\DeclareMathAlphabet{\pazocal}{OMS}{zplm}{m}{n}
\newtheorem {theorem}    {Theorem}[section]
\theoremstyle{definition}
\newtheorem {lemma}      [theorem]    {Lemma}
\newtheorem {corollary}  [theorem]    {Corollary}
\theoremstyle{definition}
\theoremstyle{remark}
\newcommand{\defeq}{\vcentcolon=}
\newcommand{\gen}[1]{\langle #1 \rangle}
\def\a{\alpha}                
\def\b{\beta}
\def\eps{\varepsilon}
\def\a{\alpha}
\def\b{\beta}
\def\Z{{\mathbb Z}}
\def\Z{\mathbb{Z}}     % integers
\def\N{\mathbb{N}}     % natural numbers
\def\lab{{\text{Lab}}}
\def\vertexradius{.1}
\def\vertex(#1){\fill (#1) circle (\vertexradius)}
\begin{document}

%%%%%%%%%%%%%%%%%%%%%%%%%%%%%%%%%%%%%%%%%%%%%%%%%%%%%%%%%%%%%%%%%
%%%%%%%%%%%%%%%%%%%%%%%%%%%%%%%%%%%%%%%%%%%%%%%%%%%%%%%%%%%%%%%%%
\title{\bf Torsion Subgroups of Groups with Cubic Dehn Function}
\maketitle
\begin{center}

Francis Wagner

\end{center}

\bigskip

\begin{center}

\textbf{Abstract}

\end{center}

We construct the first examples of finitely presented groups with cubic Dehn function containing a finitely generated infinite torsion subgroup. Moreover, we show that any infinite free Burnside group with sufficiently large odd exponent can be embedded as a subgroup of a finitely presented group with cubic Dehn function.

\bigskip

%%%%%%%%%%%%%%%%%%%%%%%%%%%%%%%%%%%%%%%%%%%%%%%%%%%%%%%%%%%%%%%%%

\section{Introduction}

Let $\pazocal{A}$ be an alphabet and $\pazocal{R}$ be a set of reduced words in the alphabet $\pazocal{A}\cup\pazocal{A}^{-1}$. Letting $F(\pazocal{A})$ be the free group with basis $\pazocal{A}$, define the \textit{normal closure} of $\pazocal{R}$ in $F(\pazocal{A})$, denoted $\gen{\gen{\pazocal{R}}}$, to be the smallest normal subgroup of $F(\pazocal{A})$ containing $\pazocal{R}$. It is easy to see that this subgroup exists and is generated by the set of reduced words of the form $fRf^{-1}$, where $f\in F(\pazocal{A})$ and $R\in\pazocal{R}$. The group $F(\pazocal{A})/\gen{\gen{\pazocal{R}}}$ is then denoted by $\gen{\pazocal{A}\mid\pazocal{R}}$.

Given a group $G$ isomorphic to $F(\pazocal{A})/\gen{\gen{\pazocal{R}}}$, it is convenient to view $G$ as being generated by $\pazocal{A}$, so that elements of $G$ can be represented by reduced words over $\pazocal{A}$. With this interpretation, $G$ is said to have \textit{presentation} $\gen{\pazocal{A}\mid\pazocal{R}}$. It follows immediately that a reduced word $W$ in the alphabet $\pazocal{A}\cup\pazocal{A}^{-1}$ represents the identity in $G$ if and only if there exist some $k\in\N$, $f_1,\dots,f_k\in F(\pazocal{A})$, $R_1,\dots,R_k\in\pazocal{R}$, and $\eps_1,\dots,\eps_k\in\{\pm1\}$ such that $W=\prod\limits_{i=1}^k f_iR_i^{\eps_i}f_i^{-1}$ in $F(\pazocal{A})$. If $W=1$ in $G$, then its \textit{area}, $\text{Area}(W)$, is the minimal value of $k$ so that there exists a representation of $W$ in $F(\pazocal{A})$ as above.

Alternatively, given a group $G$ with presentation $\pazocal{P}$, the area of a word $W$ representing the identity in $G$ can be defined as the minimal area of a van Kampen diagram $\Delta$ over $\pazocal{P}$ (see Section 2.1) such that $\lab(\partial\Delta)\equiv W$, where $\equiv$ represents (here and throughout the rest of this paper) letter-for-letter equality.

If $\pazocal{A}$ and $\pazocal{R}$ are both finite, then the group $G$ is called \textit{finitely presented}. First introduced in [9], the \textit{Dehn function} of the group $G$ with respect to its finite presentation $\pazocal{P}=\gen{\pazocal{A}\mid\pazocal{R}}$ is the function $\delta_{\pazocal{P}}:\N\to\N$ defined by $\delta_{\pazocal{P}}(n)=\max\{\text{Area}(W):|W|_{\pazocal{A}}\leq n\}$.

Dehn functions are defined up to an asymptotic equivalence $\sim$ taken on functions $\N\to\N$ defined by $f\sim g$ if and only if $f\preccurlyeq g$ and $g\preccurlyeq f$, where $f\preccurlyeq g$ if and only if there exists a $C>0$ such that $$f(n)\leq Cg(Cn)+Cn+C$$ for all $n\in\N$. Given a finitely presented group $G$ with finite presentations $\pazocal{P}$ and $\pazocal{S}$, it is a simple exercise to show that $\delta_{\pazocal{P}}\sim\delta_{\pazocal{S}}$. So, given a finitely presented group $G$, one can define the \textit{Dehn function} of $G$, $\delta_G$, as the Dehn function of any of its finite presentations.

The Dehn function is a useful invariant for studying finitely presented groups. Two of many examples of how this is so are as follows.

\begin{addmargin}[1em]{0em}

(1) The Dehn function is closely related to the solvability of the word problem in the group, as smaller Dehn functions correspond to groups with more tractable word problems [27]. 

(2) If $G$ is the fundamental group of a compact Riemannian manifold $M$, then $\delta_G$ is equivalent to the smallest isoperimetric function of the universal cover $\tilde{M}$.

\end{addmargin}

Note that under the equivalence relation $\sim$, all polynomial functions of degree $d$ are equivalent to one another. Because of this, it makes sense to consider groups of linear Dehn function, groups of quadratic Dehn function, etc. A finitely presented group is word hyperbolic in the sense of Gromov if and only if its Dehn function is linear [9]. Moreover, any finitely presented group $G$ satisfying $\delta_G\prec n^2$ is word hyperbolic [9], [3], [16]. This `gap' in possible Dehn functions leads naturally to the question of what properties satisfied by hyperbolic groups are satisfied by groups with quadratic Dehn function.

For example, hyperbolic groups are known to have solvable conjugacy problem [9], while in the early 1990s Rips posed the question of the solvability of the conjugacy problem in groups with quadratic Dehn function. In 2018 Olshanskii and Sapir [26] answered this problem in the negative, exhibiting groups with quadratic Dehn function and unsolvable conjugacy problem. A problem arising in a similar manner (using methods similar to those used in [26]) is what is addressed in this paper.

A class of groups $\pazocal{V}$ is called a \textit{variety} if it is closed under subgroups, direct products, and homomorphic images. Equivalently, a variety is a class of groups defined by a set of \textit{group laws}, i.e equations of the form $v(x_1,\dots,x_k)=1$ that hold in the group when any group elements $g_1,\dots,g_k$ are substituted for $x_1,\dots,x_k$. For example, the class of abelian groups is a variety defined by the group law $x_1x_2x_1^{-1}x_2^{-1}=1$.

Given an alphabet $\pazocal{A}$, define the \textit{verbal subgroup} of $F(\pazocal{A})$, $\pazocal{V}(\pazocal{A})$, as the subgroup generated by all possible values of the group laws defining $\pazocal{V}$. (For example, if $\pazocal{V}$ is the variety of abelian groups, $\pazocal{V}(\pazocal{A})$ is the derived subgroup of $F(\pazocal{A})$). It is easy to see that $\pazocal{V}(\pazocal{A})$ is a normal subgroup of $F(\pazocal{A})$. Then define $F_{\pazocal{V}}(\pazocal{A})=F(\pazocal{A})/\pazocal{V}(\pazocal{A})$ as the \textit{free group relative to the variety $\pazocal{V}$}. This name is justified by the universal property of relatively free groups: If $G\in\pazocal{V}$ and is generated by the set $\{g_i\}_{i\in I}$, then for $\pazocal{A}=\{a_i\}_{i\in I}$, there exists an epimorphism $\phi:F_{\pazocal{V}}(\pazocal{A})\to G$ such that $\phi(a_i\pazocal{V}(\pazocal{A}))=g_i$ for all $i\in I$.

The varieties of interest in this paper are the Burnside varieties $\pazocal{B}_n$ for $n\geq2$, where $\pazocal{B}_n$ is defined by the group law $x^n=1$. For simplicity of notation, the \textit{free Burnside group} (or the free group relative to the Burnside variety $\pazocal{B}_n$) is denoted $B(\pazocal{A},n)$, or simply $B(m,n)$ if $|\pazocal{A}|=m$.

The Burnside problem, dating back to 1902, asked whether or not there exists a finitely generated infinite torsion group. Although the problem was solved in the affirmative by Golod and Shaferevich in 1964 [8], the group constructed did not have finite exponent. This led to the Bounded Burnside problem, which essentially asked whether or not $B(m,n)$ is infinite for some $m,n\in\N$ (and, if so, for which choices of $m,n$). Novikov and Adian were the first to give examples of $m,n$ such that $B(m,n)$ is infinite and has solvable word problem, specifically for all $m>1$ and $n\geq4381$ odd [14]. Adian later improved the bound on $n$ to $n\geq665$ in 1978 [1]. In 1982, Olshanskii provided a simpler geometric proof that $B(m,n)$ is infinite for $m>1$ and $n\geq10^{10}$ odd, as well as proving the existence of the so-called Tarski monster groups [15].

It is established in each proof that these infinite torsion groups cannot be finitely presented, i.e they cannot be presented by a finite number of relations. As such, one cannot speak of the Dehn function of $B(m,n)$ for sufficiently large $n$. Since the word problem is solvable in this group, though, a question arises as to whether these free Burnside groups can be embedded into finitely presented groups with `small' Dehn functions.

Ghys and de la Harpe proved in 1991 that no hyperbolic group contains an infinite torsion subgroup [7]. In particular, this means that the group $B(m,n)$ for $m>1$ and sufficiently large odd $n$ cannot be embedded into a finitely presented group with linear Dehn function. On the other hand, Olshanskii and Sapir exhibited in 2000 such an embedding into a finitely presented group $G$ satisfying $\delta_G\preccurlyeq n^{10}$ [20].

Using a similar construction as was used in [19] and [26] and the geometric methods of Olshanskii's 1982 proof of the infiniteness of $B(m,n)$ for sufficiently large $n$, we prove the following in this paper:

\begin{theorem} \label{main theorem}

For all $n>10^{10}$ odd, there exists a finitely presented group $G_n$ with cubic Dehn function into which the free Burnside group $B(2,n)$ embeds. In particular, there exist finitely presented groups $G$ satisfying $\delta_G\sim n^3$ and containing finitely generated infinite torsion subgroups.

\end{theorem}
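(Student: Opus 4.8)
\textbf{Overall strategy.} The plan is to build $G_n$ as an $S$-machine group following the template of Olshanskii--Sapir [19], [26], but with the target group being $B(2,n)$ rather than something with a harder word problem. Concretely, one first encodes the word problem of $B(2,n)$ into an $S$-machine (a rewriting system with a Turing-machine flavor but group-theoretic symmetry), chosen so that its accepting computations on an input word $w$ have length polynomial --- in fact linear up to the usual bookkeeping --- in $|w|$ whenever $w=1$ in $B(2,n)$, exploiting Olshanskii's geometric solution of the word problem via the theory of graded presentations and small-cancellation-type diagrams from his 1982 proof. The bound $n>10^{10}$ enters exactly here: it is the threshold guaranteeing that $B(2,n)$ is infinite and that its Dehn-type combinatorial machinery (contiguity diagrams, $A$-maps) behaves. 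Then one passes to the associated finitely presented group $G_n=G(\mathcal{M})$ via the Sapir--Birget--Rips construction, where generators correspond to tape letters, state letters, and the $S$-machine commands, and relations record one step of the machine.

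\textbf{Key steps in order.} (i) Construct the $S$-machine $\mathcal{M}$ recognizing the language $\{w : w=1 \text{ in } B(2,n)\}$ with quadratic (or better) time and appropriate symmetry properties; verify it is an $S$-machine in the technical sense (each command invertible, no "illegal" configurations causing blow-up). (ii) Form the finitely presented group $G_n$ and identify a natural copy of $B(2,n)$ inside it --- this is the subgroup generated by the tape letters of one designated tape sector; one must check that a word over these letters is trivial in $G_n$ if and only if it is trivial in $B(2,n)$, which follows from the ``if'' direction being built into the machine and the ``only if'' direction being extracted from an analysis of van Kampen diagrams over $G_n$ whose boundary is such a word (the diagram must, after trimming, encode an accepting computation). (iii) Prove the Dehn function upper bound $\delta_{G_n}(m)\preccurlyeq m^3$: here one takes a word $W$ with $|W|\le m$ representing $1$ in $G_n$, builds a van Kampen diagram, and decomposes it into ``discs'' corresponding to computations of $\mathcal{M}$ and ``bands'' ($\theta$-bands, $a$-bands) as in [19], [26]; the time bound on $\mathcal{M}$ gives each computational disc area $O(m^2)$, and there are $O(m)$ of them along the boundary, yielding $O(m^3)$ total, while the subtler part is bounding the area contributed by hub relations and by the interaction of bands. (iv) Prove the matching lower bound $\delta_{G_n}(m)\succcurlyeq m^3$ by exhibiting an explicit family of words $W_m$ (typically of the form built from a hub relator conjugated so as to force a large computation) whose only fillings must contain $\sim m^3$ cells --- this uses a length/area estimate on the ``comb'' structure of minimal diagrams. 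Finally, since $B(2,n)$ is infinite (by the cited results, as $n>10^{10}$ is odd) and of exponent $n$, hence torsion and finitely generated, the last sentence of the theorem is immediate from the embedding.

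\textbf{Main obstacle.} The hard part will be step (iii) together with the compatibility between step (iii) and the embedding in step (ii): one needs the $S$-machine for $B(2,n)$ to be simultaneously (a) fast enough (near-quadratic time) to keep the Dehn function at $m^3$, and (b) ``clean'' enough that no short word over the tape letters can be killed by an accidental short computation --- i.e. the machine must not recognize any word not equal to $1$ in $B(2,n)$, even allowing the extra freedom that the group (as opposed to a deterministic machine) provides. Controlling diagrams over $G_n$ requires a careful surgery argument (removing pairs of mutually inverse cells, cutting along maximal bands, applying the ``no annular bands'' lemmas), and the estimate that a reduced diagram's area is cubic in its perimeter is where essentially all the technical weight of the paper lies; the Burnside machinery of [1],[14],[15] is invoked as a black box to certify the time bound of $\mathcal{M}$, but interfacing it with the $S$-machine formalism --- ensuring the accepting computations are exactly the right length and that the relative presentation of $B(2,n)$ used to design $\mathcal{M}$ has the requisite small-cancellation properties --- is the delicate point that must be done by hand rather than quoted.
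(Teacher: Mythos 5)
Your outline captures the general Olshanskii--Sapir template (an $S$-machine, the associated finitely presented group, bands/annuli and surgery on minimal diagrams), but it diverges from the paper in two places, one of which is a genuine gap. First, the lower bound. You propose to prove $\delta_{G_n}\succcurlyeq m^3$ by exhibiting explicit words in the machine group that force cubic fillings. The paper never does this, and it is not clear it can be done: no cubic lower bound is established (or needed) for the machine group $G(\textbf{M})$ itself, and the author explicitly hopes to eventually realize the embedding in a group with quadratic Dehn function, so a cubic lower bound for $G(\textbf{M})$ may simply be false. Instead the paper sets $G_n=G(\textbf{M})\times H_3\Z$, where the discrete Heisenberg group supplies the cubic lower bound for free, and Brick's results on Dehn functions of direct products ($\delta_{G\times H}\preccurlyeq n^2+\delta_G+\delta_H$ and $\delta_G\preccurlyeq\delta_{G\times H}$) reduce everything to the cubic \emph{upper} bound for $G(\textbf{M})$. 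Without this (or some substitute), your step (iv) is an unproven and possibly unprovable claim, and the statement $\delta_G\sim n^3$ is not established.

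Second, a structural point that matters for feasibility of your step (i)--(iii): the paper's machine does not recognize $\{w: w=1 \text{ in } B(2,n)\}$; it recognizes only the set of defining relators $\pazocal{L}=\{u^n\}$, with accepting computations of length linear in $\|u\|$. Arbitrary consequences of the Burnside relations are not pushed through the machine at all -- they are handled by adjoining $a$-relations (words trivial in $B(2,n)$ in the special input alphabet), giving the auxiliary group $G_a(\textbf{M})\cong G(\textbf{M})$, and the $a$-cells are assigned quadratic weight, justified by the isoperimetric estimate $\rho(\Delta)\leq|\partial\Delta|^2$ for reduced graded diagrams over the Burnside presentation (Lemma \ref{a-cells are quadratic B(m,n)}, proved via $A$-maps and contiguity submaps). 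This hybrid is precisely what improves the exponent from the $n^{10}$ of the earlier Olshanskii--Sapir embedding to $3$; your plan, which asks the machine itself to decide triviality in $B(2,n)$ quickly and cleanly, is essentially the older route and would not obviously yield a cubic bound. The embedding argument itself is also slightly different from what you sketch: injectivity is read off from the fact that a minimal diagram over $G_a(\textbf{M})$ with boundary in the input alphabet consists only of $a$-cells (via the disk/spoke graph lemma), rather than from decoding an accepting computation.
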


For $\pazocal{A}=\{a_i\}_{i=1}^\infty$, Shirvanyan proved that $B(\pazocal{A},n)$ embeds in the group $B(2,n)$ for $n$ as in the theorem [29]. Hence, Theorem \ref{main theorem} immediately implies the following corollary:

\begin{corollary}

For all $n>10^{10}$ odd, there exists a finitely presented group $G_n$ with cubic Dehn function into which the free Burnside groups $B(m,n)$ for $m\geq2$ embed.

\end{corollary}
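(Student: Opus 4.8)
The plan is to deduce the corollary from Theorem~\ref{main theorem} together with Shirvanyan's embedding theorem, using only the general fact that a subgroup of a finitely presented group inherits no constraint on Dehn function but \emph{does} inherit the embedding. Concretely, fix $n>10^{10}$ odd and let $G_n$ be the finitely presented group produced by Theorem~\ref{main theorem}, so that $\delta_{G_n}\sim n^3$ and there is an embedding $\iota\colon B(2,n)\hookrightarrow G_n$. Shirvanyan's result [29] provides, for each $m\geq 2$ (indeed for the countably generated $B(\pazocal{A},n)$ with $\pazocal{A}=\{a_i\}_{i=1}^\infty$), an embedding $\sigma_m\colon B(m,n)\hookrightarrow B(2,n)$: if $m$ is finite this is immediate since $B(m,n)$ is a subgroup of $B(\pazocal{A},n)$ which in turn embeds in $B(2,n)$. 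Composing, $\iota\circ\sigma_m\colon B(m,n)\hookrightarrow G_n$ is an embedding, and $G_n$ is unchanged, so its Dehn function is still cubic. This already proves the statement with the \emph{same} group $G_n$ serving all $m\geq 2$ simultaneously, which is slightly stronger than what is asked.

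The only point requiring a word of care is the direction of Shirvanyan's embedding: the theorem as quoted embeds the infinitely generated free Burnside group into the two-generator one, so one must observe that $B(m,n)$ for finite $m$ sits inside $B(\pazocal{A},n)$ as the subgroup generated by $a_1,\dots,a_m$ — this is true because $B(\pazocal{A},n)$ is relatively free, so the retraction $a_i\mapsto a_i$ ($i\leq m$), $a_i\mapsto 1$ ($i>m$) splits the inclusion and shows it is injective. After that the argument is a two-line composition of monomorphisms. I would also remark that the hypothesis $n>10^{10}$ odd is exactly the hypothesis needed both for Theorem~\ref{main theorem} and for Shirvanyan's theorem, so no strengthening of hypotheses is incurred.

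There is essentially no obstacle here: the corollary is a formal consequence, and the entire content is packaged in Theorem~\ref{main theorem} (the hard theorem of the paper) and in the cited embedding [29]. If anything, the ``main step'' is simply citing [29] correctly and noting that embeddings compose; I would present it in three or four sentences rather than as a displayed multi-step proof.

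\begin{proof}
Fix $n>10^{10}$ odd and let $G_n$ be the finitely presented group provided by Theorem~\ref{main theorem}, so that $\delta_{G_n}\sim n^3$ and there is an embedding $B(2,n)\hookrightarrow G_n$. Let $m\geq 2$. Writing $\pazocal{A}=\{a_i\}_{i=1}^\infty$, the relatively free group $B(\pazocal{A},n)$ contains $B(\{a_1,\dots,a_m\},n)\cong B(m,n)$ as a subgroup: the homomorphism $B(\pazocal{A},n)\to B(\{a_1,\dots,a_m\},n)$ sending $a_i\mapsto a_i$ for $i\leq m$ and $a_i\mapsto 1$ for $i>m$ (which exists by the universal property of $B(\pazocal{A},n)$) restricts to the identity on the subgroup generated by $a_1,\dots,a_m$, so that subgroup injects. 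By Shirvanyan's theorem [29], $B(\pazocal{A},n)$ embeds in $B(2,n)$ for $n$ as above, hence so does $B(m,n)$. Composing this embedding with $B(2,n)\hookrightarrow G_n$ yields an embedding $B(m,n)\hookrightarrow G_n$, and $G_n$ is the same finitely presented group with cubic Dehn function. This proves the corollary.
\end{proof}
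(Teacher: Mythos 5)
Your argument is correct and is essentially the paper's own: the corollary is deduced by composing Shirvanyan's embedding $B(\pazocal{A},n)\hookrightarrow B(2,n)$ from [29] with the embedding $B(2,n)\hookrightarrow G_n$ of Theorem~\ref{main theorem}, the paper simply leaving implicit the routine observation (which you spell out via the retraction) that $B(m,n)$ sits inside $B(\pazocal{A},n)$.
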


It is the goal of the author to replace Theorem 1.1 in the near future with the `optimal' result, i.e an embedding of $B(2,n)$ in a group with quadratic Dehn function, solving the problem raised by Olshanskii.

\bigskip

%%%%%%%%%%%%%%%%%%%%%%%%%%%%%%%%%%%%%%%%%%%%%%%%%%%%%%%%%%%%%%%%%

\section{Maps and diagrams}

A main tool used through the remainder of this paper is the concept of van Kampen diagrams over group presentations that was introduced by its namesake in 1933 [30]. It is assumed that the reader is intimately acquainted with this concept. The following subsection functions to recall the most important definitions; for further reference, see [17], [13], and [28].

\subsection{van Kampen diagrams} \

Let $G$ be a group with presentation $\gen{\pazocal{A}\mid\pazocal{R}}$. Suppose $\Delta$ is an oriented 2-complex homeomorphic to a disk equipped with a \textit{labelling} function, i.e a function $\lab:E(\Delta)\to\pazocal{A}\cup\pazocal{A}^{-1}\cup\{1\}$ which satisfies $\lab(e^{-1})\equiv\lab(e)^{-1}$ for any edge $e\in E(\Delta)$ (with, of course, $1^{-1}\equiv1$). The label of a path in $\Delta$ is defined in the obvious way, that is $\lab(e_1\dots e_n)\equiv\lab(e_1)\dots\lab(e_n)$. For any edge $e$ in $\Delta$, $e$ is called a $0$-edge if $\lab(e)\equiv1$; otherwise, $e$ is called an $\pazocal{A}$-edge.

Finally, suppose that for each cell $\Pi$ of $\Delta$, one of the following is true:

\begin{addmargin}[1em]{0em}

(1) omitting the label of any zero edges, $\lab(\partial\Pi)$ is visually equal to a cyclic permutation of $R^{\pm1}$ for some $R\in\pazocal{R}$ 
%(Figure \ref{R-cell})

(2) $\partial\Pi$ consists of $0$-edges and exactly two $\pazocal{A}$-edges $e$ and $f$, with $\lab(e)\equiv\lab(f^{-1})$ 
%(Figure \ref{0-cell-1})

(3) $\partial\Pi$ consists only of $0$-edges. 
%(Figure \ref{0-cell-2})

\end{addmargin}

%\begin{center}
%\begin{figure}[h]
%\begin{minipage}{.45\linewidth}
%\centering{
%\includegraphics[scale=0.3]{R-cell.png}
%\caption{$\pazocal{R}$-cell corresponding to the relation $[a,b]$}
%\label{R-cell}
%}
%\end{minipage}
%\begin{minipage}{.45\textwidth}
%\centering{
%\includegraphics[scale=0.3]{0-cell-1.png}
%\caption{0-cell of type (2)}
%\label{0-cell-1}
%}
%\end{minipage}
%
%\begin{minipage}{.45\textwidth}
%\centering{
%\includegraphics[scale=0.3]{0-cell-2.png}
%\caption{0-cell of type (3)}
%\label{0-cell-2}
%}
%\end{minipage}
%\end{figure}
%\end{center}

Then $\Delta$ is called a \textit{(disk) van Kampen diagram} (or simply a \textit{disk diagram}) over the presentation $\gen{\pazocal{A}\mid\pazocal{R}}$. The cells satisfying condition (1) above are called $\pazocal{R}$-cells, while the others are called 0-cells.

It is easy to see that the contour, $\partial\Delta$, of a disk diagram $\Delta$ has label equal to the identity in $G$. Conversely, van Kampen's Lemma (Lemma 11.1 of [17]) states that a word $W$ over $\pazocal{A}$ represents the identity of $G$ if and only if there exists a disk diagram $\Delta$ over the presentation $\gen{\pazocal{A}\mid\pazocal{R}}$ with $\lab(\partial\Delta)\equiv W$.

The \textit{area}, $\text{Area}(\Delta)$, of a disk diagram $\Delta$ is the number of $\pazocal{R}$-cells it contains, while the area of a word $W$ satisfying $W=1$ in $G$ is the minimal area of a diagram $\Delta$ satisfying $\lab(\partial\Delta)\equiv W$.

A \textit{0-refinement} of a disk diagram $\Delta$ is a disk diagram $\Delta'$ obtained from $\Delta$ by the insertion of 0-edges or 0-cells. Note that a 0-refinement has the same area as the diagram from which it arises.

Let $\Delta$ be a disk diagram and $\Pi_1$ and $\Pi_2$ be two $\pazocal{R}$-cells in $\Delta$. Suppose $O_1,O_2$ are vertices of $\Pi_1,\Pi_2$, respectively, there exists a simple path $t$ from $O_1$ to $O_2$ in $\Delta$ such that $\lab(t)=1$ in $F(\pazocal{A})$ (that is, the free group with basis $\pazocal{A}$), and $\lab(\partial\Pi_1)$ read starting at $O_1$ is mutually inverse to $\lab(\partial\Pi_2)$ read starting at $O_2$ %(Figure \ref{cancellable}). 
Then $\Pi_1$ and $\Pi_2$ are called \textit{cancellable} in $\Delta$.

%\begin{figure}[h]
%\centering{
%\includegraphics[scale=0.3]{Cancellable.png}
%\caption{Cancellable cells}
%\label{cancellable}
%}
%\end{figure}

This term is justified by the ability to `remove' the cells $\Pi_1$ and $\Pi_2$ from $\Delta$ without affecting the label of $\partial\Delta$, yielding a disk diagram $\Delta'$ satisfying $\lab(\partial\Delta')\equiv\lab(\partial\Delta)$ with $\text{Area}(\Delta')<\text{Area}(\Delta)$.

Naturally, a disk diagram is called \textit{reduced} if it has no pair of cancellable cells. By simply removing pairs of cancellable cells, any disk diagram over a presentation can be made reduced. This immediately leads to a strengtheened version of van Kampen's lemma: A word $W$ over $\pazocal{A}$ represents the identity in $G$ if and only if there exists a reduced disk diagram $\Delta$ over the presentation with $\lab(\partial\Delta)\equiv W$.

An annular (Schupp) diagram over the presentation $\gen{\pazocal{A}\mid\pazocal{R}}$ is defined similarly. It is then an immediate consequence of van Kampen's lemma that two words $W$ and $V$ are conjugate in $G$ if and only if there exists a reduced annular diagram $\Delta$ with contour components $p$ and $q$ satsifying $\lab(p)\equiv W$ and $\lab(q)\equiv V^{-1}$. %(Figure \ref{annular}).

%\begin{figure}[h]
%\centering{
%\includegraphics[scale=0.3]{Annular.png}
%\caption{Schupp diagram}
%\label{annular}
%}
%\end{figure}

\smallskip

%%%%%%%%%%%%%%%%%%%%%%%%%%%%%%%%%%%%%%%%%%%%%%%%%%%%%%%%%%%%%%%%%

\subsection{Graded maps on a disk or annulus} \

The definitions and lemmas presented over the next several subsections are those introduced by Olshanskii in his solution of the Burnside problem. These can be found in [17], with those relevant to the proof of Lemma \ref{a-cells are quadratic B(m,n)} restated here for easy reference.

A \textit{map} $\Delta$ is a finite oriented planar graph on a disk which subdivides the surface into polygonal cells. In particular, by `forgetting' the labelling, one can interpret a van Kampen diagram as a map.

A map $\Delta$ is called \textit{graded} if each cell $\Pi$ in $\Delta$ is assigned a nonnegative integer $r(\Pi)$ called its \textit{rank}. The map $\Delta$ is called a \textit{map of rank at most k} if all its cells have rank $\leq k$. The minimal $k$ for which $\Delta$ is a map of rank at most $k$ is called the \textit{rank} of $\Delta$ and denoted $r(\Delta)$.

For $r(\Delta)=k$, the \textit{type} of $\Delta$, $\tau(\Delta)$, is the $(k+2)$-vector $(r(\Delta),\tau_0,\dots,\tau_k)$, where $\tau_i$ is the number of cells of rank $k-i$ in $\Delta$. The types of maps are ordered lexicographically, i.e for two maps $\Delta$ and $\Gamma$ with $\tau(\Delta)=(r(\Delta),\tau_0,\dots,\tau_k)$ and $\tau(\Gamma)=(r(\Gamma),\sigma_0,\dots,\sigma_\ell)$, $\tau(\Delta)\leq\tau(\Gamma)$ if the following three conditions hold:

\begin{addmargin}[1em]{0em}

$\bullet$ $r(\Delta)\leq r(\Gamma)$;

$\bullet$ if $r(\Delta)=r(\Gamma)$, then $\tau_0\leq\sigma_0$;

$\bullet$ for $1\leq i\leq r(\Delta)$, if $r(\Delta)=r(\Gamma)$ and $\tau_j=\sigma_j$ for all $j<i$, then $\tau_i\leq\sigma_i$.

\end{addmargin}

For simplicity, the cells of rank 0 in a graded map are called \textit{0-cells}. All other cells are called \textit{$\pazocal{R}$-cells} (even though an alphabet $\pazocal{R}$ is not specified). 

The edges of the graph are divided into two disjoint sets, called the \textit{0-edges} and the \textit{$\pazocal{A}$-edges}. The \textit{length} of a path $p$ in a graded map $\Delta$, denoted $|p|$, is the number of $\pazocal{A}$-edges that comprise it. In particular, for $\partial\Pi$ the contour of a cell, $|\partial\Pi|$ is called the \textit{perimeter} of $\Pi$.

Three facts are assumed about graded maps, which are motivated by the definition of van Kampen diagrams:

\begin{addmargin}[1em]{0em}

(1) the inverse edge of a 0-edge is also a 0-edge

(2) the contour of a 0-cell either consists entirely of 0-edges or of exactly two $\pazocal{A}$-edges in addition to a number of 0-edges

(3) if $\Pi$ is an $\pazocal{R}$-cell, then $|\partial\Pi|>0$

\end{addmargin}

If $\Delta$ is a graded map and $\Gamma$ is a subspace homeomorphic to a disk bounded by some edgepath of $\Delta$, then $\Gamma$ is called a \textit{submap} of $\Delta$.

It is further assumed that the contour of a graded map has a fixed decomposition. In particular, if $\Delta$ is a graded map, then $\partial\Delta$ is factorized as $p_1\dots p_k$ with each $p_i$ called a \textit{section} of the contour.

\smallskip

%%%%%%%%%%%%%%%%%%%%%%%%%%%%%%%%%%%%%%%%%%%%%%%%%%%%%%%%%%%%%%%%%

\subsection{0-Bonds and 0-contiguity submaps} \

Let $\Delta$ be a graded map and $\Pi$ be a 0-cell whose contour contains exactly two $\pazocal{A}$-edges, $e_1$ and $e_2$. Then the pair of edges $e_1,e_2^{-1}$ are called \textit{immediately adjacent} (as is the pair $e_1^{-1},e_2$). Two edges $e$ and $f$ of $\Delta$ are then said to be \textit{adjacent} if there exists a sequence of edges $e=e_1,e_2,\dots,e_{k+1}=f$ such that $e_i$ and $e_{i+1}$ are immediately adjacent for $i=1,\dots,k$.

Let $\Delta$ be a graded map with adjacent edges $e$ and $f$. Suppose $e$ belongs to the contour of the $\pazocal{R}$-cell $\Pi_1$ and $f^{-1}$ to the contour of some $\pazocal{R}$-cell $\Pi_2$. Per the definition, set $e=e_1,\dots,e_{k+1}=f$ with 0-cells $\pi_1,\dots,\pi_k$ such that the only two $\pazocal{A}$-edges of $\partial\pi_i$ are $e_i^{-1}$ and $e_{i+1}$.

We can then write $\partial\pi_i=e_i^{-1}p_ie_{i+1}s_i$ for $i=1,\dots,k$ such that $|p_i|=|s_i|=0$. With the aid of 0-refinement, we can assume that $p=p_1\dots p_k$ and $s=s_k\dots s_1$ are simple paths such that each intersects $\Pi_1,\Pi_2$ only on its endpoints.

Then, the submap $\Gamma$ with contour $p^{-1}es^{-1}f^{-1}$ consisting of the cells $\pi_1,\dots,\pi_k$ is called a \textit{0-bond} between $\Pi_1$ and $\Pi_2$. Further, $e$ and $f^{-1}$ are called the \textit{contiguity arcs} of the 0-bond $\Gamma$ and $p$ and $s$ the \textit{side arcs}.

Similarly, if $e$ and $f$ are adjacent edges with $e$ belonging to the contour of some $\pazocal{R}$-cell $\Pi$ and $f^{-1}$ belonging to some section $q$ of the contour, then a 0-bond between $\Pi$ and $q$ is defined. What's more, a 0-bond between two sections of the contour can be defined.

Now suppose $e_1,f_1$ and $e_2,f_2$ are two pairs of adjacent edges such that $e_1$ and $e_2$ belong to the contour of some $\pazocal{R}$-cell $\Pi_1$ and $f_1^{-1},f_2^{-1}$ to some $\pazocal{R}$-cell $\Pi_2$. Then, construct two 0-bonds, $\Gamma_1$ and $\Gamma_2$, between the two pairs, with $\partial\Gamma_i=z_ie_iw_if_i^{-1}$. If $\Gamma_1=\Gamma_2$, set $\Gamma=\Gamma_1$. Otherwise, there exist subpaths $y_1$ and $y_2$ of $\partial\Pi_1$ and $\partial\Pi_2$, respectively, such that $y_1=e_1pe_2$ and $y_2=f_2^{-1}uf_1^{-1}$ (or $y_1=e_2pe_1$ and $y_2=f_1^{-1}uf_2^{-1}$). Then let $\Gamma$ be the submap with contour $z_1y_1w_2y_2$ (or $z_2y_1w_1y_2$). If $\Gamma$ does not contain $\Pi_1$ or $\Pi_2$, then $\Gamma$ is called a \textit{0-contiguity submap} of $\Pi_1$ to $\Pi_2$. In this case, $y_1$ and $y_2$ are called the \textit{contiguity arcs} of $\Gamma$; for clarity, these arcs are denoted $y_i=\Gamma_{  ^\wedge}\Pi_i$ (though if $\Pi_1=\Pi_2$, then $\Gamma_{ ^\wedge}\Pi_1$ is two distinct arcs). The paths $z_1$ and $w_2$ (or $z_2$ and $w_1$) are called the \textit{side arcs} of $\Gamma$. Note that both side arcs have length zero. The ratio $|y_1|/|\partial\Pi_1|$ is called the \textit{degree of contiguity} of $\Pi_1$ to $\Pi_2$, and is denoted $(\Pi_1,\Gamma,\Pi_2)$; similarly, $(\Pi_2,\Gamma,\Pi_1)=|y_2|/|\partial\Pi_2|$ is the degree of contiguity of $\Pi_2$ to $\Pi_1$.

As with 0-bonds, 0-contiguity submaps between a cell and a section of $\partial\Delta$ are similarly defined, as are between two sections of $\partial\Delta$. The contiguity arcs, side arcs, and degree of contiguity are defined similarly; for example, if $\Gamma$ is a 0-contiguity submap between an $\pazocal{R}$-cell $\Pi$ and a section $q$ of the contour of $\partial\Delta$, then the degree of contiguity of $q$ to $\Pi$ is $(q,\Gamma,\Pi)=|\Gamma_{ ^\wedge}q|/|q|$.

Note, however, that if $\Gamma$ is a contiguity submap between a cell $\Pi$ and itself, then $(\Pi,\Gamma,\Pi)$ is a pair of numbers.

Two 0-contiguity submaps $\Gamma_1$ and $\Gamma_2$ are \textit{disjoint} if they have no common cells, their contiguity arcs have no common points, and their side arcs have no common points.

\smallskip

%%%%%%%%%%%%%%%%%%%%%%%%%%%%%%%%%%%%%%%%%%%%%%%%%%%%%%%%%%%%%%%%%

\subsection{Bonds and contiguity submaps} \

In this subection, $\eps\in(0,1)$ is taken to be a fixed constant. For the moment, one can think of this number as `sufficiently small', with this interpretations made precise in the next section.

Set $k>0$ and suppose the terms $j$-bond and $j$-contiguity submap have been defined for all $0\leq j<k$. As with 0-contiguity submaps, the definitions of contiguity arcs, side arcs, and degree of contiguity for $j$-contiguity submaps follow. Two submaps $\Gamma_1,\Gamma_2$ such that $\Gamma_i$ is a $j_i$-contiguity submap for $j_i<k$ are called \textit{disjoint} if they have no common cells, their contiguity arcs have no common points, and their side arcs have no common points. Note that this definition agrees with the analogue for 0-contiguity submaps.

Let $\pi$, $\Pi_1$, and $\Pi_2$ be cells of a graded map $\Delta$, perhaps with $\Pi_1=\Pi_2$, satisfying the following:

\begin{addmargin}[1em]{0em}

(1) $r(\pi)=k$, $r(\Pi_i)>k$ for $i=1,2$,

(2) there are disjoint submaps $\Gamma_1,\Gamma_2$ such that $\Gamma_i$ is a $j_i$-contiguity submap of $\pi$ to $\Pi_i$ with $j_i<k$ and such that $\Pi_1$ is not contained in $\Gamma_2$ and $\Pi_2$ is not contained in $\Gamma_1$,

(3) $(\pi,\Gamma_i,\Pi_i)\geq\eps$ for $i=1,2$.

\end{addmargin}

For $i=1,2$, let $\partial\Gamma_i=v_is_i$ for $v_i={\Gamma_i}_{ ^\wedge}\pi$ and $\partial\pi=u_1v_1u_2v_2$. Letting $\Gamma$ be the submap with contour $s_1u_1^{-1}s_2u_2^{-1}$, $\Gamma$ is called the \textit{$k$-bond} between $\Pi_1$ and $\Pi_2$ defined by the contiguity submaps $\Gamma_1$ and $\Gamma_2$ with \textit{principal cell} $\pi$. The \textit{contiguity arc} of $\Gamma$ to $\Pi_i$ is defined as $\Gamma_{ ^\wedge}\Pi_i\defeq{\Gamma_i}_{ ^\wedge}\Pi_i$. The \textit{side arcs} of $\Gamma$ are defined in the obvious way.

A $k$-bond between an $\pazocal{R}$-cell and a section of the contour or two distinct sections of the contour is defined similarly.

Suppose $\Gamma_1$ is a $k$-bond between two cells $\Pi_1$ and $\Pi_2$ and $\Gamma_2$ is a $j$-bond between $\Pi_1$ and $\Pi_2$ for $j\leq k$. If $\Gamma_1=\Gamma_2$, then set $\Gamma=\Gamma_1$. Otherwise, if $\Gamma_1$ and $\Gamma_2$ are disjoint, then set $\partial\Gamma_i=z_iv_iw_is_i$ for $v_i={\Gamma_i}_{ ^\wedge}\Pi_1$ and $s_i={\Gamma_i}_{ ^\wedge}\Pi_2$. Then set $y_1$ as a subpath of $\partial\Pi_1$ of the form $v_1vv_2$ (or $v_2vv_1$) and $y_2$ as a subpath of $\partial\Pi_2$ of the form $s_2ss_1$ (or $s_1ss_2$). Setting $\Gamma$ as the submap with contour $z_1y_1w_2y_2$ (or $z_2y_1w_1y_2$), if $\Gamma$ does not contain $\Pi_1$ or $\Pi_2$, then it is called the \textit{$k$-contiguity submap} of $\Pi_1$ to $\Pi_2$ defined by the bonds $\Gamma_1$ and $\Gamma_2$. As with previous definitions, $y_i=\Gamma_{ ^\wedge}\Pi_i$ is called the \textit{contiguity arc} of $\Gamma$ to $\Pi_i$, $z_1$ and $w_2$ (or $w_1$ and $z_2$) are called the \textit{side arcs} of $\Gamma$, $(\Pi_1,\Gamma,\Pi_2)=|y_1|/|\Pi_1|$ is called the \textit{degree of contiguity} of $\Gamma$ to $\Pi_1$.

A $k$-contiguity submap between an $\pazocal{R}$-cell and a section of the contour is defined similarly, as is a $k$-contiguity submap between two sections of the contour.

The number $k$ is often omitted when referring to $k$-contiguity submaps, so that there will be reference merely to a contiguity submap. Further, if $\Gamma$ is a contiguity submap between $\Pi_1$ and $\Pi_2$ and $\partial\Gamma=p_1q_1p_2q_2$ with $q_i=\Gamma_{ ^\wedge}\Pi_i$, then $\partial(\Pi_1,\Gamma,\Pi_2)$ denotes the \textit{standard decomposition} $p_1q_1p_2q_2$.

\smallskip

%%%%%%%%%%%%%%%%%%%%%%%%%%%%%%%%%%%%%%%%%%%%%%%%%%%%%%%%%%%%%%%%%

\subsection{Auxiliary parameters} \

The arguments presented through the rest of this section rely on the \textit{lowest parameter principle} introduced in [17]. For this, we introduce the relation $>>$ on parameters defined as follows.

If $\a_1,\a_2,\dots,\a_k$ are parameters with $\a_1>>\a_2>>\dots>>\a_k$, then for $2\leq i\leq k$, it is understood that $\a_1,\dots,\a_{i-1}$ are assigned prior to the assignment of $\a_i$ and that the assignment of $\a_i$ is dependent on the assignment of its predecessors. The resulting inequalities are then understood as `$\a_i\leq$(any positive-valued expression involving $\a_1,\dots,\a_{i-1}$)'.

The principle makes the sequence of inequalities used throughout the rest of the section consistent without muddling the matter with the arithmetic of particular infinitesimals.

Specifically, the assignment of parameters used in this section is:
$$\a>>\b>>\gamma>>\delta>>\eps>>\zeta>>\iota$$
Note that $\eps$ is the parameter used to define contiguity submaps in the previous subsection. 

Further, one more restriction is imposed on the assignment of $\iota$, specifically that its inverse $n=1/\iota$ is a very large odd integer. It can be shown that the necessary constraints on the parameters can be relaxed to allow $n$ to be any odd integer greater than $10^{10}$.

\smallskip

%%%%%%%%%%%%%%%%%%%%%%%%%%%%%%%%%%%%%%%%%%%%%%%%%%%%%%%%%%%%%%%%%

\subsection{A-maps and A$^0$-maps} \

In a graded map $\Delta$, let $p=e_1\dots e_k$ be a path. The insertion or deletion in $p$ of a subpath of the form $ee^{-1}$ (or $e^{-1}e$) is called a \textit{combinatorial deformation of type I}. Meanwhile, if there exists a cell $\pi$ in $\Delta$ with $\partial\pi=f_1\dots f_t$, then the insertion or deletion in $p$ of the subpath $(f_1\dots f_t)^{\pm1}$ is called a \textit{combinatorial deformation of type II}. 

Two paths $p$ and $q$ are called \textit{combinatorially homotopic} if one can pass between them via a finite number of combinatorial deformations. A path $p$ in $\Delta$ is called \textit{geodesic} if $|p|\leq|q|$ for any path $q$ combinatorially homotopic to it.

A graded map $\Delta$ is called an A-map if it satisfies:

\begin{addmargin}[1em]{0em}

(A1) the contour of each cell $\Pi$ of rank $j$ is cyclically reduced with $|\partial\Pi|\geq nj$,

(A2) if $\Pi$ is a cell of rank $j$ in $\Delta$, then any subpath of length $\leq\max(j,2)$ in $\partial\Pi$ is geodesic in $\Delta$,

(A3) if $\Pi_1$ and $\Pi_2$ are $\pazocal{R}$-cells in $\Delta$ and $\Gamma$ is a contiguity submap of $\Pi_1$ to $\Pi_2$ with $(\Pi_1,\Gamma,\Pi_2)\geq\eps$, then $|\Gamma_{ ^\wedge}\Pi_2|<(1+\gamma)r(\Pi)$

\end{addmargin}

For some fixed positive integer $\ell$, an $A$-map on a disk whose contour is decomposed into at most $\ell$ sections is called an A$^0$-map. For the purposes of Lemma \ref{a-cells are quadratic B(m,n)}, we fix $\ell=10$ (as opposed to the convention $\ell=4$ taken in [17]).

Similar to the definition of A-maps, a section $q$ of a contour of a graded map $\Delta$ is called a \textit{smooth section of rank $k>0$} if the following two conditions are satisfied:

\begin{addmargin}[1em]{0em}

(1) every subpath of $q$ of length $\leq\max(k,2)$ is geodesic in $\Delta$,

(2) if $\Gamma$ is a contiguity submap of a cell $\pi$ to $q$ satisfying $(\pi,\Gamma,q)\geq\eps$, then $|\Gamma_{ ^\wedge}q|<(1+\gamma)k$.

\end{addmargin}

The following lemma is clear from the definitions above.

\begin{lemma} \label{Lemma 15.1} (1) A submap of an A-map is an A-map.

(2) If a subpath $p$ of a smooth section $q$ of rank $k$ in an A-map $\Delta$ is a subpath of the contour of a submap $\Gamma$, then $p$ can be regarded as a smooth section of rank $k$ in $\partial\Gamma$.

(3) Let $\Delta$ be an A-map containing a cell $\Pi$ of rank $k$ and $q$ be a subpath of $\partial\Pi$. Suppose $q$ is a section of the contour of a submap $\Gamma$ not containing $\Pi$. Then $q$ is a smooth section of rank $k$ in $\partial\Gamma$.

\end{lemma}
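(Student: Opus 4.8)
The plan is to treat each of the three parts directly from the definitions, since Olshanskii's framework is set up precisely so that these are ``formal'' consequences. For part (1): let $\Gamma$ be a submap of an A-map $\Delta$. I would check (A1), (A2), (A3) in turn. Condition (A1) is intrinsic to each cell (it only refers to $\partial\Pi$ and the rank of $\Pi$), and every cell of $\Gamma$ is a cell of $\Delta$, so (A1) is inherited with no work. For (A2) and (A3), the only subtlety is that the notions ``geodesic'' and ``contiguity submap'' are defined relative to the ambient map. I would point out that combinatorial deformations of a path lying in $\Gamma$ take place within $\Gamma\subseteq\Delta$, so a path geodesic in $\Delta$ is a fortiori geodesic in $\Gamma$ — hence (A2) passes to $\Gamma$. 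For (A3), any contiguity submap of $\Pi_1$ to $\Pi_2$ inside $\Gamma$ is in particular such a submap inside $\Delta$ (bonds and contiguity submaps are built from cells and $0$-cells of the ambient map, and all of these lie in $\Gamma$), with the same contiguity degree and the same arc $\Gamma_{^\wedge}\Pi_2$, so the bound from (A3) in $\Delta$ gives (A3) in $\Gamma$.

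For parts (2) and (3) the argument is the same shape: verify the two defining conditions of a smooth section. In part (2), $p$ is a subpath of a smooth section $q$ of rank $k$ in $\Delta$, and $p$ lies on $\partial\Gamma$. Condition (1) of smoothness (every subpath of length $\le\max(k,2)$ is geodesic) is inherited because a subpath of $p$ is a subpath of $q$, hence geodesic in $\Delta$, hence geodesic in $\Gamma$ by the containment observation above. Condition (2) (contiguity submaps $\Gamma'$ of a cell $\pi$ to $p$ with $(\pi,\Gamma',p)\ge\eps$ satisfy $|\Gamma'_{^\wedge}p|<(1+\gamma)k$) follows because such a $\Gamma'$, viewed in $\Delta$, is a contiguity submap of $\pi$ to $q$ with the same contiguity degree, so the smoothness of $q$ in $\Delta$ yields exactly the required bound. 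Part (3) is the case where $q$ is a piece of the boundary of an actual cell $\Pi$ of rank $k$ sitting outside $\Gamma$: here condition (1) of smoothness is exactly (A2) applied to $\Pi$ in $\Delta$ (subpaths of $\partial\Pi$ of length $\le\max(k,2)$ are geodesic in $\Delta$, hence in $\Gamma$), and condition (2) is exactly (A3) applied with $\Pi_1=\Pi$: a contiguity submap $\Gamma'$ of a cell $\pi$ to $q$ in $\Gamma$, together with the cell $\Pi$, forms a contiguity submap of $\pi$ to $\Pi$ in $\Delta$ with contiguity arc $\Gamma'_{^\wedge}q$ a subarc of $\partial\Pi$, so $|\Gamma'_{^\wedge}q|<(1+\gamma)r(\Pi)=(1+\gamma)k$.

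The only point requiring any care — and the one I would state as a small sub-claim and dispatch first — is the monotonicity of ``geodesic'' under passing to a submap: if $r\subseteq\Gamma\subseteq\Delta$ then $r$ geodesic in $\Delta$ implies $r$ geodesic in $\Gamma$. This holds because any path in $\Gamma$ combinatorially homotopic to $r$ is also a path in $\Delta$ combinatorially homotopic to $r$ (type I and type II deformations using cells of $\Gamma$ are also deformations using cells of $\Delta$), so the minimum length defining ``geodesic'' can only increase when we restrict to $\Gamma$; thus minimality in $\Delta$ forces minimality in $\Gamma$. I do not expect any genuine obstacle: the entire content is bookkeeping with the ambient-versus-submap distinction, and the lemma is labeled ``clear from the definitions'' precisely for that reason. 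If anything demands a sentence of justification it is checking in part (3) that $\Gamma'$ together with $\Pi$ genuinely constitutes a contiguity submap in the sense of the earlier subsections (in particular that $\Gamma'$ does not contain $\Pi$, which is guaranteed by the hypothesis that $\Gamma$ — hence $\Gamma'$ — does not contain $\Pi$), and that its contiguity arc to $\Pi$ is the subarc $q$-portion $\Gamma'_{^\wedge}q$ of $\partial\Pi$, which is immediate from the construction of bonds.
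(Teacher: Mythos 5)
Your proof is correct, and since the paper gives no proof at all (it states only that the lemma ``is clear from the definitions above,'' deferring to Olshanskii's book), your direct verification of (A1)--(A3) and the two smoothness conditions is precisely the argument the paper is gesturing at. The one sub-claim you rightly isolate --- that a path geodesic in $\Delta$ remains geodesic in any submap $\Gamma$, because type I/II deformations in $\Gamma$ are deformations in $\Delta$ and so the class of homotopic competitors only shrinks --- is indeed the crux, and you handle it correctly; the only cosmetic slip is the phrase ``together with the cell $\Pi$'' in part (3), since $\Gamma'$ alone is already the contiguity submap of $\pi$ to $\Pi$ in $\Delta$, with $\Pi$ merely supplying the target cell, but you immediately say the right thing afterward.
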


The following lemmas are proved in [17] and stated here for reference.

%\begin{lemma} \label{Theorem 16.1}
%
%\textit{(Theorem 16.1 of [17])} Let $\Delta$ be an A$^0$ map with $r(\Delta)>0$. Then there is in $\Delta$ a $\pazocal{R}$-cell $\pi$ and at most 10 disjoint contiguity submaps $\Gamma_1,\dots,\Gamma_k$ of $\pi$ to sections of the contour of $\Delta$ such that the sum of the contiguity degrees of $\pi$ across $\Gamma_1,\dots,\Gamma_k$ is greater than $\bar{\gamma}\equiv 1-\gamma$.
%
%\end{lemma}

\begin{lemma} \label{Corollary 16.1}

\textit{(Corollary 16.1 of [17])} Let $\Delta$ be an A-map on a disk of nonzero rank whose contour is decomposed into the subsections $q_1,\dots,q_{10}$. Then, in $\Delta$, there exists an $\pazocal{R}$-cell $\pi$ and disjoint contiguity submaps $\Gamma_1,\dots,\Gamma_{10}$ of $\pi$ to $q_1,\dots,q_{10}$, respectively, (some of which may be absent) such that $$\sum_{i=1}^{10} (\pi,\Gamma_i,q_i)>1-\gamma$$

\end{lemma}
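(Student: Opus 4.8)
This is Corollary 16.1 of [17] with the number of contour sections raised from $4$ to $10$, so the plan is to obtain it exactly as there, from the induction-on-type proof of the compound theorem of Section 16 of [17], after checking that the change $4\to10$ breaks nothing. Concretely, $\ell$ enters that proof only through a fixed finite list of inequalities among $\alpha >> \beta >> \gamma >> \delta >> \eps >> \zeta >> \iota$ and through the count of how many sections a subdiagram cut along bonds can pick up; so I would first re-inspect that list and confirm the inequalities still hold with $10$ in place of $4$ --- which they do, the dependence on $\ell$ being polynomial while the parameters are assigned strictly in the displayed order --- and that no subdiagram arising in the induction ever needs more than $10$ sections, which is exactly what fixing $\ell=10$ buys. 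With that in hand the argument is that of [17], whose shape is as follows.

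One argues by induction on the type $\tau(\Delta)$ over the class of A-maps whose contour carries at most $\ell$ sections. Base case: if $\Delta$ has a single $\pazocal{R}$-cell $\pi$ then $\partial\pi$ is separated from $\partial\Delta$ only by $0$-cells, so there are $0$-contiguity submaps of $\pi$ to the sections $q_i$ whose contiguity arcs cover $\partial\pi$ up to $0$-edges and (if $\pi$ is $0$-adjacent to itself, in which case (A3) bounds that self-contiguity by $(1+\gamma)r(\pi)$) a part of relative length $O(\iota)$; since $\gamma >> \iota$ this gives $\sum_i(\pi,\Gamma_i,q_i)>1-\gamma$. Inductive step: pick an $\pazocal{R}$-cell $\pi$ of maximal rank $k=r(\Delta)$ and, by the bond/contiguity machinery above, fix a maximal system of pairwise disjoint contiguity submaps joining $\pi$ to $q_1,\dots,q_\ell$ and to the remaining $\pazocal{R}$-cells at contiguity degree $\geq\eps$. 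Deleting $\pi$, these submaps, and the cells they meet cuts $\Delta$ into finitely many A-maps (Lemma \ref{Lemma 15.1}(1)), each of which omits $\pi$ and so has type strictly below $\tau(\Delta)$, and each of whose contour sections is a piece of some $q_i$, a side arc of a bond, or a smooth arc cut from $\partial\pi$ or a neighbouring cell (Lemma \ref{Lemma 15.1}(2),(3)); here one checks the section count stays $\leq10$.

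Now the dichotomy. If the contiguity submaps of $\pi$ to the $q_i$ already give $\sum_i(\pi,\Gamma_i,q_i)>1-\gamma$, we are done. Otherwise a definite part of $\partial\pi$ --- of relative length of order $\gamma$ --- faces inward, toward neighbouring cells or toward subdiagrams meeting no $q_i$ at degree $\geq\eps$; applying the induction hypothesis to the cut-off subdiagrams produces, in one of them, an $\pazocal{R}$-cell $\pi'$ with total contiguity $>1-\gamma$ to the sections of that subdiagram's contour. Transporting these contiguities back to $\Delta$, the pieces of the $q_i$ among those sections contribute directly, while $\pi'$'s contiguity to the rest --- side arcs of bonds and smooth arcs of $\partial\pi$, the latter of rank $k=r(\pi)\geq r(\pi')$ --- is held small by (A3) and the smooth-section conditions and so is absorbed by the $\gamma$-slack. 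Hence some $\pazocal{R}$-cell of $\Delta$ --- $\pi'$, or $\pi$ in degenerate configurations, possibly after iterating the reduction --- has $\sum_i(\cdot,\cdot,q_i)>1-\gamma$, contradicting the choice of $\Delta$ and closing the induction.

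The real work is that last transfer estimate: tracking, across the boundedly many bonds separating $\pi'$ from $\partial\Delta$, how much contiguity is lost to side arcs and to arcs of $\partial\pi$, and checking that the total loss stays strictly inside the $\gamma$-slack --- this is where (A3), the smooth-section conditions, and the strict hierarchy $\alpha >> \beta >> \gamma >> \delta >> \eps >> \zeta >> \iota$ are used together. All of this is already done in [17] for $\ell=4$; what must genuinely be verified here is only that inflating the section count to $10$ breaks none of those estimates --- it does not --- and that $10$ sections always suffice for the subdiagrams produced, which is the reason for fixing $\ell=10$ from the start.
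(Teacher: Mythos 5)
Your proposal matches the paper's treatment: the paper simply cites Corollary 16.1 of [17] (proved there for four sections) and remarks that adapting the argument to ten sections is an easy exercise, which is exactly the reduction you carry out, together with a faithful sketch of the induction-on-type argument from [17]. No issues.
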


Note that Lemma \ref{Corollary 16.1} is proved in [17] for four distinguished sections $q_1,\dots,q_4$ of the contour. It is an easy exercise, though, to adapt this proof to the statement above.

The cell $\pi$ guaranteed by Lemma \ref{Corollary 16.1} is called a \textit{$\gamma$-cell}.

\begin{lemma} \label{Theorem 17.1}

\textit{(Theorem 17.1 of [17])} Let $\Delta$ be an A-map with contour $qt$. If $q$ is a smooth section, then $(1-\b)|q|\leq|t|$ (with equality if and only if $|q|=|t|=0$).

\end{lemma}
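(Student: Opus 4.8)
The plan is to argue by induction on the type $\tau(\Delta)$, ordered lexicographically as above, using throughout that a submap of an A-map is an A-map (Lemma \ref{Lemma 15.1}(1)) and that a proper submap has strictly smaller type; write $k$ for the rank of the smooth section $q$. \emph{Base case $r(\Delta)=0$}: then $\Delta$ has no $\pazocal{R}$-cells, so after deleting $0$-edges $\lab(\partial\Delta)=1$ in $F(\pazocal{A})$ and $\lab(q)$, $\lab(t)^{-1}$ share a freely reduced form; condition (1) in the definition of a smooth section forbids the cancellations that could shorten $q$, so $q$ is geodesic in $\Delta$, and since $t^{-1}$ is combinatorially homotopic to $q$ we get $|q|\le|t|$, hence $(1-\b)|q|\le|t|$ with equality forcing $|q|=0$ and then $|t|=0$.

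\emph{Inductive step $r(\Delta)>0$}: if $|q|\le\max(k,2)$ then $q$ itself is geodesic by condition (1) and again $|q|\le|t|$, so assume $|q|>\max(k,2)$. Decompose $\partial\Delta$ into the two sections $q$ and $t$ and apply Lemma \ref{Corollary 16.1}: there are an $\pazocal{R}$-cell $\pi$ and disjoint contiguity submaps $\Gamma_q,\Gamma_t$ of $\pi$ to $q$ and to $t$ (either possibly absent) with $(\pi,\Gamma_q,q)+(\pi,\Gamma_t,t)>1-\gamma$. Each of $\Gamma_q,\Gamma_t$ is an A-map of smaller type in which the arc it cuts off $\pi$ is a smooth section of rank $r(\pi)$ (Lemma \ref{Lemma 15.1}(3)), so the inductive hypothesis gives $|{\Gamma_q}_{ ^\wedge}q|\ge(1-\b)|{\Gamma_q}_{ ^\wedge}\pi|$ and $|{\Gamma_t}_{ ^\wedge}t|\ge(1-\b)|{\Gamma_t}_{ ^\wedge}\pi|$.

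Now I perform surgery according to where $\pi$ is contiguous. If $\pi$ is contiguous to $t$ along almost all of $\partial\pi$ --- which one checks is forced when $k$ is small relative to $n=1/\iota$, since condition (2) of smoothness bounds $|{\Gamma_q}_{ ^\wedge}q|<(1+\gamma)k$ and hence, with the estimate above and $|\partial\pi|\ge n\,r(\pi)\ge n$ from (A1), bounds $(\pi,\Gamma_q,q)$ by a negligible multiple of $k/n$ --- then I delete the submap $\pi\cup\Gamma_t$: the complement $\Delta'$ is an A-map of strictly smaller type with contour $q\,t'$, where $t'$ replaces ${\Gamma_t}_{ ^\wedge}t$ by the complementary arc $u$ of $\partial\pi$, and $q$ is still a smooth section of rank $k$ in $\partial\Delta'$ (Lemma \ref{Lemma 15.1}(2)). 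Since $|u|=|\partial\pi|-|{\Gamma_t}_{ ^\wedge}\pi|$ is at most a $\gamma$-fraction of $|\partial\pi|$ while $|{\Gamma_t}_{ ^\wedge}t|\ge(1-\b)|{\Gamma_t}_{ ^\wedge}\pi|$ is almost all of it, the surgery does not lengthen $\partial\Delta$, so $|t'|\le|t|$, and the inductive hypothesis gives $(1-\b)|q|\le|t'|\le|t|$. A similar (but, since only $q$ is smooth, not quite symmetric) surgery handles the case where $\pi$ is predominantly contiguous to $q$, and the case where $\pi$ is substantially contiguous to both $q$ and $t$ is handled by cutting along $\pi\cup\Gamma_q\cup\Gamma_t$, which separates $\Delta$ into two smaller A-submaps each carrying a subsection of $q$ as a smooth section of rank $k$ (Lemma \ref{Lemma 15.1}(2)); applying the inductive hypothesis to both and summing, with the same accounting, gives $(1-\b)|q|\le|t|$. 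Tracing where these estimates are tight shows equality forces $|q|=|t|=0$.

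The main obstacle is the length accounting in the surgery step: one must ensure that trading a contiguity arc of $\pi$ for the complementary arc of $\partial\pi$ never lengthens $\partial\Delta$, i.e. that $\pi$ is \emph{predominantly} contiguous to a single one of the two sections. Extracting this from condition (2) of smoothness is clean only after combining it with the bound $|\partial\pi|\ge n\,r(\pi)$ from (A1) and the largeness of $n$, and genuine care is needed when $k$ is comparable to $n$ or when $\pi$ is substantially contiguous to both $q$ and $t$; it is precisely in pinning down these regimes that the full parameter hierarchy $\a\gg\b\gg\gamma\gg\delta\gg\eps\gg\zeta\gg\iota$ is used.
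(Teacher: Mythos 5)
The paper does not prove this statement at all: it is quoted verbatim as Theorem 17.1 of [17], whose proof rests on more of the A-map machinery of \S\S15--17 of [17] than the few lemmas restated in Section 2, so your proposal has to stand on its own. Its first case is sound: when the $\gamma$-cell $\pi$ is contiguous to $t$ with degree close to $1$ (equivalently, when its contiguity degree $\psi_q$ to $q$ is negligible --- which is automatic only when $k$ is small compared with $n\,r(\pi)$, by Lemma~\ref{Lemma 15.4} together with condition (2) of smoothness), excising $\pi\cup\Gamma_t$ leaves $q$ intact and smooth, shortens the complementary part of the contour by roughly $(2\psi_t-1-2\b-2\zeta)|\partial\pi|>0$, and the induction on type closes.

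The genuine gap is in the two cases you defer, and they are exactly the substance of the theorem. If $\pi$ is predominantly contiguous to $q$, excising $\pi\cup\Gamma_q$ replaces a subpath of $q$ by side arcs and an arc of $\partial\pi$, so the new contour is no longer of the form (smooth section)(complement): the surviving pieces of $q$ are smooth by Lemma~\ref{Lemma 15.1}(2), but applying the inductive statement to either piece only bounds its length by a quantity that contains the other piece, so no bound on $|q|$ is recovered; worse, this surgery shortens $q$, which is the wrong direction for an inequality penalizing long $q$ --- the discarded contiguity arc, of length up to about $\psi_q|\partial\pi|$ (and allowed to be as large as $(1+\gamma)k$), is never paid back. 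In the mixed case, cutting along $\pi\cup\Gamma_q\cup\Gamma_t$ and summing the two inductive inequalities gives $(1-\b)(|q|-|v_1|)\le|t|-|v_2|+(1-\psi_q-\psi_t)|\partial\pi|+c\,\zeta|\partial\pi|$, where $v_1,v_2$ denote the contiguity arcs along $q$ and $t$; to conclude you would need $(1-\b)|v_1|+(\gamma+c\,\zeta)|\partial\pi|\le|v_2|$, and since Lemma~\ref{Lemma 15.4} only guarantees $|v_2|>(\psi_t-2\b)|\partial\pi|$ while $|v_1|$ can be of order $\psi_q|\partial\pi|$, the ``same accounting'' closes only when $\psi_t$ exceeds $\psi_q$ by a definite margin (roughly $\psi_t\ge\tfrac12+\b+\gamma$). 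Everything therefore reduces to the configuration $\psi_q\gtrsim\tfrac12$, which is genuinely possible once $k\gtrsim n\,r(\pi)$, and there neither of your surgeries works as described; excluding it requires playing condition (1) of smoothness (the arc of $q$ that could be rerouted around $\pi$ would fail to be geodesic) against condition (2), Lemma~\ref{Lemma 15.3}, and the parameter hierarchy, and this is precisely the part of the argument in [17] that your sketch does not supply. (Minor points: A-maps carry no labels, so the base case should be argued combinatorially rather than via $\lab(\partial\Delta)$, and the equality clause is likewise left untracked.)
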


\begin{lemma} \label{Corollary 17.1}

\textit{(Corollary 17.1 of [17])} If $\Delta$ is an A-map, then $|\partial\Delta|>(1-\b)|\partial\Pi|$ for all $\pazocal{R}$-cells $\Pi$ in $\Delta$.

\end{lemma}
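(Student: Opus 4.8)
The plan is to derive this as a direct consequence of Lemma \ref{Theorem 17.1} applied to an appropriate submap. Given an A-map $\Delta$ and an $\pazocal{R}$-cell $\Pi$ in it, the idea is to cut $\Delta$ along a suitable path so that $\partial\Pi$ becomes (a subpath of) one section of the contour of a submap to which Lemma \ref{Theorem 17.1} applies, with $\partial\Pi$ playing the role of the smooth section $q$. Concretely, I would first invoke Lemma \ref{Lemma 15.1}(3): since $\partial\Pi$ (or any subpath of it) can be viewed as a section of the contour of a submap $\Gamma$ not containing $\Pi$ itself, that section is smooth of rank $k = r(\Pi)$ in $\partial\Gamma$. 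The natural choice of $\Gamma$ here is $\Delta$ with the open cell $\Pi$ removed — more precisely, after a $0$-refinement one treats $\partial\Pi$ as a genuine boundary section of the complementary submap.

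The key steps, in order, are as follows. First, I would set up the complementary submap: remove the interior of $\Pi$ from $\Delta$ and, via $0$-refinement, realize the result as an A-map $\Gamma$ whose contour decomposes as $q t$, where $q \equiv (\partial\Pi)^{-1}$ traces the boundary of the removed cell and $t$ traces the original contour $\partial\Delta$ (so $|t| = |\partial\Delta|$ and $|q| = |\partial\Pi|$). By Lemma \ref{Lemma 15.1}(1), $\Gamma$ is indeed an A-map, and by Lemma \ref{Lemma 15.1}(3), $q$ is a smooth section of rank $k$ in $\partial\Gamma$. Second, I would check that $\Gamma$ has nonzero rank, or handle the degenerate case $r(\Gamma) = 0$ separately — if all cells other than $\Pi$ have rank $0$, one still gets the bound directly since $t$ must "enclose" $q$ using only $0$-cells, forcing $|t| \geq |q| > (1-\b)|q|$. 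Third, with $\Gamma$ an A-map with contour $qt$ and $q$ smooth, Lemma \ref{Theorem 17.1} gives immediately $(1-\b)|q| \leq |t|$, i.e. $(1-\b)|\partial\Pi| \leq |\partial\Delta|$. To upgrade $\leq$ to the strict inequality $<$ claimed in the statement, I would note that equality in Lemma \ref{Theorem 17.1} forces $|q| = |t| = 0$, which is impossible here because $|\partial\Pi| > 0$ by axiom (A1) (or by fact (3) about graded maps, since $\Pi$ is an $\pazocal{R}$-cell).

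The main obstacle I anticipate is purely topological bookkeeping rather than anything deep: one must be careful that after deleting $\Pi$ the complementary region is genuinely a disk (a submap in the sense defined), that the contour of this disk is correctly factored as $q t$ with the orientations consistent, and that applying a $0$-refinement does not change lengths or ranks. There is also the minor subtlety that $\partial\Pi$ need not be simple as a subpath of $\partial\Delta$-complement if $\Pi$ touches the outer boundary, but Lemma \ref{Lemma 15.1}(3) is stated precisely to cover this, so I would lean on it rather than re-prove anything. Once the submap is correctly identified, the estimate is a one-line application of Lemma \ref{Theorem 17.1}.
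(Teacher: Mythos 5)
Your proposal hinges on the claim that, after deleting the open cell $\Pi$, the complement can be realized (via $0$-refinement) as a disk submap $\Gamma$ with contour $qt$, where $q$ is $(\partial\Pi)^{-1}$ and $t$ traces $\partial\Delta$, so that $|t|=|\partial\Delta|$. That is exactly the point that fails, and it is not mere bookkeeping. For a cell $\Pi$ whose boundary does not meet $\partial\Delta$ (the typical case), $\Delta\setminus\mathrm{int}(\Pi)$ is an annulus, not a disk; a $0$-refinement inserts $0$-edges and $0$-cells but cannot change this topology, and the notion of submap in Section 2.2 requires a disk bounded by an edgepath. To obtain a disk you must cut the annulus along a path $x$ joining $\partial\Pi$ to $\partial\Delta$, and then the section complementary to $q$ is not $\partial\Delta$ but a path of length $|\partial\Delta|+2|x|$. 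Lemma \ref{Theorem 17.1} then gives only $(1-\b)|\partial\Pi|\leq|\partial\Delta|+2|x|$, which is strictly weaker than the statement, since $|x|$ is in no way controlled (think of a cell buried deep inside a large diagram). Lemma \ref{Lemma 15.1}(3) does not rescue this: it guarantees smoothness of a subpath of $\partial\Pi$ \emph{once} it sits as a section of the contour of some disk submap not containing $\Pi$; it says nothing about the existence of such a submap whose remaining contour is exactly $\partial\Delta$. Even when $\Pi$ touches $\partial\Delta$, the complement may split into several disk components with $\partial\Pi$ distributed among them, so the clean factorization $qt$ you use is again unavailable.

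The correct ingredients you identify (smoothness of $\partial\Pi$, the estimate of Lemma \ref{Theorem 17.1}, and the strictness coming from $|\partial\Pi|>0$) are indeed the relevant ones, and the matching constant $1-\b$ is why the statement is a corollary of Theorem 17.1 in [17]; but bridging the annular topology is precisely where the content of that corollary lies, and the one-line reduction you propose does not supply it. Note also that this paper does not prove the lemma at all — it is quoted from [17] — so the comparison here is with the argument in Olshanskii's book, where the passage from the smooth-section estimate to the bound on cell perimeters is handled by a genuine argument rather than by applying Lemma \ref{Theorem 17.1} to ``$\Delta$ minus $\Pi$'' as if it were a disk with boundary $\partial\Pi\cup\partial\Delta$.
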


\begin{lemma} \label{Lemma 15.3}

\textit{(Lemma 15.3 of [17])} Let $\Delta$ be an A-map and $\Gamma$ be a contiguity submap of a cell $\Pi$ to a section $q$ and of the contour. If $p_1q_1p_2q_2=\partial(\Pi,\Gamma,q)$ and $P=\max\{|p_1|,|p_2|\}$, then $P<\zeta n\cdot r(\Pi)$.

\end{lemma}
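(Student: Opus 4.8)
The plan is to reproduce the induction of [17]. I would argue by induction on the rank $k=r(\Gamma)$ of the contiguity submap, proving simultaneously the analogous bounds for contiguity submaps between two $\pazocal{R}$-cells and between two sections of the contour. Note first that $r(\Gamma)<r(\Pi)$ in every case, since each positive-rank cell of $\Gamma$ occurs as the principal cell of a bond in the construction of $\Gamma$ and therefore has rank strictly below $r(\Pi)$; so it is convenient to aim for the slightly stronger bound $P<\zeta n\cdot r(\Gamma)$. The base case $k=0$ is immediate: a $0$-contiguity submap has both side arcs of length $0$ by construction, so $P=0$.

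For the inductive step I would first reduce from contiguity submaps to bonds. By definition a $k$-contiguity submap $\Gamma$ between $\Pi$ and $q$ is determined by two bonds $E_1$ (a $k$-bond) and $E_2$ (a $j$-bond, $j\leq k$), and reading off the standard decomposition $p_1q_1p_2q_2=\partial(\Pi,\Gamma,q)$ shows that each of $p_1,p_2$ is a side arc of $E_1$ or of $E_2$. So it suffices to bound the side arcs of an $m$-bond $E$ (with $m\leq k$). Let $\pi$ be its principal cell, so $r(\pi)=m<r(\Pi)$, and let $\Gamma_1,\Gamma_2$ be the constituent contiguity submaps of $\pi$ to $\Pi$ and of $\pi$ to $q$, each of rank $<m$. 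Writing $\partial\pi=u_1v_1u_2v_2$ with $v_1={\Gamma_1}_{ ^\wedge}\pi$ and $v_2={\Gamma_2}_{ ^\wedge}\pi$, one computes from the definition of a bond that each side arc of $E$ is a concatenation $x\,u^{\pm1}\,y$, where $x$ is a side arc of $\Gamma_1$, $y$ is a side arc of $\Gamma_2$, and $u\in\{u_1,u_2\}$ is the arc of $\partial\pi$ joining $v_1$ to $v_2$.

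The hard part is controlling this expression, and there are two genuine difficulties. First, one needs an upper bound on $|u|$, and a priori only $|u|\leq(1-2\eps)|\partial\pi|$ is available (the contiguity arcs $v_1,v_2$ together occupy at least a $2\eps$-fraction of $\partial\pi$, by condition (3) of the definition of a bond), which is far too weak. Second, since a bond's side arc absorbs a side arc from each of its two constituents, a naive term-by-term recursion compounds and never fits under the target bound, so the contributions of the nested principal cells must be shown to be controlled globally. I would handle both using the A-map estimates recalled above: because $\Pi\not\subseteq E$, the arc $v_1$ is a smooth section of rank $m$ in $\partial\Gamma_1$ and the contiguity arc of $\Gamma_1$ to $\Pi$ is a smooth section of rank $r(\Pi)$ in $\partial\Gamma_1$ (Lemma~\ref{Lemma 15.1}(3)); then Lemma~\ref{Theorem 17.1}, Lemma~\ref{Corollary 17.1}, and Lemma~\ref{Corollary 16.1}, together with the geodesic axiom (A2) and the lower bound $\eps$ on the degrees of contiguity, force $\partial\pi$ — and, propagating the estimate through the recursion, the whole side-arc path — to be essentially accounted for by the contiguity arcs, up to an error that is a negligible $\zeta$-fraction of $n\cdot r(\Gamma)$. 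The lowest-parameter principle ($\gamma\gg\delta\gg\eps\gg\zeta\gg\iota=1/n$) then closes the induction and yields $P<\zeta n\cdot r(\Gamma)\leq\zeta n\cdot r(\Pi)$.

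I expect the real obstacle to be exactly this isoperimetric estimate: ruling out a principal cell that leaks a long boundary arc into a side arc, and — more subtly — ensuring that the many nested such contributions sum to something small; both fail under the naive $(1-2\eps)|\partial\pi|$ bound and under naive recursion, so a genuinely global use of the estimates of the preceding lemmas is needed (in [17] this is streamlined by additional minimality conventions on bonds). Everything else is routine bookkeeping: the reduction of contiguity submaps to bonds, the contour computation for a bond, the base case, and checking that the cell-to-section and section-to-section cases go through verbatim, since one never invokes smoothness of a section.
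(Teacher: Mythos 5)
This paper never actually proves the statement: it is one of the facts imported wholesale from [17] (``The following lemmas are proved in [17] and stated here for reference''), so there is no in-paper argument to measure you against, and the only meaningful comparison is with Olshanskii's proof in [17, \S 15]. Judged on its own terms, your proposal has the correct skeleton --- the side arcs $p_1,p_2$ of $\Gamma$ are side arcs of its two defining bonds; a $k$-bond's side arc factors as (side arc of $\Gamma_1$)$\cdot u^{\pm1}\cdot$(side arc of $\Gamma_2$) with $u\subset\partial\pi$; one inducts on rank --- but it is not a proof, because the entire content of the lemma is the estimate you defer. You need $|u|$, and the sum of all such principal-cell contributions accumulated through the recursion, to come out below a $\zeta$-fraction of $n\,r(\Pi)$, while the principal cell $\pi$ may have rank up to $r(\Pi)-1$ and hence perimeter at least $n(r(\Pi)-1)$ by (A1); the a priori bound $|u|\leq(1-2\eps)|\partial\pi|$ is, as you note, useless, and the paragraph asserting that Lemmas~\ref{Lemma 15.1}, \ref{Corollary 16.1}, \ref{Theorem 17.1} and \ref{Corollary 17.1} ``force'' the required smallness is precisely the claim to be established, not an argument for it. There is also a circularity hazard you leave unresolved: in [17] the estimates of \S\S 16--17 are proved jointly with the \S 15 lemmas by an induction organized by type, so inside a proof of this lemma they may only be applied to submaps of strictly smaller type, and your sketch invokes them for the ambient configuration without setting up that bookkeeping.

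Two further slips. The proposed strengthening $P<\zeta n\cdot r(\Gamma)$ rests on the claim that every positive-rank cell of $\Gamma$ occurs as the principal cell of a bond in its construction, whence $r(\Gamma)<r(\Pi)$; that is not what the construction gives. The contiguity submap is the region enclosed by the side arcs of the two defining bonds together with arcs of $\partial\Pi$ and of $q$, and it may contain cells that play no role in those bonds, so $r(\Gamma)<r(\Pi)$ is not immediate (in [17] the comparison of $r(\Gamma)$ with the ranks of the ambient cells is a separate lemma, not a remark). The fact you actually need --- and which is true --- is only that the two defining bonds have rank $<r(\Pi)$ because their principal cells do, and that $p_1,p_2$ are side arcs of those bonds. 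Finally, your strengthened bound fails in your own base case: for a $0$-contiguity submap $P=0$ while $\zeta n\cdot r(\Gamma)=0$, so the strict inequality must be stated against $r(\Pi)$ (or with $r(\Gamma)+1$), exactly as in the lemma.
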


\begin{lemma} \label{Lemma 15.4}

\textit{(Lemma 15.4 of [17])} If the degree of $\Gamma$-contiguity of a cell $\Pi$ to a section $q$ of a contour in an A-map $\Delta$ is equal to $\psi$ and $p_1q_1p_2q_2=\partial(\Pi,\Gamma,q)$, then $|q_2|>(\psi-2\b)|\partial\Pi|$.

\end{lemma}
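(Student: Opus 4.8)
The plan is to treat the contiguity submap $\Gamma$ as an A-map in its own right and apply Theorem~\ref{Theorem 17.1} to it. First I would fix the standard decomposition $\partial\Gamma=\partial(\Pi,\Gamma,q)=p_1q_1p_2q_2$, so that $q_1=\Gamma_{ ^\wedge}\Pi$ is a subpath of $\partial\Pi$ with $|q_1|=\psi|\partial\Pi|$ and $q_2=\Gamma_{ ^\wedge}q$ is a subpath of $q$; since $q_1$ lies on the contour of $\Pi$ we have $\psi\leq1$. By Lemma~\ref{Lemma 15.1}(1), $\Gamma$ is an A-map, and since $q_1$ is a section of $\partial\Gamma$ contained in $\partial\Pi$ while $\Gamma$ does not contain $\Pi$, Lemma~\ref{Lemma 15.1}(3) shows that $q_1$ is a smooth section of rank $r(\Pi)$ in $\partial\Gamma$.

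Next I would regard $\partial\Gamma$ as decomposed into just the two sections $q_1$ and the complementary arc $t=p_2q_2p_1$, and apply Theorem~\ref{Theorem 17.1} to the A-map $\Gamma$ with smooth section $q_1$. This gives $(1-\b)|q_1|\leq|t|=|p_1|+|p_2|+|q_2|$, hence $|q_2|\geq(1-\b)|q_1|-|p_1|-|p_2|$. To bound the side arcs I would invoke Lemma~\ref{Lemma 15.3}, which gives $\max\{|p_1|,|p_2|\}<\zeta n\cdot r(\Pi)$, together with condition (A1), which gives $n\cdot r(\Pi)\leq|\partial\Pi|$; combining these yields $|p_1|+|p_2|\leq2\zeta|\partial\Pi|$. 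Substituting $|q_1|=\psi|\partial\Pi|$ and using $\psi\leq1$ (so that $(1-\b)\psi\geq\psi-\b$) produces $|q_2|\geq(\psi-\b-2\zeta)|\partial\Pi|$. Finally, since $\b>>\zeta$ we have $2\zeta<\b$, so $\psi-\b-2\zeta>\psi-2\b$, and as $|\partial\Pi|>0$ ($\Pi$ being an $\pazocal{R}$-cell) we conclude $|q_2|>(\psi-2\b)|\partial\Pi|$, as required.

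The only step I expect to require genuine care is recognizing $q_1$ as a smooth section of rank $r(\Pi)$ in $\partial\Gamma$: the arc $q_1$ may be far longer than $r(\Pi)$, so it need not be geodesic, and a naive comparison of $|q_1|$ with $|p_1|+|p_2|+|q_2|$ is unavailable — it is precisely the A-map estimate behind Theorem~\ref{Theorem 17.1}, applied via the smooth-section status of $q_1$ furnished by Lemma~\ref{Lemma 15.1}(3), that supplies the needed inequality. Everything downstream is routine bookkeeping with the parameter hierarchy $\b>>\zeta$.
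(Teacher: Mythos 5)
The paper itself does not prove this lemma; it is quoted verbatim from [17], so there is no in-paper argument to compare against. As a derivation from the other quoted statements, your argument is correct: Lemma~\ref{Lemma 15.1}(1) makes $\Gamma$ an A-map, Lemma~\ref{Lemma 15.1}(3) gives that $q_1$ (the contiguity arc of $\Gamma$ to $\Pi$) is a smooth section of rank $r(\Pi)$ of $\partial\Gamma$ --- you are right to flag this as the crux --- and Lemma~\ref{Theorem 17.1} applied to the two-section decomposition $\partial\Gamma=q_1t$ with $t=p_2q_2p_1$, combined with the side-arc bound from Lemma~\ref{Lemma 15.3} together with (A1) (so that $|p_1|+|p_2|<2\zeta n\cdot r(\Pi)\leq2\zeta|\partial\Pi|$), yields $|q_2|\geq(\psi-\beta-2\zeta)|\partial\Pi|>(\psi-2\beta)|\partial\Pi|$. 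The strict inequality is correctly closed by $\psi\leq1$, $2\zeta<\beta$, and $|\partial\Pi|>0$.

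One caveat is worth recording: in [17] the statement here is Lemma 15.4, belonging to Section~15, while Theorem~17.1 is proved two sections later and rests on the Section~15 estimates. Deducing the earlier lemma from the later theorem is therefore circular if offered as a proof from first principles; Olshanskii's own argument for Lemma~15.4 does not have the global bound of Theorem~17.1 available and must work more locally with the contiguity submap. For this paper the point is moot, since all of these statements are imported as black boxes rather than reproved, but you should be clear with yourself that you have exhibited a logical consequence within a circle of mutually dependent cited facts, not reconstructed the original argument of [17].
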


\smallskip

%%%%%%%%%%%%%%%%%%%%%%%%%%%%%%%%%%%%%%%%%%%%%%%%%%%%%%%%%%%%%%%%%

\subsection{Graded Presentations} \

Given an alphabet $\pazocal{A}$, let $\{\pazocal{S}_i\}_{i=1}^\infty$ be a collection of subsets of $F(\pazocal{A})$ such that if $W\in\pazocal{S}_i$ and $V$ is a cyclic permutation of $W$ or $W^{-1}$, then $V\notin\pazocal{S}_j$ for any $j\neq i$. Set $\pazocal{R}_j=\cup_{i=1}^j\pazocal{S}_i$ for $j\geq1$, $\pazocal{R}_0=\emptyset$, and $\pazocal{R}=\cup_{i=1}^\infty\pazocal{S}_i$. Then $\gen{\pazocal{A}\mid\pazocal{R}}$ is called a \textit{graded presentation} for the group $G$. Further, define $G(j)=\gen{\pazocal{A}\mid\pazocal{R}_j}$ for all $j\geq0$. Note that $G(0)\cong F(\pazocal{A})$.

The words in $\pazocal{S}_i$ are called the \textit{relators of rank $i$}. For words $X,Y$ over $\pazocal{A}$, if $X=Y$ in $G(i)$, then $X$ and $Y$ are said to be \textit{equal in rank $i$}, denoted $X\stackrel{i}{=}Y$.

Given a disk diagram $\Delta$ over the presentation $\gen{\pazocal{A}\mid\pazocal{R}}$, let $\Pi$ be an $\pazocal{R}$-cell such that $\lab(\Pi)$ is a cyclic permutation of a relation of rank $i$ (or the inverse of such a relation). Then $\Pi$ is called a \textit{cell of rank $i$}, given by the representative notation $r(\Pi)=i$. Naturally, the 0-cells of $\Delta$ are called cells of rank 0.

Note that after forgetting the labels of the edges of a disk diagram $\Delta$ over $\gen{\pazocal{A}\mid\pazocal{R}}$, $\Delta$ is a graded map (with the ranks of cells assigned in the same way). A diagram satisfying this property is called a \textit{graded disk diagram}. It is then natural to define the \textit{rank} and \textit{type} of a graded disk diagram as the rank and type of the underlying map.

Let $\Delta$ be a graded disk diagram over $\gen{\pazocal{A}\mid\pazocal{R}}$ containing two $\pazocal{R}$-cells $\Pi_1$ and $\Pi_2$ with $r(\Pi_1)=r(\Pi_2)=j$. Suppose there exists a 0-refinement $\Delta'$ of $\Delta$ with copies $\Pi_1',\Pi_2'$ of $\Pi_1,\Pi_2$, respectively, such that there exist vertices $O_1,O_2$ of $\Pi_1',\Pi_2'$, respectively, and a simple path $t$ between them in $\Delta'$ satisfying $\lab(t)\stackrel{j-1}{=}1$. Suppose further that $\lab(\partial\Pi_1')$ read starting at $O_1$ is mutually inverse to $\lab(\partial\Pi_2')$ read starting at $O_2$. Then $\Pi_1$ and $\Pi_2$ are called a \textit{j-pair} in $\Delta$. 

This generalizes the concept of cancellable cells in a disk diagram over a presentation: If $\Delta$ is a graded disk diagram over a graded presentation with a $j$-pair $\Pi_1,\Pi_2$, then one can `remove' $\Pi_1$ and $\Pi_2$ from $\Delta$ at the cost of cells of rank $\leq j-1$, producing a graded disk diagram $\Delta'$ over the same presentation with $\lab(\Delta')\equiv\lab(\Delta)$ and $\tau(\Delta')<\tau(\Delta)$.

A graded disk diagram $\Delta$ over $\gen{\pazocal{A}\mid\pazocal{R}}$ is \textit{reduced} if, for any graded disk diagram $\Gamma$ over $\gen{\pazocal{A}\mid\pazocal{R}}$ satisfying $\lab(\Delta)\equiv\lab(\Gamma)$, the inequality $\tau(\Delta)\leq\tau(\Gamma)$ holds. Similar to reduced disk diagrams over general presentations, one can make any graded disk diagram reduced simply by the removal of $j$-pairs (for varying $j$). As a result, van Kampen's Lemma can again be strengthened: Given a graded presentation $G=\gen{\pazocal{A}\mid\pazocal{R}}$, a word $W$ over $\pazocal{A}$ represents the identity in $G$ if and only if there exists a reduced graded disk diagram $\Delta$ over the presentation with $\lab(\partial\Delta)\equiv W$.

Graded annular diagrams are defined similarly.

\smallskip

%%%%%%%%%%%%%%%%%%%%%%%%%%%%%%%%%%%%%%%%%%%%%%%%%%%%%%%%%%%%%%%%%

\subsection{The group $B(m,n)$} \

Fix $\pazocal{A}=\{a_1,\dots,a_m\}$ for some $m\geq2$. Let $G(0)=\gen{\pazocal{A}\mid\pazocal{R}_0=\emptyset}\cong F(\pazocal{A})$.

A graded presentation $G(\infty)=\gen{\pazocal{A}\mid\pazocal{R}}$ for $\pazocal{R}=\cup_{i=1}^\infty\pazocal{S}_i$, where every element of $\pazocal{R}$ has the form $A^n$ for $A\in F(\pazocal{A})$, is defined as follows. 

If $A^n\in\pazocal{S}_i$, then $A$ is called a \textit{period of rank $i$}. For $j\geq1$, suppose $\pazocal{R}_{j-1}=\cup_{i=1}^{j-1}\pazocal{S}_i$ and $G(j-1)=\gen{\pazocal{A}\mid\pazocal{R}_{j-1}}$ have already been defined.

Let $A$ be a word in $F(\pazocal{A})$ such that $A\neq1$. Then $A$ is said to be \textit{simple in rank $j-1$} if:

\begin{addmargin}[1em]{0em}

(1) there is no word $B\in F(\pazocal{A})$ such that $B$ is a period of rank $k\leq j-1$ and $A$ is conjugate in $G(j-1)$ to $B^m$ for some integer $m$,

(2) there is no word $C\in F(\pazocal{A})$ such that $|C|<|A|$ and $A$ is conjugate in $G(j-1)$ to $C^m$ for some integer $m$.

\end{addmargin}

Then let $\pazocal{X}_j$ be a maximal subset of words simple in rank $j-1$ of length $j$ subject to the constraint that if $A,B\in\pazocal{X}_j$ and $A\neq B$, then $A$ is not conjugate to $B$ or $B^{-1}$ in $G(j-1)$. 

As an example of this construction, $\pazocal{X}_1=\{a_i^{\eps_i}\}_{i=1}^m$ for some chosen $\eps_1,\dots,\eps_m\in\{\pm1\}$.

Finally, set $\pazocal{S}_j=\{A^n\mid A\in\pazocal{X}_j\}$.

Notice that every relation of rank $j$ has length $nj$, so that a reduced graded diagram over the presentation $\gen{\pazocal{A}\mid\pazocal{R}}$ satisfies (A1) for the definition of an A-map. The following lemma confirms that (A2) and (A3) also hold.

\begin{lemma} \label{Burnside A-maps}

(\textit{Lemma 19.4 of [17]}) If $\Delta$ is a reduced graded disk diagram over the presentation $\gen{\pazocal{A},\pazocal{R}}$ of $G(\infty)$, then the underlying map associated to $\Delta$ is an A-map.

\end{lemma}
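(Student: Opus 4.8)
The plan is to verify the three axioms (A1), (A2), (A3) for the underlying map of a reduced graded disk diagram $\Delta$ over the presentation of $G(\infty)$, proceeding by induction on the rank $j$. As observed in the excerpt, every relator of rank $j$ has the form $A^n$ with $|A|=j$, hence perimeter $nj$, and since words simple in rank $j-1$ are cyclically reduced (a word conjugate to a proper power of a shorter word is excluded, so in particular $A$ cannot be a proper power, and one arranges $A$ reduced and cyclically reduced by construction), axiom (A1) is immediate and requires no induction. The real content is (A2) and (A3), which must be established simultaneously by induction on $j$: assuming that every reduced graded disk diagram over $\gen{\pazocal{A}\mid\pazocal{R}_{j-1}}$ has A-map underlying structure, one deduces the same for diagrams over $\gen{\pazocal{A}\mid\pazocal{R}_j}$.

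First I would set up the inductive framework carefully. The key point is that a reduced graded disk diagram $\Delta$ of rank $\le j$ can be analyzed by looking at its cells of maximal rank $j$ and the submaps between them, which are reduced graded diagrams of rank $\le j-1$ and hence A-maps by the inductive hypothesis. For (A2): a subpath $p$ of length $\le\max(j,2)$ of $\partial\Pi$ for a cell $\Pi$ of rank $j$ would fail to be geodesic only if there were a shorter combinatorially homotopic path $q$; closing up $p^{-1}q$ to a reduced diagram of smaller rank and applying the A-map estimates from Lemma~\ref{Theorem 17.1} and Lemma~\ref{Corollary 17.1} (which bound how a smooth section, here $p$ regarded via Lemma~\ref{Lemma 15.1}(3) as a smooth section of the sub-diagram, can interact with the rest) forces a contradiction with the simplicity of the period $A$ in rank $j-1$. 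This is where the parameter inequalities $\a\gg\b\gg\gamma\gg\delta\gg\eps\gg\zeta\gg\iota$ and the lowest parameter principle enter, since the bounds accumulate through the nested submaps. For (A3): given a contiguity submap $\Gamma$ of $\Pi_1$ to $\Pi_2$ with degree of contiguity $\ge\eps$, one must show $|\Gamma_{^\wedge}\Pi_2|<(1+\gamma)r(\Pi_2)$; the obstruction to this is that a long contiguity arc would produce, after reading off labels around $\Gamma$ and using that the principal cell structure has rank $<j$, a relation expressing a large power of the period $A_2$ of rank $j$ as conjugate in $G(j-1)$ to a power of the period $A_1$, again violating either the maximality in the choice of $\pazocal{X}_j$ (if $r(\Pi_1)=r(\Pi_2)=j$) or simplicity in rank $j-1$.

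The main obstacle, and the step I expect to consume the bulk of the work, is the simultaneous induction coupling (A2) and (A3): neither can be proved in isolation because the geodesic property (A2) is used to control lengths of side arcs and contiguity arcs inside the contiguity submaps appearing in (A3), while (A3) for lower ranks is what licenses treating those submaps as A-maps and hence applying Lemma~\ref{Theorem 17.1}--Lemma~\ref{Lemma 15.4} to them. Untangling this requires a careful ordering of the induction — typically one proves a slightly stronger package of statements (including bounds on contiguity of cells to sections, i.e.\ the smooth-section conditions) all at rank $\le j$ together — and verifying at each stage that the numerical estimates close up with room to spare, which is exactly the role of the hierarchy of infinitesimal parameters. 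Since this lemma is quoted as Lemma~19.4 of [17], the honest plan here is to indicate that the proof is the one given by Olshanskii in Chapter 7 of [17]: the periods are chosen precisely so that the three A-map axioms hold, and the verification is the long induction sketched above; for the purposes of this paper the statement is used as a black box, so only the reduction of (A1) to the form $A^n$, $|A|=j$ and a pointer to the source are needed in the text.
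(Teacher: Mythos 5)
The paper gives no proof here — it simply imports Lemma~19.4 from Chapter~7 of Olshanskii's book [17] as a black box, exactly as you observe in your closing sentence. Your sketch of the simultaneous induction on rank, coupling (A2) and (A3) with (A1) falling out of the normalization $|\partial\Pi|=nj$ for a period of length $j$, is a fair summary of the argument in [17], so your proposal is consistent with what the paper does.
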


What's more, the following explains why this construction is of particular importance here.

\begin{lemma} \label{Burnside group} 

\textit{(Theorem 19.1 and 19.7 of [17])} The group $G(\infty)$ is free in the Burnside variety $\pazocal{B}_n$, i.e $G(\infty)\cong B(m,n)$. Moreover, this group is infinite.

\end{lemma}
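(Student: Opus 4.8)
The plan is to treat the two assertions — that $G(\infty)$ is free in $\pazocal{B}_n$, and that it is infinite — separately. First I would establish the Burnside property through the claim that $A^n=1$ in $G(\infty)$ for every $A\in F(\pazocal{A})$, proved by induction on the length $j=|A|$ of the reduced word $A$, the case $j=0$ being trivial. If $A$ is simple in rank $j-1$, the maximality built into the definition of $\pazocal{X}_j$ forces either $A\in\pazocal{X}_j$, so that $A^n\in\pazocal{S}_j\subseteq\pazocal{R}$, or $A$ to be conjugate in $G(j-1)$ — hence in $G(\infty)$ — to $B^{\pm1}$ for some $B\in\pazocal{X}_j$; either way $A^n$ is conjugate in $G(\infty)$ to $B^{\pm n}=1$. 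If $A$ is not simple in rank $j-1$, then $A$ is conjugate in $G(j-1)$, hence in $G(\infty)$, either to a power of a period $B$ of rank $k\leq j-1$ — whence, since $B^n\in\pazocal{S}_k\subseteq\pazocal{R}$, $A^n$ is conjugate to a power of $B^n=1$ — or to $C^m$ with $|C|<j$, in which case $C^n=1$ in $G(\infty)$ by the inductive hypothesis and so $A^n=1$ again. This proves the claim, which shows at once that $G(\infty)$ satisfies the law $x^n=1$; and since every defining relator of $G(\infty)$ is literally an $n$-th power, $\gen{\gen{\pazocal{R}}}$ is contained in the verbal subgroup $\pazocal{B}_n(\pazocal{A})$ of $F(\pazocal{A})$, while the claim yields the reverse inclusion $\pazocal{B}_n(\pazocal{A})\subseteq\gen{\gen{\pazocal{R}}}$; hence $\gen{\gen{\pazocal{R}}}=\pazocal{B}_n(\pazocal{A})$ and $G(\infty)=F(\pazocal{A})/\gen{\gen{\pazocal{R}}}$ coincides with $B(m,n)=F(\pazocal{A})/\pazocal{B}_n(\pazocal{A})$.

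For infiniteness, granting for now that $\pazocal{X}_\ell\neq\emptyset$ for every $\ell\geq1$, I would show that $G(\infty)$ has elements of arbitrarily large geodesic length. Fix a period $A$ of rank $\ell$. Condition (2) of ``simple in rank $\ell-1$'', applied with exponent $1$, says that $A$ is not conjugate in $G(\ell-1)$ to any word of length $<\ell$; in particular $A$ is geodesic in $G(\ell-1)$. Suppose $A=V$ in $G(\infty)$ with $V$ a reduced word of length $<\ell$, and take a reduced disk diagram $\Delta$ over $G(\infty)$ with $\lab(\partial\Delta)\equiv AV^{-1}$, so $|\partial\Delta|<2\ell$. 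Since $A$ and $V$ are distinct reduced words, $\Delta$ contains an $\pazocal{R}$-cell; let $\Pi$ be one of maximal rank $r$. If $r\leq\ell-1$, then $\Delta$ is a diagram over $G(\ell-1)$, contradicting the geodesy of $A$ there. If $r\geq\ell$, then by Lemma~\ref{Burnside A-maps} the underlying map of $\Delta$ is an A-map, so Lemma~\ref{Corollary 17.1} gives $|\partial\Delta|>(1-\b)|\partial\Pi|\geq(1-\b)nr\geq(1-\b)n\ell$, which with $|\partial\Delta|<2\ell$ forces $(1-\b)n<2$, impossible for $n>10^{10}$. Hence $A$ is geodesic of length $\ell$ in $G(\infty)$, and letting $\ell\to\infty$ shows $G(\infty)$ is infinite.

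The remaining step, that $\pazocal{X}_\ell\neq\emptyset$ for every $\ell$, is the main obstacle — it is the combinatorial heart of Olshanskii's solution of the Burnside problem. It suffices to produce, for each $\ell$, a single word of length $\ell$ simple in rank $\ell-1$, since then, by the maximality defining $\pazocal{X}_\ell$, that word either lies in $\pazocal{X}_\ell$ or is conjugate in $G(\ell-1)$ to $B^{\pm1}$ for some $B\in\pazocal{X}_\ell$. For this I would take $A$ to be a cyclically reduced word of length $\ell$ that is not a proper power and has no cyclic subword equal to a sufficiently high power of a nonempty word — the precise exponent coming out of the parameter analysis. Such words exist in exponential abundance over any alphabet with $m\geq2$ letters: already cube-free words suffice, and there are exponentially many of each length.

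It then remains to check that such an $A$ is indeed simple in rank $\ell-1$. If it were not, $A$ would be conjugate in $G(\ell-1)$ to $D^m$ for some cyclically reduced $D$ with $|D|<\ell$ (a shorter period, or the cyclic reduction of a shorter word) and some integer $m$; consider a reduced annular diagram over $G(\ell-1)$ realizing this conjugacy. If it has no $\pazocal{R}$-cell, then $A$ is conjugate in $F(\pazocal{A})$ to $D^m$, hence a cyclic shift of a nontrivial power of $D$ (since $|D|<|A|$ forces $|m|\geq2$), contradicting that $A$ is not a proper power. If it has a cell, then it is an A-map (by the annular analogue of Lemma~\ref{Burnside A-maps}), and the contiguity estimates — Lemmas~\ref{Corollary 16.1}, \ref{Theorem 17.1}, \ref{Lemma 15.3}, \ref{Lemma 15.4} and their annular counterparts — together with the combinatorics of periodic words force $A$ to contain a long subword that essentially coincides with a high power of a nonempty word (the defining relators of $G(\ell-1)$ being themselves enormous $n$-th powers of periods), again contradicting the choice of $A$. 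Making this last argument precise, with the parameter hierarchy $\a>>\b>>\gamma>>\delta>>\eps>>\zeta>>\iota$ ($\iota=1/n$) correctly threaded through and the constraints verified down to $n>10^{10}$, is the only genuinely hard part; the two preceding steps are short given the lemmas recalled in this section.
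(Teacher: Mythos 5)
The paper does not prove this lemma: it is cited verbatim from Theorems 19.1 and 19.7 of Olshanskii's book [17], and the machinery recalled in Sections 2.2--2.8 exists precisely so that this citation can be made. Your reconstruction of that argument is correct in outline for the parts you actually carry out: the induction on $|A|$ establishing the law $x^n=1$ in $G(\infty)$ (handling the ``simple'' case by maximality of $\pazocal{X}_j$ and the ``non-simple'' case via the two negated conditions), the resulting equality $\gen{\gen{\pazocal{R}}}=\pazocal{B}_n(\pazocal{A})$, and the geodesy in $G(\infty)$ of a period of rank $\ell$ via condition (2), Lemma~\ref{Burnside A-maps}, and Lemma~\ref{Corollary 17.1} are all sound.

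The genuine gap is exactly where you say it is: the nonemptiness of $\pazocal{X}_\ell$ for every $\ell$. This is not a cleanup step --- it \emph{is} Olshanskii's solution of the bounded Burnside problem, and it occupies the bulk of the chapter of [17] from which the surrounding lemmas in this section are quoted. Your sketch of it does not close. Concretely: (a) the assertion that cube-free words suffice is unsupported and sits in tension with your own remark that ``the precise exponent'' must come ``out of the parameter analysis''; the aperiodicity threshold needed for the contiguity estimates to yield a contradiction is dictated by the hierarchy $\alpha\gg\beta\gg\gamma\gg\cdots\gg\iota$, and $3$ is asserted, not derived. (b) In the decisive case of your annular-diagram argument --- the one with $\pazocal{R}$-cells present --- you reduce to the claim that Lemmas~\ref{Corollary 16.1}--\ref{Lemma 15.4} ``force $A$ to contain a long subword that essentially coincides with a high power,'' but that claim is itself the conclusion of a delicate induction run simultaneously with the construction of $\pazocal{R}$ (one must already know that reduced diagrams of all lower ranks are $A$-maps, that periodic words of lower rank have unique periods, etc.), not a one-step consequence of the four contiguity lemmas. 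As a citation the lemma is unobjectionable; as a reconstruction, the heart of the proof is still missing, and you are right to flag it as such.
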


\smallskip

%%%%%%%%%%%%%%%%%%%%%%%%%%%%%%%%%%%%%%%%%%%%%%%%%%%%%%%%%%%%%%%%%

\subsection{Mass of a diagram} \

We now introduce a weighting on diagrams over the presentation $\gen{\pazocal{A}\mid\pazocal{R}}$ of $B(m,n)$, generalizing the area of such a diagram.

If $\Pi$ is an $\pazocal{R}$-cell in a reduced graded disk diagram $\Delta$ over the presentation $\gen{\pazocal{A}\mid\pazocal{R}}$ of $B(m,n)$, then we define the \textit{mass} of $\Pi$ as $\rho(\Pi)=|\partial\Pi|^2$. We extend this definition to the mass of the entire diagram, taking $\rho(\Delta)$ to be the sum of the massess of its $\pazocal{R}$-cells.

\begin{lemma} \label{a-cells are quadratic B(m,n)}

If $\Delta$ is a reduced graded disk diagram over the presentation $G(\infty)$ of $B(m,n)$, then $\rho(\Delta)\leq|\partial\Delta|^2$.

\end{lemma}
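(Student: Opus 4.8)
The plan is to argue by induction on the type $\tau(\Delta)$ of the reduced graded disk diagram $\Delta$, ordered lexicographically as in Section 2.2. If $\Delta$ has no $\pazocal{R}$-cells then $\rho(\Delta)=0$ and there is nothing to prove, so assume $r(\Delta)>0$. The idea is to find a single $\pazocal{R}$-cell $\Pi$ whose mass is controlled well enough by $|\partial\Delta|$ that removing it (and cutting $\Delta$ into smaller pieces) lets the inductive hypothesis finish the job. Since $\Delta$ is reduced, Lemma \ref{Burnside A-maps} tells us the underlying map is an A-map, so all the machinery of Section 2.6 is available; in particular, after decomposing $\partial\Delta$ into (at most ten) sections, Lemma \ref{Corollary 16.1} produces a $\gamma$-cell $\pi$ together with disjoint contiguity submaps $\Gamma_1,\dots,\Gamma_{10}$ of $\pi$ to those sections with $\sum_i(\pi,\Gamma_i,q_i)>1-\gamma$.

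First I would use this $\gamma$-cell $\pi$ together with the contiguity submaps $\Gamma_i$ to cut $\Delta$ along the side arcs of the $\Gamma_i$ into $\pi$ itself plus a bounded number of subdiagrams $\Delta_1,\dots,\Delta_s$ (here $s\le 10$), each of which is again a reduced A-map by Lemma \ref{Lemma 15.1}(1) and each with strictly smaller type than $\Delta$. The mass is additive over $\pazocal{R}$-cells, so $\rho(\Delta)=|\partial\pi|^2+\sum_j\rho(\Delta_j)$, and by induction $\rho(\Delta_j)\le|\partial\Delta_j|^2$. The crux is then a perimeter estimate: I need to bound $|\partial\pi|^2+\sum_j|\partial\Delta_j|^2$ by $|\partial\Delta|^2$. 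Here the high degree of contiguity of $\pi$ to the boundary is essential. Because $\sum_i(\pi,\Gamma_i,q_i)>1-\gamma$, Lemma \ref{Lemma 15.4} shows that the contiguity arcs of $\pi$ along $\partial\Delta$ together account for length close to $|\partial\pi|$, while Lemma \ref{Lemma 15.3} bounds the side arcs of each $\Gamma_i$ by $\zeta n\cdot r(\pi)\le\zeta|\partial\pi|$; since there are at most ten of them, their total length is at most $10\zeta|\partial\pi|$, which is negligible compared to $|\partial\pi|$ by the lowest-parameter principle. Consequently each $\partial\Delta_j$ consists of a piece of $\partial\Delta$ (disjoint from the pieces used by the other $\Delta_{j'}$ and from the contiguity arcs of $\pi$) together with two short side arcs and a subpath of $\partial\pi$; one checks, using that the "arcs of $\partial\pi$ appearing on the $\partial\Delta_j$" sum to at most $\gamma|\partial\pi|+10\cdot(\text{short})$, that
\[
|\partial\pi|+\sum_j|\partial\Delta_j| \;\le\; |\partial\Delta| + O(\gamma)\,|\partial\pi|,
\]
and more importantly that the portions of $\partial\Delta$ used by the various pieces are essentially disjoint. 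Squaring and using $a^2+b^2\le(a+b)^2$ for nonnegative $a,b$ repeatedly — applied to the (almost) disjoint decomposition of $\partial\Delta$ into the contiguity arcs of $\pi$ and the boundary portions of the $\Delta_j$ — then yields $|\partial\pi|^2+\sum_j|\partial\Delta_j|^2\le|\partial\Delta|^2$, after absorbing the $O(\gamma)$ error terms (this is exactly where $\gamma,\zeta$ being chosen small relative to the earlier parameters, and $\ell=10$ being fixed, get used).

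The main obstacle, as usual in this circle of ideas, is the bookkeeping in that perimeter inequality: one has to be careful that the subdiagrams $\Delta_j$ genuinely partition (up to short side arcs) the complement in $\partial\Delta$ of $\pi$'s contiguity arcs, so that the quadratic quantity $|\partial\Delta|^2$ is not overcounted — the inequality $\sum x_j^2\le(\sum x_j)^2$ is lossy, and it is precisely what makes a quadratic bound (rather than just a linear one on perimeters) provable, but it forces the $x_j$ to come from honestly disjoint arcs. A secondary subtlety is the degenerate case where $\pi$ has contiguity to a section of $\partial\Delta$ of degree close to $1$ (so some $\Delta_j$ might fail to have smaller rank, or $\pi$ could be "peripheral"); this is handled exactly as the analogous step in the proof of Lemma \ref{Corollary 17.1}, by first passing to the relevant contiguity submap and applying Lemma \ref{Theorem 17.1} to show the opposite section of $\partial\pi$ is already longer than $(1-\beta)|\partial\pi|$, which directly gives $|\partial\pi|^2\le(1-\beta)^{-2}|\partial\Delta|^2$ — but one wants the clean bound, so in that case one instead observes $\Delta$ minus $\pi$ is a single subdiagram and the estimate is immediate. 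I would organize the write-up so that this boundary/degenerate case is dispatched first, then the generic cutting argument carries the induction.
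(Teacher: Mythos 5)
Your setup matches the paper's (reduce to an A-map via Lemma \ref{Burnside A-maps}, split $\partial\Delta$ into ten sections, extract a $\gamma$-cell $\pi$ with contiguity submaps $\Gamma_1,\dots,\Gamma_{10}$ via Lemma \ref{Corollary 16.1}, cut and induct), but the quantitative heart of your argument does not work as stated. When you cut $\Delta$ into $\pi$, the submaps $\Gamma_i$, and the regions between them, the $\Gamma_i$ cannot be ignored: they may contain many $\pazocal{R}$-cells, so they must appear among the pieces to which the inductive hypothesis is applied. But $\partial\Gamma_i$ contains the contiguity arc of $\pi$, i.e.\ a subpath of $\partial\pi$, and by Lemma \ref{Theorem 17.1} these arcs have total length close to $|\partial\pi|$, which by Lemma \ref{Corollary 17.1} is comparable to $|\partial\Delta|$. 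None of that length lies on $\partial\Delta$. Consequently the sum of the perimeters of your pieces is roughly $|\partial\Delta|+2|\partial\pi|$, and your claimed bound $|\partial\pi|+\sum_j|\partial\Delta_j|\leq|\partial\Delta|+O(\gamma)|\partial\pi|$ is false; with it falls the ``disjoint arcs plus $a^2+b^2\leq(a+b)^2$'' step. (If instead your $\Delta_j$ are only the in-between regions, then the mass of the cells inside the $\Gamma_i$ is simply unaccounted for.) The paper's mechanism is different: because the ten sections are chosen of essentially equal length ($\approx|\partial\Delta|/10$), each piece --- each $\Gamma_i$ and each in-between region --- has perimeter at most about $\tfrac{2}{9}|\partial\Delta|$, where the arc along $\partial\pi$ is controlled through the other three sides via Lemmas \ref{Lemma 15.1}, \ref{Theorem 17.1} and \ref{Lemma 15.3}; one then sums $20\cdot\tfrac{4}{81}+\tfrac{1}{81}\leq1$ over the $21$ pieces. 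This is a ``each piece is a small fraction'' count, not a disjointness-of-arcs convexity argument, and your write-up never uses the equal length of the sections, which is exactly what makes it work.

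The second gap is the case in which some $\Gamma_i$ is absent. Then the pieces are no longer all small fractions of $|\partial\Delta|$, and the counting above fails; this is not the ``peripheral/degenerate'' situation you describe, and removing $\pi$ does not make the estimate ``immediate'' (the complement still contains the cells of the surviving contiguity submaps, and its boundary picks up arcs of $\partial\pi$). The paper handles this with a separate argument: it merges the present $\Gamma_i$ into one contiguity submap $\Gamma$ of $\pi$ to the complementary arc $p$ with $(\pi,\Gamma,p)>1-\gamma$, cuts $\Delta$ along a short path $u$ (of length $<3\gamma|t_2|$) into two subdiagrams of perimeters roughly $(1+3\gamma)|t_2|$ and $|\partial\Delta|-(1-3\gamma)|t_2|$, where $|t_2|\leq\tfrac{19}{20}|\partial\Delta|$, and closes with an explicit quadratic inequality in $|t_2|$. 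Some such two-piece convexity computation is unavoidable here, and your proposal does not supply it.
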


\begin{proof}

The proof inducts on $|\partial\Delta|$, with the base case $|\partial\Delta|\leq(1-\b)n$. In this case, Lemma \ref{Corollary 17.1} implies that $\Delta$ contains no $\pazocal{R}$-cells, so that $\rho(\Delta)=0$. So, in what proceeds, we assume that $|\partial\Delta|>(1-\b)n$ and $\Delta$ is a `minimal counterexample' to the lemma.

We partition $\partial\Delta$ into 10 section, $q=q_1\dots q_{10}$, any two of which differ in length by at most 1. By Lemma \ref{Burnside A-maps}, it then follows that the underlying map associated to $\Delta$ is an A$^0$-map. Applying Lemma \ref{Corollary 16.1}, there exists in $\Delta$ a $\gamma$-cell $\pi$ together with contiguity submaps $\Gamma_1,\dots,\Gamma_{10}$ of $\pi$ to $q_1,\dots,q_{10}$ (some of which may be absent).

As $|\partial\Delta|>(1-\b)n$, we have $(\frac{1}{10}-\frac{2}{n})|\partial\Delta|<|q_i|<(\frac{1}{10}+\frac{2}{n})|\partial\Delta|$ for all $1\leq i\leq 10$ (using $\b<\frac{1}{2}$).

\underline{Case 1.} Suppose one of the contiguity maps, say $\Gamma_1$, is absent.

Then, let $p=q_2\dots q_{10}$. Define $i,j\in\{2,\dots,10\}$ such that $i$ is the minimal index for which $\Gamma_i$ exists and $j$ the maximal such index. Using the defining bonds of $\Gamma_i$ and $\Gamma_j$, one can then form a contiguity submap $\Gamma$ of $\pi$ to $p$ containing all present contiguity submaps $\Gamma_2,\dots,\Gamma_{10}$ and such that $(\pi,\Gamma,p)>1-\gamma$. Letting $s_1t_1s_2t_2=\partial(\pi,\Gamma,p)$, Lemma \ref{Lemma 15.3} and \ref{Lemma 15.4} then imply that $|t_2|>(1-\gamma-2\b)|\partial\pi|$ and $|s_1|+|s_2|<2\zeta|\partial\pi|$.

Let $\bar{t}_1$ be the complement of $t_1$ in the contour of $\pi$ so that $\partial\pi=t_1^{-1}\bar{t}_1$. Further, let $\bar{t}_2$ be the complement of $t_2$ in the contour of $\Delta$, so that $\partial\Delta=t_2\bar{t}_2$. Letting $u=s_2^{-1}\bar{t}_1s_1^{-1}$, then $|\bar{t}_1|<\gamma|\partial\pi|$ and $|u|<2\gamma|\partial\pi|$ (taking $\zeta<2\gamma$). Now cut $\Delta$ along $u$ to get two graded subdiagrams $\Delta_1$ and $\Delta_2$ with contours $u^{-1}t_2$ and $u\bar{t}_2$.

Then, $|u|<2\gamma|\partial\pi|<2\gamma(1-\gamma-2\b)^{-1}|t_2|<3\gamma|t_2|<3\gamma|\partial\Delta|$ and $|t_2|<9(\frac{1}{10}+\frac{2}{n})|\partial\Delta|<\frac{19}{20}|\partial\Delta|$.

Hence, for sufficiently small $\gamma$,
$$|\partial\Delta_1|=|t_2|+|u|<(1+3\gamma)|t_2|<\frac{19}{20}(1+3\gamma)|\partial\Delta|<|\partial\Delta|$$
$$|\partial\Delta_2|=|u|+|\bar{t}_2|=|u|+|\partial\Delta|-|t_2|<|\partial\Delta|-(1-3\gamma)|t_2|<|\partial\Delta|$$

Applying the inductive hypothesis on both diagrams then gives 
$$\rho(\Delta_1)<(1+3\gamma)^2|t_2|^2$$
$$\rho(\Delta_2)<(|\partial\Delta|-(1-3\gamma)|t_2|)^2$$

For sufficiently small $\gamma$, one can take $\frac{40}{19}(1-3\gamma)\geq(1+3\gamma)^2+(1-3\gamma)^2$. So,
$$|t_2|((1+3\gamma)^2+(1-3\gamma)^2)\leq\frac{40}{19}|t_2|(1-3\gamma)\leq2|\partial\Delta|(1-3\gamma)$$

This means $|t_2|^2(1+3\gamma)^2+|t_2|^2(1-3\gamma)^2-2|\partial\Delta||t_2|(1-3\gamma)\leq0$, and so
$$(|\partial\Delta|-(1-3\gamma)|t_2|)^2+(1+3\gamma)^2|t_2|^2\leq|\partial\Delta|^2$$

This final inequality yields $$\rho(\Delta)=\rho(\Delta_1)+\rho(\Delta_2)<|\partial\Delta|^2$$

\underline{Case 2.} Suppose all $\Gamma_i$ are present.

Then set $s_1^it_1^is_2^it_2^i=\partial(\pi,\Gamma_i,q_i)$ for all $i$, $\partial\pi=t_1^{10}v_{10}t_1^9v_9\dots t_1^1v_1$, and $\partial\Delta=t_2^1w_1t_2^2w_2\dots t_2^{10}w_{10}$. Further, for $i=1,\dots,10$, let $\Delta_i$ be the subdiagram with contour $w_i(s_2^{i+1})^{-1}v_{i+1}(s_1^i)^{-1}$ (with indices counted mod 10).

As in the previous case, Lemma \ref{Lemma 15.3} implies that $|s_j^i|<\zeta|\partial\pi|$. Also, Lemma \ref{Corollary 17.1} implies that $|\partial\pi|<2|\partial\Delta|$.

Lemmas \ref{Lemma 15.1}(c) and \ref{Theorem 17.1} imply that $(1-\b)|t_1^i|<|s_1^i|+|s_2^i|+|t_2^i|$ for all $i$. So, since we also have $|t_2^i|\leq|q_i|<(\frac{1}{10}+\frac{2}{n})|\partial\Delta|$, it follows that $|s_1^i|+|s_2^i|+|t_2^i|<(\frac{1}{10}+\frac{2}{n}+4\zeta)|\partial\Delta|$ for all $i$.

This means that for all $i$, $$|\partial\Gamma_i|=|t_1^i|+|s_1^i|+|s_2^i|+|t_2^i|<\left(1+\frac{1}{1-\b}\right)\left(\frac{1}{10}+\frac{2}{n}+4\zeta\right)|\partial\Delta|<\left(\frac{1}{5}+\frac{4}{n}+8\zeta\right)|\partial\Delta|<\frac{2}{9}|\partial\Delta|$$
So, applying the inductive hypothesis, $\rho(\Gamma_i)<\frac{4}{81}|\partial\Delta|^2$ for all $i$.

Further, $|w_i|<|q_i|+|q_{i+1}|<(\frac{1}{5}+\frac{4}{n})|\partial\Delta|$ and $|v_i|<\gamma|\partial\pi|<2\gamma|\partial\Delta|$, so that
$$|\partial\Delta_i|<\bigg(\frac{1}{5}+\frac{4}{n}+4\zeta+2\gamma\bigg)|\partial\Delta|<\frac{2}{9}|\partial\Delta|$$ for all $i$.

So, applying the inductive hypothesis to all the $\Gamma_i$ and $\Delta_i$ yields $\rho(\Delta_i)<\frac{4}{81}|\partial\Delta|^2$.

Finally, note that since $\pi$ is an $\pazocal{R}$-cell, $\rho(\pi)=(\iota|\partial\pi|)^2<\frac{4}{n^2}|\partial\Delta|^2<\frac{1}{81}|\partial\Delta|^2$. Thus,
$$\rho(\Delta)=\sum_{i=1}^{10}\rho(\Gamma_i)+\sum_{i=1}^{10}\rho(\Delta_i)+\rho(\pi)\leq|\partial\Delta|^2$$

\end{proof} 

\medskip

%%%%%%%%%%%%%%%%%%%%%%%%%%%%%%%%%%%%%%%%%%%%%%%%%%%%%%%%%%%%%%%%%

\section{$S$-Machines}

\subsection{Definition of $S$-machine as a Rewriting System} \

There are many interpretations of $S$-machines. Following the conventions of [19], [24], [26], [27], and others, we approach them here as a rewriting system for words over group alphabets. As such, the definitions in this section are identical to those found in these references.

An $S$-machine $\textbf{S}$ is a rewriting system with hardware $(Y,Q)$, where $Q=\sqcup_{i=0}^N Q_i$ and $Y=\sqcup_{i=1}^N Y_i$ for some positive integer $N$. For convenience of notation, set $Y_0=Y_{N+1}=\emptyset$ in this setting.

The elements of $Q\cup Q^{-1}$ are known as \textit{state letters} or \textit{$q$-letters}, while those of $Y\cup Y^{-1}$ are \textit{tape letters} or \textit{$a$-letters}. The sets $Q_i$ and $Y_i$ are called the \textit{parts} of $Q$ and $Y$, respectively. Note that the parts of the state letters are typically represented by capital letters, while their elements are represented by lowercase.

The \textit{language of admissible words} is the collection of reduced words of the form $q_0^{\eps_0}u_1q_1^{\eps_1}\dots u_kq_k^{\eps_k}$ where $\eps_i\in\{\pm1\}$ and each subword $q_{i-1}^{\eps_{i-1}}u_iq_i^{\eps_i}$ either:

\begin{addmargin}[1em]{0em}

(1) belongs to $(Q_{j-1}F(Y_j)Q_j)^{\pm1}$;

(2) has the form $quq^{-1}$ for $q\in Q_j$ and $u\in F(Y_{j+1})$; or

(3) has the form $q^{-1}uq$ for $q\in Q_j$ and $u\in F(Y_j)$

\end{addmargin}

\medskip

For a reduced word $W\in F(Y\cup Q)$, define its \textit{$a$-length} $|W|_a$ as the number of $a$-letters that comprise it. The $q$-length and $\theta$-length of $W$ are defined similarly and denoted $|W|_q$ and $|W|_\theta$, respectively.

Let $W\equiv q_1u_1q_2u_2q_3\dots q_s$ be an admissible word with $q_i\in Q_{j(i)}^{\eps_i}$ for $\eps_i\in\{\pm1\}$  and $u_i\in F(Y)$. Then the \textit{base} of $W$ is $\text{base}(W)\equiv Q_{j(1)}^{\eps_1}Q_{j(2)}^{\eps_2}\dots Q_{j(s)}^{\eps_s}$, where these letters are merely representatives of their corresponding parts, and $u_i$ is called the $Q_{j(i)}^{\eps_i}Q_{j(i+1)}^{\eps_{i+1}}$-sector of $W$. Note that the base of an admissible word $W$ need not be a reduced word and that $W$ is permitted to have many sectors of the same name (for example, $W$ may contain many $Q_iQ_{i+1}$-sectors). 

The base $Q_0\dots Q_N$ is called the \textit{standard base} of \textbf{S}. An admissible word with the standard base is called a \textit{configuration}. 

Now, set $U_0,\dots,U_m$ and $V_0,\dots,V_m$ as a collection of reduced words over $Y\cup Q$ satisfying:

\begin{addmargin}[1em]{0em}

(1) $U_i$ and $V_i$ have base $Q_{\ell(i)}Q_{\ell(i)+1}\dots Q_{r(i)}$ with $\ell(i)\leq r(i)$ and such that both are subwords of admissible words

(2) $\ell(i+1)=r(i)+1$ for all $i$

(3) $U_0$ and $V_0$ start with letters from $Q_0$, while $U_m$ and $V_m$ end with letters from $Q_N$

\end{addmargin}

Define $Q(\theta)$ as the set of state letters appearing in some $U_i$. Note that $Q(\theta)$ contains exactly one state letter from each part.

Also, set $Y(\theta)=\cup Y_j(\theta)$ for $Y_j(\theta)\subseteq Y_j$. Each $Y_j(\theta)$ is called the \textit{domain} of $\theta$ in the corresponding sector of the standard base.

If $W$ is an admissible word with all its state letters contained in $Q(\theta)\cup Q(\theta)^{-1}$ and all its tape letters contained in $Y(\theta)\cup Y(\theta)^{-1}$, then define $W\cdot\theta$ as the result of simultaneously replacing every subword $U_i^{\pm1}\equiv(u_{\ell(i)}q_{\ell(i)}u_{\ell(i)+1}q_{\ell(i)+1}\dots q_{r(i)}u_{r(i)+1})^{\pm1}$ of $W$ by the subword $V_i^{\pm1}\equiv(v_{\ell(i)}q_{\ell(i)}'v_{\ell(i)+1}\dots q_{r(i)}'v_{r(i)+1})^{\pm1}$, followed by the necessary reduction to make the resulting word again admissible.

In this case, $\theta$ is called an \textit{$S$-rule} of $(Y,Q)$ and is denoted $\theta=[U_0\to V_0,\dots,U_m\to V_m]$. This notation fully describes the rule $\theta$ except for the corresponding sets $Y_j(\theta)$. Throughout the rest of this paper, $Y_j(\theta)$ is assumed to be either $Y_j$ or $\emptyset$ unless otherwise stated, with context making it clear which is chosen.

For any $S$-rule $\theta$, if $\theta$ is applicable to an admissible word $W$, then $W$ is called \textit{$\theta$-admissible}.

An important note to stress is that the application of an $S$-rule results in a reduced word, i.e reduction is not a separate step in the application of the $S$-rule.

If the $i$-th part of the $S$-rule $\theta$ is $U_i\to V_i$, $U_i$ and $V_i$ have base $Q_{\ell(i)}\dots Q_{r(i)}$, and $Y_{r(i)+1}(\theta)=\emptyset$, then this part of the rule is denoted $U_i\xrightarrow{\ell}V_i$ and $\theta$ is said to \textit{lock} the $Q_{r(i)}Q_{r(i)+1}$-sector.

Note that every $S$-rule $\theta$ has a natural inverse, namely $\theta^{-1}=[V_0\to U_0,\dots,V_m\to U_m]$ with $Y_j(\theta^{-1})=Y_j(\theta)$ for all $j$. 

The software of $\textbf{S}$ is then defined as a set of $S$-rules $\Theta(\textbf{S})=\Theta$ that is symmetric, i.e $\theta\in\Theta$ if and only if $\theta^{-1}\in\Theta$. It is convenient to partition $\Theta$ into two disjoint sets, $\Theta^+$ and $\Theta^-$, such that $\theta\in\Theta^+$ if and only if $\theta^{-1}\in\Theta^-$. The elements of $\Theta^+$ are called the \textit{positive rules} while those of $\Theta^-$ are the \textit{negative rules}.

For $t\geq0$, suppose $W_0,\dots,W_t$ are admissible words with the same base such that there exist $\theta_1,\dots,\theta_t\in\Theta$ satisfying $W_{i-1}\cdot\theta_i\equiv W_i$ for all $1\leq i\leq t$. Then the sequence of applications of rules $\pazocal{C}:W_0\to\dots\to W_t$ is called a \textit{computation} of \textit{length} or \textit{time} $t\geq0$ of $\textbf{S}$. Moreover, the word $H=\theta_1\dots\theta_t$ is called the \textit{history} of $\pazocal{C}$ and the notation $W_t\equiv W_0\cdot H$ is used to represent the computation.

A computation is called \textit{reduced} if its history is a reduced word in $F(\Theta^+)$. Every computation can be made reduced without changing the initial and final admissible words of the computation by simply removing consecutive mutually inverse rules.

Typically, it is assumed that each part of the state letters contains two (perhaps the same) fixed elements, called the \textit{start} and \textit{end} state letters. A configuration is called a \textit{start} (or \textit{end}) configuration if all of its state letters are start (or end) letters.

A \textit{recognizing} $S$-machine is one with specified sectors called the \textit{input sectors}. If a start configuration has all sectors empty except for the input sectors, then it is called an \textit{input configuration} and its projection onto $Y\cup Y^{-1}$ is called its \textit{input}. The end configuration with every sector empty is called the \textit{accept configuration}.

A configuration $W$ is \textit{accepted} by a recognizing $S$-machine if there is an \textit{accepting computation}, i.e a computation whose initial configuration is $W$ and whose final configuration is the accept configuration. If $W$ is an accepted input configuration with input $u$, then $u$ is also said to be \textit{accepted}.

If the configuration $W$ is accepted by the $S$-machine $\textbf{S}$, then $T(W)$ is the minimal time of its accepting computations. For a recognizing $S$-machine \textbf{S}, its \textit{time function} is $$T_{\textbf{S}}(n)=\max\{T(W): W\text{ is an accepted input configuration of } \textbf{S}, \ |W|_a\leq n\}$$

\medskip

%%%%%%%%%%%%%%%%%%%%%%%%%%%%%%%%%%%%%%%%%%%%%%%%%%%%%%%%%%%%%%%%%

%\subsection{Simplifying the rules} \

If two recognizing $S$-machines have the same language of accepted words and $\Theta$-equivalent time functions, then they are said to be \textit{equivalent}.

The following simplifies how one approaches the rules of a recognizing $S$-machine.

\medskip

\begin{lemma} \label{simplify rules}

\textit{(Lemma 2.1 of [19])} Every recognizing $S$-machine $\textbf{S}$ is equivalent to a recognizing $S$-machine that satisfies:

(1) Every part of every rule has a 1-letter base (i.e if $U_i\to V_i$ is a part of a rule $\theta$, then $U_i\equiv u_iq_iu_{i+1}$ and $V_i\equiv v_iq_i'v_{i+1}$ for $q_i,q_i'$ state letters in $Q_i$)

(2) In every part $u_iq_iu_{i+1}\to v_iq_i'v_{i+1}$ of every rule, $\|u_i\|+\|v_i\|\leq1$ and $\|u_{i+1}\|+\|v_{i+1}\|\leq1$.

(3) Moreover, with the terminology of (2), $\|u_i\|+\|v_i\|+\|u_{i+1}\|+\|v_{i+1}\|\leq1$.

\end{lemma}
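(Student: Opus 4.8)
The plan is to prove Lemma \ref{simplify rules} by a sequence of reductions, each replacing a recognizing $S$-machine by an equivalent one that is closer to the desired normal form, while only changing the time function by a bounded multiplicative constant (so that $\Theta$-equivalence is preserved) and leaving the language of accepted inputs unchanged. Throughout, the strategy is the standard one of \emph{simulating} a rule with a wide base by a short sequence of rules with narrow bases, introducing auxiliary state letters and auxiliary parts $Q_i$, $Y_i$ to record intermediate progress.

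First I would reduce to the case where every part of every rule has a $1$-letter base. Suppose a part $U_i \to V_i$ of $\theta$ has base $Q_{\ell(i)}\cdots Q_{r(i)}$ with $r(i) > \ell(i)$. The idea is to refine the hardware, inserting a new copy $\widetilde Q_j$ between consecutive parts $Q_j, Q_{j+1}$ whenever needed, and to replace the single ``instantaneous'' rewrite $U_i \to V_i$ by a two-phase process: a forward sweep that, sector by sector, rewrites the relevant subwords while carrying the information ``rule $\theta$ is in progress'' in the new state letters, followed by a backward sweep that cleans up. Because the base $Q_{\ell(i)} \cdots Q_{r(i)}$ of $U_i$ is bounded in length by a constant depending only on $\textbf{S}$ (it is at most $N+1$), each application of $\theta$ is simulated by at most $O(N)$ applications of the new rules, so $T_{\textbf{S}'}(n) \preccurlyeq T_{\textbf{S}}(n)$; conversely a computation of $\textbf{S}$ is recovered from a computation of $\textbf{S}'$ by collapsing each simulated block, giving the reverse inequality. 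One must check that partial sweeps cannot ``leak'' into spurious accepting computations — this is handled by arranging that the accept configuration of $\textbf{S}'$ forces every auxiliary state letter to be in its ``idle'' value, so any accepting computation must complete every sweep it begins.

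Next, with all bases of length $1$, I would arrange condition (2): in every part $u_i q_i u_{i+1} \to v_i q_i' v_{i+1}$ we have $\|u_i\| + \|v_i\| \le 1$ and $\|u_{i+1}\| + \|v_{i+1}\| \le 1$. A part with, say, $\|u_i\| = \|v_i\| = 1$ (so it deletes one $a$-letter on the left of $q_i$ and inserts another) is replaced by two consecutive rules: one that deletes $u_i$ while recording in a fresh state letter that an insertion is pending, and a second that performs the insertion of $v_i$; longer words $u_i$ are handled letter-by-letter, again at bounded multiplicative cost since $\|u_i\|, \|v_i\|$ are bounded by the maximum length of a word occurring in a rule of $\textbf{S}$, a constant. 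Finally, to get the sharper condition (3) — that across the \emph{whole} part at most one of $u_i, v_i, u_{i+1}, v_{i+1}$ is nonempty — one iterates the same device once more, splitting a part that touches both the left and right neighboring sectors into two rules, the first affecting only the left sector and the second only the right. Each of these refinements must be checked to preserve the symmetry of $\Theta$ (replace each new rule together with its inverse) and the invariant that admissible words remain admissible under the new rules; these are routine but must be stated.

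The main obstacle I anticipate is not any single reduction but the bookkeeping that keeps the three properties simultaneously true: performing the reduction for (3) could in principle reintroduce wide bases or violate (2) if done carelessly, so the reductions must be ordered and the auxiliary letters designed so that later steps only ever split parts that already satisfy the earlier conditions. The cleanest way to manage this is to do all three refinements in one combined construction — refine the hardware once, then define the new software so that a single application of an old rule $\theta$ is simulated by a single block of new rules, each of which manifestly has a $1$-letter base and touches at most one $a$-letter in at most one adjacent sector — and to verify the time-function equivalence and language equivalence for that combined construction directly. Since a complete verification is long and the result is quoted verbatim from [19], I would at this point refer the reader to [19, Lemma 2.1] for the full details.
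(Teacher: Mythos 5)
The paper does not prove this lemma at all: it is quoted verbatim as Lemma 2.1 of [19] and invoked by reference, with the commented-out proof stripped from the source. Your sketch — refining the hardware, simulating each wide-base or multi-letter rule by a constant-length block of narrow rules, and controlling spurious accepting computations by forcing auxiliary state letters to be idle at acceptance — is consistent in spirit with the construction in [19, Lemma 2.1], and you correctly defer the full bookkeeping to that reference.

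One small point worth tightening if you ever wrote this out in full: introducing genuinely new parts $\widetilde{Q}_j$ between consecutive $Q_j, Q_{j+1}$ is not necessary and slightly complicates the admissibility bookkeeping (it changes the sector structure and forces you to specify new tape alphabets $\widetilde{Y}_j$). The cleaner device, which is what [19] uses, is to enlarge the \emph{existing} parts $Q_j$ with fresh auxiliary state letters that encode ``rule $\theta$ in progress, sweep currently at position $j$.'' With that the forward sweep alone suffices — the sweep leaves each $q_j$ in its post-$\theta$ state as it passes, so no backward clean-up sweep is needed, and the ``idle at acceptance'' condition is automatic because the accept state letters are chosen outside the auxiliary set. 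Your concern about partial sweeps leaking is the right one; the resolution is exactly what you say, just applied to state letters rather than new parts.
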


As a result of Lemma \ref{simplify rules}, we can assume that each part of every rule of an $S$-machine is of the form $q_i\to aq_i'b$ with $\|a\|+\|b\|\leq1$. However, it is often convenient to allow $\|a\|=\|b\|=1$ in a defining rule of an $S$-machine, with the implicit understanding that the machine is equivalent to one satisfying property (3).

\smallskip

%%%%%%%%%%%%%%%%%%%%%%%%%%%%%%%%%%%%%%%%%%%%%%%%%%%%%%%%%%%%%%%%%

\subsection{Some elementary properties of $S$-machines} \

The following is an immediate consequence of the definition of admissible words.

\begin{lemma} \label{locked sectors}

If the rule $\theta$ locks the $Q_iQ_{i+1}$-sector, i.e it has a part $q_i\xrightarrow{\ell}aq_i'b$ for some $q_i,q_i'\in Q_i$, then the base of any $\theta$-admissible word has no subword of the form $Q_iQ_i^{-1}$ or $Q_{i+1}^{-1}Q_{i+1}$.

\end{lemma}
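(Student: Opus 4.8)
Lemma \ref{locked sectors} is essentially a direct unwinding of the definition of admissible words together with the definition of what it means for a rule $\theta$ to lock a sector. The plan is to argue by contradiction: suppose $W$ is $\theta$-admissible and its base contains the subword $Q_iQ_i^{-1}$ (the case $Q_{i+1}^{-1}Q_{i+1}$ being entirely symmetric). Then $W$ contains a subword of the form $q u q^{-1}$ where $q \in Q_i^{\pm1}$ and, by clause (2) in the definition of the language of admissible words, $u \in F(Y_{i+1})$. The point is that this forces the $Q_iQ_{i+1}$-sector adjacent to this occurrence of $q$ to lie ``on the wrong side'': the rule part $q_i \xrightarrow{\ell} a q_i' b$ has $Y_{i+1}(\theta) = \emptyset$ by the definition of locking, so $\theta$ cannot act on any tape letters from $Y_{i+1}$.

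The key steps, in order, are: (i) recall that $\theta$ locking $Q_iQ_{i+1}$ means the relevant part of $\theta$ is $q_i \xrightarrow{\ell} a q_i' b$ with $Y_{i+1}(\theta) = \emptyset$; (ii) recall that for $\theta$ to be applicable to $W$, every tape letter of $W$ must lie in $Y(\theta) \cup Y(\theta)^{-1}$, and in particular every tape letter sitting in a $Q_iQ_{i+1}$-sector (or $Q_{i+1}^{-1}Q_i^{-1}$-sector, etc.) must come from $Y_{i+1}(\theta) = \emptyset$; (iii) suppose for contradiction that $\mathrm{base}(W)$ has a subword $Q_iQ_i^{-1}$, say coming from an occurrence $q_1^{\eps} u q_2^{-\eps}$ in $W$ with $q_1, q_2 \in Q_i$ and $\eps \in \{\pm1\}$; by admissibility clause (2)/(3) applied to this occurrence, $u \in F(Y_{i+1})$ (the tape segment squeezed between two oppositely-oriented copies of a $Q_i$-letter); (iv) for $W$ to be $\theta$-admissible, we need $u \in F(Y_{i+1}(\theta)) = F(\emptyset) = \{1\}$, so $u$ is empty, but then $q_1^{\eps} q_2^{-\eps}$ is a reduced subword of $W$ forcing $q_1 \ne q_2$ as elements of $Q_i$ --- yet even so, applying $\theta$ replaces both $q_1$ and $q_2$ according to the part $q_i \to a q_i' b$, and the resulting word $\dots a q_1' b b^{-1} (q_2')^{-1} a^{-1} \dots$ after reduction either fails to be admissible or the original $W$ was not reduced, contradicting that $W$ is an admissible word. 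One should check carefully that the issue is not merely that $\theta$ changes $W$ into a non-admissible word but genuinely that $W$ itself cannot be $\theta$-admissible; the cleanest route is to observe directly from clause (iii) that $u$ would have to be a nonempty reduced word in $F(Y_{i+1})$ for the subword $q_1^{\eps}uq_2^{-\eps}$ to be a legitimate admissible subword with a genuine $Q_iQ_i^{-1}$ in its base, contradicting $Y_{i+1}(\theta) = \emptyset$ already at step (iv) before any reduction is performed.

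The main obstacle, such as it is, is purely bookkeeping: one must handle the orientations carefully (the subword of the base could be $Q_iQ_i^{-1}$ arising as $q_1 u q_2^{-1}$ with $q_1,q_2 \in Q_i$, or as the inverse of such a configuration), and one must be precise about which domain $Y_j(\theta)$ governs the sector trapped inside the $q \, u \, q^{-1}$ pattern --- by clause (2) of the admissible-word definition it is $Y_{i+1}$ when $q \in Q_i$, and by clause (3) it is $Y_i$ when the pattern is $q^{-1} u q$; locking $Q_iQ_{i+1}$ kills $Y_{i+1}(\theta)$, which is exactly the domain relevant to the $Q_iQ_i^{-1}$ case, and locking $Q_iQ_{i+1}$ does not by itself kill $Y_i(\theta)$, which is why the lemma's second forbidden pattern is $Q_{i+1}^{-1}Q_{i+1}$ (governed by $Y_{i+1}$) rather than $Q_i^{-1}Q_i$. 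So the real content is just matching each forbidden base-subword to the empty domain that obstructs it. No deep estimate or diagram surgery is needed here; this is a warm-up lemma about the rewriting formalism.
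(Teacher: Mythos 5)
Your final argument is exactly the intended one: the paper states this lemma without proof as an immediate consequence of the definitions, and your matching of each forbidden base subword ($Q_iQ_i^{-1}$ via clause (2), $Q_{i+1}^{-1}Q_{i+1}$ via clause (3) with $j=i+1$) to the empty domain $Y_{i+1}(\theta)$, together with reducedness forcing the enclosed tape word to be nonempty, is precisely that unwinding. The detour in step (iv) about applying $\theta$ and reducing is unnecessary and not conclusive on its own, but the ``cleanest route'' you settle on supersedes it and is correct.
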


Through the rest of this paper, we will often use copies of words over disjoint alphabets. To be precise, let $A$ and $B$ be disjoint alphabets, $W\equiv a_1^{\eps_1}\dots a_k^{\eps_k}$ with $a_i\in A$ and $\eps_i\in\{\pm1\}$, and $\varphi:\{a_1,\dots,a_k\}\to B$ be an injection. Then the \textit{copy} of $W$ over the alphabet $B$ formed by $\varphi$ is the word $W'\equiv\varphi(a_1)^{\eps_1}\dots\varphi(a_k)^{\eps_k}$. Typically, the injection defining the copy will be contextually clear.

Alternatively, a copy of an alphabet $A$ is a disjoint alphabet $A'$ which is in one-to-one correspondence with $A$. For a word over $A$, its copy over $A'$ is defined by the correspondence between the alphabets.

The following four lemmas are properties of some simple computations in $S$-machines that are fundamental to the proofs presented in the next two sections. They are stated here without proof, with a reference provided for their proofs in previous literature.

\begin{lemma} \label{multiply one letter}

\textit{(Lemma 2.7 of [19])} Let $\pazocal{C}:W_0\to\dots\to W_t$ be a reduced computation, where $W_0$ is an admissible word with the two-letter base $Q_iQ_{i+1}$. Denote the tape word of $W_j$ as $u_j$ for each $0\leq j\leq t$. Suppose that each rule of $\pazocal{C}$ multiplies the $Q_iQ_{i+1}$-sector by a letter on the left (respectively right). Suppose further that different rules multiply this sector by different letters. Then:

\begin{addmargin}[1em]{0em}

$(a)$ the history of the computation, $H$, is a copy of the reduced form of the word $u_tu_0^{-1}$ read from right to left (respectively the word $u_0^{-1}u_t$ read left to right). In particular, if $u_0\equiv u_t$, then the computation is empty

$(b)$ $\|H\|\leq\|u_0\|+\|u_t\|$

$(c)$ if $\|u_{j-1}\|<\|u_j\|$ for some $1\leq j\leq t-1$, then $\|u_j\|<\|u_{j+1}\|$

$(d)$ $\|u_j\|\leq\max(\|u_0\|,\|u_t\|)$

\end{addmargin}

\end{lemma}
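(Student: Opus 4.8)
The plan is to reduce the whole statement to elementary combinatorics of free reduction; I would treat the ``left'' case, the ``right'' case being symmetric. Write $\overline{W}$ for the freely reduced form of a word $W$. Since $W_0$ has the two-letter base $Q_iQ_{i+1}$, each $W_j$ consists of a $Q_i$-letter, the sector word $u_j\in F(Y_{i+1})$ (reduced, since admissible words are reduced), and a $Q_{i+1}$-letter. By hypothesis every rule $\theta$ occurring in $\pazocal{C}$ carries a single letter $x(\theta)\in Y_{i+1}\cup Y_{i+1}^{-1}$ such that applying $\theta$ replaces the sector $u$ by $\overline{x(\theta)u}$; the assumption that distinct rules multiply by distinct letters says $\theta\mapsto x(\theta)$ is injective, and unwinding the definition $\theta^{-1}=[V_0\to U_0,\dots,V_m\to U_m]$ gives $x(\theta^{-1})=x(\theta)^{-1}$. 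Abbreviating $x_j=x(\theta_j)$, this yields $u_j=\overline{x_ju_{j-1}}$ for $1\le j\le t$.

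The crux is that the word $x_tx_{t-1}\cdots x_1$ is already freely reduced. Indeed, reducedness of $\pazocal{C}$ means $\theta_{k+1}\ne\theta_k^{-1}$, so injectivity together with $x(\theta^{-1})=x(\theta)^{-1}$ forces $x_{k+1}\ne x_k^{-1}$ for every $k$, i.e.\ no two consecutive letters of $x_t\cdots x_1$ cancel. A straightforward induction using $\overline{A\overline{B}}=\overline{AB}$ then gives $u_j=\overline{x_jx_{j-1}\cdots x_1u_0}$ for all $j$; in particular $u_tu_0^{-1}=x_t\cdots x_1$ in $F(Y_{i+1})$, and as the right-hand side is reduced it is exactly $\overline{u_tu_0^{-1}}$. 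Reading this word from right to left produces $x_1x_2\cdots x_t$, whose copy under $x_k\leftrightarrow\theta_k$ is $\theta_1\theta_2\cdots\theta_t=H$; this is (a), and $u_0\equiv u_t$ makes $\overline{u_tu_0^{-1}}$ empty, hence $H$ empty and $t=0$. Part (b) is then immediate: $\|H\|=t$ equals the length of the reduced word $\overline{u_tu_0^{-1}}$, which is at most $\|u_t\|+\|u_0^{-1}\|=\|u_0\|+\|u_t\|$.

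For (c) I would use that non-cancellation propagates forward: if $\|u_{j-1}\|<\|u_j\|$, then no cancellation occurs in $\overline{x_ju_{j-1}}$, so $u_j=x_ju_{j-1}$ verbatim and $u_j$ begins with the letter $x_j$; cancellation at step $j+1$ would require $u_j$ to begin with $x_{j+1}^{-1}$, i.e.\ $x_{j+1}=x_j^{-1}$, which has just been excluded, so $\|u_{j+1}\|=\|u_j\|+1>\|u_j\|$. Since each difference $\|u_j\|-\|u_{j-1}\|$ equals $\pm1$ and an increase is always followed by an increase, the sequence $\|u_0\|,\dots,\|u_t\|$ decreases up to some index and then increases; hence each $\|u_j\|$ in the first stretch is $\le\|u_0\|$ and each in the second is $\le\|u_t\|$, giving (d).

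The main (really, the only) obstacle I foresee is the first step: carefully checking, against the definitions of $S$-rule and admissible word (after the normalization of Lemma \ref{simplify rules}) and of inverse rule, that for a two-letter base ``multiplying the $Q_iQ_{i+1}$-sector by a letter on the left'' is faithfully captured by $u_j=\overline{x_ju_{j-1}}$ with $x_j$ a single letter depending injectively and inverse-equivariantly on $\theta_j$. Once that bookkeeping is done, everything else is routine free-group combinatorics, the one substantive idea being the forward propagation of non-cancellation used for (c) and (d).
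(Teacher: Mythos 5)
Your proof is correct, and the one step you flag as the main obstacle (encoding each rule by a single letter $x(\theta)$ with $\theta\mapsto x(\theta)$ injective and $x(\theta^{-1})=x(\theta)^{-1}$, so that reducedness of $H$ forces no cancellation in $x_t\cdots x_1$) is exactly the point that makes the rest routine. The paper itself gives no proof of this lemma — it is quoted as Lemma 2.7 of [19] — and your free-reduction argument, including the forward propagation of non-cancellation for (c) and the valley argument for (d), is essentially the standard proof found there.
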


%\begin{proof}
%
%$(a)$ Note that $u_j$ is freely equal to $\a_ju_{j-1}$ (or $u_{j-1}\a_j$), where $\a_j$ is the letter corresponding to the $j$-th letter of $H$. So, $u_t=\bar{H}'u_0$ (or $u_0H'$), where $\bar{H}'$ (respectively $H'$) is a copy of $H$ read right to left (respectively left to right). So, since $H$ is reduced, the statement follows.
%
%$(b)$ This follows immediately from $(a)$.
%
%$(c)$ Suppose the rules multiply the sector on the left. If $\|\a_ju_{j-1}\|=\|u_j\|>\|u_{j-1}\|$, then $\a_ju_{j-1}$ must be its reduced form. So, $u_{j+1}=\a_{j+1}\a_ju_{j-1}$ can only be unreduced if $\a_{j+1}$ and $\a_j$ cancel; but this would imply that the corresponding rules are mutually inverse, contradicting the assumption that $\pazocal{C}$ is reduced.
%
%The analogous argument applies if the rules multiply the sector on the right.
%
%$(d)$ Fix $0\leq r\leq t$ such that $\|u_r\|$ is minimal. Then either $\|u_r\|=\dots=\|u_t\|$ or there exists $r\leq s\leq t-1$ such that $\|u_r\|=\dots=\|u_s\|<\|u_{s+1}\|$. In the latter case, applying part $(c)$ to the subcomputation $W_s\to\dots\to W_t$ then gives $\|u_s\|<\|u_{s+1}\|<\dots<\|u_t\|$. So, $\|u_j\|\leq\|u_t\|$ for all $r\leq j\leq t$. 
%
%The analogous argument applied to the inverse computation $W_r\to\dots\to W_0$ then shows that $\|u_j\|\leq\|u_0\|$ for all $0\leq j\leq r$.
%
%\end{proof}

\begin{lemma} \label{multiply two letters}

\textit{(Lemma 2.8 of [19])} Let $W$ be an admissible word with base $Q_iQ_{i+1}$ and $X_\ell,X_r$ be disjoint alphabets in $Y_i$. Let $\pazocal{C}:W\equiv W_0\to\dots\to W_t$ be a reduced computation and denote the tape word of $W_j$ by $u_j$ for each $0\leq j\leq t$. Suppose that each rule of $\pazocal{C}$ multiplies the $Q_iQ_{i+1}$-sector by a letter of $X_\ell$ on the left and a letter of $X_r$ on the right, with different rules multiplying by different letters. Then:

\begin{addmargin}[1em]{0em}

$(a)$ if $\|u_{j-1}\|<\|u_j\|$ for some $0\leq j\leq t-1$, then $\|u_j\|<\|u_{j+1}\|$

$(b)$ $\|u_j\|\leq\max(\|u\|,\|u'\|)$ for each $j$

$(c)$ $t\leq\frac{1}{2}(\|u_0\|+\|u_t\|)$.

\end{addmargin}

\end{lemma}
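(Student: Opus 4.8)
The plan is to track how the tape word $u_j$ of the $Q_iQ_{i+1}$-sector evolves and to exploit the disjointness of $X_\ell$ and $X_r$. Write the $j$-th rule of $\pazocal{C}$ as prepending a letter $\alpha_j\in X_\ell^{\pm1}$ to the tape word and appending a letter $\beta_j\in X_r^{\pm1}$, so that $u_j$ is the free reduction of $\alpha_ju_{j-1}\beta_j$. Since $X_\ell\cap X_r=\emptyset$ and $u_{j-1}$ is reduced, $\alpha_j$ can cancel only the first letter of $u_{j-1}$, $\beta_j$ only the last, and the two cancellations cannot interact; hence $\|u_j\|-\|u_{j-1}\|\in\{-2,0,+2\}$, with the value $+2$ occurring precisely when neither $\alpha_j$ nor $\beta_j$ cancels. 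Call step $j$ a \emph{left-reduction} if $\alpha_j$ cancels, and a \emph{right-reduction} if $\beta_j$ cancels.

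The engine of the argument is a \emph{Propagation Lemma}: if step $j$ is not a left-reduction, then step $j+1$ is not a left-reduction, and symmetrically on the right. Indeed, if step $j$ is not a left-reduction then $u_j$ begins with the freshly prepended letter $\alpha_j$, so a left-reduction at step $j+1$ would require $\alpha_{j+1}=\alpha_j^{-1}$; since distinct rules multiply on the left by distinct letters of $X_\ell^{\pm1}$, with a rule and its inverse using mutually inverse letters, this would force $\theta_{j+1}=\theta_j^{-1}$, contradicting that $\pazocal{C}$ is reduced. Part (a) is then immediate: $\|u_{j-1}\|<\|u_j\|$ means step $j$ is neither a left- nor a right-reduction, so by the Propagation Lemma the same holds of step $j+1$, and therefore $\|u_j\|<\|u_{j+1}\|$.

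By part (a) the $+2$-steps of $\pazocal{C}$ form a terminal block. I would next show that among the remaining steps the $(-2)$-steps form an initial block $[1,q]$ followed by a block $[q+1,q+z]$ of $0$-steps all of one handedness --- say all left-reductions: a left-reduction $0$-step is not a right-reduction, so by propagation no later step is a right-reduction, excluding all later $(-2)$-steps and right-reduction $0$-steps, and the opposite mixture is excluded symmetrically. Part (b) follows since $\|u_j\|$ is then non-increasing on $[0,q+z]$ and strictly increasing on $[q+z,t]$, so $\|u_j\|\le\max(\|u_0\|,\|u_t\|)$ throughout. For part (c), write $t=q+z+p$ with $p$ the number of $+2$-steps; since the length drops by $2$ on each of the first $q$ steps and rises by $2$ on each of the last $p$, one gets $\|u_q\|=\|u_0\|-2q\ge0$, $\|u_t\|=\|u_q\|+2p$, hence $t=2q+z+\tfrac12(\|u_t\|-\|u_0\|)$, and it suffices to prove $2q+z\le\|u_0\|$. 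On the neutral block each step deletes the first letter of the current word --- necessarily an $X_\ell^{\pm1}$-letter, since it equals $\alpha_j^{-1}$ --- and appends an $X_r^{\pm1}$-letter, keeping the length at $\|u_q\|$; after $\|u_q\|$ such steps every letter originally in $u_q$ has been deleted, and a further deletion would illegally remove one of the appended $X_r^{\pm1}$-letters. Hence $z\le\|u_q\|=\|u_0\|-2q$, which gives $2q+z\le\|u_0\|$.

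The main obstacle is the Propagation Lemma together with the estimate $z\le\|u_q\|$ on the number of length-preserving steps: both use all three hypotheses simultaneously --- the disjointness $X_\ell\cap X_r=\emptyset$, the injectivity of the rule-to-letter assignment, and the reducedness of $\pazocal{C}$ --- and it is these "neutral" steps, which have no analogue in the purely one-sided Lemma \ref{multiply one letter}, that require the most care.
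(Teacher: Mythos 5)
The paper quotes this lemma from Lemma~2.8 of~[19] without reproducing the proof, so there is no in-text argument to compare against; your proof is correct and complete, and it is the natural argument (almost certainly the one given in~[19]). The three-way classification of steps into $-2$, $0$, $+2$ according to whether cancellation occurs on the left and/or right, the propagation lemma (a non-cancelling side stays non-cancelling, else two consecutive rules of the history would be mutually inverse), the resulting decreasing--neutral--increasing block structure giving (a) and (b), and the bound $z\le\|u_q\|$ on the neutral block --- each neutral step consumes one letter of $u_q$ from one end while the other end only accumulates letters from the disjoint alphabet, so the phase must terminate once $u_q$ is exhausted --- are all sound, and the last of these is indeed where all three hypotheses must be used at once. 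One small housekeeping remark: the paper's statement has typos (part~(b) should read $\max(\|u_0\|,\|u_t\|)$, and part~(a) should start at $j=1$); your proof correctly addresses the intended statement.
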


%\begin{proof}
%
%$(a)$ Note that $u_j=\a_ju_{j-1}\b_j$ for some letters $\a_j$ and $\b_j$. So, if $\a_j$ (or $\b_j$) cancels with the first (or last) letter of $u_{j-1}$, then $\|u_j\|\leq\|u_{j-1}\|$. Assuming $\|u_{j-1}\|<\|u_j\|$ then implies that $\a_ju_{j-1}\b_j$ is the reduced form of $u_j$.
%
%Then $u_{j+1}=\a_{j+1}u_j\b_{j+1}$ for some letters $\a_{j+1}$ and $\b_{j+1}$. As in the proof of Lemma \ref{multiply one letter}$(c)$, a cancellation in this word would then contradict the assumption that $\pazocal{C}$ is reduced, so that $\|u_j\|<\|u_{j+1}\|$.
%
%$(b)$ Using $(a)$, this follows from an argument similar to the one presented as the proof of Lemma \ref{multiply one letter}$(d)$.
%
%$(c)$ Note that $u_t=v u_0 w$ for some $v\in F(X_\ell)$ and $w\in F(X_r)$ satisfying $\|v\|=\|w\|=t$.
%
%As $X_\ell$ and $X_r$ are disjoint alphabets, all reductions in this expression for $u_t$ must come from $u_0$. As a result, $\|u_t\|\geq\|v\|+\|w\|-\|u_0\|=2t-\|u_0\|$.
%
%\end{proof}

\begin{lemma} \label{unreduced base}

\textit{(Lemma 3.6 of [25])} Suppose the base of an admissible word $W$ of an $S$-machine is $Q_iQ_i^{-1}$ (respectively $Q_i^{-1}Q_i$). Let $\pazocal{C}$ be a reduced computation starting with an admissible word $W$ with tape word $u$ and ending with an admissible word $W'$ with tape word $u'$. Suppose each rule of $\pazocal{C}$ multiplies the $Q_iQ_{i+1}$-sector (respectively the $Q_{i-1}Q_i$-sector) by a letter from the left (respectively from the right), with different rules corresponding to different letters. Then the history of the computation has the form $H_1H_2^kH_3$, where $k\geq0$, $\|H_2\|\leq\min(\|u\|,\|u'\|)$, $\|H_1\|\leq\|u\|/2$, and $\|H_3\|\leq\|u'\|/2$.

\end{lemma}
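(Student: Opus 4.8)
The statement to be proved is Lemma \ref{unreduced base}, concerning a reduced computation $\pazocal{C}$ of an $S$-machine whose admissible word has the unreduced base $Q_iQ_i^{-1}$ (the mirror case $Q_i^{-1}Q_i$ being symmetric), where every rule multiplies the relevant sector by a single left-multiplier, with distinct rules using distinct letters. The plan is to reduce the analysis to the ``one-letter multiplication'' situation already handled by Lemma \ref{multiply one letter}, but now in the presence of the fold between the two mutually inverse copies of $Q_i$. Concretely, an admissible word with base $Q_iQ_i^{-1}$ has the form $q_i u q_i^{-1}$ with $u \in F(Y_{i+1})$ (case (2) of the definition of admissible words), so there is really a single sector, the $Q_iQ_i^{-1}$-sector, carrying the tape word $u$. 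A positive rule $\theta$ with part $q_i \to a q_i' b$ multiplies $q_i$ on the left by $a$ and $q_i^{-1}$ on the right by $b^{-1}$; under the hypothesis that each rule multiplies ``from the left'' in the sense specified, the effect on the tape word $u$ of $q_i u q_i^{-1}$ is $u \mapsto a u a^{-1}$ after the necessary reduction (the $b$ attached on the far side being, up to the copy/mirror bookkeeping, the same letter $a$ read inversely across the fold). Thus the computation acts on $u$ by a sequence of conjugations $u_{j} = a_j u_{j-1} a_j^{-1}$, with $a_j$ the letter attached by $\theta_j$, and distinct rules giving distinct $a_j$.

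\textbf{Key steps.} First I would set up the precise dictionary between the rule applied at step $j$ and the letter $a_j$ conjugating the tape word, using Lemma \ref{simplify rules} to assume each part of each rule has one-letter base and a single attached letter, and using the definition of $W\cdot\theta$ together with the structure of admissible words of base $Q_iQ_i^{-1}$. Second, I would track the reduced form: write $u_j$ in reduced form and observe that since $u_j = a_j u_{j-1} a_j^{-1}$, after reduction either $\|u_j\| = \|u_{j-1}\| + 2$ (no cancellation: $a_j$ differs from the first and last letters of $u_{j-1}^{\pm1}$), or $\|u_j\| = \|u_{j-1}\|$ (cancellation occurs on one side but not the other, or $u_{j-1}$ begins and ends appropriately), or $\|u_j\| = \|u_{j-1}\| - 2$ (full cancellation on both sides). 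Third — the heart of the argument — I would show that the sequence of lengths $\|u_0\|, \|u_1\|, \dots, \|u_t\|$ is \emph{unimodal in a strong sense}: once it starts strictly increasing it cannot later decrease within the ``middle'' portion, mirroring Lemma \ref{multiply one letter}(c). The reason is that reducedness of the history $H = \theta_1\cdots\theta_t$ in $F(\Theta^+)$ together with ``distinct rules $\leftrightarrow$ distinct letters'' forces $\theta_{j+1} \neq \theta_j^{-1}$, hence $a_{j+1} \neq a_j^{-1}$; a rule that \emph{decreases} length must conjugate by the inverse of the outermost letter of $u_{j-1}$, which (by induction on the increasing stretch) is the letter that was just added, and that would force $\theta_{j+1} = \theta_j^{-1}$, contradicting reducedness. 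This yields the shape: lengths first weakly decrease (the $H_1$ phase), then reach a plateau/minimum where the word is ``irreducible under further conjugation'' so the conjugating letters cycle through a periodic pattern (the $H_2^k$ phase), then weakly increase (the $H_3$ phase). Fourth, I would extract the bounds: during the decreasing phase each step removes at most two letters from $u$, so $\|H_1\| \le \|u_0\|/2$; symmetrically, running the computation backwards (it is reduced, hence so is its reverse by inverting rules) the increasing tail relative to $u'$ gives $\|H_3\| \le \|u'\|/2$; and the periodic core $H_2$ has length at most the length of the tape word throughout the plateau, which is $\le \min(\|u_0\|, \|u'\|)$ since the plateau length does not exceed either endpoint's length.

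\textbf{Main obstacle.} The delicate point is the plateau analysis: I expect the hard part to be ruling out ``oscillation'' of the tape-word length within the supposed plateau and establishing that on the plateau the history is genuinely a power $H_2^k$ of a fixed reduced block rather than something merely bounded. The argument must combine three ingredients carefully: (i) the combinatorics of conjugation of a cyclically-reduced-or-not word in $F(Y_{i+1})$ by single generators, (ii) the reducedness of $H$ in $F(\Theta^+)$, and (iii) the injectivity ``distinct rules use distinct letters,'' which is what converts a statement about rules into a statement about letters and back. One must be attentive that the fold at base $Q_iQ_i^{-1}$ may make the two sides of the conjugation interact (the word $u_{j-1}$ could be such that $a_j$ cancels on the left while $a_j^{-1}$ on the right meets a letter equal to $a_j$, etc.), so the case analysis for $\|u_j\|$ vs.\ $\|u_{j-1}\|$ should be carried out explicitly; this is exactly the place where the proof in [25] presumably does the bookkeeping, and I would follow that template, invoking Lemma \ref{multiply one letter} for the ``generic'' left-multiplication behaviour on each half and only treating the fold interaction by hand.
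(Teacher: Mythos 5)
You should note at the outset that the paper contains no proof of Lemma \ref{unreduced base}: it is quoted verbatim from [25] (Lemma 3.6 there), so your attempt can only be compared with that source's argument, and your plan is indeed the same one. The correct dynamics are as you say: for base $Q_iQ_i^{-1}$ the admissible word is $q u q'^{-1}$ and each rule conjugates the tape word by a single letter, namely the letter the rule inserts into the $Q_iQ_{i+1}$-sector (in your notation $q_i\to a q_i'b$ this is $b$, not $a$ --- a harmless slip); reducedness of $H$ plus ``different rules $\leftrightarrow$ different letters'' gives $a_{j+1}\neq a_j^{-1}$; and your propagation argument (an increase leaves $a_j\cdots a_j^{-1}$ uncancelled on the outside, so a subsequent cancellation would force $\theta_{j+1}=\theta_j^{-1}$), applied also to the inverse computation, confines decreases to an initial segment and increases to a terminal one. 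Since conjugation by a letter changes the length by $0$ or $\pm2$, and a $0$-change occurs only at a cyclically reduced tape word, the profile is: exactly $-2$ per step for $d$ steps, a plateau at a cyclically reduced word of length $\ell\leq\min(\|u\|,\|u'\|)$, then exactly $+2$ per step for $e$ steps, with $d\leq\|u\|/2$, $e\leq\|u'\|/2$. (Your phrase ``removes at most two letters'' points the wrong way; the bound needs ``exactly two,'' which the parity argument gives.)

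Two points you leave open must be closed, and the second one affects your stated bounds. (i) On the plateau each step is one of two kinds of cyclic shift (conjugation by the inverse of the first letter, or by the last letter); exact periodicity requires ruling out a switch of direction, and this is the same one-move trick you already used: after a left shift the new word ends in $a_j^{-1}$, so a right shift at the next step would need $a_{j+1}=a_j^{-1}$. Hence the plateau letters read the cyclic word in one fixed direction, and since letters determine rules, the plateau history is periodic with period $\ell$. (ii) The plateau length $m$ need not be a multiple of $\ell$; write $m=k\ell+r$ with $0\leq r<\ell$. Your assignment ``$H_1=$ decreasing phase, $H_2^k=$ plateau, $H_3=$ increasing phase'' is then not a factorization at all, and putting all $r$ leftover plateau rules into $H_1$ alone or $H_3$ alone can violate $\|H_1\|\leq\|u\|/2$ or $\|H_3\|\leq\|u'\|/2$ (take $u=u'$ cyclically reduced, $d=e=0$, $m=\ell-1$ with $\ell\geq4$). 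The repair is to split the $r$ extra rules between the two ends: choose $r=r_1+r_2$ with $r_1,r_2\leq\ell/2$ (possible since $r\leq\ell-1$), note that any block of length $k\ell$ inside an $\ell$-periodic sequence is a $k$-th power of a cyclic shift of the period, and then $\|H_1\|\leq d+\ell/2=\|u\|/2$ and $\|H_3\|\leq e+\ell/2=\|u'\|/2$ because $\|u\|=\ell+2d$ and $\|u'\|=\ell+2e$. With (i) and (ii) supplied, your outline becomes a complete proof along the lines of [25].
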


\begin{lemma} \label{three part history}

\textit{(Lemma 2.8 of [26])} Suppose that a reduced computation $\pazocal{C}:W_0\to\dots\to W_t$ of an $S$-machine has a two-letter base and history of the form $H\equiv H_1H_2^kH_3$, $k\geq0$. Then $$|W_i|_a\leq|W_0|_a+|W_t|_a+2\|H_1\|+3\|H_2\|+2\|H_3\|$$

\end{lemma}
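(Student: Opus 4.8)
All $W_j$ in $\pazocal{C}$ share the two‑letter base, so $W_j$ is determined, up to its fixed pair of state letters (which carry no $a$‑letters), by its sector word $u_j$, and $|W_j|_a=\|u_j\|$. By the normal form of Lemma \ref{simplify rules}, a single rule replaces a one‑letter prefix of $u_j$ by a word of length $\le1$ and a one‑letter suffix by a word of length $\le1$ — for an unreduced base $Q_\ell Q_\ell^{-1}$ these two changes are mutually inverse, i.e.\ the rule conjugates $u_j$ by a letter — so in every case $\bigl||W_j|_a-|W_{j-1}|_a\bigr|\le2$. Hence if $W_i$ lies in the $H_1$‑portion of $\pazocal{C}$ then $|W_i|_a\le|W_0|_a+2\|H_1\|$, and if it lies in the $H_3$‑portion then $|W_i|_a\le|W_t|_a+2\|H_3\|$; it remains to bound $|W_i|_a$ for $W_i$ in the $H_2^{k}$‑portion. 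Put $V_j=W_0\cdot H_1H_2^{\,j}$ for $0\le j\le k$, with sector word $u^{(j)}$; then $V_0=W_0\cdot H_1$ and $V_k=W_t\cdot H_3^{-1}$, so $|V_0|_a\le|W_0|_a+2\|H_1\|$ and $|V_k|_a\le|W_t|_a+2\|H_3\|$. Each $W_i$ in the $H_2^{k}$‑portion lies in some block $V_j\xrightarrow{H_2}V_{j+1}$; estimating the distance of $W_i$ to each end of that block with the per‑step bound gives
$$|W_i|_a\ \le\ \tfrac12\bigl(|V_j|_a+|V_{j+1}|_a\bigr)+\|H_2\|\ \le\ \max_{0\le j\le k}|V_j|_a+\|H_2\|.$$
Thus it suffices to bound $\max_{0\le j\le k}|V_j|_a$ in terms of $|V_0|_a$, $|V_k|_a$ and $\|H_2\|$.

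\textbf{The crux: controlling the block boundaries.} Write $N=\|H_2\|$. Since $H_2$ is a word of $N$ rules, its front operations delete at most $N$ letters from the left of the current sector and its back operations at most $N$ from the right; hence, whenever $\|u^{(j)}\|>2N$, the central $\|u^{(j)}\|-2N$ letters of $u^{(j)}$ pass untouched into $u^{(j+1)}$, and the step $u^{(j)}\mapsto u^{(j+1)}$ is a \emph{fixed two‑sided multiplication}
$$u^{(j+1)}\ =\ \widetilde{L\,u^{(j)}\,R},\qquad \|L\|,\|R\|\le N,$$
with $L,R$ reduced words determined by $H_2$ ($R\equiv L^{-1}$, a conjugation, for an unreduced base), $\widetilde{\ \cdot\ }$ denoting reduced form. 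Call $j$ \emph{long} if $\|u^{(j)}\|>2N$. On a maximal run $a\le j\le b$ of consecutive long indices one gets $u^{(a+s)}=\widetilde{L^{s}u^{(a)}R^{s}}$ for $0\le s\le b-a$, and the function $s\mapsto\|\widetilde{L^{s}w\,R^{s}}\|$ (for fixed reduced $L,R,w$) is unimodal — non‑increasing and then non‑decreasing: writing $L=zL_0z^{-1}$, $R=\bar zR_0\bar z^{-1}$ with $L_0,R_0$ cyclically reduced, the cancellation of $L^{s}$ into $w$ and of $w$ into $R^{s}$ stabilizes after finitely many $s$, past which the length grows by $\|L_0\|+\|R_0\|$ at each step. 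Therefore $\max_{a\le j\le b}\|u^{(j)}\|=\max(\|u^{(a)}\|,\|u^{(b)}\|)$. If the run is interior ($a>0$ and $b<k$), then $a-1$ and $b+1$ are not long, so $\|u^{(a)}\|\le\|u^{(a-1)}\|+2N\le4N$ and likewise $\|u^{(b)}\|\le4N$; combining this with $\|u^{(j)}\|\le2N$ for non‑long $j$ and with the runs reaching index $0$ or index $k$ gives $\max_{0\le j\le k}|V_j|_a\le\max(|V_0|_a,|V_k|_a)+4N$.

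\textbf{Assembling the estimate.} Combining the two displays,
$$|W_i|_a\ \le\ \max_{0\le j\le k}|V_j|_a+\|H_2\|\ \le\ |V_0|_a+|V_k|_a+5\|H_2\|\ \le\ |W_0|_a+|W_t|_a+2\|H_1\|+5\|H_2\|+2\|H_3\|,$$
and for $W_i$ in the $H_1$‑ or $H_3$‑portion an even stronger estimate was already recorded. This proves the lemma with an absolute constant in place of the $3$. The loss is confined to the crude treatment of the transition between long and short runs of indices; following [26], one sharpens it using that in a reduced two‑letter‑base computation the sector can shorten only gradually within a single application of $H_2$, which prevents the excess from an interior long run and the $\|H_2\|$ coming from the within‑block estimate from being attained simultaneously, and thereby recovers the coefficient $3$. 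I will carry out that bookkeeping; it is purely quantitative.

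\textbf{Main obstacle.} The substantive ingredients are: (i) the structural fact that on a long sector the history block $H_2$ acts as one and the same two‑sided multiplication, independent of the sector, so that iterating it yields $\widetilde{L^{s}u^{(a)}R^{s}}$ — this is exactly where the hypothesis that the history is a genuine power $H_2^{k}$ is used; and (ii) the unimodality of $s\mapsto\|\widetilde{L^{s}w\,R^{s}}\|$ in a free group. Granting these, the remainder (a per‑step length estimate, splitting at the $H_2$‑blocks, and gluing unimodal stretches across the short indices) is routine.
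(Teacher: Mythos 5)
The paper does not prove this lemma at all (it is quoted as Lemma 2.8 of [26]), so your argument has to stand on its own, and it does not: the pivotal claim in your second paragraph is false. You reduce everything to the assertion that, for fixed reduced $L,R,w$ in a free group, the function $s\mapsto \|\widetilde{L^swR^s}\|$ is non-increasing and then non-decreasing, justified by saying that the cancellations of $L^s$ into $w$ and of $w$ into $R^s$ stabilize, after which the length grows by $\|L_0\|+\|R_0\|$ per step. This overlooks precisely the delicate regimes: when the middle word is consumed the powers of $L_0$ and $R_0$ meet each other (conjugation-type behaviour), and when an $L_0$- or $R_0$-exponent passes through $0$ the conjugating pieces $z,\bar z$ of $L,R$ suddenly cancel against the rest of the word and then are shielded again. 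Unimodality genuinely fails: take $L=cac^{-1}$, $R=b^{-2}$, $w=ca^{-2}c^{-1}db^{10}$; then $\widetilde{L^swR^s}=\widetilde{\,ca^{s-2}c^{-1}db^{10-2s}\,}$ has lengths $7,8,7$ at $s=2,3,4$, an increase followed by a decrease. Since each rule of a two-letter-base computation multiplies the sector by letters on the left and right that depend only on the rule, such histories $H_2^k$ are realizable by actual reduced computations, so this is not an artifact of dropping the $S$-machine structure. Consequently your step ``$\max_{a\le j\le b}\|u^{(j)}\|=\max(\|u^{(a)}\|,\|u^{(b)}\|)$'' is unproved (the excess can be positive; at best it is bounded by an additive term of order $\|z\|+\|\bar z\|\le \|H_2\|$, which would further degrade your constants and needs its own argument).

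Even setting that aside, you do not prove the stated inequality: your own bookkeeping only yields the coefficient $5\|H_2\|$ (with the additional $2N$-threshold/run analysis, which is anyway superfluous, since $u^{(j+1)}=\widetilde{Lu^{(j)}R}$ holds for every $j$, not only for ``long'' sectors), and the reduction from $5$ to $3$ is explicitly deferred with an appeal to [26] (``I will carry out that bookkeeping''). A proof of the lemma as stated must either deliver the constant $3$ or show that the weaker constant suffices for every later use; as written, neither is done. So the two things to repair are: (i) replace the false unimodality claim by a correct quantitative statement about $s\mapsto\|\widetilde{L^swR^s}\|$ (e.g.\ that its interior maximum exceeds the endpoint maximum by at most a controlled multiple of $\|L\|+\|R\|$), with a proof that handles the $L_0$--$R_0$ interaction and the conjugator effects; and (ii) actually carry out the sharpening to the stated coefficient, or else verify it is not needed — which is no longer ``purely quantitative'' once (i) has changed the constants.
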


%\begin{proof}
%
%Denote the tape word of $W_i$ as $u_i$ for each $0\leq i\leq t$, so that $|W_i|_a=\|u_i\|$. Fix $r$ and $s$ as the indices for which the subcomputations $W_0\to\dots\to W_m$ and $W_s\to\dots\to W_t$ have history $H-1$ and $H_3$, respectively.
%
%By Lemma \ref{simplify rules}, $\|u_i\|\leq\|u_0\|+2\|H_1\|$ if $0\leq i\leq m$ and $\|u_j\|\leq\|u_t\|+2\|H_3\|$ if $s\leq j\leq t$. So, it suffices to consider $m<i<s$.
%
%For $0\leq j\leq k$, denote $w_j=u_i$ where $i=\|H_1\|+j\|H_2\|$. Then there exist words $v_\ell$ and $v_r$ with $\|v_\ell\|=\|v_r\|\leq\|H_2\|$ such that $w_j=v_\ell w_{j-1}v_r$ for all $1\leq j\leq k$. So $w_j=v_\ell^jw_mv_r^j$ for all $1\leq j\leq k$. 
%
%
%
%\end{proof}

\smallskip

%%%%%%%%%%%%%%%%%%%%%%%%%%%%%%%%%%%%%%%%%%%%%%%%%%%%%%%%%%%%%%%%%

\subsection{Parameters} \

The arguments spanning the rest of this paper are reliant on the \textit{highest parameter principle}, the obvious dual to the lowest parameter principle described in Section 2.5.

In particular, we introduce the relation $<<$ on parameters defined as follows: 

If $\a_1,\a_2,\dots,\a_n$ are parameters with $\a_1<<\a_2<<\dots<<\a_n$, then for all $2\leq i\leq n$, it is understood that $\a_1,\dots,\a_{i-1}$ are assigned prior to the assignment of $\a_i$ and that the assignment of $\a_i$ is dependent on the assignment of its predecessors. The resulting inequalities are then understood as `$\a_i\geq$(any expression involving $\a_1,\dots,\a_{i-1}$)'

This principle makes the series of inequalities used throughout the rest of this proof consistent.

Specifically, the assignment of parameters we use here is:
\begin{align*}
\lambda^{-1}&<<k<<N<<c_0<<c_1<<c_2<<c_3<<c_4<<c_5<<c_6 \\
&<<L_0<<L<<K<<J<<\delta^{-1}<<c_7<<c_8<<N_1<<N_2<<N_3
\end{align*}

\bigskip

%%%%%%%%%%%%%%%%%%%%%%%%%%%%%%%%%%%%%%%%%%%%%%%%%%%%%%%%%%%%%%%%%

\section{Auxiliary Machines}

\subsection{The machine $\textbf{M}_1$} \

Let $n>10^{10}$ be a very large positive odd integer and set $\pazocal{A}=\{a_1,a_2\}$. Define the \textit{language of defining relations of $B(2,n)$} as the set $\pazocal{L}=\{u^n:u\in F(\pazocal{A})\}$. Note that $\pazocal{L}$ is a recursively enumerable set and $B(2,n)=\gen{\pazocal{A}\mid w=1, \ w\in\pazocal{L}}$.

The first recognizing $S$-machine in the construction used in this paper, $\textbf{M}_1$, has hardware $(Y,Q)$, where $Q=\sqcup_{i=0}^sQ_i$ and $Y=\sqcup_{i=1}^sY_i$, and software the set of rules $\Phi$. The machine has one input sector, assumed to be the $Q_0Q_1$-sector, whose tape alphabet $Y_1$ is a copy of $\pazocal{A}$; for simplicity, we identify $Y_1$ with $\pazocal{A}$. The main property of $\textbf{M}_1$ is the following:

\begin{lemma} \label{M_1 language}

There exists a recognizing $S$-machine as described above with language of accepted words $\pazocal{L}$. Moreover, for all $u^n\in\pazocal{L}$, there exists an accepting computation of length at most $c_0\|u\|$.

\end{lemma}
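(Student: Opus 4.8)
The plan is to build $\textbf{M}_1$ as a composition of two pieces: a non-deterministic "guesser" that writes a word $u\in F(\pazocal{A})$ into the input sector together with a parallel copy of it in an auxiliary sector, and then a "verifier" that reads the auxiliary copy $n$ times while canceling against the input, so that the machine accepts precisely when the input sector contained $u^n$. Concretely, I would use the standard multi-tape $S$-machine architecture: the part $Q_0Q_1$ is the input sector with tape alphabet $Y_1=\pazocal{A}$, and I introduce additional parts $Q_2,\dots,Q_s$ whose sectors hold working copies of $u$ over disjoint copies $\pazocal{A}^{(j)}$ of the alphabet $\pazocal{A}$. Since $\pazocal{L}=\{w^n : w\in F(\pazocal{A})\}$ is recursively enumerable — indeed it is directly enumerable by a Turing machine in time polynomial in $\|w\|$ — one could invoke the general simulation of Turing machines by $S$-machines (as in [19], [24], [26]), but here a hand-built machine is cleaner because it gives the sharp linear time bound.

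The key steps, in order, are as follows. First, a block of rules that, starting from the input configuration with input $u$ in the $Q_0Q_1$-sector, sweeps a copy of $u$ letter-by-letter into the "storage" sector; by Lemma \ref{multiply one letter} this copying phase can be realized so that its history length is linear in $\|u\|$ and the $a$-length of every intermediate configuration is $O(\|u\|)$. Second, a counting/verification loop: using a bounded counter encoded in the state letters (the start/end state letters of a designated part of $Q$, cycling $n$ times), the machine repeatedly transfers one copy of $u$ from storage into the input sector canceling it against what is there. After $n$ passes the input sector is empty if and only if the original input was literally $u^n$ (freely reduced), at which point the machine enters the accept configuration. Third, I would invoke Lemma \ref{simplify rules} to replace this machine by an equivalent one in which every part of every rule has a one-letter base and multiplies by at most one tape letter, so that the $S$-machine satisfies the normal form used elsewhere in the paper. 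Finally, I would record the time estimate: the copying phase costs $O(\|u\|)$ steps, each of the $n$ verification passes costs $O(\|u\|)$ steps, so the total accepting time is $O(n\|u\|)$, and the constant is absorbed into $c_0$ (which by the parameter hierarchy $N<<c_0$ is chosen large enough). That $\pazocal{L}$ is exactly the accepted language follows because an accepting computation forces, by the design of the loop, that the input had the form $u^n$, while conversely every $u^n$ is accepted by running the machine as designed.

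The main obstacle is controlling the $a$-length of configurations during the verification loop so that the time bound stays linear, rather than blowing up, and simultaneously ensuring reducedness of computations does not allow "cheating" — i.e. that no shorter or alternative accepting computation exists which would accept a word not in $\pazocal{L}$. This is exactly the kind of bookkeeping that Lemmas \ref{multiply one letter}–\ref{three part history} are designed to handle: the transfer of a single letter between two adjacent sectors is precisely the situation of Lemma \ref{multiply one letter}, and Lemma \ref{three part history} bounds $|W_i|_a$ in terms of the endpoints and the history. I would therefore structure the verification loop so that each elementary transfer is a two-letter-base subcomputation of the type covered by those lemmas, apply them pass-by-pass, and sum. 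The correctness direction (accepted $\Rightarrow$ input in $\pazocal{L}$) is the more delicate half and is handled by analyzing the possible reduced histories: because the loop's state letters cycle through $n$ distinct "phase" values, any accepting history must, up to the reductions allowed, consist of $n$ full phases, and tracking the tape content through these phases shows the input word must be freely equal to — hence, being reduced, literally equal to — an $n$-th power. The remaining details (explicit rule lists, the exact copy alphabets, verifying admissibility is preserved) are routine given the machinery already set up in Section 3.
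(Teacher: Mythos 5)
Your route is genuinely different from the paper's: the paper does not construct $\textbf{M}_1$ at all, but quotes the existence of a recognizing $S$-machine with language $\pazocal{L}$ from [24] and [27] (which uses only the recursive enumerability of $\pazocal{L}$) and attributes the linear bound on accepting computations to the main machine of [23]. You instead propose to build a guess-and-divide machine by hand. That is a reasonable thing to attempt, and your observation that the generic Turing-machine simulation cannot deliver the ``moreover'' clause is sound. Your time estimate is also consistent with how the lemma is used later: since every rule changes the $a$-length of a configuration by a bounded amount, any accepting computation for $u^n$ has length at least proportional to $n\|u\|$, so $c_0$ necessarily absorbs the exponent $n$; this is legitimate because $n$ is fixed before $\textbf{M}_1$ and its constants are chosen, but note it is the exponent $n$, not the base-length parameter $N$ in the hierarchy $N<<c_0$, that has to be absorbed.

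The genuine gap is the half you yourself call delicate: accepted $\Rightarrow$ input in $\pazocal{L}$. Rules of an $S$-machine are invertible and never inspect the content of a sector, so a reduced accepting computation is under no obligation to run your machine ``as designed,'' and nothing in your sketch forces the same period $u$ to be used in all $n$ passes of the verification loop. If each pass consumes the stored copy, the machine must regenerate it before the next pass, and a reduced computation may regenerate a different word; the machine as described would then accept every product $u_1u_2\cdots u_n$ of $n$ arbitrary words, a far larger language than $\pazocal{L}$. If instead the stored copy is read non-destructively, you need an explicit comparison mechanism (the role played by the primitive machines $\textbf{LR}$ and $\textbf{RL}$ later in the paper), locks built into the connecting rules so that a reduced history cannot backtrack across phase boundaries or re-enter a phase with different nondeterministic choices, and a mechanism that empties the storage sector by the accept configuration. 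Proving all of this requires a step-history analysis showing that an arbitrary reduced accepting computation decomposes, sector by sector and phase by phase, into the special two-letter-base computations to which Lemmas \ref{multiply one letter}--\ref{three part history} apply; those lemmas are the right local tools, but they do not by themselves yield that decomposition. This analysis is precisely the content of the machine of [23] that the paper invokes (and the analogue of what Sections 4.2--4.9 do for $\textbf{M}_2$--$\textbf{M}_5$); it is not routine bookkeeping, and without it the lemma is not established by your argument.
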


The existence of a such a machine with language of accepted words $\pazocal{L}$ is proved in [24] and [27] (relying only on the fact that $\pazocal{L}$ is recursively enumerable), while the main machine of [23] can be shown to satisfy the entire statement of Lemma \ref{M_1 language}.

\smallskip

%%%%%%%%%%%%%%%%%%%%%%%%%%%%%%%%%%%%%%%%%%%%%%%%%%%%%%%%%%%%%%%%%

\subsection{Historical sectors} \

Through the rest of this section, we alter the machine $\textbf{M}_1$ in specific ways to create the machines $\textbf{M}_2-\textbf{M}_5$ which satisfy properties that will be desirable for the construction of the main machine $\textbf{M}$. Many of the techniques used for these alterations are similar or identical to those used in [19], [22], and [26]. So, the proofs of many of the pertinent lemmas will be omitted, replaced instead with references to the proofs presented in those sources.

First, we introduce new sectors to the machine $\textbf{M}_1$, called \textit{historical sectors}, to obtain the recognizing $S$-machine $\textbf{M}_2$. The role of these sectors is the following: Given a computation of $\textbf{M}_1$ in the standard base, the same computation will execute identically in the \textit{working sectors}, i.e the analogues of the original sectors, while writing copies of its history in the newly added historical sectors.

The precise construction of $\textbf{M}_2$ is as follows.

Writing the hardware of $\textbf{M}_1$ as $(Y,Q)$ with $Q=\sqcup_{i=0}^sQ_i$ and $Y=\sqcup_{i=1}^sY_i$, the hardware of $\textbf{M}_2$ is then $(Y_h,Q_h)$ where $$Q_h=Q_{0,r}\sqcup Q_{1,\ell}\sqcup Q_{1,r}\sqcup Q_{2,r}\sqcup\dots\sqcup Q_{s,\ell}\sqcup Q_{s,r}, \ \ \ Y_h=Y_1\sqcup X_1\sqcup Y_2\sqcup\dots\sqcup X_{s-1}\sqcup Y_s\sqcup X_s$$

For simplicity of notation, set $Q_{0,\ell}=\emptyset$.

Here, for each $i$, $Q_{i,\ell}$ and $Q_{i,r}$ are copies of $Q_i$ (where $q_{i,\ell}\in Q_{i,\ell}$ and $q_{i,r}\in Q_{i,r}$ correspond to each $q_i\in Q_i$) and $X_i=X_{i,\ell}\sqcup X_{i,r}$ consists of two disjoint copies of $\Phi^+$, called the \textit{left} and \textit{right} history alphabets.

The positive rules of $\textbf{M}_2$ are in one-to-one correspondence with the positive rules of $\textbf{M}_1$: If $\theta=[q_0\to a_0q_0'b_1, \dots ,q_s\to a_sq_s'b_{s+1}]$ is a positive rule of $\textbf{M}_1$ (with $a_0$ and $b_{s+1}$ necessarily empty), then the corresponding positive rule $\theta_h$ of $\textbf{M}_2$ is made up of the pairs of parts $$q_{i,\ell}\to a_iq_{i,\ell}'(\theta_i^{(\ell)})^{-1}, \ \ \ q_{i,r}\to \theta_i^{(r)}q_{i,r}'b_{i+1}$$ where $\theta_i^{(\ell)}$ and $\theta_i^{(r)}$ are the copies of $\theta$ in the alphabet $X_{i,\ell}$ and $X_{i,r}$, respectively. For each $i$, the domain of $\theta_h$ in the $Q_{i,r}Q_{i+1,\ell}$-sector is $Y_i(\theta)$, while its domain in the $Q_{i,\ell}Q_{i,r}$-sector is $X_i$. In particular, no $Q_{i,\ell}Q_{i,r}$-sector is locked by a rule of $\textbf{M}_2$.

Note that as desired, for each $\theta\in\Phi$, the rule $\theta_h$ operates in the $Q_{i,r}Q_{i+1,\ell}$-sector in the same way that $\theta$ operates in the $Q_iQ_{i+1}$-sector in a computation of $\textbf{M}_1$.

In the standard base, the $Q_{i,r}Q_{i+1,\ell}$-sectors are called the \textit{working sectors} while the $Q_{i,\ell}Q_{i,r}$-sectors are called the \textit{historical sectors}. In a nonstandard base, sectors of the form $Q_{i,\ell}Q_{i,\ell}^{-1}$, $Q_{i,r}^{-1}Q_{i,r}$, and $Q_{i,r}^{-1}Q_{i,\ell}^{-1}$ are also called historical.

The start (or end) state letter for each part of the state letters of $\textbf{M}_2$ is the copy of the corresponding start (or end) letter in $\textbf{M}_1$, while the working sector corresponding to the input sector of $\textbf{M}_1$ (i.e the $Q_{0,r}Q_{1,\ell}$-sector) is the input sector of $\textbf{M}_2$.

For $w\in F(\pazocal{A})$ and $H_1\in F(\Phi^+)$, define $I_2(w,H_1)$ as the start configuration with $w$ written in the input sector, the copy of $H_1$ in the left alphabet $X_{i,\ell}$ written in the $Q_{i,\ell}Q_{i,r}$-sector for each $i$, and all other sectors empty. Conversely, define $A_2(H_1)$ as the end configuration with the copy of $H_1$ in the right alphabet $X_{i,r}$ written in the $Q_{i,\ell}Q_{i,r}$-sector for each $i$ and all other sectors empty.

\begin{lemma} \label{M_2 language}

(1) For any $u^n\in\pazocal{L}$, there exists $H_1\in F(\Phi^+)$ and a reduced computation $I_2(u^n,H_1)\equiv W_0\to\dots\to W_t\equiv A_2(H_1)$ of the machine $\textbf{M}_2$ satisfying $t=\|H_1\|\leq c_0\|u\|$.

(2) If there exists a computation $I_2(w,H_1)\to\dots\to A_2(H_1')$ of $\textbf{M}_2$ for $w\in F(\pazocal{A})$, then $w\in\pazocal{L}$ and $H_1'\equiv H_1$.

\end{lemma}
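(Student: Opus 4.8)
The plan is to derive Lemma \ref{M_2 language} directly from the properties of $\textbf{M}_1$ recorded in Lemma \ref{M_1 language} together with the explicit construction of $\textbf{M}_2$ from $\textbf{M}_1$ described above. The key observation is that the map $\theta\mapsto\theta_h$ is a bijection $\Phi^+\to\Phi^+(\textbf{M}_2)$, and in any computation of $\textbf{M}_2$ in the standard base the working sectors $Q_{i,r}Q_{i+1,\ell}$ evolve exactly as the sectors $Q_iQ_{i+1}$ evolve in the corresponding computation of $\textbf{M}_1$. So there should be a ``projection'' functor from computations of $\textbf{M}_2$ in the standard base to computations of $\textbf{M}_1$ in the standard base (forget the historical sectors, identify $\theta_h$ with $\theta$), and conversely a ``lift'' of any $\textbf{M}_1$-computation to an $\textbf{M}_2$-computation.

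For part (1): given $u^n\in\pazocal{L}$, Lemma \ref{M_1 language} yields an accepting computation $\pazocal{C}$ of $\textbf{M}_1$ of length $t\leq c_0\|u\|$, with history $H_1$ (a positive word after reducing consecutive mutually inverse rules, which does not lengthen it; if $H_1$ fails to be reduced, replace $\pazocal{C}$ by its reduced form without changing endpoints). Lift $\pazocal{C}$ to $\textbf{M}_2$ starting from $I_2(u^n,H_1)$: I would check inductively that after applying $(\theta_1)_h,\dots,(\theta_j)_h$, the working sectors carry exactly the tape words of the $j$-th configuration of $\pazocal{C}$, the historical sector $Q_{i,\ell}Q_{i,r}$ carries (the left-copy of) $H_1$ with the prefix $\theta_1\cdots\theta_j$ moved to its right-copy version — concretely, the part $q_{i,\ell}\to a_iq_{i,\ell}'(\theta_i^{(\ell)})^{-1}$ peels one left-history letter off while $q_{i,r}\to\theta_i^{(r)}q_{i,r}'b_{i+1}$ appends one right-history letter, and these are exactly inverse/forward copies of the rule applied. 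After $t$ steps all of $H_1$ has migrated from the left alphabet to the right alphabet and the working sectors are empty, i.e. we have reached $A_2(H_1)$. The length identity $t=\|H_1\|\leq c_0\|u\|$ is immediate since $\|H_1\|$ is the number of rules applied. One must also confirm the computation stays reduced: $\pazocal{C}$ is reduced and $\theta\mapsto\theta_h$ preserves this.

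For part (2): suppose $I_2(w,H_1)\to\dots\to A_2(H_1')$ is a computation of $\textbf{M}_2$. Project it to $\textbf{M}_1$ by deleting the state letters $Q_{i,\ell}$ and the tape content of the historical sectors and relabeling $\theta_h\mapsto\theta$; by construction of the rules this is a bona fide computation of $\textbf{M}_1$ from the start configuration with $w$ in the input sector and all other working sectors empty, to the end configuration with everything empty — i.e. an accepting computation of $\textbf{M}_1$ on input $w$. Hence $w\in\pazocal{L}$ by Lemma \ref{M_1 language}. For $H_1'\equiv H_1$: track the historical sectors. In each $Q_{i,\ell}Q_{i,r}$-sector, every rule $\theta_h$ simultaneously removes $\theta_i^{(\ell)}$ from the left end of the left-history word and adds $\theta_i^{(r)}$ to the right end of the right-history word; since these sectors are never locked, the concatenation of (left word read backwards)$\cdot$(history applied so far)$\cdot$(right word) is an invariant of the computation. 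At the start the right word is empty and the left word is the copy of $H_1$; at the end the left word is empty and the right word is the copy of $H_1'$; therefore $H_1'$ equals $H_1$ as elements of $F(\Phi^+)$, and since both are reduced, $H_1'\equiv H_1$. (One should note the computation may be taken reduced, and check the invariant survives the reductions forced to keep words admissible.)

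I expect the main obstacle to be the bookkeeping in part (2): one has to argue carefully that the historical-sector content is \emph{forced} to be a single migrating copy of a history word and cannot ``desynchronize'' between different indices $i$ or accumulate spurious letters, and that reductions performed during rule applications (to keep words admissible) do not destroy the invariant. The cleanest way is probably to invoke Lemma \ref{multiply one letter} applied to each historical sector separately: each rule of $\textbf{M}_2$ multiplies the $Q_{i,\ell}Q_{i,r}$-sector by one history letter (on the left of one copy, on the right of the other), different rules by different letters, so that lemma pins down the word in each historical sector as a reduced history and links it to the sequence of rules applied — exactly as in the analogous constructions of [19], [22], [26], to which one can also simply refer.
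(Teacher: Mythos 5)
Your proof is essentially correct and follows the same route as the paper: lift the reduced $\textbf{M}_1$ accepting computation to $\textbf{M}_2$ for part (1), and for part (2) project to $\textbf{M}_1$ for $w\in\pazocal{L}$ and track the historical sectors to identify $H_1$ and $H_1'$ with the history $H$. The invariant you describe explicitly (left word in $X_{i,\ell}$ representing $(\theta_1\cdots\theta_j)^{-1}H_1$, right word in $X_{i,r}$ representing $\theta_1\cdots\theta_j$, the factorization preserved since the two alphabets are disjoint and cancellations stay within one factor) is exactly the observation the paper makes more tersely. The only misstep is in your closing suggestion to invoke Lemma~\ref{multiply one letter}: that lemma requires each rule to multiply the sector by a single letter on one side only, whereas each rule of $\textbf{M}_2$ multiplies the historical sector on \emph{both} sides (by $(\theta_i^{(\ell)})^{-1}$ on the left and $\theta_i^{(r)}$ on the right). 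The applicable analogue would be Lemma~\ref{multiply two letters}, but even that only gives length bounds, not the history correspondence — your direct invariant argument is actually the right tool, and the lemma invocation can simply be dropped.
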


\begin{proof} 

(1) By Lemma \ref{M_1 language}, there exists a computation of $\textbf{M}_1$ with history $H_1\in F(\Phi^+)$ accepting $u^n$ and such that $\|H_1\|\leq c_0\|u\|$. Identifying each rule of $\textbf{M}_2$ with the corresponding rule of $\textbf{M}_1$, the computation of $\textbf{M}_2$ with history $H_1$ and initial configuration $I_2(u^n,H_1)$ satisfies the statement.

(2) Let $H$ be the history of the computation in question. Since $\textbf{M}_2$ operates as $\textbf{M}_1$ in the working sectors, it follows that $w$ is accepted by $\textbf{M}_1$, so that $w\in\pazocal{L}$ by Lemma \ref{M_1 language}. Note that in each historical sector, the computation multiplies on the left by the copy of $H^{-1}$ in the left alphabet and on the right by the copy of $H$ in the right alphabet. Since the historical sectors of $I_2(w,H_1)$ (of $A_2(H_1')$) have no letters from the right alphabets (from the left alphabets), we then see that $H_1$ and $H_1'$ must be the copy of $H$ in $F(\Phi^+)$.

\end{proof}

%\begin{lemma} \label{historical words increase}
%
%Let $W_0\to\dots\to W_t$ be a reduced computation of $\textbf{M}_2$ with base $Q_{i,\ell}Q_{i,r}$. Suppose $|W_0|_a<|W_1|_a$. Then $|W_{i-1}|_a<|W_i|_a$ for all $1\leq i\leq t$.
%
%\end{lemma}
%
%\begin{proof}
%
%Set $W_0\equiv q_{i,\ell}v_0q_{i,r}$ for some $q_i\in Q_i$. Then $W_1\equiv q_{i,\ell}'\a_\ell^{-1}v_0\a_rq_{i,r}'$ for some $q_i'\in Q_i$, where $\a_\ell$ and $\a_r$ are the copies of the first rule of the computation in $F(X_{i,\ell})$ and $F(X_{i,r})$, respectively. If either letter cancels, then $|W_1|_a\leq|W_0|_a$. So, $\a_\ell^{-1}v_0\a_r$ is a reduced word.
%
%It then follows that $W_2\equiv q_{i,\ell}''(\a_\ell\b_\ell)^{-1}v_0\a_r\b_rq_{i,r}''$ for some $q_i''\in Q_i$, where $\b_\ell$ and $\b_r$ are the copies of the second rule of the computation. As $\a_\ell^{-1}v_0\a_r$ is reduced, any cancellation in this tape word would have to come from $\a_\ell\b_\ell$ or $\a_r\b_r$; however, this would imply that the first two rules of the computation are mutually inverse, contradicting the assumption that the computation is reduced.
%
%\end{proof}

A benefit of adding historical sectors is in providing a linear estimate for the lengths of computations $W_0\to\dots\to W_t$ in terms of $\|W_0\|$ and $\|W_t\|$, as evidenced from the following statements from previous literature.

\begin{lemma} \label{one alphabet historical words}

\textit{(Lemma 3.9 of [19])} Let $W_0\to\dots\to W_t$ be a reduced computation of $\textbf{M}_2$ with base $Q_{i,\ell}Q_{i,r}$ and history $H$. Assume that all tape letters of $W_0$ belong to either $X_{i,\ell}$ or $X_{i,r}$. Then $\|H\|\leq|W_t|_a$ and $|W_0|_a\leq |W_t|_a$.

\end{lemma}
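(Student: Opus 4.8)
The plan is to analyze the computation sector by sector, using the special structure of the historical sectors established in the construction of $\textbf{M}_2$: in any $Q_{i,\ell}Q_{i,r}$-sector, every rule $\theta_h$ multiplies on the left by the copy $(\theta_i^{(\ell)})^{-1}\in X_{i,\ell}$ and on the right by the copy $\theta_i^{(r)}\in X_{i,r}$, and distinct rules correspond to distinct letters of these (disjoint) alphabets. This is precisely the hypothesis of Lemma \ref{multiply two letters}, so after reducing the computation (which changes neither $W_0$ nor $W_t$), I would invoke that lemma for the base $Q_{i,\ell}Q_{i,r}$. Since all tape letters of $W_0$ lie in $X_{i,\ell}\cup X_{i,r}$ and these are disjoint copies of $\Phi^+$, the left and right contributions never interfere.

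First I would set up notation: let $H$ be the history of the reduced computation, $u_j$ the tape word of $W_j$, and write $u_j=L_j R_j$ where $L_j$ is the maximal prefix over $X_{i,\ell}^{\pm1}$ and $R_j$ the maximal suffix over $X_{i,r}^{\pm1}$ (this decomposition is well-defined because no cancellation can occur between an $X_{i,\ell}$-letter and an $X_{i,r}$-letter). By the form of the rules, at each step $L_j$ is multiplied on the left by a single letter from $X_{i,\ell}^{\pm1}$ and $R_j$ on the right by a single letter from $X_{i,r}^{\pm1}$, with different rules giving different letters. Applying Lemma \ref{multiply one letter} (part $(a)$) separately to the sequences $(L_j)$ and $(R_j)$ — reading $L_j$ as a one-letter-on-the-left computation and $R_j$ as a one-letter-on-the-right computation — gives that $H$ is (a copy of) the reduced form of $u_t u_0^{-1}$ read appropriately, and in particular $\|H\|\le \|L_0\|+\|L_t\|+\|R_0\|+\|R_t\|$ via part $(b)$. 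But more is true: because the computation is reduced and each side is governed by Lemma \ref{multiply one letter}, the length of each side is monotone once it starts increasing (part $(c)$), so the extremal lengths occur at the endpoints and $\|u_j\|\le\max(\|u_0\|,\|u_t\|)$ for all $j$ (part $(d)$, applied to each side).

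To get the two claimed inequalities I then use the specific endpoint hypothesis. By hypothesis all tape letters of $W_0$ lie in a single alphabet, say $X_{i,\ell}$ (the case $X_{i,r}$ is symmetric); so $R_0$ is empty and $u_0=L_0$. Then $\|H\|\le\|L_0\|+\|L_t\|+\|R_t\|$; but one must rule out the possibility that $\|L_0\|$ contributes more than $|W_t|_a$. Here I would argue that a reduced computation in which the $X_{i,\ell}$-side only ever grows would force $\|L_t\|\ge\|L_0\|$, hence $|W_t|_a=\|L_t\|+\|R_t\|\ge\|L_0\|=|W_0|_a$, giving the second inequality; and since the $X_{i,r}$-side contributes exactly one letter per step of $H$ while the $X_{i,\ell}$-side also contributes exactly one letter per step, in fact $\|H\|=\|R_t\|$ (the right side started empty and only ever grew, one letter per step) and also $\|H\|=\|L_0\|-$ (net change, which is at most $\|L_t\|$... ) — more carefully, since each rule writes exactly one $X_{i,r}$-letter on the right and $R_0=\emptyset$, reducedness of the history forces $R_t$ to record $H$ letter-for-letter, so $\|H\|=\|R_t\|\le|W_t|_a$, the first inequality. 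I expect the main obstacle to be the bookkeeping at this last point: carefully justifying that no cancellation collapses the right-history word $R_t$ below length $\|H\|$, which is exactly where one needs the reducedness of $\pazocal{C}$ together with the fact that consecutive rules $\theta_j\theta_{j+1}$ with $\theta_{j+1}=\theta_j^{-1}$ are the only way to create cancelling adjacent letters in $R_t$ — so I would cite Lemma \ref{multiply one letter}$(a)$ (the history is literally a copy of $R_t$ read appropriately) to close this cleanly, and then the displayed inequalities follow immediately.
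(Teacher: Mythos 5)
Your overall route is the right one (the paper gives no proof of this lemma, only the citation to [19], but the intended argument is exactly the sector-splitting you set up), and your proof of the first inequality is correct: writing the sector word of $W_j$ as $L_jR_j$ with $L_j$ over $X_{i,\ell}$ and $R_j$ over $X_{i,r}$ (legitimate here because $W_0$ is a one-alphabet word and cancellation never crosses the two disjoint alphabets), the hypothesis gives, say, $R_0$ empty; every rule appends exactly one $X_{i,r}$-letter on the right, distinct rules append distinct letters, and reducedness of $H$ therefore forbids any cancellation on the right, so $\|R_t\|=\|H\|\le|W_t|_a$, which is indeed Lemma \ref{multiply one letter}(a) applied to the right-hand side.

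There is, however, a genuine flaw in your argument for $|W_0|_a\le|W_t|_a$: you assert that the $X_{i,\ell}$-side ``only ever grows'', i.e.\ that $\|L_t\|\ge\|L_0\|$. This is false in general. A positive rule multiplies the sector on the left by $(\theta_i^{(\ell)})^{-1}$, and such letters can cancel against $L_0$; in fact the machine is built precisely so that they do --- in the accepting computation of Lemma \ref{M_2 language}(1), the left copy of $H_1$ written in $I_2(u^n,H_1)$ is erased entirely, so there $\|L_t\|=0<\|L_0\|$. The inequality you want holds for a different reason: in the free group $L_t$ equals the left-alphabet copy of $H^{-1}$ times $L_0$, so $\|L_0\|\le\|H\|+\|L_t\|=\|R_t\|+\|L_t\|=|W_t|_a$; equivalently, each step deletes at most one letter from $L_j$ while adding exactly one uncancelled letter to $R_j$, so $|W_j|_a$ is non-decreasing in $j$. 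With that sentence replaced, your proof is complete (the case where $W_0$ is a word over $X_{i,r}$ is symmetric, as you say). A side remark: Lemma \ref{multiply two letters} by itself cannot give either inequality --- its part (c) only yields $\|H\|\le\frac{1}{2}(|W_0|_a+|W_t|_a)$ --- so the endpoint analysis in your second half, once corrected as above, is where all the content lies.
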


%\begin{proof}
%
%Let $v_j$ be the tape word of $W_j$ for each $j$ and suppose $v_0\in F(X_{i,\ell})$. Then $v_t=u_\ell^{-1}v_0u_r$ where $u_\ell$ and $u_r$ are the copies of $H$ in $F(X_{i,\ell})$ and $F(X_{i,r})$, respectively. Then no letter of $u_r$ can be cancelled in the product, so that $|W_t|_a=\|u_\ell^{-1}v_0\|+\|u_r\|=\|u_\ell^{-1}v_0\|+\|H\|$. This implies that $|W_t|_a\geq\|H\|$. As $\|u_\ell^{-1}v_0\|\geq\|v_0\|-\|u_\ell^{-1}\|=|W_0|_a-\|H\|$, it also implies $|W_t|_a\geq|W_0|_a$.
%
%The analagous argument implies the statement if $v_0\in F(X_{i,r})$.
%
%\end{proof}

\begin{lemma} \label{M_2 restriction}

\textit{(Lemma 3.13 of [22])} Suppose a reduced computation $W_0\to\dots\to W_t$ of $\textbf{M}_2$ has base of length at most some constant $N_0$ and containing a subword $Q_{i,\ell}Q_{i,r}$. If $W_0$ has no letters from $X_{i,\ell}$ or no letters from $X_{i,r}$, then there is a constant $c=c(N_0)$ such that $|W_0|_a\leq c|W_t|_a$.

\end{lemma}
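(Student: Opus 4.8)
The plan is to induct on the length $t$ of the computation, reducing the claim for $\textbf{M}_2$ with an arbitrary base containing a historical subword $Q_{i,\ell}Q_{i,r}$ to the one-historical-sector estimate of Lemma \ref{one alphabet historical words}, together with the elementary computation lemmas (Lemmas \ref{multiply one letter}--\ref{three part history}) applied to the remaining two-letter sub-bases. First I would observe that, since the base has length at most $N_0$, one can split it into at most $N_0-1$ two-letter sectors and track how each rule of the computation acts on each of them. Because $\textbf{M}_2$ is built from $\textbf{M}_1$ by adding historical sectors, every rule $\theta_h$ acts on a non-historical two-letter sector exactly as some rule of $\textbf{M}_1$ does (hence by multiplication by at most one letter on each side, with different rules giving different letters), and on each historical sector by multiplying on the left by a copy of $H^{-1}$ in the left alphabet and on the right by a copy of $H$ in the right alphabet.

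The key step is the treatment of the distinguished historical sector $Q_{i,\ell}Q_{i,r}$. Here the hypothesis is exactly that of Lemma \ref{one alphabet historical words}: $W_0$ has no letters from $X_{i,\ell}$ or none from $X_{i,r}$, so all tape letters of the projection of $W_0$ onto this sector lie in a single history alphabet. That lemma then gives $\|H\|\le |W_t|_a$ and the projection estimate $|W_0|_{a,\,\text{this sector}}\le |W_t|_{a,\,\text{this sector}}$. The crucial gain is the bound $\|H\|\le |W_t|_a$ on the history length itself, which is what makes the whole base estimate work: once the history length is linearly bounded by $|W_t|_a$, I can feed $H$ (or, if needed, a decomposition $H\equiv H_1H_2^kH_3$ obtained from Lemma \ref{unreduced base} applied to any sectors of the form $Q_{j,\ell}Q_{j,\ell}^{-1}$ etc., with $\|H_2\|\le\min(\|u\|,\|u'\|)$) into Lemma \ref{three part history} to bound $|W_j|_a$ for intermediate $j$, and in particular bound $|W_0|_a$ by a constant multiple of $|W_t|_a + \|H\| \le (1 + \text{const})|W_t|_a$.

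More concretely, I would argue sector by sector. For each non-historical two-letter sub-base $Q_jQ_{j+1}$ (in the appropriate working/state-letter sense), the rules multiply by at most one letter per side with distinct rules giving distinct letters, so Lemma \ref{multiply one letter}(d) (or Lemma \ref{multiply two letters}(b), or Lemma \ref{unreduced base} for unreduced sub-bases) bounds the length of the projection of every $W_j$ in terms of the lengths of the projections of $W_0$ and $W_t$. For each historical sub-base other than the distinguished one, the same multiply-by-one-letter-per-side structure holds (the left/right history alphabets are disjoint), so the same lemmas apply. For the distinguished sector, Lemma \ref{one alphabet historical words} gives both the length control and, vitally, $\|H\|\le |W_t|_a$. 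Summing the at most $N_0$ sector estimates and using $\|H\|\le|W_t|_a$ to absorb the terms proportional to the history length (which appear via Lemma \ref{three part history} or directly from the multiplication lemmas), one gets $|W_0|_a\le c(N_0)|W_t|_a$ with $c(N_0)$ depending only on $N_0$ (and on the fixed machine $\textbf{M}_2$, hence ultimately only on $N_0$).

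The main obstacle I anticipate is bookkeeping rather than a genuinely new idea: one must be careful that the base, being possibly unreduced and of length up to $N_0$, may contain sub-bases of the forms $Q_{j}Q_j^{-1}$, $Q_j^{-1}Q_j$, and in particular $Q_{i,r}^{-1}Q_{i,\ell}^{-1}$-type historical sectors, so the clean "multiply by one letter, distinct rules distinct letters" hypothesis must be verified in each case and the decomposition $H\equiv H_1H_2^kH_3$ from Lemma \ref{unreduced base} invoked where the sub-base is unreduced; the exponent $k$ is then controlled because $\|H_2\|$ is bounded by the (small) tape length in some sector while the total history length is bounded by $|W_t|_a$ via the distinguished historical sector. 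A secondary subtlety is making sure the presence of at least one genuine historical subword $Q_{i,\ell}Q_{i,r}$ with the one-alphabet hypothesis is what breaks the potential circularity — without it, no bound on $\|H\|$ is available and the statement is false — so the proof must use that hypothesis essentially, exactly as in the cited Lemma 3.13 of [22].
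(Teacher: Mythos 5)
The paper itself gives no proof of this lemma (it is quoted from Lemma 3.13 of [22]), so your argument can only be judged on its own merits, and its load-bearing part is correct: restricting the computation to the distinguished historical sub-base $Q_{i,\ell}Q_{i,r}$, the one-alphabet hypothesis puts you exactly in the situation of Lemma \ref{one alphabet historical words}, which yields the crucial bound $\|H\|\leq|W_t|_a$; after that, the conclusion follows because each rule of $\textbf{M}_2$ changes the $a$-length of each of the at most $N_0$ sectors by a bounded amount (at most two letters per sector, by the explicit form of the rules or Lemma \ref{simplify rules}), so $|W_0|_a\leq|W_t|_a+2N_0\|H\|\leq(1+2N_0)|W_t|_a$, giving $c(N_0)=1+2N_0$. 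Where your write-up should be tightened is the sector-by-sector scaffolding, which is both heavier than needed and partly misapplied: for the working sectors the hypothesis of Lemma \ref{multiply one letter} (each rule multiplies by one letter, different rules by different letters) need not hold, since those sectors run arbitrary rules of $\textbf{M}_1$ that may insert, delete, or lock; and Lemma \ref{three part history} bounds intermediate configurations in terms of \emph{both} $|W_0|_a$ and $|W_t|_a$, so it cannot by itself deliver a bound on $|W_0|_a$. Neither defect is fatal, because once $\|H\|\leq|W_t|_a$ is in hand the trivial per-step length-change estimate replaces all of that machinery; I recommend stating the argument in that two-line form, keeping Lemma \ref{one alphabet historical words} as the only substantive ingredient, exactly as you correctly identified it to be.
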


%\begin{proof}
%
%Let $V_0\to\dots\to V_t$ be the restiction of the computation to the $Q_{i,\ell}Q_{i,r}$-sector. By Lemma \ref{one alphabet historical words}, $t\leq|V_t|_a$ and $|V_0|_a\leq|V_t|_a$.
%
%Lemma \ref{simplify rules} then implies that $|W_0|_a\leq |W_t|_a+2(N_0-2)t\leq|W_t|_a+(2N_0-4)|V_t|_a\leq(2N_0-3)|W_t|_a$, so the statement holds for $c\geq2N_0-3$.
%
%\end{proof}

\begin{lemma} \label{M_2 bound}

\textit{(Lemma 3.12 of [19])} For any reduced computation $W_0\to\dots\to W_t$ of $\textbf{M}_2$ with base of length at least three, $|W_i|_a\leq9(|W_0|_a+|W_t|_a)$ for all $0\leq i\leq t$.

\end{lemma}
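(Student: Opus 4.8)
The plan is to split the statement sector by sector and then feed each piece to the two-letter-base lemmas of Section 3.2. Write the common base of the $W_i$ as $B=B_1B_2\cdots B_n$ with $n\ge3$. This partitions each $W_i$ into its $n-1$ sectors $S_1,\dots,S_{n-1}$, so that $|W_i|_a=\sum_{k=1}^{n-1}|W_i|_{a,S_k}$, where $|W|_{a,S}$ is the $a$-length of the content of $W$ in the sector $S$; likewise $|W_0|_a+|W_t|_a=\sum_k\bigl(|W_0|_{a,S_k}+|W_t|_{a,S_k}\bigr)$. It therefore suffices to bound, for each $k$, the quantity $|W_i|_{a,S_k}$ by an absolute constant times the endpoint contents $|W_0|_{a,S_j}+|W_t|_{a,S_j}$ of $S_k$ and of its at most two neighbours $S_{k\pm1}$: since every sector is a neighbour of at most two others, summing such local estimates over $k$ multiplies the constant by at most $3$, and with the right numerology the total is at most $9$. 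For any $k$ the restriction of the computation to the sub-base $B_kB_{k+1}$ (one first discards the steps that act idly on $S_k$, which only deletes configurations whose $S_k$-content is that of a neighbour) is again a reduced computation with a two-letter base, so the lemmas of Section 3.2 apply to it.

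The key dichotomy is between ``straight'' and ``degenerate'' sectors. Call a historical sector \emph{straight} if it has the form $Q_{i,\ell}Q_{i,r}$ up to inversion. By construction of $\textbf{M}_2$ its content is always a word over $X_i=X_{i,\ell}\sqcup X_{i,r}$, every rule multiplies it by a letter of $X_{i,\ell}^{\pm1}$ on the left and a letter of $X_{i,r}^{\pm1}$ on the right (distinct rules by distinct letters), and it is never locked; hence Lemma \ref{multiply two letters}(b) gives $|W_i|_{a,S_k}\le\max\bigl(|W_0|_{a,S_k},|W_t|_{a,S_k}\bigr)$, and Lemma \ref{multiply two letters}(c) gives the global bound $t\le\tfrac{1}{2}\bigl(|W_0|_{a,S_k}+|W_t|_{a,S_k}\bigr)$ on the length of the whole computation. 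For a working sector $Q_{i,r}Q_{i+1,\ell}$, every rule acts on it as the corresponding rule of $\textbf{M}_1$ acts on the $Q_iQ_{i+1}$-sector, so by Lemma \ref{simplify rules} it changes that sector's $a$-length by at most $2$ per step; since $n\ge3$, non-degenerate working and historical sectors alternate along $B$, so such an $S_k$ has an adjacent straight historical sector $S_{k\pm1}$, and then $|W_i|_{a,S_k}\le|W_0|_{a,S_k}+2i\le|W_0|_{a,S_k}+|W_0|_{a,S_{k\pm1}}+|W_t|_{a,S_{k\pm1}}$ by the global bound. Each straight historical sector is thus charged once for itself and once for each of its at most two working neighbours, i.e.\ with total coefficient at most $3$. (When a working sector is locked at some moments, one splits the restricted history at the (un)lockings, where its content is short, and applies Lemmas \ref{multiply one letter} and \ref{multiply two letters} on the pieces.)

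What remains are the ``turning points'' of $B$, where the base reverses direction and one encounters degenerate sectors such as $Q_{i,\ell}Q_{i,\ell}^{-1}$, $Q_{i,r}^{-1}Q_{i,r}$, $Q_{i,r}^{-1}Q_{i,\ell}^{-1}$, together with any working sectors flanked only by such degenerate sectors. These occur in short contiguous runs. On a degenerate sub-base on which $\textbf{M}_2$ acts by one-sided multiplication, Lemma \ref{unreduced base} writes the restricted history as $H_1H_2^mH_3$ with $\|H_1\|\le|W_0|_{a,S_k}/2$, $\|H_2\|\le\min(|W_0|_{a,S_k},|W_t|_{a,S_k})$, $\|H_3\|\le|W_t|_{a,S_k}/2$, and Lemma \ref{three part history} then bounds $|W_i|_{a,S_k}$ by a bounded multiple of $|W_0|_{a,S_k}+|W_t|_{a,S_k}$; on the degenerate historical sub-bases (where the action is by conjugation) one uses Lemma \ref{one alphabet historical words} and Lemma \ref{M_2 restriction} instead, the latter being applicable because after restriction the base has length at most three and, in the situations where this case arises, one endpoint has letters from only one of $X_{i,\ell},X_{i,r}$ in $S_k$. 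Adding the per-sector estimates and tallying the coefficients gives $|W_i|_a=\sum_k|W_i|_{a,S_k}\le9\bigl(|W_0|_a+|W_t|_a\bigr)$.

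The main obstacle is precisely this last family of cases. For a clean sector the bound is immediate from Lemmas \ref{multiply one letter} and \ref{multiply two letters} and already yields the constant $2$; the real content of the lemma --- and the source of the enlarged constant $9$ --- is keeping every estimate \emph{local}, i.e.\ controlling $|W_i|_{a,S_k}$ through the endpoint contents of $S_k$ and its neighbours rather than through the total time $t$, which for a long base is not bounded by any absolute constant times $|W_0|_a+|W_t|_a$. Confirming that the turning-point runs are short, that the hypotheses of Lemmas \ref{unreduced base}, \ref{one alphabet historical words} and \ref{M_2 restriction} hold there, and that the accumulated coefficients stay below $9$, is the bookkeeping carried out in [19].
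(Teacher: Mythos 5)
Your decomposition strategy (restrict to each two-letter sub-base and apply the Section 3.2 lemmas) is the natural starting point, and the accounting for the case of a straight historical neighbor is sound. However, there is a genuine gap at the step where you reduce the working sectors to an adjacent \emph{straight} historical sector via the global time bound $t\le\tfrac12(|W_0|_{a,S_{k'}}+|W_t|_{a,S_{k'}})$ from Lemma \ref{multiply two letters}(c). Working and historical sectors do alternate along any admissible base, but the historical neighbor of a working sector need not be straight: it can be a degenerate one of the form $Q_{j,\ell}Q_{j,\ell}^{-1}$ or $Q_{j,r}^{-1}Q_{j,r}$. In fact the \emph{entire} base can consist of working and degenerate historical sectors with no subword $(Q_{j,\ell}Q_{j,r})^{\pm1}$ at all; for example $(Q_{i,r}Q_{i+1,\ell}Q_{i+1,\ell}^{-1}Q_{i,r}^{-1})^mQ_{i,r}$ is admissible for every $m\ge1$ (and has length $\ge3$), yet every historical sector in it is degenerate. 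So the assertion that the turning points ``occur in short contiguous runs'' is false, and the fallback you provide for that regime does not close the gap: Lemma \ref{unreduced base} applied to a degenerate historical sector gives $H\equiv H_1H_2^mH_3$ with $\|H_1\|,\|H_2\|,\|H_3\|$ controlled by the endpoint contents but with $m$ (hence $t$) unbounded, while the working sector's rules are those of $\textbf{M}_1$ and need not be one-sided nor multiply by distinct letters, so Lemmas \ref{multiply one letter} and \ref{multiply two letters} do not apply to it directly and the estimate $|W_i|_{a,S_k}\le|W_0|_{a,S_k}+2i$ with $i$ up to $t$ gives no absolute-constant bound.

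Two of the auxiliary lemmas you invoke for the degenerate case also have hypotheses that are not met there: Lemma \ref{one alphabet historical words} is stated for the base $Q_{i,\ell}Q_{i,r}$, and Lemma \ref{M_2 restriction} explicitly requires the base to contain a subword $Q_{i,\ell}Q_{i,r}$; both of these demand precisely the straight historical sector that is absent in the problematic bases. For the record, the paper does not prove this statement --- it cites it directly from [19] --- so there is no in-paper argument to compare against, but the proof in [19] must handle the all-degenerate-historical case by something more than the sector-by-sector charging scheme you describe; as written, your plan would leave the working-sector contents uncontrolled on bases such as the one above.
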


\smallskip

%%%%%%%%%%%%%%%%%%%%%%%%%%%%%%%%%%%%%%%%%%%%%%%%%%%%%%%%%%%%%%%%%

\subsection{Primitive Machines} \

As in the constructions in [19] and [26], we next introduce two machines, denoted $\textbf{LR}(Y)$ and $\textbf{RL}(Y)$ for an alphabet $Y$ and called \textit{primitive machines}, that will be used to alter $\textbf{M}_2$.

The standard base of $\textbf{LR}(Y)$ is $Q^{(1)}PQ^{(2)}$ with $Q^{(1)}=\{q^{(1)}\}$, $P=\{p^{(1)},p^{(2)}\}$, and $Q^{(2)}=\{q^{(2)}\}$. The letter $p^{(1)}$ is the start letter of $P$, while $p^{(2)}$ is the end letter. 

The tape alphabets are two disjoint copies of $Y$, denoted $Y^{(1)}$ and $Y^{(2)}$ and assigned in the natural way.

The positive rules of $\textbf{LR}(Y)$ come in the following three forms:

\begin{addmargin}[1em]{0em}

$\bullet$ $\zeta^{(1)}(a)=[q^{(1)}\to q^{(1)}, \ p^{(1)}\to a_1^{-1}p^{(1)}a_2, \ q^{(2)}\to q^{(2)}]$ for all $a\in Y$, where $a_i$ is its copy in $Y^{(i)}$.

\textit{Comment.} In practice, this moves $p^{(1)}$ left, replacing letters from the $Q^{(1)}P$-sector with their copies in the $PQ^{(2)}$-sector.

\medskip

$\bullet$ $\zeta^{(12)}=[q^{(1)}\xrightarrow{\ell} q^{(1)}, \ p^{(1)}\to p^{(2)}, \ q^{(2)}\to q^{(2)}]$

\textit{Comment.} When $p^{(1)}$ meets $q^{(1)}$, this rule switches it to $p^{(2)}$. This is called the \textit{connecting rule}.

\medskip

$\bullet$ $\zeta^{(2)}(a)=[q^{(1)}\to q^{(1)}, \ p^{(2)}\to a_1p^{(2)}a_2^{-1}, \ q^{(2)}\to q^{(2)}]$ for all $a\in Y$, where $a_i$ is its copy in $Y^{(i)}$.

\textit{Comment.} When $p^{(2)}$ is present, this rule moves it to the right towards $q^{(2)}$ and replaces letters in the $PQ^{(2)}$-sector with their copies in the $Q^{(1)}P$-sector.

\end{addmargin}

The state letters of $P$ are called \textit{running state letters}. In practice, they ``run" left to the adjacent state letter and then right to the other, as is indicated by the name of the machine.

It is useful to note the following two points:

\begin{addmargin}[1em]{0em}

$\bullet$ In a computation of the standard base, each of the rules $(\zeta^{(j)}(a))^{\pm1}$ changes the $a$-length of a configuration by at most two. In particular, it changes the length by two or leaves it the same.

$\bullet$ Consider the projection of configurations onto $F(Y)$ given by sending state letters to the identity and letters from $Y^{(j)}$ to their copies in $Y$. No rule of $\textbf{LR}(Y)$ changes the value of the configuration under this projection. An application of this useful fact is referred to as a \textit{projection argument}.

\end{addmargin}

\begin{lemma} \label{primitive computations}

\textit{(Lemma 3.1 of [19])} Let $\pazocal{C}:W_0\to\cdots\to W_t$ be a reduced computation of $\textbf{LR}(Y)$ in the standard base. Then:

\begin{addmargin}[1em]{0em}

(1) if $|W_{i-1}|_a<|W_i|_a$ for some $1\leq i\leq t-1$, then $|W_i|_a<|W_{i+1}|_a$

(2) $|W_i|_a\leq\max(|W_0|_a,|W_t|_a)$ for each $i$

(3) if $W_0\equiv q^{(1)}up^{(1)}q^{(2)}$ and $W_t\equiv q^{(1)}vp^{(2)}q^{(2)}$ for some $u,v\in F(Y^{(1)})$, then $u\equiv v$, $|W_i|_a=|W_0|_a\defeq l$ for each $i$, $t=2l+1$, and the $Q^{(1)}P$-sector is locked in the rule $W_l\to W_{l+1}$. Moreover, letting $\bar{u}$ be the mirror image of $u$ and $\bar{u}_2$ its canonical copy over $Y^{(2)}$, the history $H$ is a copy of $\bar{u}\zeta^{(12)}\bar{u}_2^{-1}$

(4) if $W_0\equiv q^{(1)}up^{(j)}q^{(2)}$ and $W_t\equiv q^{(1)}vp^{(j)}q^{(2)}$ for some $u,v$ and $j\in\{1,2\}$, then $u\equiv v$ and the computation is empty (i.e $t=0$)

(5) if $W_0$ is of the form $q^{(1)}up^{(1)}q^{(2)}$, $q^{(1)}p^{(1)}uq^{(2)}$, $q^{(1)}up^{(2)}q^{(2)}$, or $q^{(1)}p^{(2)}uq^{(2)}$ for some word $u$, then $|W_i|_a\geq|W_0|_a$ for every $i$.

\end{addmargin}

\end{lemma}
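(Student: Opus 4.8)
The plan is to analyze the machine $\textbf{LR}(Y)$ by the standard technique for primitive machines: show that in any reduced computation the running letter $p$ must travel monotonically toward one of the flanking state letters, so the computation decomposes into at most one ``leftward'' phase, possibly one application of the connecting rule $\zeta^{(12)}$, and at most one ``rightward'' phase. Concretely, each rule $\zeta^{(1)}(a)^{\pm1}$ moves $p^{(1)}$ one step to the left (or right, for the inverse) while transferring a copy of the letter $a$ between the two sectors; because the computation is reduced, one cannot immediately undo a move, and a projection argument (the value in $F(Y)$ is invariant) forces the word being carried to behave like a stack — so a leftward move followed by a rightward move in the same phase would require the carried letters to cancel, which a reduced history forbids. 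This gives part (1): once the $a$-length strictly increases it must keep increasing, because an increase in $a$-length signals that $p^{(1)}$ is moving into the $Q^{(1)}P$-sector (pulling in letters), and reversing direction would be a non-reduced step.

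With part (1) in hand, part (2) is immediate: a reduced computation is ``unimodal down'' — it can only decrease then increase — and in fact by (1) it cannot decrease-then-increase either within a single $p^{(j)}$-phase, so the $a$-length is maximized at an endpoint. For part (3), I would argue that if $W_0$ has $p^{(1)}$ adjacent to $q^{(2)}$ (all letters of $u$ lying in the $Q^{(1)}P$-sector) and $W_t$ has $p^{(2)}$ adjacent to $q^{(2)}$, then the only way to get from a $p^{(1)}$-configuration to a $p^{(2)}$-configuration is to apply the connecting rule $\zeta^{(12)}$ exactly once (Lemma \ref{locked sectors} together with reducedness prevents applying it twice), and this rule requires $p^{(1)}$ to be adjacent to $q^{(1)}$ at that moment. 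So the first phase drags $p^{(1)}$ all the way left, consuming all $l=|W_0|_a$ letters of $u$ and converting them to their $Y^{(2)}$-copies (one rule per letter, in the order dictated by reading $u$ backwards, since $p^{(1)}$ eats the letter immediately to its left), giving $l$ steps; then $\zeta^{(12)}$; then the second phase drags $p^{(2)}$ right, which by the projection argument must restore exactly the letters of $u$ (over $Y^{(1)}$) in reverse-of-reverse order, i.e. $v\equiv u$, taking another $l$ steps. Hence $t=2l+1$, every intermediate configuration has $a$-length $l$, the locking happens at step $l+1$, and the history is the stated copy of $\bar u\,\zeta^{(12)}\,\bar u_2^{-1}$.

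Part (4) follows from the same monotonicity: with $p^{(j)}$ fixed (no connecting rule used, since that would change $p^{(1)}$ to $p^{(2)}$), all rules are of type $\zeta^{(j)}(a)^{\pm1}$, and a nonempty reduced such computation would have to move $p^{(j)}$ off one endpoint and back, contradicting part (1) applied to this phase (the $a$-length would have to strictly increase forever, impossible in finite time, or the history would fail to be reduced). So $t=0$ and $u\equiv v$. Finally, part (5) is a direct consequence of (1) and (2) once one checks the extreme cases: when $p^{(j)}$ starts at an endpoint of the configuration, the only legal first move increases the $a$-length (there is no letter to ``give back'' on that side), so we are already on the increasing branch and $|W_i|_a\ge|W_0|_a$ throughout; the four listed forms are exactly the endpoint cases for $j\in\{1,2\}$.

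I expect the main obstacle to be making the ``stack/reducedness'' argument fully rigorous — precisely, proving part (1), that a strict increase in $a$-length at one step forces a strict increase at the next. The subtlety is ruling out the configuration where $p^{(j)}$ oscillates by carrying and returning letters in a way that is individually reduced (no $\theta\theta^{-1}$) yet globally wasteful; one must use the projection argument to pin down exactly which letter each $\zeta^{(j)}(a)$-rule transfers (it is forced by the adjacent letter, so different rules in the phase do correspond to different letters only when the carried word is reduced), and combine this with the observation that $\zeta^{(1)}(a)$ moves $p^{(1)}$ strictly left while $\zeta^{(2)}(a)$ moves $p^{(2)}$ strictly right. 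Since this is verbatim Lemma 3.1 of [19], I would either cite that proof directly or reproduce this short case analysis; everything downstream (parts 2–5) is then bookkeeping.
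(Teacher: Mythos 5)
The paper does not give its own proof of this lemma; it is quoted from [19] (Lemma~3.1 there) and used as a black box, so your blind reconstruction is filling a gap the paper leaves to the reference. Your overall scheme --- decompose the computation into $\zeta^{(1)}$- and $\zeta^{(2)}$-phases separated by at most one connecting rule, use the projection to $F(Y)$ as an invariant, and apply a Lemma~\ref{multiply one letter}-type ``stack'' argument inside each phase --- is the right one and would support all five parts.

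Two of your local explanations, however, do not hold up as stated. For part~(1), you write that an increase in $a$-length ``signals that $p^{(1)}$ is moving into the $Q^{(1)}P$-sector (pulling in letters)'' and that reversing direction would be non-reduced. That picture is not right: $\zeta^{(1)}(a)$ multiplies the $Q^{(1)}P$-sector on the right by $a_1^{-1}$ and the $PQ^{(2)}$-sector on the left by $a_2$, so the total $a$-length goes up precisely when \emph{neither} end cancels, i.e.\ when a mismatched letter is pushed into both sectors; this is not $p^{(1)}$ ``travelling.'' Moreover, after such a step the next rule $\zeta^{(1)}(b)^{-1}$ with $b\neq a$ is perfectly reduced --- it just happens to grow both sectors again. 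The clean argument is: a length increase forces the $Q^{(1)}P$-sector to be nonempty, so the next rule cannot be $\zeta^{(12)\pm1}$; hence it is another $\zeta^{(j)}$-rule, and applying Lemma~\ref{multiply one letter}$(c)$ to each of the two two-letter restrictions (different rules multiply by different letters) shows both sector lengths keep strictly increasing.

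For part~(5), the claim that ``the only legal first move increases the $a$-length'' is false. Starting from $q^{(1)}p^{(2)}uq^{(2)}$, a rule $\zeta^{(2)}(a)$ with $u$ beginning in $a_2$ shifts one letter from the $PQ^{(2)}$-sector to the $Q^{(1)}P$-sector and leaves $|W|_a$ unchanged, so you are not automatically ``on the increasing branch.'' The statement is still true, but you should prove it directly from the projection invariant: since one sector of $W_0$ is empty, $|W_0|_a$ equals the length of the reduced word $\pi(W_0)\in F(Y)$; for every $i$, $|W_i|_a=\|u_1^{(i)}\|+\|u_2^{(i)}\|\geq\|\pi(u_1^{(i)})\pi(u_2^{(i)})\|=\|\pi(W_0)\|=|W_0|_a$. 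This is shorter than going through (1) and (2) and avoids the false step. Parts (2), (3), and (4) of your argument are sound as written.
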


%\begin{proof}
%
%(1) In this case, the $i$-th rule of $\pazocal{C}$ must be $(\zeta^{(j)}(a))^{\pm1}$ for some $a\in Y$ and increase the $a$-length in each of the sectors. This implies that both sectors of $W_i$ are nonempty, so that the subsequent rule of $\pazocal{C}$ must be of the form $(\zeta^{(j)}(b))^{\pm1}$ for some $b\in Y$. But then the restriction of the subcomputation $W_{i-1}\to W_i\to W_{i+1}$ to one of the sectors satisfies the hypotheses of Lemma \ref{multiply one letter}, so that part $(c)$ of that lemma implies that $W_i\to W_{i+1}$ increases the $a$-length of each sector.
%
%(2) Using part (1), this follows from a similar argument as the one presented as the proof of Lemma \ref{multiply one letter}$(d)$.
%
%(3) A projection argument immediately implies $u\equiv v$. Applying Lemma \ref{multiply one letter} to the restriction of the computation to the $Q^{(1)}P$-sector proves the rest of the statement.
%
%(4) and (5) follow from similar projection arguments.
%
%\end{proof}

The machine $\textbf{RL}(Y)$ is the right analogue of $\textbf{LR}(Y)$. To be precise, the standard base of $\textbf{RL}(Y)$ is $Q^{(1)}RQ^{(2)}$ with $R=\{r^{(1)},r^{(2)}\}$, the tape alphabets are again two copies of $Y$ denoted $Y^{(1)}$ and $Y^{(2)}$, and the positive rules are:

\begin{addmargin}[1em]{0em}

$\bullet$ $\xi^{(1)}(a)=[q^{(1)}\to q^{(1)}, \ r^{(1)}\to a_1r^{(1)}a_2^{-1}, \ q^{(2)}\to q^{(2)}]$ for all $a\in Y$, where $a_i$ is its copy in $Y^{(i)}$

$\bullet$ $\xi^{(12)}=[q^{(1)}\to q^{(1)}, \ r^{(1)}\xrightarrow{\ell}r^{(2)}, \ q^{(2)}\to q^{(2)}]$

$\bullet$ $\xi^{(2)}(a)=[q^{(1)}\to q^{(1)}, \ r^{(2)}\to a_1^{-1}r^{(2)}a_2, \ q^{(2)}\to q^{(2)}]$ for all $a\in Y$, where $a_i$ is its copy in $Y^{(i)}$.

\end{addmargin}

There is an obvious analogue of Lemma \ref{primitive computations} for $\textbf{RL}(Y)$, which can be verified in exactly the same way.

Next, we generalize these machines as in [26] to force the running state letters to run back and forth a number of times rather than just once. Specifically, for the parameter $k$ (see Section 3.3), define the machine $\textbf{LR}_k(Y)$ (or $\textbf{RL}_k(Y)$) as the composition of $\textbf{LR}(Y)$ (respectively $\textbf{RL}(Y)$) with itself $k$ times.

To be precise, the standard base of $\textbf{LR}_k(Y)$ is the same as that of $\textbf{LR}(Y)$, but with $P=\{p^{(i)}\}_{i=1}^{2k}$ consisting of $2k$ letters rather than 2. The end letter of $P$ is now $p^{(2k)}$, while the start letter is again $p^{(1)}$.

The tape alphabets of $\textbf{LR}_k(Y)$ are the same two copies of $Y$ that serve as the tape alphabets of $\textbf{LR}(Y)$.

The positive rules are:

\begin{addmargin}[1em]{0em}

$\bullet$ $\zeta^{(2j-1)}(a)=[q^{(1)}\to q^{(1)}, \ p^{(2j-1)}\to a_1^{-1}p^{(2j-1)}a_2, \ q^{(2)}\to q^{(2)}]$ for $1\leq j\leq k$ and for all $a\in Y$, where $a_i$ is its copy in $Y^{(i)}$.
\medskip

$\bullet$ $\zeta^{(2j-1,2j)}=[q^{(1)}\xrightarrow{\ell} q^{(1)}, \ p^{(2j-1)}\to p^{(2j)}, \ q^{(2)}\to q^{(2)}]$ for all $1\leq j\leq k$

\medskip

$\bullet$ $\zeta^{(2j)}(a)=[q^{(1)}\to q^{(1)}, \ p^{(2j)}\to a_1p^{(2j)}a_2^{-1}, \ q^{(2)}\to q^{(2)}]$ for all $1\leq j\leq k$ and for all $a\in Y$, where $a_i$ is its copy in $Y^{(i)}$.

\medskip

$\bullet$ $\zeta^{(2j,2j+1)}=[q^{(1)}\to q^{(1)}, \ p^{(2j)}\xrightarrow{\ell} p^{(2j+1)}, \ q^{(2)}\to q^{(2)}]$ for all $1\leq j\leq k-1$

\end{addmargin}

Note that for the case $k=1$, one can see that $\textbf{LR}_1(Y)=\textbf{LR}(Y)$.

The definition of $\textbf{RL}_k(Y)$ is formulated similarly.

The analogues of Lemma \ref{primitive computations} hold for the machines $\textbf{LR}_k(Y)$ and $\textbf{RL}_k(Y)$. For example, the following is the analogue of part (3):

\begin{lemma} \label{LR_k analogue}

Let $\pazocal{C}:W_0\to\dots\to W_t$ be a reduced computation of $\textbf{LR}_k(Y)$ in the standard base. If $W_0\equiv q^{(1)}up^{(1)}q^{(2)}$ and $W_t\equiv q^{(1)}vp^{(2k)}q^{(2)}$ for some $u,v\in F(Y^{(1)})$, then $u\equiv v$, $|W_i|_a=|W_0|_a\defeq l$ for each $i$, and $t=2lk+2k-1$.

\end{lemma}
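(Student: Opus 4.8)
The plan is to reduce Lemma \ref{LR_k analogue} to repeated application of the already-established Lemma \ref{primitive computations}(3), exploiting the fact that $\textbf{LR}_k(Y)$ is by construction the $k$-fold composition of $\textbf{LR}(Y)$ with itself. Concretely, the running state letters of $\textbf{LR}_k(Y)$ pass through the values $p^{(1)},p^{(2)},\dots,p^{(2k)}$, and I want to argue that in a reduced computation from $q^{(1)}up^{(1)}q^{(2)}$ to $q^{(1)}vp^{(2k)}q^{(2)}$ the index of the running letter is (weakly) monotone in time: once we have applied the connecting rule $\zeta^{(j,j+1)}$ passing from index $j$ to index $j+1$ we can never return to index $j$, because the reverse rule would have to be the immediately preceding letter of the history (the connecting rules are the only rules that change the running-letter index, and $\zeta^{(j,j+1)}$ has a unique inverse), contradicting reducedness unless a $\zeta^{(j)}(a)$-rule intervenes — but $\zeta^{(j)}(a)$ does not change the index, so this does not help; the only subtlety is whether a block of index-$j$ rules can sit between two index-changing rules, which is handled exactly as in the proof of Lemma \ref{primitive computations}.

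First I would establish monotonicity of the running-letter index along the computation. Since the only index-changing rules are the connecting rules $\zeta^{(2j-1,2j)}$ and $\zeta^{(2j,2j+1)}$, each with a unique inverse among the negative rules, reducedness forbids an index $j\to j+1$ step followed (possibly after a block of index-preserving $\zeta^{(\cdot)}(a)$-rules on index $j+1$, which cannot occur because the connecting rules lock the $Q^{(1)}P$-sector — see Lemma \ref{locked sectors} — forcing the running letter to sit adjacent to $q^{(1)}$ with an empty $Q^{(1)}P$-sector, so no $\zeta^{(j+1)}(a)$ with the relevant side can apply immediately) by a $j+1\to j$ step. Hence the index is nondecreasing, and since $W_0$ has index $1$ and $W_t$ has index $2k$, the computation decomposes uniquely as $\pazocal{C}_1\zeta^{(1,2)}\pazocal{C}_2\zeta^{(2,3)}\cdots\zeta^{(2k-1,2k)}\pazocal{C}_{2k}$, where $\pazocal{C}_m$ is a (possibly empty) reduced computation consisting entirely of rules $\zeta^{(m)}(a)^{\pm1}$ operating on a fixed running letter $p^{(m)}$.

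Next, each $\pazocal{C}_m$ together with the bracketing connecting rules is precisely a computation of a copy of $\textbf{LR}(Y)$ (odd $m$) or of $\textbf{RL}(Y)$ (even $m$) in its standard base, going from the start value of the running letter to its end value. By Lemma \ref{primitive computations}(3) (and its $\textbf{RL}$-analogue), such a computation has $a$-length constant throughout, equal to the common value $l=|W_0|_a$ — here I would use that the $a$-length is unchanged by the connecting rules and is preserved across the whole computation by the projection argument, so the value $l$ at the start propagates through every $\pazocal{C}_m$ — and its length is $2l+1$, counting the terminal connecting rule. Since each $\zeta^{(2j,2j+1)}$ is a single additional rule not belonging to either adjacent primitive block, summing gives $t=k(2l+1)+(k-1)=2lk+2k-1$, and the projection argument yields $u\equiv v$ directly (both project to the same element of $F(Y)$ and both lie in $F(Y^{(1)})$).

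The main obstacle I anticipate is making the monotonicity-of-index argument airtight: one must rule out the scenario in which the running letter oscillates between two consecutive indices, and the cleanest way is to observe that applying $\zeta^{(j,j+1)}$ leaves the running letter pinned against $q^{(1)}$ with the $Q^{(1)}P$-sector empty and the $Q^{(1)}P$-sector locked, so the only rules applicable next are either $\zeta^{(j+1)}(a)$ (which, moving $p^{(j+1)}$ away from $q^{(1)}$, is index-preserving) or $(\zeta^{(j,j+1)})^{-1}$ (forbidden by reducedness). This mirrors verbatim the bookkeeping already carried out for $\textbf{LR}(Y)$ in [19], so it is routine but needs to be stated; everything downstream is then a mechanical concatenation of Lemma \ref{primitive computations}(3) across the $k$ blocks.
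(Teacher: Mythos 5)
Your proposal is correct and is essentially the argument the paper intends: the paper states this lemma without proof as an immediate analogue of Lemma \ref{primitive computations}, and your route — reducedness plus the locked sectors (via the Lemma \ref{multiply one letter}-style observation that a nonempty block of index-preserving rules cannot return the locked sector to empty) forces the running-letter index to be monotone, the computation then splits into $k$ passes of $\textbf{LR}(Y)$ separated by the $k-1$ connecting rules $\zeta^{(2j,2j+1)}$, and Lemma \ref{primitive computations}(3) applied to each pass gives $u\equiv v$, constant $a$-length $l$, and length $2l+1$ per pass — is exactly that. One minor slip: a single block $\pazocal{C}_m$ with its bracketing connecting rule is only half of an $\textbf{LR}$-pass (length $l+1$, not $2l+1$); the full pass is odd block $+$ connecting rule $+$ even block, which is the grouping your final count $k(2l+1)+(k-1)=2lk+2k-1$ actually uses, so nothing downstream is affected.
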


When the alphabet $Y$ is contextually clear, it is convenient to omit it from the names of the machines. So, there will be reference in subsequent constructions to the machines $\textbf{LR}$, $\textbf{RL}$, $\textbf{LR}_k$, and $\textbf{RL}_k$.

\smallskip

%%%%%%%%%%%%%%%%%%%%%%%%%%%%%%%%%%%%%%%%%%%%%%%%%%%%%%%%%%%%%%%%%

\subsection{The machine $\overline{\textbf{M}}_2$} \

The next machine in this construction, $\textbf{M}_3$, is the composition of the machine $\textbf{M}_2$ with the primitive machines $\textbf{LR}$ and $\textbf{RL}$. To aid in this construction, we first introduce (as in [26]) the intermediate recognizing $S$-machine $\overline{\textbf{M}}_2$.

For convenience of notation, rewrite the standard base of $\textbf{M}_2$ as $\sqcup_{i=0}^{s} Q_i$ (note that this changes the value of $s$ from that used in Sections 4.1 and 4.2). Then the standard base of $\overline{\textbf{M}}_2$ is obtained by replacing each $Q_i$ with the three-letter subword $P_iQ_iR_i$, so that the standard base of $\overline{\textbf{M}}_2$ is $$P_0Q_0R_0P_1Q_1R_1\dots P_{s}Q_{s}R_{s}$$ 
The parts $P_i$ contain the running state letters of $\textbf{LR}$ for each $i$, i.e $P_i=\{p_i^{(1)},p_i^{(2)}\}$. Similarly, the parts $R_i$ contain the running state letters of $\textbf{RL}$, so that $R_i=\{r_i^{(1)},r_i^{(2)}\}$.

For $0\leq i\leq s-1$, the tape alphabets of the $Q_iR_i$-, $R_iP_{i+1}$-, and $P_{i+1}Q_{i+1}$-sectors are copies of the tape alphabet of the $Q_iQ_{i+1}$-sector of $\textbf{M}_2$. The $P_0Q_0$- and $Q_sR_s$-sectors have empty tape alphabet and are present only for notational purposes.

For every positive rule $\theta$ of $\textbf{M}_2$, there are two corresponding positive rules of $\overline{\textbf{M}}_2$, denoted $\theta(1)$ and $\theta(2)$. If the $i$-th part of $\theta$ is $q_i\to a_iq_i'b_{i+1}$, then there are three parts of $\theta(j)$ given by
$$p_i^{(j)}\xrightarrow{\ell} a_ip_i^{(j)}, \ q_i\xrightarrow{\ell}q_i', \ r_i^{(j)}\to r_i^{(j)}b_{i+1}$$ 
The rules constructed this way fully comprise the positive rules of $\overline{\textbf{M}}_2$. So, the $P_iQ_i$- and $Q_iR_i$-sectors are always locked and the only work being done is in the $R_iP_{i+1}$-sectors. Naturally, the $R_0P_1$-sector is the input sector of the machine.

If the $Q_iQ_{i+1}$-sector is a working sector of $\textbf{M}_2$, then the $R_iP_{i+1}$-sector is called a working sector of $\overline{\textbf{M}}_2$. On the other hand, if the $Q_iQ_{i+1}$-sector is historical, then the $Q_iR_i$-, $R_iP_{i+1}$-, and $P_{i+1}Q_{i+1}$-sectors are all called historical.

For non-standard bases, appropriate $Q_iQ_i^{-1}$-, $R_iR_i^{-1}$-, etc sectors are also historical sectors of $\overline{\textbf{M}}_2$. Essentially, a sector is historical for $\overline{\textbf{M}}_2$ if its tape alphabet is the disjoint union of left and right parts.

If $B$ is the base of some computation of $\overline{\textbf{M}}_2$ and $UV$ is a two-letter subword of $B$ such that $UV$-sectors are historical (respectively working, input), then $UV$ is a historical (respectively working, input) subword of $B$.

\smallskip

%%%%%%%%%%%%%%%%%%%%%%%%%%%%%%%%%%%%%%%%%%%%%%%%%%%%%%%%%%%%%%%%%

\subsection{The machine $\textbf{M}_3$} \

The standard base of the recognizing $S$-machine $\textbf{M}_3$ is the same as that of $\overline{\textbf{M}}_2$, with the names `input', `working', and `historical' assigned to sectors in the same way. However, each part of the state letters has more letters than the corresponding part of the state letters of $\overline{\textbf{M}}_2$.

The idea of the function of $\textbf{M}_3$ is the following. Consider the start configuration of $\overline{\textbf{M}}_2$ with a word $w$ in the input $R_0P_1$-sector, copies of a history word $H_1\in\Phi^+$ in the left alphabets of the historical $R_iP_{i+1}$-sectors, and all other sectors empty. The machine first executes $\textbf{RL}$ in the appropriate historical sectors, running the letters of $R_i$ right and then left, and then executes the computation of $\overline{\textbf{M}}_2$ whose history is the natural copy of $H_1$. When the letters of the historical sectors have been converted to the right alphabet, the machine then executes copies of $\textbf{LR}$ in the appropriate historical sectors, moving the letters of $P_{i+1}$ left and then right, then executes the computation of $\overline{\textbf{M}}_2$ whose history is a copy of $H_1^{-1}$. These four steps result in a copy of the same start configuration of $\overline{\textbf{M}}_2$ (but with different state letters). The machine then repeats these four steps $k$ times and finishes by running the first three one more time. 

As a result, the machine $\textbf{M}_3$ is a concatenation of $4k+3$ different machines, which are denoted $\textbf{M}_3(1),\dots,\textbf{M}_3(4k+3)$, each of which has a distinct copy of the standard base of $\overline{\textbf{M}}_2$. So, each part of the state letters of $\textbf{M}_3$ is the disjoint union of $4k+3$ sets corresponding to the parts of the state letters of $\textbf{M}_3(1),\dots,\textbf{M}_3(4k+3)$. 

To force the correct order of this concatenation, there are \textit{transition rules} $\chi(i,i+1)$, which switch the state letters from the end letters of one machine to the start letters of the next. Further, $\chi(i,i+1)$ locks any sector that is locked by either the rules of $\textbf{M}_3(i)$ or the rules of $\textbf{M}_3(i+1)$.

The following is a detailed description of the concatenated machines and transition rules:

\begin{addmargin}[1em]{0em}

$\bullet$ The machine $\textbf{M}_3(1)$ consists of the \textit{parallel work} of copies of the machine $\textbf{RL}=\textbf{RL}(\Phi^+)$ in the appropriate historical sectors. As such, if the $Q_iQ_{i+1}$-sector of $\textbf{M}_2$ is historical, then the subword $Q_iR_iP_{i+1}$ of the standard base of $\textbf{M}_3(1)$ is treated as a copy of the standard base of $\textbf{RL}$. The term ``parallel work" means that each rule of $\textbf{M}_3(1)$ executes the corresponding rule of $\textbf{RL}$ simultanelously in each of these copies, identifying the right alphabet of the $Q_iR_i$-sector and the left alphabet of the $R_iP_{i+1}$-sector with the corresponding tape alphabets of $\textbf{RL}$. So, for any rule of $\textbf{M}_3(1)$, its domain in the historical $Q_iR_i$-sectors (respectively $R_iP_{i+1}$-sectors) is the corresponding copy of $X_{i,r}$ (respectively $X_{i,\ell}$). Additionally, each rule locks every other sector except for the input sector. Each part of the state letters not present in a copy of $\textbf{RL}$ consists of one letter. Start and end letters are assigned in the obvious way.

$\bullet$ The transition rule $\chi(1,2)$ locks every sector except for the input sector and the historical $R_iP_{i+1}$-sectors and switches the state letters from the end state of $\textbf{M}_3(1)$ to the start state of $\textbf{M}_3(2)$. Additionally, the domain of this rule in each historical $R_iP_{i+1}$-sector is the corresponding left alphabet.

$\bullet$ $\textbf{M}_3(2)$ is a copy of the machine $\overline{\textbf{M}}_2$.

$\bullet$ The transition rule $\chi(2,3)$ locks all sectors except for the historical $R_iP_{i+1}$-sectors and changes the state letters from the end letters of $\textbf{M}_3(2)$ to the start letters of $\textbf{M}_3(3)$. Its domain in each historical $R_iP_{i+1}$-sector is the corresponding right alphabet.

$\bullet$ $\textbf{M}_3(3)$, similar to $\textbf{M}_3(1)$, works in parallel as $\textbf{LR}=\textbf{LR}(\Phi^+)$ in the historical sectors, but now with the subwords $R_iP_{i+1}Q_{i+1}$ functioning as the copy of the standard base of $\textbf{LR}$. The copy of $X_{i,r}$ (respectively $X_{i,\ell}$) in the $R_iP_{i+1}$-sector (respectively $P_{i+1}Q_{i+1}$-sector) is identified with the corresponding tape alphabet of $\textbf{LR}$. Otherwise, $\textbf{M}_3(3)$ is defined in a similar way as $\textbf{M}_3(1)$.

$\bullet$ The transition rule $\chi(3,4)$ locks all sectors except for the historical $R_iP_{i+1}$-sectors and changes the state letters from the end letters of $\textbf{M}_3(3)$ to the start letters of $\textbf{M}_3(4)$. Its domain in each historical $R_iP_{i+1}$-sector is the corresponding right alphabet.

$\bullet$ $\textbf{M}_3(4)$ is then a copy of the `inverse' machine $\overline{\textbf{M}}_2^{-1}$. This means the hardware is a copy of $\overline{\textbf{M}}_2$, while its positive rules are copies of the negative rules of $\overline{\textbf{M}}_2$.

$\bullet$ The transition rule $\chi(4,5)$ locks every sector other than the input and the historical $R_iP_{i+1}$-sectors and changes the state letters from the end state of $\textbf{M}_3(4)$ to the start state of $\textbf{M}_3(5)$. Its domain in each historical $R_iP_{i+1}$-sector is the corresponding left alphabet.

$\bullet$ For $2\leq i\leq k$, the machines $\textbf{M}_3(4i-3),\dots,\textbf{M}_3(4i)$ are copies of $\textbf{M}_3(1),\dots,\textbf{M}_3(4)$, respectively, with transition rules defined similarly.

$\bullet$ Finally, the machines $\textbf{M}_3(4k+1)$, $\textbf{M}_3(4k+2)$, and $\textbf{M}_3(4k+3)$ are copies of $\textbf{M}_3(1)$, $\textbf{M}_3(2)$, and $\textbf{M}_3(3)$, respectively. 

\end{addmargin}

Note that the $Q_0R_1$-sector is locked by every rule of $\textbf{M}_3$.

Similar to the definition of $I_2(w,H_1)$, the configuration $I_3(w,H_1)$ of $\textbf{M}_3$ is the start configuration with $w\in F(\pazocal{A})$ written in the input sector and copies of $H_1\in F(\Phi^+)$ written in the left alphabet of each of the historical $R_iP_{i+1}$-sectors. 

Also, the configuration $A_3(H_1)$ is the end configuration of $\textbf{M}_3(4k+3)$ with $H_1\in F(\Phi^+)$ written in the right alphabets of the historical $R_iP_{i+1}$-sector. Note that $A_3(H_1)$ is then the configuration obtained from $A_2(H_1)$ by inserting appropriate $P$- and $R$-letters next to the $Q$-letters.

A configuration of $\textbf{M}_3$ is \textit{tame} if each $P$- and $R$-letter is adjacent to a $Q$-letter (i.e all $P_iQ_i$- and $Q_iR_i$-sectors are empty) and every historical sector contains no letters from either the left or the right alphabets. Note that for all $w\in F(\pazocal{A})$ and $H_1\in F(\Phi^+)$, the configurations $I_3(w,H_1)$ and $A_3(H_1)$ are tame.

\begin{lemma} \label{M_3 tame}

\textit{(Lemma 3.14 of [26])} Let $\pazocal{C}:W_0\to\dots\to W_t$ be a reduced computation of $\textbf{M}_3(2i+1)$ in the standard base for some $0\leq i\leq 2k+1$. Then:

\begin{addmargin}[1em]{0em}

$(a)$ $|W_j|_a\leq\max(|W_0|_a,|W_t|_a)$ for every $j$; 

moreover, if $W_0$ is tame, then $|W_0|_a\leq|W_1|_a\leq\dots\leq|W_t|_a$

$(b)$ $t\leq\|W_0\|+\|W_t\|-5$;

moreover, if $W_0$ is tame, then $t\leq2\|W_t\|-5$.

\end{addmargin}

\end{lemma}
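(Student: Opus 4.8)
The plan is to reduce both parts to the single--machine analysis of Lemma~\ref{primitive computations}, exploiting the fact that $\textbf{M}_3(2i+1)$ is a \emph{synchronized} parallel copy of one primitive machine: when $2i+1\equiv 1\pmod 4$ it runs $\textbf{RL}(\Phi^+)$ in parallel on the historical blocks $Q_jR_jP_{j+1}$, and when $2i+1\equiv 3\pmod 4$ it runs $\textbf{LR}(\Phi^+)$ in parallel on the blocks $R_jP_{j+1}Q_{j+1}$, with a single rule of $\textbf{M}_3(2i+1)$ applying one and the same primitive rule ($\xi^{(1)}(a)$, $\xi^{(12)}$, $\xi^{(2)}(a)$, or the corresponding $\zeta$-rules) simultaneously in every block. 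Every sector that is not one of these historical sectors is either locked by all rules or, like the input sector, has tape alphabet disjoint from all the history alphabets, hence is never modified. So the $a$-length of each non-historical sector is constant along $\pazocal{C}$; the projection of $\pazocal{C}$ onto a historical block $b$ is a reduced computation of $\textbf{RL}(\Phi^+)$ (resp. $\textbf{LR}(\Phi^+)$) in its standard base (reducedness is preserved because the rule correspondence is a bijection respecting inverses); and $|W_j|_a-|W_0|_a$ equals the sum over the historical blocks $b$ of $|W_j^{(b)}|_a-|W_0^{(b)}|_a$.

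For part~$(a)$ I would lift the three relevant ingredients of Lemma~\ref{primitive computations} to the whole configuration using the synchronization. Since all blocks are driven by the same primitive rule, whenever $|W_{j-1}|_a<|W_j|_a$ the same rule one step later still raises the $a$-length, so $|W_j|_a<|W_{j+1}|_a$; this is the configuration--level analogue of the ``commitment'' property, Lemma~\ref{primitive computations}(1). Combined with the per-block bound $|W_j^{(b)}|_a\le\max(|W_0^{(b)}|_a,|W_t^{(b)}|_a)$ of Lemma~\ref{primitive computations}(2) and the observation that, by lockstep, the block realizing the maximum at time $j$ realizes it at $0$ or at $t$, this gives $|W_j|_a\le\max(|W_0|_a,|W_t|_a)$. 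For the ``moreover'': if $W_0$ is tame then in each block the running $P$- or $R$-letter abuts a $Q$-letter, which is exactly one of the initial forms appearing in Lemma~\ref{primitive computations}(5); thus $|W_j^{(b)}|_a\ge|W_0^{(b)}|_a$ for all $j$ and all $b$, so $|W_j|_a\ge|W_0|_a$, and together with the commitment property this forces $|W_0|_a\le|W_1|_a\le\cdots\le|W_t|_a$.

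For part~$(b)$ I would estimate $t$ sweep by sweep. A reduced computation of a single plain primitive machine fires its connecting rule only boundedly often (the connecting rule locks the sector between the running letter and the far end of its pair, hence is applicable only when that sector is empty), so $\pazocal{C}$ splits into a bounded number of monotone sweeps of the running letters; by the reducedness analysis underlying Lemma~\ref{primitive computations}, each sweep moves the running letters monotonically across their sectors, so its number of steps is at most the largest $a$-length occurring during that sweep, which by part~$(a)$ is $\le\max(|W_0|_a,|W_t|_a)$. Charging length-decreasing sweeps to $W_0$ and length-increasing sweeps to $W_t$ yields $t\le|W_0|_a+|W_t|_a+O(1)$. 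Since $\pazocal{C}$ is in the standard base, $\|W_0\|+\|W_t\|$ exceeds $|W_0|_a+|W_t|_a$ by $|W_0|_q+|W_t|_q$, which is at least twice the number of letters in the base and hence absorbs the additive constant, giving $t\le\|W_0\|+\|W_t\|-5$. If $W_0$ is tame, the ``moreover'' of part~$(a)$ gives $|W_0|_a\le|W_t|_a$ and hence $\|W_0\|\le\|W_t\|$ (the $q$-lengths agree), so $t\le 2\|W_t\|-5$.

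The main obstacle will be the lockstep bookkeeping in part~$(a)$: one must check carefully that the synchronization of rules across the historical blocks really is enough to transport each ingredient of Lemma~\ref{primitive computations}---the commitment property, the maximum bound, and the lower bound~(5) for a running letter abutting a $Q$-letter---to the sum over the blocks, rather than re-deriving them from scratch; in particular one must rule out the two potential hazards, namely that a length-increasing step in one block is cancelled by the opposite behaviour in another, and that the running letters fail to move monotonically within a sweep. The remaining points---bounding the number of connecting rules, the harmless additive constant, and checking that the input sector is genuinely untouched---are routine, and this entire strategy is the one carried out in the proof of Lemma~3.14 of [26], which we follow.
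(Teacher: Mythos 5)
The paper does not give a proof of this lemma; it is stated, like the neighbouring computational lemmas about $\textbf{M}_3$, with an attribution to Lemma~3.14 of~[26]. So there is no in-paper proof to compare against, and your reconstruction stands or falls on its own merits.

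Your skeleton --- project $\pazocal{C}$ onto each historical block, recognize the projection as a reduced computation of the relevant primitive machine in its standard base, and aggregate the single-block estimates from Lemma~\ref{primitive computations} --- is exactly the right one, and is what the surrounding apparatus is set up to support. But the two ``hazards'' you flag at the end are genuine, and as sketched they are not yet disposed of.

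For the first: per-block commitment in the form stated in Lemma~\ref{primitive computations}(1) does \emph{not} lift to the aggregate by itself. Two step sequences each consistent with strict-increase-implies-strict-increase, say $(+2,+2,+2)$ in one block and $(0,-2,+2)$ in another, would sum to $(+2,0,+4)$, violating commitment. What actually lifts is the strictly stronger per-block fact, visible from the explicit form of the $\zeta$- and $\xi$-rules in a reduced computation (and implicit in the proof of \ref{primitive computations}(1) in~[19]): the per-step increment sequence of each block has the form $\{-2\}^{*}\{0\}^{*}\{+2\}^{*}$, i.e.\ the per-block $a$-length is \emph{convex} in time. Convexity, unlike commitment, is closed under addition, so the aggregate $a$-length is convex, hence maximal at an endpoint, giving part~$(a)$. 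The ``moreover'' is then the special case where Lemma~\ref{primitive computations}(5) places every block already in the $\{0\}^{*}\{+2\}^{*}$ tail.

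For the second: ``each sweep moves the running letters monotonically'' is not literally true --- $\zeta^{(1)}(a)$ followed by $\zeta^{(1)}(b)^{-1}$ with $b\neq a$ already moves the running letter left and then right inside one sweep of a reduced computation. The clean route for part~$(b)$ is to restrict to the $Q^{(1)}P$-sector of one fixed historical block and invoke Lemma~\ref{multiply one letter}: part~$(a)$ of that lemma shows the connecting rule $\zeta^{(12)}$ (resp.\ $\xi^{(12)}$) occurs at most once, since between two occurrences the locked sector would start and end empty and the intervening subcomputation would have to be trivial; and part~$(b)$ of that lemma bounds each of the at most two remaining pieces by the initial plus final sector lengths. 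Since $\|W_0\|$ and $\|W_t\|$ exceed the corresponding $a$-lengths by the (fixed, large) $q$-length of the standard base, the additive slack easily absorbs the one connecting rule and yields $t\le\|W_0\|+\|W_t\|-5$; the tame case follows as you say from $|W_0|_a\le|W_t|_a$ together with equal $q$-lengths.
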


\begin{lemma} \label{M_3 historical sector}

\textit{(Lemma 3.15 of [26])} Let $W_0\to\dots\to W_t$ be a reduced computation of $\textbf{M}_3$ with base containing a historical sector of the form $(R_iP_{i+1})^{\pm1}$. Then for each $j$, there is at most one occurrence of the transition rules $\chi(j,j+1)^{\pm1}$ in the history $H$ of the computation.

\end{lemma}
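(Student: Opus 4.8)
The plan is to read off from the history $H$ the sequence of component machines the computation passes through and to show that this sequence is monotone. Write $H\equiv H_0\,\chi_1\,H_1\,\chi_2\cdots\chi_r\,H_r$, where $\chi_1,\dots,\chi_r$ are the occurrences in $H$ of transition rules $\chi(\cdot,\cdot)^{\pm1}$ and each $H_\ell$ is a (possibly empty) word in the rules of a single machine $\textbf{M}_3(m_\ell)$; this decomposition exists because the non-transition rules of $\textbf{M}_3$ leave the state letters inside one layer. Hence $\chi_\ell$ joins consecutive layers, so $|m_{\ell-1}-m_\ell|=1$ and $(m_0,m_1,\dots,m_r)$ is a walk on the path $1-2-\cdots-(4k+3)$. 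The rule $\chi(j,j+1)^{\pm1}$ occurs in $H$ at most once for every $j$ precisely when this walk traverses each edge at most once, and on a path graph that is equivalent to the walk being monotone. So it suffices to prove that $(m_0,\dots,m_r)$ is monotone.

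Suppose it is not. Then for some $\ell$ we have $m_{\ell-1}=m_{\ell+1}=m_\ell\mp1$; since there is a unique transition rule between two given consecutive layers, $\chi_{\ell+1}=\chi_\ell^{-1}$, and because $H$ is reduced $H_\ell$ is a nonempty reduced computation of $\textbf{M}_3(m_\ell)$. Tracing the state letters, the configuration just after $\chi_\ell$ and the one just before $\chi_{\ell+1}$ carry the same state letters, namely all the start letters of layer $m_\ell$ or all its end letters, according to the direction of $\chi_\ell$. We now contradict the presence of the historical $(R_iP_{i+1})^{\pm1}$-sector in the base, splitting on the parity of $m_\ell$. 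If $m_\ell$ is even, $\textbf{M}_3(m_\ell)$ is a copy of $\overline{\textbf{M}}_2$ or of $\overline{\textbf{M}}_2^{-1}$, and every rule of this layer acts on the historical $R_iP_{i+1}$-sector by deleting one letter of one history alphabet and inserting one letter of the other — always in the same direction within the layer — whereas the domain of $\chi_\ell$, hence of $\chi_{\ell+1}=\chi_\ell^{-1}$, in that sector is a single one of the two alphabets. Viewing that sector across $H_\ell$ as a one-sided stack, the first rule of the nonempty reduced $H_\ell$ inserts a letter of the other alphabet that no later rule of the same layer can remove and that reducedness prevents $H_\ell$ from removing by backtracking; thus the content of the sector at the end of $H_\ell$ is no longer confined to the alphabet prescribed by $\chi_{\ell+1}$, contradicting applicability of $\chi_{\ell+1}$.

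If $m_\ell$ is odd, then on the three-letter sub-base through $R_i$ and $P_{i+1}$ (namely $Q_iR_iP_{i+1}$ or $R_iP_{i+1}Q_{i+1}$, according to the residue of $m_\ell$ mod $4$) the machine $\textbf{M}_3(m_\ell)$ runs as a parallel copy of $\textbf{RL}$ or $\textbf{LR}$, and the running state letter in $R_i$ or $P_{i+1}$ sits in the same position — start or end — at the beginning and at the end of $H_\ell$. Moreover the transition rules lock the $Q_iR_i$- and $P_{i+1}Q_{i+1}$-sectors, so those are empty across $\chi_\ell$ and $\chi_{\ell+1}$. Restricting $H_\ell$ to the relevant sub-base and applying Lemma \ref{primitive computations}(4), together with its evident $\textbf{RL}$, $\textbf{LR}$ and reversed-base analogues and Lemma \ref{unreduced base} in the cases where the neighbour of $R_i$ or $P_{i+1}$ in the base is an inverse $q$-letter, forces $H_\ell$ to be the empty computation, contradicting $H_\ell$ being nonempty. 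In either parity the walk must be monotone, which completes the proof.

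I expect the bulk of the work to be the bookkeeping in the last two steps: pinning down, for each transition rule $\chi(j,j+1)$, exactly which history alphabet it confines the $R_iP_{i+1}$-sector to, and running through the possible local shapes of the base around $R_iP_{i+1}$ — whether $R_i$ is preceded by $Q_i$, by an inverse $q$-letter, or is an end of the base, and similarly on the $P_{i+1}$ side — so that a genuine rigidity statement about the primitive machines (respectively about $\overline{\textbf{M}}_2$) applies. None of these cases is deep, but making the case division exhaustive is the delicate point.
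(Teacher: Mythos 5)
The proof is correct, and it follows the route one would expect (and that [26], which this paper simply cites, almost certainly takes): decompose $H$ into single-layer blocks joined by transition rules, observe that the desired conclusion is equivalent to the walk $(m_0,\dots,m_r)$ on the chain $1-2-\cdots-(4k+3)$ being monotone, and rule out a turn by exhibiting a rigid invariant of each layer restricted to the historical sector. Your even-layer argument, once unpacked, amounts to the observation that the $X_r$-suffix (or $X_\ell$-prefix, depending on which transition rule bounds $H_\ell$) of the historical-sector content is, at every moment, precisely the reduced $X_r$-copy of the prefix of $H_\ell$ processed so far — never shrinking precisely because $H_\ell$ is reduced — and your odd-layer argument correctly identifies Lemma \ref{primitive computations}(4) (and its $\textbf{RL}$ mirror) as the mechanism, once one notices that the adjoining transition rules lock the $Q_iR_i$- and $P_{i+1}Q_{i+1}$-sectors and, via Lemma \ref{locked sectors}, rule out the $R_i^{-1}R_i$ and $P_{i+1}P_{i+1}^{-1}$ neighbours, so Lemma \ref{unreduced base} is in fact never needed.

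One detail you should make explicit where you invoke ``evident analogues'': Lemma \ref{primitive computations}(4) is stated for the full three-letter standard base of the primitive machine. If the base of the computation contains $R_iP_{i+1}$ but not the flanking $Q_i$ (for $\textbf{M}_3(m_\ell)\cong\textbf{RL}$) or not $Q_{i+1}$ (for $\textbf{M}_3(m_\ell)\cong\textbf{LR}$), the restriction to the historical sub-base is two letters, and Lemma \ref{primitive computations}(4) does not literally apply. But in that case the running state letter sits at an end of the admissible word, so the non-connecting rules of the primitive machine would have to insert a tape letter outside the word and are therefore not applicable at all; the only possibly applicable rule is the connecting one, and a single such rule moves the state letter to the other start/end position, so $H_\ell$ must again be empty. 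With this supplied, the case division you flagged as the delicate point is complete.
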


%\begin{proof}
%
%Assuming this to be false, there exists a subword of $H$ of the form $\chi(j,j+1)^{\pm1}H'\chi(j,j+1)^{\mp1}$ where $H'$ is a computation of one of $\textbf{LR}$, $\textbf{RL}$, or $\overline{\textbf{M}}_2$. The first two cases immediately contradict Lemma \ref{primitive computations}(4), so that we may assume $H'$ is a computation of $\overline{\textbf{M}}_2$. 
%
%Let $W_r\to\dots\to W_s$ be the subcomputation with the history $H'$. As $W_r$ is $\chi(j,j+1)^{\mp1}$-admissible, its historical $(R_iP_{i+1})^{\pm1}$-sectors must contain only letters in either the corresponding left or the corresponding right alphabet (depending on the value of $j$). But then the application of $H'$ introduces a reduced word from the opposite alphabet to this sector, so that $W_s$ cannot be $\chi(j,j+1)^{\mp1}$-admissible.
%
%\end{proof}

\begin{lemma} \label{M_3 standard base}

\textit{(Lemma 3.16 of [26])} Let $\pazocal{C}:W_0\to\dots\to W_t$ be a reduced computation of $\textbf{M}_3$ in the standard base with history $H$.

\begin{addmargin}[1em]{0em}

$(a)$ If $H\equiv\chi(i,i+1)H'\chi(i+4,i+5)$, then the word $W_0$ is a copy of $W_t$

$(b)$ If $\pazocal{C}$ has two subcomputations $\pazocal{C}_1$ and $\pazocal{C}_2$ with histories $\chi(i,i+1)H'\chi(i+4,i+5)$ and $\chi(j,j+1)H''\chi(j+4,j+5)$, then they have equal lengths and $\pazocal{C}_2$ is a cyclic permutation of a copy of $\pazocal{C}_1$

$(c)$ for the parameter $c_1$, $|W_j|_a\leq c_1\max(|W_0|_a,|W_t|_a)$ for $j=0,1,\dots,t$; \\ moreover, if $W_0$ is a tame, then $|W_j|_a\leq c_1|W_t|_a$ for all $j$.

\end{addmargin}

\end{lemma}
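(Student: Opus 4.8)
The plan is to derive all three parts from a single structural fact about computations of $\textbf{M}_3$ in the standard base. Since this base contains historical sectors of the form $(R_iP_{i+1})^{\pm1}$, Lemma \ref{M_3 historical sector} applies and each transition rule $\chi(j,j+1)^{\pm1}$ occurs at most once in the history $H$. Consequently the sequence of constituent machines $\textbf{M}_3(m)$ through which $\pazocal{C}$ passes is a walk on the path graph $1-2-\cdots-(4k+3)$ crossing each edge at most once, hence a monotone walk (a walk on a path that never repeats an edge cannot reverse direction); so $\pazocal{C}$ decomposes as a concatenation of at most $4k+3$ subcomputations, each carried out within a single $\textbf{M}_3(m)$ and separated by single transition rules $\chi(m,m\pm1)$. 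In particular, when $H\equiv\chi(i,i+1)H'\chi(i+4,i+5)$, monotonicity and the at-most-once property force $H'\equiv H_1\,\chi(i+1,i+2)\,H_2\,\chi(i+2,i+3)\,H_3\,\chi(i+3,i+4)\,H_4$ with each $H_\ell$ the history of a computation of $\textbf{M}_3(i+\ell)$ in the standard base.

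For part (a) I would analyze these four subcomputations. Two of the machines $\textbf{M}_3(i+1),\dots,\textbf{M}_3(i+4)$ are primitive-type (a parallel copy of $\textbf{RL}$ and a parallel copy of $\textbf{LR}$, in one of four cyclic orders), and the other two are copies of $\overline{\textbf{M}}_2$ and $\overline{\textbf{M}}_2^{-1}$; the flanking transition rules pin down the running state letters at the beginning and end of each subcomputation. For the primitive subcomputations, the analogue of Lemma \ref{primitive computations}(3) shows each is a complete pass: it returns the content of every active historical sector to its initial value and, since the corresponding rules lock every other sector, leaves all remaining sectors (in particular the working sectors) untouched. For the $\overline{\textbf{M}}_2^{\pm1}$-subcomputations, the crucial point is that the intervening transition rules have domains in the historical $R_iP_{i+1}$-sectors equal to the left or the right history alphabet, in exactly the pattern which — together with reducedness of $\pazocal{C}$ — forces the history of the $\overline{\textbf{M}}_2$-step to be a copy of the word $H_1$ recorded in those sectors and the history of the $\overline{\textbf{M}}_2^{-1}$-step to be a copy of $H_1^{-1}$; thus the $\overline{\textbf{M}}_2$-step rewrites the historical content from the left into the right alphabet while performing the corresponding $\textbf{M}_1$-computation in the working sectors, and the $\overline{\textbf{M}}_2^{-1}$-step undoes both. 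Composing the four steps returns the configuration to one identical to $W_0$ up to the natural relabelling of state letters, so $W_t$ is a copy of $W_0$; the remaining cyclic orders are treated identically (or reduced to this one by a conjugation argument).

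Part (b) follows from part (a) together with the observation that the length of the historical content is invariant along $\pazocal{C}$ (each rule multiplies each historical sector by at most one letter on each side, and the transition-rule domains force these multiplications to be consistent), so a block subcomputation $\chi(i,i+1)H'\chi(i+4,i+5)$ has length depending only on that invariant; since the constituent machines repeat with period $4$, the ordered quadruples of machines traversed by $\pazocal{C}_1$ and $\pazocal{C}_2$ are cyclic rotations of each other, and as each block is deterministic once its initial configuration is fixed, $\pazocal{C}_1$ and $\pazocal{C}_2$ have equal length and $\pazocal{C}_2$ is a cyclic permutation of a copy of $\pazocal{C}_1$. For part (c), use the monotone decomposition into at most $4k+3$ single-machine pieces joined by transition rules: on each odd-indexed piece Lemma \ref{M_3 tame}(a) bounds $|W_j|_a$ by the maximum of the $a$-lengths of its endpoints (and makes it nondecreasing when the piece begins at a tame configuration); on each even-indexed piece the analogous estimate for $\overline{\textbf{M}}_2^{\pm1}$, which follows from Lemma \ref{M_2 bound} since $\overline{\textbf{M}}_2$ is $\textbf{M}_2$ decorated with running state letters, gives a bound by a constant times that maximum; and each transition rule changes $a$-length by at most a constant. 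Since $c_1>>k$ in the parameter hierarchy, $c_1$ absorbs the number $4k+3$ of pieces, giving $|W_j|_a\leq c_1\max(|W_0|_a,|W_t|_a)$; when $W_0$ is tame the traversal runs forward through successive blocks, each of which returns the configuration to a copy without net growth, yielding $|W_j|_a\leq c_1|W_t|_a$.

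The main obstacle I expect is the ``forced history'' step in part (a): justifying rigorously, from reducedness of $\pazocal{C}$ and the prescribed domains of the transition rules in the historical sectors, that the $\overline{\textbf{M}}_2$- and $\overline{\textbf{M}}_2^{-1}$-subcomputations have history exactly a copy of $H_1^{\pm1}$ — in particular that reducedness forbids the working sectors from being modified in any way not dictated by $H_1$, and that the word recorded in the historical sectors is indeed the history of a genuine computation. Bookkeeping the four cyclic orders of the block, and, for part (c), controlling $a$-lengths along stretches of $\pazocal{C}$ that traverse fewer than four machines and so do not close up into a block, are the other points that need care.
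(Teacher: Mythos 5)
The paper does not reprove this lemma; it cites Lemma 3.16 of [26] and moves on, so there is no in-text proof to compare against. Your high-level skeleton is nevertheless correct: Lemma \ref{M_3 historical sector} does force a monotone traversal of $\textbf{M}_3(1),\dots,\textbf{M}_3(4k+3)$, and a block $\chi(i,i+1)H'\chi(i+4,i+5)$ therefore splits into exactly four one-machine subcomputations separated by $\chi(i+1,i+2)$, $\chi(i+2,i+3)$, $\chi(i+3,i+4)$. For part (a) you have correctly isolated the crux, and it does go through: the flanking transition rules pin the historical-sector content to a single alphabet (left/right as prescribed) at the block boundaries, a complete-pass argument (the $\textbf{LR}_k$/$\textbf{RL}_k$ analogue of Lemma \ref{primitive computations}(3)) handles the two primitive steps, and for the two $\overline{\textbf{M}}_2^{\pm1}$ steps the fact that a run with history $G$ conjugates historical content to $(G^{(\ell)})^{-1}uG^{(r)}$, together with the boundary alphabet constraints and reducedness, forces $G$ to equal the recorded word $H_1$ (resp.\ $H_1^{-1}$), hence the working sectors undo each other and $W_t$ is a copy of $W_0$. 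Part (b) then reduces to part (a) plus the observation that the forced history of each full block is a copy of the same word, up to the cyclic shift of which of the four machine types one starts with; your phrase ``deterministic once its initial configuration is fixed'' is a fair summary of this.

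Part (c) as written has a genuine gap. You bound $|W_j|_a$ within each of the at most $4k+3$ pieces by a constant times the $a$-lengths of that piece's \emph{own} two endpoints, and then say $c_1\gg k$ ``absorbs the number of pieces.'' But these piece bounds control the \emph{interiors} of pieces only; they never bound the $a$-length of an interior transition-rule configuration $V_\ell$ in terms of $|W_0|_a$ and $|W_t|_a$, and the maximum over $j$ could a priori be achieved at such a $V_\ell$, with nothing ruling out a chain $|V_1|_a\ll|V_2|_a\ll\cdots$. Choosing $c_1$ large in terms of $k$ cannot repair this, because the danger is not the count $4k+3$ but an uncontrolled intermediate value. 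The fix is exactly what you gesture at but do not invoke: use part (a) to observe that the $a$-lengths at all full-block boundaries in $\pazocal{C}$ coincide (block endpoints are copies of each other), so the only piece endpoints whose $a$-length is not already pinned to this common value lie inside the two incomplete blocks at the ends of $\pazocal{C}$, each containing at most three pieces whose endpoints chain back to $W_0$ or $W_t$ in a bounded number of steps. With that in hand, the per-piece estimates you cite do give $|W_j|_a\leq c_1\max(|W_0|_a,|W_t|_a)$, and for tame $W_0$ the monotonicity in Lemma \ref{M_3 tame}(a) upgrades this to $|W_j|_a\leq c_1|W_t|_a$. So your sketch needs to make the dependence of (c) on (a) explicit rather than leaning on parameter absorption.
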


\begin{lemma} \label{M_3 accepting computations} 

Let $H_1\in F(\Phi^+)$ and $W_0\equiv A_3(H_1)$ or $W_0\equiv I_3(w,H_1)$ for some $w\in F(\pazocal{A})$. If $\pazocal{C}:W_0\to\dots\to W_t\equiv W$ is a reduced computation, then $t\leq c_1|V|_a$ where $V$ is any admissible subword of $W$ with base $Q_iR_iP_{i+1}Q_{i+1}$ where $R_iP_{i+1}$ is a historical sector.

\end{lemma}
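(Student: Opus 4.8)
The strategy is to follow the computation $\pazocal{C}$ through the constituent machines $\textbf{M}_3(1),\dots,\textbf{M}_3(4k+3)$ one stage at a time, bounding the length of each stage in terms of the $a$-length of the restriction of $W$ to the base of $V$.

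First, since the base of $\pazocal{C}$ is the standard base, which contains historical $R_iP_{i+1}$-subwords, Lemma \ref{M_3 historical sector} forces each transition rule $\chi(j,j+1)^{\pm1}$ to occur at most once in the history $H$; hence a reduced computation cannot re-enter a machine it has left, so $\pazocal{C}$ traverses a consecutive block of at most $4k+3$ of the $\textbf{M}_3(j)$ and $H$ factors as $H^{(1)}\chi_1H^{(2)}\chi_2\cdots\chi_{m-1}H^{(m)}$, $m\le 4k+3$, where each $H^{(\ell)}$ is the history of a computation of a single $\textbf{M}_3(j_\ell)$ in its standard base; since $I_3(w,H_1)$ and $A_3(H_1)$ are both tame, $\pazocal{C}$ begins at a tame configuration. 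I would then restrict everything to the fixed base $Q_iR_iP_{i+1}Q_{i+1}$, writing $V_j$ for the subword of $W_j$ on this base (so $V_t\equiv V$), and bound each stage. Let $\ell_-,\ell_+$ denote the $a$-lengths of $V$ at the start and end of a given stage. In an odd stage, which acts on the subbase $Q_iR_iP_{i+1}$ (resp. $R_iP_{i+1}Q_{i+1}$) as a copy of $\textbf{RL}(\Phi^+)$ (resp. $\textbf{LR}(\Phi^+)$), a projection argument and Lemma \ref{primitive computations}(2) give $|V_j|_a\le\max(\ell_-,\ell_+)$ throughout, and the running letter makes at most one pass each way, so the stage has length $\le 2\max(\ell_-,\ell_+)+O(1)$. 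In an even stage, which is a copy of $\overline{\textbf{M}}_2^{\pm1}$ and does work on $V$'s base only in the historical $R_iP_{i+1}$-sector, multiplying it by one letter of the left history alphabet and one of the right history alphabet per step, Lemma \ref{multiply two letters} applied to the subbase $R_iP_{i+1}$ gives $|V_j|_a\le\max(\ell_-,\ell_+)+O(1)$ throughout and stage length $\le\tfrac12(\ell_-+\ell_+)+O(1)$. (In particular all of this is measured through the historical part of $V$, so the bound is independent of the working sectors, and hence of $w$.)

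It remains to control the endpoint quantities $\ell_\pm$. At every boundary between two complete $4$-cycles the value of $|V_j|_a$ equals $\|H_1\|$: by Lemma \ref{M_3 standard base}(a) the configuration there is a copy of the one at the previous boundary, and at the first boundary (or at $W_0$ itself) it is $\|H_1\|$ because $I_3(w,H_1)$ and $A_3(H_1)$ carry just one copy of $H_1$ in each historical $R_iP_{i+1}$-sector. Since $\pazocal{C}$ spans at most $k$ complete $4$-cycles plus at most two partial ones, and within any single cycle each stage bound above is $O(\max_j|V_j|_a)$ over that cycle, the ``$\le\max$ of endpoints'' inequalities propagate from the two anchors $|V_0|_a=\|H_1\|$ and $|V_t|_a=|V|_a$ to give $\max_j|V_j|_a\le C|V|_a+C$ for an absolute constant $C$. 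Summing over the $\le 4k+3$ stages and the $\le 4k+2$ transition rules yields $t\le(4k+3)\bigl(2\max_j|V_j|_a+O(1)\bigr)+(4k+2)\le C'k|V|_a+C'k$, and since $k<<c_1$ in the parameter ordering this is at most $c_1|V|_a$.

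The real obstacle is the bookkeeping in the last paragraph: because $\pazocal{C}$ need not begin or end at a $4$-cycle boundary, the intermediate $|V_j|_a$ are not directly read off from $|V|_a$, so one must chase the endpoint inequalities of Lemmas \ref{primitive computations} and \ref{multiply two letters} across the two partial cycles at the ends of $\pazocal{C}$, tracking which history alphabet each historical sector carries at each stage boundary (so that the hypotheses of those lemmas, and where needed of Lemma \ref{one alphabet historical words}, genuinely apply) and accounting for content that may be parked in the normally-empty $Q_iR_i$ and $P_{i+1}Q_{i+1}$ historical sectors during the primitive stages.
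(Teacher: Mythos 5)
Your proposal follows essentially the same route as the paper: factor the history into stages via Lemma \ref{M_3 historical sector}, restrict to a base containing a historical sector, bound each stage using Lemmas \ref{primitive computations}, \ref{multiply two letters}, and \ref{one alphabet historical words}, and use tameness of $W_0$ to anchor the historical $a$-length at $\|H_1\|$, then sum. The one organizational difference is that the paper argues directly that every stage except the last has both endpoints with historical $a$-length equal to $\|H_1\|$ (so only the last stage can grow, and there one reads off $|V_t|_a\ge\|H_1\|$ from the flat-then-strictly-increasing structure), whereas you invoke Lemma \ref{M_3 standard base}(a) for invariance across complete $4$-cycles and propagate max-of-endpoints inequalities from the two anchors --- this is the bookkeeping you flag as the ``real obstacle,'' and the paper's per-stage anchoring sidesteps most of it.
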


\begin{proof}

Suppose $W_0\equiv A_3(H_1)$. Lemma \ref{M_3 standard base} implies that the history $H$ of $\pazocal{C}$ has the form $$H=h_{4k+3}\chi(4k+3,4k+2)h_{4k+2}\dots\chi(j+1,j)h_j$$ where $1\leq j\leq 4k+3$, $\chi(i+1,i)\equiv\chi(i,i+1)^{-1}$, and $h_i$ is the history of a reduced computation of $\textbf{M}_3(i)$ for $j\leq i\leq 4k+3$. Lemmas \ref{M_3 standard base}, \ref{primitive computations}, \ref{multiply two letters}, and \ref{one alphabet historical words} then imply that for $j+1\leq i\leq 4k+3$, $h_i$ has length bounded by $\|H_1\|$. So, $\|H\|-\|h_j\|\leq (4k+2)(\|H_1\|+1)$, so that taking $c_1>>k$ proves the statement if $\|h_j\|=0$.

Supposing $h_j$ is nonempty, let $W_s\to\dots\to W_t\equiv W$ be the subcomputation with this history. Let $V_s\to\dots\to V_t$ be the restriction of this subcomputation to:

\begin{addmargin}[1em]{0em}

$\bullet$ a three-letter subword $Q_iR_iP_{i+1}$ for $R_iP_{i+1}$ historical if $j=4r+1$ for some integer $r$

$\bullet$ a three-letter subword $R_iP_{i+1}Q_{i+1}$ for $R_iP_{i+1}$ historical if $j=4r+3$ for some integer $r$

$\bullet$ a historical $R_iP_{i+1}$-sector if $j$ is even

\end{addmargin}

Note that $|V_s|_a=\|H_1\|$ in each case. Lemmas \ref{primitive computations} and \ref{multiply two letters} then imply that in each case, there exists $s\leq l\leq t$ such that $\|H_1\|=|V_s|_a=\dots=|V_l|_a<|V_{l+1}|_a<\dots<|V_t|_a$. The same lemmas then imply that $l-s\leq2\|H_1\|+1$, while $2(t-l)+\|H_1\|=|V_t|_a$. So, since $s\leq (4k+2)(\|H_1\|+1)$ as above, $t\leq(4k+2)(\|H_1\|+1)+2\|H_1\|+1+(|V_t|_a-\|H_1\|)/2$.

Since $|W|_a\geq|V_t|_a\geq\|H_1\|$, the statement holds for sufficiently large $c_1$.

A similar argument implies the statement if $W_0\equiv I_3(w,H_1)$.

\end{proof}

\begin{lemma} \label{M_3 language}

$(1)$ For any $u^n\in\pazocal{L}$, there exists $H_1\in F(\Phi^+)$ with $\|H_1\|\leq c_0\|u\|$ and a reduced computation  $I_3(u^n,H_1)\to\dots\to A_3(H_1)$ of $\textbf{M}_3$ of length at most $c_1\|u\|$.

$(2)$ If there exists a reduced computation $I_3(w,H_1)\to\dots\to A_3(H_1')$ of $\textbf{M}_3$ for $w\in F(\pazocal{A})$, then $w\in\pazocal{L}$ and $H_1'\equiv H_1$.

\end{lemma}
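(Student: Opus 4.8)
The plan is to prove (1) by explicitly lifting an accepting computation of $\textbf{M}_2$ through $\overline{\textbf{M}}_2$ and the $4k+3$ phases of $\textbf{M}_3$, and to prove (2) by reversing this: first show that an arbitrary computation between $I_3(w,H_1)$ and $A_3(H_1')$ must exhibit the expected phase structure, then project onto the working sectors to recover a computation of $\textbf{M}_2$ accepting $w$, and finally track the historical sectors to force $H_1'\equiv H_1$. The main structural inputs are Lemmas \ref{M_3 historical sector} and \ref{M_3 standard base} together with the primitive‑machine Lemma \ref{primitive computations}.

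For (1), apply Lemma \ref{M_2 language}(1) to fix $H_1\in F(\Phi^+)$ with $\|H_1\|\le c_0\|u\|$ and a reduced computation $I_2(u^n,H_1)\to\cdots\to A_2(H_1)$ of $\textbf{M}_2$ of length $\|H_1\|$. Then build the required computation of $\textbf{M}_3$ phase by phase as dictated by the definition of $\textbf{M}_3$: a complete parallel $\textbf{RL}$-run, then $\chi(1,2)$, then the $\overline{\textbf{M}}_2$-lift of $I_2(u^n,H_1)\to\cdots\to A_2(H_1)$ obtained by replacing each $\textbf{M}_2$-rule $\theta$ by $\theta(1)$ (length $\|H_1\|$), then $\chi(2,3)$, a complete parallel $\textbf{LR}$-run, $\chi(3,4)$, the $\overline{\textbf{M}}_2^{-1}$-lift of the reverse computation, $\chi(4,5)$, and so on — repeating this four-phase block $k$ times and finishing with the phases $\textbf{M}_3(4k+1),\textbf{M}_3(4k+2),\textbf{M}_3(4k+3)$ — which by construction terminates at $A_3(H_1)$ (the consistency of the $k$-fold repetition, i.e.\ each round reproducing the setup for the next, is exactly what Lemma \ref{M_3 standard base}(a),(b) encodes, so no separate check is needed). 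This computation is reduced: each phase is reduced on its own (the primitive phases by the explicit form of the history in Lemma \ref{primitive computations}(3), the $\overline{\textbf{M}}_2^{\pm}$-phases because $H_1$ is reduced) and distinct transition rules glue the phases without producing a cancellable pair. For the length bound, each $\overline{\textbf{M}}_2^{\pm}$-phase has length $\|H_1\|$ and each parallel primitive phase has length $2\|H_1\|+O(1)$, since the running state letters traverse only the historical content of length $\|H_1\|$ while the working sectors stay locked — this is the parallel form of Lemma \ref{primitive computations}(3). Hence the total length is at most $C(k\|H_1\|+k)\le Ck(c_0+1)\|u\|$ for $\|u\|\ge1$; as $c_1$ is assigned after $k$ and $c_0$, we may take $c_1\ge Ck(c_0+1)$, giving the bound $c_1\|u\|$ (if $u$ is trivial the statement is vacuous).

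For (2), let $H$ be the history of the given computation. Since $I_3(w,H_1)$ carries the start state letters of $\textbf{M}_3(1)$ and $A_3(H_1')$ the end state letters of $\textbf{M}_3(4k+3)$, $H$ must contain every transition rule $\chi(j,j+1)$, $1\le j\le4k+2$; the standard base contains historical subwords $R_iP_{i+1}$, so by Lemma \ref{M_3 historical sector} each $\chi(j,j+1)^{\pm1}$ occurs at most once — hence exactly once, positively, and in increasing order of $j$. Thus $H\equiv h_1\,\chi(1,2)\,h_2\,\chi(2,3)\cdots\chi(4k+2,4k+3)\,h_{4k+3}$ with each $h_i$ a reduced computation of $\textbf{M}_3(i)$ in the standard base of $\overline{\textbf{M}}_2$. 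Restrict to the working sectors (the working $R_iP_{i+1}$-sectors together with the input sector): the odd phases $\textbf{M}_3(4r+1)$ are parallel $\textbf{RL}$- or $\textbf{LR}$-machines that lock every working sector except possibly the input, and because the following transition rule requires the running state letters to have reached their end values, each $h_{4r+1}$ is a complete primitive run and hence, by Lemma \ref{primitive computations}(3),(4), leaves the working-sector content unchanged; the even phases $\textbf{M}_3(4r+2)$ (copies of $\overline{\textbf{M}}_2$) and $\textbf{M}_3(4r)$ (copies of $\overline{\textbf{M}}_2^{-1}$) act on the working sectors exactly as $\textbf{M}_2$, respectively $\textbf{M}_2^{-1}$, do on theirs. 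Reading off the $Q$-state letters and the working-sector content, the even phases therefore realize a computation of $\textbf{M}_2$ — using that the rule set of $\textbf{M}_2$ is symmetric, the $\overline{\textbf{M}}_2^{-1}$-phases contribute genuine $\textbf{M}_2$-moves — from the input configuration of $\textbf{M}_2$ with input $w$ (the $Q$-state letters are reset to $\textbf{M}_2$'s start letters by $\chi(1,2)$, and all non-input working sectors of $I_3(w,H_1)$ are empty) to the accept configuration of $\textbf{M}_2$ (the end state letters and all empty working sectors of $A_3(H_1')$). Since the language of $\textbf{M}_2$ is $\pazocal{L}$ (Lemma \ref{M_2 language}), this yields $w\in\pazocal{L}$. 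To obtain $H_1'\equiv H_1$, track the historical $R_iP_{i+1}$-sectors: Lemma \ref{M_3 standard base}(a),(b) applied to the $k$ complete blocks $\chi(4r-3,4r-2)\cdots\chi(4r,4r+1)$ shows that the configurations at block boundaries are copies of one another and the $\overline{\textbf{M}}_2$-subcomputations in corresponding phases are cyclic permutations of copies; combined with the projection argument for the primitive phases (which carry the historical content unchanged, up to the left/right alphabet correspondence) and the fact that $\overline{\textbf{M}}_2$ acts on the historical sectors the way $\textbf{M}_2$ does — for which Lemma \ref{M_2 language}(2) gives that the historical content at the end of an accepting computation equals that at the start — the historical content is carried from $I_3(w,H_1)$ to $A_3(H_1')$ intact modulo that alphabet correspondence, so matching $H_1$ (in the left alphabets) with $H_1'$ (in the right alphabets) forces $H_1'\equiv H_1$.

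The main obstacle is in part (2): the $k$-fold repetition is controlled cleanly by Lemma \ref{M_3 standard base}, so the essential work is the single-block analysis — verifying that the transition rules' locking constraints genuinely force each $h_i$ to be a complete phase-computation, and that the projection arguments faithfully separate and track the (large) working-sector content and the (small) historical-sector content through the primitive and $\overline{\textbf{M}}_2$-phases. Part (1) is comparatively routine once $\textbf{M}_3$'s phase structure is unwound and the phase lengths are read off from Lemma \ref{primitive computations}.
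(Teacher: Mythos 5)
Your proposal is correct and takes essentially the same route as the paper: part (1) is obtained by concatenating the parallel $\textbf{RL}$/$\textbf{LR}$ runs and the $\overline{\textbf{M}}_2^{\pm1}$ computations joined by transition rules, using Lemmas \ref{M_2 language}(1) and \ref{primitive computations}(3), and part (2) by invoking Lemma \ref{M_3 historical sector} to force the history into the form $h_1\chi(1,2)h_2\cdots\chi(4k+2,4k+3)h_{4k+3}$ and then applying Lemmas \ref{M_2 language}(2) and \ref{primitive computations} to the individual phases. The paper's own proof is simply a more condensed statement of this same argument, so your additional bookkeeping (phase completeness, tracking of historical content) is just the detail the paper leaves implicit.
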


\begin{proof}

(1) The existence of a computation of the form $I_3(u^n,H_1)\to\dots\to A_3(H_1)$  follows from the definition of the machine and Lemmas \ref{M_2 language}(1) and \ref{primitive computations}(3), as one can concatenate appropriate computations of $\overline{\textbf{M}}_2^{\pm1}$, $\textbf{LR}$, and $\textbf{RL}$ (connected by transition rules). As the appropriate computations of $\overline{\textbf{M}}_2^{\pm1}$ are of length at most $c_0\|u\|$ and those of $\textbf{LR}$ and $\textbf{RL}$ are of length $2\|H_1\|+1\leq2\|u\|+1$, the bound on the length of the computation is given by the choice of paramaters.

(2) The initial configuration has state letters of the machine $\textbf{M}_3(1)$ while the final configuration has those of $\textbf{M}_3(4k+3)$. Note that passing from state letters of one machine to those of the next requires the presence of a transition rule. Lemma \ref{M_3 historical sector} then implies that the history of the computation must be of the form $H_1''\chi(1,2)H_2''\chi(2,3)\dots\chi(4k+2,4k+3)H_{4k+3}''$, where $H_i''$ is the history of a reduced computation in $\textbf{M}_3(i)$. Applying Lemmas \ref{M_2 language}(2) and \ref{primitive computations} to the subcomputations of history $H_i''$ implies the statement.

\end{proof}

\smallskip

%%%%%%%%%%%%%%%%%%%%%%%%%%%%%%%%%%%%%%%%%%%%%%%%%%%%%%%%%%%%%%%%%

\subsection{The machine $\textbf{M}_4$} \

As in [26], the machine $\textbf{M}_4$ is a composition of $\textbf{M}_3$ with a `mirror copy' of itself, in the sense of what is defined below.

Let $B_3$ be the standard base of $\textbf{M}_3$ and $B_3'$ a copy of $B_3$. Then the standard base of $\textbf{M}_4$ is $B_3(B_3')^{-1}$. Given the start (or end) letter of a part of the standard base of $\textbf{M}_3$, the start (or end) letter of the corresponding part of $B_3$ is the same while that of the corresponding part of $(B_3')^{-1}$ is the inverse of its copy.

The tape alphabet of a sector formed by a two-letter subword of $B_3$ is the same as the corresponding sector in $\textbf{M}_3$, while that of a sector formed by a subword of $(B_3')^{-1}$ is the same as that of the sector corresponding to its inverse. The tape alphabet formed by the last letter of $B_3$ and the first letter of $(B_3')^{-1}$ is empty, so that every rule locks it.

The positive rules of $\textbf{M}_4$ are in one-to-one correspondence with the positive rules of $\textbf{M}_3$, with each executing in parallel on $B_3$ and $(B_3')^{-1}$ in the same way as its corresponding rule. For example, if a positive rule of $\textbf{M}_3$ has part $q\to aq$, then the corresponding positive rule of $\textbf{M}_4$ has parts $q\to aq$ and $(q')^{-1}\to(q')^{-1}(a')^{-1}$, where $q'$ and $a'$ are the corresponding copies of $q$ and $a$.

Note the mirror symmetry of this construction. As such, given a sector formed by a two-letter subword of $B_3$, we refer to the corresponding sector formed by a subword of $(B_3')^{-1}$ as its mirror copy (and vice versa) and call $(B_3')^{-1}$ the mirror copy of $B_3$.

A sector is historical (or working) in the machine $\textbf{M}_4$ if it corresponds to a historical (or working) sector of $\textbf{M}_3$. However, the only input sector is the the $R_0P_1$-sector; in particular, its mirror copy in $(B_3')^{-1}$ is not considered an input sector.

Note that a configuration of $\textbf{M}_4$ is merely the concatenation of a configuration of $\textbf{M}_3$ with a copy of the inverse of one. So, to each configuration of $\textbf{M}_4$, one can associate two configurations of $\textbf{M}_3$ which completely define it. As such, a configuration of $\textbf{M}_4$ is called \textit{tame} if both of its associated configurations are tame in $\textbf{M}_3$.

There are obvious analogues of Lemmas \ref{M_3 tame} through \ref{M_3 standard base} in the setting of $\textbf{M}_4$. In particular, the following analogue of Lemma \ref{M_3 standard base}$(c)$ is stated here for easy reference.

\begin{lemma} \label{M_4 bounds}

Let $\pazocal{C}:W_0\to\dots\to W_t$ be a reduced computation of $\textbf{M}_4$ in the standard base. Then $|W_j|_a\leq c_1\max(|W_0|_a,|W_t|_a)$ for $j=0,1,\dots,t$; moreover, $|W_j|_a\leq c_1|W_t|_a$ if $W_0$ is a tame configuration.

\end{lemma}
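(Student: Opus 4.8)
The plan is to deduce Lemma \ref{M_4 bounds} directly from the corresponding statement for $\textbf{M}_3$, namely Lemma \ref{M_3 standard base}$(c)$, by exploiting the mirror-symmetric structure of $\textbf{M}_4$. Recall that a configuration $W$ of $\textbf{M}_4$ in the standard base $B_3(B_3')^{-1}$ decomposes uniquely as a concatenation $W \equiv W^{(1)} W^{(2)}$, where $W^{(1)}$ is a configuration of $\textbf{M}_3$ in the base $B_3$ and $W^{(2)}$ is the inverse of a copy of a configuration of $\textbf{M}_3$ in the base $(B_3')^{-1}$ (the tape alphabet joining the last letter of $B_3$ to the first letter of $(B_3')^{-1}$ being empty, so no cancellation occurs there). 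Since every positive rule of $\textbf{M}_4$ acts in parallel as its corresponding rule of $\textbf{M}_3$ on each of $B_3$ and $(B_3')^{-1}$, a reduced computation $\pazocal{C}: W_0 \to \dots \to W_t$ of $\textbf{M}_4$ projects to two computations $\pazocal{C}_1: W_0^{(1)} \to \dots \to W_t^{(1)}$ and $\pazocal{C}_2: W_0^{(2)} \to \dots \to W_t^{(2)}$ of $\textbf{M}_3$, both with the same history $H$ as $\pazocal{C}$ (up to the fixed copying isomorphism).

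First I would check that reducedness of $\pazocal{C}$ forces reducedness of at least one of the projected computations — in fact, since the history of $\pazocal{C}$ equals the history of each projection, and a computation is reduced precisely when its history is reduced in $F(\Theta^+)$, both $\pazocal{C}_1$ and $\pazocal{C}_2$ are automatically reduced as computations of $\textbf{M}_3$. Then Lemma \ref{M_3 standard base}$(c)$ applies to each: for every $j$ we get $|W_j^{(m)}|_a \leq c_1 \max(|W_0^{(m)}|_a, |W_t^{(m)}|_a)$ for $m = 1, 2$. Since $|W_j|_a = |W_j^{(1)}|_a + |W_j^{(2)}|_a$ for every $j$ (the two pieces share no $a$-edges), adding the two inequalities yields
\[
|W_j|_a \leq c_1\big(\max(|W_0^{(1)}|_a,|W_t^{(1)}|_a) + \max(|W_0^{(2)}|_a,|W_t^{(2)}|_a)\big) \leq c_1\big(|W_0|_a + |W_t|_a\big) \leq 2c_1 \max(|W_0|_a, |W_t|_a),
\]
which gives the first assertion after absorbing the constant $2$ into $c_1$ (permissible since $c_1$ is chosen according to the highest parameter principle, so it may be taken as large as needed relative to the previously fixed parameters). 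For the refined bound when $W_0$ is tame: by definition $W_0$ is tame in $\textbf{M}_4$ exactly when both $W_0^{(1)}$ and $W_0^{(2)}$ are tame in $\textbf{M}_3$, so the "moreover" clause of Lemma \ref{M_3 standard base}$(c)$ gives $|W_j^{(m)}|_a \leq c_1 |W_t^{(m)}|_a$ for $m=1,2$, and summing gives $|W_j|_a \leq c_1 |W_t|_a$ directly.

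There is essentially no obstacle here; the content of the lemma is entirely contained in Lemma \ref{M_3 standard base}$(c)$, and the only thing to verify carefully is that the $a$-length of a configuration of $\textbf{M}_4$ is genuinely additive over the two halves $W^{(1)}, W^{(2)}$ — i.e.\ that the empty tape alphabet at the junction of $B_3$ and $(B_3')^{-1}$ really does prevent any reduction spanning the two pieces, so that the decomposition $W \equiv W^{(1)}W^{(2)}$ survives each application of a rule. This is immediate from the construction of $\textbf{M}_4$ (every rule locks that middle sector), so the proof reduces to the two-line argument above; the only mild subtlety worth a sentence in the write-up is the reindexing of the constant $c_1$, which is harmless by the highest parameter principle. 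Accordingly I would simply write: "This follows from Lemma \ref{M_3 standard base}$(c)$ applied to the two projections of the computation onto $B_3$ and $(B_3')^{-1}$, using that the $a$-length of a configuration of $\textbf{M}_4$ is the sum of the $a$-lengths of its two associated configurations of $\textbf{M}_3$ and that tameness of a configuration of $\textbf{M}_4$ is equivalent to tameness of both associated configurations."
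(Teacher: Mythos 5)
Your proof is correct, and it is exactly what the author intends by declaring Lemma \ref{M_4 bounds} an ``obvious analogue'' of Lemma \ref{M_3 standard base}$(c)$: the paper gives no argument of its own, so there is nothing to compare against beyond the intended deduction, which you have spelled out faithfully --- the decomposition $W\equiv W^{(1)}W^{(2)}$, the fact that each projection inherits the (reduced) history of $\pazocal{C}$, the additivity of $|\cdot|_a$ over the two halves because the junction sector has empty tape alphabet, and the equivalence of tameness with tameness of both associated $\textbf{M}_3$-configurations. The one point worth a sentence in any write-up is the constant: summing the two projected inequalities yields $|W_j|_a\leq c_1\bigl(\max(|W_0^{(1)}|_a,|W_t^{(1)}|_a)+\max(|W_0^{(2)}|_a,|W_t^{(2)}|_a)\bigr)\leq 2c_1\max(|W_0|_a,|W_t|_a)$, and the estimate $\max(a,b)+\max(c,d)\leq 2\max(a+c,b+d)$ is sharp (e.g.\ $a=d=1$, $b=c=0$), so your argument genuinely produces $2c_1$ rather than $c_1$. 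You correctly note this is harmless under the highest-parameter convention, but since the paper reuses the symbol $c_1$ verbatim, a careful write-up should either introduce a fresh constant or state explicitly that $c_1$ is tacitly enlarged.
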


Similarly, there is an obvious analogue of Lemma \ref{M_3 language} for the machine $\textbf{M}_4$. To this end, for $w\in F(\pazocal{A})$ and $H_1\in F(\Phi^+)$, define $I_4(w,H_1)$ (or $A_4(H_1)$) as the configuration of $\textbf{M}_4$ whose associated configurations in $\textbf{M}_3$ are both $I_3(w,H_1)$ (or $A_3(H_1)$).

\begin{lemma} \label{M_4 language}

(1) For any $u^n\in\pazocal{L}$, there exists an $H_1\in F(\Phi^+)$ with $\|H_1\|\leq c_0\|u\|$ and a reduced computation $I_4(u^n,H_1)\to\dots\to A_4(H_1)$ of $\textbf{M}_4$ of length at most $c_1\|u\|$.

(2) If there exists a reduced computation $I_4(w,H_1)\to\dots\to A_4(H_1')$ of $\textbf{M}_4$ for $w\in F(\pazocal{A})$, then $w\in\pazocal{L}$ and $H_1'\equiv H_1$.

\end{lemma}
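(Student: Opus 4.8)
The plan is to derive Lemma \ref{M_4 language} directly from Lemma \ref{M_3 language} by exploiting the mirror-symmetric structure of $\textbf{M}_4$. Recall that $\textbf{M}_4$ has standard base $B_3(B_3')^{-1}$, its positive rules act in parallel on the two halves $B_3$ and $(B_3')^{-1}$ exactly as the corresponding rule of $\textbf{M}_3$ acts, and the tape alphabet of the connecting sector (between the last letter of $B_3$ and the first of $(B_3')^{-1}$) is empty. Consequently, any admissible word of $\textbf{M}_4$ with the standard base splits uniquely as a concatenation $W = V \cdot (V')^{-1}$, where $V$ is an admissible word of $\textbf{M}_3$ in its standard base and $V'$ is a copy of another such admissible word; and any reduced computation of $\textbf{M}_4$ in the standard base induces, by restriction to $B_3$ and to $(B_3')^{-1}$, two reduced computations of $\textbf{M}_3$ sharing the same history $H$ (up to the obvious mirroring), with the second one being a copy of the first. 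This is the observation that does all the work.

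For part (1), I would invoke Lemma \ref{M_3 language}(1) to obtain, for a given $u^n\in\pazocal{L}$, a history $H_1\in F(\Phi^+)$ with $\|H_1\|\leq c_0\|u\|$ and a reduced computation $I_3(u^n,H_1)\to\dots\to A_3(H_1)$ of $\textbf{M}_3$ of length at most $c_1\|u\|$. Since the positive rules of $\textbf{M}_4$ are in one-to-one correspondence with those of $\textbf{M}_3$ and act in parallel on the two mirrored halves, applying the corresponding sequence of rules to the configuration $I_4(u^n,H_1)$ (whose two associated $\textbf{M}_3$-configurations are both $I_3(u^n,H_1)$) produces, step by step, configurations whose two associated configurations agree with the steps of the $\textbf{M}_3$-computation; the final configuration therefore has both associated configurations equal to $A_3(H_1)$, i.e.\ it is $A_4(H_1)$. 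The resulting computation has the same length $\leq c_1\|u\|$ and is reduced since its history equals that of the $\textbf{M}_3$-computation, which is reduced. One must check the computation remains admissible throughout: this is immediate because admissibility of $W=V\cdot(V')^{-1}$ reduces to admissibility of $V$ and of $V'$ together with the fact that the connecting sector is always empty (it is locked by every rule).

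For part (2), suppose $I_4(w,H_1)\to\dots\to A_4(H_1')$ is a reduced computation of $\textbf{M}_4$ for some $w\in F(\pazocal{A})$. Restricting this computation to the half $B_3$ of the standard base yields a reduced computation of $\textbf{M}_3$ whose initial configuration is the $B_3$-associated configuration of $I_4(w,H_1)$, namely $I_3(w,H_1)$, and whose final configuration is that of $A_4(H_1')$, namely $A_3(H_1')$. (Reducedness is inherited: the history of the restricted computation equals the history of the $\textbf{M}_4$-computation, since each rule of $\textbf{M}_4$ genuinely acts on $B_3$ — indeed on the input $R_0P_1$-sector or its neighbors — so no cancellation is introduced or lost by restriction.) Now Lemma \ref{M_3 language}(2) applied to $I_3(w,H_1)\to\dots\to A_3(H_1')$ gives $w\in\pazocal{L}$ and $H_1'\equiv H_1$, which is exactly the claim.

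The only genuinely delicate point — and the step I expect to require the most care — is verifying that restriction of an $\textbf{M}_4$-computation to one half of its base is legitimately a reduced $\textbf{M}_3$-computation with the expected endpoints, i.e.\ that the parallel structure of the rules is clean enough that no history letter acts trivially on $B_3$ (so reducedness transfers) and that admissibility is preserved in both directions. This is essentially a bookkeeping verification built into the construction of $\textbf{M}_4$ and is the analogue, for the endpoint statement, of the already-cited analogues of Lemmas \ref{M_3 tame}--\ref{M_3 standard base} in the $\textbf{M}_4$ setting; once it is in place, parts (1) and (2) are immediate transcriptions of Lemma \ref{M_3 language}(1) and (2).
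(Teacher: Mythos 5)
Your proposal is correct and is exactly the argument the paper intends: the paper states Lemma \ref{M_4 language} without proof as the ``obvious analogue'' of Lemma \ref{M_3 language}, relying on precisely the parallel/mirror structure of $\textbf{M}_4$ that you make explicit (run the $\textbf{M}_3$-computation in parallel on both halves for (1), restrict to the $B_3$ half and apply Lemma \ref{M_3 language}(2) for (2)). Your extra care about admissibility of the locked connecting sector and transfer of reducedness is the right bookkeeping and introduces no gap.
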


\smallskip

%%%%%%%%%%%%%%%%%%%%%%%%%%%%%%%%%%%%%%%%%%%%%%%%%%%%%%%%%%%%%%%%%

\subsection{The machine $\overline{\textbf{M}}_4$} \

The machine $\overline{\textbf{M}}_4$ is a simple tweak to the machine $\textbf{M}_4$. Its role in the scope of our construction is similar to that of the machine $\overline{\textbf{M}}_2$.

The standard base of $\overline{\textbf{M}}_4$ adds just one part to that of $\textbf{M}_4$. In particular, setting $B_4$ as the standard base of $\textbf{M}_4$, the standard base of $\overline{\textbf{M}}_4$ is $\{t\}B_4$, where $\{t\}$ consists of a single letter (which, clearly, acts as both the start and end letter of its part). The length of this standard base is henceforth the parameter $N$.

The tape alphabet of the new sector in the standard base, i.e the $\{t\}P_0$-sector, is empty. All other tape alphabets are carried over from $\textbf{M}_4$. 

The positive rules of $\overline{\textbf{M}}_4$ then correspond to those of $\textbf{M}_4$, operating on the copy the hardware of $\textbf{M}_4$ in the same way and locking the new sector.

The input sector is the same as that of $\textbf{M}_4$. The corresponding definitions (for example, historical sector, working sector, etc) then extend in the clear way, as do all lemmas of Section 4.6.

\smallskip

%%%%%%%%%%%%%%%%%%%%%%%%%%%%%%%%%%%%%%%%%%%%%%%%%%%%%%%%%%%%%%%%%

\subsection{The machine $\textbf{M}_5$} \

Similar to the main machine of [26], the recognizing $S$-machine $\textbf{M}_5$ is created from $\overline{\textbf{M}}_4$, $\textbf{LR}_k$ (where $k$ is the parameter specified in Section 3.3), and three more simple machines. The standard base of the machine is the same as that of $\overline{\textbf{M}}_4$, but with each part other than the one-letter part $\{t\}$ consisting of more state letters.

The rules partition $\textbf{M}_5$ into five different machines, denoted $\textbf{M}_5(1),\dots,\textbf{M}_5(5)$, with the transition rules $\theta(i,i+1)$ functioning in a similar way as the rules $\chi(i,i+1)$ did in the construction of $\textbf{M}_3$ in Section 4.5. Accordingly, each part of the state letters (other than the one-letter part $\{t\}$) is the disjoint union of five sets corresponding to these five machines; for this construction, though, we also include one additional letter in each part of the state letters (other than the one-letter part $\{t\}$), which functions as the end state letter of that part.

One major difference between $\textbf{M}_5$ and its predecessors is that the $Q_0R_0$-sector functions as the input sector (as opposed to the $R_0P_1$-sector).

Each of the rules locks the $\{t\}P_0$-sector and operates on the mirror copy of $B_3$ in the symmetric way. As such, when defining the rules below, we detail only the parts of the rules with state letters from $B_3$, implicitly defining the rest of the rule.

The positive rules of $\textbf{M}_5(1)$ are in correspondence with $\pazocal{A}$. The rule corresponding to the letter $a\in\pazocal{A}$ writes a copy of $a$ on the left of the $R_0P_1$-sector and a copy of its inverse on the right of the $Q_0R_0$-sector. Each part of the state letters has just one letter corresponding to this submachine.

The transition rule $\theta(12)$ locks all sectors except for the $R_0P_1$-sector and changes the state letters from the end (only) letters of $\textbf{M}_5(1)$ to the start letters of $\textbf{M}_5(2)$.

The machine $\textbf{M}_5(2)$ operates as the machine $\textbf{LR}_k$ on the subword $R_0P_1Q_1$ of the standard base. Each other part of the state letters consists of just one letter corresponding to this submachine and all other sectors are locked.

The transition rule $\theta(23)$ locks all sectors except for the $R_0P_1$-sector and changes the state letters from the end letters of $\textbf{M}_5(2)$ to the start letters of $\textbf{M}_5(3)$.

The machine $\textbf{M}_5(3)$ inserts history words in the left alphabets of the historical $R_iP_{i+1}$-sectors. As a result, the positive rules of $\textbf{M}_5(3)$ are in one-to-one correspondence with $\Phi^+$, with such a rule inserting the left copy of the corresponding rule to the left of each appropriate $P_{i+1}$-letter. Each rule locks all other sectors except for the $R_0P_1$-sector. The state letters are fixed by every rule, so that each part contains just one state letter corresponding to $\textbf{M}_5(3)$.

The transition rule $\theta(34)$ locks all sectors except for the $R_0P_1$- and historical $R_iP_{i+1}$-sectors and changes the state letters to the start letters of $\textbf{M}_5(4)$. Its domain in each unlocked historical sector is the corresponding left alphabet.

The machine $\textbf{M}_5(4)$ is a copy of the machine $\overline{\textbf{M}}_4$.

The transition rule $\theta(45)$ locks all sectors except for historical $R_iP_{i+1}$-sectors and changes the state letters to the start (only) state letters of $\textbf{M}_5(5)$. The domain of the rule in each historical $R_iP_{i+1}$-sector is the corresponding left alphabet.

The machine $\textbf{M}_5(5)$ acts similart to the inverse of $\textbf{M}_5(3)$, erasing the history words in the historical $R_iP_{i+1}$-sectors. However, these rules write in the right alphabets and lock every other sector, particularly the $R_0P_1$-sector (in contrast to the rules of $\textbf{M}_5(3)$). Similar to $\textbf{M}_5(3)$, each part of the state letters contains just one letter corresponding to $\textbf{M}_5(5)$.

Finally, the accept rule $\theta_0$ locks every sector and switches from the end (only) state letters of $\textbf{M}_5(5)$ to the end state letters of the machine. These end state letters form the accept configuration $A_5$.

Note that the machine $\textbf{M}_5$ is very similar to the main machine of [26], with only the first machine $\textbf{M}_5(1)$ altered. As such, many of the lemmas in the next few subsections will be presented with partial or no proof.

\medskip

%%%%%%%%%%%%%%%%%%%%%%%%%%%%%%%%%%%%%%%%%%%%%%%%%%%%%%%%%%%%%%%%%

\subsection{Standard computations of $\textbf{M}_5$} \

For simplicity of notation, denote the inverse of each of the transition rules by switching the indices, i.e so that $\theta(i,i+1)^{-1}=\theta(i+1,i)$. These inverses are also referred to as transition rules.

The history $H$ of a reduced computation of $\textbf{M}_5$ can be factorized in such a way that each factor is either a transition rule or a maximal nonempty product of rules of one of the five defining machines $\textbf{M}_5(1),\dots,\textbf{M}_5(5)$. The \textit{step history} of a reduced computation is then defined so as to capture the order of the types of these factors. To do this, denote the transition rule $\theta(i,j)$ by the pair $(i,j)$ and a factor that is a product of rules in $\textbf{M}_5(i)$ simply by $(i)$. 

For example, if $H\equiv H'H''H'''$ where $H'$ is a product of rules from $\textbf{M}_5(2)$, $H''\equiv\theta(23)$, and $H'''$ is a product of rules from $\textbf{M}_5(3)$, then the step history of a computation with history $H$ is $(2)(23)(3)$. So, the step history of a computation is some concatenation of the letters
$$\{(1),(2),(3),(4),(5),(12),(23),(34),(45),(21),(32),(43),(54)\}$$ 
Note that there is no reference to the accept rule $\theta_0$ or its inverse in the step history. This is because $\theta_0^{\pm1}$ can only appear as the first or last letter of the history of a reduced computation, in which case any subsequent rule is either the transition rule $\theta(54)$ or from $\textbf{M}_5(5)$. As such, one can view $\theta_0^{\pm1}$ as a rule of $\textbf{M}_5(5)$ when creating the step history, so that it contributes to maximal subcomputations of step history $(5)$.

What's more, one can omit reference to a transition rule when its existence is clear from its necessity. For example, given a reduced computation with step history $(2)(23)(3)$, one can instead write the step history as $(2)(3)$, as the rule $\theta(23)$ must occur in order for the subcomputation of $\textbf{M}_5(3)$ to be possible.

If the step history of a computation is $(i-1,i)(i,i+1)$, it is also permitted for the step history to be written as $(i-1,i)(i)(i,i+1)$ even though the maximal `subcomputation' with step history $(i)$ is empty.

A \textit{one-step computation} is a computation with step history of one of the following forms:

\begin{addmargin}[1em]{0em}

$\bullet$ $(i)$

$\bullet$ $(i)(i,i\pm1)$

$\bullet$ $(i\pm1,i)(i)$

$\bullet$ $(i\pm1,i)(i)(i,i\pm1)$

\end{addmargin}

Certain subwords cannot appear in the step history of a reduced computation. For example, it is clear that it is impossible for the step history of a reduced computation to contain the subword $(1)(3)$. The next few lemmas display the impossibility of some less obvious potential subwords.

\begin{lemma} \label{first step history}

\textit{(Lemma 4.2 of [26])} If the base of a reduced computation $\pazocal{C}$ of $\textbf{M}_5$ has at least one historical subword $UV$ corresponding to a historical $(R_iP_{i+1})^{\pm1}$-sector (or its mirror copy), then its step history is not:

\begin{addmargin}[1em]{0em}

$(a)$ $(34)(4)(43)$ or $(54)(4)(45)$

$(b)$ $(23)(3)(32)$

\end{addmargin}

\end{lemma}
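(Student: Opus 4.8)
The plan is to argue by contradiction in each case, exploiting the structure of the machines $\textbf{M}_5(3)$, $\textbf{M}_5(4)$, and $\textbf{M}_5(5)$ together with the fact that the base contains a historical subword $UV$ corresponding to an $(R_iP_{i+1})^{\pm1}$-sector. The key observations are: (i) the rules of $\textbf{M}_5(3)$ insert history letters into the \emph{left} alphabet of each historical $R_jP_{j+1}$-sector, (ii) the rules of $\textbf{M}_5(5)$ write in the \emph{right} alphabet of each historical $R_jP_{j+1}$-sector, (iii) the transition rule $\theta(34)$ acts on the left alphabet and $\theta(54)$ acts on the left alphabet while $\theta(45)$ acts on the left alphabet — so a configuration entering $\textbf{M}_5(4)$ via $\theta(34)$ still has its history words in the left alphabet, whereas a configuration leaving via $\theta(45)$ and entering $\textbf{M}_5(5)$ must likewise be positioned in the left alphabet. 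Since $\textbf{M}_5(4)$ is a copy of $\overline{\textbf{M}}_4$ (itself built from $\textbf{M}_4$, hence ultimately from $\textbf{M}_3$ and $\overline{\textbf{M}}_2$), one tracks what $\textbf{M}_5(4)$ can and cannot do to the historical sector.

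For part $(b)$, suppose the step history is $(23)(3)(32)$. The subcomputation of $\textbf{M}_5(3)$ starts immediately after $\theta(23)$ and ends immediately before $\theta(32)=\theta(23)^{-1}$. The rules of $\textbf{M}_5(3)$ all fix the state letters and merely insert (left copies of) rules of $\Phi^+$ to the left of the $P_{j+1}$-letters; restricted to the historical $(R_iP_{i+1})^{\pm1}$-sector, this is exactly the situation of a one-letter-base computation multiplying on one side by distinct letters. By Lemma \ref{multiply one letter}(a), such a reduced computation whose initial and final tape words in that sector agree must be empty. But $\theta(23)$ followed by $\theta(32)$ forces the initial and final configurations of the $\textbf{M}_5(3)$-subcomputation to be related only through the (invertible) $\textbf{M}_5(3)$ history, and reducedness of $\pazocal{C}$ in $F(\Theta^+)$ then forces the $\textbf{M}_5(3)$-subcomputation to be nonempty while simultaneously the historical-sector projection shows it contributes a reduced, hence nontrivial, word that cannot be cancelled — contradicting that the whole history $(23)(3)(32)$ returns to a state letter of $\textbf{M}_5(2)$ with the historical sector unchanged. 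The precise mechanism is that $\theta(23)$ and $\theta(32)$ both have domain the left alphabet in the historical $R_iP_{i+1}$-sector, so the only net change in that sector over $(23)(3)(32)$ is produced by the $\textbf{M}_5(3)$ rules, which by Lemma \ref{multiply one letter} cannot be nonempty and return to the same word; hence the subcomputation is empty, making the history $\theta(23)\theta(32)$, which is not reduced — the desired contradiction.

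For part $(a)$, suppose the step history is $(34)(4)(43)$ (the case $(54)(4)(45)$ is handled symmetrically, using $\textbf{M}_5(5)$ in place of $\textbf{M}_5(3)$ and the right alphabet in place of — actually, here one notes $\theta(54)$ and $\theta(45)$ both have left-alphabet domain in the historical sector, so the same reasoning applies). Here the subcomputation of $\textbf{M}_5(4)=\overline{\textbf{M}}_4$ is entered and exited through $\theta(34)$ and $\theta(43)=\theta(34)^{-1}$, each acting on the left alphabet of the historical $(R_iP_{i+1})^{\pm1}$-sector. Now invoke the analogue of Lemma \ref{M_3 accepting computations} (or more directly the analogues of Lemmas \ref{M_3 historical sector} and \ref{M_3 standard base}) for $\overline{\textbf{M}}_4$: a reduced computation of $\overline{\textbf{M}}_4$ whose base contains a historical $(R_iP_{i+1})^{\pm1}$-sector has tightly controlled structure, and in particular, since $\textbf{M}_5(4)$ does not touch the left/right distinction in that historical sector except through its own internal structure, the configuration at the end of the $\textbf{M}_5(4)$-subcomputation is forced (by the mirror-copy structure and the analogue of Lemma \ref{M_3 standard base}(a)) to be a copy of the configuration at its start. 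Combined with $\theta(43)=\theta(34)^{-1}$ immediately following, this makes the history $\theta(34)(\cdots)\theta(43)$ reducible, contradicting that $\pazocal{C}$ is reduced.

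The main obstacle I expect is bookkeeping the left-versus-right alphabet domains of the transition rules $\theta(34)$, $\theta(45)$, $\theta(54)$ in the historical sectors and confirming that $\textbf{M}_5(4)=\overline{\textbf{M}}_4$ genuinely cannot ``flip'' a historical sector from its incoming state to a state that would make $\theta(43)$ (resp.\ $\theta(45)$) applicable in a nontrivial way — this requires carefully citing the right analogue among Lemmas \ref{M_3 tame}--\ref{M_3 standard base} (promoted to $\textbf{M}_4$ and then $\overline{\textbf{M}}_4$, as the excerpt promises such analogues exist) and checking that a historical subword survives the restriction to a base on which those lemmas apply. Everything else is a routine reduction-of-history argument of the kind appearing throughout Section 4.
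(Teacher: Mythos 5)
First, note that the paper itself does not prove this statement; it is quoted from Lemma 4.2 of [26], so your proposal can only be measured against the intended argument for the machine as defined here.

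For part $(b)$ your overall strategy is the right one (restrict to the historical two\--letter base $UV$ and apply Lemma \ref{multiply one letter}$(a)$ to conclude the $(3)$\--part is empty, so the history would be $\theta(23)\theta(23)^{-1}$, contradicting reducedness), but the step you call the ``precise mechanism'' rests on a false premise. You assert that $\theta(23)$ and $\theta(32)$ have the left alphabet as their domain in the historical $R_iP_{i+1}$\--sector; in fact $\theta(23)$ locks every sector except the $R_0P_1$\--sector, so it locks every historical sector and its mirror copy. This matters, because your inference ``the only net change is produced by the $\textbf{M}_5(3)$\--rules, which cannot be nonempty and return to the same word'' does not show that the initial and final tape words of the $(3)$\--subcomputation agree --- nothing in your argument forces the sector to ``return to the same word.'' What does force it is precisely the locking: the configurations immediately after $\theta(23)$ and immediately before $\theta(32)$ must have that sector empty, and only then does Lemma \ref{multiply one letter}$(a)$ give emptiness of the $(3)$\--part. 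So $(b)$ is fixable in one line, but as written the crucial boundary condition is unjustified.

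Part $(a)$ has a genuine gap. Even if you could show that the configuration at the end of the $(4)$\--subcomputation is ``a copy of'' the configuration at its start, that would not make the history $\theta(34)H_4\theta(43)$ reducible: $\theta(34)H_4\theta(34)^{-1}$ is a reduced word whenever $H_4$ is a nonempty reduced word in rules of $\textbf{M}_5(4)$, regardless of what the configurations do. To reach a contradiction you must show that $H_4$ itself is forced to be empty (or that no admissible configurations realize such a computation). Moreover, the lemma you invoke, the analogue of Lemma \ref{M_3 standard base}$(a)$, concerns standard\--base computations whose history is bracketed by $\chi(i,i+1)$ and $\chi(i+4,i+5)$; here the base is arbitrary except for containing one historical sector, and in fact, by the $\overline{\textbf{M}}_4$\--analogue of Lemma \ref{M_3 historical sector}, the $(4)$\--part can contain no $\chi$\--transition rules at all (it begins and ends in the states of the first submachine, and each $\chi(j,j+1)^{\pm 1}$ occurs at most once when a historical $(R_iP_{i+1})^{\pm1}$\--sector is in the base), so that lemma gives you nothing. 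What remains --- showing that the $(4)$\--part, now known to consist of rules of the first (resp.\ relevant) submachine, is forced to be empty when restricted to the historical sector, using the exact left/right\--alphabet domains of $\theta(34),\theta(43)$ (resp.\ $\theta(54),\theta(45)$), the locking by the connecting rules, and Lemmas \ref{primitive computations} and \ref{multiply one letter} --- is exactly the ``bookkeeping'' you defer at the end, and it is the entire content of $(a)$. It cannot be waved away: the paper's stated domains are themselves delicate here (for instance $\theta(45)$ is said to have left\--alphabet domain in the historical sectors, which is incompatible with the accepting computation in Lemma \ref{M_5 language}$(1)$, where those sectors carry right\--alphabet words when $\theta(45)$ is applied), so part $(a)$ stands or falls on precisely the alphabet analysis your proposal omits.
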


%\begin{proof}
%
%$(a)$ Suppose the step history is $(34)(4)(43)$. Let $W_0$ be the initial configuration of $\pazocal{C}$ and the history of $\pazocal{C}$ be $H\equiv\theta(34)H'\theta(43)$ for $H'$ nonempty. If $H'$ contains a letter $\chi$ corresponding to a transition rule of $\overline{\textbf{M}}_4$, then it has prefix $H''\chi(1,2)$, where $H''$ has no such letter. But then Lemma \ref{M_3 historical sector} implies that this is the only occurrence of $\chi(1,2)^{\pm1}$ in $H'$. Since each transition rule of $\overline{\textbf{M}}_4$ changes the state letters, $W_0\cdot\theta(34)H'$ must have different state letters than $W_0\cdot\theta(34)$, and so cannot be $\theta(43)$-admissible.
%
%Conversely, if $H$ contains no letter corresponding to a transition rule of $\overline{\textbf{M}}_4$, then the subcomputation with history $H'$ operates only as $\textbf{RL}$ on some historical sectors, so that Lemma \ref{primitive computations}$(4)$ implies that $H'$ must be empty, yielding a contradiction.
%
%The same argument applies if the step history is $(54)(4)(45)$.
%
%$(b)$ The rules of $\textbf{M}_5(3)$ write letters in the $UV$-sector in the same way as the hypotheses of Lemma \ref{multiply one letter}, so that part $(a)$ of that lemma implies that the maximal subcomputation with step history $(3)$ is empty. But then this implies that $\pazocal{C}$ is not reduced.
%
%\end{proof}

\begin{lemma} \label{second step history}

\textit{(Lemma 4.3 of [26])} There are no reduced computations of $\textbf{M}_5$ in the standard base whose step history is $(32)(2)(23)$, $(12)(2)(21)$, or $(21)(1)(12)$.

\end{lemma}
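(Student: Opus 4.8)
The plan is to analyze each of the three forbidden step histories by tracking what the relevant submachines do to a suitable sector, and to derive a contradiction with the computation being reduced. In all three cases the forbidden pattern has the shape $(j,i)(i)(i,j)$, i.e. the computation enters $\textbf{M}_5(i)$ from $\textbf{M}_5(j)$ via a transition rule and immediately exits back to $\textbf{M}_5(j)$ via the inverse transition rule, with a nonempty block of $\textbf{M}_5(i)$-rules in between. The key observation is that each transition rule $\theta(j,i)$ and its inverse $\theta(i,j)$ lock almost every sector, and in particular both lock some distinguished sector $S$ which is \emph{not} locked by the rules of $\textbf{M}_5(i)$ (or vice versa: is locked by $\textbf{M}_5(i)$ but unlocked by the transitions, depending on the case). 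Using Lemma \ref{locked sectors}, this forces the base of the computation to avoid certain unreduced two-letter subwords, which in turn lets us reduce to a two-letter (or three-letter) base and apply one of the primitive-machine lemmas.

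Concretely: for $(21)(1)(12)$, the middle machine $\textbf{M}_5(1)$ acts on the $R_0P_1$- and $Q_0R_0$-sectors by the rules in correspondence with $\pazocal{A}$, writing a letter $a$ on the left of $R_0P_1$ and $a^{-1}$ on the right of $Q_0R_0$; meanwhile $\theta(12)$ and $\theta(21)$ lock all sectors except $R_0P_1$. So in the $Q_0R_0$-sector the whole computation, read across the $\theta(21)$, $\textbf{M}_5(1)$-block, $\theta(12)$ segments, has history consisting of a block of $\textbf{M}_5(1)$-rules flanked by rules that leave $Q_0R_0$ untouched; since the initial and terminal configurations of the $\textbf{M}_5(1)$-block must be connected to the state letters of $\textbf{M}_5(2)$ on both sides and $\textbf{M}_5(1)$ has only one state letter in each relevant part, the $\textbf{M}_5(1)$-subcomputation begins and ends at the \emph{same} state configuration, which by the one-letter-base analysis (Lemma \ref{multiply one letter}, applied as in Lemma \ref{primitive computations}(4)) forces the tape word to be unchanged, hence the subcomputation empty — contradicting that the block is nonempty. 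For $(12)(2)(21)$, the middle machine is $\textbf{LR}_k$ acting on $R_0P_1Q_1$, and $\theta(12)$/$\theta(21)$ sit at its boundary; here one uses the analogue of Lemma \ref{primitive computations}(4) for $\textbf{LR}_k$: a reduced computation of $\textbf{LR}_k$ that begins and ends with the same running state letter $p^{(j)}$ must be empty. The boundary transition rules pin down which running state letter of $P_1$ is present at the two ends (namely the start letter $p_1^{(1)}$, since $\theta(12)$ switches \emph{to} it and $\theta(21)$ switches \emph{from} it), so again the $\textbf{M}_5(2)$-block is empty, a contradiction. For $(32)(2)(23)$ the argument is the same as the $(12)(2)(21)$ case applied to the other boundary of $\textbf{M}_5(2)$: $\theta(23)$ switches the $P_1$-letters to the \emph{end} letter $p_1^{(2k)}$ of $\textbf{LR}_k$ and $\theta(32)$ switches away from it, so the $\textbf{LR}_k$-computation starts and ends at $p_1^{(2k)}$ and is therefore empty.

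The main obstacle I anticipate is the bookkeeping around which running/state letters the transition rules leave active, and making precise the claim that a reduced $\textbf{LR}_k$-computation starting and ending at the same running state letter $p^{(i)}$ is empty — this needs the right generalization of Lemma \ref{primitive computations}(4), which the paper asserts holds ("the analogues of Lemma \ref{primitive computations} hold for the machines $\textbf{LR}_k(Y)$ and $\textbf{RL}_k(Y)$") but only states explicitly for part (3) in Lemma \ref{LR_k analogue}. I would invoke that analogue of part (4) directly. A secondary subtlety is ruling out the possibility that the base has unreduced historical subwords that would let more than one transition rule of a given kind appear (compare Lemma \ref{M_3 historical sector}); but since we are in the \emph{standard} base here, the base is reduced and $P$-, $Q$-, $R$-letters occur in the fixed cyclic pattern, so each transition rule appears in the history at most in the positions dictated by the step history, and the one-step structure of the forbidden pattern is genuinely as described. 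Once these two points are granted, each of the three cases collapses to "a nonempty reduced subcomputation of a primitive machine with coinciding endpoint state letters is empty," which is the desired contradiction.
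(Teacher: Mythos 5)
Your proposal captures the correct overall strategy, and for the one case the paper proves in detail, $(21)(1)(12)$, it coincides with the paper's argument: restrict the $(1)$-block $\pazocal{C}'$ to the input sector $Q_0R_0$, observe that $\theta(12)$ (and hence its inverse $\theta(21)$) locks this sector, so the restricted subcomputation begins and ends with the empty tape word, and then Lemma~\ref{multiply one letter}$(a)$ forces $\pazocal{C}'$ to be empty, contradicting the step history being exactly $(21)(1)(12)$. For the other two cases the paper defers to [26], and your reconstruction via the $\textbf{LR}_k$-analogue of Lemma~\ref{primitive computations}(4) is reasonable: $\theta(12)$ (resp.\ $\theta(23)$) pins the $P_1$-letter to $p^{(1)}$ (resp.\ $p^{(2k)}$) and locks $P_1Q_1$, so both ends of the $(2)$-block restricted to $R_0P_1Q_1$ have the form $q^{(1)}up^{(j)}q^{(2)}$ with the same $j$, and the analogue of Lemma~\ref{primitive computations}(4) (asserted in the paper to hold for $\textbf{LR}_k$) gives emptiness.

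One point to tighten in your $(21)(1)(12)$ case: the inference ``begins and ends at the same state configuration, which \ldots forces the tape word to be unchanged'' is logically inverted. Lemma~\ref{multiply one letter}$(a)$ does not derive $u_0\equiv u_t$ from matching state letters --- indeed in $\textbf{M}_5(1)$ every configuration has the same state letters, so that observation is vacuous. What the lemma says is the converse: \emph{given} $u_0\equiv u_t$ in the two-letter sector, the reduced history must be empty. The equality $u_0\equiv u_t$ itself comes from the fact that $\theta(12)^{\pm1}$ \emph{locks} the $Q_0R_0$-sector (not merely ``leaves it untouched''), so both endpoints of the $(1)$-block have that sector empty. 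You do mention the locking, so the right ingredient is present; just reorder the logic so that locking supplies the hypothesis $u_0\equiv u_t\equiv\emptyset$ and Lemma~\ref{multiply one letter}$(a)$ supplies the conclusion that the block is empty. With that fix, the three cases each collapse to the contradiction you describe.
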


\begin{proof}

%Suppose the history of the computation is $\theta(32)H'\theta(23)$ where $H'$ corresponds to a nonempty computation of (parallel copies of) $\textbf{LR}_k$. But the initial and terminal configurations of $H'$ are $\theta(23)$-admissible, so that Lemma \ref{primitive computations}(4) implies that $H'$ is empty, yielding a contradiction. A similar argument rules out the possibility of the step history being $(12)(2)(21)$.

Suppose the step history is $(21)(1)(12)$. Let $\pazocal{C}'$ be the maximal subcomputation with step history $(1)$. Then the restriction of $\pazocal{C}'$ to the two-letter base $Q_0R_0$ (i.e to the input sector) satisfies the hypotheses of Lemma \ref{multiply one letter}. Since $\theta(12)$ locks the input sector, applying Lemma \ref{multiply one letter}$(a)$ to this restricted subcomputation implies that it must be empty, yielding a contradiction.

The proofs of the other two cases can be found in [26].

\end{proof}

The previous two lemmas quickly imply the following:

\begin{lemma} \label{(A) and (B)}

\textit{(Lemma 4.5 of [26])} The step history of every reduced computation of $\textbf{M}_5$ with standard base either:

\begin{addmargin}[1em]{0em}

(A) contains a subword of the form $(34)(4)(45)$, $(54)(4)(43)$, $(12)(2)(23)$, or $(32)(2)(21)$

(B) is a subword of one of the words
$$(4)(45)(5)(54)(4), \ (4)(43)(3)(34)(4), \ (2)(23)(3)(34)(4),$$ $$(4)(43)(3)(32)(2), \ (2)(21)(1), \text{ or } (1)(12)(2)$$

\end{addmargin}

\end{lemma}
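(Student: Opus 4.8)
The plan is to extract the combinatorial skeleton of the step history and then delete, family by family, everything incompatible with Lemmas \ref{first step history} and \ref{second step history}. Recall that, after inserting empty $(i)$-factors where needed, the step history of a reduced computation has the canonical form $(i_0)(i_0,i_1)(i_1)(i_1,i_2)\cdots(i_{m-1},i_m)(i_m)$ with $|i_{k+1}-i_k|=1$ for every $k$; in other words it records a walk of length $m$ on the path $1-2-3-4-5$. The first thing I would set up is a \emph{restriction principle}: if a reduced computation $\pazocal{C}$ in the standard base has step history containing a three-factor subword $(uv)(v)(vw)$ (with $u,w$ neighbours of $v$), then the contiguous subcomputation of $\pazocal{C}$ realizing that subword is again reduced — its history is a subword of a reduced word of $F(\Theta^+)$ — and again has the standard base, since a time-restriction does not alter the base. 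Hence Lemmas \ref{first step history} and \ref{second step history} apply and rule out that this triple be any of $(21)(1)(12)$, $(12)(2)(21)$, $(32)(2)(23)$, $(23)(3)(32)$, $(34)(4)(43)$, or $(54)(4)(45)$. Here I would use that the standard base of $\textbf{M}_5$, namely $\{t\}B_3(B_3')^{-1}$, literally contains a historical subword of the form $R_iP_{i+1}$, so that the hypothesis of Lemma \ref{first step history} is met by every computation with the standard base.

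Next I would run the bookkeeping. There are exactly fourteen triples $(uv)(v)(vw)$ with $u,w$ neighbours of $v$ in the path $1-2-3-4-5$ (one each at $v=1$ and $v=5$, four each at $v=2,3,4$). Discarding the six triples forbidden above and the four triples listed in alternative (A), namely $(34)(4)(45)$, $(54)(4)(43)$, $(12)(2)(23)$, $(32)(2)(21)$, leaves precisely the four triples $(23)(3)(34)$, $(43)(3)(32)$, $(43)(3)(34)$, $(45)(5)(54)$ — all of which have middle index in $\{3,5\}$. Consequently, if a reduced computation in the standard base does not fall under (A), then $i_k\in\{3,5\}$ for every \emph{interior} index $i_k$, i.e. for $1\le k\le m-1$. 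Since consecutive walk indices differ by exactly $1$ and $\{3,5\}$ contains no two adjacent integers, two interior indices $i_k,i_{k+1}$ cannot coexist; this forces $m\le 2$, i.e. the walk has at most two steps.

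Finally I would simply enumerate the walks of length $\le 2$. Those of length $0$ or $1$ — the five single letters and the eight words $(1)(12)(2)$, $(2)(21)(1)$, $(2)(23)(3)$, $(3)(32)(2)$, $(3)(34)(4)$, $(4)(43)(3)$, $(4)(45)(5)$, $(5)(54)(4)$ — are each visibly a subword of one of the six words in (B). Among the walks of length $2$, the only interior triple must be one of the four surviving ones, so the walk is $(2,3,4)$, $(4,3,2)$, $(4,3,4)$, or $(4,5,4)$, giving step histories $(2)(23)(3)(34)(4)$, $(4)(43)(3)(32)(2)$, $(4)(43)(3)(34)(4)$, $(4)(45)(5)(54)(4)$, which are themselves four of the six words in (B). This exhausts all possibilities, establishing the dichotomy. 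I do not expect a genuine obstacle: the whole weight of the argument is carried by Lemmas \ref{first step history} and \ref{second step history}, and the rest is the finite case check above; the only points requiring a sentence of care are phrasing the restriction-to-a-subcomputation argument so that reducedness and the standard base are both preserved, and checking that the standard base does contain a historical $R_iP_{i+1}$-subword so that Lemma \ref{first step history} is applicable.
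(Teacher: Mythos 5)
Your proof is correct and matches the route the paper indicates (it cites Lemma 4.5 of [26] immediately after asserting that it follows ``quickly'' from the two preceding lemmas). The restriction principle — that a contiguous time-restriction of a reduced computation in the standard base is again reduced with the standard base, so Lemmas~\ref{first step history} and~\ref{second step history} apply to it — is exactly the right mechanism, and the observation that the standard base $\{t\}B_3(B_3')^{-1}$ contains a historical $R_iP_{i+1}$-subword is the needed hypothesis check for Lemma~\ref{first step history}. One small clarification worth a phrase: all six forbidden triples $(uv)(v)(vw)$ have $u=w$, so when the middle $(v)$-factor happens to be empty the subcomputation's history contains $\theta(u,v)\theta(v,u)=\theta(u,v)\theta(u,v)^{-1}$ and the computation is already unreduced — that is, the two lemmas are only needed when $(v)$ is nonempty, and the empty case is excluded for free by reducedness. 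With that noted, the enumeration (fourteen triples, six forbidden, four sent to alternative~(A), four survivors with middle index in $\{3,5\}$, and the resulting bound $m\le 2$ on the underlying walk) is complete and correct, and the remaining finite check that every walk of length at most two yields a subword of one of the six words in~(B) is as you describe.
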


%\begin{proof}
%
%If there is no occurrence of either $(2)$ or $(4)$ in the step history, then this follows immediately from Lemmas \ref{first step history} and \ref{second step history}.
%
%If $(4)$ appears in the step history but not as the first or the last letter, then Lemma \ref{first step history}$(a)$ implies that (A) must be satisfied. Lemma \ref{second step history} implies the same for (2).
%
%If $(2)$ is the first letter and (A) is not satisfied, then the same lemmas imply that the longest possible step histories are $(2)(23)(3)(34)(4)$ and $(2)(21)(1)$, so that the computation satisfies (B). If $(2)$ is the last letter, then the reverse holds, yielding the same result.
%
%Similarly, if $(4)$ is either the first or last letter letter and (A) is not satisfied, then the step history is either a subword of $(4)(45)(5)(54)(4)$, of $(4)(43)(3)(34)(4)$, of $(4)(43)(3)(32)(2)$, or of $(2)(23)(3)(34)(4)$.
%
%\end{proof}

The history of a reduced computation of $\textbf{M}_5$ is called \textit{controlled} if it is of one of the following two forms:

\begin{addmargin}[1em]{0em}

$(a)$ $\chi(4i+2,4i+3)H'\chi(4i+3,4i+4)$ for some $0\leq i\leq k$, i.e the computation has step history (4) and works as $\overline{\textbf{M}}_4$, with the subcomputation of history $H'$ operating (in parallel) as $\textbf{LR}$

%$(b)$ $\chi(4i,4i+1)H'\chi(4i+1,4i+2)$ for some $1\leq i<k$, i.e the computation has step history (4) and works as $\overline{\textbf{M}}_4$, with the subcomputation of history $H'$ operating (in parallel) as $\textbf{RL}$

$(b)$ $\zeta^{(2i,2i+1)}H'\zeta^{(2i+1,2i+2)}$, i.e the computation has step history (2) and works (in parallel) as $\textbf{LR}_k$

\end{addmargin}

\begin{lemma} \label{M_5 controlled}

\textit{(Lemma 4.4 of [26])} Let $\pazocal{C}:W_0\to\dots\to W_t$ be a reduced compuation of $\textbf{M}_5$ with controlled history $H$. Then the base of the computation is a reduced word and all configurations are uniquely defined by the history $H$ and the base of $\pazocal{C}$.\newline Moreover, if $\pazocal{C}$ is a computation in the standard base, then $|W_0|_a=\dots=|W_t|_a$ and $\|H\|=2l+3$ (respectively $\|H\|=l+2$), where $l$ is the $a$-length of each historical $R_iP_{i+1}$-sector (respectively of the input sector) of $W_0$ if $H$ is of the form $(a)$ (respectively $(b)$).

\end{lemma}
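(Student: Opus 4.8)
The plan is to exploit that a controlled history is, apart from its two bracketing transition (or connecting) rules, the history of a parallel family of primitive machines, for which Lemma~\ref{primitive computations} and its $\textbf{LR}_k$-analogue Lemma~\ref{LR_k analogue} give a complete description. In case $(a)$, $\pazocal{C}$ is a computation of $\textbf{M}_5(4)$, a copy of $\overline{\textbf{M}}_4$, and since $H\equiv\chi(4i+2,4i+3)H'\chi(4i+3,4i+4)$ with $H'$ containing no further transition rules, the maximal subcomputation of history $H'$ runs inside $\textbf{M}_3(4i+3)$ and its mirror, i.e.\ in parallel as $\textbf{LR}(\Phi^+)$ in every historical three-letter subword $R_jP_{j+1}Q_{j+1}$ (and in each mirror copy), the running letters lying in the $P_{j+1}$-parts. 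In case $(b)$, $\pazocal{C}$ itself operates as $\textbf{LR}_k$ on $R_0P_1Q_1$ (and its mirror), with $H\equiv\zeta^{(2i,2i+1)}H'\zeta^{(2i+1,2i+2)}$ and $H'$ containing none of the connecting/transition rules $\zeta^{(\ast,\ast)}$, so $H'$ consists solely of the leftward rules $\zeta^{(2i+1)}(a)$.

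I would first prove the base $B$ of $\pazocal{C}$ is reduced. The key claim is that every sector of the standard base of the inner machine is locked by at least one rule occurring in $H$: in case $(a)$ the rules $\chi(4i+2,4i+3)$ and $\chi(4i+3,4i+4)$ each lock all sectors except the historical $R_jP_{j+1}$-sectors (and their mirrors), while the connecting rule $\zeta^{(12)}$ of $\textbf{LR}$ — which necessarily occurs in $H'$, since the running state letters pass from the start letter $p^{(1)}$ to the end letter $p^{(2)}$ and only $\zeta^{(12)}$ changes that letter — locks precisely those remaining historical sectors; in case $(b)$ every rule of $\textbf{M}_5(2)$ locks all sectors except $R_0P_1$ and $P_1Q_1$ (and mirrors), and $\zeta^{(2i,2i+1)}$ locks $P_1Q_1$ while $\zeta^{(2i+1,2i+2)}$ locks $R_0P_1$. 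Since $B$ is $\theta$-admissible for every rule $\theta$ occurring in $\pazocal{C}$, Lemma~\ref{locked sectors} then forbids every subword of $B$ of the form $Q_iQ_i^{-1}$ or $Q_i^{-1}Q_i$, so $B$ is reduced.

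Next I would show $H$ and $B$ determine all configurations. As each $S$-rule acts deterministically on admissible words and has an inverse rule, $W_0,\dots,W_t$ are determined by $H$ together with any single $W_j$, so it suffices to reconstruct $W_1$, the configuration right after the first bracketing rule. The opening transition/connecting rule locks, hence empties in $W_1$, every sector outside the active ones listed above; the state letters of $W_1$ are the start letters of the inner machine, read off from the index $i$; and the contents of the active sectors of $W_1$ are exactly the letters the running state letter will successively step over, which are recorded rule by rule in $H'$ (in case $(a)$ the parallel structure forces all historical sectors of $W_1$ to carry the same word, so there is no ambiguity). Thus $W_1$, and with it all of $\pazocal{C}$, is determined by $H$ and $B$; the argument is verbatim the same for a non-standard (reduced) base.

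Finally, for $\pazocal{C}$ in the standard base I would restrict $H'$ to each active subword and invoke the primitive-machine lemmas. In case $(a)$, restricting to a historical $R_jP_{j+1}Q_{j+1}$ yields a reduced computation of $\textbf{LR}(\Phi^+)$ in the standard base from a word $q^{(1)}up^{(1)}q^{(2)}$ to $q^{(1)}vp^{(2)}q^{(2)}$ — the bracketing $\chi$-rules force the running letter to be $p^{(1)}$ at $W_1$ and $p^{(2)}$ at $W_{t-1}$ and force $P_{j+1}Q_{j+1}$ to be empty there — so Lemma~\ref{primitive computations}(3) gives $u\equiv v$, constant $a$-length $l$ on that subword, and $\|H'\|=2l+1$; since $H'$ is one and the same word for all historical subwords these values of $l$ coincide, every other sector is empty throughout, and the two $\chi$-rules preserve $a$-length, whence $|W_0|_a=\dots=|W_t|_a$ and $\|H\|=\|H'\|+2=2l+3$. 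In case $(b)$, $H'$ is a monotone leftward pass of $p^{(2i+1)}$ over the $l$ tape letters of the $R_0P_1$-sector of $W_0$ (the bracketing rules force $p^{(2i+1)}$ to start adjacent to $Q_1$ with $P_1Q_1$ empty and to end adjacent to $R_0$ with $R_0P_1$ empty, so it must step past all $l$ letters), hence $\|H'\|=l$, the $a$-length is unchanged at every step, and $\|H\|=l+2$; this is the one-pass instance of the elementary analysis behind Lemma~\ref{LR_k analogue}. The main difficulty I anticipate is precisely the bookkeeping running through the second, third, and fourth paragraphs: pinning down exactly which sectors each of $\chi(4i+2,4i+3)$, $\chi(4i+3,4i+4)$, $\zeta^{(2i,2i+1)}$, $\zeta^{(2i+1,2i+2)}$ locks, since this one fact simultaneously yields reducedness of $B$, the reconstruction of $W_1$, and the emptiness of the inactive sectors used in the length computations.
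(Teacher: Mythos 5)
The paper does not actually prove this statement — it is quoted as Lemma 4.4 of [26] — so there is no in-paper argument to compare against; what you propose is the expected reconstruction of that proof, and its skeleton is sound: deduce reducedness of the base from the fact that every sector is locked by some rule occurring in $H$ (via Lemma \ref{locked sectors}), recover all configurations from $H$ and the base by reconstructing the configuration just after the opening rule, and in the standard base restrict to the active three-letter subwords and quote Lemma \ref{primitive computations}(3) (resp.\ the one-pass analysis behind Lemma \ref{LR_k analogue}, or equivalently Lemma \ref{multiply one letter}) to get $u\equiv v$, constant $a$-length and $\|H\|=2l+3$ (resp.\ $l+2$). Your treatment of case $(b)$ is airtight, since there the two bracketing rules alone lock $P_1Q_1$ and $R_0P_1$ while every rule of $\textbf{M}_5(2)$ locks all remaining sectors; and you correctly read ``the input sector'' in the statement as the $R_0P_1$-sector, in line with Lemma \ref{M controlled} (in $\textbf{M}_5$ the official input sector $Q_0R_0$ is locked throughout step $(2)$, so the statement only makes sense with that reading).

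The one genuine soft spot is in case $(a)$: your claim that the connecting rule $\zeta^{(12)}$ must occur in $H'$ is derived from the running letter passing from $p^{(1)}$ to $p^{(2)}$, and that passage is forced only if the base of $\pazocal{C}$ actually contains a letter of some part $P_{j+1}$ of a historical sector (or its mirror copy). If the base avoids all $P$-parts, nothing in the literal definition of a controlled history supplies the connecting rule: for instance a word with base $R_jR_j^{-1}$ and a nonempty tape word from the right alphabet of the historical $R_jP_{j+1}$-sector is admissible for $\chi(4i+2,4i+3)$, for the $\zeta^{(1)}(a)$-copies (which fix the single $R_j$-letter and do not touch that sector), and for $\chi(4i+3,4i+4)$; the resulting reduced computation has a history of the prescribed form, an unreduced base, and sector contents not determined by $H$ and the base. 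So, as written, your argument establishes reducedness of the base and uniqueness of the configurations only for bases meeting a $P$-part. The repair is to place the occurrence of the connecting rule inside $H'$ into the definition of controlled history (as is effectively the case in [26], and as holds in every later application in this paper, where $H$ is extracted from a standard-base trapezium and therefore does contain $\zeta^{(12)}$), and then argue locking of the historical $R_jP_{j+1}$-sectors from the word $H$ itself rather than from the particular computation at hand; with that adjustment the rest of your bookkeeping — which sectors $\chi(4i+2,4i+3)$, $\chi(4i+3,4i+4)$, $\zeta^{(2i,2i+1)}$, $\zeta^{(2i+1,2i+2)}$ lock — carries the proof through exactly as you outline.
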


%\begin{proof}
%
%Suppose $H$ is of the form $(a)$. Then, every historical sector of the standard base is locked by $\chi(4i-2,4i-1)$, locked by $\chi(4i-1,4i)$, or locked by a rule of $H'$. Every non-historical sector of the standard base is locked by $\chi(4i-2,4i-1)$ or by $\chi(4i-1,4i)$. It then follows from Lemma \ref{locked sectors} that the base is a reduced word. By Lemma \ref{primitive computations}(3), the histories of the primitive machines subsequently restore the tape words in all historical sectors. Since one of the transition rules locks all non-historical sectors, Lemma \ref{M_3 tame} implies the equality $|W_0|_a=\dots=|W_t|_a$ and all other statements. 
%
%Supposing $H$ is of the form $(b)$, each sector other than the $R_0P_1$- and $P_1Q_1$-sectors (and their mirror copies) is locked by every rule. Further, depending on the value of $i$, the $R_0P_1$-sector (and its mirror copy) is locked by one of the first or last rules, while the $P_1Q_1$-sector (and its mirror copy) is locked by the other. So, Lemma \ref{locked sectors} again implies that the base is reduced. The rest of the statement follows immediately from applications of Lemma \ref{multiply one letter}$(a)$.
%
%\end{proof}

Define $I_5(w_1,w_2)$ as the start configuration with $w_1\in F(\pazocal{A})$ inserted in the input sector and a copy of $w_2^{-1}\in F(\pazocal{A})$ in its mirror copy. Similarly, define $I_5'(w_1,w_2)$ as the $\theta(21)$-admissible configuration with $w_1$ inserted in the $R_0P_1$-sector and a copy of its inverse inserted in the mirror copy.

\begin{lemma} \label{M_5 language}

$(1)$ For every $u^n\in\pazocal{L}$, $I_5(u^n,u^n)$ is accepted by $\textbf{M}_5$.

$(2)$ If $I_5(w_1,w_2)$ is accepted by $\textbf{M}_5$, then $w_1\equiv w_2\in\pazocal{L}$.

\end{lemma}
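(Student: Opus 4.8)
The plan is to establish the two directions separately, with part (1) being a direct construction and part (2) requiring the structural analysis of step histories developed in Lemma \ref{(A) and (B)}. For part (1), given $u^n\in\pazocal{L}$, I would exhibit an explicit accepting computation of $\textbf{M}_5$ starting from $I_5(u^n,u^n)$ by concatenating the five phases in order. First, $\textbf{M}_5(1)$ rewrites $u^n$ letter by letter from the input $Q_0R_0$-sector into the $R_0P_1$-sector (with the mirror copy handled symmetrically), arriving at a configuration of the form $I_5'(u^n,u^n)$. Then $\textbf{M}_5(2)$ runs as $\textbf{LR}_k$ on the $R_0P_1Q_1$-subword; by Lemma \ref{LR_k analogue} this returns the same tape word with the running letters advanced to their end positions. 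Next $\textbf{M}_5(3)$ inserts copies of the history word $H_1$ (supplied by Lemma \ref{M_4 language}(1), with $\|H_1\|\leq c_0\|u\|$) into the left alphabets of the historical $R_iP_{i+1}$-sectors, producing a copy of $I_4(u^n,H_1)$ (with the extra $\{t\}$-letter). Then $\textbf{M}_5(4)$, being a copy of $\overline{\textbf{M}}_4$, executes the computation $I_4(u^n,H_1)\to\cdots\to A_4(H_1)$ guaranteed by Lemma \ref{M_4 language}(1). Finally $\textbf{M}_5(5)$ erases the history words from the $R_iP_{i+1}$-sectors, leaving the accept configuration reachable by $\theta_0$. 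Stringing these together with the transition rules $\theta(12),\theta(23),\theta(34),\theta(45)$ yields an accepting computation.

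For part (2), suppose $I_5(w_1,w_2)$ is accepted. I would first reduce to the case of a reduced computation, then examine its step history using Lemma \ref{(A) and (B)}. The initial configuration is a start configuration (state letters of $\textbf{M}_5(1)$), so the step history begins with $(1)$; the accept rule $\theta_0$ is applied only at the very end, preceded by rules of $\textbf{M}_5(5)$. Since the step history must both start at $(1)$ and end at $(5)$, it cannot be a subword of one of the short words in alternative (B) (each of those is confined to machines with indices in a small range not spanning $1$ through $5$), so alternative (A) applies and the step history contains a subword of the form $(12)(2)(23)$ or $(32)(2)(21)$; in fact, combined with the endpoint constraints, the only possibility is that the full step history is $(1)(12)(2)(23)(3)(34)(4)(45)(5)$, i.e. the computation passes cleanly through all five machines in increasing order exactly once. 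I would argue that, because each transition rule $\theta(i,i+1)$ and each submachine can be traversed in only one "productive" direction given tameness of the start configuration, the step history contains each of $(1),\dots,(5)$ exactly once and no backward steps.

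Having pinned down the step history, I would extract the phase structure of the computation. The $\textbf{M}_5(3)$-phase, acting on a historical $R_iP_{i+1}$-sector, can only insert a word into the left alphabet, and the $\textbf{M}_5(5)$-phase can only produce the right-alphabet copy; applying the $\textbf{M}_5(4)$-phase in between, which is a copy of $\overline{\textbf{M}}_4$ and hence of $\textbf{M}_4$, Lemma \ref{M_4 language}(2) forces the input of that $\textbf{M}_4$-subcomputation to lie in $\pazocal{L}$ and the output history word to equal the input history word $H_1$. The $\textbf{M}_5(2)$-phase (running as $\textbf{LR}_k$, controlled history) preserves the tape word of the $R_0P_1$-sector by Lemma \ref{M_5 controlled} (and a projection argument), and the $\textbf{M}_5(1)$-phase, by its defining rules, transfers the contents of the $Q_0R_0$-input sector into the $R_0P_1$-sector while simultaneously moving the mirror copy; restricting to the two-letter base $Q_0R_0$ and using Lemma \ref{multiply one letter} together with the fact that $\theta(12)$ locks this sector, the word written in the $R_0P_1$-sector must be exactly $w_1$ and, by the mirror symmetry, the mirror copy must carry $w_2$ with $w_1\equiv w_2$. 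Combining these, the input fed to the $\textbf{M}_4$-phase is $w_1\equiv w_2$, which therefore lies in $\pazocal{L}$, completing the proof.

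The main obstacle I expect is the bookkeeping in part (2): showing rigorously that the step history is forced to be the single increasing pass and that no "wasted" back-and-forth (which Lemma \ref{(A) and (B)}(B) permits in principle for computations not spanning the full range) can occur here, given that the endpoints are a genuine start configuration and the accept configuration. This is where Lemmas \ref{first step history}, \ref{second step history}, and \ref{(A) and (B)} must be deployed carefully, and where the tameness of $I_5(w_1,w_2)$ and the locking behavior of the transition rules do the real work. Once the step history is nailed down, the remaining identifications $w_1\equiv w_2$ and $w_1\in\pazocal{L}$ follow from the already-established Lemmas \ref{multiply one letter}, \ref{M_5 controlled}, and \ref{M_4 language}(2) essentially by inspection.
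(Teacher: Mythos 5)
Your proposal follows essentially the same route as the paper: part (1) is the same explicit concatenation of the five phases (Lemma \ref{multiply one letter} for $\textbf{M}_5(1)$, the $\textbf{LR}_k$ analogue of Lemma \ref{primitive computations}(3) for $\textbf{M}_5(2)$, Lemma \ref{M_4 language}(1) for $\textbf{M}_5(3)$--$(4)$, then erasure and $\theta_0$), and part (2) is the same step-history analysis via Lemmas \ref{first step history}--\ref{(A) and (B)} followed by an application of Lemma \ref{M_4 language}(2) to the $(4)$-phase. One correction, though it does not sink the argument: your claim that the \emph{full} step history of an accepting computation must be exactly $(1)(12)(2)(23)(3)(34)(4)(45)(5)$, with each letter occurring once and no backward steps, is both unnecessary and not justified by the cited lemmas -- those lemmas forbid subwords such as $(54)(4)(45)$, $(34)(4)(43)$, $(12)(2)(21)$, etc., but not, say, $(45)(5)(54)$, so an accepting computation may wander after reaching $\textbf{M}_5(5)$ (compare Lemma \ref{M_5 accepted configurations}, which has to select an accepting computation of minimal step-history length precisely because accepting computations are not unique passes). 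All you need, and all the paper establishes, is that the step history has prefix $(1)(12)(2)(23)(3)(34)(4)(45)$; the conclusion $w\in\pazocal{L}$ then comes from Lemma \ref{M_4 language}(2) applied to the $(4)$-subcomputation ending in a $\theta(45)$-admissible word, independently of what the computation does afterwards. Also note the paper gets $w_1\equiv w_2$ in one line, since every rule of $\textbf{M}_5$ acts identically on the input sector and its mirror copy; your derivation of this via the $\textbf{M}_5(1)$-phase is fine but more roundabout.
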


\begin{proof}

(1) First, consider the computation $I_5(u^n,u^n)\to\dots\to W_1$ of $\textbf{M}_5(1)$ whose history is a copy of $u^n$ read right to left. By Lemma \ref{multiply one letter}, $W_1\cdot \theta(12)\equiv I_5'(u^n,u^n)$.

Next, consider the analogue of the computation of Lemma \ref{primitive computations}(3) to the machine $\textbf{LR}_k$. Letting $H_2$ be the history of this computation, it follows that $I_5'(u^n,u^n)\cdot H_2$ is a copy of $I_5'(u^n,u^n)$. Set $W_2\equiv I_5'(u^n,u^n)\cdot(H_2\theta(23))$.

Let $H_1\in F(\Phi^+)$ be the word corresponding to $u^n$ in Lemma \ref{M_4 language}(1) (or Lemma \ref{M_1 language}). Identifying $H_1$ with the rules of $\textbf{M}_5(3)$ and setting $W_3=W_2\cdot H_1\theta(34)$, it then follows that $W_3$ is the configuration obtained from $I_4(u^n,H_1)$ by adjoining $t$ to the front of it.

By Lemma \ref{M_4 language}(1) and the definition of the rules of $\textbf{M}_5(4)$, there exists a computation of $\textbf{M}_5(4)$ with history $H_4$ and initial configuration $W_3$ and final configuration the analogue of $A_4(H_1)$. Letting $W_4\equiv W_3\cdot(H_4\theta(45))$, there exists a reduced computation of $\textbf{M}_5(5)$ with initial configuration $W_4$ and whose final configuration $W_5$ is $\theta_0$-admissible.

Concatenating these computations yields a reduced computation $I_5(u^n,u^n)\to\dots\to W_5\to A_5$.

(2) As every rule of $\textbf{M}_5$ operates identically on the input sector and its mirror image, it follows immediately that $w_1\equiv w_2$. Let $w\equiv w_1$.

By Lemmas \ref{first step history} through \ref{(A) and (B)}, the step history of an accepting computation for $I_5(w,w)$ must have prefix $(1)(12)(2)(23)(3)(34)(4)(45)$, i.e the history has prefix $H_1\theta(12)H_2\theta(23)H_3\theta(34)H_4\theta(45)$. Lemma \ref{multiply one letter} implies that $I_5(w,w)\cdot H_1\theta(12)\equiv I_5'(w,w)$; the analogue of Lemma \ref{primitive computations}(3) for the machine $\textbf{LR}_k$ then implies that $W_2\equiv I_5'(w,w)\cdot H_2(23)$ is a copy of $I_5'(w,w)$; it then follows that $W_3\equiv W_2\cdot H_3\theta(34)$ is the analogue of $I_4(w,H)$ for $H\in F(\Phi^+)$ the natural copy of $H_3$. But that the subcomputation with initial configuraiton $W_3$ and history $H_4$ results in a $\theta(45)$-admissible word  implies that $w\in\pazocal{L}$ by Lemma \ref{M_4 language}(2).

\end{proof}

Motivated by Lemma \ref{M_5 language}, the simpler notation $I_5(w)$ will be used in place of $I_5(w,w)$.

\begin{lemma} \label{M_5 tame}

\textit{(Lemma 4.7 of [26])} Let $\pazocal{C}:W_0\to\dots\to W_t$ be a reduced computation in the standard base of $\textbf{M}_5$ that either satisfies (B) of Lemma \ref{(A) and (B)} or has step history of length at most 2. Then for the parameter $c_2$:

\begin{addmargin}[1em]{0em}

$(a)$ $|W_j|_a\leq c_2\max(|W_0|_a,|W_t|_a)$ for $0\leq j\leq t$

$(b)$ $t\leq c_2^2(\|W_0\|+\|W_t\|)$

\end{addmargin}

\end{lemma}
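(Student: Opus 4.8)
The plan is to chop $\pazocal{C}$ into maximal one-step pieces, estimate each piece with the lemmas already available for the component machines, and then reassemble. Under either hypothesis the step history of $\pazocal{C}$ has length bounded by an absolute constant (at most $2$, or at most $5$ since each of the six words in part (B) of Lemma \ref{(A) and (B)} has at most five letters), so $\pazocal{C}$ factors as $\pazocal{C}=\pazocal{D}_1\tau_1\pazocal{D}_2\tau_2\cdots\tau_{m-1}\pazocal{D}_m$ with $m\le 3$, each $\tau_p$ a single transition rule, each $\pazocal{D}_p\colon W_0^{(p)}\to\cdots\to W_{t_p}^{(p)}$ a reduced computation of one of $\textbf{M}_5(1),\dots,\textbf{M}_5(5)$ (with the mirror copy acting in parallel), $W_0^{(1)}\equiv W_0$, and $W_{t_m}^{(m)}$ equal to $W_t$ up to a possible application of $\theta_0^{\pm1}$, which locks every sector. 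Since every transition rule and $\theta_0$ locks all but one sector, $\big||W_0^{(p+1)}|_a-|W_{t_p}^{(p)}|_a\big|\le C_0$ for an absolute constant $C_0$. Thus both statements reduce to controlling the pieces and the $O(1)$ ``breakpoint'' $a$-lengths $|W_0^{(p)}|_a,|W_{t_p}^{(p)}|_a$.

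For (a) I would estimate each piece separately. A piece in $\textbf{M}_5(4)$ is a computation of $\overline{\textbf{M}}_4$, so the $\textbf{M}_4$-analogue of Lemma \ref{M_3 standard base}(c), namely Lemma \ref{M_4 bounds}, gives $|W_j^{(p)}|_a\le c_1\max(|W_0^{(p)}|_a,|W_{t_p}^{(p)}|_a)$. A piece in $\textbf{M}_5(2)$ runs $\textbf{LR}_k$ on $R_0P_1Q_1$ with all other sectors frozen, so the $\textbf{LR}_k$-analogue of Lemma \ref{primitive computations}, parts (1) and (2), gives that $|W_j^{(p)}|_a$ is unimodal and bounded by $\max(|W_0^{(p)}|_a,|W_{t_p}^{(p)}|_a)$. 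A piece in $\textbf{M}_5(1)$, $\textbf{M}_5(3)$, or $\textbf{M}_5(5)$ fixes all state letters and, at each step, multiplies a bounded family of sectors (the input sector and an $R_0P_1$-sector; or each historical $R_iP_{i+1}$-sector) each by a single tape letter, different rules by different letters, with all remaining sectors frozen; applying Lemma \ref{multiply one letter} in parallel to each affected sector yields a comparable estimate $|W_j^{(p)}|_a\le c_1\max(|W_0^{(p)}|_a,|W_{t_p}^{(p)}|_a)$ and unimodality-type behaviour. Hence $\max_j|W_j|_a\le c_1\max_p\max(|W_0^{(p)}|_a,|W_{t_p}^{(p)}|_a)$, and a short case analysis over the finitely many admissible step-history shapes of Lemma \ref{(A) and (B)}(B) (and those of length $\le 2$) — using within-piece unimodality together with the directional monotonicity of the component machines, in particular Lemma \ref{primitive computations}(1) and (5), which forbids an $\textbf{LR}_k$-piece started from its start state from dropping below its initial $a$-length — shows every breakpoint $a$-length is at most a fixed multiple of $\max(|W_0|_a,|W_t|_a)$. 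Choosing $c_2$ large enough to absorb the finitely many factors $c_1$ and $C_0$ then gives (a).

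For (b) I would sum the per-piece lengths. For a piece in $\textbf{M}_5(1)$, $\textbf{M}_5(3)$, or $\textbf{M}_5(5)$, Lemma \ref{multiply one letter}(b) gives $t_p\le |W_0^{(p)}|_a+|W_{t_p}^{(p)}|_a$; for a piece in $\textbf{M}_5(2)$, the $\textbf{LR}_k$-analogue of Lemma \ref{multiply two letters}(c) gives $t_p\le |W_0^{(p)}|_a+|W_{t_p}^{(p)}|_a$. The remaining and hardest case is a piece in $\textbf{M}_5(4)$, and this is where I expect the main obstacle: one must descend through the nested construction $\textbf{M}_5(4)=\overline{\textbf{M}}_4\supset\textbf{M}_4\supset\textbf{M}_3$, writing the $\overline{\textbf{M}}_4$-computation as a parallel run of an $\textbf{M}_3$-computation and its mirror, and the $\textbf{M}_3$-computation (by Lemma \ref{M_3 standard base}(a) and Lemma \ref{M_3 historical sector}) as a bounded concatenation of subcomputations working as $\overline{\textbf{M}}_2^{\pm1}$ or as the primitive machines $\textbf{LR}$, $\textbf{RL}$; because the base of a standard-base computation necessarily contains historical sectors, Lemma \ref{M_3 accepting computations}, Lemma \ref{one alphabet historical words}, and the $\textbf{M}_4$-analogue of Lemma \ref{M_3 standard base}(c) force each such subcomputation to have length linear in the $a$-lengths of its two endpoints, whence $t_p\le c_1(|W_0^{(p)}|_a+|W_{t_p}^{(p)}|_a)$ (and where it helps, Lemma \ref{M_5 controlled} supplies exact counts for the $(4)$- and $(2)$-pieces). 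Summing over the $m\le 3$ pieces, $t=\sum_p t_p+O(1)\le 2mc_1\max_p\max(|W_0^{(p)}|_a,|W_{t_p}^{(p)}|_a)+O(1)$, and part (a) bounds the right-hand side by $c_2^2(\|W_0\|+\|W_t\|)$ for suitable $c_2$. Since $\textbf{M}_5$ differs from the main machine of [26] only in $\textbf{M}_5(1)$, whose pieces are dispatched entirely by Lemma \ref{multiply one letter}, the remainder of the argument — including the delicate breakpoint case analysis and the $\textbf{M}_5(4)$ time estimate — is that of Lemma 4.7 of [26].
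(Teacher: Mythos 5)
Your proposal matches the paper's proof, which simply observes that $\textbf{M}_5$ differs from the main machine of [26] only in the submachine $\textbf{M}_5(1)$, whose reduced computations satisfy the hypotheses of Lemma \ref{multiply one letter}, and otherwise defers to Lemma 4.7 of [26] — exactly the reduction you make in your closing sentence, with your per-piece sketch being a faithful outline of that cited argument. One small correction that does not affect the conclusion: an $\textbf{M}_5(2)$-piece runs $\textbf{LR}_k$, so its length is on the order of $2k$ times the sector's $a$-length (cf. Lemma \ref{LR_k analogue} and Lemma \ref{M_5 controlled}) rather than the bound $t_p\le |W_0^{(p)}|_a+|W_{t_p}^{(p)}|_a$ you attribute to an analogue of Lemma \ref{multiply two letters}(c); the extra factor depending on $k$ is harmless because $k\ll c_2$ in the parameter ordering.
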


\begin{proof}

The only alteration needed to the proof presented in [26] is the observation that a reduced computation of $\textbf{M}_5(1)$ satisfies the hypotheses of Lemma \ref{multiply one letter}.

\end{proof}

\begin{lemma} \label{M_5 accepted configurations}

For every accepted configuration $W$ of $\textbf{M}_5$, there exists an accepting computation of length at most $c_3\|W\|$ and such that the $a$-length of every configuration does not exceed $c_3|W|_a$.

\end{lemma}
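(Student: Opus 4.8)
The plan is to fix a reduced accepting computation $\pazocal{C}:W\equiv W_0\to\dots\to W_T\equiv A_5$ of $\textbf{M}_5$ of minimal length (one may reduce the computation for free and then minimize among reduced ones) and to determine the shape of $\pazocal{C}$ through its step history. Since $\pazocal{C}$ ends with the accept rule $\theta_0$, which counts as a rule of $\textbf{M}_5(5)$ for step-history purposes, the step history of $\pazocal{C}$ ends in a block of type $(5)$. Combining Lemmas \ref{first step history}, \ref{second step history} and \ref{(A) and (B)} with the minimality of $\pazocal{C}$, I would argue that the step history of $\pazocal{C}$ is a suffix of $(1)(12)(2)(23)(3)(34)(4)(45)(5)$; that is, $\pazocal{C}$ is a forward sweep, decomposing as $\pazocal{D}_{i_1}\chi_1\pazocal{D}_{i_2}\chi_2\cdots\chi_{r-1}\pazocal{D}_{i_r}$ with $1\le i_1<\dots<i_r=5$, each $\chi_j$ a transition rule and each $\pazocal{D}_{i_j}$ a (possibly empty) computation of the single machine $\textbf{M}_5(i_j)$. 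The reason a minimal accepting computation cannot go back and forth is that $\textbf{M}_5(2)=\textbf{LR}_k$ leaves the historical and working sectors fixed (Lemma \ref{LR_k analogue}), $\textbf{M}_5(3)$ and $\textbf{M}_5(5)$ only monotonically lengthen, resp.\ shorten, the historical sectors, and the $\textbf{M}_5(4)=\overline{\textbf{M}}_4$ piece occurring inside an accepting computation must already reach the accept configuration of $\overline{\textbf{M}}_4$; hence any transition into a lower-indexed machine could be excised to yield a strictly shorter accepting computation, and the prohibited step-history subwords of Lemmas \ref{first step history} and \ref{second step history}, such as $(21)(1)(12)$ and $(23)(3)(32)$, show that such a detour could in any case not be resumed toward $A_5$.

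Granting that $\pazocal{C}$ is a forward sweep with at most five pieces, I would control the $a$-lengths along $\pazocal{C}$ piece by piece. A piece $\pazocal{D}_1$ of $\textbf{M}_5(1)$ has step history of length at most $2$, so Lemma \ref{M_5 tame}(a) applies --- and in fact such a piece shuffles letters between the $Q_0R_0$- and $R_0P_1$-sectors and is governed sector-wise by Lemma \ref{multiply one letter}, so it barely changes the $a$-length. A piece $\pazocal{D}_2$ of $\textbf{M}_5(2)$ in a forward sweep runs $\textbf{LR}_k$ between its endpoints, so every configuration in it has the same $a$-length (Lemma \ref{LR_k analogue}). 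A piece $\pazocal{D}_3$ of $\textbf{M}_5(3)$ has step history $(23)(3)(34)$, a subword of a word in part (B) of Lemma \ref{(A) and (B)}, so Lemma \ref{M_5 tame}(a) applies; its final configuration exceeds $|W|_a$ by at most a bounded multiple of $\|H_1\|$, where in a minimal computation the inserted history word satisfies $\|H_1\|\le c_0\|u\|\le c_0\|W\|$ by Lemma \ref{M_4 language}(1). A piece $\pazocal{D}_4$ of $\textbf{M}_5(4)=\overline{\textbf{M}}_4$ is controlled by Lemma \ref{M_4 bounds}, the $\textbf{M}_4$-analogue of Lemma \ref{M_3 standard base}(c), and a piece $\pazocal{D}_5$ of $\textbf{M}_5(5)$ (together with $\theta_0$) again has step history of length at most $2$. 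Since $|W_T|_a=|A_5|_a$ is bounded and the endpoint $a$-lengths of consecutive pieces are linked by these inequalities, propagating them from both ends gives $|W_j|_a\le c_3|W|_a$ for every $j$, provided $c_3$ exceeds a fixed expression in $c_0,c_1,c_2,N$; this is the second assertion.

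For the length I would estimate each piece: a piece of $\textbf{M}_5(1)$ has length equal to that of the word being shuffled, hence at most $|W|_a$ (Lemma \ref{multiply one letter}); a piece of $\textbf{M}_5(2)=\textbf{LR}_k$ has length $2kl+2k-1$ with $l$ the $a$-length of its input sector, hence at most $2k(c_3|W|_a+1)$ (Lemma \ref{LR_k analogue}); a piece of $\textbf{M}_5(3)$ or $\textbf{M}_5(5)$ has length at most the number of history letters inserted or erased, hence at most $N\|H_1\|\le Nc_0\|W\|$; and a piece of $\textbf{M}_5(4)=\overline{\textbf{M}}_4$ has length at most $c_1$ times the $a$-length of a suitable historical sector, hence at most $c_1c_3|W|_a$, either via Lemma \ref{M_4 language}(1) when the piece begins from the analogue of $I_4(w,H_1)$, or via the $\overline{\textbf{M}}_4$-analogue of Lemma \ref{M_3 accepting computations} when $W$ itself lies inside $\textbf{M}_5(4)$. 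Adding the at most five pieces and at most four transition rules, $T\le c_3\|W\|$ once $c_3$ is chosen sufficiently large, which the ordering $N<<c_0<<c_1<<c_2<<c_3$ permits.

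\textbf{The main obstacle} is the step-history analysis of the first paragraph: showing that a minimal accepting computation is a genuine forward sweep, and in particular that its $\overline{\textbf{M}}_4$-piece --- which dominates the length --- already reaches the accept configuration of $\overline{\textbf{M}}_4$. This rests both on the prohibition lemmas for step histories and on having the correct analogue of Lemma \ref{M_3 accepting computations} for $\overline{\textbf{M}}_4$ available, which one obtains by running the same argument successively on $\textbf{M}_3$, $\textbf{M}_4$ and $\overline{\textbf{M}}_4$.
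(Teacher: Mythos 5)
Your overall strategy (normalize the accepting computation, read off its step history, then bound each step piecewise using Lemmas \ref{multiply one letter}, \ref{LR_k analogue}, \ref{M_4 bounds}, \ref{M_4 language} and the $\overline{\textbf{M}}_4$-analogue of Lemma \ref{M_3 accepting computations}) is the right kind of argument, and your handling of the pieces in steps $(1),(2),(3),(5)$ is essentially sound. The genuine gap is the structural claim in your first paragraph: that a minimal accepting computation is a ``forward sweep,'' i.e.\ has step history a suffix of $(1)(12)(2)(23)(3)(34)(4)(45)(5)$. Your justification --- that any transition into a lower-indexed machine could be excised --- fails for the one return that actually matters: a computation with step history $(4)(43)(3)(34)(4)(45)(5)$. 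Here the detour through $\textbf{M}_5(3)$ \emph{changes} the words written in the historical sectors (it exits step $(4)$ at some $I_4(w,H'')$, rewrites the history, and re-enters at $I_4(w,H_1)$ with $H_1\not\equiv H''$), so the computation never revisits a configuration and nothing can be excised; minimality of length gives no contradiction. The paper's own proof (which analyzes the inverse computation from $A_5$ and normalizes by minimal \emph{step-history length}) explicitly retains exactly this case --- its case analysis reduces the backward step history to a prefix of $(43)(3)(34)(4)$ or of $(43)(3)(32)(2)(21)(1)$, the first of which is your excluded non-monotone shape --- and then disposes of it using tameness of the re-entry configuration together with Lemma \ref{M_3 standard base}. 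Since an accepted configuration $W$ sitting inside the $\overline{\textbf{M}}_4$ phase may well be completable only by backing out through $\theta(43)$, your shape claim is unproven and your piecewise estimates do not cover all accepted $W$.

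A secondary, smaller point: even in the forward-sweep case, your assertion that ``in a minimal computation the inserted history word satisfies $\|H_1\|\le c_0\|u\|$'' does not follow from Lemma \ref{M_4 language}(1) alone; that lemma only supplies the \emph{existence} of a short accepting history. To use it you must construct an alternative accepting computation from $W$ (erase whatever partial history $W$ already carries, insert the short $H_1$, run $\overline{\textbf{M}}_4$, erase) and compare lengths --- this is precisely the replacement argument the paper carries out (``so that we can assume $W_l\equiv I_4(w,H_1)$''), and it is also what rescues the $a$-length bound when $|W|_a$ is small, since the crude estimate ``length bound plus bounded change per rule'' leaves an additive constant not dominated by $c_3|W|_a$. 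So the proposal needs both the missing case and this replacement step made explicit before the piecewise bounds can be summed as you intend.
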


\begin{proof}

Let $\pazocal{C}$ be an accepting computation of $W$ with step history of minimal length and $\bar{\pazocal{C}}$ be its inverse computation. Set $H$ as the history of $\bar{\pazocal{C}}$.

Then, there exists a maximal subcomputation $A_5\equiv W_0\to\dots\to W_r$ of $\bar{\pazocal{C}}$ with step history $(5)$. Applying Lemma \ref{multiply one letter} to $W_0\to\dots\to W_r$ implies that $r\leq\|W_r\|$ and $|W_j|_a\leq |W_r|_a$ for all $0\leq j\leq r$, so that the statement follows if $t=r$ (i.e $\pazocal{C}$ is a one-step computation of step history $(5)$). 

If $t>r$, then there exists a maximal subcomputation $A_5\equiv W_0\to\dots\to W_l$ of $\bar{\pazocal{C}}$ whose step history is $(5)(4)$. Then $W_{r+1}\to\dots\to W_l$ is a one-step computation of step history $(4)$ with $W_{r+1}\equiv A_4(H)$ for some $H\in F(\Phi^+)$. Lemmas \ref{M_3 standard base} and \ref{M_3 accepting computations} then apply to this computation, yielding $|W_j|_a\leq c_1|W_l|_a$ for all $r+1\leq j\leq l$ and $l-r-1\leq c_1\|W_l\|$. So, $l\leq(c_1+1)\|W_l\|$ and $|W_j|_a\leq|W_l|_a$ for all $0\leq j\leq l$ since $\|W_r\|=\|W_{r+1}\|$, implying the statement if $t=l$.

Suppose the length of $\bar{\pazocal{C}}$, $t$, is greater than $l$.  Suppose further that the step history of the subcomputation $W_l\to\dots\to W_t$ has prefix $(43)(3)(34)(4)(45)$. Then, let $W_l\to\dots\to W_x$ be the maximal subcomputation with this step history. Then we can replace the subcomputation $W_r\to\dots\to W_x$ with a computation of step history $(5)$ described in Lemma \ref{multiply one letter}. This reduces the length of the step history, contradicting the assumption that $\pazocal{C}$ is minimal in regards to step history length.

Lemmas \ref{first step history} and \ref{second step history} then imply that the step history of the subcomputation $W_l\to\dots\to W_t$ must be a prefix of either: 
\begin{addmargin}[1em]{0em}

$\bullet$ $(43)(3)(34)(4)$ or

$\bullet$ $(43)(3)(32)(2)(21)(1)$

\end{addmargin}

As $W_l$ is $\theta(43)$-admissible and the operation of each rule of $W_0\to\dots\to W_l$ acts in the same way on each historical $R_iP_{i+1}$-sector, we must have $W_l\equiv I_4(w,H_1'')$ for some $w\in F(\pazocal{A})$ and $H_1''\in F(\Phi^+)$. Similarly, we have $W_{r+1}\equiv A_4(H_1')$ for some $H_1'\in F(\Phi^+)$. But then $W_{r+1}\to\dots\to W_l$ is a computation of $\overline{\textbf{M}}_4$ of the form $A_4(H_1')\to\dots\to I_4(w,H_1'')$, so that Lemma \ref{M_4 language}(2) implies that $w\in\pazocal{L}$ and $H_1'\equiv H_1''$. Setting $w=u^n$, suppose $\|H_1'\|>c_0\|u\|$. Lemma \ref{M_4 language}(1) then provides $H_1\in F(\Phi^+)$ with $\|H_1\|\leq c_0\|u\|$ and a computation $A_4(H_1)\to\dots\to I_4(w,H_1)$ of length at most $c_1\|u\|$. Identifying $H_1$ with the rules of $\textbf{M}_5(1)$ then gives a computation $A_5\to\dots\to I_4(w,H_1)$ of length at most $\|H_1\|+c_1\|u\|+3$ and so that the $a$-length of every configuration is bounded by $|I_4(w,H_1)|$. Using Lemma \ref{multiply one letter}, we can then find a computation $I_4(w,H_1)\cdot\theta(43)\to\dots\to W_{l+1}$ of step history $(3)$ so that the computation's length and the configurations' $a$-lengths are bounded by $|W_{l+1}|_a$. Concatenating these computations then gives a computation $A_5\to\dots\to W_{l+1}$ with the desirable bounds, so that we can assume that $W_l\equiv I_4(w,H_1)$.

Let $W_{l+1}\to\dots\to W_x$ be the maximal subcomputation with step history $(3)$. The restriction of this subcomputation to any historical $R_iP_{i+1}$-sector, $V_{l+1}\to\dots\to V_x$, then satisfies the hypotheses of Lemma \ref{multiply one letter}. But then the history of this computation is at most $\|V_{l+1}\|+\|V_x\|=\|H_1\|+\|V_x\|$ and $\|V_j\|\leq\max(\|H_1\|,\|V_x\|)$ for all $l+1\leq j\leq x$. So, since the projection of each configuration to the input sector is $u^n$, the computation's length is at most $|W_x|_a$ and each $|W_j|_a$ is at most $(s+1)|W_x|_a$. Combining the bounds with the first part of the computation implies the statement if $x=t$.

Otherwise, in the first case, $W_{x+1}$ is tame, so that Lemma \ref{M_3 standard base} gives the appropriate bounds. For the second case, applications of Lemmas \ref{multiply one letter} and \ref{primitive computations} give the appropriate bounds.

%In the first case, note that $W_l$ is tame and apply Lemma \ref{M_5 tame} again. This yields $|W_j|_a\leq c_2|W_t|_a$ for all $l\leq j\leq t$, so that $|W_j|_a\leq c_2^2|W_t|_a$ for all $0\leq j\leq t$; and $t-l\leq c_2^2\|W_t\|$ so that $t\leq(c_2^3+c_2^2)\|W_t\|$
%
%In the second case, if necessary one can break the subcomputation in two so that one subcomputation has step history $(43)(3)(32)(2)$ and the other has step history $(21)(1)$. Applying Lemma \ref{M_5 tame} to each of these gives a bound as above.
%
%Taking $c_3>>c_2$ then yields the desired inequalities.

\end{proof}

%%%%%%%%%%%%%%%%%%%%%%%%%%%%%%%%%%%%%%%%%%%%%%%%%%%%%%%%%%%%%%%%%

\subsection{Computations of $\textbf{M}_5$ with long histories} \

For any accepted configuration $W$ of $\textbf{M}_5$, we fix an accepting computation $\pazocal{C}(W)$ according to Lemma \ref{M_5 accepted configurations}.

\begin{lemma} \label{M_5 long history}

\textit{(Lemma 4.10 of [26])} Let $W_0$ be an accepted configuration, $\pazocal{C}:W_0\to\dots\to W_t$ be a reduced computation of $\textbf{M}_5$, and $H_0,H_t$ be the histories of $\pazocal{C}(W_0)$, $\pazocal{C}(W_t)$, respectively. Then for the parameters $c_4$ and $c_5$, either:

\begin{addmargin}[1em]{0em} 

$(a)$ $t\leq c_4\max(\|W_0\|,\|W_t\|)$ and $\|W_j\|\leq c_5\max(\|W_0\|,\|W_t\|)$ for every $j=0,\dots,t$ or 

$(b)$ $\|H_0\|+\|H_t\|\leq t/500$ and the sum of lengths of all maximal subcomputations of $\pazocal{C}$ with step histories $(12)(2)(23)$, $(32)(2)(21)$, $(34)(4)(45)$, and $(54)(4)(43)$ is at least $0.99t$.

\end{addmargin}

\end{lemma}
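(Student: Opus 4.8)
Since the machine $\textbf{M}_5$ here differs from the main machine of [26] only in the submachine $\textbf{M}_5(1)$, the plan is to follow the proof of Lemma 4.10 of [26] and to indicate the (few) places where the difference must be re-examined.

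First I would dispose of the easy case. By Lemma \ref{(A) and (B)}, either the step history of $\pazocal{C}$ contains one of the four subwords listed under (A), or it is a subword of one of the six words under (B). In the second situation the step history has bounded length, so Lemma \ref{M_5 tame} applies and yields $t\leq c_2^2(\|W_0\|+\|W_t\|)$ together with $|W_j|_a\leq c_2\max(|W_0|_a,|W_t|_a)$ for all $j$; since every configuration has $q$-length $N$, this is alternative (a) once $c_4,c_5$ are chosen after $c_2$ and $N$. So I may assume the step history contains one of $(12)(2)(23)$, $(32)(2)(21)$, $(34)(4)(45)$, $(54)(4)(43)$. Now factor $\pazocal{C}$ into its maximal one-step subcomputations $\pazocal{C}_1,\dots,\pazocal{C}_m$, with lengths $\ell_1,\dots,\ell_m$ and division configurations $W^{(0)}=W_0,\dots,W^{(m)}=W_t$; call $\pazocal{C}_i$ \emph{controlled} if its step history is one of the four words in the statement and \emph{ordinary} otherwise.

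Three facts drive the proof. (i) A controlled piece works (in parallel) as $\textbf{LR}_k$, or, inside the copy of $\overline{\textbf{M}}_4$, as a full run of $\textbf{M}_3$ through all of $\textbf{M}_3(1),\dots,\textbf{M}_3(4k+3)$, hence through about $2k$ copies of $\textbf{LR}$/$\textbf{RL}$ separated by transition rules; by Lemma \ref{M_5 controlled} (using Lemmas \ref{primitive computations}, \ref{LR_k analogue}, \ref{M_3 standard base} and \ref{M_3 accepting computations}) such a piece preserves the $a$-length, and if its $a$-length is $s$ then $2k-1\leq \ell_i\leq C_0 k s$ for an absolute constant $C_0$. (ii) An ordinary piece whose step history avoids level $1$ is handled directly by Lemma \ref{M_5 tame}; if it involves $\textbf{M}_5(1)$, one restricts to the $Q_0R_0$- or $R_0P_1$-sector, observes that $\textbf{M}_5(1)$ multiplies that sector on one side by letters of $\pazocal{A}$ with distinct rules using distinct letters, and invokes Lemma \ref{multiply one letter}(b),(d) together with Lemma \ref{M_5 tame}; in all cases $\ell_i$ and the change in $a$-length are bounded by $C_0\max(|W^{(i-1)}|_a,|W^{(i)}|_a)$. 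This restriction-to-$\textbf{M}_5(1)$ observation is the only genuinely new verification relative to [26], and it also underlies the (cited) impossibility of $(21)(1)(12)$ in Lemma \ref{second step history}. (iii) Using Lemmas \ref{first step history} and \ref{second step history}, the sequence of levels visited by $\pazocal{C}_1,\dots,\pazocal{C}_m$ is rigid: at most an absolute constant number of ordinary pieces lie between two consecutive controlled pieces, and levels $1$ and $2$ are visited only within a bounded initial or terminal stretch.

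It remains to combine these estimates into the dichotomy. Let $M=\max_j|W_j|_a$; by Lemma \ref{M_5 tame} one has $M\leq c_2\max_i|W^{(i)}|_a$. From (i)--(iii) and a telescoping estimate along the alternation of controlled and ordinary pieces (the $a$-length changes by a bounded factor across each ordinary piece, is constant across each controlled one, and the pieces of largest $a$-length dominate), the total length $T_{\text{ord}}$ of the ordinary pieces is $O(M)$, while the total length $T_{\text{ctrl}}$ of the controlled pieces is at least a fixed fraction of $kM$ (contributed by a controlled piece operating within a bounded number of steps of the peak $a$-length); hence $T_{\text{ctrl}}\geq 0.99\,t$ for $k$ large. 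Now split on the size of $M$: if $M\leq (c_4/C_1k)\max(\|W_0\|,\|W_t\|)$ for the constant $C_1$ implicit above, then $t=O(kM)\leq c_4\max(\|W_0\|,\|W_t\|)$ and $\|W_j\|\leq M+N\leq c_5\max(\|W_0\|,\|W_t\|)$, giving (a); otherwise $\max(\|W_0\|,\|W_t\|)<C_1kM/c_4$, so by Lemma \ref{M_5 accepted configurations} applied to the accepted configurations $W_0$ and $W_t$ one gets $\|H_0\|+\|H_t\|\leq c_3(\|W_0\|+\|W_t\|)\leq 2C_1c_3kM/c_4\leq t/500$, since $t$ is bounded below by a constant multiple of $kM$ and $c_4$ is chosen large relative to $c_3$; this is (b). The main obstacle is making the telescoping estimate of the last paragraph precise — tracking exactly how the $a$-length evolves through the controlled/ordinary alternation so as to extract both $T_{\text{ord}}=O(M)$ and the lower bound $T_{\text{ctrl}}=\Omega(kM)$ — everything else being either a direct citation or, for the $\textbf{M}_5(1)$-steps, an immediate application of Lemma \ref{multiply one letter}.
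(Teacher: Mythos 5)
The paper does not give its own proof here: the lemma is stated as Lemma 4.10 of [26] and the argument is imported by reference. Your outline is a faithful reconstruction of that argument and correctly isolates the only point that changes with the modified $\textbf{M}_5(1)$ — namely, that a one-step $\textbf{M}_5(1)$-subcomputation, restricted to the $Q_0R_0$- or $R_0P_1$-sector, satisfies the hypotheses of Lemma \ref{multiply one letter}; this is exactly the observation the paper records when it adapts the neighboring Lemma \ref{M_5 tame}. Two small cautions: the statement that ``levels $1$ and $2$ are visited only within a bounded initial or terminal stretch'' should be restricted to the \emph{ordinary} pieces (level $2$ occurs throughout, inside the controlled pieces $(12)(2)(23)$ and $(32)(2)(21)$), and the ``telescoping'' estimate you flag is indeed the quantitative heart of Lemma 4.10 of [26] — your description of it (ordinary pieces of length $O(\text{current }a\text{-length})$, controlled pieces of length $\asymp k\cdot(\text{current }a\text{-length})$, so the ordinary contribution is $O(1/k)$ of the controlled one) has the right shape, but since you do not carry it out, the proposal remains an outline rather than a complete proof. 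As an outline it matches the approach the paper intends.
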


\begin{lemma} \label{M_5 long history has controlled}

\textit{(Lemma 4.13 of [26])} Let $W_0$ be an accepted configuration and $\pazocal{C}:W_0\to\dots\to W_t$ be a reduced computation of $\textbf{M}_5$ in the standard base with $t>c_4\max(\|W_0\|,\|W_t\|)$. Then the history of any subcomputation $\pazocal{D}:W_r\to\dots\to W_s$ of $\pazocal{C}$ (or the inverse of $\pazocal{D}$) of length at least $0.4t$ contains a subcomputation with controlled history.

\end{lemma}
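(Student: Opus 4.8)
The plan is to combine the structural dichotomy of Lemma~\ref{M_5 long history} with the classification of step histories in Lemma~\ref{(A) and (B)}, so that any sufficiently long subcomputation is forced to contain a maximal subcomputation of step history $(12)(2)(23)$, $(32)(2)(21)$, $(34)(4)(45)$, or $(54)(4)(43)$, and then to extract from such a subcomputation a controlled subhistory by noting that the work of $\textbf{M}_5(2)$ (resp.\ $\textbf{M}_5(4)$) between the two transition rules is precisely a parallel run of $\textbf{LR}_k$ (resp.\ of $\textbf{LR}$ inside $\overline{\textbf{M}}_4$), which is controlled by definition.

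First I would apply Lemma~\ref{M_5 long history} to $\pazocal{C}$: since $t>c_4\max(\|W_0\|,\|W_t\|)$, alternative $(a)$ fails, so alternative $(b)$ holds, giving that the maximal subcomputations of $\pazocal{C}$ with step histories among $(12)(2)(23)$, $(32)(2)(21)$, $(34)(4)(45)$, $(54)(4)(43)$ have total length at least $0.99t$. Now let $\pazocal{D}:W_r\to\dots\to W_s$ be a subcomputation of length at least $0.4t$. The subcomputations of the above four types that lie entirely outside $\pazocal{D}$ occupy at most the $t-0.4t=0.6t$ time steps not in $\pazocal{D}$, so the portion of these special subcomputations meeting $\pazocal{D}$ has total length at least $0.99t-0.6t=0.39t>0$; in particular some maximal special subcomputation $\pazocal{E}$ of $\pazocal{C}$ overlaps $\pazocal{D}$ in at least one time step. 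Here I would be a bit more careful: to guarantee that $\pazocal{D}$ (or its inverse) actually \emph{contains} an honest controlled subcomputation, one wants not merely overlap but that $\pazocal{D}$ contains a whole transition rule $\theta(i-1,i)$ followed later by $\theta(i,i+1)$ (or the reverse) of the appropriate type; this follows because a single special maximal subcomputation of $\pazocal{C}$ has length $O(\|W_0\|+\|W_t\|+\text{intermediate lengths})$ — bounded in terms of the $a$-lengths of the configurations at its endpoints via Lemmas~\ref{M_5 tame} and \ref{M_3 accepting computations} — whereas the total special length inside $\pazocal{D}$ is $\geq 0.39t$, and $t$ is huge compared to those $a$-lengths because $\|W_0\|,\|W_t\|$ are small relative to $t$ under the hypothesis and $\|W_j\|$ stays comparable along $\pazocal{C}$; hence at least one special maximal subcomputation is contained wholly inside $\pazocal{D}$, transition rules included.

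Having produced a maximal subcomputation $\pazocal{E}\subseteq\pazocal{D}$ with step history, say, $(34)(4)(45)$, I would then cut out of it the maximal subcomputation $\pazocal{E}'$ with step history $(4)$, i.e.\ the stretch between the two transition rules. By the construction of $\textbf{M}_5(4)$ as a copy of $\overline{\textbf{M}}_4$ and the structure of $\overline{\textbf{M}}_4$ (which runs $\textbf{M}_4$, itself built from $\textbf{M}_3$, which in turn cycles the primitive machines), a maximal subcomputation of step history $(34)(4)(45)$ must, by Lemma~\ref{first step history}$(a)$, traverse the internal phases of $\overline{\textbf{M}}_4$ in a forced cyclic order; in particular it contains a subcomputation whose history is of the form $\chi(4i+2,4i+3)H'\chi(4i+3,4i+4)$ with the $H'$-part running in parallel as $\textbf{LR}$ — which is exactly case $(a)$ of the definition of a controlled history. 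Symmetrically, a maximal subcomputation of step history $(12)(2)(23)$ runs $\textbf{M}_5(2)=\textbf{LR}_k$ between its two framing transition rules, and the corresponding stretch has history $\zeta^{(2i,2i+1)}H'\zeta^{(2i+1,2i+2)}$, matching case $(b)$. The cases $(54)(4)(43)$ and $(32)(2)(21)$ are handled by the same argument applied to the inverse computation, which is why the statement allows passing to the inverse of $\pazocal{D}$.

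The main obstacle I anticipate is the bookkeeping in the overlap argument: one must verify that a \emph{whole} special maximal subcomputation (both transition rules included) fits inside $\pazocal{D}$, not merely a fragment of one, and this requires a quantitative bound on the length of an individual special maximal subcomputation $\pazocal{E}$ in terms of the $a$-lengths of its endpoint configurations. For $\pazocal{E}$ of step history $(12)(2)(23)$ or $(32)(2)(21)$, Lemma~\ref{M_5 controlled} gives length exactly $2\ell+3$ or $\ell+2$ where $\ell$ is an $a$-length of a sector of the endpoint configuration, hence $O(\|W_r\|+\|W_s\|)$; for $(34)(4)(45)$ and $(54)(4)(43)$, one combines Lemmas~\ref{M_3 standard base}(c), \ref{M_3 accepting computations}, and \ref{M_5 controlled} to get a bound of the form $c_1\max(|W_r|_a,|W_s|_a)$ times a constant depending on $k$ and $N$. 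Against this, alternative $(b)$ of Lemma~\ref{M_5 long history} gives $\|W_0\|+\|W_t\| \leq t/500$ and (via the propagation bounds already established for $\textbf{M}_5$) $\|W_j\|\leq c_5 t$ only in a form we do not want — here I would instead invoke the first clause of $(b)$, $\|H_0\|+\|H_t\|\leq t/500$, together with the $a$-length control to see that all configuration lengths inside $\pazocal{D}$ are at most a fixed polynomial fraction of $t$; since the sum of special lengths inside $\pazocal{D}$ is $\geq 0.39t$ while each special subcomputation fragment at the two ends of $\pazocal{D}$ contributes at most this small quantity, a genuine interior special maximal subcomputation of the full length exists, and it contains its controlled subcomputation as above.
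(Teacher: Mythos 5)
The paper gives no proof of this lemma (it is quoted as Lemma 4.13 of [26]), so your write-up has to stand on its own, and it has a genuine gap at its central step. The first half is fine: the hypothesis $t>c_4\max(\|W_0\|,\|W_t\|)$ rules out alternative $(a)$ of Lemma \ref{M_5 long history}, so the special blocks (maximal subcomputations with step histories $(12)(2)(23)$, $(32)(2)(21)$, $(34)(4)(45)$, $(54)(4)(43)$) have total length at least $0.99t$, and their intersection with $\pazocal{D}$ has length at least $0.39t$; also, a special block that lies \emph{entirely} inside $\pazocal{D}$ does contain a controlled subcomputation, since it traverses all internal $\chi$- (resp. $\zeta$-) transitions. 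The gap is your claim that some special block must lie wholly inside $\pazocal{D}$. You justify it by asserting that each block has length bounded by a constant times $a$-lengths that are small compared to $t$, because ``$\|W_j\|$ stays comparable along $\pazocal{C}$''. That is exactly what fails in case $(b)$: the hypothesis controls only $\|W_0\|$ and $\|W_t\|$, the intermediate configurations can have length of order $t/k$, and a \emph{single} special block can occupy essentially all of $\pazocal{C}$. For instance, start at $W_0\equiv I_5(u^n)$, run $(1)(12)(2)(23)$, let the $(3)$-phase write a very long accepting history $H_1$ for $u^n$ with $\|H_1\|\gg c_4(N+n\|u\|)$, then run $(34)(4)(45)(5)$ down to a short end configuration: then $t\approx(4k+4)\|H_1\|>c_4\max(\|W_0\|,\|W_t\|)$, while the one block of step history $(34)(4)(45)$ has length about $\frac{4k}{4k+4}\,t>0.9t$. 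A subcomputation $\pazocal{D}$ of length $0.4t$ taken from the middle of that block contains no complete special block, so your argument proves nothing for it, although the conclusion of the lemma is true. Lemmas \ref{M_5 tame} and \ref{M_3 accepting computations}, which you invoke, bound a block only in terms of the $a$-lengths of \emph{its own} (intermediate) endpoint configurations, and neither the hypothesis nor $\|H_0\|+\|H_t\|\leq t/500$ makes those small relative to $t$.

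What is missing, and what the argument of [26] actually supplies, is the fragment case: if no special block is contained in $\pazocal{D}$, then the intersection of $\pazocal{D}$ with the special blocks sits inside at most two end fragments, so one fragment (a prefix or suffix of a single block) has length at least $0.195t\geq 0.195\cdot(\text{length of that block})$, and one must extract a controlled subword from this fragment itself. This uses the rigid internal periodicity of a block: for a $(34)(4)(45)$ or $(54)(4)(43)$ block, Lemma \ref{M_3 standard base}$(b)$ makes all windows between $\chi(j,j+1)$ and $\chi(j+4,j+5)$ of equal length, so, since $k$ is chosen large, any contiguous piece forming a fixed positive fraction of the block contains a whole window $\chi(4i+2,4i+3)H'\chi(4i+3,4i+4)$, i.e. a controlled history of type $(a)$; for a $(12)(2)(23)$ or $(32)(2)(21)$ block, the phases of $\textbf{LR}_k$ all have length equal to the sector length (Lemma \ref{LR_k analogue}, Lemma \ref{M_5 controlled}), so a fixed fraction of the block contains a full $\zeta^{(2i,2i+1)}H'\zeta^{(2i+1,2i+2)}$, a controlled history of type $(b)$; reversed blocks are handled by passing to the inverse of $\pazocal{D}$, as the statement allows. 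Until you carry out this fragment analysis, the proof covers only the easy case and does not establish the lemma.
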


%\begin{proof}
%
%By Lemma \ref{M_5 long history}, the sum of the lengths of all maximal subcomputations of $\pazocal{C}$ with step histories $(12)(2)(23)$, $(32)(2)(21)$, $(34)(4)(45)$, and $(54)(4)(43)$ is at least $0.99t$. Therefore, $\pazocal{D}$ must contain a one-step subcomputation $\pazocal{D}'$ of step history $(2)$ or $(4)$ which is a subcomputation of some $\pazocal{C}'$ that is a maximal subcomputation of $\pazocal{C}$ having step history of one of those four forms. Further, one can find $\pazocal{D}'$ and $\pazocal{C}'$ such that for $K',H'$ the histories of $\pazocal{D}',\pazocal{C}'$, respectively, $\|K'\|\geq0.3\|H'\|$.
%
%Suppose $\pazocal{C}'$ has step history $(34)(4)(45)$. Assuming that that $k>10$, Lemmas \ref{M_3 standard base}$(b)$ and \ref{M_3 historical sector} imply that $\pazocal{D}'$ must contain a subcomputation with controlled history.
%
%Suppose $\pazocal{C}'$ has step history $(12)(2)(23)$. It follows from Lemma \ref{primitive computations}(3,4) that $\pazocal{C}'$ repeats cycles of $\textbf{LR}$ $k$ times. Taking $k>10$ again implies that $\pazocal{D}'$ contains a subcomputation with controlled history. 
%
%The inverse arguments imply the statement if $\pazocal{C}'$ has step history $(54)(4)(43)$ or $(32)(2)(21)$.
%
%\end{proof}

\begin{lemma} \label{M_5 special computations}

Let $\pazocal{C}:W_0\to\dots\to W_t$ be a nonempty reduced computation of $\textbf{M}_5$ such that:

\begin{addmargin}[1em]{0em}

(1) $W_0\equiv A_5$ or $W_0\equiv I_5(u^n)$ for some $u^n\in\pazocal{L}$ and 

(2) $W_t\equiv A_5$ or $W_t\equiv I_5(v^n)$ for some $v^n\in\pazocal{L}$. 

\end{addmargin}

Then the sum of the lengths of all subcomputations of $\pazocal{C}$ with step histories $(12)(2)(23)$, $(32)(2)(21)$, $(34)(4)(45)$, and $(54)(4)(43)$ is at least $0.99t$.

\end{lemma}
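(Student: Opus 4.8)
The plan is to reduce to Lemma~\ref{M_5 long history} and then run a charging argument on the step history of $\pazocal{C}$. First I would observe that both endpoints are accepted configurations: $A_5$ is accepted by the empty computation, and every $I_5(u^n)$ with $u^n\in\pazocal{L}$ is accepted by Lemma~\ref{M_5 language}(1). Hence Lemma~\ref{M_5 long history} applies to $\pazocal{C}$. In alternative~(b) of that lemma the sum of the lengths of the maximal subcomputations of $\pazocal{C}$ with step histories $(12)(2)(23)$, $(32)(2)(21)$, $(34)(4)(45)$ and $(54)(4)(43)$ is already at least $0.99t$, and since for each of these four step histories there is at most one subcomputation of $\pazocal{C}$ realizing it, this is exactly the claim. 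So all the work goes into alternative~(a): $t\le c_4\max(\|W_0\|,\|W_t\|)$ and $\|W_j\|\le c_5\max(\|W_0\|,\|W_t\|)$ for every $j$.

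In case~(a) I would analyse the step history of $\pazocal{C}$. Since $W_0$ and $W_t$ are $A_5$ or $I_5$-configurations, their state letters are, up to the accept rule $\theta_0^{\pm1}$, those of $\textbf{M}_5(1)$ or $\textbf{M}_5(5)$, so the step history begins and ends with a letter of $\{(1),(5)\}$. Combining this with the forbidden step-history subwords of Lemmas~\ref{first step history} and~\ref{second step history} and with Lemma~\ref{(A) and (B)}, one checks that every maximal piece of $\pazocal{C}$ that is not one of the four listed special ones has step history $(1)$, $(3)$ or $(5)$ (its adjacent transition rules absorbed); that a $(1)$-piece can occur only at an end of $\pazocal{C}$ abutting an $I_5$-endpoint; and --- the key point --- that every such non-special piece is immediately flanked by at least one special piece. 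Indeed, two adjacent non-special pieces would produce a forbidden subword, and if $\pazocal{C}$ were a single non-special piece then, restricting it to the two-letter base $Q_0R_0$, to the $R_0P_1$-sector, or to a historical $R_iP_{i+1}$-sector (on each of which its rules multiply that sector by single letters, distinct rules by distinct letters), Lemma~\ref{multiply one letter}(a) would force $\pazocal{C}$ to be empty, contrary to hypothesis.

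The charging argument then closes the proof. Given a non-special maximal piece $\pazocal{D}$, restricted as above to a two-letter base whose sector has $a$-length $\ell$ in the configuration adjacent to an appropriately chosen flanking special piece, Lemma~\ref{multiply one letter}(d) bounds the length of $\pazocal{D}$ by $\ell$. That flanking special piece runs, to completion, either the machine $\textbf{LR}_k$ (when its step history is $(12)(2)(23)$ or $(32)(2)(21)$, and then $\pazocal{D}$ is the $(1)$-piece and the sector is $R_0P_1$) or a copy of $\overline{\textbf{M}}_4$ whose operation forces order-$k$ full $\textbf{LR}$/$\textbf{RL}$ passes on the very same historical sector (when its step history is $(34)(4)(45)$ or $(54)(4)(43)$, and then $\pazocal{D}$ is a $(3)$- or $(5)$-piece); completion is forced because the bracketing transition rules require the running state letters to reach their terminal index. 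Hence, by Lemma~\ref{LR_k analogue} (respectively Lemma~\ref{primitive computations}(3) applied order-$k$ times, using that $\textbf{LR}$ and $\textbf{RL}$ preserve $a$-length), the length of that special piece is at least $2k\ell$. Since a given special piece flanks at most two non-special pieces, summing over all maximal pieces yields that the total length of the non-special pieces is at most $\tfrac1k$ times the total length of the special pieces, hence at most $t/k$; therefore the total length of the special pieces is at least $(1-\tfrac1k)t\ge0.99t$ by the choice of the parameter $k$.

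The step-history bookkeeping of case~(a) is where I expect the difficulty to lie: one must verify carefully that every non-special piece is genuinely flanked by a special piece performing a complete $\textbf{LR}_k$-style pass on exactly the sector whose $a$-length bounds that non-special piece, so that the factor $k$ always lands on the special side of each charge (the correct flank must sometimes be taken to the left, sometimes to the right). A secondary technical nuisance is that a special $(4)$-piece alters its historical sectors along the way, so one must invoke the ``return to a copy'' behaviour of controlled subcomputations (Lemma~\ref{M_5 controlled} and the $\textbf{M}_5$-analogue of Lemma~\ref{M_3 standard base}) to keep the relevant $a$-lengths under control across the whole of $\pazocal{C}$.
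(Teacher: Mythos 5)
Your proposal is correct and essentially the paper's argument: factor $\pazocal{C}$ into special pieces with step history $(12)(2)(23)$, $(32)(2)(21)$, $(34)(4)(45)$, $(54)(4)(43)$ interleaved with one-step $(1)$/$(3)$/$(5)$ pieces, dispatch the $l=1$ case via Lemma~\ref{multiply one letter}$(a)$, and charge the length of each non-special piece (bounded by the sum of its end $a$-lengths in the working sector via Lemma~\ref{multiply one letter}$(b)$ --- note it is part $(b)$, not $(d)$, that bounds the computation length) against the $\geq 2k$-times-longer flanking special pieces. The preliminary split via Lemma~\ref{M_5 long history} is harmless but superfluous, since your case-(a) charging argument nowhere uses the bound $t\leq c_4\max(\|W_0\|,\|W_t\|)$ and would apply verbatim in case (b); also, your side remark that each of the four step histories can occur in at most one subcomputation of $\pazocal{C}$ is not true in general, though nothing in the proof rests on it.
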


\begin{proof}

Lemmas \ref{first step history} and \ref{second step history} imply that $\pazocal{C}$ can be factorized as $\pazocal{D}_1\pazocal{C}_2\dots\pazocal{D}_{l-1}\pazocal{C}_{l-1}\pazocal{D}_l$ where $l\geq1$, every $\pazocal{C}_i$ has one of the four step histories from Lemma \ref{(A) and (B)}(A), and every $\pazocal{D}_i$ is a one-step subcomputation with step history (1), (3), or (5). Further, $\pazocal{D}_1$ (respectively $\pazocal{D}_l$) have step history (1) or (5), which is determined by the configuration $W_0$ (respectively $W_t$). 

Suppose $l=1$. Then $\pazocal{C}$ is a one-step computation with step history $(1)$ or $(5)$. In the former case, both $W_0$ and $W_t$ have empty $R_0P_1$-sectors, so that Lemma \ref{multiply one letter}$(a)$ implies that the computation is empty. In the latter case, both have empty historical $R_iP_{i+1}$-sectors, so that again the computation must be empty.

So, we take $l\geq2$. Let $H(i)$ and $K(i)$ be the histories of $\pazocal{C}_i$ and $\pazocal{D}_i$, respectively.

Set $\pazocal{D}_i:W_x\to\dots\to W_y$ and let $V_x\to\dots\to V_y$ be the restriction of $\pazocal{D}_i$ to a sector in which it inserts letters. As in the proof of Lemma \ref{M_5 long history}, $\|K(i)\|\leq|V_x|_a+|V_y|_a$, while $\|H(i)\|\geq2k|V_x|_a$ and $\|H(i+1)\|\geq2k|V_y|_a$. As for $\pazocal{D}_1$ (respectively $\pazocal{D}_l$), we can choose the sector so that the initial (respectively final) configuration satisfies $|V_x|_a=0$ (respectively $|V_y|_a=0$). It follows that $\sum_i\|H(i)\|\geq500\sum_i\|K(i)\|$. This implies that $$\sum_i\|H(i)\|\geq0.99\bigg(\sum_i\|H(i)\|+\sum_i\|K(i)\|\bigg)=0.99t$$

\end{proof}

\begin{lemma} \label{M_5 factor}

Let $\pazocal{C}:W_0\to\dots\to W_t$ be a reduced computation of $\textbf{M}_5$ such that $W_0\equiv A_5$ or $W_0\equiv I_5(u^n)$ for some $u^n\in\pazocal{L}$. Let $H$ be the history of $\pazocal{C}$. Then there exists a factorization $H\equiv H_1H_2$ such that:

\begin{addmargin}[1em]{0em}

(1) for $\pazocal{C}_1$ the subcomputation of history $H_1$, the sum of the lengths of all subcomputations of $\pazocal{C}_1$ with step histories $(12)(2)(23)$, $(32)(2)(21)$, $(34)(4)(45)$, and $(54)(4)(43)$ is at least $0.99\|H_1\|$

(2) $\|H_2\|\leq c_4\|W_t\|$ or $\|H_2\|\leq\|H_1\|/200$

\end{addmargin}

\end{lemma}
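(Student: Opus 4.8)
The plan is to deduce the statement from Lemma~\ref{M_5 long history} and, in the single case it does not settle, from the step-history decomposition already used to prove Lemma~\ref{M_5 special computations}. First I would observe that $W_0$ is an accepted configuration: $A_5$ is accepted by the empty computation, and $I_5(u^n)=I_5(u^n,u^n)$ is accepted by Lemma~\ref{M_5 language}(1); since every configuration of $\pazocal{C}$ has the same (standard) base as $W_0$, Lemma~\ref{M_5 long history} applies to $\pazocal{C}$. If its alternative $(b)$ holds, the total length of the maximal subcomputations of $\pazocal{C}$ with step histories $(12)(2)(23)$, $(32)(2)(21)$, $(34)(4)(45)$, $(54)(4)(43)$ is already at least $0.99t$, so I take $H_1\equiv H$ and $H_2$ empty. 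If alternative $(a)$ holds with $\|W_t\|\ge\|W_0\|$ --- in particular whenever $W_0\equiv A_5$, since then $\|W_0\|=|A_5|_q=N\le|W_t|_q\le\|W_t\|$ --- then $t\le c_4\|W_t\|$ and I take $H_1$ empty, $H_2\equiv H$. The only remaining case is $W_0\equiv I_5(u^n)$ with alternative $(a)$ and $\|W_0\|>\|W_t\|$.

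In that case I would use the step-history decomposition: by Lemmas~\ref{first step history}, \ref{second step history} and \ref{(A) and (B)}, $\pazocal{C}$ factors as $\pazocal{E}_0\pazocal{C}_1\pazocal{E}_1\cdots\pazocal{C}_m\pazocal{E}_m$, where each $\pazocal{C}_i$ is a maximal subcomputation with one of the four step histories of Lemma~\ref{(A) and (B)}(A), and each $\pazocal{E}_i$ is a one-step subcomputation (possibly empty) whose step history involves only the symbols $(1)$, $(3)$, $(5)$ and their flanking transitions, the interior ones being confined to step $(3)$. If $m=0$, then the step history of $\pazocal{C}$ is a prefix of $(1)(12)(2)$; a projection argument on the word read off the (adjacent) $Q_0R_0$- and $R_0P_1$-sectors shows that the total $a$-length stays at least $|u^n|_a$, whence $|W_t|_a\ge\|W_0\|-N$, and Lemma~\ref{M_5 tame}$(b)$ then gives $t\le c_4\|W_t\|$; I take $H_1$ empty, $H_2\equiv H$. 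If $m\ge1$, I set $H_1$ to be the history of $\pazocal{E}_0\pazocal{C}_1\cdots\pazocal{C}_m$ and $H_2$ the history of $\pazocal{E}_m$.

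Condition (1) is obtained by repeating, for the prefix $\pazocal{C}':W_0\to\cdots\to W_0\cdot H_1$, the estimate from the proof of Lemma~\ref{M_5 special computations}: each $\pazocal{C}_i$ contains a controlled subcomputation operating in parallel as $\textbf{LR}$ or $\textbf{LR}_k$ over a sector whose $a$-length $l_i$ does not change, so $\|H^{(i)}\|\ge 2k\,l_i$ up to an additive constant (Lemmas~\ref{primitive computations}, \ref{LR_k analogue}, \ref{M_5 controlled} and the structure of $\textbf{M}_3$); each interior $\pazocal{E}_i$ multiplies single letters into a sector shared with a neighbouring $\pazocal{C}$, so by Lemma~\ref{multiply one letter} its length is at most the sum of the $a$-lengths at its two ends, each of the form $l_j+O(1)$; and $\pazocal{E}_0$ is even shorter, since the sector it writes in, the $R_0P_1$-sector, is empty in $W_0\equiv I_5(u^n)$. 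Summing, and using that $m\le\|H_1\|/(2k-1)$, the total length of the $\pazocal{E}_i$'s is at most $\tfrac{1}{200}\|H_1\|$ once $k$ is large, so $\sum_i\|H^{(i)}\|\ge 0.99\|H_1\|$.

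For condition (2): $\pazocal{E}_m$ is a one-step connector of step $(1)$, $(3)$, or $(5)$. Restricting it to a sector $\sigma$ in which it multiplies by a single letter at each step and applying Lemma~\ref{multiply one letter}$(b)$ gives $\|H_2\|\le |V_x|_a+|V_y|_a$, where $V_x,V_y$ are the restrictions to $\sigma$ of the first and last configurations of $\pazocal{E}_m$; always $|V_y|_a\le|W_t|_a\le\|W_t\|$. Bounding $|V_x|_a$ is the crux, and it is done by a case analysis on the step history of $\pazocal{C}_m$ (which dictates which sector $\sigma$ is available): if step $(1)$ follows $(32)(2)(21)$, take $\sigma=R_0P_1$, over which the $\textbf{LR}_k$-part of $\pazocal{C}_m$ runs with unchanging $a$-length, so $|V_x|_a\le\|H^{(m)}\|/(2k)\le\|H_1\|/400$; if step $(3)$ follows $(12)(2)(23)$, the historical $R_iP_{i+1}$-sectors are locked throughout steps $(1)$ and $(2)$ and are empty in $I_5(u^n)$, so $|V_x|_a=0$; and if step $(5)$ (respectively step $(3)$) follows $(34)(4)(45)$ (respectively $(54)(4)(43)$), take $\sigma$ a historical $R_iP_{i+1}$-sector and use that a full computation of $\textbf{M}_5(4)=\overline{\textbf{M}}_4$ preserves the history words stored in such sectors (the $\overline{\textbf{M}}_4$-analogue of Lemma~\ref{M_4 language}(2)), so $|V_x|_a$ equals their length, which in turn is at most $\|H^{(m)}\|/(2k)\le\|H_1\|/400$ because the inner $\textbf{M}_3$ of $\overline{\textbf{M}}_4$ runs $\textbf{LR}$ or $\textbf{RL}$ over $\sigma$ at least $2k$ times (Lemma~\ref{primitive computations} and the structure of $\textbf{M}_3$). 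In every case $\|H_2\|\le\|H_1\|/400+|W_t|_a$, so $\|H_2\|\le\|H_1\|/200$ if $|W_t|_a\le\|H_1\|/400$, and $\|H_2\|\le 2|W_t|_a\le c_4\|W_t\|$ otherwise. The expected main obstacle is precisely this final case analysis: matching the sector modified by the tail connector $\pazocal{E}_m$ with a sector over which the controlled or primitive part of $\pazocal{C}_m$ operated, which forces one to track carefully how the nested machines $\overline{\textbf{M}}_4\supset\textbf{M}_4\supset\textbf{M}_3$ act on the historical $R_iP_{i+1}$-sectors.
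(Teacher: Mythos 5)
Your overall strategy follows the paper's: reduce via Lemma~\ref{M_5 long history} to the case $W_0\equiv I_5(u^n)$, decompose $\pazocal{C}$ into maximal one-step connectors alternating with the maximal "$(A)$-type" subcomputations $\pazocal{C}_i$, take $H_1$ to be everything up through $\pazocal{C}_m$, and reduce condition~(1) to the counting done in Lemma~\ref{M_5 special computations}. Where your argument breaks is in the treatment of the tail and, consequently, in the proof of condition~(2).

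You assert that the terminal piece $\pazocal{E}_m$ is a \emph{one-step} connector of step $(1)$, $(3)$ or $(5)$, and then bound $\|H_2\|\le|V_x|_a+|V_y|_a$ via Lemma~\ref{multiply one letter}$(b)$. This is not sound, because the step history of the tail is only constrained by Lemma~\ref{(A) and (B)}(B) to be a subword of one of the six listed words, not to be one-step. For example, after $\pazocal{C}_m$ with step history $(12)(2)(23)$ the tail may have step history $(3)(34)(4)$, and after $(34)(4)(45)$ it may be $(5)(54)(4)$; these are two-step. Lemma~\ref{multiply one letter} does not apply to such a tail: the transition rule $\theta(34)$ or $\theta(54)$ does not multiply any sector, and the subsequent $(4)$-block runs $\overline{\textbf{M}}_4$, which is not a single-letter multiplication of the sector you chose for the $(5)$- or $(3)$-part. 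So $\|H_2\|\le|V_x|_a+|V_y|_a$ is unjustified, and with it the final dichotomy $\|H_2\|\le\|H_1\|/200$ or $\|H_2\|\le c_4\|W_t\|$. The paper copes with exactly this difficulty by first assuming $\|H_2\|>c_4\|W_t\|$, isolating the maximal one-step prefix $W_s\to\cdots\to W_x$ of $\pazocal{E}$, and then treating the possible $(4)$-continuation $W_{x+1}\to\cdots\to W_t$ separately with Lemmas~\ref{M_4 bounds} and~\ref{M_3 accepting computations}; Cases~1 and~2 end in contradictions (so the assumption forces $\|H_2\|\le c_4\|W_t\|$ there), and Cases~3 and~4 close by comparing the tail against the $(4k+1)$ or $(2k+1)$ controlled sub-blocks inside $\pazocal{C}_m$ to obtain $\|H_1\|\geq200\|H_2\|$. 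You will need an analogue of this two-part treatment of $\pazocal{E}_m$; your per-sector bound on $|V_x|_a$ is in the right spirit for the one-step prefix, but an independent estimate on the $(4)$-continuation (or a proof that it cannot occur when $\|H_2\|$ is large relative to $\|W_t\|$) is missing.
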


\begin{proof}

If $t>c_4\max(\|W_0\|,\|W_t\|)$, then Lemma \ref{M_5 long history} implies the statement for $H_1=H$ and $H_2$ empty. So, we assume $t\leq c_4\max(\|W_0\|,\|W_t\|)$.

If $W_0\equiv A_5$, then $\|W_t\|\geq\|W_0\|$, so that $t\leq c_4\|W_t\|$, implying the statement for $H_2=H$ and $H_1$ empty. So, we assume that $W_0\equiv I_5(u^n)$ for some $u^n\in\pazocal{L}$.

Factor $\pazocal{C}$ as $\pazocal{D}_1\pazocal{C}_2\dots\pazocal{D}_{l-1}\pazocal{C}_{l-1}\pazocal{E}$, where $\pazocal{D}_i$ is a one-step computation with step history $(1)$, $(3)$, or $(5)$, $\pazocal{C}_i$ is a maximal subcomputation with step history from Lemma \ref{(A) and (B)}(A), and the step history of $\pazocal{E}$ is of the form (B) in this lemma.

Set $\pazocal{C}_1'=\pazocal{D}_1\pazocal{C}_2\dots\pazocal{D}_{l-1}\pazocal{C}_{l-1}$ and $H_1$ as its history. As in previous lemmas, the sum of the lengths of all subcomputations of $\pazocal{C}_1'$ with step histories of the four forms of (A) is at least $0.99\|H_1\|$. So, it suffices to prove that for $H_2$ the history of $\pazocal{E}$, statement (2) holds. 

Assume $\|H_2\|>c_4\|W_t\|$. By Lemmas \ref{first step history} and \ref{second step history}, there is a maximal prefix $H'$ of $H$ for which the step history of the corresponding computation is a prefix of $(1)(12)(2)(23)$. But Lemma \ref{primitive computations} implies that any such computation $I_5(w)\to\dots\to V$ has length bounded by $c_1\|V\|$. This implies that $H$ has a prefix so that the computation corresponding to this has step history $(1)(12)(2)(23)$; in particular, $H_1$ is nonempty.

Set $s=\|H_1\|$, so that $\pazocal{E}:W_s\to\dots\to W_t$. By Lemma \ref{M_5 tame}, $\|H_2\|\leq c_2^2(\|W_s\|+\|W_t\|)$, so that $\|W_s\|>\frac{c_4-c_2^2}{c_2^2}\|W_t\|>c_2\|W_t\|$ by choice of parameters.

\underline{Case 1.} The step history of $\pazocal{C}_1'$ ends with $(32)(2)(21)$.

By Lemma \ref{second step history}, $\pazocal{E}$ has step history $(1)$. But $W_s$ has empty $Q_0R_0$-sector, so that Lemma \ref{multiply one letter} implies that $\|W_t\|\geq\|W_s\|$, contradicting the assumption that $\|W_s\|>c_2\|W_t\|$.

\underline{Case 2.} The step history of $\pazocal{C}_1'$ ends with $(12)(2)(23)$.

By Lemmas \ref{first step history} and \ref{(A) and (B)}, the step history of $\pazocal{E}$ is then a prefix of $(3)(34)(4)$. Let $W_s\to\dots\to W_x$ be the maximal subcomputation with step history $(3)$.

If $x<t$, then $W_{x+1}\to\dots\to W_t$ is a computation with step history $(4)$ and $W_{x+1}$ is tame. So, Lemma \ref{M_4 bounds} implies that $\|W_{x+1}\|\leq c_1\|W_t\|$, meaning $\|W_s\|>\frac{c_2}{c_1}\|W_x\|>c_1\|W_x\|$ by taking $c_2>c_1^2$.

But $W_s\to\dots\to W_x$ is a computation of step history $(3)$ and $W_s$ has empty historical $R_iP_{i+1}$-sectors, so that $\|W_s\|\leq\|W_x\|$ giving a contradiction.

\underline{Case 3.} The step history of $\pazocal{C}_1'$ ends with $(34)(4)(45)$.

By Lemmas \ref{first step history} and \ref{(A) and (B)}, the step history of $\pazocal{E}$ is then a prefix of $(5)(54)(4)$. Let $W_s\to\dots\to W_x$ be the maximal subcomputation with step history $(5)$. As in the previous case, we then have $\|W_s\|>c_1\|W_x\|$. In particular, letting $V_s\to\dots\to V_x$ be the restriction of the subcomputation to a historical $R_iP_{i+1}$-sector, $\|V_s\|>c_1\|V_x\|$.

By Lemma \ref{multiply one letter}, $x-s\leq\|V_s\|+\|V_x\|<2\|V_s\|$. Meanwhile, if $t>x$, then the subcomputation $W_{x+1}\to\dots\to W_t$ is a computation of $\overline{\textbf{M}}_4$ with $W_{x+1}\equiv A_4(H_1)$ for some $H_1\in F(\Phi^+)$. Then, Lemma \ref{M_3 accepting computations} implies that $t-x\leq c_1|U_t|_a$ where $U_t$ is any admissible subword of $W_t$ with base $Q_iR_iP_{i+1}Q_{i+1}$ where $R_iP_{i+1}$ is historical. Note that $|W_s|_a=\kappa\|V_s\|$ where $\kappa$ is the number of historical $R_iP_{i+1}$-sectors and their mirror images in the standard base, while $|W_t|_a\geq\kappa|U_t|_a$. So, $\|V_s\|>c_2|U_t|_a$, implying $t-x<\|V_s\|$. Combining the inequalities above, we get $\|H_2\|<3\|V_s\|$. 

Now consider the subcomputation $W_l\to\dots\to W_s$ with step history $(34)(4)(45)$. Then each of the $4k+1$ subcomputations of $W_{l+1}\to\dots\to W_{s-1}$ between transition rules have length at least $\|V_s\|$, so that taking $k$ sufficiently large yields $\|H_1\|\geq(4k+1)\|V_s\|>200\|H_2\|$.

\underline{Case 4.} The step history of $\pazocal{C}_1'$ ends with $(54)(4)(43)$.

Lemmas \ref{first step history} and \ref{(A) and (B)} imply that the step history of $\pazocal{E}$ is a prefix of either $(3)(34)(4)$ or $(3)(32)(2)$. If it is a prefix of $(3)(34)(4)$, then an alagous argument to Case 3 applies. So, assume the step history is $(3)(32)(2)$.

Let $W_s\to\dots\to W_x$ be the maximal subcomputation with step history $(3)$. Then, we have $\|W_s\|>c_1\|W_x\|$, so that letting $V_s\to\dots\to V_x$ be the restriction of this subcomputation to a historical $R_iP_{i+1}$-sector, $\|V_s\|>c_1\|V_x\|$. So, $x-s\leq 2\|V_s\|$. 

If $t>x$, then $W_{x+1}\to\dots\to W_t$ is a computation of (two parallel copies of) $\textbf{LR}_k$ on the subword $R_0P_1Q_1$ (and its mirror image). Let $U_{x+1}\to\dots\to U_t$ be the restriction of this subcomputation to the base $R_0P_1Q_1$.

Let $w$ be the projection of $W_s$ to the input sector. Then $|U_{x+1}|_a=\|w\|$. By Lemma \ref{primitive computations}, there exists $y$ with $x+1\leq y\leq t$ such that $|U_{x+1}|_a=\dots=|U_y|_a$ and $|U_y|_a<|U_{y+1}|_a<\dots<|U_t|_a$ (if $t>y$). Further, the same lemma implies that $y-x-1\leq k(2\|w\|+1)$ and $t-y\leq|U_t|_a$.

Combining, this then implies that $$\|H_2\|\leq2\|V_s\|+k(2\|w\|+1)+|U_t|_a+1\leq2\|V_s\|+(2k+1)|U_t|_a+(k+1)$$
Note that $V_s$ is the copy of the history of a computation of $\textbf{M}_1$ accepting $w$. So, since each rule changes the size of any sector by at most two, $2\|V_s\|\geq\|w\|$, i.e $|W_s|_a\leq2(N+1)\|V_s\|$. So, since $\|W_s\|>c_2\|W_t\|$, we have $\|V_s\|\geq c_1|U_t|_a$ by taking $c_2>>c_1>>N$. So, taking $c_1>>k$, it then follows that $\|H_2\|\leq3\|V_s\|+(k+1)$.

But the subcomputation $W_l\to\dots\to W_s$ with step history $(54)(4)(43)$ must have length at least $(2k+1)\|V_s\|$. But since $\|W_s\|>c_2\|W_t\|$, we have $N(\|V_s\|+1)+2\|w\|>c_2(2|U_t|_a+N)$. Since $|U_t|_a\geq\|w\|$, it then follows that $\|V_s\|>c_2-1>k(k+1)$ since $c_2>>k$. Hence, $\|H_1\|\geq2k\|V_s\|+k(k+1)>200\|H_2\|$ by taking $k$ sufficiently large.

\end{proof}

%\smallskip
%
%%%%%%%%%%%%%%%%%%%%%%%%%%%%%%%%%%%%%%%%%%%%%%%%%%%%%%%%%%%%%%%%%%
%
%\subsection{One-step computations of $\textbf{M}_5$} \

\begin{lemma} \label{M_5 one-step}

\textit{(Lemma 4.14 of [26])} Let $W_0$ be an accepted configuration and $\pazocal{C}:W_0\to\dots\to W_t$ be a reduced computation with step history of length 1. Suppose $|W_j|_a>3|W_0|_a$ for some $1\leq j\leq t$. Then there is a sector $QQ'$ such that a state letter from $Q$ or from $Q'$ inserts an $a$-letter increasing the length of the sector for each rule of the subcomputation $W_j\to\dots\to W_t$.

\end{lemma}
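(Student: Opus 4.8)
The step history of $\pazocal{C}$ has length one, so $\pazocal{C}$ is a reduced computation of a single one of the machines $\textbf{M}_5(1),\dots,\textbf{M}_5(5)$, and since $W_0$ is a configuration the whole of $\pazocal{C}$ runs over the standard base. Being accepted, $W_0$ lies (backwards) on an accepting computation terminating at the symmetric configuration $A_5$, so by the mirror symmetry of the rules of $\textbf{M}_5$ the two halves of $W_0$ — hence of every $W_i$ — carry equal $a$-length in corresponding sectors; applying the same reasoning to $\pazocal{C}^{-1}$ shows $W_t$ is accepted as well. The plan is then as follows. Every sector locked by all rules of the relevant $\textbf{M}_5(i)$ keeps constant $a$-length along $\pazocal{C}$, so $|W_j|_a>3|W_0|_a$ forces, by a pigeonhole argument (using the mirror symmetry to treat a sector and its mirror together), the existence of a single unlocked sector $QQ'$ — or of a single coupled pair of sectors sharing a common running state letter — whose $a$-length in $W_j$ is more than, and in particular strictly more than, three times its $a$-length in $W_0$. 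I would then show that the $a$-length of $QQ'$ along $\pazocal{C}$ is unimodal (non-increasing, then non-decreasing, with every step in the increasing portion inserting one $a$-letter adjacent to an endpoint of $QQ'$); since then $j$ already lies strictly past the minimum, every rule of $W_j\to\dots\to W_t$ inserts an $a$-letter via $Q$ or $Q'$ lengthening $QQ'$, which is the assertion.

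For $\textbf{M}_5(1)$, $\textbf{M}_5(3)$, and $\textbf{M}_5(5)$ this unimodality is immediate: the positive rules of each are indexed by a fixed alphabet ($\pazocal{A}$, respectively $\Phi^+$), and restricted to any unlocked sector every rule multiplies that sector by one letter on a fixed side, with distinct rules corresponding to distinct letters, so the restriction of $\pazocal{C}$ to the pigeonholed sector meets the hypotheses of Lemma \ref{multiply one letter}, whose parts $(c)$ and $(d)$ give exactly the V-shape and the monotone growth past time $j$. For $\textbf{M}_5(2)$, which runs as $\textbf{LR}_k$ on $R_0P_1Q_1$ and symmetrically on its mirror, the pigeonholed coupled pair is $\{R_0P_1,P_1Q_1\}$, whose total $a$-length is the $a$-length of the underlying $\textbf{LR}_k$-configuration; the $\textbf{LR}_k$-analogue of parts (1) and (2) of Lemma \ref{primitive computations} makes this total unimodal, so past time $j$ no cancellation occurs and the running state letter index is constant, whence every rule is some $\zeta^{(m)}(a)$ inserting one $a$-letter into each of $R_0P_1$ and $P_1Q_1$ adjacent to the $P_1$-letter; take $QQ'=R_0P_1$.

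The substantive case is $\textbf{M}_5(4)=\overline{\textbf{M}}_4$. Here I would descend one further level and analyse the step history of the underlying $\textbf{M}_3$-computation over the submachines $\textbf{M}_3(1),\dots,\textbf{M}_3(4k+3)$, the mirror half being symmetric. The submachines $\textbf{M}_3(i)$ with $i$ odd are parallel copies of $\textbf{RL}$ or $\textbf{LR}$ in the historical sectors, again governed by Lemma \ref{primitive computations}; those with $i$ even are copies of $\overline{\textbf{M}}_2$, whose historical $Q_iR_i$-, $R_iP_{i+1}$-, and $P_{i+1}Q_{i+1}$-sectors multiply by one letter in each of the left and right history alphabets (Lemmas \ref{one alphabet historical words} and \ref{multiply two letters}), while their working sectors run as $\textbf{M}_1$ and record the running history verbatim into those historical sectors. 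The short, uncontrolled $\textbf{M}_1$-pieces would be bounded using Lemma \ref{M_3 standard base}, its $\textbf{M}_4$-analogue Lemma \ref{M_4 bounds}, and Lemma \ref{M_3 accepting computations}, and the fact that $W_0$ and $W_t$ are accepted would be used to pin the internal $\textbf{M}_3$-computation between configurations of a controlled (tame) form, which restricts its admissible step history. One then reduces to the case that the pigeonholed sector is a historical sector — where the monotonicity lemmas above apply verbatim — or is a working sector whose sustained growth is shadowed, step for step, by monotone lengthening of the history alphabets in the adjacent historical sector, which may then be taken as $QQ'$ instead.

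The hard part will be exactly this last point in the $\overline{\textbf{M}}_4$-case: a reduced computation of $\overline{\textbf{M}}_4$ can run the black-box machine $\textbf{M}_1$ in its working sectors, and $\textbf{M}_1$ carries no intrinsic monotonicity. What I would need to pin down carefully is that growth past $3|W_0|_a$ cannot be hidden in a working sector — since every full cycle of $\textbf{M}_3$ returns the configuration to a copy of itself (Lemma \ref{M_3 standard base}$(a)$), net growth over $W_j\to\dots\to W_t$ is produced inside the $\textbf{M}_3(i)$-pieces, and inside the $\overline{\textbf{M}}_2$-pieces working-sector growth is accompanied step for step by monotone lengthening of the adjacent historical sectors, which supplies the required fixed sector $QQ'$.
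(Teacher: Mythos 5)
The paper itself gives no proof of this lemma (it cites Lemma 4.14 of [26]), so I am evaluating your argument on its own terms.

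Your handling of the machines $\textbf{M}_5(1)$, $\textbf{M}_5(2)$, $\textbf{M}_5(3)$, and $\textbf{M}_5(5)$ is correct: in each of these the unlocked sectors (or coupled sector pair) receive one letter per rule from a free alphabet on a fixed side, the pigeonhole on $|W_j|_a>3|W_0|_a$ singles out one of them, and Lemma~\ref{multiply one letter}$(c)$, Lemma~\ref{multiply two letters}$(a)$, or the $\textbf{LR}_k$-analogue of Lemma~\ref{primitive computations}(1) then gives exactly the unimodality that makes every rule past time $j$ an insertion. (You should note explicitly that in $\textbf{M}_5(2)$ the connecting rules $\zeta^{(m,m+1)}$ leave $a$-length fixed, so the strict increase past $j$ already rules them out of the tail — this is needed for the conclusion that \emph{every} rule of $W_j\to\dots\to W_t$ inserts.)

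The $\overline{\textbf{M}}_4$ case is where the argument does not close, and the specific claim you lean on — that ``inside the $\overline{\textbf{M}}_2$-pieces working-sector growth is accompanied step for step by monotone lengthening of the adjacent historical sectors'' — is not correct. A historical $R_iP_{i+1}$-sector of $\overline{\textbf{M}}_2$ receives one left-alphabet letter and one right-alphabet letter per rule, which is exactly the setting of Lemma~\ref{multiply two letters}; that lemma gives a flat-bottom V, not monotonicity. When $W_0$ is an accepted configuration lying mid-way through an $\overline{\textbf{M}}_4$-phase, the historical sectors generically contain both left- and right-alphabet letters, so the computation can spend many steps on the descending slope of that V while the working sectors (running the black-box $\textbf{M}_1$) grow. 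Your pigeonhole can then select a working sector, and nothing in the sketch converts that selection into a sector with a usable monotonicity lemma. A second, related issue: Lemma~\ref{M_3 standard base}(a) controls only step histories of the specific form $\chi(i,i+1)H'\chi(i+4,i+5)$, which a generic step-$(4)$ computation of $\textbf{M}_5$ need not have, so the ``full cycle'' reduction is not available in general; the $\textbf{M}_3$-step history is monotone in the submachine index by Lemma~\ref{M_3 historical sector}, but that alone does not let you concatenate the V-shapes across $\textbf{M}_3$-phase boundaries into a single V. What is really needed is a careful use of the hypothesis that $W_0$ is \emph{accepted} to pin down the historical-sector contents (and hence the location of the V-minima) so that the $3\times$ growth forces $j$ past all of them; as written, the sketch does not supply that step.
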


\medskip

%%%%%%%%%%%%%%%%%%%%%%%%%%%%%%%%%%%%%%%%%%%%%%%%%%%%%%%%%%%%%%%%%

\subsection{The machines $\textbf{M}_{6,1}$ and $\textbf{M}_{6,2}$} \

Similar to the construction in [19], the next auxiliary machine, $\textbf{M}_{6,1}$, is the \textit{cyclic} machine that functions as the composition of the machine $\textbf{M}_5$ with itself a large number of times, specifically the parameter $L$.

Letting $\{t(i)\}B_4(i)$ be a copy of the standard base of $\textbf{M}_5$ for each $1\leq i\leq L$, the standard base of $\textbf{M}_{6,1}$ is
$$\{t(1)\}B_4(1)\{t(2)\}B_4(2)\{t(3)\}\dots\{t(L)\}B_4(L)$$ 
However, we also assign a tape alphabet to the space after the final letter $R_s(L)$ of $B_4(L)$, which corresponds to the $R_s(L)\{t(1)\}$-sector. As such, it is possible for an admissible word of $\textbf{M}_{6,1}$ to have base $$\{t(1)\}B_4(1)\dots\{t(L)\}B_4(L)\{t(1)\}B_4(1)\{t(2)\}$$ i.e it essentially `wraps around' the standard base. This is the defining property of a `cyclic' machine.

The tape alphabet of any sector formed by a one-letter part of the standard base (including the $R_s(L)\{t(1)\}$-sector) is defined as empty in this construction. The tape alphabets of all other sectors arise from $\textbf{M}_5$ in the natural way. 

The rules of $\textbf{M}_{6,1}$ are in correspondence with those of $\textbf{M}_5$, with each rule operating in parallel on each of the copies of the standard base of $\textbf{M}_5$ in the same way as its corresponding rule.

The copies of the input sector (respectively the historical sectors) are taken as the input (respectively historical) sectors of $\textbf{M}_{6,1}$.

Naturally, there arise submachines $\textbf{M}_{6,1}(1),\dots,\textbf{M}_{6,1}(5)$ corresponding to the submachines of $\textbf{M}_5$. As such, the definition of step history and controlled history extend to computations of $\textbf{M}_{6,1}$.

The lemmas of Section 4.9 have natural analogues to computations in $\textbf{M}_{6,1}$. For example, letting $I_6(w)$ be the start configuration obtained by concatenating $L$ copies of $I_5(w)$, the following is the obvous analogue of Lemma \ref{M_5 language}:

\begin{lemma} \label{M_{6,1} language}

For $w\in F(\pazocal{A})$, $I_6(w)$ is accepted by $\textbf{M}_{6,1}$ if and only if $w\in\pazocal{L}$.

\end{lemma}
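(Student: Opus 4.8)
The plan is to prove Lemma~\ref{M_{6,1} language} by transferring the corresponding statement for $\textbf{M}_5$ (Lemma~\ref{M_5 language}) across the parallel-composition construction, using the fact that every rule of $\textbf{M}_{6,1}$ acts on each of the $L$ copies of the standard base of $\textbf{M}_5$ exactly as its counterpart in $\textbf{M}_5$, and that no rule involves the dummy one-letter sectors joining consecutive copies.

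First I would prove the easy direction: if $w\in\pazocal{L}$, then by Lemma~\ref{M_5 language}(1) the configuration $I_5(w)$ is accepted by $\textbf{M}_5$ via some computation $\pazocal{C}_5: I_5(w)\to\dots\to A_5$ with history $H$. Since each rule $\theta$ of $\textbf{M}_{6,1}$ executes the corresponding rule of $\textbf{M}_5$ simultaneously in every copy $B_4(i)$ (and the bridging sectors $R_s(j)\{t(j+1)\}$ are always empty and locked), the history $H$ applied to $I_6(w)=I_5(w)\cdots I_5(w)$ ($L$ copies, with the extra $t(i)$'s wedged in) produces the configuration that is the $L$-fold concatenation of $A_5$; this is by definition the accept configuration of $\textbf{M}_{6,1}$. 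Hence $I_6(w)$ is accepted. I would note here that one must check the step-history bookkeeping is consistent — since all copies run in lockstep, the single history $H$ is simultaneously a valid history for each copy, so no conflict arises.

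For the converse, suppose $I_6(w)$ is accepted by $\textbf{M}_{6,1}$ via a computation with history $H'$. Restricting this computation to any single copy $\{t(i)\}B_4(i)$ of the standard base of $\textbf{M}_5$ — which is a legitimate restriction because $\textbf{M}_{6,1}$ does no work across the dummy sectors separating the copies — yields a computation of $\textbf{M}_5$ with history $H'$, initial configuration $I_5(w)$, and final configuration $A_5$ (the $i$-th block of the accept configuration of $\textbf{M}_{6,1}$ is precisely the accept configuration $A_5$ of $\textbf{M}_5$). Thus $I_5(w)$ is accepted by $\textbf{M}_5$, and Lemma~\ref{M_5 language}(2) gives $w\in\pazocal{L}$ (with $w_1\equiv w_2\equiv w$, which is automatic here since $I_6(w)$ is built from $I_5(w)=I_5(w,w)$).

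The only subtlety — and the step I would be most careful about — is verifying that restriction-to-one-block and simultaneous-execution are genuinely inverse operations at the level of admissible words and rule applications: i.e., that an admissible word of $\textbf{M}_{6,1}$ in the standard base really is the concatenation of $L$ admissible words of $\textbf{M}_5$ (each prefixed by its $t(i)$), that applying a rule of $\textbf{M}_{6,1}$ and then restricting equals restricting and then applying the corresponding rule of $\textbf{M}_5$, and that a reduced computation stays reduced under restriction. These are exactly the same parallel-work verifications carried out for $\textbf{M}_3$, $\textbf{M}_4$, and the primitive machines earlier in the section, so I would simply invoke that the construction of $\textbf{M}_{6,1}$ from $\textbf{M}_5$ is ``parallel work in the sense already described'' and cite the analogous arguments in~[19] rather than redo them; the heart of the matter is still Lemma~\ref{M_5 language}.
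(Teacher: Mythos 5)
Your proof is correct and matches the paper's intent: the paper states this lemma without proof as the ``obvious analogue'' of Lemma~\ref{M_5 language}, and the justification it has in mind is exactly your argument of running a history in lockstep on all $L$ copies and, conversely, restricting an accepting computation to a single block $\{t(i)\}B_4(i)$ (the same restriction/extension technique the paper uses explicitly later, e.g.\ in Lemmas~\ref{extending one-machine} and~\ref{M accepted configurations}).
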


\smallskip

The machine $\textbf{M}_{6,2}$, unique to this construction, has hardware that is a copy of that of $\textbf{M}_{6,1}$. The rules of the machine are in correspondence with those of $\textbf{M}_{6,1}$ and operate analogously except for one modification: Each rule locks the first input sector, i.e the $Q_0(1)R_0(1)$-sector.

The definitions of $\textbf{M}_{6,1}$ extend in the obvious way to $\textbf{M}_{6,2}$, and the lemmas of Section 4.9 again have natural analogues in this machine. For example, letting $J_6(w)$ be the configuration obtained from $I_6(w)$ by erasing the copy of $w$ in the first input sector, the following is the natural analogue of Lemma \ref{M_5 language} (and Lemma \ref{M_{6,1} language}):

\begin{lemma} \label{M_{6,2} language}

For $w\in F(\pazocal{A})$, $J_6(w)$ is accepted by $\textbf{M}_{6,2}$ if and only if $w\in\pazocal{L}$.

\end{lemma}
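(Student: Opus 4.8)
The plan is to mirror the proof of Lemma~\ref{M_5 language}, adapting it to account for the single modification in passing from $\textbf{M}_{6,1}$ to $\textbf{M}_{6,2}$, namely that every rule of $\textbf{M}_{6,2}$ locks the first input sector $Q_0(1)R_0(1)$. The key observation is that $J_6(w)$ is obtained from $I_6(w)$ precisely by deleting the copy of $w$ in that first input sector, which is exactly the sector that $\textbf{M}_{6,2}$ never touches. So a computation of $\textbf{M}_{6,2}$ starting from $J_6(w)$ should correspond bijectively to a computation of $\textbf{M}_{6,1}$ starting from $I_6(w)$ in which that first input copy simply sits there inertly.

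For the ``if'' direction, suppose $w=u^n\in\pazocal{L}$. By Lemma~\ref{M_{6,1} language}, $I_6(w)$ is accepted by $\textbf{M}_{6,1}$; fix an accepting computation. Since the rules of $\textbf{M}_{6,1}(1)$ acting on the first block $B_4(1)$ are the ones that erase the first input sector (via Lemma~\ref{multiply one letter}-type work, as in the proof of Lemma~\ref{M_5 language}(1)), and in $\textbf{M}_{6,2}$ these are replaced by rules locking that sector while acting identically on the other $L-1$ blocks, I would run the analogue of the accepting computation of Lemma~\ref{M_5 language}(1) in each of the other $L-1$ copies simultaneously, with the first block's input sector held fixed at the empty word throughout. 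Concretely: the $\textbf{M}_{6,1}$-accepting computation for $I_6(w)$ can be arranged (by Lemmas~\ref{multiply one letter}, \ref{primitive computations}(3), and \ref{M_4 language}(1)) so that on each block it performs the five-phase routine of Lemma~\ref{M_5 language}(1); removing the first-block input letters just means the first block starts one phase ``ahead'' in the empty configuration, which is still $\theta(12)$-admissible, $\theta(23)$-admissible, etc., so the same history drives all $L$ blocks to the accept configuration. Hence $J_6(w)$ is accepted by $\textbf{M}_{6,2}$.

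For the ``only if'' direction, suppose $J_6(w)$ is accepted by $\textbf{M}_{6,2}$. Look at the restriction of the accepting computation to any single block $B_4(i)$ with $i\geq2$: since the rules of $\textbf{M}_{6,2}$ act on these blocks exactly as the rules of $\textbf{M}_5$ act on $\{t\}B_4$, this restriction is an accepting computation of $\textbf{M}_5$ with initial configuration the $i$-th copy of $I_5(w)$. Then Lemma~\ref{M_5 language}(2) gives $w\in\pazocal{L}$ directly. (One must check $L\geq 2$, which holds by the parameter ordering, so such a block $i\geq2$ exists.) This completes both directions.

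The main obstacle I anticipate is the bookkeeping in the ``if'' direction: verifying that the accepting computation of $\textbf{M}_{6,1}$ for $I_6(w)$ really can be chosen so that its restriction to the first block acts only in sectors other than $Q_0(1)R_0(1)$ during the $\textbf{M}_{6,2}(1)$-phase --- in other words, that locking the first input sector does not obstruct the $\textbf{M}_5(1)$-phase (which is the phase whose entire job in $\textbf{M}_5$ is to erase that sector). The resolution is that the $\textbf{M}_5(1)$-phase applied to an already-empty input sector is simply the empty subcomputation (by Lemma~\ref{multiply one letter}$(a)$), and the transition rule $\theta(12)$ locks the first input sector anyway, so the first block is $\theta(12)$-admissible with empty input from the start; from that point on all phases of $\textbf{M}_5(2)$--$\textbf{M}_5(5)$ leave the input sector locked, so the first block's computation proceeds verbatim as in the other blocks. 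Everything else is a routine transcription of the proof of Lemma~\ref{M_5 language}, using the ``obvious analogues'' of the Section~4.9 lemmas already granted to $\textbf{M}_{6,2}$.
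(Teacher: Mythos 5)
Your overall route is the intended one, and your ``only if'' direction is correct: restricting an accepting computation of $\textbf{M}_{6,2}$ to a block $\{t(i)\}B_4(i)$ with $i\geq2$ (which exists since $L\geq2$) yields an accepting computation of $\textbf{M}_5$ starting from a copy of $I_5(w,w)$, and Lemma~\ref{M_5 language}(2) gives $w\in\pazocal{L}$.

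The gap is in your verification of the ``if'' direction. You claim the first block is ``one phase ahead'' and that the $\textbf{M}_5(1)$-phase acts on it as the empty subcomputation; that is not how $\textbf{M}_{6,2}$ works, and if it were, your construction would break. The modification defining $\textbf{M}_{6,2}$ only locks the $Q_0(1)R_0(1)$-sector: the phase-$(1)$ rules still write the copy of $a$ on the left of the $R_0(1)P_1(1)$-sector and still act symmetrically on the mirror copy of $B_3$ inside $B_4(1)$ (recall that in $J_6(w)$ the mirror copy of the first input sector still carries a copy of $w^{-1}$). This is precisely what saves the argument: applying to $J_6(u^n)$ the same history that accepts $I_6(u^n)$ in $\textbf{M}_{6,1}$, the locked (empty) sector stays empty, while $u^n$ is written into the $R_0(1)P_1(1)$-sector and the copy of $(u^n)^{-1}$ is erased from the mirror copy, so after $\theta(12)$ the first block coincides with all the others and phases $(2)$--$(5)$ run verbatim in parallel. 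Under your picture, where the first block's $R_0(1)P_1(1)$-sector stays empty and its mirror input copy is untouched, the same history would not even be applicable: the connecting rules of the $\textbf{LR}_k$-phase lock the $R_0P_1$-sector, and in block $1$ that sector would be nonempty at the required moments (the $\zeta$-rules applied to an empty sector accumulate letters instead of cancelling them), and the leftover $(u^n)^{-1}$ in the mirror copy would prevent reaching the accept configuration. The correct verification is a step-by-step comparison: the $\textbf{M}_{6,2}$-run on $J_6(u^n)$ agrees with the $\textbf{M}_{6,1}$-run on $I_6(u^n)$ in every sector except $Q_0(1)R_0(1)$, which is empty throughout, so each rule remains applicable and the run ends at the accept configuration. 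With that substitution your proof is complete.
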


\medskip

%%%%%%%%%%%%%%%%%%%%%%%%%%%%%%%%%%%%%%%%%%%%%%%%%%%%%%%%%%%%%%%%%

\section{The machine $\textbf{M}$}

\subsection{Definition of the machine} \

The final step of this construction is to combinine the machines $\textbf{M}_{6,1}$ and $\textbf{M}_{6,2}$ to create the cyclic machine $\textbf{M}$ that is sufficient for the proof of Theorem \ref{main theorem}.

Similar to $\textbf{M}_{6,1}$ and $\textbf{M}_{6,2}$, the standard base of $\textbf{M}$ is of the form $\{t(1)\}B_4(1)\dots\{t(L)\}B_4(L)$, with the same sectors considered input, historical, etc.  However, each of the parts making up $B_4(i)$ consists of more state letters than its counterparts in $\textbf{M}_{6,1}$ and $\textbf{M}_{6,2}$. 

To be precise, any part of the standard base that is not a one-letter part $\{t(j)\}$ consists of a copy of the corresponding part of the standard base of $\textbf{M}_{6,1}$, a (disjoint) copy of the corresponding part of the standard base of $\textbf{M}_{6,2}$, and two new letters, $q(s)$ and $q(a)$, which function as the part's start and end letters, respectively. The accept configuration of $\textbf{M}$, denoted $W_{ac}$, is taken to be the concatenation of the end letters of the parts of the standard base. So, taking $B_4(j,a)$ as the concatenation of the end letters of the parts of $B_4(j)$, we have:
$$W_{ac}\equiv t(1)B_4(1,a)t(2)\dots t(L)B_4(L,a)$$

The rules $\Theta$ of $\textbf{M}$ are partitioned into two disjoint sets, $\Theta_1$ and $\Theta_2$. The positive rules of each consist of two connecting rules and a set of `working' rules. Unlike in previous constructions, though, these two sets are not connected in order to force them to run sequentially, rather in order to force them to operate `one or the other'.

The rules of $\Theta_1^+$ are defined as follows:

\begin{addmargin}[1em]{0em}

$\bullet$ The transition rule $\theta(s)_1$ locks all sectors other than the input sectors and their mirror copies. It switches the state letters from the start state of $\textbf{M}$ to the copy of the start state of $\textbf{M}_{6,1}$.

$\bullet$ The `working' rules of $\Theta_1^+$ operate exactly as the positive rules of the machine $\textbf{M}_{6,1}$.

$\bullet$ The transition rule $\theta(a)_1$ locks all sectors and switches the state letters from the copies of the end state letters of $\textbf{M}_{6,1}$ to the end state letters of $\textbf{M}$.

\end{addmargin}

The rules of $\Theta_2^+$ are defined as follows:

\begin{addmargin}[1em]{0em}

$\bullet$ The transition rule $\theta(s)_2$ locks each of the sectors locked by $\theta(s)_1$, but also locks the $Q_0(1)R_0(1)$-sector. It switches the state letters from the start state of $\textbf{M}$ to the copy of the start state of $\textbf{M}_{6,2}$.

$\bullet$ The `working' rules of $\Theta_2^+$ operate exactly as the positive rules of the machine $\textbf{M}_{6,2}$.

$\bullet$ The transition rule $\theta(a)_2$ locks all sectors and switches the state letters from the copies of the end state letters of $\textbf{M}_{6,2}$ to the end state letters of $\textbf{M}$.

\end{addmargin}

By the definition of the rules, one might infer that the first input sector $Q_0(1)R_0(1)$ is of particular significance. Hence, we refer to it as the \textit{`special' input sector}.

For $w\in F(\pazocal{A})$, the natural copy of $I_6(w)$ (respectively $J_6(w)$) in the hardware of this machine is $\theta(s)_1^{-1}$-admissible (respectively $\theta(s)_2^{-1}$-admissible). We denote $I(w)$ (respectively $J(w)$) as the start input configuration satisfying $I(w)\equiv I_6(w)\cdot\theta(s)_1^{-1}$ (respectively $J(w)\equiv J_6(w)\cdot\theta(s)_2^{-1}$). Note that both $I(w)$ and $J(w)$ are $\theta(s)_1$-admissible, while $I(w)$ is not $\theta(s)_2$-admissible for nonempty $w$.

\medskip

%%%%%%%%%%%%%%%%%%%%%%%%%%%%%%%%%%%%%%%%%%%%%%%%%%%%%%%%%%%%%%%%%

\subsection{Standard computations of $\textbf{M}$} \

Next, we adapt the definition of the step history to computations of $\textbf{M}$. To this end, let the letters $(s)_i^{\pm1}$ and $(a)_i^{\pm1}$ represent the transition rules of $\Theta_i$ and add a subscript to each letter of the step history of a subcomputation acting as $\textbf{M}_{6,i}$.

So, an example of a step history of a reduced computation of $\textbf{M}$ is $(s)_1(1)_1(12)_1(2)_1$, while a general step history is some concatenation of the letters 
$$\{(s)_i^{\pm1},(a)_i^{\pm1},(1)_i,(2)_i,(3)_i,(4)_i,(5)_i,(12)_i,(23)_i,(34)_i,(45)_i,(21)_i,(32)_i,(43)_i,(54)_i; \ i=1,2\}$$
A reduced computation is called a \textit{one-machine computation} if every letter of its step history contains the same index; if this index is $i$, then it is called a \textit{one-machine computation in the $i$-th machine}. For example, a computation with step history $(s)_1(1)_1(12)_1(2)_1$ is a one-machine computation in the first machine, while a computation with step history $(1)_1(s)_1^{-1}(s)_2(1)_2$ is not a one-machine computation. A computation that is not one-machine is called \textit{multi-machine}.

As in Section 4.9, some subwords clearly cannot appear in the step history of a reduced computation, while other impossibilities are less obvious. As $\textbf{M}$ acts as parallel copies of $\textbf{M}_5$ throughout a one-machine computation, though, Lemmas \ref{first step history} and \ref{second step history} (with the same subscript added to each letter) apply. 

\begin{lemma} \label{turn}

Let $\pazocal{C}:W_0\to W_1\to W_2$ be the reduced computation with step history $((s)_1^{-1}(s)_2)^{\pm1}$ and base $(Q_0(1)R_0(1))^{\pm1}$, that is, the `special' input sector (or its inverse). Then $|W_i|_a=0$ for $0\leq i\leq 2$. 

\end{lemma}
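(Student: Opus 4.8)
The plan is to exploit the single fact that the rule $\theta(s)_2$ locks the ``special'' input sector $Q_0(1)R_0(1)$; everything else is bookkeeping. By replacing $\pazocal{C}$ with its inverse computation $W_2\to W_1\to W_0$ if necessary (the domain sets satisfy $Y_j(\theta^{-1})=Y_j(\theta)$, so ``locking'' is preserved under inversion) and noting that the base $R_0(1)^{-1}Q_0(1)^{-1}$ determines the same sector as $Q_0(1)R_0(1)$, it suffices to treat the case where the step history is $(s)_1^{-1}(s)_2$ and the base is $Q_0(1)R_0(1)$.

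In that case the history of $\pazocal{C}$ is $H\equiv\theta(s)_1^{-1}\theta(s)_2$, so $W_1\equiv W_0\cdot\theta(s)_1^{-1}$ and $W_2\equiv W_1\cdot\theta(s)_2$. Since the base of $\pazocal{C}$ consists of the single sector $Q_0(1)R_0(1)$, each $W_i$ is of the form $q_iu_ir_i$ with $q_i\in Q_0(1)$, $r_i\in R_0(1)$, $u_i\in F(Y)$, and $|W_i|_a=\|u_i\|$. Now $W_1$ is $\theta(s)_2$-admissible, being the source of the application $W_1\cdot\theta(s)_2$; as $\theta(s)_2$ locks the $Q_0(1)R_0(1)$-sector, its domain in that sector is empty, and hence $W_1$ can have no tape letters, i.e. $u_1$ is empty and $|W_1|_a=0$.

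It remains to pass this back to $W_0$ and forward to $W_2$, which is immediate: $\theta(s)_1$ and $\theta(s)_2$ are transition rules, so each of their parts has the form $q\to q'$ and inserts no $a$-letter; therefore applying $\theta(s)_1^{\pm1}$ or $\theta(s)_2^{\pm1}$ leaves the $a$-length of every sector unchanged, giving $|W_0|_a=|W_1|_a=|W_2|_a=0$. The only point that needs a moment's care — and the closest thing here to an obstacle — is the last one, namely confirming from the definition of $\textbf{M}$ that the transition rules $\theta(s)_i$ genuinely act only on state letters; once that is in hand the lemma follows with no computation.
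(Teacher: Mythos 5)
Your proposal is correct and is exactly the intended argument: the paper states Lemma \ref{turn} without proof precisely because it follows immediately from the fact that $\theta(s)_2$ (and hence $\theta(s)_2^{-1}$, since $Y_j(\theta^{-1})=Y_j(\theta)$) locks the `special' input sector, so the $\theta(s)_2^{\pm1}$-admissible word in the computation has empty sector, and the transition rules $\theta(s)_1^{\pm1},\theta(s)_2^{\pm1}$ only switch state letters, so the $a$-length stays $0$ throughout. Your reduction to the case of step history $(s)_1^{-1}(s)_2$ with base $Q_0(1)R_0(1)$ and your final check that the transition rules insert no $a$-letters are both sound.
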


\begin{lemma} \label{return to start}

If a one-machine computation $\pazocal{C}:W_0\to\dots\to W_t$ of $\textbf{M}$ in the standard base has step history of the from $(s)_ih_i(s)_i^{-1}$, then $W_0$ is either $I(u^n)$ or $J(u^n)$ for some $u^n\in\pazocal{L}$.

\end{lemma}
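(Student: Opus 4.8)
The plan is to study the portion $\pazocal{C}'$ of $\pazocal{C}$ lying strictly between the opening application of $\theta(s)_i$ and the closing application of $\theta(s)_i^{-1}$, i.e.\ the computation whose step history is $h_i$, to show that it is forced to climb all the way up through $\textbf{M}_{6,i}(2),\dots,\textbf{M}_{6,i}(5)$, and then to decode that ascent exactly as in the proof of Lemma \ref{M_5 language}(2). To set up: a one-machine computation is reduced by definition, so $\theta(s)_i$ and $\theta(s)_i^{-1}$ are not adjacent in the history, $h_i$ is nonempty, and $W_0,W_t$ are both in the start state of $\textbf{M}$. Put $W_1\equiv W_0\cdot\theta(s)_i$. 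The rule $\theta(s)_i$ locks every sector other than the $L$ input sectors and their mirror copies, and when $i=2$ it additionally locks the special input sector $Q_0(1)R_0(1)$; hence $W_0$ and $W_1$ have all sectors empty apart from the $L$ input sectors and their mirror copies (and the special input sector empty as well if $i=2$), and $W_1$ is in the start state of $\textbf{M}_{6,i}$, which is the state of $\textbf{M}_{6,i}(1)$ in each of the $L$ copies. The configuration $W_{t-1}$ preceding $\theta(s)_i^{-1}$ satisfies the same conditions, so $\pazocal{C}'\colon W_1\to\dots\to W_{t-1}$ is a nonempty reduced computation of $\textbf{M}_{6,i}$ in the standard base which begins and ends in the state of $\textbf{M}_{6,i}(1)$ with every non-input sector empty.

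Next I would pin down $h_i$. Since $\textbf{M}_{6,i}$ runs as $L$ synchronized copies of $\textbf{M}_5$ and the standard base contains historical $R_jP_{j+1}$-subwords, Lemmas \ref{first step history} and \ref{second step history} apply to the restriction of $\pazocal{C}'$ to any single copy and thereby constrain $h_i$. If $h_i$ never leaves $\textbf{M}_{6,i}(1)$, then restricting $\pazocal{C}'$ to a two-letter input-sector base $Q_0(j)R_0(j)$ puts us in the situation of Lemma \ref{multiply one letter} --- each rule multiplies this sector on the right by a distinct letter --- and the projection argument shows the content of that sector agrees at the two ends of $\pazocal{C}'$ (both $R_0(j)P_1(j)$-sectors being empty), so Lemma \ref{multiply one letter}(a) forces $\pazocal{C}'$ to be empty, a contradiction. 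Hence $h_i$ leaves $\textbf{M}_{6,i}(1)$; but then, starting from $(1)_i(12)_i(2)_i$, the impossibility of the step histories $(12)(2)(21)$, then $(23)(3)(32)$, then $(34)(4)(43)$ excludes in turn every way of turning back before $\textbf{M}_{6,i}(5)$ is reached, so $h_i$ has a prefix $(1)_i(12)_i(2)_i(23)_i(3)_i(34)_i(4)_i(45)_i$.

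Finally I would decode that ascent, restricting for a fixed copy $j$ the subcomputation of $\pazocal{C}'$ realizing the prefix $(1)_i(12)_i(2)_i(23)_i(3)_i(34)_i(4)_i$. As $\theta(12)_i$ locks the input sector, the $(1)_i$-block transfers the whole content $x_j$ of $Q_0(j)R_0(j)$ into $R_0(j)P_1(j)$; the $(2)_i$-block runs as $\textbf{LR}_k$ and preserves it by the analogue of Lemma \ref{LR_k analogue}; the $(3)_i$-block prepends a fixed word $H_1\in F(\Phi^+)$ in the left historical alphabets, so that after $\theta(34)_i$ the configuration in copy $j$ is $I_4(x_j,H_1)$; and the $(4)_i$-block is a computation of $\overline{\textbf{M}}_4$ from $I_4(x_j,H_1)$ ending in a $\theta(45)$-admissible configuration, whence $x_j\in\pazocal{L}$ by the $\overline{\textbf{M}}_4$-analogue of Lemma \ref{M_4 language}(2), exactly as in the proof of Lemma \ref{M_5 language}(2). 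Write $x_j=u_j^n$. Because every rule of $\pazocal{C}'$ acts simultaneously on all $L$ copies, the history of the $(1)_i$-block is, by Lemma \ref{multiply one letter}(a), a single word determined by $x_j$, so all the $x_j$ coincide, say $x_j\equiv u^n$, and by the mirror symmetry of the construction each mirror input sector of $W_0$ then holds $u^{-n}$. Reading this off, $W_0$ is $I(u^n)$ when $i=1$ and $J(u^n)$ when $i=2$ (where the special input sector stays empty throughout), with $u^n\in\pazocal{L}$ in either case.

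The main obstacle is the step-history bookkeeping together with checking that the configuration surviving at each transition rule has exactly the shape required to feed into the next machine's decoding lemma --- in particular that it is $I_4(\cdot,\cdot)$ just before $\textbf{M}_{6,i}(4)$ runs --- so that Lemmas \ref{first step history} and \ref{second step history} can be invoked at precisely the step where a return toward $\textbf{M}_{6,i}(1)$ would first be required. A secondary technical point is the synchronization across the $L$ copies, which is exactly what upgrades ``the processed input lies in $\pazocal{L}$'' to the exact identification of $W_0$ as $I(u^n)$ or $J(u^n)$.
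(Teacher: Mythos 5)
Your proposal is correct and follows essentially the same route as the paper's proof: $\theta(s)_i$-admissibility pins down the shape of $W_0$, Lemma \ref{multiply one letter}$(a)$ rules out staying in step $(1)_i$, Lemmas \ref{first step history} and \ref{second step history} force the prefix $(1)_i(12)_i(2)_i(23)_i(3)_i(34)_i(4)_i(45)_i$, and the decoding via Lemma \ref{M_4 language}(2) (as in the proof of Lemma \ref{M_5 language}(2)) together with the parallel action on all $L$ copies identifies $W_0$ as $I(u^n)$ or $J(u^n)$ with $u^n\in\pazocal{L}$. The only difference is cosmetic: the paper identifies $W_0\equiv I(w)$ or $J(w)$ from the $(1)_i(12)_i$ prefix first and then shows $w\in\pazocal{L}$, while you establish membership in $\pazocal{L}$ before the synchronization step.
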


\begin{proof}

Note that $W_0$ is $\theta(s)_i$-admissible, so that it is a start configuration with all sectors empty except perhaps for the input sectors and their mirror copies. Assume at least one of these sectors is not empty.

As $h_i$ cannot be empty, it must have prefix $(1)_i(12)_i$ by Lemma \ref{multiply one letter}$(a)$. 

If $i=1$, then Lemma \ref{multiply one letter}$(a)$ further implies that $W_0$ must have a copy of some word $w\in F(\pazocal{A})$ written in each input sector and a copy of its inverse written in each mirror copy of the input sector. In particular, this means $W_0\equiv I(w)$.

If $i=2$, then the analogous argument implies the same except for an empty `special' input sector, i.e $W_0\equiv J(w)$.

Lemmas \ref{first step history} and \ref{second step history} then imply that $h_i$ has prefix $(1)_i(12)_i(2)_i(23)_i(3)_i(34)_i(4)_i(45)_i$. As in the proof Lemma \ref{M_5 language}(2), this implies that $w\in\pazocal{L}$.

\end{proof}

\begin{lemma} \label{M language}

A start configuration $W$ is accepted by the machine $\textbf{M}$ if and only if $W\equiv I(u^n)$ or $W\equiv J(u^n)$ for some $u^n\in\pazocal{L}$.

\end{lemma}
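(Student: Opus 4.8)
The plan is to prove both implications by reducing to the language lemmas already established for the constituent machines $\textbf{M}_{6,1}$ and $\textbf{M}_{6,2}$, namely Lemma \ref{M_{6,1} language} and Lemma \ref{M_{6,2} language}. For the ``if'' direction, suppose $W\equiv I(u^n)$ with $u^n\in\pazocal{L}$. By definition $I(u^n)\equiv I_6(u^n)\cdot\theta(s)_1^{-1}$, so applying $\theta(s)_1$ returns to $I_6(u^n)$, now read as a configuration with the copies of the start state of $\textbf{M}_{6,1}$; since the working rules of $\Theta_1^+$ operate exactly as the positive rules of $\textbf{M}_{6,1}$, any accepting computation of $\textbf{M}_{6,1}$ for $I_6(u^n)$ (which exists by Lemma \ref{M_{6,1} language}) lifts verbatim to a computation of $\textbf{M}$ ending at a configuration with the copies of the end letters of $\textbf{M}_{6,1}$; then the transition rule $\theta(a)_1$ switches these to the end letters of $\textbf{M}$, i.e.\ to $W_{ac}$. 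The case $W\equiv J(u^n)$ is identical, using $\theta(s)_2$, the rules of $\Theta_2^+$, Lemma \ref{M_{6,2} language}, and $\theta(a)_2$.

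For the ``only if'' direction, suppose a start configuration $W$ is accepted, and fix an accepting computation $\pazocal{C}$. First I would argue that $\pazocal{C}$ is a one-machine computation: its first rule must be a transition rule $\theta(s)_1$ or $\theta(s)_2$ (these are the only rules applicable to a start configuration, since the working rules of each $\Theta_i$ require state letters of $\textbf{M}_{6,i}$, not the start letters $q(s)$), and similarly the last rule must be $\theta(a)_1$ or $\theta(a)_2$. If $\pazocal{C}$ were multi-machine its step history would have to contain a subword of the form $(s)_i^{-1}(s)_j$ (a ``turn'' between the two machines, possibly with $i=j$), because the only way to pass between $\textbf{M}_{6,1}$-state letters and $\textbf{M}_{6,2}$-state letters is through the common start letters $q(s)$ and hence through a pair of transition rules $\theta(s)_i^{\pm1}$. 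Using Lemma \ref{turn} (and more generally the fact that at such a turn the configuration is a start configuration with all non-input sectors empty) together with Lemma \ref{multiply one letter} applied to the ``special'' input sector, I would show that such a turn forces the subcomputation between consecutive turns to have empty history, so after removing trivial detours $\pazocal{C}$ reduces to a one-machine computation with step history $(s)_i h_i (a)_i$ for a single $i$.

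Once $\pazocal{C}$ is a one-machine computation in the $i$-th machine, stripping the leading $\theta(s)_i$ and trailing $\theta(a)_i$ exhibits $W\cdot\theta(s)_i$ as a configuration accepted by $\textbf{M}_{6,i}$ in the sense of that machine. Moreover $W\cdot\theta(s)_i$ has all sectors empty except possibly the input sectors and their mirror copies — when $i=2$ also the special input sector is empty, since $\theta(s)_2$ locks it. Running the argument of Lemma \ref{return to start} (invoking Lemmas \ref{first step history}, \ref{second step history}, \ref{multiply one letter}, and the reasoning of Lemma \ref{M_5 language}(2), applied in parallel across the $L$ copies of $\textbf{M}_5$ constituting $\textbf{M}_{6,i}$) shows that in order for the computation to reach the accept configuration, the input sectors must contain a common word $w\in F(\pazocal{A})$ (with inverses in the mirror copies) and $w$ must lie in $\pazocal{L}$; writing $w=u^n$, this means $W\equiv I(u^n)$ if $i=1$ and $W\equiv J(u^n)$ if $i=2$. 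The main obstacle is the one-machine reduction in the middle paragraph: one must rule out accepting computations that genuinely oscillate between $\textbf{M}_{6,1}$ and $\textbf{M}_{6,2}$, and the key leverage is that every such oscillation must traverse the special input sector via $\theta(s)_i^{\pm1}$, at which point Lemma \ref{turn} and the length constraints of Lemma \ref{multiply one letter} collapse the intervening subcomputation.
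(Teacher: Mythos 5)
Your ``if'' direction is exactly the paper's argument.

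In the ``only if'' direction there is a genuine gap: you attempt to show that every accepting computation can be reduced to a one-machine computation, but this is neither established nor needed. Your proposed collapse of ``turns'' does not work in general: Lemma \ref{turn} only says that the \emph{special} input sector is empty at a turn, not that the subcomputation between consecutive turns is trivial. Multi-machine accepting computations genuinely occur and cannot be waved away (e.g., the step history could pass through a nonempty $(s)_1 h_1 (s)_1^{-1}(s)_2 h_2 (a)_2$ factorization, since the transition rule $\theta(s)_1^{-1}$ returns to a start configuration with empty special input sector, from which $\theta(s)_2$ is applicable). The paper sidesteps this entirely by considering only the \emph{maximal one-machine prefix} $\pazocal{C}_1$ of the accepting computation $\pazocal{C}$. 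Its step history is forced to be either $(s)_i h_i (s)_i^{-1}$ or $(s)_i h_i (a)_i$ (the terminal configuration of $\pazocal{C}_1$ must be a start or end configuration in order for a rule of the other $\Theta_j$ to be applicable, or in order to coincide with $W_{ac}$). In the first case, Lemma \ref{return to start} applied directly to $\pazocal{C}_1$ already gives $W\equiv I(u^n)$ or $J(u^n)$ with $u^n\in\pazocal{L}$, regardless of what happens afterwards. In the second case, $W$ is $\theta(s)_i$-admissible (all sectors empty except the input sectors and their mirror copies, and also the special input sector empty if $i=2$), so Lemma \ref{multiply one letter}$(a)$ gives $W\equiv I(w)$ or $J(w)$ for some $w$, and then Lemma \ref{M_{6,1} language} or \ref{M_{6,2} language} applied to the subcomputation with step history $h_i$ gives $w\in\pazocal{L}$. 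The ingredients you cite are the right ones, but the one-machine reduction you interpose is an unnecessary and unsupported step; replacing it with the maximal-prefix analysis closes the gap.
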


\begin{proof}

For any $u^n\in\pazocal{L}$, the definition of the rules and Lemmas \ref{M_{6,1} language} and \ref{M_{6,2} language} imply that $I(u^n)$ and $J(u^n)$ are accepted by one-machine computations of the first and second machine, respectively.

Let $\pazocal{C}:W\equiv W_0\to\dots\to W_t\equiv W_{ac}$ be an accepting computation. Consider the maximal one-machine computation, $\pazocal{C}_1:W_0\to\dots\to W_r$, that serves as a prefix of $\pazocal{C}$. 

Then the step history of $\pazocal{C}_1$ is of the form $(s)_ih_i(s)_i^{-1}$ or $(s)_ih_i(a)_i$, where the maximal subcomputation with step history $h_i$ is a computation of $\textbf{M}_{6,i}$. 

In the first case, the statement follows from Lemma \ref{return to start}. 

In the second, note that $W_0$ is $\theta(s)_i$-admissible, so that all its sectors are empty except perhaps for the input sectors and their mirror images. As in the proof of Lemma \ref{return to start}, applications of Lemma \ref{multiply one letter}$(a)$ imply that there exists $w\in F(\pazocal{A})$ such that $W_0\equiv I(w)$ if $i=1$ and $W_0\equiv J(w)$ if $i=2$. 

That $w\in\pazocal{L}$ follows from Lemmas \ref{M_{6,1} language} and \ref{M_{6,2} language} applied to the maximal subcomputation of step history $h_i$.

\end{proof}

The history $H$ of a reduced computation $\pazocal{C}$ of $\textbf{M}$ is called \textit{controlled} if $\pazocal{C}$ is a one-machine computation and $H$ corresponds to a controlled computation of $\textbf{M}_5$. As such, the next lemma follows immediately from Lemma \ref{M_5 controlled}.

\begin{lemma} \label{M controlled}

Let $\pazocal{C}:W_0\to\dots\to W_t$ be a reduced computation of $\textbf{M}$ with controlled history $H$. Then the base of the computation is a reduced word and all configurations are uniquely defined by the history $H$ and the base of $\pazocal{C}$.\newline Moreover, if $\pazocal{C}$ is a computation in the standard base, then $|W_0|_a=\dots=|W_t|_a$ and $\|H\|=2s+3$ (respectively $\|H\|=s+2$), where $s$ is the $a$-length of each historical $R_i(j)P_{i+1}(j)$-sector (respectively of each $R_0(j)P_1(j)$-sector) of $W_0$ if $H$ is of the form $(a)$ (respectively $(b)$) in the definition of controlled history (see Section 4.9).

\end{lemma}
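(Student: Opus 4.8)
The plan is to deduce the statement directly from Lemma \ref{M_5 controlled}, using the fact that throughout a controlled computation the machine $\textbf{M}$ runs as a collection of disjoint parallel copies of $\textbf{M}_5$.

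First I unpack the hypothesis. By definition a controlled history $H$ of $\textbf{M}$ is the history of a one-machine computation $\pazocal{C}$ in some $i$-th machine whose history corresponds to a controlled computation of $\textbf{M}_5$. Such a computation of $\textbf{M}_5$ has step history $(2)$ or $(4)$ and in particular involves no transition rule of $\textbf{M}_5$, so $\pazocal{C}$ uses only the ``working'' rules of $\textbf{M}_{6,i}$ and none of $\theta(s)_i^{\pm1}$, $\theta(a)_i^{\pm1}$. These working rules act as parallel copies of the rules of $\textbf{M}_5$ on the subwords of the form $\{t(j)\}B_4(j)$ occurring in the base of $\pazocal{C}$, and the only parts of that base not contained in such a copy are the one-letter parts $\{t(j)\}$, which are fixed by every rule of $\textbf{M}$ and whose two adjacent sectors have empty tape alphabet. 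Hence restricting $\pazocal{C}$ to each copy of the standard base of $\textbf{M}_5$ occurring in its base produces a reduced computation $\pazocal{C}_j:W_0^{(j)}\to\dots\to W_t^{(j)}$ of $\textbf{M}_5$ whose history is $H$, read over the rules of $\textbf{M}_5$ through the natural correspondence (which respects inverses), and which is again controlled.

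Now apply Lemma \ref{M_5 controlled} to each $\pazocal{C}_j$: its base is a reduced word and every configuration $W_m^{(j)}$ is uniquely determined by $H$ and the base of $\pazocal{C}_j$. Reassembling the copies along the one-letter $t$-parts — which, being single letters fixed by all rules, can introduce no cancellation and so cannot spoil reducedness at the seams — shows that the base of $\pazocal{C}$ is reduced and that each $W_m$ is determined by $H$ and the base of $\pazocal{C}$, giving the first assertion. For the second, assume $\pazocal{C}$ is in the standard base; then each $\pazocal{C}_j$ is a controlled computation of $\textbf{M}_5$ in the standard base, so Lemma \ref{M_5 controlled} gives $|W_0^{(j)}|_a=\dots=|W_t^{(j)}|_a$, and summing over the copies yields $|W_0|_a=\dots=|W_t|_a$. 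The same lemma gives $\|H\|=2l^{(j)}+3$ if $H$ has the form $(a)$ and $\|H\|=l^{(j)}+2$ if it has the form $(b)$, where $l^{(j)}$ is the common $a$-length of the historical $R_iP_{i+1}$-sectors (resp. of the input sector) of $W_0^{(j)}$; since the left side does not depend on $j$, all the $l^{(j)}$ coincide with a single integer, which is precisely the number $s$ in the statement, whence $\|H\|=2s+3$, resp. $\|H\|=s+2$. The only point requiring any care is the behaviour of the base at the seams between consecutive $\textbf{M}_5$-copies, which is handled by the triviality of the one-letter $t$-parts; everything else is a direct transcription of Lemma \ref{M_5 controlled}.
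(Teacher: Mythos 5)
Your proposal is correct and is essentially a worked-out version of the paper's own (one-line) justification, which simply says the lemma follows immediately from Lemma \ref{M_5 controlled}: you decompose the computation into the parallel $\textbf{M}_5$-copies living in the blocks $\{t(j)\}B_4(j)$, note that a controlled history uses no $\theta(s)_i^{\pm1}$, $\theta(a)_i^{\pm1}$ nor any transition rule of $\textbf{M}_5$ and always locks the `special' input sector (so the $\textbf{M}_{6,2}$ case poses no issue), and then transport the conclusions of Lemma \ref{M_5 controlled} back through the $t$-seams. The one small detail you elide is that the base of $\pazocal{C}$ need not contain complete copies of the $\textbf{M}_5$ standard base, but this is harmless since Lemma \ref{M_5 controlled} already applies to arbitrary (non-standard) bases, so restricting to whatever fragment of $\{t(j)\}B_4(j)$ actually appears still yields a controlled $\textbf{M}_5$-computation.
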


\smallskip

%%%%%%%%%%%%%%%%%%%%%%%%%%%%%%%%%%%%%%%%%%%%%%%%%%%%%%%%%%%%%%%%%

\subsection{Projected subwords} \

For a configuration $W$ and $1\leq i\leq L$, define $W(i)$ as the admissible subword of $W$ with base $\{t(i)\}B_4(i)$. So, for any configuration $W$, $W\equiv W(1)\dots W(L)$. It is worth noting, then, that if a rule $\theta$ is applicable to some configuration $W$, then $\theta$ operates on each $W(j)$ identically for each $j\geq2$.

Particularly, define the admissible words $A(i)\equiv W_{ac}(i)$, $I(w,i)\equiv (I(w))(i)$, and $J(w,i)\equiv (J(w))(i)$ for all $w\in F(\pazocal{A})$.

For an admissible word $V$ with base a subword of $\{t(i)\}B_4(i)$ for some $1\leq i\leq L$, a \textit{coordinate shift} of $V$ is an admissible word $V'$ which is the copy of $V$ with base a subword of $\{t(j)\}B_4(j)$ for some $1\leq j\leq L$ obtained simply changing all $i$'s to $j$'s. For example, if $W$ is an accepted configuration, then for $i,j\geq2$, $W(i)$ and $W(j)$ are coordinate shifts of one another.

Further, for a configuration $W$ and $1\leq i\leq L$, define $W(i,m)$ as the admissible subword of $W(i)$ whose base is the copy of $(B_3')^{-1}$. As above, similarly define $A(i,m)$, $I(w,i,m)$, and $J(w,i,m)$ for notational convenience.

\begin{lemma} \label{extending one-machine}

Let $H$ be the history of a one-machine computation of the $j$-th machine $\pazocal{C}:V_0\to\dots\to V_t$ with base $\{t(i)\}B_4(i)$ for some $i\in\{1,\dots,L\}$. Then there exists a one-machine computation $W_0\to\dots\to W_t$ of the standard base with history $H$ such that $W_l(i)\equiv V_l$ for all $0\leq l\leq t$.

\end{lemma}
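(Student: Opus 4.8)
The statement is essentially a "locality to globality" principle for one-machine computations: a computation that only touches the $i$-th block $\{t(i)\}B_4(i)$ extends to a computation of the full standard base by letting the machine act trivially (or rather, act in its prescribed uniform way) on all the other blocks. The plan is to build $W_0\to\dots\to W_t$ block by block, using the fact observed just before the lemma that a rule $\theta$ applicable to a configuration operates identically on every block $W(j)$ for $j\geq 2$, together with coordinate shifts to transport the given computation $\pazocal{C}$ into the other coordinates.

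First I would fix the start configuration $W_0$: set $W_0(i)\equiv V_0$, and for every $j\neq i$ let $W_0(j)$ be the admissible word of the appropriate base that the rules of the $j$-th-machine-step-history dictate. Concretely, since $\pazocal{C}$ is a one-machine computation of the $j$-th machine (here $j\in\{1,2\}$ is the machine index, not the block index — I will keep the notation of the lemma), the first rule $\theta_1$ of $H$ is $\theta(s)_j$ or a working rule of $\textbf{M}_{6,j}$; in either case its step-history type tells us exactly which start/intermediate state letters each block must carry. I would choose $W_0(j')$ for $j'\neq i$ to be the coordinate shift of $V_0$ when $V_0$'s base is the standard base of the corresponding copy of $\textbf{M}_5$, and more generally to be whatever admissible word makes $W_0$ a legitimate admissible word of $\textbf{M}$ and makes $\theta_1$ applicable to it. Because the hardware of $\textbf{M}$ is built from $L$ parallel copies of (an enlargement of) $\textbf{M}_5$'s base and the non-special blocks are genuinely interchangeable, such a choice exists.

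Next I would induct on $l$: having produced $W_l$ with $W_l(i)\equiv V_l$ and with $W_l$ admissible and $\theta_{l+1}$-admissible, apply $\theta_{l+1}$ to get $W_{l+1}\defeq W_l\cdot\theta_{l+1}$. The rule operates on $W_l(i)$ exactly as it does in $\pazocal{C}$, so $W_{l+1}(i)\equiv V_l\cdot\theta_{l+1}\equiv V_{l+1}$, which is the desired compatibility. To complete the induction I must check $\theta_{l+2}$-admissibility of $W_{l+1}$; this follows because $\theta_{l+2}$ is applicable to $V_{l+1}$ (as $\pazocal{C}$ is a computation) and the state-letter/tape-letter conditions for $\theta_{l+2}$-admissibility are checked sector by sector, each sector lying inside a single block, and on each block $W_{l+1}$ agrees (up to coordinate shift) with an admissible word to which $\theta_{l+2}$ applies. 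One also needs that the history $H$ is the same reduced word of transition and working rules throughout, which is immediate since we apply exactly the rules $\theta_1,\dots,\theta_t$ of $\pazocal{C}$.

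The main obstacle is the bookkeeping around the "special" first block $Q_0(1)R_0(1)$ and the interplay with the indices: the rules of $\Theta_2$ lock the special input sector, so when $i\neq 1$ one must make sure the first block $W_l(1)$ is chosen so that all of $\theta_1,\dots,\theta_t$ remain applicable to it — this is exactly where one uses that a one-machine computation's step history is of the restricted form $(s)_jh_j(s)_j^{-1}$ or $(s)_jh_j(a)_j$ and that $\textbf{M}_{6,1},\textbf{M}_{6,2}$ differ only in locking that one sector, so that an admissible choice of $W_0(1)$ (e.g. with the special sector empty) is stable under the whole history. Everything else is routine verification that admissibility is a block-local, sector-local condition, together with the cyclic-base remark so that even a "wrap-around" base of $V_0$ causes no trouble. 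I would organize the write-up as: (1) construction of $W_0$; (2) inductive step applying $\theta_{l+1}$ and checking compatibility; (3) verification that admissibility propagates, isolating the special-sector case.
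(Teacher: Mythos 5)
Your overall framework (build $W_0$ blockwise and then push $\theta_1,\dots,\theta_t$ through by induction, using that admissibility is a block-by-block, sector-by-sector condition) is reasonable, and for the first machine ($j=1$, any $i$) and for the second machine with $i\geq2$ your coordinate-shift prescription with the special input sector emptied in block~$1$ is exactly what the paper does. But there is a genuine gap in the remaining subcase $j=2$, $i=1$, and your inductive step silently fails there.

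Here is the problem. When $j=2$ and $i=1$, the block $V_l=W_l(1)$ has its $Q_0(1)R_0(1)$-sector locked and hence empty throughout, while its mirror sector in $(B_3')^{-1}(1)$ can carry a nontrivial word. Your proposal takes $W_0(x)$ for $x\geq2$ to be the coordinate shift of $V_0$, which has an \emph{empty} $Q_0(x)R_0(x)$-sector. But for $x\geq2$ the rules of $\Theta_2$ do \emph{not} lock $Q_0(x)R_0(x)$; during a one-step subcomputation with step history $(1)_2$ the rule for a letter $a$ appends $a^{-1}$ to that sector. So after the $(1)_2$-part of $H$ the sector of $W_l(x)$ accumulates a nontrivial word (whereas $W_l(1)$'s special sector stays empty), and the next rule $\theta(12)_2$, which locks $Q_0R_0$ in \emph{every} block, is no longer applicable. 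Concretely, if $V_0\equiv J(u^n,1)$ with $u^n\neq1$ and $H$ has prefix $\theta(s)_2 h_1\theta(12)_2$ where $h_1$ is the copy of $u^n$ read right to left, then your $W_0(x)$ ends the $h_1$-phase with $(u^n)^{-1}$ written in $Q_0(x)R_0(x)$, so $\theta(12)_2$ does not apply. The sentence in your inductive step --- ``on each block $W_{l+1}$ agrees (up to coordinate shift) with an admissible word to which $\theta_{l+2}$ applies'' --- is exactly what breaks: $W_{l+1}(x)$ is not a coordinate shift of $V_{l+1}$, nor of any $\theta_{l+2}$-admissible word. Your earlier ``more generally to be whatever admissible word makes $W_0$ a legitimate admissible word of $\textbf{M}$ and makes $\theta_1$ applicable'' is not enough, because one must make \emph{all} of $\theta_1,\dots,\theta_t$ applicable, not just $\theta_1$.

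The fix, which is what the paper's Case~3 does, is to build $W_0(x)$ for $x\geq2$ not by coordinate-shifting all of $V_0$ but by taking the admissible subword $V_0(m)$ of $V_0$ with base the copy of $(B_3')^{-1}$, coordinate-shifting that to block $x$, and setting the $B_3$-part of $W_0(x)$ equal to the \emph{mirror copy} of the shifted $V_0(m)$. This puts the copy of $u^n$ (rather than the empty word) into each $Q_0(x)R_0(x)$-sector for $x\geq2$, which is precisely what makes $Q_0(x)R_0(x)$ empty out in lockstep with the mirror sector of block~$1$ and keeps every $\theta_l$ applicable. Your proposal should be amended to handle this case explicitly; the inductive verification then goes through because, with this choice of $W_0$, for every $l$ and every $x\geq2$ the block $W_l(x)$ really is the coordinate shift of an admissible word of the form ``$B_3$-part $=$ mirror copy of $(B_3')^{-1}$-part of $V_l$'', to which $\theta_{l+1}$ is applicable whenever it is applicable to $V_l$.
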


\begin{proof}

For each $l\in\{0,\dots,t\}$, construct from $V_l$ the admissible words $V_l(x)$ with base $\{t(x)\}B_4(x)$ for $1\leq x\leq L$ as follows:

\begin{addmargin}[1em]{0em}

$\bullet$ \underline{Case 1:} Suppose $j=1$. 

Then for all $1\leq x\leq L$, define $V_l(x)$ as the coordinate shift of $V_l$ with base $\{t(x)\}B_4(x)$.

$\bullet$ \underline{Case 2:} Suppose $j=2$ and $i\geq2$. 

Then define $V_l(x)$ for $x\geq2$ in the same way as in Case 1. 

However, in this case, set $V_l(1)$ as the admissible word resulting from emptying the `special' input sector of the coordinate shift of $V_l$ with base $\{t(1)\}B_4(1)$.

$\bullet$ \underline{Case 3:} Suppose $j=2$ and $i=1$. 

Then, define $V_l(m)$ as the admissible subword of $V_l$ whose base is the copy of $(B_3')^{-1}$ in $B_4(1)$. 

Next, define $V_l(x)(m)$ as the appropriate coordinate shift of $V_l$ for each $x\geq2$. 

Letting $V_l(x)(m')$ be the mirror copy of $V_l(x)(m)$, define $V_l(x)\equiv t(x)V_l(x)(m')V_l(x)(m)$.

\end{addmargin}

Now define $W_l\equiv V_l(1)\dots V_l(L)$ for each $0\leq l\leq t$. Clearly, $W_l(i)\equiv V_l$ for all $l$.

Letting $\theta_l$ be the letter of $H$ corresponding to the transition $V_{l-1}\to V_l$, it is easy to check that in each case, $W_{l-1}\cdot\theta_l\equiv W_l$.

\end{proof}

\begin{lemma} \label{extending one-machine m}

Let $H$ be the history of a one-machine computation of the $j$-th machine $\pazocal{C}:V_0\to\dots\to V_t$ with base the copy of $(B_3')^{-1}$ in $B_4(i)$ for some $i\in\{1,\dots,L\}$. Then there exists a computation $W_0\to\dots\to W_t$ of the standard base with history $H$ such that $W_l(i,m)\equiv V_l$ for all $0\leq l\leq t$.

\end{lemma}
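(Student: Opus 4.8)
The plan is to reduce to Lemma~\ref{extending one-machine} by first extending the given computation from its mirror-copy sub-base $(B_3')^{-1}$ up to the full base $\{t(i)\}B_4(i)$ of the $i$-th block, exploiting the mirror symmetry built into the definition of $\textbf{M}_4$ and inherited by $\textbf{M}$. Recall that $B_4(i)$ is a copy of $B_3(B_3')^{-1}$, that the part $\{t(i)\}$ consists of a single letter while the sector joining the $B_3(i)$- and $(B_3'(i))^{-1}$-portions has empty tape alphabet, and that (away from the special input sector) every rule of $\textbf{M}$ acts on those two portions of the $i$-th block in mutually mirror fashion.

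Write $H\equiv\theta_1\cdots\theta_t$. For each $l$, let $\bar V_l$ be the mirror copy of $V_l$, an admissible word whose base is the copy of $B_3$ in $B_4(i)$, and set $V_l'\equiv t(i)\,\bar V_l\,V_l$, an admissible word with base $\{t(i)\}B_4(i)$ whose restriction to the copy of $(B_3')^{-1}$ is $V_l$. (When $i=1$ and $j=2$, the rules of the second machine lock the special input sector $Q_0(1)R_0(1)$, breaking the mirror symmetry there; in this case I would instead take the $B_3(1)$-portion of $V_l'$ to be $\bar V_l$ with the subword occupying the sector $Q_0(1)R_0(1)$ emptied — still a legitimate choice, precisely because the second machine keeps that sector locked throughout the computation.) Exactly as in Case~3 of the proof of Lemma~\ref{extending one-machine}, I would verify that each $\theta_l$ carries $V_{l-1}'$ to $V_l'$: it acts as $\theta_l$ does on the $(B_3'(i))^{-1}$-portion (sending $V_{l-1}$ to $V_l$), as the mirror of $\theta_l$ does on the $B_3(i)$-portion (in the $(1,2)$-case using that the offending sector simply stays empty), and trivially on the letter $t(i)$; since the separating sectors have empty tape alphabet, no cancellation crosses the block-internal boundaries, so $V_{l-1}'\cdot\theta_l\equiv V_l'$. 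Hence $\pazocal{C}':V_0'\to\cdots\to V_t'$ is a one-machine computation of the $j$-th machine with base $\{t(i)\}B_4(i)$ and history $H$.

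Applying Lemma~\ref{extending one-machine} to $\pazocal{C}'$ now produces a computation $W_0\to\cdots\to W_t$ of the standard base of $\textbf{M}$ with history $H$ such that $W_l(i)\equiv V_l'$ for every $l$; consequently $W_l(i,m)$, being the restriction of $W_l(i)\equiv V_l'$ to the copy of $(B_3')^{-1}$, is exactly $V_l$, as desired. The only delicate point is the mirror-symmetry bookkeeping just sketched — and, inside it, the special-input-sector subtlety when $(i,j)=(1,2)$, which is resolved precisely because the second machine locks $Q_0(1)R_0(1)$; everything else is the same routine verification (empty separating sectors, parallel-mirror rule action) already carried out in the proof of Lemma~\ref{extending one-machine}.
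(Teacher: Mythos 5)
Your proposal is correct and follows essentially the same route as the paper: the heart of the argument is the same mirror-copy construction for the $i$-th block (adjoining $t(i)$ and the mirror word, emptying the `special' input sector exactly when $i=1$, $j=2$, and using the empty separating tape alphabets plus the locked special sector to check rule-by-rule compatibility). The only difference is organizational: you factor the passage from the base $\{t(i)\}B_4(i)$ to the standard base through Lemma \ref{extending one-machine}, whereas the paper writes out the corresponding case-by-case construction for all blocks directly; the resulting standard-base computation is the same.
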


\begin{proof}

For each $l\in\{0,\dots,t\}$, construct from $V_l$ the admissible words $V_l(x)$ with base $B_4(x)$ for all $1\leq x\leq L$ as follows:

\begin{addmargin}[1em]{0em}

$\bullet$ \underline{Case 1:} Suppose $j=1$.

Let $V_l(m')$ be the mirror copy of $V_l$, so that it is an admissible word whose base is the copy of $B_3$ in $B_4(i)$.

Then define $V_l(i)$ as the admissible word with base $\{t(i)\}B_4(i)$ formed by adjoining $t(i)V_l(m')$ to $V_l$. 

Finally, define $V_l(x)$ as the appropriate coordinate shift of $V_l(i)$ for each $1\leq x\leq L$.

$\bullet$ \underline{Case 2:} Suppose $j=2$ and $i\geq2$.

Define $V_l(x)$ in the same way as in Case 1 for $2\leq x\leq L$. 

However, in this case, define $V_l(1)$ as the result of emptying the `special' input sector of the coordinate shift of $V_l(i)$.

$\bullet$ \underline{Case 3:} Suppose $j=2$ and $i=1$. 

For each $x$, set $V_l(x)(m)$ as the corresponding coordinate shift of $V_l$. 

Then, for $x\geq2$, let $V_l(x)(m')$ be the mirror copy of $V_l(x)(m)$, while setting $V_l(1)(m')$ as the result of emptying the `special' input sector of the mirror copy of $V_l$. 

Then, set $V_l(x)=t(x)V_l(x)(m')V_l(x)(m)$ for each $1\leq x\leq L$.

\end{addmargin}

Now define $W_l\equiv t(1)V_l(1)t(2)\dots t(L)V_l(L)$ for each $0\leq l\leq t$. Clearly, $W_l(i,m)\equiv V_l$ for each $l$.

Letting $\theta_l$ be the letter of $H$ corresponding to the transition $V_{l-1}\to V_l$, it is easy to check that in each case, $W_{l-1}\cdot\theta_l\equiv W_l$.

\end{proof}

\begin{lemma} \label{input sectors locked}

Let $\pazocal{C}:A(i)\to\dots\to A(i)$ be a one-machine computation of the $j$-th machine. Then the step history of $\pazocal{C}$ has no occurrence of $(1)_j$. In particular, for $H$ the history of $\pazocal{C}$, every rule of $H$ locks the `special' input sector, $W_{ac}$ is $H$-admissible, and $W_{ac}\cdot H\equiv W_{ac}$.

\end{lemma}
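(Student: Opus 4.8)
The plan is to argue by contradiction. Suppose the step history of $\pazocal{C}$ contained an occurrence of $(1)_j$; I would isolate a maximal subcomputation $\pazocal{D}$ of $\pazocal{C}$ whose step history is $(1)_j$, possibly flanked by transition rules $((s)_j(1)_j)$, $((1)_j(12)_j)$, etc. The key is that $\pazocal{C}$ starts and ends at $A(i)\equiv W_{ac}(i)$, which is an \emph{accept} configuration: all its sectors — in particular all input sectors and their mirror copies — are empty. First I would restrict $\pazocal{C}$ to the `special' input sector $Q_0(1)R_0(1)$ (when $i=1$) or to an ordinary input sector (when $i\geq 2$); this restricted computation has initial and final $a$-length zero. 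Since the rules of $\textbf{M}_5(1)$ (hence of $\textbf{M}_{6,1}(1)$, $\textbf{M}_{6,2}(1)$) multiply the relevant input sector by a single $a$-letter on one side, with different rules giving different letters, the restriction of $\pazocal{D}$ to such a sector satisfies the hypotheses of Lemma \ref{multiply one letter}. By Lemma \ref{multiply one letter}$(a)$, a computation of step history $(1)_j$ between two configurations with empty input sector must be empty — but we also need that the input sectors are empty \emph{throughout} $\pazocal{C}$ at the moment $\pazocal{D}$ occurs, not just at the endpoints of $\pazocal{C}$.

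To handle that I would invoke the structural analysis already in place: by Lemmas \ref{first step history} and \ref{second step history} (which apply to one-machine computations of $\textbf{M}$ with the appropriate subscript added), together with Lemma \ref{(A) and (B)} and Lemma \ref{return to start}, any one-machine computation whose step history contains $(1)_j$ and which begins at a configuration with all non-input sectors empty must in fact begin at $I(u^n)$ or $J(u^n)$ for some $u^n\in\pazocal{L}$ (this is exactly the content of Lemma \ref{return to start}, once one notes that a step history containing $(1)_j$ forces the initial configuration to be $\theta(s)_j$-admissible after running the history backwards to the nearest $(s)_j$). But $A(i)$ is the projection of the accept configuration $W_{ac}$, which is \emph{not} of the form $I(u^n)$ or $J(u^n)$: the start and end state letters of each part are distinct new letters $q(s),q(a)$, so $W_{ac}$ carries end state letters, whereas $I(u^n)$ and $J(u^n)$ carry start state letters. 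This contradiction shows $(1)_j$ cannot occur in the step history of $\pazocal{C}$.

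Once $(1)_j$ is excluded, every rule of $H$ belongs to one of $\textbf{M}(2),\dots,\textbf{M}(5)$ or is a transition rule not adjacent to a $(1)_j$-block, and by construction every such rule — in both $\Theta_1$ and $\Theta_2$ — locks the `special' input sector $Q_0(1)R_0(1)$ (for $\textbf{M}_{6,2}$ this is explicit in the definition; for the rules coming from $\textbf{M}_{6,1}$ it holds because the only rules of $\textbf{M}_5$ acting on the $R_0P_1$/$Q_0R_0$ sectors are those of $\textbf{M}_5(1)$ and $\textbf{M}_5(2)$, and the $Q_0R_0$-sector rules all belong to step $(1)$, while $\textbf{M}_5(2)$ locks it). Hence $H$ locks the `special' input sector. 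It follows that $H$ is applicable to the standard-base configuration $W_{ac}$: indeed, by Lemma \ref{extending one-machine} the computation $\pazocal{C}$ with base $\{t(i)\}B_4(i)$ extends to a computation $W_0\to\dots\to W_t$ of the standard base with the same history and $W_l(i)\equiv A(i)$ for all $l$, and since $H$ locks every sector that differs between $W_{ac}$ and this extension — in particular the special input sector is the only sector where $W_0$ could disagree with $W_{ac}$ for $i=1$, and it is empty in both — one checks $W_0\equiv W_{ac}$, whence $W_{ac}$ is $H$-admissible and $W_{ac}\cdot H\equiv W_t\equiv W_{ac}$.

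\textbf{Main obstacle.} The delicate point is the step-history exclusion of $(1)_j$: one must be careful that $A(i)$, being an accept-type configuration with end state letters, genuinely cannot arise as the start configuration of a subcomputation whose step history reaches step $(1)$, and this requires threading Lemmas \ref{first step history}, \ref{second step history}, \ref{(A) and (B)}, and \ref{return to start} correctly rather than a direct application of Lemma \ref{multiply one letter} alone (the latter only controls the input sector at the two endpoints of a single-step-history block, not across the whole of $\pazocal{C}$). The remainder is bookkeeping with the definitions of the transition rules and of $W_{ac}$.
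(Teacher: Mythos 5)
Your overall plan coincides with the paper's: exclude $(1)_j$ from the step history, deduce that every rule of $H$ locks the `special' input sector, and then use Lemma \ref{extending one-machine} to see that the standard-base extension of $\pazocal{C}$ starts at $W_{ac}$, so $W_{ac}$ is $H$-admissible and $W_{ac}\cdot H\equiv W_{ac}$. The last two paragraphs of your argument are fine and are essentially the paper's proof.

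The gap is in the exclusion step. The intermediate claim you rely on --- ``any one-machine computation whose step history contains $(1)_j$ and which begins at a configuration with all non-input sectors empty must begin at $I(u^n)$ or $J(u^n)$'' --- is false: the inverse of an accepting computation of $I(u^n)$ begins at $W_{ac}$ (all sectors empty), contains $(1)_j$ in its step history, and does not begin at a configuration of the form $I(\cdot)$ or $J(\cdot)$. The misstep is the appeal to Lemma \ref{return to start}: that lemma requires the step history to have the form $(s)_jh_j(s)_j^{-1}$, i.e.\ the initial configuration must be $\theta(s)_j$-admissible (a start configuration), whereas here $A(i)$ carries the end letters $q(a)$, the history begins with $\theta(a)_j^{-1}$, and there need be no occurrence of $(s)_j$ in $H$ at all, so ``running the history backwards to the nearest $(s)_j$'' has no meaning. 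What your argument never uses is that $\pazocal{C}$ also \emph{ends} at $A(i)$, and that is exactly what makes the exclusion work: since $\pazocal{C}$ begins and ends at end-letter configurations, and in a reduced one-machine computation the letters $(s)_j^{\pm1},(a)_j^{\pm1}$ can occur only as the first or last letter of the step history, any occurrence of $(1)_j$ is interior and hence flanked by $(21)_j$ on one side and $(12)_j$ on the other, producing the subword $(21)_j(1)_j(12)_j$ forbidden by Lemma \ref{second step history}. Note also that the obstacle you flagged in your first paragraph is illusory: one does not need the input sector to be empty throughout $\pazocal{C}$, only at the two ends of the $(1)_j$-block, and this is automatic because the flanking transition rules $\theta(12)_j^{\pm1}$ lock the input sector --- this is precisely how Lemma \ref{second step history} is proved via Lemma \ref{multiply one letter}$(a)$, so the direct route you abandoned is the one that works.
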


\begin{proof}

Lemma \ref{second step history} implies that the step history of $\pazocal{C}$ has no occurrence of $(21)_j(1)_j(12)_j$. So, the definition of a one-machine computation implies that there is no occurrence of the letter $(1)_j$. As a consquence, the `special' input sector (and in fact all input sectors) are locked by every rule of $\pazocal{C}$.

Lemma \ref{extending one-machine} implies the rest of the statement, as one can check that the configuration arising from $A(i)$ in all three cases is $W_{ac}$.

\end{proof}

\begin{lemma} \label{input sectors locked m}

Let $\pazocal{C}:A(i,m)\to\dots\to A(i,m)$ be a one-machine computation of the $j$-th machine. Then the step history of $\pazocal{C}$ has no occurrence of $(1)_j$. In particular, for $H$ the history of $\pazocal{C}$, every rule of $H$ locks the `special' input sector, $W_{ac}$ is $H$-admissible, and $W_{ac}\cdot H\equiv W_{ac}$.

\end{lemma}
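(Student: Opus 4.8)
The plan is to mirror the argument for Lemma~\ref{input sectors locked} verbatim, replacing every use of Lemma~\ref{extending one-machine} by its analogue Lemma~\ref{extending one-machine m}. First I would invoke Lemma~\ref{second step history} (with the subscript $j$ adjoined to every letter of the step history, as licensed by the discussion preceding Lemma~\ref{turn}): since $\pazocal{C}$ is a one-machine computation in the $j$-th machine, its step history contains no subword $(21)_j(1)_j(12)_j$. By definition of a one-machine computation the only way the letter $(1)_j$ could occur in the step history is flanked by transition rules $(21)_j$ and $(12)_j$ (there is no $(s)_j$ available in the interior of a one-machine computation whose endpoints are $A(i,m)$, since $A(i,m)$ is not $\theta(s)_j^{\pm1}$-admissible — it is an end configuration), so $(1)_j$ cannot appear at all. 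Consequently every rule of $H$ locks every input sector, in particular the `special' input sector $Q_0(1)R_0(1)$.

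Next I would upgrade this local statement about $A(i,m)$ to the global statement about $W_{ac}$ using Lemma~\ref{extending one-machine m}. That lemma, applied to the one-machine computation $\pazocal{C}:A(i,m)\to\dots\to A(i,m)$ with base the copy of $(B_3')^{-1}$ in $B_4(i)$, produces a computation $W_0\to\dots\to W_t$ of the standard base with the same history $H$ and with $W_l(i,m)\equiv A(i,m)$ for all $l$. The only thing left to check is that this standard-base computation actually starts (hence ends, $H$ being the same) at $W_{ac}$: this is the routine verification indicated in the proof of Lemma~\ref{extending one-machine m}, namely that in each of the three cases the admissible word $V_0(x)=t(x)V_0(x)(m')V_0(x)(m)$ built from $V_0=A(i,m)$ is exactly the coordinate shift $A(x)$ of $W_{ac}(i)$ (using that the mirror copy $A(i,m')$ of the end configuration $A(i,m)$ is the corresponding half of $W_{ac}(i)$, and — in Case~2 and Case~3 — that emptying the already-empty `special' input sector of an end configuration changes nothing). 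Hence $W_0\equiv W_{ac}$, so $W_{ac}$ is $H$-admissible and $W_{ac}\cdot H\equiv W_t\equiv W_{ac}$.

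The only genuine subtlety — and the step I would flag as the main obstacle — is the endpoint bookkeeping in Case~3 of Lemma~\ref{extending one-machine m} (the case $j=2$, $i=1$), where the base $(B_3')^{-1}\subseteq B_4(1)$ lies in the mirror half of the very first block and the construction rebuilds the whole standard base by mirroring and coordinate-shifting; one must be sure that the reconstructed $V_0(1)$ is $A(1)$ and not some configuration with a stray letter in the special input sector. But since $A(i,m)$ is an end configuration (all its working and historical sectors empty), its mirror copy and all coordinate shifts are likewise empty in those sectors, and emptying the special input sector is vacuous; so the reconstructed configuration is the concatenation of the coordinate shifts of $W_{ac}(i)$, which is $W_{ac}$ by definition. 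Everything else is a formal transcription of the proof of Lemma~\ref{input sectors locked}.
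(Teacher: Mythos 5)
Your proposal is correct and follows essentially the same route as the paper, whose proof of this lemma is literally ``run the proof of Lemma~\ref{input sectors locked} with Lemma~\ref{extending one-machine m} in place of Lemma~\ref{extending one-machine}'': exclude $(1)_j$ via Lemma~\ref{second step history} together with the one-machine/endpoint considerations, then extend to the standard base and check that the configuration built from $A(i,m)$ in all three cases of Lemma~\ref{extending one-machine m} is $W_{ac}$. Your extra bookkeeping on the $(s)_j^{\pm1}$ letters and on Case~3 just makes explicit what the paper leaves as a routine verification.
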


\begin{proof}

Using Lemma \ref{extending one-machine m}, this follows from a very similar proof as the one presented for Lemma \ref{input sectors locked}.

\end{proof}

\begin{lemma} \label{subword return to start}

Let $H\equiv(s)_jh_j(s)_j^{-1}$ be the history of a one-machine computation $\pazocal{C}$ in the base $\{t(i)\}B_4(i)$ for some $i\in\{1,\dots,L\}$. Then the initial configuration of $\pazocal{C}$ is $I(u^n,i)$ or $J(u^n,i)$ for some $u^n\in\pazocal{L}$. 

\end{lemma}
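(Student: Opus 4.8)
The plan is to reduce this statement about the long-based computation $\pazocal{C}$ in $\{t(i)\}B_4(i)$ to the already-established Lemma \ref{return to start}, which handles exactly this situation for computations of $\textbf{M}$ in the \emph{standard} base. The bridge between the two is Lemma \ref{extending one-machine}: since $\pazocal{C}$ is a one-machine computation of the $j$-th machine with base $\{t(i)\}B_4(i)$, that lemma produces a one-machine computation $\pazocal{C}':W_0\to\dots\to W_t$ of the standard base with the same history $H$ and with $W_l(i)\equiv V_l$ for every $l$. In particular $\pazocal{C}'$ has step history $(s)_jh_j(s)_j^{-1}$ as well, so Lemma \ref{return to start} applies to $\pazocal{C}'$ and yields $W_0\equiv I(u^n)$ or $W_0\equiv J(u^n)$ for some $u^n\in\pazocal{L}$.

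From there I would simply project back: by definition $I(u^n,i)\equiv (I(u^n))(i)$ and $J(u^n,i)\equiv (J(u^n))(i)$, and we have arranged $V_0\equiv W_0(i)$. Hence the initial configuration of $\pazocal{C}$ is $W_0(i)$, which is $I(u^n,i)$ in the first case and $J(u^n,i)$ in the second, completing the proof. The argument is essentially two invocations of earlier lemmas glued by the projection identity $W_0(i)=V_0$.

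The only point requiring a little care — and the place I'd expect to be the main (mild) obstacle — is matching up the index $j$ of the machine with the conclusion: Lemma \ref{extending one-machine} is stated uniformly for $j\in\{1,2\}$, and Lemma \ref{return to start} already incorporates both the $i=1$ ($I(w)$) and $i=2$ ($J(w)$) cases of the transition rule, so no case analysis on $j$ is actually needed here beyond quoting those lemmas. One should also note that the extension $\pazocal{C}'$ inherits reducedness of its history from $\pazocal{C}$ (the history word $H$ is literally the same), so the hypotheses of Lemma \ref{return to start} are genuinely met. Everything else is bookkeeping with the coordinate-shift/projection notation set up in this subsection.
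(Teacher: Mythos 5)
Your proposal matches the paper's proof exactly: apply Lemma \ref{extending one-machine} to lift $\pazocal{C}$ to a standard-base one-machine computation with the same history, invoke Lemma \ref{return to start} to conclude $W_0\equiv I(u^n)$ or $J(u^n)$, and project back via $V_0\equiv W_0(i)$. Correct, and identical in structure to the paper's argument.
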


\begin{proof}

Denote the computation $\pazocal{C}$ as $V_0\to\dots\to V_t$. 

Then, applying Lemma \ref{extending one-machine}, there exist configurations $W_0,\dots,W_t$ such that $W_l(i)\equiv V_l$ and $W_0\cdot H\equiv W_t$.

Applying Lemma \ref{return to start} then implies that $W_0\equiv I(u^n)$ or $W_0\equiv J(u^n)$ for some $u^n\in\pazocal{L}$, whence $V_0\equiv W_0(i)\equiv I(u^n,i)$ or $J(u^n,i)$.

\end{proof}

Again, we form the `mirror' of Lemma \ref{subword return to start}:

\begin{lemma} \label{subword return to start m}

Let $H\equiv (s)_jh_j(s)_j^{-1}$ be the history of a one-machine computation $\pazocal{C}$ whose base is the copy of $(B_3')^{-1}$ in $B_4(i)$ for some $i\in\{1,\dots,L\}$. Then the initial configuration of $\pazocal{C}$ is $I(u^n,i,m)$ or $J(u^n,i,m)$ for some $u^n\in\pazocal{L}$.

\end{lemma}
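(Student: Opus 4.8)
The plan is to mirror the proof of Lemma \ref{subword return to start}, replacing the appeal to Lemma \ref{extending one-machine} with its analogue Lemma \ref{extending one-machine m}, and then invoking Lemma \ref{return to start} on the resulting standard-base computation.

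Concretely, I would first write $\pazocal{C}$ as $V_0\to\dots\to V_t$, so that each $V_l$ is an admissible word whose base is the copy of $(B_3')^{-1}$ in $B_4(i)$ and the history is $H\equiv(s)_jh_j(s)_j^{-1}$. Since $\pazocal{C}$ is by hypothesis a one-machine computation of the $j$-th machine with base exactly the copy of $(B_3')^{-1}$ in $B_4(i)$, Lemma \ref{extending one-machine m} applies and produces a computation $W_0\to\dots\to W_t$ of the standard base with the \emph{same} history $H$ such that $W_l(i,m)\equiv V_l$ for all $0\leq l\leq t$. Because the history word is transported verbatim, this extended computation is again a one-machine computation of the $j$-th machine whose step history is $(s)_jh_j(s)_j^{-1}$, so it satisfies the hypotheses of Lemma \ref{return to start}.

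Applying Lemma \ref{return to start} to $W_0\to\dots\to W_t$ then gives some $u^n\in\pazocal{L}$ with $W_0\equiv I(u^n)$ or $W_0\equiv J(u^n)$. Taking the $(\,\cdot\,)(i,m)$-projection and using the definitions $I(u^n,i,m)\equiv(I(u^n))(i,m)$ and $J(u^n,i,m)\equiv(J(u^n))(i,m)$, one concludes $V_0\equiv W_0(i,m)\equiv I(u^n,i,m)$ or $V_0\equiv J(u^n,i,m)$, which is the assertion.

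The only point that needs genuine checking — and hence the main obstacle — is confirming that Lemma \ref{extending one-machine m} is legitimately invoked here, i.e.\ that the base of $\pazocal{C}$ is \emph{precisely} the copy of $(B_3')^{-1}$ in $B_4(i)$ (so that the three cases in that lemma cover $\pazocal{C}$) and that the projection $W\mapsto W(i,m)$ of the extended configurations recovers the $V_l$; both are built into the statement and proof of Lemma \ref{extending one-machine m}. Everything else is bookkeeping: the step history, and therefore the hypothesis of Lemma \ref{return to start}, is preserved because the history word $H$ is unchanged under the extension.
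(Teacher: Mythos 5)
Your proposal matches the paper's proof exactly: the paper states the result follows "using Lemma \ref{extending one-machine m}" by "a similar proof as the one presented to Lemma \ref{subword return to start}," which is precisely the three-step route you spell out — extend to a standard-base computation via Lemma \ref{extending one-machine m}, apply Lemma \ref{return to start} to the extension, then project back with $(\cdot)(i,m)$. The extra care you take to confirm that the step history is preserved under the extension (so that Lemma \ref{return to start} genuinely applies) is a correct and useful observation that the paper leaves implicit.
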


\begin{proof}

Using Lemma \ref{extending one-machine m}, this follows by a similar proof as the one presented to Lemma \ref{subword return to start}.

\end{proof}

Using Lemma \ref{M language}, a similar proof as the one presented for Lemma \ref{subword return to start} then implies the following:

\begin{lemma} \label{subword M language}

If $W_0$ is an admissible subword of a start configuration with base $\{t(i)\}B_4(i)$ for some $i\in\{1,\dots,L\}$, then there exists a one-machine computation $W_0\to\dots\to A(i)$ if and only if $W_0\equiv I(u^n,i)$ or $W_0\equiv J(u^n,i)$ for some $u^n\in\pazocal{L}$.

\end{lemma}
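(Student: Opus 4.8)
The plan is to mirror the proof of Lemma~\ref{subword return to start}, reducing the statement to Lemma~\ref{M language} via Lemma~\ref{extending one-machine}.

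For the ``if'' direction, suppose $W_0\equiv I(u^n,i)$ or $W_0\equiv J(u^n,i)$ for some $u^n\in\pazocal{L}$. Recall from the proof of Lemma~\ref{M language} that $I(u^n)$ is accepted by a one-machine computation of the first machine and $J(u^n)$ by a one-machine computation of the second machine. Restricting the relevant accepting computation to the base $\{t(i)\}B_4(i)$---that is, replacing each configuration $W$ in it by its subword $W(i)$---produces a one-machine computation $I(u^n,i)\to\dots\to W_{ac}(i)\equiv A(i)$ (respectively $J(u^n,i)\to\dots\to A(i)$): every rule of $\textbf{M}$ acts on the contiguous sub-base $\{t(i)\}B_4(i)$, and restriction changes neither the history nor the step history, so the one-machine property survives.

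For the ``only if'' direction, let $\pazocal{C}:W_0\equiv V_0\to\dots\to V_t\equiv A(i)$ be a one-machine computation with history $H$. Apply Lemma~\ref{extending one-machine} to obtain a one-machine computation $W_0'\to\dots\to W_t'$ of the standard base, again with history $H$, such that $W_l'(i)\equiv V_l$ for all $l$. Because $W_0$ is an admissible subword of a start configuration, all of its state letters are start letters, and the construction in Lemma~\ref{extending one-machine} only coordinate-shifts $W_0$ into the remaining copies of the base (at worst emptying the special input sector), so $W_0'$ is again a start configuration. Likewise, tracing through the three cases of that construction and using that $A(i)=W_{ac}(i)$, that $W_{ac}$ is the coordinate-uniform concatenation of the parts' end letters, and that the special input sector of $W_{ac}$ is empty, forces $W_t'\equiv W_{ac}$. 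Hence $W_0'$ is a start configuration accepted by $\textbf{M}$, and Lemma~\ref{M language} yields $W_0'\equiv I(u^n)$ or $W_0'\equiv J(u^n)$ for some $u^n\in\pazocal{L}$; projecting onto the $i$-th coordinate, $W_0\equiv V_0\equiv W_0'(i)\equiv I(u^n,i)$ or $J(u^n,i)$, as desired.

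The only point that needs genuine care is the claim in the previous paragraph that the computation supplied by Lemma~\ref{extending one-machine} really starts at a standard start configuration and \emph{ends at $W_{ac}$}, rather than at a configuration merely agreeing with $W_{ac}$ in the $i$-th coordinate. This is a direct unwinding of the explicit three-case construction behind Lemma~\ref{extending one-machine}, so I do not expect any serious obstacle; the remainder of the argument is a verbatim transcription of the proof of Lemma~\ref{subword return to start}.
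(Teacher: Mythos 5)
Your proof is correct and follows exactly the route the paper intends: the ``only if'' direction lifts the computation on $\{t(i)\}B_4(i)$ to the standard base via Lemma~\ref{extending one-machine} and then invokes Lemma~\ref{M language}, and the ``if'' direction restricts the accepting computations from Lemma~\ref{M language} back to the sub-base — this is the same template the paper uses for Lemma~\ref{subword return to start} and explicitly says to reuse here. The one point you flag as needing care (that $W_t'(i)\equiv A(i)$ forces $W_t'\equiv W_{ac}$) is indeed the only nontrivial check, and it is exactly the observation the paper records in the proof of Lemma~\ref{input sectors locked}.
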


Similarly, the following `mirror' lemma follows from Lemma \ref{M language} via a proof similar to that presented as the proof of Lemma \ref{subword return to start m}:

\begin{lemma} \label{subword M language m}

If $W_0$ is a subword of a start configuration whose base is the copy of $(B_3')^{-1}$ in $B_4(i)$ for some $i\in\{1,\dots,L\}$, then there exists a one-machine computation $W_0\to\dots\to A(i,m)$ if and only if $W_0\equiv I(u^n,i,m)$ or $W_0\equiv J(u^n,i,m)$ for some $u^n\in\pazocal{L}$. 

\end{lemma}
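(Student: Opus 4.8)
The plan is to follow, almost verbatim, the proof of Lemma~\ref{subword return to start m}, with Lemma~\ref{M language} playing the role that Lemma~\ref{return to start} played there: for the ``if'' direction I will restrict a known one-machine accepting computation to the block in question, and for the ``only if'' direction I will use the extension lemma~\ref{extending one-machine m} to promote $\pazocal{C}$ to a computation of the full standard base and then invoke Lemma~\ref{M language}. The one point that is not entirely formal is checking that this promotion carries the pair (subword of a start configuration on block $i$, accept word $A(i,m)$ on block $i$) to the pair (global start configuration, global accept configuration $W_{ac}$), so that Lemma~\ref{M language} actually applies.

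For the ``if'' direction, suppose $W_0\equiv I(u^n,i,m)$ or $W_0\equiv J(u^n,i,m)$ for some $u^n\in\pazocal{L}$. By the first paragraph of the proof of Lemma~\ref{M language} (i.e.\ by Lemmas~\ref{M_{6,1} language} and~\ref{M_{6,2} language}), the configuration $I(u^n)$ (resp.\ $J(u^n)$) is accepted by a \emph{one-machine} computation $\pazocal{D}\colon I(u^n)\to\dots\to W_{ac}$ (resp.\ $J(u^n)\to\dots\to W_{ac}$) of the first (resp.\ second) machine, necessarily in the standard base of $\textbf{M}$. Restricting $\pazocal{D}$ to the copy of $(B_3')^{-1}$ inside $B_4(i)$ is the same routine operation used repeatedly in Section 4 (it commutes with rule applications and leaves the history, hence the step history, hence the one-machine property, unchanged); it yields a one-machine computation whose initial configuration is $(I(u^n))(i,m)\equiv I(u^n,i,m)\equiv W_0$ (resp.\ $J(u^n,i,m)$) and whose final configuration is $W_{ac}(i,m)\equiv A(i,m)$, as required.

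For the ``only if'' direction, let $\pazocal{C}\colon V_0\to\dots\to V_t\equiv A(i,m)$ be a one-machine computation of the $j$-th machine with $V_0\equiv W_0$ and base the copy of $(B_3')^{-1}$ in $B_4(i)$. By Lemma~\ref{extending one-machine m} there is a computation $W_0'\to\dots\to W_t'$ of the standard base with the same history such that $W_l'(i,m)\equiv V_l$ for all $l$; in particular $W_0'(i,m)\equiv W_0$ and $W_t'(i,m)\equiv A(i,m)=W_{ac}(i,m)$. Since $W_0$ is an admissible subword of a start configuration, all of its state letters are start letters of $\textbf{M}$, and because each of the operations used in the construction of Lemma~\ref{extending one-machine m} (coordinate shifts, mirror copies, emptying the special input sector) sends the start letter of a part to the start letter of the corresponding part, $W_0'$ is again a start configuration of $\textbf{M}$. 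Applying that same construction to $W_{ac}(i,m)$ --- whose state letters are all end letters and which carries no tape letters --- produces exactly $W_t'\equiv W_{ac}$: the $B_3$-halves are the mirror copies, the other blocks are coordinate shifts of $W_{ac}(i)$, and the special input sector is empty in $W_{ac}$. Hence $W_0'\to\dots\to W_t'\equiv W_{ac}$ is an accepting computation of $\textbf{M}$ issuing from a start configuration, so Lemma~\ref{M language} gives $W_0'\equiv I(u^n)$ or $W_0'\equiv J(u^n)$ for some $u^n\in\pazocal{L}$, and therefore $W_0\equiv W_0'(i,m)$ equals $I(u^n,i,m)$ or $J(u^n,i,m)$. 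The only step requiring genuine care is this last matching: one must make sure the extension lands on $W_{ac}$ (and on a genuine start configuration) rather than on a configuration merely agreeing with them on block $i$, which hinges on the fact that both $W_{ac}$ and any start configuration are determined by their restriction to a single block $\{t(x)\}B_4(x)$ and on the special input sector being empty in $W_{ac}$; everything else is a transcription of the proof of Lemma~\ref{subword return to start m}.
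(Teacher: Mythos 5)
Your proposal is correct and follows essentially the same route the paper takes: for the ``only if'' direction you extend the block-level computation to the standard base via Lemma~\ref{extending one-machine m}, check that the endpoints become a genuine start configuration and $W_{ac}$ respectively, and invoke Lemma~\ref{M language}; for the ``if'' direction you restrict the one-machine accepting computation supplied by Lemmas~\ref{M_{6,1} language} and~\ref{M_{6,2} language} to the mirror block. The only difference from the paper's terse indication (which just names the extension lemma and Lemma~\ref{M language}) is that you spell out the endpoint-matching checks explicitly, which is a welcome precision rather than a deviation.
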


\begin{lemma} \label{starts with I}

For $i\in\{1,\dots,L\}$, suppose $\pazocal{C}:I(w,i)\equiv W_0\to\dots\to W_t$ is a maximal one-machine subcomputation of a multi-machine computation $\pazocal{D}$ such that $\pazocal{C}$ serves as a prefix of $\pazocal{D}$. Let $H$ be the history of $\pazocal{C}$. Then $w\in\pazocal{L}$ and either:

\begin{addmargin}[1em]{0em}

(1) $I(w)$ is $H$-admissible with $I(w)\cdot H\equiv W_{ac}$ or $I(w)\cdot H\equiv I(w')$ for some $w'\in\pazocal{L}$, or

(2) $J(w)$ is $H$-admissible with $J(w)\cdot H\equiv W_{ac}$ or $J(w)\cdot H\equiv J(w')$ for some $w'\in\pazocal{L}$.

\end{addmargin}

\end{lemma}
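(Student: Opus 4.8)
The plan is to run a short case analysis on the last rule of $\pazocal{C}$, lift $\pazocal{C}$ to a computation of the entire standard base via Lemma \ref{extending one-machine}, and then invoke the language lemmas for $\textbf{M}$. First note that $\pazocal{C}$ is nonempty, since $\pazocal{D}$ is multi-machine (hence nonempty) and every one-rule computation is one-machine. As $I(w,i)$ is a projected start configuration, only $\theta(s)_1$ and $\theta(s)_2$ are applicable to it; let $j\in\{1,2\}$ be the index of the first rule of $\pazocal{C}$. Since $\pazocal{C}$ is a proper prefix of the multi-machine $\pazocal{D}$ and is its maximal one-machine prefix, the rule of $\pazocal{D}$ immediately following $\pazocal{C}$ has index $j'\ne j$; because the state letters of $\textbf{M}_{6,1}$ and $\textbf{M}_{6,2}$ are disjoint copies, $W_t$ must lie in a state to which some rule of index $j'$ applies, and the only states reachable by rules of either index are the start and end states of $\textbf{M}$. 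Hence either $\pazocal{C}$ ends with $\theta(s)_j^{-1}$, and, being reduced and one-machine, has step history $(s)_jh_j(s)_j^{-1}$, or it ends with $\theta(a)_j$ and has step history $(s)_jh_j(a)_j$; in both cases the middle portion is a nonempty reduced computation of $\textbf{M}_{6,j}$.

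Next I would apply Lemma \ref{extending one-machine} to $\pazocal{C}$, obtaining a one-machine computation $\pazocal{C}':W_0'\to\dots\to W_t'$ of the standard base with history $H$ and $W_l'(i)\equiv W_l$; in particular $W_0'\cdot H\equiv W_t'$. Inspecting the three cases of that construction, $W_0'\equiv I(w)$ when $j=1$, and $W_0'\equiv J(w)$ when $j=2$ and $i\geq2$; when $j=2$ and $i=1$, $\theta(s)_2$-admissibility of $I(w,1)$ (which locks the special input sector) forces $w$ empty, so $W_0'$ is the all-empty start configuration $I(w)\equiv J(w)$. In every case $W_0'\in\{I(w),J(w)\}$, the configuration $W_0'$ has empty special input sector whenever $j=2$, and every rule of $\pazocal{C}'$ lies in $\Theta_j$ — so for $j=1$ these rules act identically on all input sectors $Q_0(x)R_0(x)$, while for $j=2$ each of them locks the special input sector $Q_0(1)R_0(1)$.

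Suppose first $\pazocal{C}$ ends with $\theta(s)_j^{-1}$. Then $\pazocal{C}'$ has step history $(s)_jh_j(s)_j^{-1}$, so Lemma \ref{return to start} gives $W_0'\equiv I(u^n)$ or $J(u^n)$ with $u^n\in\pazocal{L}$, whence $w=u^n\in\pazocal{L}$. Since $W_t'$ also lies in the start state of $\textbf{M}$ and, for $j=2$, still has empty special input sector, the reverse of $\pazocal{C}'$ is again a reduced one-machine computation of the standard base with step history of the form $(s)_jh_j^{-1}(s)_j^{-1}$, and Lemma \ref{return to start} gives $W_t'\equiv I(w')$ or $J(w')$ with $w'\in\pazocal{L}$. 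If $j=1$, the special input sector of $W_t'$ is empty iff all its input sectors are, so $W_t'\equiv I(w')$ and conclusion (1) holds; if $j=2$, the special input sector of $W_t'$ is empty, so $W_t'\equiv J(w')$ and conclusion (2) holds. Suppose instead $\pazocal{C}$ ends with $\theta(a)_j$. Then $W_{t-1}'$ is in the end state of $\textbf{M}_{6,j}$, reached within the $\textbf{M}_{6,j}$-computation by the copy of the accept rule $\theta_0$ of $\textbf{M}_5$; as $\theta_0$ locks every sector, it is applicable only to all-empty configurations, so $W_{t-1}'$ is all-empty and $W_t'\equiv W_{t-1}'\cdot\theta(a)_j\equiv W_{ac}$. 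Thus $\pazocal{C}'$ is an accepting computation of $\textbf{M}$ from $W_0'$, so Lemma \ref{M language} gives $W_0'\equiv I(u^n)$ or $J(u^n)$ with $u^n\in\pazocal{L}$; hence $w=u^n\in\pazocal{L}$ and $I(w)\cdot H\equiv W_{ac}$ (conclusion (1)) or $J(w)\cdot H\equiv W_{ac}$ (conclusion (2)) according as $W_0'$ is $I(w)$ or $J(w)$.

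The main obstacle is the bookkeeping through Lemma \ref{extending one-machine}: one must track precisely which of $I(w)$, $J(w)$ the lift $W_0'$ equals — this is exactly what pins down whether conclusion (1) or (2) is the correct one — and handle the degenerate case $j=2$, $i=1$ (where $\theta(s)_2$-admissibility forces $w$ empty and $I(w)\equiv J(w)$). The other point needing care is the claim in the last case that $W_t'$ is genuinely $W_{ac}$ and not merely some configuration in the end state of $\textbf{M}$; this rests on the fact that the accept rule of $\textbf{M}_5$ locks all sectors and is the only rule producing its end-state letters, so the end state of $\textbf{M}_{6,j}$ can only be entered at the all-empty configuration. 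Everything else is a direct assembly of the cited computation lemmas.
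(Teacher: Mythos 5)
Your proof is correct and follows essentially the same route as the paper's. The paper compresses the argument by invoking Lemmas \ref{subword return to start} and \ref{subword M language} directly to get $w\in\pazocal{L}$, whereas you inline their derivations by lifting $\pazocal{C}$ to the standard base via Lemma \ref{extending one-machine} and then applying the full-base Lemmas \ref{return to start} and \ref{M language}; since those subword lemmas are themselves proved by exactly this lift-and-apply pattern, the two treatments are equivalent. Your additional detail — pinning down the step history via disjointness of the $\textbf{M}_{6,1}$/$\textbf{M}_{6,2}$ state letters, and the degenerate case $j=2$, $i=1$ where $\theta(s)_2$-admissibility forces $w$ empty — is correct and fills in steps the paper leaves implicit. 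The detour through the accept rule $\theta_0$ in the $(a)_j$ branch is unnecessary (that $\theta(a)_j$ itself locks every sector already forces $W_{t-1}'$ to have all sectors empty), but it does no harm.
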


\begin{proof}

By the definition of the machine, the step history of $\pazocal{C}$ is either of the form $(s)_jh_j(a)_j$ or $(s)_jh_j(s)_j^{-1}$ for $j\in\{1,2\}$. By Lemmas \ref{subword return to start} and \ref{subword M language}, it follows that $w\in\pazocal{L}$.

If $\pazocal{C}$ is a one-machine computation of the first machine, then the construction described in the proof of Lemma \ref{extending one-machine} implies that $I(w)$ is $H$-admissible; if it is a one-machine computation of the second machine, then the same shows that $J(w)$ is $H$-admissible.

Clearly, if the step history is of the form $(s)_1h_1(a)_1$ (respectively $(s)_2h_2(a)_2$), then $I(w)\cdot H\equiv W_{ac}$ (respectively $J(w)\cdot H\equiv W_{ac}$). Otherwise, $W_t$ must be $\theta(s)_j$-admissible for some $j$, and so must be $I(w',i)$ or $J(w',i)$ for some $w'\in\pazocal{L}$. Applying the proof of Lemma \ref{subword return to start} to the inverse computation shows that $w'\in\pazocal{L}$, while the construction described in the proof of Lemma \ref{extending one-machine} shows that $I(w)\cdot H\equiv I(w')$ (respectively $J(w)\cdot H\equiv J(w')$).

\end{proof}

\begin{lemma} \label{starts with J}

For $i\in\{1,\dots,L\}$, suppose $\pazocal{C}:J(w,i)\equiv W_0\to\dots\to W_t$ is a maximal one-machine subcomputation of a multi-machine computation $\pazocal{D}$ such that $\pazocal{C}$ serves as a prefix of $\pazocal{D}$. Let $H$ be the history of $\pazocal{C}$. Then $w\in\pazocal{L}$ and either

\begin{addmargin}[1em]{0em}

(1) $I(w)$ is $H$-admissible with $I(w)\cdot H\equiv W_{ac}$ or $I(w)\cdot H\equiv I(w')$ for some $w'\in\pazocal{L}$

(2) $J(w)$ is $H$-admissible with $J(w)\cdot H\equiv W_{ac}$ or $J(w)\cdot H\equiv J(w')$ for some $w'\in\pazocal{L}$

\end{addmargin}

\end{lemma}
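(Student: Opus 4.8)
The plan is to reproduce the proof of Lemma~\ref{starts with I} almost word for word. The one structural remark needed is that $I(w)$ and $J(w)$ differ only inside the block $B_4(1)$ --- the configuration $J(w)$ is $I(w)$ with the `special' input sector $Q_0(1)R_0(1)$ emptied --- so $J(w,i)\equiv I(w,i)$ whenever $i\ge 2$; in that range the hypothesis of Lemma~\ref{starts with I} is literally met and its conclusion is exactly conclusions (1)--(2). Thus only the case $i=1$ genuinely needs a separate, parallel argument, and for it we still have that $J(w,1)$ is an admissible subword of a start configuration which is both $\theta(s)_1$-admissible and $\theta(s)_2$-admissible, so the opening rule of $\pazocal{C}$ is a transition rule $\theta(s)_j$ with $j\in\{1,2\}$.

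For $i=1$ I would carry out the three moves of the proof of Lemma~\ref{starts with I}. First, since $\pazocal{C}$ is a maximal one-machine prefix of the reduced multi-machine computation $\pazocal{D}$, the rule of $\pazocal{D}$ following $W_t$ must change the active machine, which forces the step history of $\pazocal{C}$ to be $(s)_jh_j(a)_j$ (so $W_t\equiv A(1)$) or $(s)_jh_j(s)_j^{-1}$, with the part of step history $h_j$ a computation of $\textbf{M}_{6,j}$; Lemma~\ref{subword M language} in the first case and Lemma~\ref{subword return to start} in the second give $w\in\pazocal{L}$. Second, I would pass to the standard base using the extension construction from the proof of Lemma~\ref{extending one-machine}: for $j=2$ its Cases~2 and~3 produce a standard-base computation with history $H$ started at $J(w)$, and for $j=1$ one checks that $H$ is applicable to $J(w)$ directly, using that a first-machine computation acts identically on all $L$ copies and that applicability of its rules in the standard base does not depend on the tape content of the input sectors. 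Third, reading the extended computation off coordinate~$1$: if its step history ends with $(a)_j$ the terminal configuration is $W_{ac}$, while if it ends with $(s)_j^{-1}$ the terminal configuration is $\theta(s)_j$-admissible and hence equals $I(w',i)$ or $J(w',i)$, so that applying the proof of Lemma~\ref{subword return to start} to the inverse computation yields $w'\in\pazocal{L}$ and identifies $W_0\cdot H$ as $W_{ac}$, $I(w')$, or $J(w')$. This is precisely (1)--(2).

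The step that needs care is the $j=1$, $i=1$ case above: the coordinate-shift extension of Lemma~\ref{extending one-machine}, applied verbatim to $J(w,1)$, empties \emph{every} input sector of the extended initial configuration and so produces neither $I(w)$ nor $J(w)$. One way around it is to verify applicability of $H$ to $J(w)$ by hand and then trace the extended computation copy by copy, using $w\in\pazocal{L}$ together with Lemmas~\ref{subword M language} and~\ref{subword return to start} to pin down the copies $x\ge 2$; alternatively, one can observe, via Lemma~\ref{M_5 language}, that as an $\textbf{M}_5$-configuration $J(w,1)$ has an empty input sector but a copy of $w^{-1}$ in its mirror, so that it is accepted --- and returns to an $\textbf{M}_5$-start configuration --- only when $w=1$, forcing $j=2$ when $w\ne 1$, in which case Cases~2--3 of Lemma~\ref{extending one-machine} already suffice. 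Either way, the remaining reasoning is the same step-history bookkeeping and the same inversion argument as in the proof of Lemma~\ref{starts with I}.
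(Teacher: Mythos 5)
Your broad strategy matches the paper's: reduce $i\ge 2$ (and the case $w=1$, which you don't mention but which also gives $J(w,i)\equiv I(w,i)$) to Lemma~\ref{starts with I}, and for $i=1$ with $w\ne1$ rule out the possibility that $\pazocal{C}$ belongs to the first machine, so that Cases~2--3 of Lemma~\ref{extending one-machine} apply and yield conclusion~(2). You also correctly diagnose the crux: the Case~1 construction of Lemma~\ref{extending one-machine}, run verbatim on $J(w,1)$ with a first-machine history, would empty every input sector and therefore produce a standard-base configuration that is neither $I(w)$ nor $J(w)$.

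The problem is that neither of your two repairs for this crux actually holds up. Option~(a) rests on the claim that ``applicability of its rules in the standard base does not depend on the tape content of the input sectors''; this is false, since $\theta(12)_1$ locks the input sectors and so is not applicable unless they are empty. Option~(b) invokes Lemma~\ref{M_5 language} to rule out both possible shapes of the step history $(s)_1 h_1 (a)_1$ and $(s)_1 h_1 (s)_1^{-1}$. That lemma does handle the first shape: viewing $J(w,1)$ as the $\textbf{M}_5$-configuration $I_5(1,w)$, acceptance forces $1\equiv w$, a contradiction. But the second shape --- returning to an $\textbf{M}_5$-start configuration --- is not an accepting computation of $\textbf{M}_5$, and Lemma~\ref{M_5 language} says nothing about it. This is exactly the case the paper has to handle separately: it observes that the asymmetry of $J(w,1)$ (input empty, mirror carrying $w^{-1}\ne 1$, with every $\textbf{M}_5(1)$-rule acting symmetrically on both) forbids the prefix $(s)_1(1)_1(12)_1$ since $\theta(12)_1$ locks both of those sectors, so the step history would have to be $(s)_1(1)_1(s)_1^{-1}$; and then restricting to the $R_0(1)P_1(1)$-sector, which $\theta(s)_1^{\pm1}$ locks, Lemma~\ref{multiply one letter}$(a)$ forces the $(1)_1$-subcomputation to be empty, leaving the unreduced history $\theta(s)_1\theta(s)_1^{-1}$. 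You need some version of this two-step argument (or an equivalent one) to close the gap; citing Lemma~\ref{M_5 language} alone does not do it.
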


\begin{proof}

If $i\geq2$ or $w=1$, then $J(w,i)\equiv I(w,i)$, so that the statement follows from Lemma \ref{starts with I}.

So, assume $w\neq1$ and $i=1$. Then Lemma \ref{subword return to start} implies that $w\in\pazocal{L}$ 

Suppose $\pazocal{C}$ is a computation of the first machine. The lack of symmetry in $J(w,1)$ (the `special' input sector is empty while its mirror image contains a copy of $w^{-1}$) implies that the step history of $\pazocal{C}$ cannot have prefix $(s)_1(1)_1(12)_1$. But then it must have step history $(s)_1(1)_1(s)_1^{-1}$. The application of Lemma \ref{multiply one letter} to the restriction of $\pazocal{C}$ to the $R_0(1)P_1(1)$-sector then implies that the maximal subcomputation with steph history $(1)_1$ is empty, leading to a contradiction.

Then, as $\pazocal{C}$ is a computation of the second machine, the construction in the proof of Lemma \ref{extending one-machine} implies that $J(w)$ is $H$-admissible.

The rest of the statements follow from the same arguments as those presented in the proof of Lemma \ref{starts with I}.

\end{proof}

\begin{lemma} \label{starts with I,J m}

Suppose $\pazocal{C}:W_0\to\dots\to W_t$ is a maximal one-machine subcomputation of a multi-machine computation $\pazocal{D}$ such that $\pazocal{C}$ serves as a prefix of $\pazocal{D}$. Suppose further that $W_0\equiv I(w,i,m)$ or $W_0\equiv J(w,i,m)$ for some $i\in\{1,\dots,L\}$. Let $H$ be the history of $\pazocal{C}$. Then $w\in\pazocal{L}$ and either

\begin{addmargin}[1em]{0em}

(1) $I(w)$ is $H$-admissible with $I(w)\cdot H\equiv W_{ac}$ or $I(w)\cdot H\equiv I(w')$ for some $w'\in\pazocal{L}$

(2) $J(w)$ is $H$-admissible with $J(w)\cdot H\equiv W_{ac}$ or $J(w)\cdot H\equiv J(w')$ for some $w'\in\pazocal{L}$

\end{addmargin}

\end{lemma}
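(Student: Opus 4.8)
The plan is to transcribe the proofs of Lemmas \ref{starts with I} and \ref{starts with J}, with Lemma \ref{extending one-machine} replaced by its mirror Lemma \ref{extending one-machine m} and Lemmas \ref{subword return to start}, \ref{subword M language} replaced by Lemmas \ref{subword return to start m}, \ref{subword M language m}. The first point to record is a simplification: the configurations $I(w)$ and $J(w)$ differ only in the `special' input sector $Q_0(1)R_0(1)$, which lies in the copy of $B_3$ sitting inside $B_4(1)$ and not in the copy of $(B_3')^{-1}$; consequently $I(w,i,m)\equiv J(w,i,m)$ for every $i\in\{1,\dots,L\}$, and we may assume $W_0\equiv I(w,i,m)$.

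Next I would determine the shape of the step history of $\pazocal{C}$. The state letters of $W_0$ are the start letters $q(s)$ of $\textbf{M}$, so the only applicable rules are $\theta(s)_1$ and $\theta(s)_2$; hence the step history of $\pazocal{C}$ begins with $(s)_j$ for some $j\in\{1,2\}$. Since $\pazocal{C}$ is a \emph{maximal} one-machine subcomputation serving as a prefix of the multi-machine computation $\pazocal{D}$, it must terminate in a transition rule of the $j$-th machine, so its step history has one of the forms $(s)_jh_j(a)_j$ or $(s)_jh_j(s)_j^{-1}$. In the first case $W_t\equiv A(i,m)$, so Lemma \ref{subword M language m} yields $w\in\pazocal{L}$; in the second case $W_0$ satisfies the hypotheses of Lemma \ref{subword return to start m}, which again yields $w\in\pazocal{L}$.

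Now I would extend $\pazocal{C}$ by Lemma \ref{extending one-machine m}, obtaining a one-machine computation $\pazocal{C}':W_0'\to\dots\to W_t'$ of the standard base with history $H$ and $W_l'(i,m)\equiv W_l$ for all $l$. Tracking which of Cases 1--3 in the proof of Lemma \ref{extending one-machine m} is in force --- Case 1 for $j=1$, Case 2 or 3 for $j=2$ --- together with the identity $I(w,i,m)\equiv J(w,i,m)$, one checks that the reconstruction is pinned down: $W_0'\equiv I(w)$ when $j=1$ and $W_0'\equiv J(w)$ when $j=2$, so in particular $I(w)$ (respectively $J(w)$) is $H$-admissible. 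If the step history ends in $(a)_j$, the same reconstruction forces $W_t'\equiv W_{ac}$, and we are in alternative (1) (respectively (2)) with $I(w)\cdot H\equiv W_{ac}$ (respectively $J(w)\cdot H\equiv W_{ac}$). If it ends in $(s)_j^{-1}$, then $W_t$ is $\theta(s)_j$-admissible and the reverse computation of $\pazocal{C}$ still has step history of the form $(s)_jh'(s)_j^{-1}$, so Lemma \ref{subword return to start m} applied to it shows $W_t\equiv I(w',i,m)\equiv J(w',i,m)$ with $w'\in\pazocal{L}$, and the uniqueness of the reconstruction forces $W_t'\equiv I(w')$ (respectively $J(w')$); thus $I(w)\cdot H\equiv I(w')$ (respectively $J(w)\cdot H\equiv J(w')$). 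This exhausts the possibilities.

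The step I expect to be the main obstacle is the bookkeeping in the last paragraph: one must check that the reconstruction of Lemma \ref{extending one-machine m} is compatible with the already-known forms of \emph{both} endpoints of $\pazocal{C}$, i.e.\ that $\pazocal{C}'$ genuinely starts at the standard configuration $I(w)$ or $J(w)$ and ends at $W_{ac}$, $I(w')$, or $J(w')$ --- and not merely at some configuration with the same $(B_3')^{-1}$-projection. This is exactly where the observation $I(w,i,m)\equiv J(w,i,m)$ and the careful split into the three cases of that lemma are used; the remaining deductions are a verbatim copy of the arguments in Lemmas \ref{starts with I} and \ref{starts with J}.
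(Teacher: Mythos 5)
Your proposal is correct and follows essentially the same route as the paper, whose proof simply invokes Lemma \ref{extending one-machine m} and repeats the arguments of Lemmas \ref{starts with I} and \ref{starts with J}; your explicit use of $I(w,i,m)\equiv J(w,i,m)$ and the case-by-case check of the reconstruction is exactly the bookkeeping that argument requires.
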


\begin{proof}

Using Lemma \ref{extending one-machine m}, this follows from the same arguments as the ones presented as the proofs of Lemmas \ref{starts with I} and \ref{starts with J}.

\end{proof}

The next lemma follows immediately from Lemmas \ref{input sectors locked} through \ref{starts with I,J m}.

\begin{lemma} \label{extending computations}

Suppose $\pazocal{C}:A(i,m)\to\dots\to A(i,m)$ is a reduced computation of $\textbf{M}$ with history $H$. Let $H\equiv H_1\dots H_l$ be the factorization so that each $H_j$ is a maximal one-machine subcomputation. Set $W_j\equiv A(i,m)\cdot(H_1\dots H_j)$ for each $j=0,\dots l$. Then either (1) $W_j\equiv A(i,m)$ or (2) there exists a $w_j\in\pazocal{L}$ such that $W_j\equiv I(w_j,i,m)$ or $J(w_j,i,m)$.

In case (1), set $W_j'(1)\equiv W_j'(2)\equiv W_{ac}$; in case (2), set $W_j'(1)\equiv I(w_j)$ and $W_j'(2)\equiv J(w_j)$.

Then for each $j$, there exists $z_j\in\{1,2\}$ and a reduced computation in the standard base $\pazocal{C}_j':W_{j-1}'(z_j)\to\dots\to W_j'(z_j)$ with history $H_j$. (Note that $z_j$ is such that $H_j$ is the history of a one-machine computation of the $z_j$-th machine).

\end{lemma}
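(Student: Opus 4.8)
The plan is to read off the coarse structure of $\pazocal{C}$ from the way its maximal one-machine factors $H_1,\dots,H_l$ are glued together, and then to hand each factor to the appropriate lemma of Sections~5.2--5.3. The structural point is that once a computation is inside $\textbf{M}_{6,1}$ (resp.\ $\textbf{M}_{6,2}$) its state letters are copies of those of $\textbf{M}_{6,1}$ (resp.\ $\textbf{M}_{6,2}$), so the only state letters common to rules of $\Theta_1$ and rules of $\Theta_2$ are the start letters $q(s)$ and the end letters $q(a)$ of $\textbf{M}$. Hence whenever two consecutive factors $H_j$ and $H_{j+1}$ lie in different machines ($z_j\neq z_{j+1}$), the last rule of $H_j$ cannot be a working rule, nor $\theta(s)_{z_j}$, nor $\theta(a)_{z_j}^{-1}$ (all of which leave the state in the $\textbf{M}_{6,z_j}$-world, on which no rule of $\Theta_{z_{j+1}}$ acts), so it is $\theta(s)_{z_j}^{-1}$ or $\theta(a)_{z_j}$; in particular every intermediate configuration $W_j$, as well as $W_0=W_l=A(i,m)$, has all its state letters equal to $q(s)$ or all equal to $q(a)$.

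Granting this, I prove the dichotomy (1)/(2) for each $W_j$. If $W_j$ has all state letters $q(a)$, then either $j\in\{0,l\}$, or $j$ is interior and the rule $\theta(a)_{z_j}$ that produces $W_j$ forces the configuration immediately before it to be in the accept state of $\textbf{M}_{6,z_j}$; since the accept state of $\textbf{M}_{6,z_j}$ is reached only from its (all-empty) accept configuration and $\theta(a)_{z_j}$ locks every sector, $W_j$ is all-empty, i.e.\ $W_j\equiv A(i,m)$, which is case~(1). If instead $W_j$ has all state letters $q(s)$, then $W_j$ is a subword of a start configuration with base the copy of $(B_3')^{-1}$ in $B_4(i)$, and inspecting the step history of the one-machine subcomputation $\pazocal{C}_{j+1}:W_j\to\dots\to W_{j+1}$ — which begins with $(s)_{z_{j+1}}$ and, by the structural point, ends with $(s)_{z_{j+1}}^{-1}$ or with $(a)_{z_{j+1}}$ — I apply Lemma~\ref{subword return to start m} in the first subcase, and Lemma~\ref{subword M language m} in the second (there $W_{j+1}\equiv A(i,m)$, so $\pazocal{C}_{j+1}$ is a one-machine computation landing at $A(i,m)$). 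Either way $W_j\equiv I(u^n,i,m)$ or $J(u^n,i,m)$ for some $u^n\in\pazocal{L}$, which is case~(2). Since the copy of $(B_3')^{-1}$ in $B_4(i)$ contains neither the special input sector nor its mirror, $I(w,i,m)\equiv J(w,i,m)$, so the value $w_j$ is unambiguous; only the full configuration $W_j'(z_j)$ depends on which machine $z_j$ is.

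For the existence of $\pazocal{C}_j'$ I argue for each $j$ in turn, splitting according to the forms of $W_{j-1}$ and $W_j$ just found. If $W_{j-1}\equiv W_j\equiv A(i,m)$, then $\pazocal{C}_j$ is a one-machine computation $A(i,m)\to\dots\to A(i,m)$ of the $z_j$-th machine, and Lemma~\ref{input sectors locked m} gives that $W_{ac}$ is $H_j$-admissible with $W_{ac}\cdot H_j\equiv W_{ac}$; since $W_{j-1}'(z_j)=W_j'(z_j)=W_{ac}$ this is the desired $\pazocal{C}_j'$. If $W_{j-1}\equiv I(w_{j-1},i,m)$ and $j<l$, then $\pazocal{C}_j$ is a maximal one-machine prefix of the multi-machine computation $W_{j-1}\to\dots\to W_l$, so Lemma~\ref{starts with I,J m} applies and gives, according to whether $z_j=1$ or $z_j=2$, that $I(w_{j-1})$ (resp.\ $J(w_{j-1})$) is $H_j$-admissible and that $W_{j-1}'(z_j)\cdot H_j$ equals $W_{ac}$ or $I(w')$ (resp.\ $J(w')$) for some $w'\in\pazocal{L}$; passing to the $(i,m)$-subword shows this equals $W_j'(z_j)$, so the standard-base computation with history $H_j$ issuing from $W_{j-1}'(z_j)$ is $\pazocal{C}_j'$. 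The remaining cases — $W_{j-1}\equiv A(i,m)$ with $W_j$ of the form $I(\cdot,i,m)$, and the first or last factor, where neither $\pazocal{C}_j$ nor its inverse is visibly a prefix of a multi-machine computation — I handle by applying directly the construction in the proof of Lemma~\ref{extending one-machine m} to lift $\pazocal{C}_j$ (a one-machine computation with base the copy of $(B_3')^{-1}$) to the standard base, checking that Case~1 of that construction ($z_j=1$) extends a configuration with $(i,m)$-part $A(i,m)$ (resp.\ $I(w,i,m)$) to $W_{ac}$ (resp.\ $I(w)$), and that Cases~2--3 ($z_j=2$) extend it to $W_{ac}$ (resp.\ $J(w)$); these extensions are $W_{\bullet}'(z_j)$ by definition, and the lifted computation inherits the reduced history $H_j$.

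The main difficulty is entirely organizational: one must carry along throughout the convention that the index $z_j$ selects whether $W_j'(z_j)$ is $I(w_j)$ or $J(w_j)$, and verify in each case that the full standard-base configurations furnished by Lemma~\ref{input sectors locked m}, Lemma~\ref{starts with I,J m}, or the construction of Lemma~\ref{extending one-machine m} are \emph{literally} the configurations $W_{j-1}'(z_j)$ and $W_j'(z_j)$ named in the statement. This is exactly the place where the asymmetry of $J(w)$ in its first (``special'') input coordinate, the fact that $A(i,m)$ lies below the special input sector, and the impossibility of a rule of $\Theta_{z_{j+1}}$ acting on an $\textbf{M}_{6,z_j}$-state are all used; once those identifications are settled, the statement follows from the cited lemmas with no further computation.
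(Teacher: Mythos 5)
Your proposal is correct and takes essentially the same approach the paper intends — the paper itself states the lemma "follows immediately from Lemmas \ref{input sectors locked} through \ref{starts with I,J m}" without supplying details, and your argument is an honest filling-in of those details: the structural observation that a change of machine can only happen through an all-$q(s)$ or all-$q(a)$ configuration, the dichotomy (1)/(2) obtained by feeding $\pazocal{C}_{j+1}$ to Lemmas \ref{subword return to start m} or \ref{subword M language m}, and the production of each $\pazocal{C}_j'$ via Lemmas \ref{input sectors locked m} and \ref{starts with I,J m}, falling back on the explicit lift from the proof of Lemma \ref{extending one-machine m} for the boundary factors. One small imprecision worth noting: for $i=1$ the copy of $(B_3')^{-1}$ in $B_4(1)$ does contain the \emph{mirror} of the `special' input sector, so the reason $I(w,1,m)\equiv J(w,1,m)$ is not that the mirror is absent but that $I(w)$ and $J(w)$ agree on it (both place $w^{-1}$ there) and differ only in the `special' input sector itself, which lies in the $B_3$ part; the conclusion you use is still correct.
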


In other words, Lemma \ref{extending computations} says that except for the insertion/removal of words of $\pazocal{L}$ from the `special' input sector, the computation $\pazocal{C}$ can be extended to a reduced computation $\pazocal{C}':W_{ac}\to\dots\to W_{ac}$ (though no such computation exists).

\begin{lemma} \label{projection admissible configuration not}

Let $W$ be an accepted configuration and $\theta\in\Theta$. Suppose $W$ is not $\theta$-admissible but $W(i)$ is $\theta$-admissible. Then $i\geq2$ and either:

\begin{addmargin}[1em]{0em}

(1) $\theta=\theta(s)_2$ and $W\equiv I(u^n)$ for some $u^n\in\pazocal{L}$.

(2) $\theta=\theta(12)_1$ and $W$ has $u^n$ written in the `special' input sector for some nonempty $u^n\in\pazocal{L}$.

\end{addmargin}

In particular, the configuration $V$ obtained by emptying the `special' input sector of $W$ is $\theta$-admissible.

\end{lemma}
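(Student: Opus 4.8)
The plan is to pin down the single structural difference between the `special' input sector $Q_0(1)R_0(1)$ and all the other input sectors, and to show this is the only mechanism by which $W$ can fail to be $\theta$-admissible while $W(i)$ succeeds. First I would record that for $j\geq2$ every rule of $\textbf{M}$ acts on $W(j)$ through a coordinate shift of the same piece of the rule, so the one asymmetry is that $\theta$ may lock the special input sector without locking the other input sectors. Since $W$ is accepted, either $W\equiv W_{ac}$ or $W$ is $\theta'$-admissible for the first rule $\theta'$ of an accepting computation; in particular all state letters of $W$ agree across coordinates up to coordinate shift, and the same is forced for its tape letters away from the special input sector (otherwise $W(i)$ would already fail). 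Hence the failure of $\theta$-admissibility of $W$ can only come from $\theta$ locking the special input sector while the special input sector of $W$ holds a nonempty word $u\in F(\pazocal{A})$; this also forces $i\geq2$, since if $i=1$ then $W(1)$ contains the special input sector and fails for the same reason.

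Next I would read off $\theta$ and $u$ from the state of $W$. If $W$ is a start configuration, only $\theta(s)_1$ and $\theta(s)_2$ can apply; $\theta(s)_1$ does not lock the special input sector, so $\theta=\theta(s)_2$, and Lemma~\ref{M language} gives $W\equiv I(v^n)$ or $W\equiv J(v^n)$ for some $v^n\in\pazocal{L}$; the special input sector being nonempty rules out $J(v^n)$, so $W\equiv I(u^n)$ with $u^n\in\pazocal{L}$, which is case (1). If $W$ is not a start configuration, then it is $W_{ac}$, or its state lies in (the copy of) $\textbf{M}_{6,2}$, or in $\textbf{M}_{6,1}$. The first is impossible, as $W_{ac}$ has empty special input sector. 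In the second, every applicable rule — the working rules of $\textbf{M}_{6,2}$ and the relevant $\Theta_2$-transitions — locks the special input sector, so that sector of $W$ would be empty, a contradiction. So the state of $W$ lies in $\textbf{M}_{6,1}$; but for every $\textbf{M}_{6,1}$-state other than the start state of $\textbf{M}_{6,1}$, every applicable rule locks $Q_0R_0$ in each coordinate, so once more the special input sector of $W$ would be empty. Thus $W$ has the start state of $\textbf{M}_{6,1}$, where among the applicable rules ($\textbf{M}_{6,1}(1)$-rules, $\theta(12)_1$, and $\theta(s)_1^{-1}$) only $\theta(12)_1$ locks the special input sector, so $\theta=\theta(12)_1$. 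To identify $u$, I would use that acceptedness forces all working, historical, and mirror sectors of $W$ to be empty (a nonempty such sector could never be cleared starting from a configuration at the start state of $\textbf{M}_{6,1}$ with nonempty special input sector, since the only rules reaching those sectors are blocked); then $\theta(s)_1^{-1}$ applies to $W$, the configuration $W_0\equiv W\cdot\theta(s)_1^{-1}$ is a start configuration that is accepted (prepend $\theta(s)_1$ to an accepting computation of $W$), hence $W_0\equiv I(v^n)$ or $J(v^n)$ by Lemma~\ref{M language}; its special input sector equals that of $W$ and is nonempty, so $W_0\equiv I(v^n)$ and the special input sector of $W$ is $v^n\in\pazocal{L}$, which is case (2).

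Finally, for the `in particular' clause: in either case the sole obstruction to $\theta$-admissibility of $W$ was the nonempty special input sector, which $\theta$ locks; emptying it to form $V$ changes no state letters and no other tape letters, so $V$ is $\theta$-admissible. I expect the main obstacle to be the part of the second paragraph that rules out, for an accepted $W$, every state except the two relevant ones, and in particular the verification that when $W$ sits at the start state of $\textbf{M}_{6,1}$ its working, historical, and mirror sectors are empty, so that $\theta(s)_1^{-1}$ can be pulled back to an accepted start configuration. This is where one must combine acceptedness with the earlier structural results on one-machine subcomputations (Lemmas~\ref{return to start}, \ref{subword return to start}, \ref{multiply one letter}) and with the uniform action of $\textbf{M}$ across coordinates $\geq2$.
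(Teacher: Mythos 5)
Your first paragraph and the state-by-state elimination in the second are essentially sound: localizing the failure to the `special' input sector, deducing $i\geq2$, handling the start-configuration case (only $\theta(s)_1,\theta(s)_2$ are applicable by state, so $\theta=\theta(s)_2$ and $W\equiv I(u^n)$ by Lemma \ref{M language}), and forcing the state of $W$ to be the start state of the $\textbf{M}_{6,1}$-copy so that $\theta=\theta(12)_1$ — all of this is correct and parallels the paper's argument (which instead splits on $\theta\in\Theta_2$ versus $\theta\in\Theta_1$ and works along the inverse of an accepting computation).

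The genuine gap is in how you finish case (2). The claim that acceptedness forces all working sectors of $W$ to be empty, so that $\theta(s)_1^{-1}$ applies to $W$, is false. The configurations that actually realize case (2) have the form $W\equiv\bigl(J(w)\cdot\theta(s)_1\bigr)\cdot H$ with $H$ the full $(1)_1$-computation of Lemma \ref{multiply one letter}: there every $R_0(i)P_1(i)$-sector and its mirror copy carries a copy of the nonempty word $w$, so $W$ is not $\theta(s)_1^{-1}$-admissible. Your justification (``the only rules reaching those sectors are blocked'') fails because an accepting computation can first run $(1)_1$-rules, thereby emptying the special input sector, and only then pass through $\theta(12)_1$ and on to the rules that clear the $R_0P_1$- and historical sectors. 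Moreover, the conclusion you extract from the pullback, $W\cdot\theta(s)_1^{-1}\equiv I(v^n)$, would place $v^n$ in every input sector of $W$; since $\theta(12)_1$ locks all input sectors (not only the special one), this contradicts your own hypothesis that $W(i)$ is $\theta(12)_1$-admissible, so your argument, taken literally, would show case (2) is vacuous — which it is not. What is missing is the dynamic step: as in the paper, invert an accepting computation of $W$, isolate the terminal maximal one-machine subcomputation of the first machine, and observe that its initial configuration is an accepted start or end configuration; the end case and the $I(w)$ or empty-input cases would make $W(1)$ a coordinate shift of $W(i)$, so it must be $J(w)$ with $w\in\pazocal{L}$ nonempty. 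Lemma \ref{multiply one letter} then forces the step history of this piece to be exactly $(s)_1(1)_1$ and shows that the special input sector of $W$ carries a full copy of $w^{-1}\in\pazocal{L}$ while the other input sectors are emptied; this is precisely where both the membership in $\pazocal{L}$ and the ``in particular'' clause come from.
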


\begin{proof}

Suppose $i=1$. Since each rule operates on the copies of $(B_3')^{-1}$ in parallel and $W$ is accepted, each subword of $W$ with such a base is a coordinate shift of the others. As $W(1)$ is $\theta$-admissible, it follows that each subword with base a copy of $(B_3')^{-1}$ is $\theta$-admissible. The symmetry of the rules' operation in the bases $B_4(j)$ for $j\geq2$ then implies that $W$ is $\theta$-admissible. So, $i\geq2$.

By a similar argument, it follows that $W(j)$ is $\theta$-admissible for each $j\geq2$. So, since $W$ is not $\theta$-admissible, $W(1)$ must not be $\theta$-admissible. Again, it follows that the subword of $W(1)$ whose base is a copy of $(B_3')^{-1}$ is $\theta$-admissible. So, by the definition of the rules, $\theta$ must lock the `special' input sector while that sector is not empty in $W$.

\underline{Case 1:} Suppose $\theta\in\Theta_2$.

As $W$ is accepted, it must be $\theta'$-admissible for some $\theta'\in\Theta$. But since each rule of $\Theta_2$ locks the `special' input sector, it follows that $\theta'\in\Theta_1$. Since $W(i)$ is admissible for rules from both $\Theta_1$ and $\Theta_2$, $W$ must be either a start or an end configuration. The only end configuration that is accepted, though, is $W_{ac}$, so that $W$ must be a start configuration. Clearly, then, $\theta=\theta(s)_2$.

Lemma \ref{M language} then implies that $W\equiv I(u^n)$ or $J(u^n)$ for some $u^n\in\pazocal{L}$; but since the `special' input sector must be nonempty, it follows that $W\equiv I(u^n)$.

\underline{Case 2:} Suppose $\theta\in\Theta_1$.

As the working rules of $\Theta_1$ operate in parallel as $\textbf{M}_5$, $W(1)$ cannot be a coordinate shift of $W(i)$. 

Let $\pazocal{C}':W_{ac}\equiv V_0\to\dots\to V_t\equiv W$ be the inverse of an accepting computation for $W$. Again noting that the rules of $\Theta_1$ operate in parallel as $\textbf{M}_5$, there exists a maximal subcomputation $\pazocal{D}':V_r\to\dots\to V_s$ such that $\pazocal{C}'=\pazocal{C}'_0\pazocal{D}'\pazocal{C}'_1$, $\pazocal{D}'$ is a one-machine computation of the second machine, and $\pazocal{C}'_1$ is a one-machine computation of the first machine.

Since $W(i)$ is $\theta$-admissible for $\theta\in\Theta_1$, $V_s$ must be an accepted start or end configuration. If it were an accepted end configuration, then $V_s\equiv W_{ac}$ and, since $\pazocal{C}'_1$ is a one-machine computation of the first machine, $W(1)$ is a coordinate shift of $W(i)$. So, $V_s$ must be an accepted start configuration, and so Lemma \ref{M language} implies that $V_s\equiv I(w)$ or $V_s\equiv J(w)$ for some $w\in\pazocal{L}$.

Again, if $V_s\equiv I(w)$ or $w$ is empty, then $W(1)$ is a coordinate shift of $W(i)$. So, $V_s\equiv J(w)$ for $w\in\pazocal{L}$ nonempty.

Clearly, the step history of $\pazocal{C}'_1$ must then have prefix $(s)_1(1)_1$. Letting $\pazocal{C}''_1$ be the restriction to the base $\{t(i)\}B_4(i)$, any subsequent letter of the step history must be $(12)_1$ by Lemma \ref{multiply one letter}. But since $V_s$ has empty `special' input sector, $V_t$ cannot be $\theta(12)_1$-admissible. So, the entire step history of $\pazocal{C}'_1$ is $(s)_1(1)_1$.

It follows from Lemma \ref{multiply one letter} that $W(i)$ is admissible only for rules of the step history $(1)_1$ or $(12)_1$. But $W(1)$ is admissible for all rules of step history $(1)_1$, so that $\theta=\theta(12)_1$.

Since $W(i)$ is $\theta$-admissible, Lemma \ref{multiply one letter} applied to $\pazocal{C}'_1$ implies that the `special' input sector of $W$ contains a copy of the word $w^{-1}\in\pazocal{L}$. Note that removing $w^{-1}$ from the `special' input sector of $W$ yields a configuration that is $\theta$-admissible.

\end{proof}

\begin{lemma} \label{accepted configuration a-length}

Let $W$ be an accepted configuration. Then $\frac{1}{2}|W(j)|_a\leq|W(1)|_a\leq\frac{3}{2}|W(j)|_a$ for all $2\leq j\leq L$.

\end{lemma}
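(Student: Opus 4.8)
The plan is to reduce the statement to a comparison of $W(1)$ with $W(2)$, dispose of one inequality completely, and reduce the other to a bookkeeping claim about the input sectors whose verification is the real work.

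\textbf{Reduction.} Since $W$ is accepted, $W(i)$ and $W(j)$ are coordinate shifts of one another for all $i,j\ge 2$ (this is recorded after the definition of coordinate shift in Section~5.3), so $|W(i)|_a=|W(j)|_a$, and it suffices to prove $\tfrac12|W(2)|_a\le|W(1)|_a\le\tfrac32|W(2)|_a$. Write $s_i:=|Q_0(i)R_0(i)|_a$ for the $a$-length of the $i$-th input sector of $W$. The structural fact behind everything is that every rule of $\textbf{M}$ acts on $B_4(1)$ and on $B_4(2)$ by the \emph{same} rule (identifying the two bases by a coordinate shift), the only exception being that every rule of $\Theta_2$ leaves the ``special'' input sector $Q_0(1)R_0(1)$ untouched while acting normally on $Q_0(2)R_0(2)$ and on the mirror copy of $Q_0(1)R_0(1)$. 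Fixing an accepting computation $W\to\dots\to W_{ac}$ and reading it backwards from $W_{ac}$ — where all the sectors involved below are empty — an induction on this computation shows that $W(1)$ and $W(2)$ coincide, up to a coordinate shift, outside their input sectors; hence $|W(1)|_a=r+s_1$ and $|W(2)|_a=r+s_2$ for a common $r\ge 0$.

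\textbf{The lower inequality.} A second, easier induction along the same backwards computation — using that every rule of $\textbf{M}$ acts on $Q_0(2)R_0(2)$ and on its mirror copy symmetrically while both are empty in $W_{ac}$ — shows that the mirror copy of $Q_0(2)R_0(2)$ in $W$ has $a$-length exactly $s_2$. That mirror copy is among the sectors counted by $r$, so $r\ge s_2$; since $s_1\ge 0$,
\[|W(1)|_a=r+s_1\ \ge\ r\ \ge\ \tfrac12\,(r+s_2)\ =\ \tfrac12|W(2)|_a .\]

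\textbf{The upper inequality.} Using $r\ge s_2$ again, it now suffices to prove $s_1\le s_2$, for then $|W(1)|_a=r+s_1\le r+s_2=|W(2)|_a\le\tfrac32|W(2)|_a$. Factor the accepting computation into maximal one-machine subcomputations; by Lemma~\ref{M language} the configuration at each junction other than the initial $W$ is $W_{ac}$, some $I(u^n)$, or some $J(u^n)$, and $s_1\le s_2$ in all three cases ($s_1=s_2$ for $W_{ac}$ and $I(u^n)$, and $s_1=0$ for $J(u^n)$). Moreover, combining Lemma~\ref{(A) and (B)}, Lemma~\ref{second step history} (which forbids $(21)(1)(12)$) and the fact that an accepting computation terminates in a step-$(5)$ configuration, one sees that any accepted configuration whose state is \emph{not} that of a step-$(1)$ submachine has all input sectors empty (its accepting computation never uses step $(1)$), so there $s_1=s_2=0$ as well; a start configuration is one of $I(u^n)$, $J(u^n)$. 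Hence the only remaining case is that $W$ itself lies inside a maximal step-$(1)$ subcomputation $\pazocal{D}$. For this case one restricts $\pazocal{D}$ to the sectors $Q_0(1)R_0(1)$ and $Q_0(2)R_0(2)$, to the adjacent sectors $R_0(1)P_1(1)$ and $R_0(2)P_1(2)$, and to all of their mirror copies: each restriction is a reduced computation to which Lemma~\ref{multiply one letter} applies, and because $\pazocal{D}$ has a common history $H$ for all of them, comparing the descriptions of $H$ furnished by Lemma~\ref{multiply one letter}(a), together with the projection identity $(Q_0(j)R_0(j))(R_0(j)P_1(j))=\text{const}$ along the step-$(1)$ part (and its mirror version) and the fact that the terminal configuration of $\pazocal{D}$ is one of the three types above, pins $s_1$ and $s_2$ down tightly enough — here the hypothesis $n\|u\|\ge 10^{10}\ge 2$ is used — to give $s_1\le s_2$.

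\textbf{Main obstacle.} The delicate point is precisely this last case when $\pazocal{D}$ abuts a $J(u^n)$ junction: there $Q_0(1)R_0(1)$ and $Q_0(2)R_0(2)$ genuinely get out of sync (one is emptied while the other is not), so $W(1)$ and $W(2)$ need not agree up to a coordinate shift even outside their input sectors, and one must track the special input sector, the sector $R_0(1)P_1(1)$, and all the relevant mirror copies simultaneously, controlling via repeated use of Lemma~\ref{multiply one letter} and projection arguments the additive discrepancies introduced by the step-$(1)$ rules and checking that they never push the ratio $|W(1)|_a/|W(2)|_a$ outside the interval $[\tfrac12,\tfrac32]$.
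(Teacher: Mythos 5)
Your reduction and your lower bound are sound: the sector-by-sector comparison of $W(1)$ with $W(2)$ (all sectors other than the input sectors stay coordinate shifts of one another, since the only asymmetry in the rules is the locking of $Q_0(1)R_0(1)$) and the mirror-symmetry argument giving $r\ge s_2$ are essentially the argument the paper itself uses for $\tfrac12|W(j)|_a\le|W(1)|_a$. The upper bound, however, contains a genuine error: the inequality $s_1\le s_2$ to which you reduce is false, and so is the bound $|W(1)|_a\le|W(2)|_a$ it would give. Take a nonempty $u^n\in\pazocal{L}$ and let $W\equiv J(u^n)\cdot\theta(s)_1h$, where $h$ consists of $m\le n\|u\|$ rules of step $(1)_1$ chosen, as in Lemma \ref{multiply one letter}, to erase letters of $u^n$ from the non-special input sectors; every such $W$ is accepted (invert $h$, apply $\theta(s)_1^{-1}$, then accept $J(u^n)$ by the second machine). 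Since the special input sector is empty in $J(u^n)$, each of these $m$ rules deposits a letter into $Q_0(1)R_0(1)$ while deleting one from $Q_0(2)R_0(2)$, so $s_1=m$ and $s_2=n\|u\|-m$: for $m>n\|u\|/2$ you get $s_1>s_2$, and at $m=n\|u\|$ one computes $|W(1)|_a=3n\|u\|$ while $|W(2)|_a=2n\|u\|$. The constant $3/2$ in the lemma is therefore attained, so the case you defer to the end (your ``main obstacle'') cannot be settled the way you propose: no bookkeeping can yield $s_1\le s_2$ there.

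What is needed in exactly that case is a two-sided count rather than a comparison of the input sectors alone: along the step-$(1)_1$ subcomputation issuing from the $J(u^n)$ junction, the special input sector of $W(1)$ gains $m$ letters, but its mirror copy simultaneously loses $m$ letters (the mirror copies are not locked by $\Theta_2$ and still carry a copy of $u^{-n}$ at $J(u^n)$), while $|W(2)|_a$ remains $2n\|u\|$; hence $|W(1)|_a=n\|u\|+2m\le 3n\|u\|=\tfrac32|W(2)|_a$, by Lemma \ref{multiply one letter}. This is what the paper does, after reducing to this situation by induction on the number of maximal one-machine subcomputations of the inverse accepting computation (with junctions at $W_{ac}$, $I(u^n)$ or $J(u^n)$), so your case analysis is the right skeleton — only the final estimate must track the mirror copy of the special input sector instead of aiming at $s_1\le s_2$. (Incidentally, your ``main obstacle'' paragraph contradicts your own reduction: $W(1)$ and $W(2)$ do remain coordinate shifts outside the input sectors even in this case, as your formula $|W(i)|_a=r+s_i$ requires.)
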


\begin{proof}

Let $\pazocal{C}$ be an accepting computation for $W$ and $\bar{\pazocal{C}}$ the inverse computation.

For $i,j\geq2$, each rule operates on the bases $\{t(i)\}B_4(i)$ and $\{t(j)\}B_4(j)$ in the same way, so that $W(i)$ and $W(j)$ are coordinate shifts of one another.

Moreover, the mirror symmetry of the rules implies that $W(j)$ can be factored into two words, one with base $\{t(j)\}B_3(j)$ and the other the mirror copy, with identical $a$-lengths. Meanwhile, the mirror copy contained in $\{t(1)\}B_4(1)$ is operated on similarly, so that $W(1)$ can be factored into two words with at least one of them having the same $a$-length as each part of $W(j)$. As a result, $\frac{1}{2}|W(j)|_a\leq|W(1)|_a$.

If $\bar{\pazocal{C}}$ is a one-machine computation of the first machine, then $W(1)$ is also a copy of $W(j)$, so that $|W(1)|_a=|W(j)|_a$. If it is a one-machine computation of the second machine, then $|W(1)|_a\leq|W(j)|_a$. 

So, assume $\bar{\pazocal{C}}$ is a multi-machine computation and factor its history $H$ as $H\equiv H_1\dots H_l$ so that each $H_i$ is the history of a maximal one-machine subcomputation.

Inducting on $l$, one can assume that $W_{ac}\cdot H_1$ is not $W_{ac}$. So, we assume that there exists $u^n\in\pazocal{L}$ such that $W_{ac}\cdot H_1\equiv I(u^n)$ (i.e $H_1$ is the history of a one-machine computation of the first machine) or $W_{ac}\cdot H_1\equiv J(u^n)$ (i.e of the second machine).

If $W_{ac}\cdot H_1\equiv I(u^n)$, then since $\bar{\pazocal{C}}$ is not a one-machine computation, $I(u^n)$ must be $\theta(s)_2$-admissible. But this is only possible if $u^n$ is empty. Then either $W_{ac}\cdot H_1H_2\equiv J(v^n)$ for some $v^n\in\pazocal{L}$ or $W_{ac}\cdot H_1H_2\equiv W_{ac}$. In either case, each intermediate configuration $W'$ of the subcomputation with history $H_2$ satisfies $|W'(1)|_a\leq|W'(j)|_a$ for all $j\geq2$. If $W_{ac}\cdot H_1H_2\equiv W_{ac}$, then we can then disregard $H_1H_2$ and consider the shorter computation of history $H_3\dots H_l$; while if $W_{ac}\cdot H_1H_2\equiv J(v^n)$, then we can then replace $H_1H_2$ with the history $H_1'$ of a one-machine computation of the second machine satisfying $W_{ac}\cdot H_1'\equiv J(v^n)$. Either way, the number of steps has been reduced, so that we again induct on $l$.

As a result, we can assume that $W_{ac}\cdot H_1\equiv J(u^n)$. If $u^n$ is empty, then this is similar to the previous case. Otherwise, $H\equiv H_1H_2$ with $H_2$ the history of a computation of step history $(s)_1(1)_1$ (as in the proof of Lemma \ref{projection admissible configuration not}). As the computation with step history $(1)_1$ operates as $\textbf{LR}$ on each subword $Q_0(i)R_0(i)P_1(i)$ (and its mirror copy), it satisfies the hypotheses of Lemma \ref{multiply one letter}. Considering the computation outlined in Lemma \ref{multiply one letter}, note that a copy of $u^n$ would be written in each $R_0(i)P_1(i)$-sector and its mirror copy; but a copy of its inverse would be written in the `special' input sector (while the copy of $u^n$ is erased from all other input sectors and mirror copies of input sectors). The inequality then follows from Lemma \ref{multiply one letter}$(c)$.

\end{proof}

\begin{lemma} \label{M accepted configurations}

For every accepted configuration $W$ of $\textbf{M}$, there exists an accepting computation $W\equiv W_0\to\dots\to W_t\equiv W_{ac}$ of length at most $c_4\|W(i)\|$ and such that $|W_j(i)|_a\leq c_4|W(i)|_a$ for all $0\leq j\leq t$ and $1\leq i\leq L$. Moreover, this accepting computation consists of at most two maximal one-machine subcomputations.

\end{lemma}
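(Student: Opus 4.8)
The plan is to deduce the statement from its $\textbf{M}_5$-analogue, Lemma \ref{M_5 accepted configurations}, by unwinding the two levels of composition $\textbf{M}_5\rightsquigarrow\textbf{M}_{6,1},\textbf{M}_{6,2}\rightsquigarrow\textbf{M}$ with the help of the projected-subword lemmas of Section 5.3 and the structural lemmas \ref{starts with I}--\ref{accepted configuration a-length}.

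First I would pin down the coarse shape of an accepting computation. Given any accepting computation $\pazocal{C}$ of $W$, factor its history into maximal one-machine pieces. Since state letters can only pass from $\textbf{M}_{6,1}$ to $\textbf{M}_{6,2}$ (or back) through the start state or the end state of $\textbf{M}$, each junction configuration between consecutive pieces is forced to be $W_{ac}$ or, by Lemma \ref{M language}, a start configuration $I(w')$ or $J(w')$ with $w'\in\pazocal{L}$. By Lemmas \ref{M_{6,1} language} and \ref{M_{6,2} language}, any such $I(w')$ (resp. $J(w')$) is itself accepted by a single one-machine computation of the first (resp. second) machine. Consequently $W$ has an accepting computation consisting of at most two maximal one-machine subcomputations: a one-machine computation $\pazocal{C}_1\colon W\to V$ with $V\in\{W_{ac},I(w'),J(w')\}$, followed (if $V\neq W_{ac}$) by a one-machine accepting computation of $V$. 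This already yields the ``moreover'' clause; it remains to replace the two pieces by short ones carrying the $a$-length bound.

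The quantitative bounds come from projecting onto the coordinates $\{t(i)\}B_4(i)$. Fix $i\ge2$. By Lemma \ref{extending one-machine}, a one-machine computation of $\textbf{M}$ restricted to this base is a computation of a copy of $\textbf{M}_5$, with $W_{ac}(i)=A(i)$ playing the role of the accept configuration; one checks (inheriting acceptance from $\pazocal{C}_1$ together with its tail) that $W(i)$ is accepted by $\textbf{M}_5$. Lemma \ref{M_5 accepted configurations} then provides an accepting $\textbf{M}_5$-computation for $W(i)$ of length at most $c_3\|W(i)\|$ all of whose configurations have $a$-length at most $c_3|W(i)|_a$; expanding it back to $\textbf{M}$ via Lemma \ref{extending one-machine} and concatenating with a likewise short accepting computation of the intermediate $V$ (whose parameter $\|w'\|$ is $\le c_3\|W(i)\|$, being read off a short accepting $\textbf{M}_5$-computation of $W(i)$) gives a computation $W\to W_{ac}$ of $\textbf{M}$ of length $\le c_4\|W(i)\|$ and with every $W_j(i)$ of $a$-length $\le c_4|W(i)|_a$, using $c_4\gg c_3$. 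For $i,j\ge2$ the subwords $W(i)$ and $W(j)$ are coordinate shifts of one another, so a single history works for all these coordinates at once and $|W(i)|_a=|W(j)|_a$; and Lemma \ref{accepted configuration a-length} transfers the $a$-length bound to the coordinate $i=1$ after enlarging the constant.

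The main obstacle — the only genuinely non-routine point — is the ``special'' input sector $Q_0(1)R_0(1)$: when a relevant one-machine piece runs in the second machine, $W(1)$ is no longer a coordinate shift of the $W(j)$ with $j\ge2$, and one must verify, using Lemmas \ref{projection admissible configuration not}, \ref{input sectors locked} and \ref{accepted configuration a-length}, that the $\textbf{M}_5$-computation built for $W(i)$ genuinely lifts to a computation of $\textbf{M}$ whose initial configuration is $W$ itself (not some neighbour differing in that sector), and that the choice of which machine to run each piece in is compatible with the content of that sector. Once this bookkeeping is settled, the length and $a$-length estimates close by the parameter inequality $c_4\gg c_3$.
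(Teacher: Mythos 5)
Your skeleton (two maximal one-machine pieces; projecting to a base $\{t(i)\}B_4(i)$ with $i\geq2$; invoking Lemma \ref{M_5 accepted configurations}; lifting back with Lemma \ref{extending one-machine}; transferring to the coordinate $i=1$ via Lemma \ref{accepted configuration a-length}) is essentially the paper's argument for the case where $W$ is accepted by a single one-machine computation. But there is a genuine gap exactly at the point you set aside as bookkeeping. Consider an accepted $W$ obtained from $J(u^n)$, $u^n\in\pazocal{L}$ nonempty, by $\theta(s)_1$ followed by a nonempty block of rules of step history $(1)_1$: such $W$ is accepted (reverse that block, then let the second machine accept $J(u^n)$), but $W(1)$ is neither a coordinate shift of $W(2)$ nor the result of emptying its `special' input sector. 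Hence neither Case 1 nor Case 2 of the construction in Lemma \ref{extending one-machine}, applied to a short accepting $\textbf{M}_5$-computation of $W(2)$, yields a computation of $\textbf{M}$ whose initial configuration is $W$; this cannot be ``verified'' away, since every one-machine computation constrains the `special' sector in one of exactly these two ways. So for such $W$ your construction produces a short accepting computation of some neighbour of $W$, not of $W$, and the required length and $a$-length bounds for $W$ are not established.

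What is missing is the extra ingredient the paper supplies by inducting on the number of maximal one-machine pieces: one first shows (using Lemma \ref{M language}, Lemma \ref{turn}, and the elimination of the degenerate junctions $W_{ac}$, $I(1)$, $J(1)$, or a first-machine junction, by substituting transition-rule-only computations) that the accepting computation can be arranged to begin with a first-machine prefix of step history $(1)_1(s)_1^{-1}$ ending at $J(u^n)$ with empty `special' input sector; one then bounds this prefix, in length and in intermediate $a$-lengths, by $\|W(1)\|$ and $|W(1)|_a$ via Lemma \ref{multiply one letter} applied to the restriction to the `special' input sector; only after that does the projection-and-lifting argument (your second paragraph, which is the paper's base case, including the contradiction argument showing the lift really starts at the given configuration when the computation is one-machine) apply to $J(u^n)$, and Lemma \ref{accepted configuration a-length} converts the coordinate-1 bounds into bounds for all $i$. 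A secondary, repairable point: ``one checks that $W(i)$ is accepted by $\textbf{M}_5$'' also needs the junction analysis, since a multi-machine computation restricted to $\{t(i)\}B_4(i)$ is not literally an $\textbf{M}_5$-computation (two disjoint copies of the state letters and the rules $\theta(s)_j^{\pm1},\theta(a)_j^{\pm1}$ intervene); the paper sidesteps this by only ever restricting one-machine pieces with the transition rules peeled off.
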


\begin{proof}

Let $A(W)$ be the set of accepting computations of $W$. For $\pazocal{C}\in A(W)$, define $\ell(\pazocal{C})$ as the number of maximal one-machine subcomputations of $\pazocal{C}$. Then, define $\ell(W)=\min\{\ell(\pazocal{C})\mid \pazocal{C}\in A(W)\}$.

We then induct on $\ell(W)$. For the base case, suppose $\ell(W)=1$, so that $W$ is accepted by a one-machine computation of the $j$-th machine $\pazocal{C}:W\equiv W_0\to\dots\to W_t\equiv W_{ac}$. If the first rule of $\pazocal{C}$ is $\theta(a)_j^{-1}$ or $\theta(s)_j$ (i.e if $W$ is either a start or end configuration), then note that $|W_1|_a=|W|_a$, so that it suffices to prove the statement for $W_1$. So, assume $W$ is neither a start or end configuration. 

Set $\pazocal{C}'$ as the subcomputation $W_0\to\dots\to W_{t-1}$. By the definition of the machine and the assumption that $\pazocal{C}'$ is reduced, the step history of $\pazocal{C}'$ has no occurrence of the letters $(s)_j^{\pm1}$ or $(a)_j^{\pm1}$. Let $\pazocal{D}$ be the restriction of $\pazocal{C}'$ to the base $\{t(2)\}B_4(2)$. Then one can identify $\pazocal{D}$ with a reduced computation of $\textbf{M}_5$ in the standard base, which is further an accepting computation for its initial configuration $V$ (a copy of $W(2)$). Lemma \ref{M_5 accepted configurations} then supplies a reduced computation $\pazocal{E}$ of $\textbf{M}_5$ accepting $V$ so that the length of $\pazocal{E}$ has length at most $c_3\|V\|$ and the $a$-length of each configuration of $\pazocal{E}$ is at most $c_3|V|_a$. Identifying the rules of $\textbf{M}_5$ with the rules of $\Theta_j$ then yields a one-machine computation $\pazocal{E}'$ of $\textbf{M}$ with initial admissible word $W(2)$ and final admissible word a copy of $A_5$ (specifically $W_{ac}\cdot\theta(a)_j^{-1}$).

Applying Lemma \ref{extending one-machine} then yields a one-machine computation of the $j$-th machine $\pazocal{F}:W_0'\to\dots\to W_r'$ in the standard base such that the restriction to the base $\{t(2)\}B_4(2)$ agrees with $\pazocal{E}'$. As the final configuration of $\pazocal{E}'$ is a copy of $A_5$, the constructions in Lemma \ref{extending one-machine} imply that $W_r'$ is $\theta(a)_j$-admissible, so that $W_0'$ is accepted by $\pazocal{F}$. So, $W_0'(2)\equiv W(2)$, and since both $W_0'$ and $W$ are accepted, $W_0'(i)\equiv W(i)$ for all $i\geq2$. Any difference between $W$ and $W_0'$ must then be in their projections onto the `special' input sector.

Suppose $W_0'(1)\neq W(1)$. Since at least one of $W$ and $W_0'$ must have nonempty `special' input sector and both are accepted by one-machine computations of the $j$-th machine, it follows immediately that $j=1$. Then the construction of Lemma \ref{extending one-machine} implies that $W_0'(1)$ is a coordinate shift of $W_0'(2)$, so that $W(1)$ is not a coordinate shift of $W(2)$; but then $\pazocal{C}$ is a one-machine computation of the first machine that accepts $W$, which is impossible as the first machine acts in parallel on all bases $\{t(i)\}B_4(i)$.

So, $W_0'\equiv W$, and so augmenting $\pazocal{F}$ yields an accepting computation $\bar{\pazocal{F}}:W\equiv W_0'\to\dots\to W_r'\to W_{ac}$. Since the restriction of $\pazocal{F}$ to the base $\{t(2)\}B_4(2)$ is $\pazocal{E}'$, it follows that the restriction of $\pazocal{F}$ to the base $\{t(i)\}B_4(i)$ is a copy of $\pazocal{D}$ for all $i\geq2$. So, for all $i\geq2$, the length of $\bar{\pazocal{F}}$ is at most $c_3\|W(i)\|+1$ and $|W_j'(i)|_a\leq c_3|W(i)|_a$ for all $0\leq j\leq r$.

Meanwhile, as each $W_j'$ is accepted, Lemma \ref{accepted configuration a-length} implies that $|W_j'(1)|_a\leq\frac{3}{2}|W_j'(2)|_a\leq\frac{3}{2}c_3|W(2)|_a\leq 3c_3|W(1)|_a$ for all $0\leq j\leq r$ and the length of the computation is at most $2c_3\|W(1)\|+1$.

Now, suppose $\ell(W)=\ell>1$ and set $\pazocal{C}:W\equiv W_0\to\dots\to W_t\equiv W_{ac}$ as an accepting computation with exactly $\ell$ maximal one-machine subcomputations. Set $\bar{\pazocal{C}}$ as the inverse computation of $\pazocal{C}$ and $\bar{\pazocal{C}}_1:W_t\to\dots\to W_s$ as the maximal one-machine subcomputation, say of the $i$-th machine, serving as a prefix for $\bar{\pazocal{C}}$. Then the rule corresponding to the transition $W_{s+1}\to W_s$ must be either $\theta(s)_i^{-1}$ or $\theta(a)_i$. If it is $\theta(a)_i$, then $W_s\equiv W_{ac}$, so that $W_0\to\dots\to W_s$ is an accepting computation made up of one less maximal one-machine computation.

So, we assume that $W_{s+1}\to W_s$ is given by the rule $\theta(s)_i^{-1}$. By Lemma \ref{M language}, there exists $u^n\in\pazocal{L}$ such that $W_s\equiv I(u^n)$ or $W_s\equiv J(u^n)$.

It follows from the definition of the rules that $W_s\to W_{s-1}$ is given by the rule $\theta(s)_j$ for $j\neq i$. So, Lemma \ref{turn} implies that $W_s$ has empty `special' input sector.

In the case that $i=1$, it then follows that $u^n=1$, so that $W_s$ is the start input configuration with empty input. By the definition of the machine, there exists an accepting one-machine computation $W_s\equiv V_s\to\dots\to V_{t'}\equiv W_t$ of the second machine consisting only of transition rules, so that $t'\leq t$. Then, we can consider the shorter accepting computation $$W\equiv W_0\to\dots\to W_{s-1}\equiv V_{s+1}\to\dots\to V_{t'}\equiv W_t\equiv W_{ac}$$ (with perhaps some reductions) made up of at least one less maximal one-machine computation.

So, $i=2$, so that $W_s\equiv J(u^n)$ for some $u^n\in\pazocal{L}$. If $u^n=1$, then again one can construct a one-machine computation of the first machine consisting only of transition rules and, as above, construct an accepting computation of $W$ made up of at least one less maximal one-machine computation. So, $u^n$ is nonempty.

Then, consider the maximal one-machine subcomputation $W_s\to\dots\to W_r$. If this contains just one rule, then $W\equiv W_s\cdot\theta(s)_1$, so that the appropriate bounds arise from applying Lemma \ref{M_5 accepted configurations} to the one-machine computation $W_s\to\dots\to W_t$ arising above.

Otherwise, the step history of this maximal one-machine subcomputation has prefix $(s)_1(1)_1$. 

Since the `special' input sector is empty for $J(u^n)$, the application of Lemma \ref{multiply one letter} to the restriction of the computation to this sector shows that only an empty computation of step history $(1)_1$ can result in a $(12)_1$-admissible configuration; but since $u$ is nonempty, this resulting configuration is not $(12)_1$-admissible.

Similarly, the application of Lemma \ref{multiply one letter} to the restriction of the computation to the copies of the $R_0P_1$-sector imply that only an empty computation of step history $(1)_1$ can yield a $(s)_1^{-1}$-admissible word. However, this would yield a computation that is not reduced.

So, the entire subcomputation $W_s\to\dots\to W_0$ has step history $(s)_1(1)_1$. Applying Lemma \ref{multiply one letter}$(b,c,d)$ to the restriction of this subcomputation to the `special' input sector then gives the inequalities $s\leq\|W(1)\|$ and $|W_j(1)|_a\leq|W(1)|_a$ for all $0\leq j\leq s$.

Since $W_t\to\dots\to W_s$ is a one-machine computation of the second machine, as above we find that $t-s\leq 2c_3\|W(1)\|+1$ and $|W_j(1)|_a\leq 3c_3|W_s(1)|_a$ for all $s\leq j\leq t$. So, $t\leq (2c_3+1)\|W(1)\|+1$ and $|W_j(1)|_a\leq 3c_3|W(1)|_a+1$ for all $0\leq j\leq t$.

Lemma \ref{accepted configuration a-length} then gives appropriate bounds in terms of $|W(x)|_a$ and $\|W(x)\|$ for $x\geq2$.

\end{proof}

\smallskip

%%%%%%%%%%%%%%%%%%%%%%%%%%%%%%%%%%%%%%%%%%%%%%%%%%%%%%%%%%%%%%%%%

\subsection{Computations of $\textbf{M}$ with long history} \

The next two lemmas are easily seen using Lemmas \ref{M_5 special computations} and \ref{M_5 factor} and restricting any computation to the base $\{t(x)\}B_4(x)$ for $x\geq2$.

\begin{lemma} \label{M special computations}

Fix $i\in\{1,\dots,L\}$. Let $\pazocal{C}:W_0\to\dots\to W_t$ be a one-machine computation of the $j$-th machine for $j=1$ (respectively $j=2$) and such that (1) $W_0\equiv A(i)$ or $W_0\equiv I(u^n,i)$ (respectively $W_0\equiv J(u^n,i)$) for some $u^n\in\pazocal{L}$ and (2) $W_t\equiv A(i)$ or $W_t\equiv I(v^n,i)$ (respectively $W_t\equiv J(v^n,i)$) for some $v^n\in\pazocal{L}$. Then the sum of the lengths of all subcomputations of $\pazocal{C}$ with step histories $(12)_j(2)_j(23)_j$, $(32)_j(2)_j(21)_j$, $(34)_j(4)_j(45)_j$, and $(54)_j(4)_j(43)_j$ is at least $0.99t$.

\end{lemma}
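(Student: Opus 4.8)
The plan is to deduce the statement from Lemma~\ref{M_5 special computations}, which is exactly the corresponding assertion for $\textbf{M}_5$, by restricting $\pazocal{C}$ to a single coordinate block $\{t(x)\}B_4(x)$ with $x\geq 2$. If $t=0$ the claim is trivial, so I assume $t>0$.

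First I would invoke Lemma~\ref{extending one-machine} on the history $H$ of $\pazocal{C}$ to extend $\pazocal{C}$ to a one-machine computation $\pazocal{C}^\ast\colon W_0^\ast\to\dots\to W_t^\ast$ of the $j$-th machine in the standard base, again with history $H$ and with $W_l^\ast(i)\equiv W_l$ for every $l$ (Case~1 of that lemma handles $j=1$, and Cases~2 and~3 handle $j=2$ with $i\geq 2$ and $i=1$). I then fix some $x\in\{2,\dots,L\}$ (possible since $L$ is large) and restrict $\pazocal{C}^\ast$ to the base $\{t(x)\}B_4(x)$. Since $\textbf{M}$ acts on this block exactly as $\textbf{M}_{6,j}$ acts on its $x$-th copy of the standard base of $\textbf{M}_5$, and $\textbf{M}_{6,j}$ acts on each such copy exactly as $\textbf{M}_5$, this restriction — after the evident coordinate shift and after deleting the at most two transition rules $\theta(s)_j^{\pm1}$ and $\theta(a)_j^{\pm1}$, which in a one-machine computation can occur only at its two ends and fix every tape letter — is a reduced computation $\pazocal{C}'\colon W_0'\to\dots\to W_{t'}'$ of $\textbf{M}_5$ in the standard base, with $t'\geq t-2$. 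Because $x\geq 2$, the `special' input sector plays no role in this block, so the $x$-th blocks of $A(i)$, $I(u^n,i)$ and $J(u^n,i)$ become (after the coordinate shift and the deletion of the bounding transition) precisely $A_5$, $I_5(u^n)$ and $I_5(u^n)$. Hence $W_0'\equiv A_5$ or $I_5(u^n)$ and $W_{t'}'\equiv A_5$ or $I_5(v^n)$, so Lemma~\ref{M_5 special computations} applies to $\pazocal{C}'$.

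Lemma~\ref{M_5 special computations} then gives that the total length of the subcomputations of $\pazocal{C}'$ with step histories $(12)(2)(23)$, $(32)(2)(21)$, $(34)(4)(45)$ and $(54)(4)(43)$ is at least $0.99t'$. Each of these is the $x$-th-block restriction of a subcomputation of $\pazocal{C}$ of the same length whose step history is obtained by attaching the subscript $j$ to each letter, and conversely; all of them lie strictly inside $\pazocal{C}$, between its two bounding transition rules. Since $t>0$, one checks from the structure of $\textbf{M}_5$ that a nonempty reduced computation with endpoints of the form $A_5$ or $I_5(\cdot)$ has length far larger than any fixed constant — it must run $\textbf{LR}_k$ at least once, so $t'\geq 2k-1$ — and combining this with the slightly sharper bound $\tfrac{500}{501}t'$ extracted from the proof of Lemma~\ref{M_5 special computations} and with $t'\geq t-2$ yields a total length of at least $0.99t$. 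This gives the claim.

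I expect the one place needing care to be the bookkeeping around the coordinate shift and the finitely many transition rules $\theta(s)_j^{\pm1}$, $\theta(a)_j^{\pm1}$: namely, that restricting to a block with $x\geq 2$ really does produce a \emph{reduced} $\textbf{M}_5$-computation whose endpoints are exactly $A_5$ or $I_5(\cdot)$ rather than some perturbation, and that dropping the at most two transition steps does not spoil the $0.99$-coefficient. Lemma~\ref{M_5 factor} is what supplies the corresponding control in the companion lemma, where the endpoints of the computation need not be of this special form.
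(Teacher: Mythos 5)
Your argument matches the paper's one‑line proof sketch for this lemma: pass, via Lemma~\ref{extending one-machine} when $i=1$ (and directly when $i\geq2$), to a block $\{t(x)\}B_4(x)$ with $x\geq2$, identify the result with an $\textbf{M}_5$‑computation, and invoke Lemma~\ref{M_5 special computations}. Your bookkeeping around the two boundary transition rules $\theta(s)_j^{\pm1},\theta(a)_j^{\pm1}$ and the observation that the proof of Lemma~\ref{M_5 special computations} actually yields the stronger coefficient $500/501$ — so that $\tfrac{500}{501}(t-2)\geq 0.99\,t$ once a nonempty such computation has $t'\geq 2k-1$ with $k$ large — correctly fills in a step the paper leaves implicit.
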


\begin{lemma} \label{M one-machine factor}

Fix $i\in\{1,\dots,L\}$. Let $\pazocal{C}:W_0\to\dots\to W_t$ be a one-machine computation of the $j$-th machine for $j=1$ (respectively $j=2$) and such that $W_0\equiv A(i)$ or $W_0\equiv I(u^n,i)$ (respectively $W_0\equiv J(u^n,i)$) for some $u^n\in\pazocal{L}$. Let $H$ be the history of $\pazocal{C}$. Then there exists a factorization $H\equiv H_1H_2$ such that:

\begin{addmargin}[1em]{0em}

(1) for $\pazocal{C}_1$ the subcomputation of history $H_1$, the sum of the lengths of all subcomputations of $\pazocal{C}_1$ with step histories $(12)_j(2)_j(23)_j$, $(32)_j(2)_j(21)_j$, $(34)_j(4)_j(45)_j$, and $(54)_j(4)_j(43)_j$ is at least $0.99\|H_1\|$

(2) $\|H_2\|\leq c_4\|W_t\|$ or $\|H_2\|\leq\|H_1\|/200$

\end{addmargin}

\end{lemma}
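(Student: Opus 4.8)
The plan is to reduce Lemma \ref{M one-machine factor} to its analogue for $\textbf{M}_5$, namely Lemma \ref{M_5 factor}, by restricting the one-machine computation $\pazocal{C}$ to the base $\{t(x)\}B_4(x)$ for some fixed $x\geq2$. Since $\pazocal{C}$ is a one-machine computation of the $j$-th machine, every rule of $\pazocal{C}$ acts on the subword with base $\{t(x)\}B_4(x)$ exactly as the corresponding rule of $\textbf{M}_{6,j}$, which in turn acts as a copy of $\textbf{M}_5$. Thus the restriction $\pazocal{C}^{(x)}:W_0(x)\to\dots\to W_t(x)$ can be identified with a reduced computation of $\textbf{M}_5$ in the standard base (using $x\geq2$ so there is no issue with the `special' input sector being emptied, and so that for $j=2$ the word $J(u^n,x)$ is literally a coordinate shift of $I_5(u^n)$). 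The hypothesis on $W_0$ translates to: $W_0(x)\equiv A_5$ (in case $W_0\equiv A(i)$) or $W_0(x)\equiv I_5(u^n)$ (in case $W_0\equiv I(u^n,i)$ for $j=1$ or $W_0\equiv J(u^n,i)$ for $j=2$, recalling $J(u^n,x)\equiv I(u^n,x)$ for $x\geq2$), which is exactly the hypothesis of Lemma \ref{M_5 factor}.

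The key steps, in order, are as follows. First I would fix $x\geq2$ and set $V_l\equiv W_l(x)$, observing via the construction of $\textbf{M}$ (the `working' rules of $\Theta_j$ operate exactly as the rules of $\textbf{M}_{6,j}$, which run parallel copies of $\textbf{M}_5$, and transition rules $\theta(s)_j,\theta(a)_j$ correspond to the boundary transition rules of $\textbf{M}_5$) that $\pazocal{C}^{(x)}:V_0\to\dots\to V_t$ is a reduced computation of $\textbf{M}_5$ in the standard base with history $H$ (under the natural identification of the rules of $\Theta_j$ acting on $\{t(x)\}B_4(x)$ with the rules of $\textbf{M}_5$), and that its step history coincides with that of $\pazocal{C}$ after deleting the subscript $j$. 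Second, I would check the hypothesis of Lemma \ref{M_5 factor}: $V_0\equiv A_5$ or $V_0\equiv I_5(u^n)$, which follows from the definition of $A(i,x)$, $I(u^n,i,x)$, $J(u^n,i,x)$ as coordinate shifts of $A_5$, $I_5(u^n)$, $I_5(u^n)$ respectively for $x\geq2$. Third, I would invoke Lemma \ref{M_5 factor} to obtain a factorization $H\equiv H_1H_2$ so that (with $\pazocal{C}^{(x)}_1$ the subcomputation of history $H_1$) the total length of the subcomputations of $\pazocal{C}^{(x)}_1$ with the four listed step histories (without subscripts) is at least $0.99\|H_1\|$, and such that $\|H_2\|\leq c_4\|V_t\|$ or $\|H_2\|\leq\|H_1\|/200$. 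Finally, I would translate back: the step histories of subcomputations of $\pazocal{C}_1$ are exactly those of $\pazocal{C}^{(x)}_1$ with the subscript $j$ reinserted, so the first conclusion is immediate; and by Lemma \ref{accepted configuration a-length} (or the simpler observation $\|V_t\|=\|W_t(x)\|\leq\|W_t(i)\|\cdot\text{const}$, but it is cleanest to phrase $\|H_2\|\leq c_4\|W_t\|$ in terms of the full configuration), the length bound on $H_2$ carries over — here one uses $\|W_t\|\geq\|W_t(x)\|=\|V_t\|$ so that $\|H_2\|\leq c_4\|V_t\|\leq c_4\|W_t\|$.

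The main obstacle — really the only subtle point — is handling the `special' input sector $Q_0(1)R_0(1)$ correctly, which is why I would fix $x\geq2$ throughout rather than $x=1$. For $x\geq2$ the subword with base $\{t(x)\}B_4(x)$ of $I(u^n)$, $J(u^n)$, and of $I(u^n,i)$, $J(u^n,i)$ (their restrictions) all agree with the standard copy of $I_5(u^n)$, and no rule of $\Theta_2$ behaves anomalously on that base (the asymmetry of $\Theta_2$ is confined to $\{t(1)\}B_4(1)$). One should double-check that the identification of rules is faithful enough that reducedness of $\pazocal{C}$ implies reducedness of $\pazocal{C}^{(x)}$ (it does, since the history word is literally the same and reducedness is a property of the history in $F(\Theta^+)$ versus $F(\Theta(\textbf{M}_5)^+)$ under the bijection of rules), and that step histories match up (they do, by the correspondence of the submachines $\textbf{M}_{6,j}(r)\leftrightarrow\textbf{M}_5(r)$ and of the transition rules). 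Modulo these bookkeeping checks, the lemma is an immediate consequence of Lemma \ref{M_5 factor}, exactly as the remark preceding the statement asserts.
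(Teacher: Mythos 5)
Your plan is essentially the paper's own (the paper's remark before the lemma literally says to use Lemma \ref{M_5 factor} together with restriction to $\{t(x)\}B_4(x)$ for $x\geq2$), but there is a gap in how you carry it out: you write $V_l\equiv W_l(x)$ as though each $W_l$ were a full configuration of $\textbf{M}$ that can be restricted to base $\{t(x)\}B_4(x)$ for any $x$. It is not. The hypothesis has $W_0\equiv A(i)$, $I(u^n,i)$, or $J(u^n,i)$, all of which are admissible subwords whose base is $\{t(i)\}B_4(i)$ for the fixed $i$; so $\pazocal{C}$ is a computation with that single base, and $W_l(x)$ is undefined unless $x=i$. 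Your device of ``choosing $x\geq2$ so the special input sector is not an issue'' is unavailable when $i=1$.

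Concretely, the cases fall out as follows. For $i\geq 2$, and also for $i=1$ with $j=1$, the rules of the $j$-th machine act on $\{t(i)\}B_4(i)$ exactly as $\textbf{M}_5$ acts on its standard base, so $\pazocal{C}$ is itself (a coordinate shift of) a computation of $\textbf{M}_5$ starting from $A_5$ or $I_5(u^n)$, and Lemma \ref{M_5 factor} applies directly; no restriction from the standard base is involved. The problematic case is $i=1$, $j=2$: there the rules of $\Theta_2$ restricted to $\{t(1)\}B_4(1)$ are $\textbf{M}_5$-rules with the $Q_0R_0$-sector additionally locked, and the initial word $J(u^n,1)$ is not a coordinate shift of $I_5(u^n)$ (its $Q_0R_0$-sector is empty), so Lemma \ref{M_5 factor} cannot be invoked as is. The fix is to first extend $\pazocal{C}$ to a one-machine computation in the full standard base via Lemma \ref{extending one-machine} (Case 3 of that construction handles exactly $j=2$, $i=1$), and \emph{then} restrict the extended computation to $\{t(x)\}B_4(x)$ for a fixed $x\geq 2$; that restriction is an $\textbf{M}_5$-computation starting from $A_5$ or $I_5(u^n)$, to which Lemma \ref{M_5 factor} applies. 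Since the history $H$ and hence all step histories and subcomputation lengths are unchanged under extension and restriction, conclusion (1) transfers verbatim. For conclusion (2) you should also note that the restriction's final configuration may have larger norm than $W_t$ (because the extension mirror-doubles the content of the $(B_3')^{-1}$ half, filling in the $Q_0R_0$-sector that was empty in $W_t$), so the bound one directly gets is $\|H_2\|\leq c_4\|W_t'(x)\|\leq 2c_4\|W_t\|$; this is absorbed by the slack in the parameters, but it should be said rather than elided.
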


For any accepted configuration $W$, fix an accepting computation $\pazocal{C}(W)$ according to Lemma \ref{M accepted configurations}.

\begin{lemma} \label{M projected long history}

Let $W_0$ be an accepted configuration and $\pazocal{C}:W_0(i)\equiv V_0\to\dots\to V_t$ be a reduced computation of $\textbf{M}$ for some $i\in\{2,\dots,L\}$. Then there exists an accepted configuration $W_t$ such that $W_t(i)\equiv V_t$. Let $H_0,H_t$ be the histories of $\pazocal{C}(W_0),\pazocal{C}(W_t)$, respectively. Then for the parameters $c_5$ and $c_6$, either:

\begin{addmargin}[1em]{0em}

$(a)$ $t\leq c_5\max(\|V_0\|,\|V_t\|)$ and $\|V_j\|\leq c_6\max(\|W_0(i)\|,\|W_t(i)\|)$ for every $j=0,\dots,t$ or

$(b)$ $\|H_0\|+\|H_t\|\leq t/500$ and the sum of the lengths of all subcomputations of $\pazocal{C}$ with step histories $(12)_i(2)_i(23)_i$, $(32)_i(2)_i(21)_i$, $(34)_i(4)_i(45)_i$, and $(54)_i(4)_i(43)_i$ is at least $0.98t$.

\end{addmargin}

\end{lemma}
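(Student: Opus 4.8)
The plan is to first produce the accepted configuration $W_t$, and then to reduce the dichotomy to the one-machine results of Section~5.3 (Lemmas~\ref{M special computations} and~\ref{M one-machine factor}) together with the bookkeeping already carried out for $\textbf{M}_5$ in Lemma~\ref{M_5 long history}. For the existence of $W_t$: since $W_0$ is accepted, restricting a fixed accepting computation of $W_0$ to the base $\{t(i)\}B_4(i)$ gives a reduced computation $\pazocal{E}_0\colon V_0\to\dots\to A(i)$, and reducing $\pazocal{C}^{-1}\pazocal{E}_0$ gives a reduced computation $\pazocal{F}\colon V_t\to\dots\to A(i)$. Let $\pazocal{F}_1\colon V_t\to\dots\to U$ be the maximal one-machine subcomputation serving as a prefix of $\pazocal{F}$. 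The key point is a \emph{transition-rule} observation on the base $\{t(i)\}B_4(i)$ with $i\geq2$: the only rules applicable to a configuration carrying the start letters (resp.\ end letters) of $\textbf{M}$ are $\theta(s)_1,\theta(s)_2$ (resp.\ $\theta(a)_1^{-1},\theta(a)_2^{-1}$), so a maximal one-machine subcomputation of $\pazocal{F}$ that abuts a piece of the other machine must begin and end at such a configuration; combined with Lemmas~\ref{subword return to start} and~\ref{subword M language} (and Lemma~\ref{multiply one letter}$(a)$ applied to the appropriate sector restrictions, as in the proofs of Lemmas~\ref{return to start} and~\ref{projection admissible configuration not}), this forces $U\equiv A(i)$ or $U\equiv I(u^n,i)=J(u^n,i)$ for some $u^n\in\pazocal{L}$. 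Extending $\pazocal{F}_1$ to the standard base by Lemma~\ref{extending one-machine}, its initial configuration $\widetilde W$ has $\widetilde W(i)\equiv V_t$ and its terminal configuration is the canonical lift of $U$, namely $W_{ac}$, $I(u^n)$, or $J(u^n)$, each of which is accepted (Lemma~\ref{M language}); hence $\widetilde W$ is accepted, and we put $W_t:=\widetilde W$. By Lemma~\ref{accepted configuration a-length} all $\|W_t(j)\|$ are comparable, so Lemma~\ref{M accepted configurations} yields $\|H_0\|\leq c_4\|V_0\|$ and $\|H_t\|\leq c_4\|V_t\|$.

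Next I factor $\pazocal{C}=\pazocal{C}_1\cdots\pazocal{C}_m$ into maximal one-machine subcomputations. By maximality any two consecutive $\pazocal{C}_k$ belong to different machines, so the transition-rule observation above (applied verbatim to $\pazocal{C}$) shows that every internal boundary configuration is $A(i)$ or $I(u^n,i)$ with $u^n\in\pazocal{L}$. Consequently each middle piece $\pazocal{C}_k$ with $2\leq k\leq m-1$ begins and ends at such a configuration, so Lemma~\ref{M special computations} gives that the total length of its subcomputations with step histories among $(12)_i(2)_i(23)_i$, $(32)_i(2)_i(21)_i$, $(34)_i(4)_i(45)_i$, $(54)_i(4)_i(43)_i$ is at least $0.99\|\pazocal{C}_k\|$. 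For $\pazocal{C}_m$ (special start, arbitrary end) Lemma~\ref{M one-machine factor} gives $\pazocal{C}_m=\pazocal{C}_m'\pazocal{C}_m''$ with $\pazocal{C}_m'$ enjoying the same $0.99$-property and $\|\pazocal{C}_m''\|\leq c_4\|V_t\|$ or $\|\pazocal{C}_m''\|\leq\|\pazocal{C}_m'\|/200$; for $\pazocal{C}_1$ (arbitrary start, special end) I apply Lemma~\ref{M one-machine factor} to $\pazocal{C}_1^{-1}$ and reverse, using that the four listed step histories are interchanged in pairs under reversal, obtaining $\pazocal{C}_1=\pazocal{C}_1''\pazocal{C}_1'$ with $\pazocal{C}_1'$ good and $\pazocal{C}_1''$ short in the same sense with $\|V_0\|$ in place of $\|V_t\|$. (If $m=1$ and at least one endpoint is special the same reasoning applies with one of $\pazocal{C}_1'',\pazocal{C}_m''$ empty; if $m=1$ and neither $V_0$ nor $V_t$ has the special form, identify the one-machine $\pazocal{C}$ with a computation of $\textbf{M}_5$ in its standard base and quote Lemma~\ref{M_5 long history} directly, translating the bound on the $\textbf{M}_5$-accepting histories into one on $\|H_0\|+\|H_t\|$ via Lemma~\ref{M accepted configurations}.)

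It remains to assemble the dichotomy. Write $G=\|\pazocal{C}_1'\|+\sum_{k=2}^{m-1}\|\pazocal{C}_k\|+\|\pazocal{C}_m'\|$ and $S=\|\pazocal{C}_1''\|+\|\pazocal{C}_m''\|$, so $t=G+S$ and the total length of the four-type subcomputations of $\pazocal{C}$ is at least $0.99G$. If $t\leq c_5\max(\|V_0\|,\|V_t\|)$ we are in case~(a): the bound $\|V_j\|\leq c_6\max(\|V_0\|,\|V_t\|)$ holds because the base $\{t(i)\}B_4(i)$ has fixed length and each rule changes the $a$-length of each sector by at most two, so $\|V_j\|\leq\|V_0\|+O(t)$, which is $\leq c_6\max(\|V_0\|,\|V_t\|)$ by choice of $c_6$. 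Otherwise $t>c_5\max(\|V_0\|,\|V_t\|)$; then either $S\leq c_4(\|V_0\|+\|V_t\|)\leq(2c_4/c_5)t$ or $S\leq G/100$, and in both cases $G\geq0.99t$, so the four-type subcomputations total at least $0.99G\geq0.98t$, while $\|H_0\|+\|H_t\|\leq c_4(\|V_0\|+\|V_t\|)\leq(2c_4/c_5)t\leq t/500$ since $c_5\gg c_4$. This is case~(b).

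The routine part is the last paragraph; the real work is in the first two. The main obstacle is the identification, in Steps~1 and~2, of the boundary configurations between consecutive one-machine pieces as exactly $A(i)$ or $I(u^n,i)$ with $u^n\in\pazocal{L}$ — this requires threading together Lemmas~\ref{turn}, \ref{return to start}, \ref{subword return to start}, \ref{subword M language} and the appropriate restrictions of Lemma~\ref{multiply one letter} — and the verification that the extension lemmas (Lemma~\ref{extending one-machine}) glue along these configurations, where one must keep track of which of the two submachines $\Theta_1,\Theta_2$ a given piece lies in and exploit the harmless ambiguity (for $i\geq2$) between the lifts $I(u^n)$ and $J(u^n)$, which restrict to the same admissible word on $\{t(i)\}B_4(i)$.
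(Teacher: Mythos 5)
Your construction of $W_t$ (reducing $\pazocal{C}^{-1}\pazocal{E}_0$, classifying the terminal configuration $U$ of the first maximal one-machine piece, and lifting it by Lemma \ref{extending one-machine}) is a legitimate variant of the paper's argument, and your final assembly of cases (a)/(b) from Lemmas \ref{M special computations}, \ref{M one-machine factor} and \ref{M accepted configurations} matches the paper's. However, Step 2 has a genuine gap. You assert that ``the transition-rule observation (applied verbatim to $\pazocal{C}$)'' shows every internal boundary configuration is $A(i)$ or $I(u^n,i)$ with $u^n\in\pazocal{L}$. The observation only shows that a boundary configuration carries all start letters or all end letters of $\textbf{M}$ (hence equals $A(i)$ in the end case, since $\theta(a)_j$ locks every sector); in the start case it is merely some restricted start configuration, whose input sector and its mirror copy may a priori contain arbitrary, even unrelated, words. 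Lemmas \ref{subword return to start} and \ref{subword M language} force the form $I(u^n,i)$ with $u^n\in\pazocal{L}$ only when the boundary is an endpoint of a one-machine piece whose other endpoint is also special (step history $(s)_jh_j(s)_j^{-1}$) or which terminates at $A(i)$. That is automatic for middle pieces, and it works for your auxiliary computation $\pazocal{F}$ in Step 1 precisely because $\pazocal{F}$ is anchored at $A(i)$; but it fails for $\pazocal{C}$ itself when $m=2$ and the unique boundary is of start type: it is then flanked by $\pazocal{C}_1$, which starts at the arbitrary $V_0$, and $\pazocal{C}_2$, which ends at the arbitrary $V_t$, so neither cited lemma applies to either piece, and your subsequent applications of Lemma \ref{M one-machine factor} to $\pazocal{C}_2$ and $\pazocal{C}_1^{-1}$ are not justified.

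The missing ingredient is exactly the paper's main step, which uses the hypothesis that $W_0$ is accepted (this hypothesis never enters your Step 2): lift the pieces of $\pazocal{C}$ to the standard base by Lemma \ref{extending one-machine}; compare the initial lift $W_0^{(1)}$ with $W_0$ via Lemma \ref{projection admissible configuration not} (they can differ only in the `special' input sector, and only in the two situations listed there), so $W_0^{(1)}$ is accepted; conclude that every configuration of the lifted first piece is accepted, and hence that the lifted boundary configuration, being an accepted start or end configuration, is $W_{ac}$, $I(u^n)$ or $J(u^n)$ with $u^n\in\pazocal{L}$ by Lemma \ref{M language}; then restrict to $\{t(i)\}B_4(i)$ and iterate along the pieces. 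Without running this acceptance-propagation for $\pazocal{C}$ (or otherwise anchoring the $m=2$ boundary, e.g.\ through the accepted configuration $W_t$ you built), the boundary classification you rely on in Step 2 remains unproved.
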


\begin{proof}

Let $H'$ be the history of $\pazocal{C}$ and factor it as $H'=H'_1\dots H'_l$ for $l\geq1$ where each $H'_j$ is the history of a maximal one-machine subcomputation.

Let $\pazocal{C}_j:V_x\to\dots\to V_y$ be the subcomputation of $\pazocal{C}$ with history $H'_j$. Applying Lemma \ref{extending one-machine} to $\pazocal{C}_j$ then produces a reduced computation of the standard base $\pazocal{C}_j':W_x^{(j)}\to\dots\to W_y^{(j)}$ of history $H_j'$ such that $W_z^{(j)}(i)\equiv V_z$ for $x\leq z\leq y$.

Suppose $W_0$ is not $H_1'$-admissible and let $H_1''$ be the maximal prefix for which it is admissible (perhaps $H_1''$ is empty). Then, let $W_r\equiv W_0\cdot H_1''$. Then $W_r$ is not $\theta$-admissible for $\theta$ the subsequent rule of $H_1'$, so that Lemma \ref{projection admissible configuration not} applies, i.e either $W_r\equiv I(u^n)$ for some $u^n\in\pazocal{L}$ or $W_r$ has $u^n$ written in the `special' input sector for some nonempty $u^n\in\pazocal{L}$ while all other input sectors are empty. In either case, it is easy to see that the construction of Lemma \ref{extending one-machine} applied to $W_r(i)$ yields an accepted configuration $W_r^{(1)}$. As a result, $W_0^{(1)}$ is also accepted.

This allows one to conclude that all configurations of $\pazocal{C}_1'$ are accepted. If $l\geq2$, then the final configuration of $\pazocal{C}_1'$, $W_s^{(1)}$, must either be $W_{ac}$ or $I(u^n)$ or $J(u^n)$ for some $u^n\in\pazocal{L}$. In each case, it follows that $W_s^{(2)}$ is again of one of these three forms, so that each configuration of $\pazocal{C}_2'$ is accepted. Continuing in this way, $W_t=W_t^{(l)}$ is an accepted configuration.

As in the proof of Lemma \ref{M_5 long history}, it suffices to assume that $t>c_5\max(\|V_0\|,\|V_t\|)$. Then, by Lemma \ref{M accepted configurations}, $\|H_0\|+\|H_t\|\leq 2c_4\max(\|V_0\|,\|V_t\|)\leq t/500$.

If $l=1$, then the rest of the statement follows from Lemma \ref{M_5 long history}. So, assume $l>1$.

For $j=1,\dots,l-1$, letting the terminal configuration of the subcomputation $\pazocal{C}_j$ of history $H_j'$ be $W_j'$, we see that $W_j'$ is one of $W_{ac}$, $I(u^n)$, or $J(u^n)$ for some $u^n\in\pazocal{L}$. Lemma \ref{M special computations} then implies that for $2\leq j\leq l-1$, the sum of the lengths of all maximal subcomputations of $\pazocal{C}_j$ with the relevant step histories is at least $0.99\|H_j'\|$.

Set $\|H_1'\|=x$. If $W_1'\equiv W_{ac}$, then $\|V_0\|\geq\|V_x\|$; in this case, set $x=x'$. Otherwise, applying Lemma \ref{M one-machine factor} to the inverse computation $\bar{\pazocal{C}}_1$, we can find an $x'\leq x$ such that the sum of the maximal subcomputations of $V_{x'}\to\dots\to V_x$ with relevant step history is at least $0.99(x-x')$ and either $x'\leq c_4\|V_0\|$ or $x'\leq(x-x')/200$. In the latter case, clearly $x'<t/200$; for the former case,
$$x'\leq c_4\|V_0\|\leq c_4\max(\|V_0\|,\|V_t\|)<c_5\max(\|V_0\|,\|V_t\|)/200<t/200$$
Similarly, setting $\|H_l'\|=y$, there exists $y'\leq y$ such that the sum of the lengths of the subcomputations $V_{t-y}\to\dots\to V_{t-y'}$ with the relevant step histories is at least $0.99(y-y')$ and $y'<t/200$.

So, the sum of the subcomputations of $\pazocal{C}$ with the relevant step histories is at least
$$0.99(t-x'-y')\geq0.99^2t>0.98t$$

%If the first rule $\theta_1$ of $\pazocal{C}$ is not applicable to $W_0$, then Lemma \ref{projection admissible configuration not} implies that $W_0\equiv I(u^n)$ for some $u^n\in\pazocal{L}$, $J(u^n)$ is $\theta_1$-admissible, and $W_0(i)\equiv J(u^n,i)$. So, in this case we can replace $W_0$ with $J(u^n)$. As a result, we can assume without loss of generality that $W_0$ is $\theta_1$-admissible.
%
%We construct $W_t$ from $V_t$ by the methods used in previous proofs. Similar to above, we assume $W_t$ is $\theta_t^{-1}$-admissible for $\theta_t$ the final rule of $\pazocal{C}$.
%
%Now, letting $H'$ be the history of $\pazocal{C}$, let $H'_1$ be the maximal prefix for which $W_0$ is admissible. If $W_0\cdot H'_1$ is not $W_t$, then $W_0\cdot H'_1\equiv J(u^n)$

\end{proof}

The next lemma is proved in exactly the same way that Lemma \ref{M_5 long history has controlled} is proved, using Lemma \ref{M projected long history} in place of Lemma \ref{M_5 long history}.

\begin{lemma} \label{M projected long history controlled}

Let $W_0$ be an accepted configuration and $\pazocal{C}:W_0(i)\equiv V_0\to\dots\to V_t$ be a reduced computation of $\textbf{M}$ for some $i\in\{2,\dots,L\}$. Then the history of any subcomputation $\pazocal{D}:V_r\to\dots\to V_s$ of $\pazocal{C}$ (or the inverse of $\pazocal{D}$) of length at least $0.4t$ contains a subcomputation with controlled history.

\end{lemma}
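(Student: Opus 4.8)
The statement is the exact analogue of Lemma~\ref{M_5 long history has controlled}, but now stated for the projected computation $\pazocal{C}:W_0(i)\equiv V_0\to\dots\to V_t$ of $\textbf{M}$ rather than for a computation of $\textbf{M}_5$. As the sentence preceding the lemma already indicates, the strategy is to copy the proof of Lemma~\ref{M_5 long history has controlled} verbatim, substituting Lemma~\ref{M projected long history} for Lemma~\ref{M_5 long history} wherever the latter was invoked. So the first thing I would do is recall the structure of that earlier argument: given $\pazocal{D}$ of length $\geq 0.4t$, one applies the dichotomy (a)/(b) of Lemma~\ref{M projected long history} to $\pazocal{D}$ (viewed as a computation in its own right with the same initial/final accepted configurations supplied by that lemma). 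Case (a) is ruled out by the hypothesis $t>c_4\max(\|W_0\|,\|W_t\|)$ together with the parameter inequality $c_5<<\dots$, exactly as in the $\textbf{M}_5$ version; this forces case (b).

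In case (b), the sum of lengths of maximal subcomputations of $\pazocal{D}$ with step histories among $(12)_i(2)_i(23)_i$, $(32)_i(2)_i(21)_i$, $(34)_i(4)_i(45)_i$, $(54)_i(4)_i(43)_i$ is at least $0.98\cdot(\text{length of }\pazocal{D})$, hence at least $0.98\cdot0.4t > 0$, so in particular $\pazocal{D}$ contains at least one such maximal subcomputation $\pazocal{D}'$. The point is that such a subcomputation has \emph{controlled} history: a maximal subcomputation with step history of the form $(34)(4)(45)$ (or its three siblings) is precisely, after restriction to a single historical $R_i(x)P_{i+1}(x)$-sector or the input sector, a computation acting in parallel as $\textbf{LR}$ or $\textbf{LR}_k$ bracketed by the appropriate connecting rules $\zeta^{(\cdots)}$; this is the content of the definition of ``controlled history'' (Section 4.9, carried over via Lemma~\ref{M controlled}). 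Then $\pazocal{D}'$ (or its inverse, depending on orientation) is the desired subcomputation with controlled history, completing the argument. One only needs the trivial observation that if $\pazocal{D}$ has length $\geq 0.4t$ then so does its inverse, so the ``or the inverse of $\pazocal{D}$'' clause handles the case where the relevant maximal subcomputation appears reading backwards.

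The two things that require a moment's care rather than pure transcription are: (i) checking that Lemma~\ref{M projected long history} genuinely applies to the \emph{sub}computation $\pazocal{D}:V_r\to\dots\to V_s$ and not just to all of $\pazocal{C}$ — this is fine because Lemma~\ref{M projected long history} produces, for any reduced computation starting from a projection $W_0(i)$ of an accepted configuration, an accepted $W_t$ with $W_t(i)\equiv V_t$, and the configurations $V_r,V_s$ inherit this (every intermediate $V_j$ is the $i$-th projection of an accepted configuration by the argument inside the proof of Lemma~\ref{M projected long history}), so $\pazocal{D}$ again satisfies the hypotheses of that lemma with $W_0,W_t$ replaced by the corresponding accepted configurations; and (ii) the bookkeeping that the constant $0.98$ (rather than $0.99$) in Lemma~\ref{M projected long history}(b) still leaves $0.98\cdot0.4t>0$, which is immediate.

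\begin{proof}
The argument is identical to that of Lemma~\ref{M_5 long history has controlled}, with Lemma~\ref{M projected long history} used in place of Lemma~\ref{M_5 long history}. Let $\pazocal{D}:V_r\to\dots\to V_s$ be a subcomputation of $\pazocal{C}$ with $s-r\geq0.4t$; replacing $\pazocal{D}$ by its inverse if necessary, we may argue with $\pazocal{D}$ itself. Each configuration $V_j$ occurring in $\pazocal{C}$ is, by the proof of Lemma~\ref{M projected long history}, the $i$-th projection of an accepted configuration of $\textbf{M}$; in particular $V_r$ is such a projection, so Lemma~\ref{M projected long history} applies to $\pazocal{D}$. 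Since $t>c_4\max(\|W_0\|,\|W_t\|)$ and (by choice of parameters) $c_5>>c_4$, alternative $(a)$ of Lemma~\ref{M projected long history} would give $0.4t\leq s-r\leq c_5\max(\|V_r\|,\|V_s\|)$; but $\|V_r\|,\|V_s\|$ are bounded above by a fixed multiple of $\max(\|W_0\|,\|W_t\|)$ via Lemma~\ref{M accepted configurations} and Lemma~\ref{accepted configuration a-length}, contradicting $t>c_4\max(\|W_0\|,\|W_t\|)$ for $c_4$ large. Hence alternative $(b)$ holds for $\pazocal{D}$: the sum of the lengths of all maximal subcomputations of $\pazocal{D}$ with step histories $(12)_i(2)_i(23)_i$, $(32)_i(2)_i(21)_i$, $(34)_i(4)_i(45)_i$, or $(54)_i(4)_i(43)_i$ is at least $0.98(s-r)>0$. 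Thus $\pazocal{D}$ contains at least one such maximal subcomputation $\pazocal{D}'$. By the definition of controlled history (Section~4.9) and Lemma~\ref{M controlled}, any maximal subcomputation of $\pazocal{C}$ whose step history is one of these four words has, after the appropriate restriction, a controlled history: such a subcomputation operates in parallel as $\textbf{LR}$ (for step history involving $(4)_i$) or as $\textbf{LR}_k$ (for step history involving $(2)_i$) on the relevant historical or input sectors, bracketed by the corresponding connecting rules. Hence $\pazocal{D}'$ (or its inverse) is the required subcomputation with controlled history.
\end{proof}
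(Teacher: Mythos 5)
The paper does not actually prove this lemma: it disposes of it in one sentence, asserting that the proof of Lemma~\ref{M_5 long history has controlled} (which is itself cited from [26] without a proof being reproduced) carries over verbatim with Lemma~\ref{M projected long history} substituted for Lemma~\ref{M_5 long history}. Your high-level framing therefore agrees with the paper's intent. However, your attempt to spell out the details introduces a genuine gap, and I want to flag it.

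The gap is in the Case~(a) dismissal. You apply the dichotomy of Lemma~\ref{M projected long history} directly to the subcomputation $\pazocal{D}:V_r\to\dots\to V_s$, and in Case~(a) you obtain $0.4t\le s-r\le c_5\max(\|V_r\|,\|V_s\|)$. You then assert that $\|V_r\|,\|V_s\|$ are bounded by a fixed multiple of $\max(\|W_0\|,\|W_t\|)$ ``via Lemma~\ref{M accepted configurations} and Lemma~\ref{accepted configuration a-length}.'' Neither lemma yields such a bound: Lemma~\ref{M accepted configurations} controls the $a$-lengths along a \emph{specially chosen} accepting computation, not along an arbitrary reduced computation $\pazocal{C}$; and Lemma~\ref{accepted configuration a-length} compares $\|W(1)\|$ with $\|W(j)\|$ for a \emph{single} accepted $W$, which does not bound an intermediate $\|V_r\|$ in terms of the endpoints of $\pazocal{C}$. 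So the asserted bound is unjustified. Moreover, even granting it, the resulting inequality $0.4t\le c_5\,C\max(\|W_0\|,\|W_t\|)$ does not contradict $t>c_4\max(\|W_0\|,\|W_t\|)$, because in the parameter ordering $c_4<<c_5$ the constant $c_5$ dominates $c_4$; your closing remark ``for $c_4$ large'' is incompatible with that ordering. The cleaner route, and presumably the intended one, is to apply the dichotomy to $\pazocal{C}$ itself (not to $\pazocal{D}$), where the corresponding Case~(a) conclusion $t\le c_5\max(\|V_0\|,\|V_t\|)$ is ruled out \emph{directly} by a hypothesis $t>c_5\max(\|V_0\|,\|V_t\|)$; once Case~(b) holds for all of $\pazocal{C}$, one argues that a subcomputation of length $\ge 0.4t$ must contain a controlled piece. (Note that this hypothesis, which is present in the $\textbf{M}_5$ version Lemma~\ref{M_5 long history has controlled}, appears to have been dropped from the statement of Lemma~\ref{M projected long history controlled} in the paper; it is certainly needed, since without it the claim fails already for $t=1$. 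You tacitly reinstated a hypothesis of this shape, which is the right instinct, but with $c_4$ rather than the constant $c_5$ actually appearing in Lemma~\ref{M projected long history}(a).)

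A smaller point: the final step, from ``$\pazocal{D}$ contains a maximal subcomputation $\pazocal{D}'$ with step history $(34)_i(4)_i(45)_i$ (or one of its three siblings)'' to ``$\pazocal{D}'$ contains a controlled history,'' is correct but worth making explicit. $\pazocal{D}'$ itself is not controlled; its $(4)_i$ part is a computation of (two parallel copies of) $\overline{\textbf{M}}_4$ that starts from the start state of $\textbf{M}_3(1)$ (forced by the transition $\theta(34)_i$) and ends in the end state of $\textbf{M}_3(4k+3)$ (forced by $\theta(45)_i$), so it necessarily passes through every transition rule $\chi(j,j+1)$, and hence contains a subcomputation with history $\chi(4j+2,4j+3)H'\chi(4j+3,4j+4)$, which is controlled of type~$(a)$; similarly the $(2)_i$ case yields a controlled subcomputation of type~$(b)$ via the connecting rules of $\textbf{LR}_k$. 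You gesture at this but it deserves to be a self-contained observation rather than folded into a citation of the definition.
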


\begin{lemma} \label{long step history}

Let $\pazocal{C}$ be a reduced computation with base $\{t(i)\}B_4(i)$ for some $i\geq2$. Then the step history of $\pazocal{C}$ either:

\begin{addmargin}[1em]{0em}

(A) contains a subword of the form $(34)_i(4)_i(45)_i$, $(54)_i(4)_i(43)_i$, $(12)_i(2)_i(23)_i$, or $(32)_i(2)_i(21)_i$

(B) has length at most $8$

\end{addmargin}

\end{lemma}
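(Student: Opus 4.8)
The strategy is to transfer the structure theorem Lemma \ref{(A) and (B)} from $\textbf{M}_5$ to the present situation, exploiting that on a block $\{t(i)\}B_4(i)$ with $i\geq 2$ the two families of working rules $\Theta_1,\Theta_2$ act identically, namely as the machine $\textbf{M}_5$ in its standard base; indeed $\textbf{M}_{6,1}$ and $\textbf{M}_{6,2}$ differ only in that the latter locks the `special' input sector $Q_0(1)R_0(1)$, which lies in $\{t(1)\}B_4(1)$. Accordingly I identify the step-history letters carrying index $1$ with those carrying index $2$ and write $(\cdot)_i$ for the resulting letters of a computation with base $\{t(i)\}B_4(i)$; between consecutive occurrences of the transition rules $\theta(s)_j^{\pm1},\theta(a)_j^{\pm1}$ ($j\in\{1,2\}$) the computation $\pazocal{C}$ runs as a reduced computation of $\textbf{M}_5$ in its standard base. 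Assume from now on that $\pazocal{C}$ is not of type (A); we must show its step history has length at most $8$.

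First I would factor $\pazocal{C}=\pazocal{C}_1\cdots\pazocal{C}_m$ into maximal one-machine subcomputations. Since the working state letters of $\Theta_1$ are disjoint from those of $\Theta_2$, the passage from $\pazocal{C}_r$ to $\pazocal{C}_{r+1}$ must occur at a configuration admissible for transition rules of both families, and inspecting $\theta(s)_j,\theta(a)_j$ shows that every such configuration is a start configuration of $\textbf{M}$ or the accept configuration $W_{ac}$. Hence each interior piece $\pazocal{C}_r$ ($1<r<m$) is \emph{complete}: both its ends are of this kind, so, restricted to $\{t(i)\}B_4(i)$, it is a reduced computation of $\textbf{M}_5$ joining two of the landmark configurations $I_5(\cdot)$, $A_5$. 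Its step history thus has the form $e\,h\,f$ with $e\in\{(s)_i,(a)_i^{-1}\}$, $f\in\{(s)_i^{-1},(a)_i\}$ and $h$ an $\textbf{M}_5$-step history; using Lemmas \ref{subword return to start} and \ref{subword M language}, the proof of Lemma \ref{return to start}, Lemma \ref{first step history} (to exclude the cases giving $A_5\to A_5$) and Lemma \ref{multiply one letter}, any nonempty $h$ then contains one of $(34)_i(4)_i(45)_i$, $(54)_i(4)_i(43)_i$, $(12)_i(2)_i(23)_i$, $(32)_i(2)_i(21)_i$ — contradicting that $\pazocal{C}$ is not of type (A) — while $h$ empty forces $\pazocal{C}_r$ to consist of two mutually inverse transition rules, contradicting reducedness. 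Therefore $m\leq 2$.

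It remains to bound the step history when $m\leq 2$. Deleting from the step history of a piece $\pazocal{C}_r$ a possible leading and a possible trailing transition letter leaves an $\textbf{M}_5$-step history $h_r$; since $\pazocal{C}$ is not of type (A), Lemma \ref{(A) and (B)} shows $h_r$ is a subword of one of the six words listed there, so $h_r$ has length at most $5$ and $\pazocal{C}_r$ contributes at most $\|h_r\|$-plus-two letters. If $\pazocal{C}_r$ abuts a mode switch (so that $h_r$ starts, or ends, at a landmark $I_5(\cdot)$ or $A_5$), then $h_r$ starts with $(1)_i$ or $(5)_i$, respectively ends with $(1)_i$, $(21)_i$ or $(5)_i$; intersecting these constraints with the six admissible words of Lemma \ref{(A) and (B)} forces $\|h_r\|\leq 3$ on that side, and $h_r$ empty if $\pazocal{C}_r$ abuts switches on both sides (the excluded case $m\geq 3$). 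Counting the two letters created by the single mode switch when $m=2$ and the at most two transition letters at the two extremities of $\pazocal{C}$, one checks in each configuration of the indices that the total length is at most $3+1+1+3=8$ (and at most $5$ when $m=1$).

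The delicate point is the bound $m\leq 2$: one must verify that no nonempty reduced one-machine computation of $\textbf{M}_5$ can connect two of the landmark configurations $I_5(\cdot)$, $A_5$ without passing through one of the four distinguished step-history subwords (or through a subword excluded by Lemma \ref{first step history}). This rests on the essentially unique, up to step history, shape of an accepting computation of $\textbf{M}_5$ starting from $I_5(u^n)$ and of its inverse, as recorded in Lemmas \ref{subword return to start}, \ref{subword M language} and \ref{M_5 language}. Secondarily, there is the bookkeeping that turns the local bounds $\|h_r\|\leq 5$ (generic side) and $\|h_r\|\leq 3$ (switch side) into the sharp constant $8$.
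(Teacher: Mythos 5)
Your argument is correct and follows essentially the same route as the paper's (much terser) proof: factor $\pazocal{C}$ into maximal one-machine subcomputations, apply Lemma \ref{(A) and (B)} to each piece viewed as a reduced $\textbf{M}_5$-computation in the standard base, and use the fact that a mode switch forces passage through a start/accept configuration, so that any piece flanked by transition rules on both sides would either contain one of the four distinguished subwords or be unreduced, limiting $\pazocal{C}$ to at most two pieces of step-history length at most $4$ each. The paper compresses all of this into a short appeal to Lemma \ref{(A) and (B)}; your fleshing out of the ``at most two pieces'' step via Lemmas \ref{subword return to start}, \ref{subword M language} and \ref{multiply one letter} is exactly what that appeal tacitly relies on.
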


\begin{proof}

Suppose (A) is not satisfied. By Lemma \ref{(A) and (B)}, any such one-machine computation has step history of length at most 5, while any such one-machine computation ending (or starting) with a subword of a start or end configuration has step history of length at most 4. What's more, the same lemma implies that there are at most two maximal one-machine subcomputations of $\pazocal{C}$.

\end{proof}

\begin{lemma} \label{M one-step}

Let $W_0$ be an accepted configuration and $\pazocal{C}:W_0(i)\equiv V_0\to\dots\to V_t$ be a reduced computation of $\textbf{M}$ for some $i\in\{2,\dots,L\}$. Suppose $\pazocal{C}$ has step history of length 1 and $|V_j|_a>3|V_0|_a$ for some $1\leq j\leq t$. Then there is a sector $QQ'$ such that a state letter from $Q$ or from $Q'$ inserts an $a$-letter increasing the length of the sector for each rule of the subcomputation $V_j\to\dots\to V_t$.

\end{lemma}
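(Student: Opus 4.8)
The plan is to reduce this statement to its $\textbf{M}_5$-analogue, Lemma~\ref{M_5 one-step}. First I note that the hypotheses force the step history of $\pazocal{C}$ to be a single letter of the form $(m)_j$ with $m\in\{1,\dots,5\}$ and $j\in\{1,2\}$: if the step history were a single letter $(s)_j^{\pm1}$ or $(a)_j^{\pm1}$, then $\pazocal{C}$ would be a single application of a transition rule, which locks every non-input sector and inserts no $a$-letter, contradicting $|V_j|_a>3|V_0|_a$. Consequently $V_0=W_0(i)$ carries state letters internal to the submachine $\textbf{M}_5(m)$ sitting inside $\textbf{M}_{6,j}$, so $W_0$ is neither a start nor an end configuration of $\textbf{M}$, and every rule of $\pazocal{C}$ lies in $\Theta_j$.

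Next I identify $\pazocal{C}$ with a computation of $\textbf{M}_5$. Since $i\geq2$, the base $\{t(i)\}B_4(i)$ is a copy of the standard base $\{t\}B_4$ of $\textbf{M}_5$, and for blocks of index $\geq2$ the working and transition rules of both $\Theta_1$ and $\Theta_2$ act exactly as the corresponding rules of $\textbf{M}_5$ — the only difference between $\Theta_2$ and $\Theta_1$ (and between $\textbf{M}_{6,2}$ and $\textbf{M}_{6,1}$) is the locking of the special input sector $Q_0(1)R_0(1)$, which lies in block $1$. Identifying rules and base accordingly turns $\pazocal{C}$ into a reduced computation $\pazocal{D}:V_0^\circ\to\dots\to V_t^\circ$ of $\textbf{M}_5$ in its standard base with $|V_l^\circ|_a=|V_l|_a$ for all $l$, hence with step history of length $1$ and $|V_j^\circ|_a>3|V_0^\circ|_a$.

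It then remains to check that $V_0^\circ$ is an accepted configuration of $\textbf{M}_5$ — the only place the global hypothesis that $W_0$ is $\textbf{M}$-accepted is used. I fix an accepting computation $\pazocal{C}(W_0)$ of $\textbf{M}$ as in Lemma~\ref{M accepted configurations}, so that it splits into at most two maximal one-machine subcomputations; let $\pazocal{D}_1$ be the first of these. Since $W_0$ is neither a start nor an end configuration, $\pazocal{D}_1$ begins with working or $\textbf{M}_{6,j}$-transition rules and, by reducedness, contains a transition rule $\theta(s)_j^{-1}$ or $\theta(a)_j$ at most once, as its final rule. Deleting that final rule (if present) and restricting $\pazocal{D}_1$ to block $i$ yields a reduced $\textbf{M}_5$-computation starting at $V_0^\circ$ and ending either at a copy of $A_5$ (if $\pazocal{D}_1$ ended at $W_{ac}$) or at a copy of $I_5(u^n)$ for some $u^n\in\pazocal{L}$ (if $\pazocal{D}_1$ ended at a junction configuration, which is $I(u^n)$ or $J(u^n)$ with $u^n\in\pazocal{L}$ by Lemma~\ref{M language}, and which restricts to $I_5(u^n)$ in every block of index $\geq2$). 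As $A_5$ is trivially accepted and $I_5(u^n)$ is accepted by Lemma~\ref{M_5 language}(1), $V_0^\circ$ is accepted by $\textbf{M}_5$. Lemma~\ref{M_5 one-step} now applies to $\pazocal{D}$ and produces a sector $QQ'$ of $\textbf{M}_5$; its counterpart among the sectors occurring inside $\{t(i)\}B_4(i)$ is the required sector, since $\pazocal{D}$ is merely $\pazocal{C}$ relabelled.

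The main obstacle is the bookkeeping of this last paragraph — verifying, through Lemmas~\ref{M accepted configurations} and~\ref{M language} and the parallel/coordinate-shift structure of $\textbf{M}$, that the projection $W_0(i)$ is itself an accepted configuration of $\textbf{M}_5$, i.e. that restricting the first maximal one-machine piece of an accepting computation of $W_0$ to a block of index $\geq2$ genuinely gives an $\textbf{M}_5$-computation into a configuration already known to be accepted. Everything else is a routine transfer along the block structure.
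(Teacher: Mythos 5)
Your proposal is correct and takes exactly the approach the paper intends: the paper's own proof of this lemma reads simply ``This follows immediately from Lemma~\ref{M_5 one-step} and the definition of $\textbf{M}$,'' and you have supplied the verification the paper treats as immediate — namely that for $i\geq 2$ the rules of $\Theta_j$ act on block $\{t(i)\}B_4(i)$ as $\textbf{M}_5$-rules, and that $W_0(i)$ is an accepted configuration of $\textbf{M}_5$ via the restriction of the accepting computation from Lemma~\ref{M accepted configurations}. One small wording note: when you rule out single-letter step histories that are transition rules, you explicitly mention only $(s)_j^{\pm1}$ and $(a)_j^{\pm1}$, whereas the letters $(m,m')_j$ (internal transition rules of $\textbf{M}_{6,j}$) are also possible one-letter step histories; the same observation — transition rules insert no $a$-letters, so $|V_1|_a=|V_0|_a$ — dispatches them as well, and should be stated for completeness.
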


\begin{proof}

This follows immediately from Lemma \ref{M_5 one-step} and the definition of $\textbf{M}$.

\end{proof}

\medskip

%%%%%%%%%%%%%%%%%%%%%%%%%%%%%%%%%%%%%%%%%%%%%%%%%%%%%%%%%%%%%%%%%

\section{Groups Associated to an $S$-machine and their Diagrams}

\subsection{The groups} \

As in previous literature (for example [19], [23], [26]), to a cyclic $S$-machine $\textbf{S}$, we now associate two groups, $M(\textbf{S})$ and $G(\textbf{S})$, which `simulate' the work of $\textbf{S}$ (in the precise sense described in Section 6.3).

Let $\textbf{S}$ be a cyclic recognizing $S$-machine with hardware $(Y,Q)$, where $Q=\sqcup_{i=0}^s Q_i$ ($Q_0=Q_{s+1}$) and $Y=\sqcup_{i=1}^s Y_i$, and software the set of rules $\Theta=\Theta^+\sqcup\Theta^-$. Further define $Y_{s+1}=\emptyset$, so that every rule locks the $Q_sQ_0$-sector. Denote the accept word of $\textbf{S}$ by $W_{ac}$.

For $\theta\in\Theta^+$, applying Lemma \ref{simplify rules} allows us to assume that $\theta$ takes the form $$\theta=[q_0\to q_0'u_1, \ q_1\to v_1q_1'u_2, \ \dots, \ q_{s-1}\to v_{s-1}q_{s-1}'u_s, \ q_s\to v_sq_s']$$ where $q_i,q_i'\in Q_i$, $u_i$ and $v_i$ are either empty or letters in $Y_i^{\pm1}$, and some of the arrows can have the form $\xrightarrow{\ell}$. Note that if $\theta$ locks the $i$-th sector, then both $u_i$ and $v_i$ are necessarily empty.

Then, define $R=\{\theta_i: \theta\in\Theta^+,0\leq i\leq s\}$. 

The group $M(\textbf{S})$ is then defined by taking the (finite) generating set $\pazocal{X}=Q\cup Y\cup R$ and subjecting it to the (finite number of) relations:

\begin{addmargin}[1em]{0em}

$\bullet$ $q_i\theta_{i+1}=\theta_i v_iq_i'u_{i+1}$ for all $\theta\in\Theta^+$ and $0\leq i\leq s$

$\bullet$ $\theta_ia=a\theta_i$ for all $0\leq i\leq s$ and $a\in Y_i(\theta)$

\end{addmargin}

Note that  the number of $a$-letters in any part of $\theta$, and so in any relation of the above forms, is at most two.

As in the language of computations of $S$-machines, letters from $Q$ are called \textit{$q$-letters}, those from $Y$ are called \textit{$a$-letters}, and those from $R$ are called \textit{$\theta$-letters}. The relations of the form $q_i\theta_{i+1}=\theta_iv_iq_i'u_{i+1}$ are called \textit{$(\theta,q)$-relations}, while those of the form $\theta_ia=a\theta_i$ are called \textit{$(\theta,a)$-relations}.

To make these formulas less muddled, it is convenient to omit the reference to the indices of the letters of $R$. This notational quirk may make it appear as though $\theta$ commutes with the letters of $Y_i(\theta)$ and conjugates $q_i$ to $v_iq_i'u_{i+1}$ for each $i$; it should be noted that these statements are not strictly true. Further, it is useful to note that if $\theta$ locks the $i$-th sector, then $Y_i(\theta)=\emptyset$ so that $\theta$ has no relation with the elements of $Y_i$.

However, this group evidently lacks any reference to the accept configuration. To amend this, the group $G(\textbf{S})$ is constructed by adding one more relation to the presentation of $M(\textbf{S})$, namely the \textit{hub-relation} $W_{ac}=1$. In other words, $G(\textbf{S})\cong M(\textbf{S})/\gen{\gen{W_{ac}}}$.

For the purposes of this paper, though, it is useful to consider extra relations within the language of tape letters called \textit{$a$-relations}. The groups $M_a(\textbf{S})$ and $G_a(\textbf{S})$ introduced here then correspond to this construction. For our purposes, the $a$-relations will correspond to the words over the alphabet of the `special' input sector (this alphabet is identified with $\pazocal{A}$) that represent the trivial element in $B(2,n)$. In other words, given $\Omega$ the set of words over $\pazocal{A}$ representing the identity in $B(2,n)$, we have $M_a(\textbf{S})\cong M(\textbf{S})/\gen{\gen{\Omega}}$ and $G_a(\textbf{S})\cong G(\textbf{S})/\gen{\gen{\Omega}}$.

An important note is that, though they remain finitely generated, $M_a(\textbf{S})$ and $G_a(\textbf{S})$ may no longer be finitely presented.

%%%%%%%%%%%%%%%%%%%%%%%%%%%%%%%%%%%%%%%%%%%%%%%%%%%%%%%%%%%%%%%%%

\subsection{Bands and annuli} \

Many of the rest of the arguments presented throughout the rest of this paper rely on diagrams over the presentations (Section 2.1) of the groups constructed in Section 6.1. To present these arguments as simply as possible, we first differentiate between the types of edges and cells that arise in such diagrams in a way similar to [19] and [26]. 

An edge labelled by a state letter is called a \textit{$q$-edge}. Similarly, an edge labelled by a tape letter is called an \textit{$a$-edge}, and one labelled by a $\theta$-letter is a \textit{$\theta$-edge}. For a path \textbf{p} in $\Delta$, $\|\textbf{p}\|$ denotes its length while $|\textbf{p}|_a$, $|\textbf{p}|_{\theta}$, and $|\textbf{p}|_q$ are its \textit{$a$-length}, \textit{$\theta$-length}, and \textit{$q$-length}, i.e the number of such edges in the path.

Cells corresponding to $(\theta,q)$-relations are called \textit{$(\theta,q)$-cells}. Similarly, there are \textit{$(\theta,a)$-cells}, \textit{$a$-cells}, and \textit{hubs}.

Let $\Delta$ be a reduced van Kampen diagram over the presentation $\gen{X\mid\pazocal{R}}$ and $\pazocal{Z}\subseteq X$. Let $\pazocal{B}$ be a sequence of (distinct) cells $(\Pi_1,\dots\Pi_n)$ in $\Delta$. Then $\pazocal{B}$ is called a \textit{$\pazocal{Z}$-band} if:

\begin{addmargin}[1em]{0em}

$\bullet$ every two consecutive cells $\Pi_i$ and $\Pi_{i+1}$ have a common boundary edge $\textbf{e}_i$ labeled by a letter from $\pazocal{Z}^{\pm1}$

$\bullet$ every cell $\Pi_i$ has exactly two $\pazocal{Z}$-edges in its boundary, $\textbf{e}_{i-1}^{-1}$ and $\textbf{e}_i$, so that $\text{Lab}(\textbf{e}_{i-1})$ and $\text{Lab}(\textbf{e}_i)$ are either both positive or both negative

$\bullet$ if $n=0$, then $\pazocal{B}$ is a single $\pazocal{Z}$-edge

\end{addmargin}

A $\pazocal{Z}$-band $\pazocal{B}$ is \textit{maximal} if it is not contained in any other $\pazocal{Z}$-band. Note that every $\pazocal{Z}$-edge is contained in a maximal $\pazocal{Z}$-band. The \textit{length} of a band is the number of $\pazocal{R}$-cells that comprise it.

In a $\pazocal{Z}$-band $\pazocal{B}$ of length $n$, using only edges from the contours of $\pi_1,\dots,\pi_n$, there exists a closed path $\textbf{e}_0^{-1}\textbf{q}_1\textbf{e}_n\textbf{q}_2^{-1}$ with $\textbf{q}_1$ and $\textbf{q}_2$ simple paths. In this case, $\textbf{q}_1$ is called the \text{bottom} of $\pazocal{B}$, denoted $\textbf{bot}(\pazocal{B})$, while $\textbf{q}_2$ is called the \textit{top} of $\pazocal{B}$ and denoted $\textbf{top}(\pazocal{B})$. 

If $\textbf{e}_0=\textbf{e}_n$ is a $\pazocal{Z}$-band of length $n$, then $\pazocal{B}$ is called a \textit{$\pazocal{Z}$-annulus}. If $\pazocal{B}$ is a non-annular $\pazocal{Z}$-band, then $\textbf{e}_0^{-1}\textbf{q}_1\textbf{e}_n\textbf{q}_2^{-1}$ is the \textit{standard factorization} of the contour of $\pazocal{B}$. If either $(\textbf{e}_0^{-1}\textbf{q}_1\textbf{e}_n)^{\pm1}$ or $(\textbf{e}_n\textbf{q}_2^{-1}\textbf{e}_0^{-1})^{\pm1}$ is a subpath of $\partial\Delta$, then $\pazocal{B}$ is called a \textit{rim band}.

A $\pazocal{Z}_1$-band and a $\pazocal{Z}_2$-band \textit{cross} if they have a common cell and $\pazocal{Z}_1\cap\pazocal{Z}_2=\emptyset$.

In diagrams over the canonical presentations of the groups of interest, there exist \textit{$q$-bands} corresponding to bands arising from $\pazocal{Z}=Q_i$ for some $i$, where every cell is a $(\theta,q)$-cell. Similarly, there exist \textit{$\theta$-bands} for $\theta\in\Theta^+$ and \textit{$a$-bands} for $a\in Y$. For $a$-bands, however, it is useful to restrict the definition of a band to disallow the inclusion of $(\theta,q)$-cells and $a$-cells, so that $a$-bands consist only of $(\theta,a)$-cells.

By the makeup of the cells, distinct maximal $q$-bands ($\theta$-bands, $a$-bands) cannot intersect.

Given an $a$-band $\pazocal{B}$, the makeup of the relations of the groups dictates that each of the $a$-edges $\textbf{e}_0,\dots,\textbf{e}_n$ is labelled identically, i.e by the same $a$-letter. Similarly, the $\theta$-edges of a $\theta$-band correspond to the same rule; however, the (suppressed) index of two such $\theta$-edges may differ.

If a maximal $a$-band contains a cell with an $a$-edge that is also on the contour of a $(\theta,q)$-cell, then the $a$-band is said to \textit{end} (or \textit{start}) on that $(\theta,q)$-cell and the corresponding $a$-edge is said to be the \textit{end} (or \textit{start}) of the band. This definition extends similarly, so that: 

\begin{addmargin}[1em]{0em}

$\bullet$ a maximal $a$-band can end on a $(\theta,q)$-cell, on an $a$-cell, or on the diagram's contour

$\bullet$ a maximal $\theta$-band can end only on the diagram's contour

$\bullet$ a maximal $q$-band can end on a hub or on the diagram's contour

\end{addmargin}

Note that if a maximal $\theta$-band ($a$-band, $q$-band) ends as above in one part of the diagram, then it must also end in another part of the diagram as it cannot be a $\theta$-annulus ($a$-annulus, $q$-annulus).

The projection of the label of the top (or bottom) of a $q$-band onto $F(\Theta^+)$ is called the \textit{history} of the band; the \textit{step history} of the band is defined further in the obvious way. The projection of the top (or bottom) of a $\theta$-band onto the alphabet $\{Q_0,\dots,Q_s\}$ is called the \textit{base} of the band.

Suppose the sequence of cells $(\pi_0,\pi_1,\dots,\pi_n)$ comprises a $\theta$-band and $(\gamma_0,\gamma_1,\dots,\gamma_k)$ a $q$-band such that $\pi_0=\gamma_0$, $\pi_n=\gamma_k$, and no other cells are shared. Suppose further that $\pi_0$ and $\pi_n$ both have edges on the outer countour of the annulus bounded by the two bands. Then the union of these two bands is called a \textit{$(\theta,q)$-annulus} and $\pi_0$ and $\pi_n$ are called its \textit{corner} cells. A \textit{$(\theta,a)$-annulus} is defined similarly.

\smallskip

The following Lemma is proved in a more general setting in [18]:

\begin{lemma} \label{M(S) annuli}

\textit{(Lemma 6.1 of [18])} A reduced van Kampen diagram $\Delta$ over $M(\textbf{S})$ has no:

\begin{addmargin}[1em]{0em}

(1) $q$-annuli

(2) $\theta$-annuli

(3) $a$-annuli. 

(4) $(\theta,q)$-annuli

(5) $(\theta,a)$-annuli

\end{addmargin}

\end{lemma}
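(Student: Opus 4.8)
The plan is to prove all five statements simultaneously by a minimal-counterexample argument, exploiting the fact that each type of annulus, if present, would force a smaller reduced diagram containing an annulus of one of these types or else an immediate contradiction from the structure of the relations. I would first fix a reduced van Kampen diagram $\Delta$ over $M(\textbf{S})$ and, among all reduced diagrams over $M(\textbf{S})$ containing an annulus of one of the five types, pick one of minimal area; then argue toward a contradiction. The key structural input is that the defining relations of $M(\textbf{S})$ come in only two flavors — $(\theta,q)$-relations $q_i\theta_{i+1}=\theta_iv_iq_i'u_{i+1}$ and $(\theta,a)$-relations $\theta_ia=a\theta_i$ — so every cell has a very restricted boundary: a $(\theta,q)$-cell has exactly two $\theta$-edges, exactly two $q$-edges (with the same part index), and at most two $a$-edges; a $(\theta,a)$-cell has exactly two $\theta$-edges and exactly two $a$-edges labelled by the same $a$-letter. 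Consequently $q$-edges, $\theta$-edges, and $a$-edges each propagate along their respective bands in a controlled way.

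The core of the argument is the classical "innermost annulus" reduction. For a $q$-annulus (1): take a $q$-annulus $\pazocal{B}$ in $\Delta$ bounding a disk subdiagram $\Delta'$ with no cells (innermost, possible since area is finite). Reading the label of $\mathbf{top}(\pazocal{B})$ around the annulus gives a word in $\theta$-letters and $a$-letters that must equal $1$ in $M(\textbf{S})$ inside the empty region, which forces the history of the $q$-band to be trivial and then exhibits a pair of mutually cancellable cells along $\pazocal{B}$, contradicting reducedness of $\Delta$. The same scheme handles $\theta$-annuli (2) and $a$-annuli (3): an innermost such annulus bounds a cell-free region, and reading the boundary of that region produces a word in the complementary alphabet that must be trivial; because the relations never allow a $q$-letter (resp. $a$-letter, $\theta$-letter) to disappear without a matching inverse on the band, one locates a cancelling pair of cells inside $\pazocal{B}$. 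For $(\theta,q)$-annuli (4) and $(\theta,a)$-annuli (5): here the annulus is the union of a $\theta$-band and a $q$-band (resp. $a$-band) sharing exactly the two corner cells; one shows the corner cells force the shared $\theta$-edge and $q$-edge (resp. $a$-edge) to close up, so that either the $q$-band (resp. $a$-band) is actually a $q$-annulus (resp. $a$-annulus) — handled by (1) (resp. (3)) — or the $\theta$-band is a $\theta$-annulus — handled by (2). Thus (4) and (5) reduce to (1)–(3), and (1)–(3) reduce to reducedness.

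Since this lemma is quoted verbatim from [18], I would not reproduce the full bookkeeping; the honest thing is to cite it. Concretely, the write-up would say: \emph{All five statements are instances of the van Kampen annulus lemma for $S$-machine groups; the proof is given in Lemma 6.1 of [18], and we only sketch the idea.} Then give the one-paragraph reduction above. The main obstacle — and the only place where a genuine case analysis is needed — is verifying that an innermost annulus of each type really does bound a \emph{cell-free} region (one must rule out the possibility that the innermost annulus still encloses cells of a type not yet eliminated, which is why the five statements are proved together, peeling off $q$-annuli first, then $\theta$-annuli, then $a$-annuli, then the two mixed annuli). Once the cell-free reduction is in place, the contradiction with reducedness of $\Delta$ is immediate from the shape of the relators, so I do not expect any hidden difficulty beyond careful orchestration of the order of elimination.
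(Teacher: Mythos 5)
The paper does not prove this lemma itself --- it simply cites Lemma~6.1 of~[18], exactly as you say is the honest thing to do, so in that minimal sense your proposal matches the paper. But the sketch you give of the underlying argument contains a genuine gap, and it is worth flagging because the paper \emph{does} give a full proof of the closely analogous Lemma~\ref{M_a no annuli} for $M_a(\textbf{M})$, and that proof does not go the way you describe.

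The central claim in your sketch --- that an innermost annulus bounds a cell-free region --- is not a consequence of innermostness. An innermost $q$-annulus can perfectly well enclose $(\theta,a)$-cells or even $(\theta,q)$-cells, since a cell in the interior need not itself lie on any annulus. The mechanism used in~[18] and in the paper's Lemma~\ref{M_a no annuli} is not a reduction to a cell-free interior followed by a word-in-the-free-group calculation; it is a \emph{band-ending argument applied inside the subdiagram bounded by the annulus}. Given a cell of the annulus, one follows the maximal band through it of the complementary type (a $\theta$-band for a $q$- or $a$-annulus, a $q$- or $a$-band for a $\theta$-annulus) and examines where it can end: if it closes on an adjacent cell of the annulus, those two cells cancel and $\Delta$ is not reduced; if it closes on a non-adjacent cell of the annulus, it produces a mixed $(\theta,q)$- or $(\theta,a)$-annulus bounding a strictly smaller subdiagram; and if it cannot close at all because the inner contour has no edges of the needed type, one gets an outright contradiction. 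This reaches the conclusion without ever asserting the interior is empty. Moreover, your proposed sequential order --- eliminate $q$-annuli first, then $\theta$-annuli, then $a$-annuli --- is circular for the cell-free reduction itself: to conclude that a $\theta$-band inside a $q$-annulus ends twice on the inner contour, you need the absence of $\theta$-annuli and of $(\theta,q)$-annuli, both of which you plan to prove afterwards. The paper's analogue avoids this by a simultaneous induction on the (signature of the) minimal subdiagram containing any counterexample annulus, with all five cases handled together; you should pattern the write-up on that proof or on Lemma~6.1 of~[18] rather than on the sequential cell-free reduction you sketch.
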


As a result, in a reduced diagram $\Delta$ over $M(\textbf{S})$, if a maximal $\theta$-band and a maximal $q$-band ($a$-band) cross, then their intersection is exactly one $(\theta,q)$-cell ($(\theta,a)$-cell). Further, every maximal $\theta$-band and maximal $q$-band ends on $\partial\Delta$ in two places.

\begin{lemma} \label{annuli lower bound}

If $\Delta$ is a reduced diagram over $G_a(\textbf{S})$ and $S$ is a $(\theta,q)$-annulus (respectively a $(\theta,a)$-annulus) with boundary $q$-band (respectively $a$-band) $\pazocal{Q}$, then the length of $\pazocal{Q}$ is at least three.

\end{lemma}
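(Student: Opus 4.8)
The plan is to argue by contradiction, handling the two possibilities $\ell:=\text{length}(\pazocal{Q})\in\{1,2\}$ separately. Suppose some reduced diagram over $G_a(\textbf{S})$ contains a $(\theta,q)$-annulus (or $(\theta,a)$-annulus) $S$ whose boundary $q$-band (resp. $a$-band) has length at most $2$; among all such data I would fix a diagram $\Delta$ with the fewest cells and, inside it, an annulus $S$ with boundary band $\pazocal{Q}$ of minimal length. Write the companion $\theta$-band of $S$ as $\pazocal{T}=(\pi_0,\dots,\pi_n)$ and $\pazocal{Q}=(\gamma_0,\dots,\gamma_k)$ with $\pi_0=\gamma_0$, $\pi_n=\gamma_k$ and $k\le 1$. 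The two facts to keep in mind are: every cell of $\pazocal{T}$ is a $(\theta,q)$- or $(\theta,a)$-cell for one fixed rule $\theta$, and a hub or an $a$-cell carries no $\theta$-edge.

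If $\ell=1$, then $\pazocal{Q}$ is the single cell $\gamma_0=\pi_0=\pi_n$, so $\pazocal{T}$ is a $\theta$-annulus. But a reduced diagram over $G_a(\textbf{S})$ has no $\theta$-annulus: the argument proving Lemma~\ref{M(S) annuli}(2) uses only the cells lying on the $\theta$-band (all of which are cells of $M(\textbf{S})$) together with reducedness of the ambient diagram, and therefore applies verbatim here. This disposes of the case $\ell=1$.

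Now suppose $\ell=2$, so $\pazocal{Q}=(\gamma_0,\gamma_1)$ and $\gamma_0,\gamma_1$ share a single $\pazocal{Z}$-edge $\textbf{f}$. Since in any $\theta$-band the two $\theta$-edges of a cell are exactly its two $\pazocal{Z}$-edges, both $\theta$-edges of $\gamma_0$ (and both of $\gamma_1$) lie on one maximal $\theta$-band, which must contain $\pazocal{T}$; call it $\pazocal{T}_0$, and let $\pazocal{Q}_0\supseteq\pazocal{Q}$ be the maximal $q$-band (resp. $a$-band). Thus $\pazocal{T}_0$ and $\pazocal{Q}_0$ cross at the two consecutive cells $\gamma_0,\gamma_1$. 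Let $\Gamma$ be the subdiagram filling the interior (``hole'') of $S$; its contour is a side of $\pazocal{T}$ together with the edge $\textbf{f}$. If $\Gamma$ contained no hub and no $a$-cell, then $\pazocal{T}\cup\pazocal{Q}\cup\Gamma$ would be a reduced diagram over $M(\textbf{S})$ containing a $(\theta,q)$-annulus (resp. $(\theta,a)$-annulus), contradicting Lemma~\ref{M(S) annuli}(4) (resp. (5)); the same contradiction arises if $\Gamma$ is empty or has its contour on $\partial\Delta$. Hence $\Gamma$ contains a hub or an $a$-cell.

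To finish I would chase bands emanating from this essential cell: a hub has only $q$-edges and an $a$-cell only $a$-edges of the special input sector, so there is a maximal $q$-band (resp. $a$-band) $\pazocal{D}\ne\pazocal{Q}_0$ meeting $\Gamma$, which is not an annulus by Lemma~\ref{M(S) annuli}(1),(3), and so must leave $\Gamma$ by crossing $\partial\Gamma$ (after a reduction, justified by minimality of $\Delta$, one may assume $\partial\Gamma$ avoids $\partial\Delta$ and $\pazocal{D}$ does not terminate inside $\Gamma$). Because distinct maximal $q$-bands (resp. $a$-bands) are disjoint, $\pazocal{D}$ cannot cross $\pazocal{Q}_0$, in particular cannot pass through $\textbf{f}$, so it must exit through the $\pazocal{T}$-side, crossing $\pazocal{T}_0$ at some cell $\pi_j$, $0<j<n$; combining $\pazocal{D}$ with a sub-band of $\pazocal{T}_0$ running back to $\gamma_0$ or $\gamma_1$ (or tracking the base of $\pazocal{T}_0$ between $\gamma_0$, $\pi_j$, $\gamma_1$) should produce an offending $(\theta,q)$- or $(\theta,a)$-annulus strictly inside $\Gamma$, hence in a diagram with fewer cells than $\Delta$, contradicting minimality. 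The main obstacle I anticipate is exactly this last step: ensuring the band-chasing in $\Gamma$ delivers a genuinely smaller offending configuration (or an outright forbidden annulus) rather than merely relocating the problem; the no-$\theta$-annuli input of Lemma~\ref{M(S) annuli} together with the minimal choice of $\Delta$ are the levers I would use to force it closed.
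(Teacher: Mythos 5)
The paper's proof of this lemma is a two-line observation, and you've missed it: because the companion $\theta$-band of the annulus meets the boundary $q$-band (resp. $a$-band) $\pazocal{Q}$ at its two corner cells, the history of $\pazocal{Q}$ is necessarily of the form $\theta w\theta^{-1}$ for the rule $\theta$ carried by that $\theta$-band. If $\pazocal{Q}$ has length two, then $w$ is empty and the history is the unreduced word $\theta\theta^{-1}$, which means the two cells of $\pazocal{Q}$ are a cancellable pair, contradicting the assumption that $\Delta$ is reduced. That is the whole argument; it is local, it reads off directly from the definitions, and it never looks inside the hole of the annulus at all.

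Your proposal takes an entirely different and ultimately incomplete route: you set up a minimal counterexample, try to show the region $\Gamma$ enclosed by the annulus contains a hub or an $a$-cell, and then chase bands emanating from that cell hoping to exhibit a smaller offending configuration. You yourself flag that the last step does not close, and indeed it does not: a $q$-band $\pazocal{D}$ starting on a hub inside $\Gamma$ might terminate on the inner contour without generating a forbidden sub-annulus, and the ``$(\theta,q)$- or $(\theta,a)$-annulus strictly inside $\Gamma$'' you hope to produce is not forced by the data you have. The correct fix is not to repair this band-chasing but to abandon it: the statement is already a consequence of the unreducedness of the boundary band's history. There is also a soft spot in your $\ell=1$ case: Lemma~\ref{M(S) annuli}(2) is stated for diagrams over $M(\textbf{S})$, and claiming its proof ``applies verbatim'' to a diagram over $G_a(\textbf{S})$ (which can contain disks and $a$-cells) is exactly the kind of assertion that needs its own lemma (cf.\ Lemma~\ref{M_a no annuli} and Lemma~\ref{G_a theta-annuli}, both of which require separate arguments). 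Fortunately this case is vacuous: the definition of a $(\theta,q)$-annulus requires two distinct corner cells, so $\pazocal{Q}$ has length at least two and only length exactly two needs to be excluded.
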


\begin{proof}

By the definition of the annulus, the history of the $q$-band must be of the form $\theta w\theta^{-1}$ for some $\theta\in\Theta$ and $w\in F(\Theta^+)$. If the length of $\pazocal{Q}$ is two, then this history is the unreduced word $\theta\theta^{-1}$, meaning $\pazocal{Q}$ is a pair of cancellable $(\theta,q)$-cells.

The argument is identical for a $(\theta,a)$-annulus by considering the label of the top (or bottom) of the corresponding $a$-band.

\end{proof}

\medskip

%%%%%%%%%%%%%%%%%%%%%%%%%%%%%%%%%%%%%%%%%%%%%%%%%%%%%%%%%%%%%%%%%

\subsection{Trapezia} \

Let $\Delta$ be a reduced diagram over the canonical presentation of $M(\textbf{S})$ whose contour is of the form $\textbf{p}_1^{-1}\textbf{q}_1\textbf{p}_2\textbf{q}_2^{-1}$, where $\textbf{p}_1$ and $\textbf{p}_2$ are sides of $q$-bands and $\textbf{q}_1$ and $\textbf{q}_2$ are maximal parts of the sides of $\theta$-bands whose labels start and end with $q$-letters. Then $\Delta$ is called a \textit{trapezium}.

In this case, $\textbf{q}_1$ and $\textbf{q}_2$ are called the \textit{top} and \textit{bottom} of the trapezium, respectively, while $\textbf{p}_1$ and $\textbf{p}_2$ are the \textit{left} and \textit{right} sides. Further, $\textbf{p}_1^{-1}\textbf{q}_1\textbf{p}_2\textbf{q}_2^{-1}$ is called the \textit{standard factorization} of the contour.

The \textit{(step) history} of the trapezium is the (step) history of the rim $q$-band with $\textbf{p}_2$ as one of its sides and the length of this history is the trapezium's \textit{height}. The base of $\text{Lab}(\textbf{q}_1)$ is called the \textit{base} of the trapezium.

It's easy to see from this definition that a $\theta$-band $\pazocal{T}$ can be viewed as a trapezium of height 1 as long as its top and bottom start and end with $q$-edges. This extends to all $\theta$-bands starting and ending with a $(\theta,q)$-cell if one merely disregards any $a$-edges of the top and bottom that precede the first $q$-edge or follow the final $q$-edge. These are called the \textit{trimmed} top and bottom of the band, denoted $\textbf{ttop}(\pazocal{T})$ and $\textbf{tbot}(\pazocal{T})$.

\begin{lemma} \label{theta-bands are one-rule computations}

Let $\pazocal{T}$ be a $\theta$-band in a reduced diagram $\Delta$ over the canonical presentation of $M(\textbf{S})$. Suppose $\pazocal{T}$ consists of at least two $(\theta,q)$-cells. Then $\lab(\textbf{tbot}(\pazocal{T}))$ and $\lab(\textbf{ttop}(\pazocal{T}))$ are admissible words. Moreover, if $\theta$ is the rule corresponding to the band $\pazocal{T}$, then $\lab(\textbf{tbot}(\pazocal{T}))$ is $\theta$-admissible and $\lab(\textbf{tbot}(\pazocal{T}))\cdot\theta\equiv\lab(\textbf{ttop}(\pazocal{T}))$.

\end{lemma}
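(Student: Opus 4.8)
The plan is to analyze the structure of a $\theta$-band $\pazocal{T}=(\Pi_1,\dots,\Pi_m)$ with $m\geq2$, where each $\Pi_j$ is a $(\theta,q)$-cell, and track how the $(\theta,q)$-relations force the top and bottom labels to be related by the rule $\theta$. First I would set up notation: the band consists of cells glued along $\theta$-edges, and each $\Pi_j$ corresponds to a single part $q_i\theta_{i+1}=\theta_i v_iq_i'u_{i+1}$ of the rule $\theta$ (or its inverse). The key observation is that the $q$-edges appearing along $\textbf{bot}(\pazocal{T})$ and $\textbf{top}(\pazocal{T})$, read in order, are state letters $q_{i_1},q_{i_2},\dots$ and that consecutive cells $\Pi_j,\Pi_{j+1}$ share a $q$-edge only if they are linked by a $q$-band; but what actually matters is that the $\theta$-edges along the top and bottom of $\pazocal{T}$ all carry the same rule label $\theta$ (with possibly different suppressed indices), so reading the defining relations cell by cell exhibits $\lab(\textbf{tbot}(\pazocal{T}))$ and $\lab(\textbf{ttop}(\pazocal{T}))$ as the two sides of a product of the $(\theta,q)$- and $(\theta,a)$-relations comprising $\theta$.

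The main steps in order: (1) Show $\lab(\textbf{tbot}(\pazocal{T}))$ is a reduced word whose subword structure (alternating $q$-letters and $a$-letters from the appropriate $Y_i$) matches the definition of an admissible word — this uses that $\Delta$ is reduced (no cancellable cells, in particular no $(\theta,q)$-annuli or $(\theta,a)$-annuli by Lemma \ref{M(S) annuli}), so the $q$-edges crossing $\pazocal{T}$ come from maximal $q$-bands crossing it exactly once, and their labels are consecutive state letters $q_0,q_1,\dots$ consistent with the base; (2) dually for $\lab(\textbf{ttop}(\pazocal{T}))$; (3) observe that the suppressed indices are forced to increase consistently along the band — here I would invoke the form of the $(\theta,q)$-relation $q_i\theta_{i+1}=\theta_iv_iq_i'u_{i+1}$ to see that crossing one cell of $\pazocal{T}$ converts the bottom $q$-letter $q_i$ (with outgoing $\theta$-edge $\theta_{i+1}$) to the top $q$-letter $q_i'$ (with incoming $\theta$-edge $\theta_i$), and inserts $v_i$ on the left and $u_{i+1}$ on the right, which is exactly one coordinate of the substitution defining $W\cdot\theta$; (4) the $(\theta,a)$-cells along $\pazocal{T}$ transport the $a$-letters in the sectors whose domain $Y_i(\theta)$ is nonempty, leaving them unchanged (since $\theta_i a=a\theta_i$), while in locked sectors no $a$-edges survive — matching the definition of applying $\theta$; (5) conclude that, reading sector by sector, $\lab(\textbf{tbot}(\pazocal{T}))$ is $\theta$-admissible and $\lab(\textbf{tbot}(\pazocal{T}))\cdot\theta\equiv\lab(\textbf{ttop}(\pazocal{T}))$, where the letter-for-letter equality (not merely equality in the group) follows because the van Kampen diagram realizes the substitution syntactically and $\Delta$ reduced rules out spurious cancellation.

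The hard part will be step (1)/(2): carefully justifying that the trimmed top and bottom are genuinely \emph{admissible} words rather than arbitrary words over $Y\cup Q$. This requires ruling out, via the reducedness of $\Delta$ and Lemma \ref{M(S) annuli}, the possibility that two consecutive $q$-edges along $\textbf{tbot}(\pazocal{T})$ belong to parts $Q_i,Q_j$ with $j\neq i+1$, or that a $q$-edge is immediately followed by another $q$-edge with no intervening $a$-letter in a non-locked sector, or that an $a$-letter outside the relevant domain $Y_i(\theta)$ appears. The cleanest way is to induct on the length $m$ of $\pazocal{T}$: for $m=2$ this is a direct check against the two defining relations shared by $\Pi_1,\Pi_2$, and for the inductive step one peels off the rim cell $\Pi_m$, applies the hypothesis to $(\Pi_1,\dots,\Pi_{m-1})$, and observes that attaching $\Pi_m$ along its $\theta$-edge extends the admissible word by one more sector's worth of data in the prescribed way. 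I expect the bookkeeping of indices (which are suppressed in the notation) to be the most delicate point, and I would handle it by temporarily reinstating the indices $\theta_i$ on all $\theta$-edges and checking that the relations force them to be globally consistent along each $\theta$-band.
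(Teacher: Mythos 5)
Your approach is essentially the paper's: both proceed cell by cell along the band, reinstating the suppressed indices $\theta_i$ on the $\theta$-edges and using the explicit form of the $(\theta,q)$-relation $q_i\theta_{i+1}=\theta_iv_iq_i'u_{i+1}$ to force the bottom label into an admissible shape and to verify that crossing the band implements $\cdot\,\theta$; your induction on the length of $\pazocal{T}$ is the paper's ``iterate the two-cell argument.''

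One small calibration on the attribution of work in your step (1)/(2). Ruling out consecutive $q$-edges from non-adjacent parts $Q_i,Q_j$ is not done by reducedness or by Lemma \ref{M(S) annuli}; it is forced \emph{syntactically} by the index bookkeeping alone, since the right $\theta$-edge of $\pi_1$ is $\theta_{i+1}$ and this must match the left $\theta$-edge of $\pi_2$, leaving only the two possibilities $q_2\in Q_{i+1}$ or $q_2=q_1^{-1}$ (and the second is permitted by the definition of admissible word, conditions (2)--(3), so it is not to be ruled out at all). Reducedness enters at exactly one place: in the case $q_2=q_1^{-1}$, if there were no $(\theta,a)$-cell between $\pi_1$ and $\pi_2$ they would be cancellable, so reducedness guarantees a nonempty intervening tape word, which is what the admissibility condition needs. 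The annuli lemma is not needed for this local statement about a single band and can be dropped from the argument.
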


\begin{proof}

Suppose $\theta\in\Theta^+$.

Let $\textbf{q}_1,\textbf{q}_2$ be the first two $q$-edges of $\textbf{bot}(\pazocal{T})$ and $q_1=\lab(\textbf{q}_1),q_2=\lab(\textbf{q}_2)$. So, $\lab(\textbf{tbot}(\pazocal{T}))$ has prefix $q_1wq_2$ for some $w\in F(Y)$. Let $\pi_1,\pi_2$ be the cells of $\pazocal{T}$ with $\textbf{q}_1,\textbf{q}_2$ on its contour, respectively.

For $0\leq i\leq s-1$, suppose $q_1\in Q_i$. Then the $i$-th part of $\theta$ must be $q_1\to u_iq_1'v_{i+1}$ for some $q_1'\in Q_i$, $u_i\in F(Y_i(\theta))$, and $v_{i+1}\in F(Y_{i+1}(\theta))$ with $\|u_i\|,\|v_{i+1}\|\leq1$. So, $\lab(\partial\pi_1)\equiv\theta_i^{-1}q_1\theta_{i+1}v_{i+1}^{-1}(q_1')^{-1}u_i^{-1}$. If there exists any cell of $\pazocal{T}$ between $\pi_1$ and $\pi_2$, it must be a $(\theta,a)$-cell with an edge labelled by $\theta_{i+1}$ on its contour; so, $w\in F(Y_{i+1}(\theta))$. 

What's more, the label of $\partial\pi_2$ must have a subword $\theta_{i+1}^{-1}q_2$. By the definition of the $(\theta,q)$-relations, this means one of two things:

\begin{addmargin}[1em]{0em}

$(1)$ $q_2\in Q_{i+1}$ and the $(i+1)$-th part of $\theta$ is $q_2\to u_{i+1}q_2'v_{i+2}$ for some $q_2'\in Q_{i+1}$, $u_{i+1}\in F(Y_{i+1}(\theta))$, and $v_{i+2}\in F(Y_{i+2}(\theta))$ with $\|u_{i+1}\|,\|v_{i+2}\|\leq1$; or

$(2)$ $q_2=q_1^{-1}$

\end{addmargin}

In case (1), the subword $q_1wq_2$ of $\lab(\textbf{tbot}(\pazocal{T}))$ satisfies (1) in the requirements for such subwords of admissible words. In case (2), it satisfies condition (2) as long as there is some $(\theta,a)$-cell between them; but this is required in the band, as otherwise $\pi_1$ and $\pi_2$ would be a pair of cancellable cells.

Let $\pazocal{T}_1=(\pi_1,\dots,\pi_2)$ be the subband of $\pazocal{T}$. Then $\lab(\textbf{tbot}(\pazocal{T}_1))\equiv q_1wq_2$. The above arguments make it clear that $q_1wq_2$ is $\theta$-admissible. Further, it is easy to see that $\lab(\textbf{ttop}(\pazocal{T}_1))\equiv (q_1wq_2)\cdot\theta$.

If $q_1\in Q_i^{-1}$, then an analogous argument yields the same conclusion.

Further, if $\textbf{tbot}(\pazocal{T})$ has more than two $q$-edges, then the argument above can be iterated to apply to the whole band.

Finally, if $\theta\in\Theta^-$, then one can apply the analogous argument to $\textbf{ttop}(\pazocal{T})$ to show that $\lab(\textbf{ttop}(\pazocal{T}))$ is $\theta^{-1}$-admissible with $\lab(\textbf{ttop}(\pazocal{T}))\cdot \theta^{-1}\equiv \lab(\textbf{tbot}(\pazocal{T}))$.

\end{proof}

\begin{lemma} \label{one-rule computations are theta-bands}

Let $U\to V$ be a computation of $\textbf{S}$ with history $H$ of length $1$, so that $H=\theta\in\Theta$. Then there exists a trapezium $\Delta$ consisting of one $\theta$-band $\pazocal{T}$ corresponding to the rule $\theta$ such that $\lab(\textbf{tbot}(\pazocal{T}))\equiv U$ and $\lab(\textbf{ttop}(\pazocal{T}))\equiv V$.

\end{lemma}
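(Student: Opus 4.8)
The plan is to prove Lemma \ref{one-rule computations are theta-bands} by a direct construction, essentially reversing the analysis carried out in the proof of Lemma \ref{theta-bands are one-rule computations}. First I would assume $\theta\in\Theta^+$ (the case $\theta\in\Theta^-$ reduces to this one by replacing $\theta$ with $\theta^{-1}$ and swapping the roles of top and bottom). Write $U\equiv q_0^{\delta_0}w_1q_1^{\delta_1}w_2\cdots w_k q_k^{\delta_k}$ for the $\theta$-admissible word, where each $q_j$ is a state letter, each $\delta_j\in\{\pm1\}$, and each $w_j\in F(Y)$ lies in the appropriate domain alphabet $Y_{i(j)}(\theta)$; by Lemma \ref{simplify rules} we may assume each part of $\theta$ has the form $q\to v q' u$ with $\|u\|+\|v\|\leq 1$. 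The value $V\equiv U\cdot\theta$ is then obtained by the prescribed substitutions followed by reduction.

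Next I would build the band cell by cell. For each $q$-letter $q_j^{\delta_j}$ occurring in $U$, create a $(\theta,q)$-cell $\pi_j$ whose contour reads (for $\delta_j=+1$, with $q_j\in Q_i$) $\theta_i^{-1}q_j\theta_{i+1}u_{i+1}^{-1}(q_j')^{-1}v_i^{-1}$, using the relation $q_j\theta_{i+1}=\theta_i v_i q_j' u_{i+1}$; for $\delta_j=-1$ one uses the inverse relation. For each letter $a$ occurring in some $w_j$ (necessarily in $Y_i(\theta)$ for the appropriate $i$), create a $(\theta,a)$-cell from the relation $\theta_i a=a\theta_i$. Then glue these cells in sequence along their $\theta$-edges, following the left-to-right order of $U$: consecutive cells share a $\theta_i$-edge exactly when the corresponding letters of $U$ are adjacent and the relevant sector is not locked. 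This produces a planar $\theta$-band $\pazocal{T}$ corresponding to $\theta$. By construction $\lab(\textbf{tbot}(\pazocal{T}))$ reads off the $q$-letters and the intervening $w_j$'s, i.e.\ it is $U$, and $\lab(\textbf{ttop}(\pazocal{T}))$ reads off the primed $q$-letters together with the inserted $v_i$'s and $u_{i+1}$'s; after the free reduction that is forced when assembling the band (the same reduction that defines $U\cdot\theta$), this is exactly $V$. Setting $\Delta=\pazocal{T}$ and taking its contour as the standard trapezium factorization $\textbf{p}_1^{-1}\textbf{q}_1\textbf{p}_2\textbf{q}_2^{-1}$ (where $\textbf{p}_1,\textbf{p}_2$ are the single $q$-edges at the two ends and $\textbf{q}_1,\textbf{q}_2$ are the trimmed top and bottom) exhibits the required trapezium.

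The main technical point to handle carefully is the reduction step: when $\|u_i\|=1$ in one part and $\|v_i\|=1$ in the adjacent part, or when $U$ itself has cancellation between a $w_j$ and an adjacent sector letter, the raw concatenation of cell boundaries need not be reduced, and one must check that performing the requisite free cancellations of $a$-edges on the top (resp.\ bottom) yields a genuine van Kampen diagram with $\lab(\textbf{ttop})\equiv V$ (resp.\ $\lab(\textbf{tbot})\equiv U$) letter-for-letter. This is precisely mirrored by the reduction built into the definition of $W\cdot\theta$, so the check is bookkeeping rather than anything deep; the locked-sector cases (where $u_i,v_i$ are empty and no $(\theta,a)$-cells appear between the neighboring $(\theta,q)$-cells) are the easiest. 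I expect no genuine obstacle here — the statement is the exact converse of Lemma \ref{theta-bands are one-rule computations} and the construction is forced — but writing out the adjacency/gluing rules for all the sign and length cases is the part that requires the most care.
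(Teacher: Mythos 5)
Your construction is correct and is essentially the paper's own proof: build a $(\theta,q)$-cell for each state letter of $U$ and $(\theta,a)$-cells for the tape letters (which lie in the appropriate domains because $U$ is $\theta$-admissible), glue them along $\theta$-edges in the order given by $U$, handle the residual cancellations (the paper does this via $0$-refinement), and treat $\theta\in\Theta^-$ by constructing the band for $\theta^{-1}$ and inverting it. No substantive difference from the paper's argument.
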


\begin{proof}

Suppose $\theta\in\Theta^+$ and set $U\equiv q_0^{\eps_0}w_1q_1^{\eps_1}\dots w_lq_l^{\eps_l}$ so that for each $0\leq i\leq l$, $q_i\in Q_{j(i)}$ for some $0\leq j(i)\leq s-1$ and $\eps_i\in\{\pm1\}$.

Then $q_i\in Q(\theta)$ for each $0\leq i\leq l$, so that the $j(i)$-th part of $\theta$ takes the form $q_i\to u_{j(i)}q_i'v_{j(i)+1}$ for some $q_i'\in Q_{j(i)}$, $u_{j(i)}\in F(Y_{j(i)}(\theta))$, and $v_{j(i)+1}\in F(Y_{j(i)+1}(\theta))$. So, there are relations of $M(\textbf{S})$ of the form $R_i=\theta_{j(i)}^{-1}q_i\theta_{j(i)+1}v_{j(i)+1}^{-1}(q_i')^{-1}u_{j(i)}^{-1}$ for all $i$.

If $\eps_i=1$, then each letter of $w_{i+1}$ is an element of $Y_{j(i)+1}(\theta)$ since $U$ is $\theta$-admissible. So, there are relations of $M(\textbf{S})$ the form $\theta_{j(i)+1}^{-1}a\theta_{j(i)+1}a^{-1}$ for each letter $a$ of $w_{i+1}$. So, gluing along the edges labelled by $\theta_{j(i)+1}^{\pm1}$, one can construct a $\theta$-band $\pazocal{T}_{i+1}$ with contour label $\theta_{j(i)+1}^{-1}w_{i+1}\theta_{j(i)+1}w_{i+1}^{-1}$.

If $\eps_i=-1$, then each letter of of $w_{i+1}$ is in $Y_{j(i)}(\theta)$ since $U$ is $\theta$-admissible. So, there are relations of $M(\textbf{S})$ of the form $\theta_{j(i)}^{-1}a\theta_{j(i)+1}a^{-1}$ for each letter $a$ of $w_{i+1}$. So, gluing along the edges labelled by $\theta_{j(i)}^{\pm1}$, one can construct a $\theta$-band $\pazocal{T}_{i+1}$ with contour label $\theta_{j(i)}^{-1}w_{i+1}\theta_{j(i)}w_{i+1}^{-1}$.

Now, let $\pi_i$ be a cell with boundary labelled by $R_i^{\eps_i}$. Then, for either possibility of $\eps_i$, one can glue $\pazocal{T}_i$ and $\pazocal{T}_{i+1}$ to the left and right of $\pi_i$, respectively.

After $0$-refinement to cancel any adjacent edges with mutually inverse labels, this process produces a $\theta$-band $\pazocal{T}$ corresponding to the rule $\theta$ with $\lab(\textbf{bot}(\pazocal{T}))\equiv U$. By the makeup of the band, it is easy to see that $\lab(\textbf{ttop}(\pazocal{T}))\equiv V$.

If $\theta\in\Theta^-$, then the same construction forms a $\theta$-band $\pazocal{T}$ corresponding to the rule $\theta^{-1}$ with $\lab(\textbf{bot}(\pazocal{T}))\equiv V$ and $\lab(\textbf{ttop}(\pazocal{T}))\equiv U$. Taking the `inverse' of this band (i.e inverting the label of each cell) produces a $\theta$-band corresponding to $\theta$ as in the statement.

\end{proof}

Note that the any trapezium $\Delta$ of height $h\geq1$ can be decomposed into $\theta$-bands $\pazocal{T}_1,\dots,\pazocal{T}_h$ connecting the left and right sides of the trapezium, with $\textbf{tbot}(\pazocal{T}_1)$ and $\textbf{ttop}(\pazocal{T}_h)$ making up the bottom and top of $\Delta$, respectively, and $\textbf{ttop}(\pazocal{T}_i)=\textbf{tbot}(\pazocal{T}_{i+1})$ for all $1\leq i\leq h-1$.

The following two lemmas are clear from the previous two lemmas and exemplify how the group $M(\textbf{S})$ simulates the work of the $S$-machine:

\begin{lemma} \label{trapezia are computations}

If $\Delta$ is a trapezium with history $H=\theta_1\dots\theta_k$ for $k\geq1$ with maximal $\theta$-bands $\pazocal{T}_1,\dots,\pazocal{T}_k$ and $U_j\equiv\textbf{tbot}(\pazocal{T}_j)$, $V_j\equiv\textbf{ttop}(\pazocal{T}_j)$ for all $j$, then $H$ is a reduced word, $U_j$ and $V_j$ are admissible words, and $V_j\equiv U_j\cdot\theta_j$ for all $j$.

\end{lemma}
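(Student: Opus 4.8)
The plan is to decompose the trapezium into its constituent $\theta$-bands and apply Lemma \ref{theta-bands are one-rule computations} to each. First I would invoke the structural remark immediately preceding the statement: a trapezium $\Delta$ of height $k \geq 1$ decomposes into maximal $\theta$-bands $\pazocal{T}_1,\dots,\pazocal{T}_k$ running between the left and right sides $\textbf{p}_1,\textbf{p}_2$, with $\textbf{tbot}(\pazocal{T}_1)$ the bottom of $\Delta$, $\textbf{ttop}(\pazocal{T}_k)$ the top, and $\textbf{ttop}(\pazocal{T}_j) = \textbf{tbot}(\pazocal{T}_{j+1})$ for $1 \leq j \leq k-1$. (The fact that each such band genuinely has the form required by Lemma \ref{theta-bands are one-rule computations} — namely that its trimmed top and bottom start and end with $q$-edges — is part of the definition of a trapezium, since the sides of the trapezium are sides of $q$-bands and the top and bottom are parts of sides of $\theta$-bands whose labels start and end with $q$-letters; one should note that this property is inherited by each $\pazocal{T}_j$.)

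Next, I would handle the case $k = 1$ directly: here $\Delta$ consists of a single $\theta$-band $\pazocal{T}_1$ corresponding to a rule $\theta_1$, and provided $\pazocal{T}_1$ has at least two $(\theta,q)$-cells, Lemma \ref{theta-bands are one-rule computations} gives that $U_1 \equiv \textbf{tbot}(\pazocal{T}_1)$ is $\theta_1$-admissible, $V_1 \equiv \textbf{ttop}(\pazocal{T}_1)$, and $V_1 \equiv U_1 \cdot \theta_1$. If $\pazocal{T}_1$ has fewer than two $(\theta,q)$-cells, one observes that the base of the trapezium is short enough that admissibility and the rule application can be checked directly from the $(\theta,q)$- and $(\theta,a)$-relations; this is a degenerate bookkeeping case rather than a real obstacle. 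For $k \geq 2$, I would simply apply the $k=1$ argument to each band $\pazocal{T}_j$ in turn: each $U_j \equiv \textbf{tbot}(\pazocal{T}_j)$ and $V_j \equiv \textbf{ttop}(\pazocal{T}_j)$ are admissible, and $V_j \equiv U_j \cdot \theta_j$. The gluing identities $\textbf{ttop}(\pazocal{T}_j) = \textbf{tbot}(\pazocal{T}_{j+1})$ are automatic from the decomposition, so no separate compatibility check is needed — the sequence $U_1 \to V_1 = U_2 \to V_2 = \cdots$ is precisely a computation.

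It remains to show $H = \theta_1 \cdots \theta_k$ is a reduced word in $F(\Theta^+)$. Suppose not: then $\theta_{j+1} = \theta_j^{-1}$ for some $j$, so consecutive bands $\pazocal{T}_j, \pazocal{T}_{j+1}$ correspond to mutually inverse rules with $\textbf{ttop}(\pazocal{T}_j) = \textbf{tbot}(\pazocal{T}_{j+1})$. I would argue this forces a pair of cancellable $(\theta,q)$-cells across the shared boundary — pick a $q$-edge on $\textbf{ttop}(\pazocal{T}_j)$, take the $(\theta,q)$-cells of $\pazocal{T}_j$ and $\pazocal{T}_{j+1}$ incident to it, and check from the form of the $(\theta,q)$-relations (the part $q_i\theta_{i+1} = \theta_i v_i q_i' u_{i+1}$ and its inverse) that these two cells have mutually inverse labels along a connecting path, contradicting reducedness of $\Delta$. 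The main obstacle is exactly this reducedness argument: one must be careful that the cells picked out really are a cancellable pair in the technical sense (common boundary edge, inverse contour labels read from matching vertices), and handle the possibility that the relevant sectors are empty so that the "connecting path" degenerates. This is the one place where a genuine case analysis on the relations is required rather than a direct appeal to Lemma \ref{theta-bands are one-rule computations}; alternatively, one could cite that the history of a reduced $\theta$-band stack is reduced as a standard fact about trapezia, but spelling out the cancellable-pair argument is cleanest.
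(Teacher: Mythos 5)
Your proof is correct and is essentially the intended one: the paper states the lemma is ``clear from the previous two lemmas,'' meaning exactly the decomposition of the trapezium into a stack of $\theta$-bands followed by an application of Lemma~\ref{theta-bands are one-rule computations} band by band, plus the standard cancellable-pair observation for reducedness of $H$. The only unnecessary worry is your hypothetical degenerate case of a $\theta$-band with fewer than two $(\theta,q)$-cells: since every maximal $\theta$-band of a trapezium must run from $\textbf{p}_1$ to $\textbf{p}_2$ (Lemma~\ref{M(S) annuli} forbids $\theta$- and $(\theta,q)$-annuli, and two ends on the same side would immediately yield cancellable cells in that $q$-band), each band automatically contains at least two $(\theta,q)$-cells, so Lemma~\ref{theta-bands are one-rule computations} applies without caveat.
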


\smallskip

\begin{lemma} \label{computations are trapezia}

For any reduced computation $U\to\dots\to U\cdot H\equiv V$ of the $S$-machine $\textbf{S}$ with $\|H\|\geq1$, there exists a trapezium $\Delta$ with (trimmed) bottom label $U$, (trimmed) top label $V$, and with history $H$.

\end{lemma}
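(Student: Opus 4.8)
The plan is to induct on the length $k=\|H\|$ of the history, using Lemma~\ref{one-rule computations are theta-bands} for the base case and for the single-step increment, and building the trapezium by stacking $\theta$-bands exactly as in the decomposition remark following Lemma~\ref{one-rule computations are theta-bands}.

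\emph{Base case $k=1$.} Then $H=\theta$ for a single rule $\theta\in\Theta$ and the computation is $U\to U\cdot\theta\equiv V$. Lemma~\ref{one-rule computations are theta-bands} produces a $\theta$-band $\pazocal{T}$ corresponding to $\theta$ with $\lab(\textbf{tbot}(\pazocal{T}))\equiv U$ and $\lab(\textbf{ttop}(\pazocal{T}))\equiv V$. Since $U$ and $V$ are admissible words, they begin and end with $q$-letters, so the trimmed top and bottom begin and end with $q$-edges; as noted just after Lemma~\ref{one-rule computations are theta-bands}, such a $\theta$-band is a trapezium of height $1$, with history $\theta\equiv H$, trimmed bottom $U$ and trimmed top $V$.

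\emph{Inductive step.} Write $H\equiv H'\theta$ with $\|H'\|=k-1$ and put $V'\equiv U\cdot H'$, so the computation factors as the reduced computation $U\to\cdots\to V'$ of history $H'$ followed by $V'\to V'\cdot\theta\equiv V$; both $H'$ and $\theta$ are reduced since $H$ is. By the inductive hypothesis there is a trapezium $\Delta'$ with trimmed bottom $U$, trimmed top $V'$ and history $H'$, and by Lemma~\ref{one-rule computations are theta-bands} there is a $\theta$-band $\pazocal{T}$ with $\lab(\textbf{tbot}(\pazocal{T}))\equiv V'$ (indeed with $\textbf{bot}(\pazocal{T})$ itself labelled $V'$) and $\lab(\textbf{ttop}(\pazocal{T}))\equiv V$. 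I would then glue $\pazocal{T}$ onto the top of $\Delta'$, identifying the top $\textbf{q}_1$-path of $\Delta'$ with $\textbf{bot}(\pazocal{T})$; these carry the identical label $V'$, so the identification is legitimate and yields a van Kampen diagram $\Delta$ over $M(\textbf{S})$. Decomposing $\Delta$ into the $\theta$-bands of $\Delta'$ together with $\pazocal{T}$, one prolongs the left and right $q$-bands of $\Delta'$ through the extreme $(\theta,q)$-cells of $\pazocal{T}$ (a legitimate prolongation, since in $M(\textbf{S})$ every rule furnishes a $(\theta,q)$-relation for every part of the state letters, in particular for the parts containing the first and last $q$-letters of $U$). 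One then checks that $\partial\Delta$ has the form $\textbf{p}_1^{-1}\textbf{q}_1\textbf{p}_2\textbf{q}_2^{-1}$, where $\textbf{p}_1,\textbf{p}_2$ are the sides of the prolonged $q$-bands (into which the $a$-edges appearing at the corners of $\textbf{top}(\Delta')$, outside its trimmed top, are absorbed), $\textbf{q}_2\equiv U$ is the bottom of $\Delta'$, and $\textbf{q}_1\equiv V$ is the trimmed top of $\pazocal{T}$. Hence $\Delta$ is a trapezium of history $H'\theta\equiv H$ with trimmed bottom $U$ and trimmed top $V$.

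\emph{Main obstacle.} The only point demanding genuine care is that the glued diagram $\Delta$ is reduced, since reducedness is part of the definition of a trapezium. A cancellable pair of cells in $\Delta$ either lies entirely in $\Delta'$ (impossible by induction) or involves a cell of $\pazocal{T}$; in the latter case the mutually-inverse-contour condition forces the top $\theta$-band of $\Delta'$ and $\pazocal{T}$ to correspond to mutually inverse rules, i.e. $H'$ to end in $\theta^{-1}$, contradicting the reducedness of $H\equiv H'\theta$, while a cancellable pair internal to $\pazocal{T}$ is excluded by the reducedness of $V'$ exactly as in the proof of Lemma~\ref{one-rule computations are theta-bands}. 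The remaining verifications — that the stacked bands really assemble into the contour described above and that the corner $a$-edges are handled correctly by the trimming convention — are routine, which is why the statement was flagged in the text as ``clear from'' Lemmas~\ref{theta-bands are one-rule computations} and~\ref{one-rule computations are theta-bands}; the companion Lemma~\ref{trapezia are computations} then follows in the same spirit from Lemma~\ref{theta-bands are one-rule computations}.
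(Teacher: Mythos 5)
Your induction-and-stacking argument is correct and is exactly the construction the paper has in mind when it declares this lemma ``clear from'' Lemmas~\ref{theta-bands are one-rule computations} and~\ref{one-rule computations are theta-bands}: one builds the $\theta$-band for each successive rule via Lemma~\ref{one-rule computations are theta-bands}, glues along the common admissible label, and notes that reducedness of $H$ (and of the admissible words) rules out cancellable pairs. Your handling of the corner $a$-edges being absorbed into the side $q$-bands and of the reducedness check supplies precisely the routine verifications the paper omits.
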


\medskip

%%%%%%%%%%%%%%%%%%%%%%%%%%%%%%%%%%%%%%%%%%%%%%%%%%%%%%%%%%%%%%%%%

\section{Modified length and area functions}

\subsection{Modified length function} \

To assist with the proofs to come, we now modify the length function on group words over the groups associated to an $S$-machine and paths in diagrams over their presentations in the same way as was done in [19], [26], etc. The standard length of a word/path will henceforth be referred to as its \textit{combinatorial length} and the modified length simply as its \textit{length}.

Define a word consisting of no $q$-letters, one $\theta$-letter, and at most two $a$-letters as a \textit{$(\theta,a)$-syllable}. Then, define the length of:

\begin{addmargin}[1em]{0em}

$\bullet$ any $q$-letter as 1

$\bullet$ any $\theta$-letter as 1

$\bullet$ any $a$-letter as the parameter $\delta$ (as indicated in Section 3.3, this should be thought of as a very small positive number)

$\bullet$ any $(\theta,a)$-syllable as 1

\end{addmargin}

For a word $w$ over the generators of the canonical presentation of $G_a(\textbf{S})$ (or any group associated to $\textbf{S}$), define a \textit{decomposition} of $w$ as a factorization of $w$ into a product of letters and $(\theta,a)$-syllables. The length of a decomposition of $w$ is then assigned as the sum of the lengths of the factors. Finally, the length of $w$, denoted $|w|$, is defined to be the smallest length of any of its decompositions.

Naturally, the length of a path in a diagram over the presentations of the groups associated to $\textbf{S}$ is defined by the length of its label.

The following provides basic properties of the length function and is easily proved using Lemma \ref{simplify rules}.

\begin{lemma} \label{lengths}

Let \textbf{s} be a path in a diagram $\Delta$ over the canonical presentation of $G_a(\textbf{S})$ (or any of the groups associated to $\textbf{S}$) consisting of $c$ $\theta$-edges and $d$ $a$-edges. Then:

\begin{addmargin}[1em]{0em} 

$(a)$ $|\textbf{s}|\geq\max(c,c+(d-2c)\delta)$

$(b)$ $|\textbf{s}|=c$ if $\textbf{s}$ is a top or a bottom of a $q$-band

$(c)$ For any product $\textbf{s}=\textbf{s}_1\textbf{s}_2$ of two paths in a diagram, $$|\textbf{s}_1|+|\textbf{s}_2|\geq|\textbf{s}|\geq|\textbf{s}_1|+|\textbf{s}_2|-2\delta$$

$(d)$ Let $\pazocal{T}$ be a $\theta$-band with base of length $l_b$ and $l_a$ the number of $a$-edges in \textbf{top}$(\pazocal{T})$ (or $\textbf{bot}(\pazocal{T})$). Then the number of cells in $\pazocal{T}$ is between $l_a-l_b$ and $l_a+3l_b$.

\end{addmargin}

\end{lemma}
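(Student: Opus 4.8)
The plan is to establish each of the four assertions $(a)$--$(d)$ directly from the definition of the length function, using the normal form for rules provided by Lemma \ref{simplify rules}. The underlying combinatorial principle throughout is that in any decomposition of a word $w$ into letters and $(\theta,a)$-syllables, every $\theta$-edge must be accounted for: it is either counted as a standalone $\theta$-letter (length $1$) or it sits inside a $(\theta,a)$-syllable (length $1$, and then it can ``absorb'' at most two $a$-edges at zero extra cost). So the cheapest way to handle $c$ $\theta$-edges and $d$ $a$-edges is to pair up as many $a$-edges as possible with $\theta$-edges, two per $\theta$-edge, and pay $\delta$ for the remaining unpaired $a$-edges.

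For $(a)$: first, any decomposition contributes at least $1$ per $\theta$-edge, and the $\theta$-edges cannot be reduced in number, so $|\mathbf{s}|\geq c$. Second, given a decomposition, suppose $k$ of the $\theta$-edges are put into $(\theta,a)$-syllables absorbing a total of $m\leq 2k$ $a$-edges; the length is then $k + (c-k) + (d-m)\delta = c + (d-m)\delta \geq c + (d-2c)\delta$ since $m\leq 2k\leq 2c$. Taking the minimum over decompositions gives $|\mathbf{s}|\geq\max(c, c+(d-2c)\delta)$, as claimed. (When $d \le 2c$ the second term is $\le c$ and the bound is vacuous beyond the first; the statement is phrased with $\max$ precisely for this reason.) For $(b)$: a top or bottom of a $q$-band consists exactly of $\theta$-edges and $q$-edges with no $a$-edges — indeed each $(\theta,q)$-cell contributes one $\theta$-edge and one $q$-edge to the band's boundary, but along the top/bottom we read the $\theta$-edges, so $\lab$ is a word in $\theta$-letters and $q$-letters alone; wait — more carefully, $\mathbf{top}(\pazocal B)$ of a $q$-band reads off the $\theta$-letters of the successive $(\theta,q)$-cells, so it is a word of length $c$ in $\theta$-letters with no $a$- or $q$-letters, hence $|\mathbf{s}| = c$. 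I would double-check the precise shape of the $(\theta,q)$-relation $q_i\theta_{i+1} = \theta_i v_i q_i' u_{i+1}$ to confirm the top of a $q$-band is a pure $\theta$-word; the point is that the $v_i, u_{i+1}$ lie on the bottom/top of $\theta$-bands, not of the $q$-band.

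For $(c)$: the upper bound $|\mathbf{s}_1| + |\mathbf{s}_2| \geq |\mathbf{s}|$ is immediate, since concatenating optimal decompositions of $\mathbf{s}_1$ and $\mathbf{s}_2$ gives a decomposition of $\mathbf{s}$. For the lower bound $|\mathbf{s}| \geq |\mathbf{s}_1| + |\mathbf{s}_2| - 2\delta$: take an optimal decomposition of $\mathbf{s}$; the only factor that can straddle the cut point between $\mathbf{s}_1$ and $\mathbf{s}_2$ is a single $(\theta,a)$-syllable (a letter cannot straddle), and splitting it at the cut breaks it into at most the $\theta$-letter plus up to two $a$-letters distributed on the two sides. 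The worst case is that this syllable, of length $1$, gets replaced by pieces of total length at most $1 + 2\delta$ (the $\theta$-letter costs $1$ on one side, and the at-most-two $a$-letters cost at most $2\delta$ total); hence the induced decompositions of $\mathbf{s}_1$ and $\mathbf{s}_2$ have total length at most $|\mathbf{s}| + 2\delta$, giving the claim.

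For $(d)$: let $\pazocal T$ be a $\theta$-band with base length $l_b$, so its top (or bottom) contains exactly $l_b$ $q$-edges. Each cell of $\pazocal T$ is either a $(\theta,q)$-cell or a $(\theta,a)$-cell. The number of $(\theta,q)$-cells equals $l_b$ (one per $q$-edge crossed). Each $(\theta,a)$-cell is crossed by a distinct $a$-band, and each such $a$-band contributes one $a$-edge to $\mathbf{top}(\pazocal T)$; conversely, by Lemma \ref{simplify rules} each rule inserts at most one $a$-letter on each side of each $q$-letter, so each $(\theta,q)$-cell accounts for boundedly many (at most, say, $3$ — one for each of $u_i, v_i$ reading around the $(\theta,q)$-relation) of the $a$-edges appearing on the top that are not also on the bottom. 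The $a$-edges on $\mathbf{top}(\pazocal T)$ are therefore of two kinds: those coming through $(\theta,a)$-cells from the bottom (at least $l_a - (\text{contribution of } (\theta,q)\text{-cells}) \geq l_a - 3 l_b$ of them), each giving one $(\theta,a)$-cell; and those created fresh. This yields: number of cells $\geq l_a - 3l_b$ is not quite it — rather, number of $(\theta,a)$-cells is at least $l_a$ minus the $a$-edges on the top not inherited from the bottom, which is at least $l_a - 3l_b$ — hmm, I need to reconcile signs; the statement is that the cell count lies between $l_a - l_b$ and $l_a + 3l_b$. The clean argument: the top has $l_a$ $a$-edges and $l_b$ $q$-edges; the bottom is obtained from the top by applying $\theta^{-1}$, which changes the number of $a$-edges by at most $3 l_b$ (at most $3$ per $q$-letter, from $\|u_i\| + \|v_i\| + \|u_{i+1}\|$-type counts via Lemma \ref{simplify rules}); a $(\theta,a)$-cell appears for each $a$-band crossing the band, and each $a$-band either starts on a $(\theta,q)$-cell (at most $\approx l_b$ of these) or passes through from bottom to top. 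I would set up the exact bookkeeping by reading the relations and counting $a$-edges on top versus bottom.

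The routine verifications of $(a)$, $(b)$, $(c)$ are genuinely immediate from the definitions. I expect $(d)$ to be the main obstacle: it requires careful bookkeeping of which $a$-bands crossing $\pazocal T$ terminate on $(\theta,q)$-cells of $\pazocal T$ versus which run all the way across, and relating the count of $(\theta,a)$-cells to the difference $l_a - l_b$ (and the $+3l_b$ slack) using the precise structure of the rules afforded by Lemma \ref{simplify rules}, namely that each part of a rule inserts at most one $a$-letter on the left and one on the right of the state letter. I would organize $(d)$ by: counting $(\theta,q)$-cells $= l_b$ exactly; bounding the number of $(\theta,a)$-cells above by $l_a + 2 l_b$ (each of the $l_a$ top $a$-edges lies on a distinct $a$-band giving a $(\theta,a)$-cell, plus at most $2 l_b$ more $a$-bands can start on the $(\theta,q)$-cells) and below by $l_a - 3 l_b$ (roughly) — then summing gives the stated interval after absorbing the $l_b$ from the $(\theta,q)$-cells into the bounds.
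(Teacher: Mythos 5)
The paper gives no proof of this lemma, stating only that it ``is easily proved using Lemma \ref{simplify rules}''; your approach --- direct verification from the definition of the modified length and the normal form of rules --- is the intended one. Parts $(a)$ and $(c)$ are argued correctly. The two places where the attempt goes wrong are $(b)$ and the bookkeeping in $(d)$.

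In $(b)$ you assert that the top (or bottom) of a $q$-band ``is a word in $\theta$-letters with no $a$- or $q$-letters,'' and that the letters $v_i, u_{i+1}$ from the relation $q_i\theta_{i+1}=\theta_i v_i q_i' u_{i+1}$ lie on the top/bottom of $\theta$-bands rather than of the $q$-band. That is not so. Reading the cell contour $\theta_i^{-1} q_i \theta_{i+1} u_{i+1}^{-1} (q_i')^{-1} v_i^{-1}$, the $a$-edges $u_{i+1}^{-1}$ and $v_i^{-1}$ sit at the corners of the $(\theta,q)$-cell, and are therefore on the top/bottom of \emph{both} the $\theta$-band and the $q$-band passing through that cell. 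So a top of a $q$-band can carry $a$-edges. The conclusion $|\mathbf{s}|=c$ is nonetheless true, but for a different reason: by Lemma \ref{simplify rules}(3) each cell contributes at most one $a$-edge to the top, and it sits adjacent to the $\theta$-edge of that cell; consequently $\mathbf{top}(\pazocal{Q})$ factors as a product of $c$ $(\theta,a)$-syllables (each $\theta$-edge together with its at most one neighbouring $a$-edge), giving $|\mathbf{s}|\le c$, and $(a)$ supplies the matching lower bound. The syllable factorisation is the substance of $(b)$, and your proposed justification would not detect, say, an $a$-edge separated from all $\theta$-edges, so the gap is real even though the result is correct.

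In $(d)$ the count is garbled in a way that produces both wrong intermediate expressions (``$l_a+2l_b$ $(\theta,a)$-cells'') and extraneous considerations ($a$-bands that merely start on a $(\theta,q)$-cell of $\pazocal T$ do not place any $(\theta,a)$-cell inside $\pazocal T$ and so do not inflate the cell count). The clean bookkeeping you gesture at is this: $\pazocal T$ contains exactly $l_b$ $(\theta,q)$-cells (one per $q$-edge of $\mathbf{top}(\pazocal T)$). Let $m$ be the number of $(\theta,a)$-cells; each such cell has exactly one $a$-edge on $\mathbf{top}(\pazocal T)$ (and one on $\mathbf{bot}(\pazocal T)$), since the cell contour is $\theta a\theta^{-1}a^{-1}$. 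Let $k$ be the number of $a$-edges of $\mathbf{top}(\pazocal T)$ lying on $(\theta,q)$-cells; then $l_a=m+k$, and $0\le k\le 2l_b$ by Lemma \ref{simplify rules}(2). So the total cell count is $l_b+m=l_b+l_a-k$, which lies in $[l_a-l_b,\,l_a+l_b]\subseteq[l_a-l_b,\,l_a+3l_b]$. This makes the stated interval (which is deliberately generous) immediate, without any appeal to comparing $\mathbf{top}(\pazocal T)$ and $\mathbf{bot}(\pazocal T)$ directly.
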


The following is an immediate consequence of the choice of parameters (specifically the choice $J<<\delta^{-1}$) and aids with removing long rim $\theta$-bands from potential counterexample diagrams in future arguments.

\begin{lemma} \label{small rim bands}

\textit{(Lemma 6.3 of [22])} Let $\Delta$ be a disk van Kampen diagram over the canonical presentation of $G_a(\textbf{S})$ with rim $\theta$-band $\pazocal{T}$ having a base of at most $K$ letters, where $K$ is the parameter listed in Section 3.3. Denote by $\Delta'$ the subdiagram $\Delta\setminus\pazocal{T}$. Then $|\partial\Delta|-|\partial\Delta'|>1$.

\end{lemma}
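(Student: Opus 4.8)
The statement asserts that removing a rim $\theta$-band $\pazocal{T}$ whose base has length at most $K$ strictly decreases the modified length of the boundary by more than $1$. The plan is to compare $\partial\Delta$ and $\partial\Delta'$ directly along the portion where they differ: write the standard factorization of the contour of the rim band $\pazocal{T}$ as $\mathbf{e}_0^{-1}\mathbf{q}_1\mathbf{e}_n\mathbf{q}_2^{-1}$, where (say) $\mathbf{e}_0^{-1}\mathbf{q}_1\mathbf{e}_n$ lies on $\partial\Delta$ and $\mathbf{q}_2$ is a subpath of $\partial\Delta'$ (the trimmed top/bottom of the band together with the two end $q$-edges). Then $\partial\Delta$ and $\partial\Delta'$ agree outside this arc, so by Lemma \ref{lengths}$(c)$ it suffices to compare $|\mathbf{e}_0^{-1}\mathbf{q}_1\mathbf{e}_n|$ with $|\mathbf{q}_2|$ up to an additive error of a few $\delta$'s.

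First I would estimate $|\mathbf{q}_1|$ from below. The path $\mathbf{q}_1$ is essentially $\textbf{bot}(\pazocal{T})$ (or $\textbf{top}$), hence it is a side of a $q$-band, so by Lemma \ref{lengths}$(b)$ its length equals its number of $\theta$-edges, which is $n$, the length of the band $\pazocal{T}$. By Lemma \ref{lengths}$(d)$, $n\geq l_a-l_b$ where $l_a$ is the number of $a$-edges along $\textbf{top}(\pazocal{T})$ and $l_b\leq K$ is the base length. Next I would estimate $|\mathbf{q}_2|$ from above: $\mathbf{q}_2$ consists of the $q$-edges carried through the band (whose number is again controlled by $l_b\leq K$, since each cell of the $q$-bands crossing $\pazocal{T}$ contributes boundedly) together with at most $l_a + O(l_b)$ $a$-edges, so $|\mathbf{q}_2|\leq c' + (l_a + c'')\delta$ for constants $c',c''$ depending only on $K$. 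Combining: the length drop is at least $n - |\mathbf{q}_2| \geq (l_a - K) - c' - (l_a + c'')\delta = l_a(1-\delta) - (K + c' + c''\delta)$, up to the $O(\delta)$ slack from Lemma \ref{lengths}$(c)$.

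The one subtlety — and the step I expect to require the most care — is that this bound is only useful when $l_a$ is large; a short rim band (small $l_a$) need not drop the length much. So the argument splits: if $l_a$ is large (say $l_a > K/\delta$, or more safely larger than a suitable multiple of $K$), the estimate above gives a drop far exceeding $1$ because the $n\geq l_a - K$ term dominates. If $l_a$ is small, then $\mathbf{q}_2$ has few $a$-edges, and one instead uses that $\mathbf{q}_1$ contains at least one genuine $\theta$-edge (the band is nonempty as a rim band removable from a diagram of nonzero relevant structure — if $\pazocal{T}$ had length $0$ it is a single $\theta$-edge and removing it still deletes a $\theta$-edge from the boundary while adding nothing of comparable length), and the two end edges $\mathbf{e}_0,\mathbf{e}_n$ are $q$-edges each of length $1$ that lie on $\partial\Delta$ but not on $\partial\Delta'$, contributing a drop of $2$ against an $a$-edge surplus of only $O(l_a\delta)$ on $\mathbf{q}_2$; since $J<<\delta^{-1}$ and hence $\delta$ is far smaller than $1/K$, this surplus is negligible and the net drop exceeds $1$.

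In either case the net change $|\partial\Delta| - |\partial\Delta'|$ is bounded below by a quantity of the form $2 - O(K\delta) > 1$ (using $\delta^{-1} >> J >> K$, so $K\delta$ is as small as we wish), which is exactly the claimed inequality. I would organize the write-up as: (1) set up the standard factorization and reduce via Lemma \ref{lengths}$(c)$ to comparing the two arcs; (2) record the $\theta$-edge and $q$-edge counts on each arc using Lemma \ref{lengths}$(b),(d)$ and the bound $l_b\leq K$; (3) carry out the two-case estimate above, invoking the parameter inequality $\delta^{-1}>>K$ to absorb all $\delta$-terms. The genuinely delicate point is bookkeeping the $q$-edges of $\mathbf{q}_2$: each maximal $q$-band crossing $\pazocal{T}$ passes through, so the number of $q$-edges on the trimmed top equals the number on the trimmed bottom and is at most $l_b+1\leq K+1$; making this precise (and ruling out $q$-annuli via Lemma \ref{M(S) annuli}) is where the argument must be watertight.
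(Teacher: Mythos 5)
There is a genuine gap, and it comes from confusing $\theta$-bands with $q$-bands. You bound $|\textbf{q}_1|$ from below by declaring it a side of a $q$-band and invoking Lemma \ref{lengths}$(b)$ to conclude $|\textbf{q}_1|=n$, the number of cells of $\pazocal{T}$. But $\textbf{q}_1$ is the top (or bottom) of the $\theta$-band $\pazocal{T}$: the $\theta$-edges of that band are the edges joining consecutive cells together with $\textbf{e}_0,\textbf{e}_n$, and none of them lies on $\textbf{q}_1$, which consists only of $q$-edges and $a$-edges. Its length is $n_q+a_1\delta$ up to small corrections, where $n_q\le K$ is the number of $(\theta,q)$-cells of $\pazocal{T}$ and $a_1$ the number of its $a$-edges --- far smaller than $n$ when the band has many $(\theta,a)$-cells. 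So your ``large $l_a$'' case, with its claimed drop of roughly $l_a(1-\delta)-K$, rests on a false estimate, and the drop is in fact never of order $l_a$: each $(\theta,a)$-cell contributes one $a$-edge to each side of $\pazocal{T}$ and each $(\theta,q)$-cell contributes at most one $a$-edge in total (Lemma \ref{simplify rules}), so the $a$-counts of the two sides differ by at most $n_q\le K$ and nearly cancel; the drop is essentially $2$. Relatedly, $\textbf{e}_0,\textbf{e}_n$ are $\theta$-edges, not $q$-edges; this is numerically harmless since both kinds have length $1$, but it matters that a $\theta$-letter can absorb up to two adjacent $a$-letters into a $(\theta,a)$-syllable, which is where a further $O(\delta)$ correction enters.

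The repair is the uniform argument your second case gestures at, with no case split on $l_a$. Write $\partial\Delta$ as $\textbf{s}$ followed by $\textbf{e}_0^{-1}\textbf{q}_1\textbf{e}_n$ and $\partial\Delta'$ as $\textbf{s}$ followed by $\textbf{q}_2$. The outer arc has two $\theta$-edges, $n_q\le K$ $q$-edges and $a_1$ $a$-edges, hence $|\textbf{e}_0^{-1}\textbf{q}_1\textbf{e}_n|\ge 2+n_q+(a_1-4)\delta$ (the $-4\delta$ accounts for $a$-letters absorbed into syllables at the two $\theta$-edges); the inner arc has the same $n_q$ $q$-edges, no $\theta$-edges, and at most $a_1+K$ $a$-edges, hence $|\textbf{q}_2|\le n_q+(a_1+K)\delta$. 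Combining with the $2\delta$ slack of Lemma \ref{lengths}$(c)$ gives $|\partial\Delta|-|\partial\Delta'|\ge 2-(K+6)\delta>1$, since $\delta^{-1}\gg K$. (Note the paper itself offers no proof of this lemma --- it is quoted from [22] --- so what matters is that your argument stands on its own, and the first branch of it does not.)
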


%\begin{proof}
%
%If $\textbf{s}$, $\textbf{s}'$ are the top and bottom of $\pazocal{T}$ with $\textbf{s}$ a subpath of $\partial\Delta$, then $|\textbf{s}'|-|\textbf{s}|\leq2K\delta$. Let $\Delta'$ be the subdiagram obtained by cutting along $\textbf{s}'$, $\textbf{s}_0$ be the complement of $\textbf{s}$ in $\partial\Delta$, and $\textbf{s}_0'$ be the complement of $\textbf{s}'$ in $\partial\Delta'$. Then, $\textbf{s}_0=\theta\textbf{s}_0'\theta^{-1}$ for some $\theta\in\Theta$, so that $|\textbf{s}_0|-|\textbf{s}_0'|\geq2-4\delta$ by Lemma \ref{lengths}$(c)$. Then, $$|\partial\Delta|-|\partial\Delta'|\geq(|\textbf{s}|+|\textbf{s}_0|-2\delta)-(|\textbf{s}'|+|\textbf{s}_0'|)\geq2-(2K+6)\delta$$
%
%Taking $\delta^{-1}>2K+6$ by the choice of parameters yields $|\partial\Delta|-|\partial\Delta'|>1$
%
%\end{proof}

\smallskip

%%%%%%%%%%%%%%%%%%%%%%%%%%%%%%%%%%%%%%%%%%%%%%%%%%%%%%%%%%%%%%%%%

\subsection{Disks} \

Next, we add extra relations to the groups $G(\textbf{S})$ and $G_a(\textbf{S})$ that will aid with later estimates. This is done in the same way as in [19], [26], etc, though no group $G_a(\textbf{S})$ was present in those sources.

These relations, called \textit{disk relations}, are of the form $W=1$ for any configuration $W$ accepted by the machine $\textbf{S}$.

\begin{lemma} \label{disks are relations}

If the configuration $W$ is accepted by the machine $\textbf{S}$, then the word $W$ is trivial over the groups $G(\textbf{S})$ and $G_a(\textbf{S})$.

\end{lemma}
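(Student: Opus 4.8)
The plan is to reduce the statement for $G_a(\textbf{S})$ to the statement for $G(\textbf{S})$, since $G_a(\textbf{S})$ is a quotient of $G(\textbf{S})$ (so if $W=1$ in $G(\textbf{S})$ then automatically $W=1$ in $G_a(\textbf{S})$). Thus it suffices to show that an accepted configuration $W$ represents the trivial element of $G(\textbf{S}) = M(\textbf{S})/\gen{\gen{W_{ac}}}$. First I would fix an accepting computation $\pazocal{C}: W \equiv W_0 \to W_1 \to \dots \to W_t \equiv W_{ac}$; such a computation exists by hypothesis, and its history $H = \theta_1\dots\theta_t$ is a word in $\Theta$. If $\|H\| = 0$ then $W \equiv W_{ac}$ and $W = 1$ in $G(\textbf{S})$ is the hub relation itself, so assume $t \geq 1$.

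The core of the argument is to build a van Kampen diagram over $G(\textbf{S})$ with boundary label $W$ by stacking a trapezium on top of a hub. By Lemma \ref{computations are trapezia}, applied to the reduced computation obtained from $\pazocal{C}$ (recall every computation can be made reduced without changing its endpoints, and reducing the history cannot change $W_0$ or $W_t$), there is a trapezium $\Delta_0$ over $M(\textbf{S})$ with history $H$, trimmed bottom label $W$, and trimmed top label $W_{ac}$. Here I should be slightly careful: a trapezium's top and bottom are the \emph{trimmed} top and bottom, so the actual boundary of $\Delta_0$ has the form $\textbf{p}_1^{-1}\textbf{q}_1\textbf{p}_2\textbf{q}_2^{-1}$ where $\textbf{q}_1$ is labeled $W$ (up to the convention that both $W$ and $W_{ac}$ begin and end with $q$-letters, which holds since configurations do), $\textbf{q}_2$ is labeled $W_{ac}$, and $\textbf{p}_1, \textbf{p}_2$ are the sides, which are labeled by words in the history letters $\Theta^+$ (sides of $q$-bands). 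Since both $W$ and $W_{ac}$ have the standard base $Q_0\dots Q_N$ and every rule locks the $Q_sQ_0$-sector, the leftmost and rightmost $q$-bands of $\Delta_0$ are the $Q_0$-band and $Q_s$-band; because the $Q_sQ_0$-sector (equivalently the ``wrap-around'' sector between $Q_s$ and $Q_0$) is locked, the two sides $\textbf{p}_1$ and $\textbf{p}_2$ can in fact be identified — gluing $\textbf{p}_1$ to $\textbf{p}_2$ turns $\Delta_0$ into an annular (or once-glued) diagram whose two boundary components are labeled $W$ and $W_{ac}^{-1}$ (cyclically). Equivalently, this exhibits $W$ and $W_{ac}$ as conjugate in $M(\textbf{S})$.

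Finally I would attach a hub. In $G(\textbf{S})$ there is a cell with boundary label $W_{ac}$ (the hub relation $W_{ac} = 1$). Pasting this hub cell along the $W_{ac}$-boundary component of the glued trapezium produces a disk van Kampen diagram over $G(\textbf{S})$ whose single boundary component is labeled (a cyclic permutation of) $W$. By van Kampen's Lemma this shows $W = 1$ in $G(\textbf{S})$, hence $W = 1$ in $G_a(\textbf{S})$ as well. The main obstacle, and the point requiring the most care, is the bookkeeping around trimmed tops/bottoms and the gluing of the two vertical sides of the trapezium: one must check that the side labels match up so the gluing is legitimate, which is exactly where the hypothesis that every rule locks the $Q_sQ_0$-sector (stated at the start of Section 6.1) and that $W, W_{ac}$ are genuine configurations with the standard base get used. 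Everything else is a direct invocation of Lemma \ref{computations are trapezia} and the definition of $G(\textbf{S})$.
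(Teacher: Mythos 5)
Your proposal is correct and follows essentially the same route as the paper: invoke Lemma \ref{computations are trapezia} to get a trapezium with bottom $W$ and top $W_{ac}$, use the locked $Q_sQ_0$-sector and the standard base to see that the two sides carry matching history labels (so $W$ and $W_{ac}$ are conjugate in $M(\textbf{S})$), and then kill $W_{ac}$ with the hub relation; the reduction of $G_a(\textbf{S})$ to $G(\textbf{S})$ via the quotient map is equally fine. The extra care you take with trimming, the empty-history case, and the side-gluing is exactly the bookkeeping the paper's proof also records.
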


\begin{proof}

As $W$ is accepted, there exists an accepting computation of it with history $H$. By Lemma \ref{computations are trapezia}, there exists a trapezium $\Delta$ corresponding to this accepting computation with trimmed bottom label $W$ and trimmed top label $W_{ac}$. As this is a computation of the standard base and every rule locks the $Q_sQ_0$-sector, one can further assume that no trimming was necessary in $\Delta$, i.e the labels of the bottom and top of $\Delta$ are $W$ and $W_{ac}$, respectively. Finally, it follows that the sides of the trapezium are labelled identically; specifically, they are labelled by the copy of $H$ obtained by adding the index $0$ to each letter. 

So, $W$ and $W_{ac}$ are conjugate in $M(\textbf{S})$. Taking into account the hub relation in both $G(\textbf{S})$ and $G_a(\textbf{S})$ then implies the relation $W=1$.

\end{proof}

As a result of Lemma \ref{disks are relations}, the presentation obtained by adding the disk relations to the group $G(\textbf{S})$ (respectively $G_a(\textbf{S})$) defines a group isomorphic to the group $G(\textbf{S})$ (respectively $G_a(\textbf{S})$). The presentation containing disk relations will be referred to in what follows as the \textit{disk presentation} of the group $G(\textbf{S})$ (respectively $G_a(\textbf{S})$). A cell of a diagram over the disk presentation corresponding to a disk relation (or its inverse) is referred to simply as a \textit{disk}.

One should note the following when considering diagrams over a disk presentation rather than diagrams over a canonical presentation:

\begin{addmargin}[1em]{0em}

$\bullet$ The disk presentation of $G(\textbf{S})$ or of $G_a(\textbf{S})$ need not be finite. In particular, there may be infinitely many disk relations in this presentation.

$\bullet$ For a word $w\in F(\pazocal{X})$ that represents the trivial element of $G(\textbf{S})$, the minimal area of diagrams over the disk presentation with contour label $w$ can be drastically different than that of diagrams over the canonical presentation of $G(\textbf{S})$.

$\bullet$ In a diagram over the disk presentation of $G_a(\textbf{S})$, a maximal $a$-band can end on a disk in addition to the other possibilities outlined in Section 6.3.

\end{addmargin}

\smallskip

%%%%%%%%%%%%%%%%%%%%%%%%%%%%%%%%%%%%%%%%%%%%%%%%%%%%%%%%%%%%%%%%%

\subsection{Modified area function} \

Similar to how we modified the length function in Section 7.1, we now alter the definition of the area of a van Kampen diagram $\Delta$ over the disk presentations of $G(\textbf{S})$ and $G_a(\textbf{S})$. We do this by introducing a weight function on the cells of diagrams, $\text{wt}$, defined as follows:

\begin{addmargin}[1em]{0em}

$\bullet$ $\text{wt}(\Pi)=1$ \ \ \ \ \ \ \ \ \ \ \ if $\Pi$ is a $(\theta,q)$-cell or a $(\theta,a)$-cell

$\bullet$ $\text{wt}(\Pi)=c_7|\partial\Pi|^2$ \ \ \  if $\Pi$ is a disk

$\bullet$ $\text{wt}(\Pi)=c_7\|\partial\Pi\|^2$ \ \ if $\Pi$ is an $a$-cell

\end{addmargin}

Naturally, we extend this to define the weight of a disk diagram $\Delta$, $\text{wt}(\Delta)$, as the sum of the weights of its cells.

\smallskip

\section{The groups associated to the machine $\textbf{M}$}

To this point, a few of the stated definitions may seem unmotivated. In particular, the use of $a$-relations is probably unclear, as is the reason for the choice of the assignment of the weights of $a$-cells and disks.

In what follows, we deal specifically with the case of our machine of interest, $\textbf{M}$, to elucidate these choices.

\subsection{Minimal diagrams} \

A $q$-letter of the form $t(i)$, i.e one that is the only letter of its part in the hardware of $\textbf{M}$, is called a \textit{$t$-letter}. Accordingly, a $(\theta,q)$-relation corresponding to a $t$-letter is called a \textit{$(\theta,t)$-relation}. Note that for each $\theta$-letter and each $t$-letter, the corresponding $(\theta,t)$-relation is simply $\theta_jt(i)=t(i)\theta_{j+1}$.

Now, we modify the definition of a reduced disk diagram over the canonical presentation of $M_a(\textbf{M})$ or over the disk presentation of $G_a(\textbf{M})$. To this end, we introduce the \textit{signature} of such a diagram $\Delta$ as the five-tuple $s(\Delta)=(\a_1,\a_2,\a_3,\a_4,\a_5)$ where: 

\begin{addmargin}[1em]{0em}

$\bullet$ $\a_1$ is the number of disks in $\Delta$ (of course, this is zero if $\Delta$ is a diagram over $M_a(\textbf{S})$), 

$\bullet$ $\a_2$ is the number of $(\theta,t)$-cells, 

$\bullet$ $\a_3$ is the total number of $(\theta,q)$-cells, 

$\bullet$ $\a_4$ is the number of $a$-cells, and

$\bullet$ $\a_5$ is the total weight

\end{addmargin}

The signatures of diagrams are ordered lexicographically, i.e if $\Delta$ and $\Gamma$ are two diagrams over $G_a(\textbf{M})$ with $s(\Delta)=(\a_1,\dots,\a_5)$ and $s(\Gamma)=(\b_1,\dots,\b_5)$, then $s(\Delta)\leq s(\Gamma)$ if:

\begin{addmargin}[1em]{0em}

$\bullet$ $\a_1\leq\a_2$

$\bullet$ for $i=2,\dots,5$, if $\a_j=\b_j$ for all $j<i$, then $\a_i\leq\b_i$

\end{addmargin}

Finally, a reduced disk diagram $\Delta$ over $G_a(\textbf{M})$ is called \textit{minimal} if for any other such disk diagram $\Gamma$ with $\text{Lab}(\Delta)\equiv\text{Lab}(\Gamma)$, $s(\Delta)\leq s(\Gamma)$.

It follows immediately from the definition that a subdiagram of a minimal diagram is minimal. Moreover, as the signature introduces a grading on the presentation of $G_a(\textbf{M})$ (see Section 2.7), it is easily verified that a word $w\in F(\pazocal{X})$ represents the trivial element of $G_a(\textbf{M})$ if and only if there exists a minimal diagram $\Delta$ over $G_a(\textbf{M})$ with $\text{Lab}(\Delta)\equiv w$.

In what follows, it is taken implicitly that all minimal diagrams over $G_a(\textbf{M})$ are formed over its disk presentation (rather than its canonical presentation).

\smallskip

%%%%%%%%%%%%%%%%%%%%%%%%%%%%%%%%%%%%%%%%%%%%%%%%%%%%%%%%%%%%%%%%%

\subsection{$a$-relations} \

As mentioned in the introduction to the groups of interest in Section 6, the $a$-relations adjoined to $M(\textbf{M})$ and $G(\textbf{M})$ to form $M_a(\textbf{M})$ and $G_a(\textbf{M})$ are all relations of the form $w=1$ where, for $\pazocal{A}$ the tape alphabet of the `special' input sector, $w\in F(\pazocal{A})$ is trivial over the free Burnside group $B(2,n)$ with basis $\pazocal{A}$.

The following Lemma sheds some light on why these particular relations are adjoined to the group presentation.

\begin{lemma} \label{a-relations are relations}

For any word $u\in F(\pazocal{A})$, the relation $u^n=1$ holds in the group $G(\textbf{M})$.

\end{lemma}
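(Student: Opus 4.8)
The plan is to construct, for any $u \in F(\pazocal{A})$, an explicit van Kampen diagram over $G(\textbf{M})$ with boundary label $u^n$, using the fact that $u^n$ is the label of an accepting computation of $\textbf{M}$ and that accepted configurations give rise to disk relations (Lemma \ref{disks are relations}). First I would observe that since $u^n \in \pazocal{L}$, Lemma \ref{M language} tells us that the start configuration $I(u^n)$ is accepted by $\textbf{M}$; hence by Lemma \ref{disks are relations} the word $I(u^n)$ is trivial in $G(\textbf{M})$, i.e. there is a disk with boundary label $I(u^n)$.

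The key point is then to relate the word $I(u^n)$ to the word $u^n$ read in the `special' input sector. Recall that $I(u^n) \equiv I_6(u^n) \cdot \theta(s)_1^{-1}$, and $I_6(u^n)$ is the concatenation of $L$ copies of $I_5(u^n) = I_5(u^n,u^n)$, which has $u^n$ written in each input $Q_0(j)R_0(j)$-sector and a copy of $(u^n)^{-1}$ written in each mirror copy. All other sectors of $I(u^n)$ are empty, and the state letters are the start letters of $\textbf{M}$. So, as a word in $F(\pazocal{X})$, $I(u^n)$ is a product of $q$-letters (the start state letters, which spell out a fixed word independent of $u$ up to the inserted tape words) interspersed with the tape words $u^n$ and $(u^n)^{-1}$ in the appropriate sectors. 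The plan is to cut the $I(u^n)$-disk out of a diagram whose remaining boundary, after filling in with $(\theta,q)$-cells and using the commutation relations, reads $u^n$: concretely, build a diagram $\Delta$ with one disk labelled $I(u^n)$, and attach along its boundary a diagram expressing that $I(u^n)$ equals (in $M(\textbf{M})$, hence in $G(\textbf{M})$) the conjugate $g \cdot u^n \cdot g^{-1}$ of $u^n$ by a word $g$ in $q$-letters, where the copies of $u^n$ in the non-special sectors cancel against the copies of $(u^n)^{-1}$ in their mirror sectors via the trivial-sector structure. Since $g u^n g^{-1} = 1$ in $G(\textbf{M})$ forces $u^n = 1$, this gives the claim.

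More carefully, I would argue as follows. The word $I(u^n)$, viewed cyclically, is freely equal in $F(\pazocal{X})$ to a product of the form $(\text{word in }q\text{-letters only}) \cdot \prod_j (\text{copy of }u^n \text{ in sector }j)(\text{copy of }(u^n)^{-1})$, but the $q$-letters of different parts do not commute, so one cannot simply cancel. Instead, the cleaner route is: the only relations of $M(\textbf{M})$ involving tape letters from the special input sector $Q_0(1)R_0(1)$ are the $(\theta,a)$-relations $\theta_i a = a \theta_i$ for $a \in \pazocal{A}$; and since no rule of $\textbf{M}_{6,1}$ locks the special input sector (only $\theta(s)_2$ and the rules of $\Theta_2$ do) — wait, rather, the relevant observation is that in the disk relation $I(u^n) = 1$, the subword in the special input sector is literally $u^n$. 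I would formalize this by noting that $I(u^n)$ and a word of the form $v \cdot u^n \cdot v^{-1}$ (with $v$ a word not involving $\pazocal{A}$ and with all other tape content cancelling) are conjugate in $M(\textbf{M})$ via the empty computation / via free reduction in the trivially-related sectors, so that $v u^n v^{-1} = 1$ in $G(\textbf{M})$, whence $u^n = 1$.

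The main obstacle I expect is the bookkeeping in the second paragraph: making precise exactly which sectors of $I(u^n)$ are empty versus contain $u^n$ or $(u^n)^{-1}$, and verifying that, after reading the disk boundary and accounting for all the $q$-letters and the mirror-sector cancellations, what remains is genuinely (a conjugate of) $u^n$ and not some more complicated word. This requires unwinding the definitions of $I_6$, $J_6$, $I_5(w_1,w_2)$, the mirror-copy construction of $\textbf{M}_4$ through $\textbf{M}$, and the fact that mirror copies of the input sector are not input sectors but carry $(u^n)^{-1}$. Once the structure of $I(u^n)$ as an element of $F(\pazocal{X})$ is pinned down, the rest is a short application of Lemma \ref{M language} and Lemma \ref{disks are relations}. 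I would also remark that the identical argument works verbatim over $G_a(\textbf{M})$, though for this lemma only $G(\textbf{M})$ is claimed.
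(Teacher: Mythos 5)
Your argument does not go through as stated, and you correctly flag where it breaks down — there is no way to "cancel" the copies of $u^{\pm n}$ sitting in the non-special input sectors and their mirror sectors against one another, because the intervening $q$-letters of the start configuration do not commute with tape letters or with each other in $M(\textbf{M})$. So $I(u^n)$, read as a word in $F(\pazocal{X})$, is \emph{not} freely (or $M(\textbf{M})$-) equal to a conjugate of a single copy of $u^n$; it is a word $P\,u^n\,S$ in which both $P$ and $S$ still contain many copies of $u^n$ and $(u^n)^{-1}$, and the relation $I(u^n)=1$ alone only gives $u^n = P^{-1}S^{-1}$, which is not obviously trivial.

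The missing idea is to use $J(u^n)$ as well, not just $I(u^n)$. By Lemma \ref{M language} \emph{both} $I(u^n)$ and $J(u^n)$ are accepted by $\textbf{M}$, so by Lemma \ref{disks are relations} both words are trivial in $G(\textbf{M})$. By construction, $J(u^n)$ is obtained from $I(u^n)$ by erasing $u^n$ from the special input sector and nothing else; that is, $I(u^n)\equiv P\,u^n\,S$ and $J(u^n)\equiv P\,S$ with the same $P$ and $S$. Then $PS=1$ gives $S^{-1}=P$, and $P\,u^n\,S=1$ gives $u^n = P^{-1}S^{-1} = P^{-1}P = 1$ in $G(\textbf{M})$. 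This two-line argument is exactly the paper's proof; it sidesteps entirely the bookkeeping you identify as the main obstacle.
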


\begin{proof}

Lemmas \ref{M language} and \ref{disks are relations} imply that the words corresponding to the configurations $I(u^n)$ and $J(u^n)$ are trivial over the group $G(\textbf{M})$. These two words differ only by the insertion of the word $u^n$ in the `special' input sector, so that $u^n=1$ over $G(\textbf{M})$.

\end{proof}

\begin{lemma} \label{G isomorphic to G_a}

The groups $G(\textbf{M})$ and $G_a(\textbf{M})$ are isomorphic.

\end{lemma}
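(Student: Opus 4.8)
The plan is to show that the normal closure $\gen{\gen{\Omega}}$ that distinguishes $G_a(\textbf{M})$ from $G(\textbf{M})$ is in fact already trivial in $G(\textbf{M})$, so the canonical projection $G(\textbf{M})\to G_a(\textbf{M})$ is an isomorphism. Recall $G_a(\textbf{M})\cong G(\textbf{M})/\gen{\gen{\Omega}}$, where $\Omega$ is the set of words over $\pazocal{A}$ (the tape alphabet of the `special' input sector) that are trivial in $B(2,n)$. Thus it suffices to prove: every $w\in\Omega$ represents the trivial element of $G(\textbf{M})$. Given this, the surjection $G(\textbf{M})\twoheadrightarrow G_a(\textbf{M})$ has trivial kernel, hence is an isomorphism.

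First I would invoke Lemma \ref{a-relations are relations}, which gives us for free that $u^n=1$ in $G(\textbf{M})$ for every $u\in F(\pazocal{A})$. These are precisely the defining relators of the free Burnside group $B(2,n)=\gen{\pazocal{A}\mid u^n,\ u\in F(\pazocal{A})}$. Therefore there is a natural epimorphism $B(2,n)\to\gen{\gen{\pazocal{A}}}_{G(\textbf{M})}$ (the subgroup of $G(\textbf{M})$ generated by the image of $\pazocal{A}$) sending each generator of $B(2,n)$ to the corresponding tape letter; equivalently, any word $w\in F(\pazocal{A})$ that is trivial in $B(2,n)$ is a consequence of the relators $\{u^n\}$ and hence is trivial in $G(\textbf{M})$. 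So $\Omega$ maps to the identity of $G(\textbf{M})$, which means $\gen{\gen{\Omega}}=1$ in $G(\textbf{M})$, and the two groups coincide.

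Concretely, the argument reads: let $w\in\Omega$. Since $w=1$ in $B(2,n)$, by van Kampen's Lemma applied to the presentation $\gen{\pazocal{A}\mid u^n,\ u\in F(\pazocal{A})}$ of $B(2,n)$, we may write $w=\prod_{i=1}^k f_i u_i^{n\eps_i} f_i^{-1}$ in $F(\pazocal{A})$ for suitable $f_i\in F(\pazocal{A})$, $u_i\in F(\pazocal{A})$, $\eps_i\in\{\pm1\}$. Each factor $u_i^{n}$ is trivial in $G(\textbf{M})$ by Lemma \ref{a-relations are relations}, and therefore the conjugate $f_i u_i^{n\eps_i} f_i^{-1}$ is trivial in $G(\textbf{M})$ as well; hence $w=1$ in $G(\textbf{M})$. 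Since this holds for all $w\in\Omega$, the normal subgroup $\gen{\gen{\Omega}}$ is trivial in $G(\textbf{M})$, so $G_a(\textbf{M})=G(\textbf{M})/\gen{\gen{\Omega}}\cong G(\textbf{M})$.

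There is no real obstacle here: the statement is essentially a bookkeeping consequence of Lemma \ref{a-relations are relations} together with the definition of $B(2,n)$ by the relators $u^n$. The only point worth stating carefully is that $\Omega$ was \emph{defined} to be the set of words over $\pazocal{A}$ trivial in $B(2,n)$ — and those words are, by definition of $B(2,n)$ as a quotient of $F(\pazocal{A})$ by $\gen{\gen{\{u^n\}}}$, exactly the elements of $\gen{\gen{\{u^n : u\in F(\pazocal{A})\}}}$ inside $F(\pazocal{A})$. Lemma \ref{a-relations are relations} kills each generator $u^n$ of this normal subgroup in $G(\textbf{M})$, hence kills the whole normal subgroup, hence kills $\Omega$. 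Thus the extra relations defining $G_a(\textbf{M})$ are redundant over $G(\textbf{M})$, proving $G(\textbf{M})\cong G_a(\textbf{M})$.
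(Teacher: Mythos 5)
Your proof is correct and follows essentially the same route as the paper's: both invoke Lemma \ref{a-relations are relations} to kill the defining relators $u^n$ of $B(2,n)$ inside $G(\textbf{M})$, and then (via von Dyck's theorem in the paper, via an explicit van Kampen product-of-conjugates in your writeup) deduce that every element of $\Omega$ is already trivial in $G(\textbf{M})$, so the extra $a$-relations are redundant and the quotient map $G(\textbf{M})\to G_a(\textbf{M})$ is an isomorphism.
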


\begin{proof}

Identify $B(2,n)$ with the presentation $\gen{\pazocal{A}\mid w=1, w\in\pazocal{L}}$.

Then consider the map $\varphi:\pazocal{A}\to G(\textbf{M})$ sending each letter to its natural copy in the tape alphabet of the `special' input sector. By the theorem of von Dyck (Theorem 4.5 of [17]), Lemma \ref{a-relations are relations} implies that $\varphi$ extends to a homomorphism $B(2,n)\to G(\textbf{M})$. This shows that for any word $w$ corresponding to an $a$-relation $w=1$, the relation $w=1$ holds in $G(\textbf{M})$. 

The theorem of von Dyck then implies that the map sending the generators of the canonical presentation of $G(\textbf{M})$ to the generators of the disk presentation of $G_a(\textbf{M})$ extends to an isomorphism between the two groups.

\end{proof}

\smallskip

\subsection{Minimal diagrams over $G_a(\textbf{M})$} \

Now we wish to justify our assignment of weights to $a$-cells and disks over the disk presentation of $G_a(\textbf{M})$. To do so, we first study areas of a diagram over the canonical presentation of $G(\textbf{M})$ with contour label corresponding to a disk relation.

\begin{lemma} \label{disks are quadratic}

(1) For any configuration $W$ accepted by $\textbf{M}$, there exists a diagram $\Delta$ over the canonical presentation of $G(\textbf{M})$ such that $\lab(\partial\Delta)\equiv W$ and $\text{Area}(\Delta)\leq c_7|W|^2$.

(2) For any nontrivial $u^n\in\pazocal{L}$, there exists a diagram $\Delta$ over the canonical presentation of $G(\textbf{M})$ with $\lab(\partial\Delta)\equiv u^n$ and $\text{Area}(\Delta)\leq c_7\|u\|^2$.

\end{lemma}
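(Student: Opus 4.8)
The plan is to extract, for each accepted configuration, an explicit van Kampen diagram from an accepting computation via Lemmas~\ref{computations are trapezia} and~\ref{M accepted configurations}, glue in a single hub, and then bound the cell count using Lemma~\ref{lengths}(d).

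For part (1), let $W$ be accepted by $\textbf{M}$. By Lemma~\ref{M accepted configurations} there is an accepting computation $W\equiv W_0\to\dots\to W_t\equiv W_{ac}$ with $t\le c_4\|W(i)\|$ and $|W_j(i)|_a\le c_4|W(i)|_a$ for all $j$ and all $i$; we may take it reduced (removing backtracking only shortens it), and summing the second estimate over $i$ gives $|W_j|_a\le c_4|W|_a$ for every $j$. If $t=0$ take $\Delta$ to be a single hub; otherwise Lemma~\ref{computations are trapezia} produces a trapezium $\Delta_0$ with trimmed bottom label $W$ and trimmed top label $W_{ac}$, and since the computation is in the standard base and every rule locks the last sector, no trimming is needed and the two sides of $\Delta_0$ carry the same label (the index-$0$ copy of the history). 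Gluing the hub $W_{ac}=1$ along the top and identifying the two sides yields a diagram $\Delta$ over the canonical presentation of $G(\textbf{M})$ with $\lab(\partial\Delta)\equiv W$. Decomposing $\Delta_0$ into its $\theta$-bands $\pazocal{T}_1,\dots,\pazocal{T}_t$ and letting $l$ be the (constant) length of the standard base of $\textbf{M}$, Lemma~\ref{lengths}(d) bounds the number of cells of $\pazocal{T}_j$ by $|W_j|_a+3l$, so
\[
\text{Area}(\Delta)\le 1+\sum_{j=1}^t\bigl(|W_j|_a+3l\bigr)\le 1+c_4\|W(i)\|\bigl(c_4|W|_a+3l\bigr).
\]
Now $|W|=|W|_q+\delta|W|_a$ for a configuration, so $|W|_a\le\delta^{-1}|W|$ and $\|W(i)\|\le\|W\|\le(1+\delta^{-1})|W|$; as $|W|\ge1$ the right-hand side is at most $C\delta^{-2}|W|^2$ for a constant $C$ depending only on $c_4$ and $l$, and since $\delta^{-1}<<c_7$ we get $\text{Area}(\Delta)\le c_7|W|^2$.

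For part (2), fix nontrivial $u^n\in\pazocal{L}$. By Lemma~\ref{M language} both $I(u^n)$ and $J(u^n)$ are accepted configurations; being start configurations, $\|I(u^n,i)\|,\|J(u^n,i)\|\le(2n+l)\|u\|$ and $|I(u^n)|_a,|J(u^n)|_a\le 2Ln\|u\|$. Running the argument of part (1) for each of them, but keeping the bounds in terms of combinatorial length, produces diagrams $\Delta_I,\Delta_J$ over the canonical presentation of $G(\textbf{M})$ with $\lab(\partial\Delta_I)\equiv I(u^n)$, $\lab(\partial\Delta_J)\equiv J(u^n)$ and $\text{Area}(\Delta_I),\text{Area}(\Delta_J)\le C'\|u\|^2$, where $C'$ depends only on $c_4$, $L$, $l$ and the fixed integer $n$ (here $\|u\|\ge1$ is used to absorb the lower-order terms). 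As in the proof of Lemma~\ref{a-relations are relations}, $I(u^n)$ and $J(u^n)$ agree outside the `special' input sector, so writing $I(u^n)\equiv\alpha u^n\beta$ and $J(u^n)\equiv\alpha\beta$ and gluing $\Delta_I$ to the mirror image of $\Delta_J$ along the $\alpha$- and $\beta$-subpaths of their contours gives a diagram $\Delta$ with $\lab(\partial\Delta)\equiv u^n$ and $\text{Area}(\Delta)=\text{Area}(\Delta_I)+\text{Area}(\Delta_J)\le 2C'\|u\|^2$. Since $c_7$ is chosen after $c_4$, $L$, $l$ and $n$ is fixed, $c_7\ge 2C'$, whence $\text{Area}(\Delta)\le c_7\|u\|^2$.

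The only subtle point is the final bookkeeping. In part (1) one must use $\delta^{-1}<<c_7$ to convert the raw cell count $C\delta^{-2}|W|^2$ into $c_7|W|^2$. In part (2) one should \emph{not} simply quote part (1) for $I(u^n)$ and $J(u^n)$: the modified length $|I(u^n)|$ has an additive contribution $|I(u^n)|_q$ from its $q$-letters, whose square is not swallowed by $c_7\|u\|^2$ when $\|u\|=1$. Instead one estimates $\text{Area}(\Delta_I)$ directly through $\|I(u^n,i)\|=O(n\|u\|)$, so that the entire constant — which involves only parameters preceding $c_7$ and the fixed exponent $n$ — is absorbed into $c_7$. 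Everything else (the trapezium construction, the hub, and the surgery gluing $\Delta_I$ to the mirror of $\Delta_J$) is routine and already appears in the proofs of Lemmas~\ref{disks are relations} and~\ref{a-relations are relations}.
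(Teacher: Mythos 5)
Your proposal is correct and follows essentially the same route as the paper's proof: invoke Lemma~\ref{M accepted configurations} to get a short accepting computation, turn it into a trapezium via Lemma~\ref{computations are trapezia}, cap with a hub, and for part (2) do this for both $I(u^n)$ and $J(u^n)$ and glue along the common part of their contours. The only cosmetic difference is that the paper breaks the trapezium into $L$ sub-trapezia (one per copy of the base $\{t(i)\}B_4(i)$) before counting cells, while you count the $\theta$-bands of the whole trapezium directly via Lemma~\ref{lengths}(d) — the resulting bound is the same, and your explicit warning that part (2) cannot be deduced by blindly applying part (1) to $I(u^n)$ (because of the $q$-length contribution to $|I(u^n)|$) is exactly the right point of caution.
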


\begin{proof}

(1) By Lemmas \ref{M accepted configurations} and \ref{computations are trapezia}, one can build a diagram $\Delta$ over the canonical presentation of $G(\textbf{M})$ with $\lab(\partial\Delta)\equiv W$ made of one hub and $L$ trapezia $\Gamma_1,\dots,\Gamma_L$ satisfying $\text{Area}(\Gamma_i)\leq c_4^2\|W(i)\|^2$. The inequality follows as we choose $c_7$ after $c_4$, $L$, and $\delta$.

(2) As in (1), we can build diagrams $\Delta_1$ and $\Delta_2$ over the canonical presentations of $G(\textbf{M})$ where $\Delta_j$ is made of one disk and $L$ trapezia $\Gamma_{1,j},\dots,\Gamma_{L,j}$ satisfying: 
\begin{addmargin}[1em]{0em}
$\bullet$ $\text{Lab}(\Delta_1)\equiv I(u^n)$ and $\text{Area}(\Gamma_{i,1})\leq c_4^2\|I(u^n,i)\|^2$ for each $i$

$\bullet$ $\text{Lab}(\Delta_2)\equiv J(u^n)$ and $\text{Area}(\Gamma_{i,2})\leq c_4^2\|J(u^n,i)\|^2$ for each $i$
\end{addmargin}

Note that $\|I(u^n,i)\|,\|J(u^n,i)\|\leq N+2n\|u\|$ for $1\leq i\leq L$. So, since $c_7$ is chosen after $c_4$, $N$, and $L$, we can assume that $\text{Area}(\Delta_j)\leq \frac{c_7}{2}\|u\|^2$ for $j=1,2$.

Gluing the $\Delta_1$ and $\Delta_2$ along their common contours then yields a diagram $\Delta$ satisfying the statement.

\end{proof}

\begin{lemma} \label{a-cells are quadratic} If $w$ is a reduced word over the alphabet $\pazocal{A}$ such that $w=1$ in $B(2,n)$, then there exists a diagram $\Delta$ over the canonical presentation of $G(\textbf{M})$ with $\lab(\partial\Delta)\equiv w$ and satisfying $\text{Area}(\Delta)\leq c_7\|w\|^2$.

\end{lemma}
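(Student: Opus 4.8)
The plan is to reduce the statement about a word $w$ trivial in $B(2,n)$ to the quadratic bound already established for words of the form $u^n$ (Lemma \ref{disks are quadratic}(2)), using the graded presentation $G(\infty)\cong B(2,n)$ of Section 2.8 and the quadratic mass estimate of Lemma \ref{a-cells are quadratic B(m,n)}. Since $w=1$ in $B(2,n)=G(\infty)$, van Kampen's Lemma (in the graded form of Section 2.7) provides a reduced graded disk diagram $\Sigma$ over the presentation $\gen{\pazocal{A}\mid\pazocal{R}}$ of $B(2,n)$ with $\lab(\partial\Sigma)\equiv w$. By Lemma \ref{Burnside A-maps} the underlying map of $\Sigma$ is an A-map, so Lemma \ref{a-cells are quadratic B(m,n)} applies and gives $\rho(\Sigma)\leq|\partial\Sigma|^2=\|w\|^2$, where $\rho(\Sigma)=\sum_\Pi|\partial\Pi|^2$ is the sum over the $\pazocal{R}$-cells of $\Sigma$.

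Next I would convert $\Sigma$ into a diagram over the canonical presentation of $G(\textbf{M})$. Each $\pazocal{R}$-cell $\Pi$ of $\Sigma$ has boundary label a cyclic permutation of $A^{\pm n}$ for some $A\in F(\pazocal{A})$ with $|\partial\Pi|=n|A|$, and the corresponding word $A^n$ (read from the appropriate starting vertex, so that it is a cyclic conjugate of $\lab(\partial\Pi)$) lies in $\pazocal{L}$. When $A$ is nonempty, Lemma \ref{disks are quadratic}(2) furnishes a diagram $\Delta_\Pi$ over the canonical presentation of $G(\textbf{M})$ with $\lab(\partial\Delta_\Pi)\equiv A^n$ and $\text{Area}(\Delta_\Pi)\leq c_7\|A\|^2=c_7(|\partial\Pi|/n)^2$; pushing the base vertex around the contour and, if necessary, taking the mirror diagram, one gets a diagram whose boundary label is exactly the cyclic conjugate of $\lab(\partial\Pi)^{\pm1}$ needed to glue it in. (If some $A$ is empty the cell contributes nothing and can be deleted from $\Sigma$ first, or handled by a $0$-refinement; this is a minor point.) Replacing every $\pazocal{R}$-cell of $\Sigma$ by the corresponding $\Delta_\Pi$, and keeping the $0$-cells of $\Sigma$ as $0$-refinements, produces a van Kampen diagram $\Delta$ over the canonical presentation of $G(\textbf{M})$ with $\lab(\partial\Delta)\equiv w$.

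Finally I would add up the areas. We have
$$\text{Area}(\Delta)=\sum_\Pi\text{Area}(\Delta_\Pi)\leq\sum_\Pi c_7\Big(\frac{|\partial\Pi|}{n}\Big)^2=\frac{c_7}{n^2}\sum_\Pi|\partial\Pi|^2=\frac{c_7}{n^2}\,\rho(\Sigma)\leq\frac{c_7}{n^2}\|w\|^2\leq c_7\|w\|^2,$$
which is the desired bound (indeed with room to spare). The one place requiring a little care — and the step I expect to be the main obstacle, though still routine — is the bookkeeping needed to ensure that the constant $c_7$ used in Lemma \ref{disks are quadratic}(2) is the same one that appears in the target inequality, and that the boundary of each $\Delta_\Pi$ can genuinely be glued to $\Sigma$ along $\partial\Pi$: one must check that Lemma \ref{disks are quadratic}(2) is insensitive to cyclic conjugation and inversion of $u^n$ (it is, since $I(u^n)$ and $J(u^n)$ depend only on the conjugacy considerations already built into the machine, and a diagram for a word can always be replaced by one for a cyclic conjugate or inverse of equal area by re-reading the boundary or mirroring). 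Once that is in place the estimate above finishes the proof, since $c_7\geq c_7/n^2$.
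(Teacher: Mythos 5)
Your proposal is correct and takes essentially the same approach as the paper's proof: build a van Kampen diagram for $w$ over the graded presentation of $B(2,n)$, replace each $\pazocal{R}$-cell (with boundary label a cyclic permutation of some $u^n\in\pazocal{L}$) by a $G(\textbf{M})$-diagram of area at most $c_7\|u\|^2$ via Lemma \ref{disks are quadratic}(2), and sum the areas using the quadratic mass estimate of Lemma \ref{a-cells are quadratic B(m,n)}. Your accounting of the factor $1/n^2$ (from $\|u\|=|\partial\Pi|/n$) even gives a slightly sharper constant, and your explicit insistence that the graded diagram be reduced — needed for Lemma \ref{Burnside A-maps} to apply — is a useful bit of care.
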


\begin{proof}

Let $\Delta_0$ be a van Kampen diagram over the presentation $\gen{\pazocal{A}\mid\pazocal{R}}$ (see Section 2.8) with $\lab(\partial\Delta_0)\equiv w$. For each cell $\Pi_0$ in $\Delta_0$, $\lab(\partial\Pi_0)\in\pazocal{R}\subset\pazocal{L}$. Setting $\lab(\partial\Pi_0)\equiv (u(\Pi_0))^n$, Lemma \ref{disks are quadratic}(2) then implies that there exists a diagram $\Pi$ over the canonical presentation of $G(\textbf{M})$ satisfying $\lab(\partial\Pi)\equiv (u(\Pi_0))^n$ and $\text{Area}(\Pi)\leq c_7\|u(\Pi_0)\|^2$.

Pasting $\Pi$ in place of $\Pi_0$ for each cell of $\Delta_0$ then produces a van Kampen diagram $\Delta$ over the canonical presentation of $G(\textbf{M})$ satsifying $\lab(\partial\Delta)\equiv w$ and $$\text{Area}(\Delta)=\sum\text{Area}(\Pi)\leq\sum\limits_{\Pi_0\in\Delta_0} c_7\|u(\Pi_0)\|^2$$

But defining $\rho(\Pi_0)=\|u(\Pi_0)\|^2$ as in the definition of mass in Section 2.9, it follows from Lemma \ref{a-cells are quadratic B(m,n)} that $$\sum\limits_{\Pi_0\in\Delta_0}\|u(\Pi_0)\|^2=\sum\limits_{\Pi_0\in\Delta_0}\rho(\Pi_0)\defeq\rho(\Delta_0)\leq\|\partial\Delta_0\|^2$$
Hence, $\text{Area}(\Delta)\leq c_7\|\partial\Delta_0\|^2\leq c_7\|w\|^2$.

\end{proof}

\medskip

%%%%%%%%%%%%%%%%%%%%%%%%%%%%%%%%%%%%%%%%%%%%%%%%%%%%%%%%%%%%%%%%%

\section{Diagrams without disks}

\subsection{Annuli} \

In this section, we bound the weight of a minimal diagram over $M_a(\textbf{M})$ in terms of its perimeter. To do this, we first dismiss the possibility of certain subdiagrams in minimal diagrams, yielding an analogue of Lemma \ref{M(S) annuli}.

\begin{lemma} \label{M_a no annuli}

If $\Delta$ is a minimal diagram over $M_a(\textbf{M})$, then it has no:
\begin{addmargin}[1em]{0em}

(1) $a$-annuli

(2) $q$-annuli

(3) $\theta$-annuli

(4) $(\theta,q)$-annuli

(5) $(\theta,a)$-annuli

\end{addmargin}

\end{lemma}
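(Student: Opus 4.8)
The plan is to mimic the proof of Lemma \ref{M(S) annuli} (Lemma 6.1 of [18]), upgrading it to account for the extra relations present in $M_a(\textbf{M})$, namely the $a$-relations $w=1$ with $w\in F(\pazocal{A})$ trivial in $B(2,n)$. The key point is that these new relations are words purely in the tape alphabet of the `special' input sector, so they introduce $a$-cells but no new $(\theta,q)$- or $(\theta,a)$-cells, and they involve no $q$-letters or $\theta$-letters. Consequently the arguments ruling out $q$-annuli, $\theta$-annuli, $(\theta,q)$-annuli and $(\theta,a)$-annuli in [18] go through essentially verbatim: those annuli are built from $(\theta,q)$- and $(\theta,a)$-cells, which see only the relations of $M(\textbf{S})$ (equivalently, only the rules of $\textbf{M}$), and the combinatorial-geometry obstructions in [18] are insensitive to the presence of $a$-cells elsewhere in the diagram. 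So for parts (2)--(5) I would reduce directly to Lemma \ref{M(S) annuli} applied to the maximal subdiagram obtained by excising $a$-cells, or simply invoke that the bands in question cannot involve $a$-cells by definition (recall $a$-bands are defined to consist only of $(\theta,a)$-cells, and $(\theta,q)$/$(\theta,a)$-annuli are unions of $\theta$- and $q$- or $a$-bands).

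For the crucial new case, part (1), the plan is a minimality/signature argument. Suppose toward a contradiction that a minimal diagram $\Delta$ over $M_a(\textbf{M})$ contains an $a$-annulus $\pazocal{S}$ for some $a\in Y$. An $a$-annulus consists of $(\theta,a)$-cells whose $a$-edges are all labelled by the same letter $a$; its two boundary circles, being tops/bottoms of the annulus, have labels that are freely trivial words in $F(\Theta^+)$ (the $\theta$-letters threading the annulus), so the inner subdiagram $\Gamma$ bounded by $\pazocal{S}$ has contour label a word in $\theta$-letters equal to $1$. Following [18], one shows the $a$-annulus together with its interior can be removed — either the annulus encloses a subdiagram that can be ``pushed out'' because the history word around it reduces, contradicting that $\Delta$ is reduced, or one obtains a diagram with strictly smaller signature (fewer $a$-cells, or fewer $(\theta,a)$-cells, or smaller weight), contradicting minimality. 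The subtlety introduced by the $a$-relations is that a maximal $a$-band could in principle start or end on an $a$-cell; but $a$-cells carry no $\theta$-letters, so an $a$-band (consisting of $(\theta,a)$-cells) cannot actually terminate on an $a$-cell in a way that closes up into an annulus — it can only close on itself, which is the case we are excluding. I would make this precise by the same surgery as in [18]: pick an innermost $a$-annulus, analyze its interior, and derive the signature contradiction.

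The main obstacle I expect is handling the interaction between $a$-cells and the annular structure cleanly — specifically, verifying that the surgery removing an innermost $a$-annulus (or $(\theta,a)$-annulus whose corner cells might abut $a$-cells) genuinely decreases the signature rather than merely trading one type of cell for another. Since the signature is lexicographic with $\a_1$ (disks) first, then $\a_2$ ($(\theta,t)$-cells), $\a_3$ (all $(\theta,q)$-cells), $\a_4$ ($a$-cells), $\a_5$ (weight), removing an $a$-annulus removes $(\theta,a)$-cells hence strictly decreases $\a_3$ while not increasing $\a_1$ or $\a_2$ (no disks or $t$-letters are involved), which suffices. For the other four parts the reduction to Lemma \ref{M(S) annuli} is routine once one observes that a minimal diagram over $M_a(\textbf{M})$, after deleting its $a$-cells, need not be a diagram over $M(\textbf{M})$, so the cleanest route is to argue directly that each such annulus is built entirely of cells of $M(\textbf{M})$ and that the reduced-ness and minimality of $\Delta$ force the same contradictions as in [18]. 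I would therefore present part (1) in full detail and dispatch (2)--(5) by citing Lemma \ref{M(S) annuli} together with the observation that no $a$-cell can occur in a $q$-, $\theta$-, $(\theta,q)$-, or $(\theta,a)$-annulus, so the presence of $a$-relations is irrelevant to those cases.
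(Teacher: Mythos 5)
Your plan for part~(1) is broadly in the right spirit, but the claim that parts~(2)--(5) can be dispatched by reducing to Lemma~\ref{M(S) annuli} (that ``the presence of $a$-relations is irrelevant to those cases'') has a genuine gap at case~(3), the $\theta$-annulus. If a $\theta$-annulus $S$ consists entirely of $(\theta,a)$-cells, its outer and inner contours are labelled by words in $F(Y)$. In a reduced diagram over $M(\textbf{S})$ (the setting of [18]), a subdiagram with contour label in $F(Y)$ must be trivial --- there are no $M(\textbf{S})$-relations among tape letters alone --- so the inner disk is empty, the contour is freely trivial, and the annulus collapses. Over $M_a(\textbf{M})$ this breaks: the inner disk may be filled with $a$-cells whose labels are $B(2,n)$-relations that are \emph{not} freely trivial, so the [18] argument produces no contradiction. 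The paper handles exactly this: it shows the interior $\Delta_S\setminus S$ must consist only of $a$-cells, hence its contour is a $B(2,n)$-relator, so by minimality it is a single $a$-cell; but then $\Delta_S$ has the same contour label as that single $a$-cell, and replacing $\Delta_S$ by the $a$-cell strictly decreases the signature. That surgery is specific to $M_a(\textbf{M})$ and cannot be obtained by citing Lemma~\ref{M(S) annuli}; it is the genuinely new content of the lemma. Relatedly, the paper organizes all five parts as a \emph{simultaneous} induction on the signature of the minimal subdiagram $\Delta_S$ containing a conjectural counterexample annulus, with the cases referring to one another (e.g.\ (3) invokes (5) for strictly smaller $\Delta_{S'}$); your piecemeal plan, proving~(1) alone and citing [18] for the rest, loses this structure and would need it to close case~(3).

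Two smaller points. First, in your signature bookkeeping you say removing an $a$-annulus ``strictly decreases $\a_3$''; but $\a_3$ counts $(\theta,q)$-cells, and the cells of an $a$-annulus are $(\theta,a)$-cells. Deleting them leaves $\a_1,\dots,\a_4$ unchanged and decreases only $\a_5$ (total weight). Second, the paper's argument for~(1) does not directly ``remove an innermost $a$-annulus''; it traces the maximal $\theta$-band joining two cells $\pi_1,\pi_2$ of $S$ and either finds a cancellable pair (contradicting reducedness) or produces a $(\theta,a)$-annulus $S'$ with $s(\Delta_{S'})<s(\Delta_S)$ (contradicting the inductive choice of $\Delta_S$). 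Your sketch would likely arrive at a correct proof, but the route the paper takes is this band-tracing argument inside the uniform induction, not a direct excision of the $a$-annulus.
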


\begin{proof}

The proof follows simultaneous induction on the minimalilty (with respect to signature) of a subdiagram $\Delta_S$ containing a conjectural counterexample, i.e an annulus $S$. So, the contour of $\Delta_S$ is one component of the contour of $S$ and $\Delta$ contains no annulus $S'$ of the above types so that the minimal subdiagram containing $S'$, $\Delta_{S'}$, satisfies $s(\Delta_{S'})<s(\Delta_S)$.

(1) Assume $S$ is an $a$-annulus. Then every cell of $S$ is a $(\theta,a)$-cell, so that any one, say $\pi_1$, has contour sharing a $\theta$-edge with $\partial\Delta_S$. The maximal $\theta$-band $\pazocal{T}$ starting at this edge then must end on $\partial\Delta_S$, i.e on a $\theta$-edge of another cell $\pi_2$ of $S$. 

Suppose $\pi_1$ and $\pi_2$ are adjacent. Then the $a$-letters corresponding to the $(\theta,a)$-relations of $\pi_1$ and $\pi_2$ must be the same, while the $\theta$-letters must be mutually inverse. So, these cells are cancellable, contradicting the assumption that $\Delta$ is minimal. 

Assuming $\pi_1$ and $\pi_2$ are not adjacent, let $\pi$ be a cell of $S$ between $\pi_1$ and $\pi_2$. But then letting $S'$ be the $(\theta,a)$-annulus bounded by $\pazocal{T}$ and the part of $S$ between $\pi_1$ and $\pi_2$ not containing $\pi$, $s(\Delta_{S'})<s(\Delta_S)$ contradicting the minimality of $\Delta_S$.

(2) By an identical argument, $S$ being a $q$-annulus leads to a contradiction since every cell of the annulus must be a $(\theta,q)$-cell, so that either $S$ contains a pair of cancellable cells or there exists a $(\theta,q)$-annulus $S'$ satisfying $s(\Delta_{S'})<s(\Delta_S)$.

(3) Assume $S$ is a $\theta$-annulus. 

If $S$ contains a $(\theta,q)$-cell, then there exists a $q$-edge on $\partial\Delta_S$ marking the start of a $q$-band in $\Delta_S$. However, since there are no disks, this band must end on $\partial\Delta_S$, creating a $(\theta,q)$-annulus. Similar to the argument in (1), this either produces cancellable $(\theta,q)$-cells in $\Delta_S$ or a $(\theta,q)$-annulus $S'$ with $s(\Delta_{S'})<s(\Delta_S)$. Either way, this contradicts the minimality of $\Delta_S$.

So, assume that every cell of $S$ is a $(\theta,a)$-cell, so that the label of the boundary of $\Delta_S$ is a word in $F(Y)$. If any maximal $a$-band has both ends on $\partial\Delta_S$, then as above it will either yield a pair of cancellable cells in $S$ or a $(\theta,a)$-annulus to contradict (5). 

The existence of any $(\theta,q)$- or $(\theta,a)$-cell in $\Delta_S$ not on $S$ would give rise to a maximal $\theta$-band that cannot intersect $S$, i.e a $\theta$-annulus $S'$. However, this would then yield a subdiagram $\Delta_{S'}$ of $\Delta_S$ not containing the cells of $S$, contradicting the minimality of $\Delta_S$. 

As a result, the subdiagram $\Delta_S'=\Delta_S\setminus S$ must consist only of $a$-cells. This means that the label of the contour of $\Delta_S'$ is trivial over $B(2,n)$, so that it corresponds to an $a$-relation. So, as $\Delta$ is minimal, $\Delta_S$ consists only of $S$ and an $a$-cell whose contour label is visually equal to the contour label of $\Delta_S$. But then letting $\Gamma$ be the diagram formed from $\Delta$ by excising $\Delta_S$ and pasting in the subdiagram consisting only of this $a$-cell, $s(\Gamma)<s(\Delta)$ and $\text{Lab}(\Gamma)\equiv\text{Lab}(\Delta)$, contradicting the minimality of $\Delta$.

(4) If $S$ is a $(\theta,q)$-annulus, then by Lemma \ref{annuli lower bound} the defining rim $q$-band $\pazocal{Q}$ contains a $(\theta,q)$-cell with neither $q$-edge on $\partial\Delta_S$. The maximal $\theta$-band in $\Delta_S$ starting at this cell, $\pazocal{T}$, then must end on $\partial\Delta_S$, so that it must intersect another $(\theta,q)$-cell of $\pazocal{T}$. The $(\theta,q)$-annulus bounded by $\pazocal{T}$ and the subband of $\pazocal{Q}$ bounded by the first and last cells of $\pazocal{T}$ then contradicts the minimality of $\Delta_S$.

(5) is proved by an identical argument to (4).

\end{proof}

Thus, every maximal $\theta$-band in a diagram over the canonical presentation of $M_a(\textbf{M})$ can cross any maximal $q$-band (maximal $a$-band) in at most one cell and must have two ends on the diagram's contour. Similarly, every maximal $q$-band must have two ends on the diagram's contour.

\begin{lemma} \label{a-bands between a-cells}

Let $\Delta$ be a minimal diagram over $G_a(\textbf{M})$. Then no $a$-band can have two ends on $a$-cells.

\end{lemma}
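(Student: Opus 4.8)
The statement to prove is that in a minimal diagram $\Delta$ over $G_a(\textbf{M})$, no $a$-band has both ends on $a$-cells. I would argue by contradiction, producing from such a configuration another diagram with strictly smaller signature, or else directly an $a$-relation that can be absorbed into a single $a$-cell. Suppose $\pazocal{B}=(\pi_1,\dots,\pi_n)$ is an $a$-band whose two ends lie on $a$-cells $\Pi_1$ and $\Pi_2$ (possibly $\Pi_1=\Pi_2$, though I'd want to check that case does not immediately collapse). Recall that $a$-bands by definition consist only of $(\theta,a)$-cells, and every $(\theta,a)$-edge along $\pazocal{B}$ is labelled by the same $a$-letter — say $a\in Y_i$ for some sector. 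The key structural fact is that the tape alphabet of every sector of $\textbf{M}$ other than the `special' input sector carries no $a$-relations: the $a$-relations are precisely the Burnside relations over $\pazocal{A}$, the tape alphabet of the $Q_0(1)R_0(1)$-sector. So I would split into two cases according to whether $a$ is a letter of the `special' input sector or not.

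\textbf{Case: $a$ not in the special input sector.} Then $\Pi_1$ and $\Pi_2$ cannot be $a$-cells at all, since an $a$-cell has contour labelled by a word in $\pazocal{A}\cup\pazocal{A}^{-1}$, and such a word contains no occurrence of the letter $a$; hence no $a$-edge of an $a$-cell is labelled by $a$, and an $a$-band labelled by $a$ cannot start or end on one. This disposes of this case outright. The real content is therefore the case $a\in\pazocal{A}$.

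\textbf{Case: $a\in\pazocal{A}$, the special input sector.} Here I would look at the region cut out by the band $\pazocal{B}$ together with arcs of $\partial\Pi_1$ and $\partial\Pi_2$. Form the subdiagram $\Gamma$ bounded by $\textbf{top}(\pazocal{B})$, a subpath of $\partial\Pi_2$ from the end $a$-edge back to a suitable vertex, $\textbf{bot}(\pazocal{B})$, and a subpath of $\partial\Pi_1$ — this is a disk subdiagram whose boundary, once one reads off labels, consists of the $a$-edge labels of the tops/bottoms of $\pazocal{B}$ (which contribute nothing of the letter $a$ except the matched pair, which cancels) together with arcs of two Burnside relators. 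Using that both $\partial\Pi_1$ and $\partial\Pi_2$ are words over $\pazocal{A}$, the label of $\partial\Gamma$ is a word over $\pazocal{A}$ that is trivial in $B(2,n)$; indeed by a projection-type argument (killing all $q$- and $\theta$-letters and projecting tape letters of each sector copy onto $\pazocal{A}$, exactly as in the primitive-machine projection arguments) the boundary word of $\Gamma$ represents $1$ in $B(2,n)$, hence corresponds to an $a$-relation. Since $\Delta$ is minimal, I can replace the whole subdiagram $\Delta_\Gamma$ (containing $\Pi_1$, $\Pi_2$, the band $\pazocal{B}$, and everything enclosed) by a single $a$-cell with the same boundary label, as was done in the proof of Lemma \ref{M_a no annuli}(3). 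This strictly decreases the number of $a$-cells $\alpha_4$ (two or more $a$-cells replaced by one) while not increasing $\alpha_1,\alpha_2,\alpha_3$, contradicting minimality. If $\Pi_1=\Pi_2$, the band $\pazocal{B}$ together with one arc of $\partial\Pi_1$ bounds an $(\theta,a)$-annulus or a disk subdiagram whose boundary is again an $a$-relation, and the same replacement (or an appeal to Lemma \ref{M_a no annuli}(5)) applies.

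\textbf{Main obstacle.} The delicate point is the bookkeeping of the boundary label of the subdiagram $\Gamma$: one must verify carefully that reading around $\textbf{top}(\pazocal{B})$, the arc of $\partial\Pi_2$, $\textbf{bot}(\pazocal{B})^{-1}$, and the arc of $\partial\Pi_1$ really does give a word that (a) lies in $F(\pazocal{A})$ and (b) is trivial in $B(2,n)$ — in particular that the $(\theta,a)$-cells of $\pazocal{B}$ contribute only cancelling pairs and that the "side" contributions from the $\theta$-edges at the two ends of $\pazocal{B}$ are absorbed correctly (they sit on $\partial\Pi_1,\partial\Pi_2$ which have no $\theta$-edges, forcing $\pazocal{B}$ to be "flush" against the $a$-cells). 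I also need to make sure the replacement genuinely lowers the signature rather than merely leaving it unchanged — this is why counting $a$-cells, and checking $\Pi_1\ne\Pi_2$ gives at least a net decrease of one, matters; the degenerate case $\Pi_1=\Pi_2$ should be handled first or folded into the annulus lemma. Everything else is routine, modelled on the arguments already given for Lemma \ref{M_a no annuli}.
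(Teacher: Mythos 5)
Your overall plan — contradict minimality by lowering the $a$-cell count $\alpha_4$ — is the right one, and your observation that the band's $a$-letter must come from the special input alphabet $\pazocal{A}$ is correct. But there is a genuine gap at the central step.

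You propose to "replace the whole subdiagram $\Delta_\Gamma$ \dots by a single $a$-cell with the same boundary label." That cannot work. The top and bottom of an $a$-band consist entirely of $\theta$-edges — every $(\theta,a)$-cell of $\pazocal{B}$ contributes a $\theta$-edge to each side, not just the two endpoint cells as you suggest. So the boundary of the region cut out by $\pazocal{B}$, $\Pi_1$, $\Pi_2$ reads $uHvH^{-1}$ with $u,v\in F(\pazocal{A})$ arcs of the two $a$-cells and $H$ the full word of $\theta$-letters along a side of the band; this is not a word over $\pazocal{A}$, so no $a$-cell has it as a contour label. You even implicitly concede this when you speak of "killing all $q$- and $\theta$-letters" in the "projection-type argument," which contradicts your earlier claim that $\partial\Gamma\in F(\pazocal{A})$. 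The fix (this is what the paper does) is first to observe that any $\theta$ appearing in $H$ has the whole alphabet $\pazocal{A}$ in its domain for the special input sector, so one can build from scratch an annular diagram $\Gamma'$ over $M(\textbf{M})$ made of $(\theta,a)$-cells with outer label $uHvH^{-1}$ and inner label $uv$. Then $uv=1$ in $M_a(\textbf{M})$, and a minimal diagram for $uv$ has no $q$- or $\theta$-edges on its contour, hence by Lemma \ref{M_a no annuli} consists only of $a$-cells; this forces $uv=1$ in $B(2,n)$, so one $a$-cell suffices. Plugging that $a$-cell into the hole of $\Gamma'$ produces a replacement for $\Delta_0$ with the same contour label, the same $\alpha_1,\alpha_2,\alpha_3$, and $\alpha_4$ reduced to one — this decreases the signature even though $\Gamma'$ may contain \emph{more} $(\theta,a)$-cells than the original band, because $(\theta,a)$-cells only affect $\alpha_5$. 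Your version, which tries to shrink everything to one cell with no $(\theta,a)$-annulus at all, simply has the wrong contour label.

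Your handling of the case $\Pi_1=\Pi_2$ also needs repair. You assert the region "bounds an $(\theta,a)$-annulus or a disk subdiagram whose boundary is again an $a$-relation," but neither part is justified, and in fact the paper's argument here is structurally different: using Case 1, every $a$-band of the enclosed region starting on $\partial\pi$ must return to $\partial\pi$, so an innermost such band joins two adjacent edges of $\partial\pi$, which forces a cancellable pair of $(\theta,a)$-cells — contradicting reducedness, not an $a$-relation count. That innermost-band argument is the missing idea for your second case.
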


\begin{proof}

\underline{Case 1:} First suppose $\Delta$ contains two different $a$-cells, $\pi_1$ and $\pi_2$, connected by an $a$-band. Let $\Delta_0$ be the subdiagram bounded by $\pi_1$, $\pi_2$, and this $a$-band. As an $a$-band consists only of $(\theta,a)$-cells, the top and bottom of the $a$-band have equivalent labels visually equal to a word $H\in F(R)$.

So, $\lab(\partial\Delta_0)\equiv uHvH^{-1}$ for some words $u,v\in F(Y)$. Note that for any rule $\theta$ corresponding to a letter of $H$, the makeup of the $a$-band implies the existence of a $(\theta,a)$-relation corresponding to $\theta$ and an $a$-letter from the `special' input sector. This then implies that the domain of $\theta$ in the `special' input sector is nonempty, which in turn implies that the domain of $\theta$ in this sector is the entire alphabet.

As a result, we can build an annular diagram $\Gamma'$ over the canonical presentation of $M(\textbf{M})$ with outer label $uHvH^{-1}$, inner label $uv$, and entirely made up of $(\theta,a)$-cells (and 0-cells). Then, since $\Delta_0$ is a diagram over $M_a(\textbf{M})$, we have $uv=1$ in $M_a(\textbf{M})$. 

Letting $\Psi$ be a minimal diagram over $M_a(\textbf{M})$ satisfying $\lab(\partial\Gamma)\equiv uv$, Lemma \ref{M_a no annuli} implies that $\Psi$ consists only of $a$-cells. But then $uv=1$ over $B(2,n)$, so that $\Psi$ consists of only one $a$-cell. Pasting $\Psi$ into the middle of $\Gamma'$ then yields a disk diagram $\Gamma_0$ over $M_a(\textbf{M})$ with contour label $uHvH^{-1}$.

Letting $\Gamma$ be the diagram obtained from $\Delta$ by excising $\Delta_0$ and pasting in $\Gamma_0$ yields a diagram with the same contour label, one less $a$-cell, and the same number of disks, $(\theta,t)$-cells, and $(\theta,q)$-cells. This contradicts the assumption that $\Delta$ is a minimal diagram.

\underline{Case 2:} Now suppose $\Delta$ has an $a$-band $T$ ending twice on the same $a$-cell $\pi$.

Consider the subdiagram $\Delta_0$ of $\Delta$ bounded by $T$ and the portion of $\partial\pi$ between the two ends of $T$.  Since each cell of $T$ is a $(\theta,a)$-cell, the portion of $\partial\Delta_0$ coinciding with the top of $T$ is comprised entirely of $\theta$-edges. As the edges of $\partial\pi$ are all $a$-edges, it follows that all $\theta$-edges of $\partial\Delta_0$ lie on the side of $T$.

As $a$-bands cannot have two ends on distinct $a$-cells by Case 1, it follows that every maximal $a$-band of $\Delta_0$ starting on $\partial\pi$ must also end on $\partial\pi$. Passing to further subdiagrams bounded by these $a$-bands, it follows that there must be an $a$-band starting and ending on adjacent edges of $\partial\pi$. But this implies that $\Delta$ is not reduced, and so not minimal.

\end{proof}

\smallskip

%%%%%%%%%%%%%%%%%%%%%%%%%%%%%%%%%%%%%%%%%%%%%%%%%%%%%%%%%%%%%%%%%

\subsection{Combs and Subcombs} \

If $\Delta$ is a minimal diagram over $M_a(\textbf{M})$ containing a maximal $q$-band $\pazocal{Q}$ such that $\textbf{bot}(\pazocal{Q})$ is a subpath of $\partial\Delta$ and every maximal $\theta$-band of $\Delta$ ends at an edge of $\textbf{bot}(\pazocal{Q})$, then $\Delta$ is called a \textit{comb} and $\pazocal{Q}$ its \textit{handle}.

The number of cells in the handle of $\pazocal{Q}$ is the comb's \textit{height} and the maximal length of the bases of the $\theta$-bands is its \textit{basic width}.

Note that every trapezium is a comb.

%A comb $\Delta$ is \textit{tight} if:
%
%\begin{addmargin}[1em]{0em}
%
%(1) one of its maximal $\theta$-bands $\pazocal{T}$ has a tight base when reading toward the handle
%
%(2) other maximal $\theta$-bands have tight bases or bases without tight prefixes
%
%\end{addmargin}

\smallskip

\begin{lemma} \label{one q-band comb}

Suppose $\Gamma$ is a comb of height $h$ containing no maximal $q$-bands other than its handle. Then $\text{wt}(\Gamma)\leq c_8|\partial\Gamma|^2$.

\end{lemma}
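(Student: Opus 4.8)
The plan is to exploit the comb structure: since $\Gamma$ has no maximal $q$-bands other than its handle $\pazocal{Q}$, every maximal $\theta$-band of $\Gamma$ has its two ends on $\partial\Gamma$ but crosses no $q$-band except $\pazocal{Q}$ itself, so each maximal $\theta$-band $\pazocal{T}_j$ contains exactly one $(\theta,q)$-cell (the one it shares with $\pazocal{Q}$) and all its other cells are $(\theta,a)$-cells or $a$-cells — but in fact, cutting $\Gamma$ along $\textbf{top}(\pazocal{Q})$ we are left with a diagram whose top portion is organized into $h$ horizontal $\theta$-bands $\pazocal{T}_1,\dots,\pazocal{T}_h$ stacked along the handle. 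First I would set up coordinates: write the history of $\pazocal{Q}$ as $H=\theta_1\cdots\theta_h$ (a reduced word by Lemma~\ref{trapezia are computations}-type reasoning, or directly since $\Gamma$ is minimal), and for each $j$ let $\pazocal{T}_j$ be the maximal $\theta$-band crossing $\pazocal{Q}$ at its $j$-th cell. Each $\pazocal{T}_j$ is a trapezium of height $1$ with base the base $B$ of $\Gamma$, and $|B|\le c_8^{1/2}$-ish — more precisely, the basic width of $\Gamma$ is some fixed constant bounded by the length $N$ of the standard base (times $L$, etc.), so I will call it $w_0$ and treat it as an absolute constant.

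Next I would count cells weighted appropriately. The handle contributes $h$ cells of weight $1$. For each $j$, by Lemma~\ref{lengths}(d) the number of cells in $\pazocal{T}_j$ is at most $l_a^{(j)}+3l_b^{(j)}$ where $l_b^{(j)}\le w_0$ is the base length and $l_a^{(j)}$ is the number of $a$-edges in $\textbf{top}(\pazocal{T}_j)$ (equivalently in a maximal side). These top labels are successive configurations of an $\textbf{M}$-computation read off the band, so I can control $l_a^{(j)}$ by the $a$-length of the configuration at level $j$; the key input is that in a comb the only $a$-edges of $\partial\Gamma$ that are not on the handle's bottom appear on the left side (a single maximal side of a $\theta$-band) and on the top $\textbf{ttop}(\pazocal{T}_h)$, so the configurations along the band are bounded by $|\partial\Gamma|$ up to the auxiliary constants — here is where I would invoke the elementary $a$-length estimates for $\textbf{M}$ (Lemmas~\ref{M accepted configurations}, \ref{M_5 accepted configurations}, or more elementarily Lemma~\ref{lengths}(a)) to get $\sum_j l_a^{(j)} \le C|\partial\Gamma|^2$ for an absolute constant $C$: each configuration has $a$-length $O(|\partial\Gamma|)$ and there are $h = O(|\partial\Gamma|)$ of them. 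Similarly $a$-cells in $\Gamma$ must by Lemma~\ref{a-bands between a-cells} be connected by $a$-bands only to $(\theta,q)$-cells or to $\partial\Gamma$, so their perimeters and number are also controlled by $|\partial\Gamma|$, and their total weight is $\le c_7\sum\|\partial\pi\|^2 \le c_7(\sum\|\partial\pi\|)^2 \le c_7(C'|\partial\Gamma|)^2$.

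Assembling: $\text{wt}(\Gamma) = h + \sum_j(\text{wt of }(\theta,a)\text{-cells in }\pazocal{T}_j) + (\text{wt of }a\text{-cells})$, and each summand is bounded by a constant multiple of $|\partial\Gamma|^2$, where the constants depend only on the earlier parameters $c_7, N, L, w_0, \delta$ but not on $\Gamma$. Since $c_8$ is chosen after all of those (Section~3.3: $c_7 << c_8 << N_1$, with $c_8$ large), the inequality $\text{wt}(\Gamma)\le c_8|\partial\Gamma|^2$ follows. The main obstacle I expect is the bookkeeping in the second step: showing that the $a$-length of each configuration along the band, and hence each $l_a^{(j)}$, is genuinely $O(|\partial\Gamma|)$ rather than something that could blow up — this requires knowing that a $\theta$-band in a comb cannot ``widen'' uncontrollably, which I would argue by noting that both sides $\textbf{p}_1,\textbf{p}_2$ of $\pazocal{T}_j$ are subpaths of $\partial\Gamma$ (the left side of $\Gamma$ and the handle side respectively are forced, and the top/bottom are sandwiched), so $l_a^{(j)}$ is bounded by the $a$-length of $\partial\Gamma$ plus the fixed base width; combined with $h\le|\partial\Gamma|$ this gives the quadratic bound. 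A secondary subtlety is verifying there are no $a$-cells whose $a$-bands force extra structure, but Lemma~\ref{a-bands between a-cells} together with the $a$-annulus-freeness (Lemma~\ref{M_a no annuli}) handles this.
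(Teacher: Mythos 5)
Your overall skeleton matches the paper's proof (weight $=$ handle $+$ $(\theta,a)$-cells $+$ $a$-cells, with $h\leq\|\partial\Gamma\|$ and the $a$-cell weights controlled via Lemma \ref{a-bands between a-cells}), but the central estimate — that each maximal $\theta$-band $\pazocal{T}_j$ contains only $O(\|\partial\Gamma\|)$ cells, i.e.\ that $\sum_j l_a^{(j)}=O(\|\partial\Gamma\|^2)$ — is justified by arguments that do not work. First, the machine-computation lemmas you invoke (Lemmas \ref{M accepted configurations}, \ref{M_5 accepted configurations}) are statements about configurations in the standard base of $\textbf{M}$ and their accepting computations; here $\Gamma$ has a single maximal $q$-band, so each $\theta$-band crosses exactly one $(\theta,q)$-cell, its base has length one, it is not a trapezium in the sense of Section 6.3 (its top and bottom do not start and end with $q$-letters), and Lemma \ref{trapezia are computations} gives you no admissible words to which those estimates could be applied. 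Second, your ``elementary'' fallback — that $l_a^{(j)}$ is bounded because both ends of $\pazocal{T}_j$ lie on $\partial\Gamma$ — is a non sequitur: the ends of the band constrain only its first and last cells, while $l_a^{(j)}$ counts $a$-edges along the (interior) top of the band, which a priori could be arbitrarily many without some global counting argument.

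The correct fix is exactly the $a$-band count you already use for the $a$-cells, applied to all maximal $a$-bands of $\Gamma$: by Lemma \ref{a-bands between a-cells} no maximal $a$-band has both ends on $a$-cells, and since the only $(\theta,q)$-cells of $\Gamma$ lie on the handle $\pazocal{Q}$, every maximal $a$-band has at least one end on $\partial\Gamma$ or on $\pazocal{Q}$; hence there are at most $\|\partial\Gamma\|+2h\leq 3\|\partial\Gamma\|$ of them. By Lemma \ref{M_a no annuli} each such $a$-band crosses each of the $h$ maximal $\theta$-bands in at most one cell, so the total number of $(\theta,a)$-cells is at most $3h\|\partial\Gamma\|\leq 3\|\partial\Gamma\|^2$ (equivalently, $l_a^{(j)}\leq 3\|\partial\Gamma\|$ for every $j$ — this is the statement your argument was missing). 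With this replacement, your assembly step goes through as in the paper: the handle contributes $h\leq\|\partial\Gamma\|$, the $a$-cells at most $9c_7\|\partial\Gamma\|^2$, and after converting combinatorial length to the modified length (costing a factor of order $\delta^{-1}$ per $a$-edge), the choice $c_8>>c_7>>\delta^{-1}$ yields $\text{wt}(\Gamma)\leq c_8|\partial\Gamma|^2$.
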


\begin{proof}

Note that each $(\theta,q)$-cell on the handle shares at least one $\theta$-edge with $\partial\Gamma$. So, $h\leq\|\partial\Gamma\|$.

By Lemma \ref{a-bands between a-cells}, every maximal $a$-band with one end on an $a$-cell has another end on either the contour of the diagram or on a $(\theta,q)$-cell. So, since the only $(\theta,q)$-cells are a part of the handle $\pazocal{Q}$, every $a$-band starting on an $a$-cell must end on either $\pazocal{Q}$ or $\partial\Gamma$, so that there are at most $2h+\|\partial\Gamma\|\leq3\|\partial\Gamma\|$ such $a$-bands. This means that the sum of the (combinatorial) perimeters of all $a$-cells in $\Gamma$ is at most $3\|\partial\Gamma\|$, meaning the sum of their weights is at most $9c_7\|\partial\Gamma\|^2$.

Meanwhile, every cell off of the handle that is not an $a$-cell is a $(\theta,a)$-cell which is a part of a maximal $a$-band. Since these $a$-bands must have at least one end on either $\partial\Gamma$ or on the handle $\pazocal{Q}$, there are at most $\|\partial\Gamma\|+2h\leq3\|\partial\Gamma\|$ maximal $a$-bands. By Lemma \ref{M_a no annuli}, each of these $a$-bands shares at most one cell with any of the $h$ maximal $\theta$-bands, so that its length is at most $h$. So, there are at most $3h\|\partial\Gamma\|\leq3\|\partial\Gamma\|^2$ $(\theta,a)$-cells in $\Gamma$.

Adding in the $h\leq\|\partial\Gamma\|$ cells of $\pazocal{Q}$ and taking into account the definition of length, the statement follows from $c_8>>c_7>>\delta^{-1}$.

\end{proof}

If $\Delta$ is a minimal diagram over $M_a(\textbf{M})$, then a subdiagram $\Gamma$ is a \textit{subcomb} of $\Delta$ if $\Gamma$ is a comb and its handle divides $\Delta$ into two parts, one of which is $\Gamma$.

\smallskip

%%%%%%%%%%%%%%%%%%%%%%%%%%%%%%%%%%%%%%%%%%%%%%%%%%%%%%%%%%%%%%%%%

\subsection{Transpositions of a $\theta$-band with an $a$-cell} \

Let $\Delta$ be a minimal diagram over $G_a(\textbf{M})$ containing an $a$-cell $\pi$ and a $\theta$-band $\pazocal{T}$ subsequently crossing some of the $a$-bands starting at $\pi$. As the cells shared by these bands and  $\pazocal{T}$ are $(\theta,a)$-cells, the rule $\theta$ corresponding to $\pazocal{T}$ contains at least one letter of $\pazocal{A}$ in its domain; so, by the definition of the rules of $\textbf{M}$, $\theta$ contains all letters of $\pazocal{A}$ in its domain.

Suppose there are no other cells between $\pi$ and the bottom of $\pazocal{T}$, i.e there is a subdiagram formed by $\pi$ and $\pazocal{T}$.

Let $\textbf{s}_1$ be the subpath of $\partial\pi$ that coincides with the bottom of $\pazocal{T}$, $\textbf{s}_2$ its complement so that $\partial\pi=\textbf{s}_1\textbf{s}_2$, and $\pazocal{T}'$ the subband of $\pazocal{T}$ satisfying $\textbf{bot}(\pazocal{T}')=\textbf{s}_1$. Let $V_1\equiv\text{Lab}(\textbf{s}_1)$ and $V_2\equiv\text{Lab}(\textbf{s}_2)$. Further, let $\Gamma$ be the subdiagram formed by $\pi$ and $\pazocal{T}'$.

Then, we can construct the $\theta$-band $\pazocal{S}$ corresponding to $\theta$ with top labelled by $V_2^{-1}$. As we have $\text{Lab}(\textbf{top}(\pazocal{T}'))\equiv V_1$, we can replace the subdiagram formed by $\pazocal{T}'$ and $\pi$ with this band pasted to the subpath $\textbf{s}_2$ of $\pi$, attaching the first and last cells of $\pazocal{S}$ to the complement of $\pazocal{T}'$ in $\pazocal{T}$ and making any necessary cancellations in the resulting band. Letting $\pazocal{T}''$ be this resulting band, we have replaced $\Gamma$ with a new copy of $\pi$ and the band $\pazocal{T}''$.

This process is called the \textit{transposition} of the $\theta$-band with the $a$-cell. 

It is a useful observation that, should $\|\textbf{s}_1\|>\frac{1}{2}\|\partial\pi\|$, then the transposition of $\pazocal{T}$ and $\pi$ would result in a diagram with the same contour label and strictly smaller signature, meaning $\Delta$ cannot be minimal.

Conversely, the diagram resulting from a transposition need not be minimal, as there may be an increase in the number of $(\theta,a)$-cells in $\Delta$. Nonetheless, this process will prove useful even in this case.

\smallskip

%%%%%%%%%%%%%%%%%%%%%%%%%%%%%%%%%%%%%%%%%%%%%%%%%%%%%%%%%%%%%%%%%

\subsection{Cubic upper bound on weights} \

\begin{lemma} \label{no q-edge quadratic}

If $\Delta$ is a minimal diagram over $M_a(\textbf{M})$ with no $q$-edges on its boundary, then $\text{wt}(\Delta)\leq c_8|\partial\Delta|^2$.

\end{lemma}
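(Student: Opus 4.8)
Let $\Delta$ be a minimal diagram over $M_a(\textbf{M})$ with no $q$-edges on $\partial\Delta$. The plan is to induct on the signature $s(\Delta)$. If $\Delta$ has no $(\theta,q)$-cells at all, then every $\theta$-band runs between two boundary edges and consists only of $(\theta,a)$-cells, so the only remaining cells are $(\theta,a)$-cells and $a$-cells; in that case the boundary label is a word in $F(Y)$ and, after removing rim $\theta$-bands via Lemma \ref{small rim bands} and using Lemma \ref{M_a no annuli} to control $a$-bands, one reduces to a diagram whose weight is governed by a single $a$-cell with $\|\partial\pi\|\le\|\partial\Delta\|\le|\partial\Delta|/\delta$; the weight bound $c_7\|\partial\Delta\|^2$ is then absorbed by $c_8$. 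So assume $\Delta$ contains a $(\theta,q)$-cell.

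\textbf{Extracting a subcomb.} Since there are no $q$-edges on $\partial\Delta$ and no $q$-annuli (Lemma \ref{M_a no annuli}(2)), every maximal $q$-band $\pazocal{Q}$ in $\Delta$ has both ends on $\partial\Delta$, which is impossible because $\partial\Delta$ has no $q$-edges — unless $\pazocal{Q}$ is closed, also impossible. Hence I must first argue that the hypothesis ``no $q$-edges on $\partial\Delta$'' forces the bottom of some maximal $q$-band to lie on $\partial\Delta$ only through its $a$- and $\theta$-edges; more precisely, among all maximal $q$-bands choose one, $\pazocal{Q}$, that is \emph{outermost} in the sense that one of the two subdiagrams it cuts off, call it $\Gamma$, contains no other maximal $q$-band whose handle separates further. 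The standard argument (as in the comb lemmas of the cited papers) shows $\Gamma$ is a subcomb with handle $\pazocal{Q}$: every $\theta$-band entering $\Gamma$ from $\pazocal{Q}$ must terminate inside $\Gamma$, and since $\Gamma$ has no interior $q$-bands, such a $\theta$-band terminates on $\partial\Gamma$, forcing it to end on $\textbf{bot}(\pazocal{Q})$. Apply Lemma \ref{one q-band comb} to get $\text{wt}(\Gamma)\le c_8|\partial\Gamma|^2$. Now cut $\Gamma$ out of $\Delta$: $\partial(\Delta\setminus\Gamma)$ is obtained from $\partial\Delta$ by replacing $\textbf{bot}(\pazocal{Q})$ with $\textbf{top}(\pazocal{Q})$, and since $\text{Lab}(\textbf{top}(\pazocal{Q}))$ has the same $\theta$-length as $\textbf{bot}(\pazocal{Q})$ and, by Lemma \ref{lengths}(b), the same length, we get $|\partial(\Delta\setminus\Gamma)|\le|\partial\Delta|$, and again $\Delta\setminus\Gamma$ has no $q$-edges on its boundary. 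By minimality $s(\Delta\setminus\Gamma)<s(\Delta)$, so induction gives $\text{wt}(\Delta\setminus\Gamma)\le c_8|\partial(\Delta\setminus\Gamma)|^2\le c_8|\partial\Delta|^2$.

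\textbf{Combining the pieces.} The difficulty is that adding $\text{wt}(\Gamma)+\text{wt}(\Delta\setminus\Gamma)\le c_8|\partial\Gamma|^2+c_8|\partial\Delta|^2$ is not obviously $\le c_8|\partial\Delta|^2$ — this is the main obstacle, and the resolution is the same quadratic-splitting trick used in Lemma \ref{a-cells are quadratic B(m,n)}: one must choose $\pazocal{Q}$ so that $\textbf{bot}(\pazocal{Q})$ is a reasonably large fraction of $\partial\Delta$ (say by taking a maximal subcomb, so that $|\partial\Gamma|$ and $|\partial\Delta|-|\textbf{bot}(\pazocal{Q})|$ both stay strictly below $|\partial\Delta|$), and then use the elementary inequality $x^2+(a-x)^2\le a^2$ valid when $0\le x\le a$, applied with $a=|\partial\Delta|$ and $x$ essentially $|\textbf{top}(\pazocal{Q})|$ together with the overlap correction from Lemma \ref{lengths}(c) (the $\pm2\delta$ terms, harmless since $\delta$ is tiny). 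Here I expect to need a short case analysis: if no single subcomb captures a constant fraction of the boundary, then $\Delta$ must be ``thin'' — a bounded number of $q$-bands each of bounded height — and a direct count as in Lemma \ref{one q-band comb} handles it; otherwise the quadratic split closes the induction. Assembling: $\text{wt}(\Delta)=\text{wt}(\Gamma)+\text{wt}(\Delta\setminus\Gamma)\le c_8\bigl(|\partial\Gamma|^2+|\partial(\Delta\setminus\Gamma)|^2\bigr)\le c_8|\partial\Delta|^2$, completing the induction.
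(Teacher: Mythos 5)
You observed the key fact — that any maximal $q$-band in $\Delta$ would need both of its $q$-edge ends on $\partial\Delta$, which is impossible because $\partial\Delta$ carries no $q$-edges — and then failed to draw the immediate conclusion. The correct inference from your own observation is that $\Delta$ contains no $q$-edges at all, hence no $(\theta,q)$-cells, and so the case ``$\Delta$ contains a $(\theta,q)$-cell'' is vacuous. Instead, you wrote ``Hence I must first argue that the hypothesis\ldots forces the bottom of some maximal $q$-band to lie on $\partial\Delta$\ldots'' and launched into a subcomb extraction and a quadratic-splitting induction modeled on Lemma \ref{a-cells are quadratic B(m,n)}. That entire branch is unnecessary, and moreover it cannot be carried out as sketched: a subcomb's handle $\pazocal{Q}$ is a $q$-band with $\textbf{bot}(\pazocal{Q})\subset\partial\Delta$, which again requires $q$-edges on the boundary, so no such subcomb exists; and you concede yourself that the quadratic split needs an unresolved ``short case analysis'' to control the fraction of the boundary captured.

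Your treatment of what you call the base case (no $(\theta,q)$-cells) — which is in fact the only case — also contains gaps. The claim that $\lab(\partial\Delta)\in F(Y)$ is false: the boundary may carry $\theta$-edges as well as $a$-edges, and indeed a generic $\theta$-band will have both its ends there. The assertion that, after removing rim $\theta$-bands, the ``weight is governed by a single $a$-cell'' is unsupported and in general wrong; nothing forces the $a$-cells to collapse to one. What the argument actually needs is the direct count used in the paper: once $\Delta$ has only $(\theta,a)$-cells and $a$-cells, Lemma \ref{a-bands between a-cells} together with Lemma \ref{M_a no annuli} forces every maximal $a$-band issuing from an $a$-cell to terminate on $\partial\Delta$, so the total combinatorial perimeter of all $a$-cells is at most $\|\partial\Delta\|$ and their total weight is at most $c_7\|\partial\Delta\|^2$; independently, there are at most $\tfrac12\|\partial\Delta\|$ maximal $\theta$-bands and at most $\|\partial\Delta\|$ maximal $a$-bands, each pair crossing at most once, giving at most $\tfrac12\|\partial\Delta\|^2$ $(\theta,a)$-cells. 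Since $\|\partial\Delta\|\le\delta^{-1}|\partial\Delta|$, choosing $c_8$ large relative to $c_7$ and $\delta^{-1}$ closes the bound. No induction, no subcombs, no splitting inequality is needed.
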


\begin{proof}

Since any $q$-edge in $\Delta$ would give rise to a maximal $q$-band which, by Lemma \ref{M_a no annuli}, can only end on the contour of the diagram, $\Delta$ cannot have any $q$-edges. So, $\Delta$ is comprised entirely of $(\theta,a)$-cells and $a$-cells. 

Lemmas \ref{a-bands between a-cells} and \ref{M_a no annuli} then imply that any maximal $a$-band with one end on an $a$-cell must have its other end on the boundary, so that the sum of the (combinatorial) perimeters of the $a$-cells is at most $\|\partial\Delta\|$. This means that the sum of the weights of the $a$-cells is at most $c_7\|\partial\Delta\|^2$. 

As any maximal $\theta$-band must start and end on $\partial\Delta$, there are at most $\frac{1}{2}\|\partial\Delta\|$ maximal $\theta$-bands in $\Delta$. Further, as there are at most $\|\partial\Delta\|$ maximal $a$-bands and each $\theta$-band intersects each $a$-band in at most one cell, the length of each $\theta$-band is at most $\|\partial\Delta\|$. So, the sum of the area of all maximal $\theta$-bands, and so the number of $(\theta,a)$-cells, is at most $\frac{1}{2}\|\partial\Delta\|^2$.

So, taking into account the weighting, the statement follows from an appropriate choice of $c_8$ in terms of $c_7$ and $\delta$.

\end{proof}

\begin{lemma} \label{M_a cubic}

If $\Delta$ is a minimal diagram over $M_a(\textbf{M})$, then $\text{wt}(\Delta)\leq N_1|\partial\Delta|^3$ for the parameter $N_1$ specified in Section 3.3.

\end{lemma}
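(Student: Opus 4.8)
The plan is to induct on the signature $s(\Delta)$, following the standard strategy from [19] and [26] of peeling off rim $\theta$-bands with short bases. First I would handle the base cases and easy reductions: if $\Delta$ has no $q$-edges on its boundary, Lemma \ref{no q-edge quadratic} already gives $\mathrm{wt}(\Delta)\leq c_8|\partial\Delta|^2\leq N_1|\partial\Delta|^3$ since $N_1>>c_8$. So I may assume $\partial\Delta$ has a $q$-edge, hence (by Lemma \ref{M_a no annuli}, since there are no disks) a maximal $q$-band with both ends on $\partial\Delta$, and similarly there is at least one maximal $\theta$-band. The goal is then to find a rim $\theta$-band $\pazocal{T}$ whose base has at most $K$ letters; removing it produces $\Delta'=\Delta\setminus\pazocal{T}$ with strictly smaller signature, so $\mathrm{wt}(\Delta')\leq N_1|\partial\Delta'|^3$ by induction, and I would need to control $\mathrm{wt}(\pazocal{T})$ and compare $|\partial\Delta'|$ to $|\partial\Delta|$ using Lemma \ref{small rim bands}, which gives $|\partial\Delta|-|\partial\Delta'|>1$.

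The key geometric input is the existence of such a short-based rim $\theta$-band. Here I would invoke the combs machinery: look at the maximal $q$-bands $\pazocal{Q}_1,\dots,\pazocal{Q}_m$ in $\Delta$; each cuts off subdiagrams, and among the ``outermost'' pieces one finds a subcomb $\Gamma$ (a comb whose handle separates $\Gamma$ off from the rest of $\Delta$), and within $\Gamma$ one extracts a rim $\theta$-band of the whole diagram whose base length is bounded by the basic width of that comb. The dichotomy to set up is: either some rim $\theta$-band has base length $\leq K$, in which case I peel it off and use induction as above (with $\mathrm{wt}(\pazocal{T})$ bounded by $O(|\partial\Delta|)$ times the base length, using Lemma \ref{lengths}(d) to count its cells and the fact that each $a$-cell hanging off it contributes a quadratic weight controlled via Lemma \ref{a-bands between a-cells}), or every rim $\theta$-band is ``wide'' — base length $>K$. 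In the wide case the number of maximal $q$-bands is forced to be large relative to the perimeter, and I would bound the total weight by summing the weights of the constituent subcombs, each estimated by Lemma \ref{one q-band comb} as $c_8(\text{local perimeter})^2$; since the local perimeters sum to something comparable to $|\partial\Delta|$ and there are at most $|\partial\Delta|$ of them, the total is at most $c_8|\partial\Delta|^3$, and $N_1>>c_8$ closes it.

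In carrying out the induction step for the short rim band, the arithmetic I expect to need is roughly: $\mathrm{wt}(\Delta)\leq \mathrm{wt}(\Delta')+\mathrm{wt}(\pazocal{T})\leq N_1|\partial\Delta'|^3 + c_8|\partial\Delta|^2$, together with $|\partial\Delta'|\leq|\partial\Delta|-1$ (actually $|\partial\Delta|-|\partial\Delta'|>1$ from Lemma \ref{small rim bands}), so $N_1|\partial\Delta'|^3\leq N_1(|\partial\Delta|-1)^3\leq N_1|\partial\Delta|^3 - 3N_1|\partial\Delta|^2 + 3N_1|\partial\Delta|$, and the surplus $-3N_1|\partial\Delta|^2$ absorbs the $c_8|\partial\Delta|^2$ term (using $N_1>>c_8$) plus lower-order corrections, provided $|\partial\Delta|$ is not too small; the finitely many small-perimeter diagrams are absorbed into the base case by enlarging $N_1$. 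One must also be careful that $\pazocal{T}$, being a rim $\theta$-band, may have $a$-cells attached via transpositions (Section 9.3) — I would either transpose these away first or, more cleanly, bound the weight contributed by all cells ``controlled by'' $\pazocal{T}$ directly, exactly as in the proof of Lemma \ref{one q-band comb}.

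The main obstacle I anticipate is the wide-rim-band case: making precise the claim that ``every rim $\theta$-band wide $\Rightarrow$ many $q$-bands $\Rightarrow$ the subcomb decomposition covers $\Delta$ with controlled overlap.'' This requires a careful analysis of how the maximal $q$-bands partition $\Delta$ into subcombs and a central part, ensuring the perimeters of the pieces add up to at most a constant multiple of $|\partial\Delta|$ (each internal $q$-band side is counted in at most two pieces, contributing $O(|\partial\Delta|)$ total extra length because the number of $q$-bands is $O(|\partial\Delta|)$ and each such side has length equal to the $\theta$-length there). Pinning down this bookkeeping — and verifying that no $\theta$-band is double-counted across subcombs, using Lemma \ref{M_a no annuli} to guarantee each $\theta$-band runs monotonically across the $q$-bands — is where the real work lies; the rest is the routine parameter chase enabled by $N_1>>c_8>>c_7>>\delta^{-1}$.
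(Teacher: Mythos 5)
Your plan works only in the half of your dichotomy where a rim $\theta$-band with base of length at most $K$ actually exists; the ``wide case'' is a genuine gap. There you propose to bound $\text{wt}(\Delta)$ directly by summing $c_8(\text{local perimeter})^2$ over ``constituent subcombs,'' but nothing guarantees that single-handle subcombs cover $\Delta$: the central part of the diagram, crossed by many maximal $q$-bands (think of a long trapezium, where the rim $\theta$-bands have arbitrarily long bases and the only single-handle subcombs are thin side pieces), is not touched by any such subcomb, so its $(\theta,q)$- and $(\theta,a)$-cells and $a$-cells are simply unaccounted for. Worse, in the situation where no short-based rim $\theta$-band can be extracted, the outermost piece cut off by a $q$-band may fail to be a subcomb at all, so the objects you want to sum over need not exist; and even if they did, a one-shot summation is not an induction step and does not interact with your signature induction. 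You flag this bookkeeping as ``where the real work lies,'' and indeed it is exactly the part that is missing. A secondary (fixable) inaccuracy: a subcomb with a single $q$-band does not immediately yield a rim $\theta$-band of the whole diagram, since $a$-cells may sit between its outermost $\theta$-band and $\partial\Delta$; one needs the transposition/minimality argument to control them.

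For contrast, the paper never needs a ``wide'' alternative. It inducts on $|\partial\Delta|$ and chooses an outermost maximal $q$-band $\pazocal{Q}$ so that one component $\Gamma$ of the complement contains no other $q$-band. If $\Gamma$ is a subcomb, the whole comb is removed at once: Lemma \ref{one q-band comb} bounds its weight quadratically, and counting the $\theta$-bands ending on the handle gives $|\partial\Delta_0|\leq|\partial\Delta|-1$ and $|\partial\Gamma|\leq(1+\delta)|\partial\Delta|$. If $\Gamma$ is not a subcomb, an innermost $\theta$-band of $\Gamma$ with both ends on $\partial\Delta$ bounds a piece $\Gamma_0$ whose other cells are all $a$-cells (Lemma \ref{M_a no annuli}); the transposition argument of Section 9.3 shows no $a$-cell can have more than half its boundary on that band (else the signature drops), which yields $|\partial\Gamma_0|\leq2|\partial\Delta|$, and then Lemmas \ref{no q-edge quadratic}, \ref{small rim bands} and \ref{a-bands between a-cells} give the quadratic bound on $\text{wt}(\Gamma_0)$ and the perimeter decrease. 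In both cases a quadratic-weight piece is peeled off while the perimeter strictly drops, so the cube absorbs the surplus by $N_1>>c_8$ exactly as in your arithmetic. To repair your proposal you would have to replace the wide-case summation with this subcomb/non-subcomb dichotomy (or something equivalent), since the dichotomy by rim $\theta$-band base length does not by itself produce a removable piece.
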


\begin{proof}

By way of contradiction, assume $\Delta$ is a counter-example diagram of minimal perimeter, i.e $\text{wt}(\Delta)>N_1|\partial\Delta|^3$ and for all minimal diagrams $\Gamma$ over $M_a(\textbf{M})$ with $|\partial\Gamma|<|\partial\Delta|$, the inequality $\text{wt}(\Gamma)\leq N_1|\partial\Gamma|^3$ holds.

By Lemma \ref{no q-edge quadratic}, $\partial\Delta$ must contain a $q$-edge. This $q$-edge corresponds to a maximal $q$-band, which must have both ends on $\partial\Delta$.

This $q$-band breaks $\Delta$ into two subdiagrams that, as two maximal $q$-bands can not intersect, have disjoint sets of maximal $q$-bands. Passing to one of these and iterating, there exists a $q$-band $\pazocal{Q}$ separating the diagram $\Delta$ into two subdiagrams, one of which, $\Gamma$, has no $q$-bands other than $\pazocal{Q}$. 

\underline{Case 1.} Suppose $\Gamma$ is a subcomb of $\Delta$ with handle $\pazocal{Q}$. 

Let $\Delta_0$ be the diagram formed by excising $\Gamma$ from $\Delta$, i.e cutting along $\textbf{t}=\textbf{bot}(\pazocal{Q})$. Set $\partial\Delta=\textbf{s}_0\textbf{s}_1$ where $\textbf{s}_0$ is a subpath of $\partial\Delta_0$ and $\textbf{s}_1$ is a subpath of $\partial\Gamma$. Then, $\partial\Delta_0=\textbf{s}_0\textbf{t}^{-1}$ and $\partial\Gamma=\textbf{t}\textbf{s}_1$.

Letting $h$ be the height of $\Gamma$, $|\textbf{t}|=h$ by Lemma \ref{lengths}$(b)$. Since every $\theta$-band intersecting $\pazocal{Q}$ must have one end on $\partial\Gamma$ and, more specifically, on $\textbf{s}_1$, $|\textbf{s}_1|\geq h+2$.

So, $|\partial\Delta_0|\leq|\textbf{s}_0|+|\textbf{t}|=|\textbf{s}_0|+h\leq|\textbf{s}_0|+|\textbf{s}_1|-2\leq|\partial\Delta|+2\delta-2\leq|\partial\Delta|-1$ for sufficiently small $\delta$. By the inductive hypothesis, this yields $\text{wt}(\Delta_0)\leq N_1(|\partial\Delta|-1)^3$.

By Lemma \ref{one q-band comb}, it then follows that $\text{wt}(\Delta)\leq N_1(|\partial\Delta|-1)^3+c_8|\partial\Gamma|^2$. 

Each $\theta$-edge of $\textbf{t}$ marks the start of a maximal $\theta$-band of $\Delta_0$. By Lemma \ref{M_a no annuli}, this $\theta$-band must end on $\textbf{s}_0$, so that $|\textbf{s}_0|\geq h=|\textbf{t}|$. So, applying Lemma \ref{lengths}, 
$$|\partial\Gamma|\leq|\textbf{t}|+|\textbf{s}_1|\leq|\textbf{s}_0|+|\textbf{s}_1|\leq|\partial\Delta|+2\delta\leq(1+\delta)|\partial\Delta|$$

So, $\text{wt}(\Delta)\leq N_1(|\partial\Delta|-1)^3+c_8(1+\delta)^2|\partial\Delta|^2$.

Taking $N_1>c_8(1+\delta)^2$ then gives 
$$\text{wt}(\Delta)\leq N_1(|\partial\Delta|-1)^3+N_1|\partial\Delta|^2\leq N_1|\partial\Delta|^3-2N_1|\partial\Delta|^2+3N_1|\partial\Delta|-1$$
But since $|\partial\Delta|\geq2$, we have $|\partial\Delta|^2\geq2|\partial\Delta|$, giving the desired contradiction $$\text{wt}(\Delta)\leq N_1|\partial\Delta|^3$$

\underline{Case 2.} Suppose $\Gamma$ is not a subcomb of $\Delta$. 

This implies that there exists a $\theta$-band $\pazocal{T}$ in $\Gamma$ that does not end on $\pazocal{Q}$. Lemma \ref{M_a no annuli} then implies that $\pazocal{T}$ has both ends on the shared contour of $\Delta$ and $\Gamma$.

Passing to further $\theta$-bands, we can assume that the subdiagram $\Gamma_0$ of $\Gamma$ bounded by $\pazocal{T}$ and $\partial\Delta$ contains no other $\theta$-bands. Lemma \ref{M_a no annuli} then implies that $\Gamma_0\setminus\pazocal{T}$ consists only of $a$-cells (or is empty). Set $\Delta'=\Delta\setminus\Gamma_0$.

If $\Gamma_0\setminus\pazocal{T}$ is empty, then $\pazocal{T}$ is a rim $\theta$-band with base of length zero, so that Lemma \ref{small rim bands} implies that $|\partial\Delta|-|\partial\Delta'|>1$. Further, since $|\textbf{top}(\pazocal{T})|=|\textbf{bot}(\pazocal{T})|$, we have $|\partial\Gamma_0|\leq2|\partial\Delta|$.

Otherwise, let $\pi$ be an $a$-cell of $\Gamma_0$. Then Lemma \ref{a-bands between a-cells} implies that each edge of $\pi$ is shared with either $\partial\Delta$ or the side of $\pazocal{T}$. If more than half these edges are shared with the top (or bottom) of $\pazocal{T}$, though, then the transposition of $\pazocal{T}$ with $\pi$ would produce a diagram with the same contour label as $\Delta$ and strictly smaller signature, contradicting the minimality of $\Delta$.

It then follows that the number of $a$-edges on the shared contour of $\Delta$ and $\Gamma_0$ is at least as large as the number on the top (bottom) of $\pazocal{T}$. It follows immediately that $|\partial\Gamma_0|\leq2|\partial\Delta|$, while the argument in Case 1 implies again that $|\partial\Delta|-|\partial\Delta'|>1$.

Lemma \ref{no q-edge quadratic} then implies that $\text{wt}(\Gamma_0)\leq c_8|\partial\Gamma_0|^2\leq4c_8|\partial\Delta|^2$, while the inductive hypothesis implies that $\text{wt}(\Delta')\leq N_1(|\partial\Delta|-1)^3$. Combining these gives $\text{wt}(\Delta)\leq N_1(|\partial\Delta|-1)^3+4c_8|\partial\Delta|^2$. 

Noting that $|\partial\Delta|\geq4$ and $N_1>>c_8$, we then reach the desired contradiction $\text{wt}(\Delta)\leq N_1|\partial\Delta|^3$.

\end{proof}

\medskip

%%%%%%%%%%%%%%%%%%%%%%%%%%%%%%%%%%%%%%%%%%%%%%%%%%%%%%%%%%%%%%%%%

\section{Diagrams with disks}

\subsection{$t$-spokes} \

When considering minimal diagrams over $G_a(\textbf{M})$ in what follows, many arguments rely on the $q$-bands corresponding to the one-letter parts $\{t(i)\}$ of the standard base, i.e to $t$-letters. To distinguish these from bands corresponding to other parts of the base, we adopt the convention of [19] and [26] and refer to them as \textit{$t$-bands}. Note the very simple makeup of these bands: Each cell has contour label $\theta t(i)\theta^{-1}t(i)^{-1}$ for some $\theta\in\Theta$ (with subscripts of $\theta$ suppressed; see Section 6.1), so that the top and the bottom of the band are labelled by a copy of the band's history.

In a minimal diagram over $G_a(\textbf{M})$, a maximal $q$-band with one end on a disk $\Pi$ is called a \textit{spoke} of $\Pi$. A \textit{$t$-spoke} is then defined in the natural way.

The pairs $(t(1),t(2)),\dots,(t(L-1),t(L)),(t(L),t(1))$ are called \textit{adjacent} $t$-letters. In a minimal diagram over $G_a(\textbf{M})$, two $t$-spokes of the same disk are called \textit{consecutive} if they correspond to adjacent $t$-letters.

\begin{lemma} \label{extend}

Let $\pazocal{C}:A(i,m)\to\dots\to A(i,m)$ be a reduced computation of $\textbf{M}$ with history $H$ and $H(0)$ be the copy of $H$ in $F(R)$ obtained by adding the subscript 0 to each letter. Then there exists a minimal diagram $\Delta$ over $M_a(\textbf{M})$ with contour label $H(0)^{-1}W_{ac}H(0)W_{ac}^{-1}$.

\end{lemma}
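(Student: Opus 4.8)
The plan is to build the diagram $\Delta$ directly out of the computation $\pazocal{C}$ by invoking the ``computations are trapezia'' machinery, and then glue in disks to account for the endpoints $A(i,m)$. First I would observe that since $\pazocal{C}:A(i,m)\to\dots\to A(i,m)$ is a reduced computation of $\textbf{M}$ with history $H$, Lemma \ref{computations are trapezia} applied to this computation (viewed as a computation of the $S$-machine $\textbf{M}$ with base the copy of $(B_3')^{-1}$ in $B_4(i)$) produces a trapezium $\Gamma$ whose trimmed bottom and top are both labelled by $A(i,m)$ and whose left and right sides are each labelled by the copy $H(0)$ of $H$ in $F(R)$. The key point here is that the base of this computation locks the first and last sectors (there is no trimming needed at the ends, cf. the argument in the proof of Lemma \ref{disks are relations}), so the contour of $\Gamma$ reads precisely $H(0)^{-1}\,A(i,m)\,H(0)\,A(i,m)^{-1}$ up to the standard factorization.

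Next I would close up the two copies of $A(i,m)$ using disks. By Lemma \ref{input sectors locked m}, the word $A(i,m)$ is a (mirror half of a) word describing the accept configuration of $\textbf{M}$, and more relevantly: since $\pazocal{C}$ runs from $A(i,m)$ to $A(i,m)$, applying Lemma \ref{extending one-machine m} and then Lemma \ref{disks are relations} shows that the full configuration $W_{ac}$ (whose restriction to the relevant base is $A(i,m)$) is trivial in $G_a(\textbf{M})$, hence bounds a disk. Concretely, one uses the construction of Lemma \ref{extending one-machine m}/\ref{extending computations} to extend $\Gamma$ to a trapezium-like diagram in the \emph{standard} base whose top and bottom are labelled $W_{ac}$, then caps each of these two boundary arcs with a disk (a cell corresponding to the disk relation $W_{ac}=1$, which exists because $W_{ac}$ is an accepted configuration, by Lemma \ref{disks are relations} and the definition of the disk presentation). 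After the two disks are pasted on, the outer contour of the resulting diagram is exactly $H(0)^{-1}W_{ac}H(0)W_{ac}^{-1}$ (the $W_{ac}$-arcs of the trapezium are consumed by the disks, leaving the two $H(0)$-labelled sides and, on the remaining two short arcs, the portions of $W_{ac}$ outside the relevant base, which reassemble into $W_{ac}^{\pm1}$).

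Finally I would pass to a minimal diagram: the word $H(0)^{-1}W_{ac}H(0)W_{ac}^{-1}$ has now been exhibited as the label of \emph{some} disk diagram over $G_a(\textbf{M})$, hence it represents the trivial element, hence (as noted in Section 8.1, minimality being achievable by the grading on the presentation) there is a \emph{minimal} diagram $\Delta$ with this contour label; and one should double check the statement actually wants a diagram over $M_a(\textbf{M})$ rather than $G_a(\textbf{M})$ --- if so, the point is that the disks themselves, by Lemma \ref{disks are quadratic}, can be replaced by subdiagrams over the \emph{canonical} presentation of $G(\textbf{M})$ (built from hubs and trapezia via Lemma \ref{M accepted configurations}), so that in fact the whole diagram can be taken over $M_a(\textbf{M})$, and then minimized.

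The main obstacle I anticipate is the bookkeeping in the gluing step: making sure that when the two copies of $A(i,m)$ are extended to $W_{ac}$ via Lemma \ref{extending one-machine m}'s construction and then capped with disks, the leftover boundary arcs genuinely spell $W_{ac}$ and $W_{ac}^{-1}$ (and not some cyclic shift or a word with spurious $a$-letters), and that the sides really are labelled by $H(0)^{\pm1}$ with consistent indices. This is essentially the same verification carried out in the proof of Lemma \ref{disks are relations}, so I expect it to go through, but it requires care about which sectors are locked along the base of $\pazocal{C}$ and about the mirror-copy conventions of Section 5 --- in particular one must use that every rule of $\textbf{M}$ locks the $Q_sQ_0$-sector (equivalently the $\{t\}P_0$-sector at the seam) so that no trimming is forced and the side labels of the trapezium coincide letter-for-letter.
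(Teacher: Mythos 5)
There is a genuine gap, and it is the central one: the target is a diagram over $M_a(\textbf{M})$, i.e.\ built only from $(\theta,q)$-cells, $(\theta,a)$-cells, and $a$-cells. Disks (and hubs, which generate the disk relations) are \emph{not} available in $M_a(\textbf{M})$ — the hub relation $W_{ac}=1$ and all disk relations belong to $G_a(\textbf{M})$ only. So your plan of ``capping'' the two $W_{ac}$-labelled arcs with disks cannot produce a diagram over the right presentation, and the fallback of replacing the disks by hub-plus-trapezia diagrams via Lemma~\ref{disks are quadratic} makes things worse, not better: hubs live in $G(\textbf{M})$, so the result is a diagram over $G_a(\textbf{M})$, still not over $M_a(\textbf{M})$. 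Independently of the presentation issue, capping also destroys the contour you want: if you paste a $W_{ac}$-disk onto a boundary arc labelled $W_{ac}$, that arc becomes interior, so the resulting boundary would \emph{lose} both $W_{ac}$-segments rather than exhibit them — the claimed contour $H(0)^{-1}W_{ac}H(0)W_{ac}^{-1}$ would not appear.

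What the paper actually does is dispense with disks entirely. The point of Lemma~\ref{extending computations} is not to extend a finished trapezium so you can cap it, but to replace the computation $\pazocal{C}$ \emph{up front} by a chain of computations in the \emph{standard} base: factor $H\equiv H_1\cdots H_l$ into maximal one-machine pieces, and for each piece get a computation $W_{j-1}'(z_j)\to\dots\to W_j'(z_j)$ with $W_0'(z_1)\equiv W_l'(z_l)\equiv W_{ac}$. Each of these gives, by Lemma~\ref{computations are trapezia}, a trapezium $\Delta_j$ over $M(\textbf{M})$ with top and bottom labelled by full configurations. Consecutive trapezia have tops/bottoms differing only by a word of $\pazocal{L}$ in the `special' input sector, and this discrepancy is bridged by $a$-cells — which are precisely the extra relations of $M_a(\textbf{M})$. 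Stacking $\Delta_1,\dots,\Delta_l$ with these $a$-cells between them yields a diagram over $M_a(\textbf{M})$ whose contour is $H(0)^{-1}W_{ac}H(0)W_{ac}^{-1}$, with both copies of $W_{ac}$ still on the boundary. So you had the right ingredients (computations-are-trapezia, the extension lemma), but the role of the extension is to make the $a$-cells — not disks — carry the gluing, and the two $W_{ac}$-arcs must remain exposed, not be covered.
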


\begin{proof}

Consider the factorization $H\equiv H_1\cdots H_l$ for $l\geq1$ given by Lemma \ref{extending computations}.

Define $H_i(0)$ as the word in $F(R)$ obtained from $H_i$ by adding a subscript $0$ to each letter. By Lemma \ref{computations are trapezia}, for each $1\leq j\leq l$, there exists a trapezium $\Delta_j$ with contour label $$H_j(0)^{-1}W_{j-1}'(z_j)H_j(0)(W_j'(z_j))^{-1}$$ where $W_j'(z_j)$ is defined as in Lemma \ref{extending computations}.

Recall that for $1\leq j\leq l-1$, $W_j'(z_j)$ differs from $W_j'(z_{j+1})$ only by the insertion/deletion of elements of $\pazocal{L}$, while $W_0'(z_1)\equiv W_l'(z_l)\equiv W_{ac}$. So, after gluing $a$-cells corresponding to these elements of $\pazocal{L}$ to the top of $\Delta_j$, the top of $\Delta_j$ can be glued to the bottom of $\Delta_{j+1}$.

Letting $\Delta$ be the diagram that results from pasting together $\Delta_1,\dots,\Delta_l$, it then follows that $\text{Lab}(\partial\Delta)\equiv H(0)^{-1}W_{ac}H(0)W_{ac}^{-1}$.

\end{proof}

\begin{lemma} \label{t-spokes between disks}

Suppose $\Delta$ is a minimal diagram over $G_a(\textbf{M})$ with consecutive $t$-spokes of the same two disks. Then the subdiagram $\Psi$ bounded by these two $t$-bands and the contours of the disks (and not containing the disks) must contain a disk.

\end{lemma}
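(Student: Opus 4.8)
The plan is to argue by contradiction: suppose $\Psi$ contains no disk. Then $\Psi$ is a minimal diagram over $M_a(\textbf{M})$ (a disk-free subdiagram of a minimal diagram is minimal over $M_a(\textbf{M})$), so Lemma \ref{M_a no annuli} applies to it. The contour of $\Psi$ is built from the two consecutive $t$-bands $\pazocal{T}_1,\pazocal{T}_2$ and two arcs of the disk contours. Since the two $t$-spokes correspond to adjacent $t$-letters $t(i),t(i+1)$, the arcs of the disk boundaries cut out by $\pazocal{T}_1,\pazocal{T}_2$ have labels that are copies of the admissible subwords $A(i,m)$-type words — more precisely, since the disk relations are the accepted configurations $W$ and each such $W$ has base $\{t(1)\}B_4(1)\dots\{t(L)\}B_4(L)$, the portion of $\partial(\text{disk})$ strictly between the $t(i)$-edge and the $t(i+1)$-edge is labelled by a copy of $W(i,m)$ (the admissible subword with base the copy of $(B_3')^{-1}$). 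First I would read off these boundary labels explicitly and check that $\textbf{top}(\pazocal{T}_1)$, $\textbf{bot}(\pazocal{T}_2)$ are the trimmed tops/bottoms relevant to $\Psi$.

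Next I would invoke the comb/trapezium machinery. Since $\Psi$ has no disks, by Lemma \ref{M_a no annuli} every maximal $\theta$-band in $\Psi$ has both ends on $\partial\Psi$; moreover the two $t$-bands $\pazocal{T}_1,\pazocal{T}_2$ bound $\Psi$ and every maximal $\theta$-band crosses the standard base, so every $\theta$-band of $\Psi$ runs between $\pazocal{T}_1$ and $\pazocal{T}_2$, meeting each in exactly one $(\theta,t)$-cell. This means $\Psi$ is (up to $a$-cells hanging off it, which must be dealt with via transpositions as in Section 10.3, or absorbed) essentially a trapezium whose left and right sides are $\pazocal{T}_1,\pazocal{T}_2$, whose history $H$ is the common history of the two $t$-bands, and whose top and bottom are the two disk arcs. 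By Lemma \ref{computations are trapezia}/\ref{trapezia are computations}, $H$ is then the history of a reduced computation of $\textbf{M}$ taking the admissible word labelling the bottom disk arc to the one labelling the top disk arc — i.e. a reduced computation $W(i,m) \to \dots \to W'(i,m)$ where $W,W'$ are the two accepted configurations.

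Then I would apply the ``extending'' lemmas of Section 9.2. Each disk arc label is a coordinate-shift copy of the corresponding $A(i,m)$-type admissible word for an \emph{accepted} configuration, so by Lemma \ref{subword M language m} / \ref{starts with I,J m} the computation realized by $H$ on this base, run backwards and forwards, forces the bottom and top configurations to be of the form $I(u^n,i,m)$, $J(u^n,i,m)$, or $A(i,m)$. Since the disks are accepted configurations, comparing with Lemma \ref{M language} pins down $W$ and $W'$ up to the contents of the ``special'' input sector. The key point is that a reduced computation whose history starts and ends adjacent to a $t$-spoke and keeps the $t$-base letters in place must, by Lemma \ref{extend} together with the uniqueness in Lemma \ref{M controlled} / the step-history analysis of Lemmas \ref{input sectors locked m}, \ref{subword return to start m}, correspond to a computation $A(i,m)\to\dots\to A(i,m)$, and then Lemma \ref{extending computations} lets us glue in the missing disk: the computation extends to $\pazocal{C}':W_{ac}\to\dots\to W_{ac}$, and the remark after Lemma \ref{extending computations} notes \emph{no such nonempty computation exists} unless it passes through a configuration forcing a disk (hub-plus-trapezia) structure. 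Tracing this back, the only way to close up $\Psi$ consistently with the two disk arcs and no interior disk contradicts the von Dyck / acceptance bookkeeping, so $\Psi$ must contain a disk.

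\textbf{Main obstacle.} The hard part will be handling the $a$-cells in $\Psi$ and the possibility that $\Psi$ is not literally a trapezium but a comb with teeth of $a$-cells, together with carefully matching the two disk-arc labels to \emph{genuine} admissible words of $\textbf{M}$ (as opposed to words that merely look admissible on a short base). I expect to need the transposition operation of Section 10.3 to push $a$-cells out to $\partial\Psi$, Lemma \ref{a-bands between a-cells} to control $a$-bands, and then a delicate application of the step-history lemmas (Lemmas \ref{first step history}--\ref{(A) and (B)}, adapted to $\textbf{M}$ via Lemmas \ref{M language}, \ref{return to start}) to conclude that the only closure of the two disk arcs by a disk-free $\Psi$ is impossible. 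The bookkeeping of which of $I(\cdot,i,m)$, $J(\cdot,i,m)$, $A(i,m)$ occurs at each end, and that consecutive $t$-spokes force adjacency of these along the \emph{same pair} of disks, is where the argument's weight lies.
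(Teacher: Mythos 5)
You correctly set up the contradiction hypothesis and identify most of the right ingredients: Lemma \ref{M_a no annuli}, the trapezium structure of the disk-free $\Psi$, the role of $a$-cells via Lemma \ref{a-bands between a-cells}, and the reduction to a reduced computation $A(i,m)\to\dots\to A(i,m)$ with history $H$ read off the $t$-spoke. But your endgame is a genuine gap. You misread the remark after Lemma \ref{extending computations}: it says flatly that no computation $W_{ac}\to\dots\to W_{ac}$ exists, not that one exists ``unless it passes through a configuration forcing a disk''; and the ``von Dyck / acceptance bookkeeping'' contradiction you gesture at is a non sequitur. There is nothing internally inconsistent about a disk-free $\Psi$ sitting between two hubs; the contradiction is to the \emph{minimality of $\Delta$} via a surgery, not a logical impossibility. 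Concretely, the missing step is: from the computation $A(i,m)\to\dots\to A(i,m)$, Lemma \ref{extend} produces a minimal diagram $\Gamma_1$ over $M_a(\textbf{M})$ with contour label $H(0)^{-1}W_{ac}H(0)W_{ac}^{-1}$; the subdiagram $\Gamma_0$ of $\Gamma_1$ between the two relevant $t$-bands has the same contour label as $\Psi$, and (using cyclicity of $\textbf{M}$) the complementary subdiagram $\Gamma$ is a disk-free diagram over $M_a(\textbf{M})$ with the same contour label as the subdiagram $\Psi'$ of $\Delta$ bounded by the two $t$-spokes and \emph{containing} the two hubs. Excising $\Psi'$ and pasting in $\Gamma$ removes two disks, contradicting minimality of $\Delta$. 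Without this replacement argument your proof does not close.

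Two smaller discrepancies: the paper rules out $a$-cells in the trapezium $\Psi_0$ by the direct observation that Lemma \ref{a-bands between a-cells} would force an $a$-edge onto $\partial\Psi_0$ (which by construction has none), rather than by the transposition machinery of Section 10.3 that you invoke; and the reduction from general disks to hubs is done at the end by replacing each disk with a hub plus $L$ trapezia (Lemma \ref{disks are relations}), not implicitly at the start. You should also note the degenerate case up front: if one of the $t$-bands had length zero, the two hubs would already be cancellable, so one may assume both $t$-bands are nonempty.
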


\begin{proof}

Assume that the two disks are hubs. Then, note that if either of these two bands has zero length, then the two hubs are cancellable, meaning $\Delta$ is not reduced (and so not minimal).

Arguing toward a contradiction, suppose $\Psi$ is a diagram over $M_a(\textbf{M})$. By Lemma \ref{M_a no annuli}, it then follows that the two $t$-bands have the same history, say $H$. Then, one can remove one of the two $t$-bands so that the label of the shared contour of the resulting diagram and one of the hubs is $A(i)$ for some $i$. Further, restrict to the subdiagram $\Psi_0$ that is bounded by the $q$-bands corresponding to the first and last letters of $A(i,m)$.

By Lemma \ref{a-bands between a-cells}, there can be no $a$-cells in $\Psi_0$ as one would necessitate the existence of an $a$-edge on $\partial\Psi_0$. So, $\Psi_0$ is a trapezium. Lemma \ref{trapezia are computations} then says there exists a computation $\pazocal{C}:A(i,m)\to\dots\to A(i,m)$ with history $H$. So, we can apply Lemma \ref{extend} to find a minimal diagram $\Gamma_1$ over $M_a(\textbf{M})$ with contour label $H(0)^{-1}W_{ac}H(0)W_{ac}^{-1}$.

The subdiagram $\Gamma_0$ of $\Gamma_1$ bounded by the two $t$-bands corresponding to the letters of the original $t$-spokes has the same contour label as $\Psi$. Since $\textbf{M}$ is a cyclic $S$-machine, we can assume that $\Gamma_0$ can be cut from $\Gamma_1$ to produce a disk diagram $\Gamma$ over $M_a(\textbf{M})$.

Letting $\Psi'$ be the subdiagram of $\Delta$ bounded by the two hubs and the two $t$-spokes (and containing the hubs), though, $\Psi'$ has the same contour label as $\Gamma$. Hence, excising $\Psi'$ from $\Delta$ and pasting in $\Gamma$ reduces the number of hubs (and so disks) by two, contradicting the minimality of $\Delta$.

Conversely, if the two disks are not hubs, then we can replace them with hubs with $L$ trapezia glued to them (see Lemma \ref{disks are relations}) and apply the procedure above to reduce the number of disks.

\end{proof}

For each minimal diagram $\Delta$ over $G_a(\textbf{M})$, there corresponds a planar graph $\Gamma\equiv\Gamma(\Delta)$ defined by:

\begin{addmargin}[1em]{0em}

(1) $V(\Gamma)=\{v_0,v_1,\dots,v_l\}$ where each $v_i$ for $i\geq1$ corresponds to one of the $l$ disks of $\Delta$ and $v_0$ is one exterior vertex

(2) For $i,j\geq1$, each shared $t$-spoke of the disks corresponding to $v_i$ and $v_j$ corresponds to an edge $(v_i,v_j)\in E(\Gamma)$

(3) For $i\geq1$, each $t$-spoke of the disk corresponding to $v_i$ which ends on $\partial\Delta$ corresponds to an edge $(v_0,v_i)\in E(\Gamma)$

\end{addmargin}

The following lemma follows from Lemma \ref{t-spokes between disks} and a graph theoretical argument applied to the corresponding graph. A detailed proof can be found in [18].

\begin{lemma} \label{graph}

If $\Delta$ is a minimal diagram over $G_a(\textbf{M})$ containing at least one disk, then $\Delta$ contains a disk $\Pi$ such that $L-3$ consecutive $t$-spokes $\pazocal{Q}_1,\dots,\pazocal{Q}_{L-3}$ of $\Pi$ have one end on $\partial\Delta$ and such that every subdiagram $\Gamma_i$ bounded by $\pazocal{Q}_i$, $\pazocal{Q}_{i+1}$, $\partial\Pi$, and $\partial\Delta$ ($i=1,\dots,L-4$) contains no disks.

\end{lemma}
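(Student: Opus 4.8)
The plan is to translate the statement into a combinatorial fact about the planar graph $\Gamma=\Gamma(\Delta)$ defined just above the lemma, and to read that fact off from Lemma \ref{t-spokes between disks} together with planarity. First I would record the local structure at a disk. The contour label of every disk of $\Delta$ is a cyclic permutation of $W_{ac}^{\pm1}$, and $W_{ac}\equiv t(1)B_4(1,a)t(2)\cdots t(L)B_4(L,a)$ contains each $t$-letter $t(i)$ exactly once, while none of the words $B_4(i,a)$ and none of the sides of any $q$-band (which are labelled by histories, i.e.\ by $\theta$-letters) contain a $t$-letter. Hence every disk $\Pi$ has exactly $L$ $t$-spokes, one issuing from each $t(i)$-edge of $\partial\Pi$, these emanate from $\partial\Pi$ in the cyclic order $t(1),t(2),\dots,t(L)$, and since a $t$-band preserves its $t$-letter and no $t$-band is a closed annulus in a minimal diagram over $G_a(\textbf{M})$ (cf.\ Lemma \ref{M_a no annuli} and [18]), each $t$-spoke ends either on $\partial\Delta$ or on the unique $t(i)$-edge of some \emph{other} disk. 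In the language of $\Gamma$ this says exactly that every disk-vertex has degree $L$, with rotation the cyclic sequence $t(1),\dots,t(L)$.

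Next I would restate Lemma \ref{t-spokes between disks} graph-theoretically: if two distinct disk-vertices $v,v'$ of $\Gamma$ are joined by two edges consecutive in the rotation at $v$ — equivalently, corresponding to adjacent $t$-letters $t(k),t(k+1)$ — then the region they cut off contains a disk; in particular $\Gamma$ has no bigon face bounded by two such edges, and since the rotation at every disk-vertex is precisely $t(1),\dots,t(L)$, any bigon face between two disk-vertices would be bounded by rotation-consecutive edges, so $\Gamma$ has no bigon face between disk-vertices at all. The claim to prove then becomes: $\Gamma$ has a disk-vertex $v$ with a run of $L-3$ consecutive edges $e_1,\dots,e_{L-3}$ (in the rotation at $v$) all joining $v$ to the exterior vertex $v_0$, such that for each $i=1,\dots,L-4$ the bigon region cut off by $e_i,e_{i+1}$ and the two arcs on $\partial\Pi_v$ and $\partial\Delta$ is a face of $\Gamma$. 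Translating back, the $e_i$ are $L-3$ consecutive $t$-spokes of $\Pi=\Pi_v$ all ending on $\partial\Delta$, and each region $\Gamma_i$ between consecutive ones is a face of $\Gamma$ and so contains no disk, which is exactly the assertion of the lemma.

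To prove this combinatorial claim I would induct on the number $l$ of disks in $\Delta$. For the existence of a disk-vertex with $L-3$ consecutive $v_0$-edges one argues against Euler's formula for the planar graph $\Gamma$: if no disk-vertex had such a run, then for every disk-vertex its $v_0$-edges would have maximal consecutive run of length $\le L-4$, forcing its disk--disk edges to be "spread out" in the rotation; but by the graph form of Lemma \ref{t-spokes between disks} two disk--disk edges that are rotation-consecutive always cut off a further disk, so the disk--disk edges of each vertex cannot be spread out without creating new disks inside bounded subregions, and a counting estimate over $\Gamma$ then yields a contradiction. Once such a disk $\Pi$ is found, if some gap $\Gamma_i$ between two of its consecutive $v_0$-spokes $\pazocal{Q}_i,\pazocal{Q}_{i+1}$ is not disk-free, one passes to the subdiagram cut off by $\pazocal{Q}_i$, $\pazocal{Q}_{i+1}$ and the two arcs on $\partial\Pi$, $\partial\Delta$: it is again a minimal diagram over $G_a(\textbf{M})$, it has strictly fewer disks, and its boundary carries $t$-edges only along the arc lying on $\partial\Delta$ (the spoke sides carry only $\theta$-letters and the $\partial\Pi$-arc is a subword of some $B_4(j,a)$, with no $t$-letters), so the inductive hypothesis applied there produces $L-3$ consecutive $t$-spokes that automatically end on $\partial\Delta$ together with disk-free gaps, and these serve for $\Delta$. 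The detailed verification of this extremal/Euler argument is the content of [18].

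The main obstacle is precisely that planar graph-theoretic extremal argument: making rigorous, via Euler's formula for $\Gamma$ and the "forced interior disk" constraint from Lemma \ref{t-spokes between disks}, why some disk-vertex must have as many as $L-3$ of its $L$ edges \emph{consecutively} incident to $v_0$ (the constant $3$ being the slack one is forced to allow), and keeping the rotation-system bookkeeping tight enough that "$L-3$ consecutive edges of $\Gamma$" genuinely corresponds to $L-3$ consecutive $t$-spokes of $\Pi$. By contrast, the passage between $\Delta$ and $\Gamma$, the identification of faces of $\Gamma$ with disk-free subdiagrams, and the reduction of disk-freeness of the gaps to the induction on the number of disks are all routine once the graph lemma is available.
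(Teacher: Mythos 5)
Your proposal takes essentially the same route the paper does: translate the claim into a statement about the planar graph $\Gamma(\Delta)$, read off the ``no bigon face between disk-vertices'' constraint from Lemma \ref{t-spokes between disks}, and defer the extremal planar-graph counting to [18], which is exactly what the paper's one-sentence proof does. Your bookkeeping of the translation (degree $L$ with rotation $t(1),\dots,t(L)$ at each disk-vertex, faces of $\Gamma$ as disk-free subdiagrams, and the observation that a clove's boundary carries $t$-edges only along its $\partial\Delta$-arc so that the inductive passage to a clove with strictly fewer disks is legitimate) is correct and a bit more explicit than the paper's remark, but it is the same argument.
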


Applying induction on the number of hubs then implies:

\begin{lemma} \label{number of q-edges}

\textit{(Lemma 5.19 of [27])} In a minimal diagram $\Delta$ over $G_a(\textbf{M})$ with $l\geq1$ disks, the number of spokes ending on $\partial\Delta$, and therefore the number of $q$-edges of $\partial\Delta$, is greater than $lLN/2$.

\end{lemma}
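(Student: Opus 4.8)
The plan is to induct on $l$, the number of disks (hubs and genuine disks) in $\Delta$. The base case $l=1$ is the crux of the argument: a single disk $\Pi$ has exactly $L$ $t$-spokes emanating from it (one per part $\{t(i)\}$ of the standard base that occurs on $\partial\Pi$), and by Lemma \ref{M_a no annuli} — or rather its disk-presentation analogue, noting that a $t$-band cannot close up into a $t$-annulus and cannot end on another disk since there is only one — every one of these $L$ $t$-spokes must run out to $\partial\Delta$. Each $t$-spoke ends on $\partial\Delta$ in two places, contributing $q$-edges; more carefully, each spoke is a $q$-band whose two ends on $\partial\Delta$ are $q$-edges, and distinct maximal $q$-bands cannot share a cell, so the $2L$ endpoints are distinct $q$-edges. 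In fact one counts at least $L \ge LN/2$ since $N \geq 2$; but we actually want the bound $lLN/2 = LN/2$, which forces us to account also for the non-$t$ spokes of $\Pi$. The standard base has length $N$ (this is the parameter $N$ from Section 4.8), so $\partial\Pi$ carries $N$ $q$-letters from each of its occurrences, and each such $q$-edge starts a maximal $q$-band (a spoke) which, again by the no-annuli lemma and the absence of other disks, must terminate on $\partial\Delta$. This gives on the order of $LN$ spokes, hence more than $LN/2$ $q$-edges on $\partial\Delta$, establishing the base case.

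For the inductive step, assume the statement for all minimal diagrams with fewer than $l$ disks, and let $\Delta$ have $l \geq 2$ disks. Invoke Lemma \ref{graph} to obtain a disk $\Pi$ with $L-3$ consecutive $t$-spokes $\pazocal{Q}_1,\dots,\pazocal{Q}_{L-3}$ ending on $\partial\Delta$ and with each intermediate subdiagram $\Gamma_i$ ($i = 1,\dots,L-4$) disk-free. I would then cut $\Delta$ along $\pazocal{Q}_1$ and $\pazocal{Q}_{L-3}$ (or, more cleanly, along a single $t$-spoke and excise the ``fan'' of $\Gamma_i$'s together with $\Pi$), producing a minimal subdiagram $\Delta'$ containing the remaining $l-1$ disks. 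The point is that removing this fan deletes $\Pi$ and all the spokes of $\Pi$ that ran to $\partial\Delta$ through the $\Gamma_i$'s, but the new boundary $\partial\Delta'$ inherits at least the spokes of the $l-1$ remaining disks that were already reaching $\partial\Delta$, plus one or two new $q$-edges along the cut. By the inductive hypothesis $\Delta'$ has more than $(l-1)LN/2$ $q$-edges on its boundary coming from its disks' spokes; meanwhile $\Pi$ itself contributes, as in the base case, more than $LN/2$ spokes ending on $\partial\Delta$ (at least the $L-3$ guaranteed $t$-spokes plus the non-$t$ spokes in each consecutive block — each block $\Gamma_i$ being disk-free forces every non-$t$ spoke of $\Pi$ crossing $\Gamma_i$ to exit on $\partial\Delta$). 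Adding these, the total exceeds $(l-1)LN/2 + LN/2 = lLN/2$; the only care needed is to check that the spokes counted for $\Delta'$ and the spokes counted for $\Pi$ are genuinely distinct $q$-edges, which follows because distinct maximal $q$-bands are cell-disjoint and the cut separates $\Pi$'s fan from $\Delta'$.

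The main obstacle I anticipate is bookkeeping the non-$t$ spokes correctly in the inductive step: Lemma \ref{graph} only promises that the $t$-spokes through the consecutive blocks reach $\partial\Delta$, so I must argue separately that inside each disk-free block $\Gamma_i$ every maximal $q$-band starting on $\partial\Pi$ (whether $t$ or not) terminates on $\partial\Delta$ — this is exactly Lemma \ref{M_a no annuli}(2),(4) applied inside $\Gamma_i$, using that $\Gamma_i$ has no disks so such a band cannot end on a disk and cannot be a $q$-annulus. A secondary subtlety is the constant: one needs $N$ large enough (guaranteed by $N \gg c_0$ in Section 3.3, and in any case $N\ge 3$) that the ``$-3$'' lost consecutive spokes in Lemma \ref{graph} and the handful of boundary $q$-edges introduced by the cuts do not spoil the $lLN/2$ count — i.e. $L-3$ together with the $N$ non-$t$ spokes per block still comfortably exceeds $LN/2$. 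I would make this precise by noting that each of the $L-4$ disk-free blocks $\Gamma_i$ plus the boundary pieces contribute at least $N$ spokes of $\Pi$ to $\partial\Delta$, so $\Pi$ alone yields more than $(L-4)N \geq LN/2$ spokes on $\partial\Delta$ for $L \geq 8$, which holds by the choice of parameters.
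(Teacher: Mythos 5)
Your strategy -- induction on the number of disks driven by Lemma \ref{graph} -- is exactly the route the paper intends, and your base case and your count of the spokes of $\Pi$ itself are essentially correct (one slip: a spoke has one end on the disk, so it ends on $\partial\Delta$ once, not ``in two places''; for $l=1$ the point is simply that all $LN$ spokes of the unique disk must end on $\partial\Delta$, since a $q$-band cannot have both ends on the same disk because each part of the standard base occurs exactly once in a disk label). Your blockwise argument that inside each disk-free $\Gamma_i$ every non-$t$ spoke of $\Pi$ must reach $\partial\Delta$, via Lemma \ref{M_a no annuli}, is also right and gives $\Pi$ at least $(L-3)+(L-4)(N-1)\geq LN-4N+1$ spokes on $\partial\Delta$.

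The genuine gap is in the inductive step, in transferring the count from $\partial\Delta'$ back to $\partial\Delta$. After you excise $\Pi$ together with the maximal clove and the two spokes $\pazocal{Q}_1,\pazocal{Q}_{L-3}$, the boundary of $\Delta'$ consists of part of $\partial\Delta$, the sides of $\pazocal{Q}_1$ and $\pazocal{Q}_{L-3}$ (which carry no $q$-edges), and a newly exposed arc $u$ of $\partial\Pi$, which does carry $q$-edges. The inductive hypothesis gives more than $(l-1)LN/2$ spokes of the remaining $l-1$ disks ending on $\partial\Delta'$, but some of these may end on $u$; in $\Delta$ those are spokes ending on the disk $\Pi$, not on $\partial\Delta$, so they must be discarded. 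Your sentence that ``$\partial\Delta'$ inherits at least the spokes \dots that were already reaching $\partial\Delta$'' asserts the harmless direction; what the induction needs is the converse, which fails in general, and distinctness of $q$-edges (the only issue you flag) does not address it. The repair is a surplus/deficit count: $u$ spans only about four sections of the standard base, hence has at most roughly $4N$ $q$-edges, so at most that many counted spokes are lost; meanwhile $\Pi$'s own contribution $LN-4N+1$ exceeds $LN/2$ by more than $4N$ once $L\geq16$, which the parameter choices guarantee. Then the total is at least $(l-1)LN/2-(4N-1)+(LN-4N+1)>lLN/2$, and the induction closes; without this accounting the sum $(l-1)LN/2+LN/2$ you write down is not justified.
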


\smallskip

%%%%%%%%%%%%%%%%%%%%%%%%%%%%%%%%%%%%%%%%%%%%%%%%%%%%%%%%%%%%%%%%%

\subsection{Transposition of a $\theta$-band and a disk} \

Similar to the construction in Section 9.3, we now describe a procedure for moving a $\theta$-band about a disk.

Let $\Delta$ be a minimal diagram over $G_a(\textbf{M})$ containing a disk $\Pi$ and a $\theta$-band $\pazocal{T}$ subsequently crossing the $t$-spokes $\pazocal{Q}_1,\dots,\pazocal{Q}_l$ of $\Pi$. Assume $l\geq2$ is maximal for $\Pi$ and $\pazocal{T}$ and there are no other cells between $\Pi$ and the bottom of $\pazocal{T}$, i.e there is a subdiagram formed by $\Pi$ and $\pazocal{T}$.

Let $\pazocal{T}'$ be the subband of $\pazocal{T}$ whose bottom path, $\textbf{s}_1$, starts with the $t$-letter corresponding to $\pazocal{Q}_1$ and ends with that of $\pazocal{Q}_l$. Further, let $\textbf{s}_2$ be the complement of $\textbf{s}_1$ in $\partial\Pi$ so that $\partial\Pi=\textbf{s}_1\textbf{s}_2$, let $W\equiv\text{Lab}(\partial\Pi)^{\pm1}$, let $V\equiv\text{Lab}(\textbf{s}_1)$, and let $\theta$ be the rule corresponding to $\pazocal{T}$. Then, $V\cdot\theta\equiv\lab(\textbf{ttop}(\pazocal{T}'))=\lab(\textbf{top}(\pazocal{T}'))$ by Lemma \ref{theta-bands are one-rule computations}. Further, let $\Gamma$ be the subdiagram formed by $\Pi$ and $\pazocal{T}'$.

Suppose $\text{Lab}(\textbf{s}_2)$ is $\theta$-admissible. Then there exists a disk $\bar{\Pi}$ with contour labelled by $W\cdot\theta$. Construct the auxiliary $\theta$-band $\pazocal{T}''$ corresponding to $\theta$ whose top is labelled by $(\text{Lab}(\textbf{s}_2)\cdot\theta)^{-1}$ and attach it to $\bar{\Pi}$. We can then excise $\Gamma$ from $\Delta$ and paste in $\bar{\Pi}$ and $\pazocal{T}''$, attaching the first and last cells of $\pazocal{T}''$ to the complement of $\pazocal{T}'$ in $\pazocal{T}$ (and perhaps make cancellations in the resulting $\theta$-band). This results in a diagram $\bar{\Delta}$ with the same contour label as that of $\Delta$.

Conversely, suppose $\text{Lab}(\textbf{s}_2)$ is not $\theta$-admissible. Then Lemma \ref{projection admissible configuration not} applies to $W$, so that $\lab(\textbf{s}_2)$ contains the `special' input sector and would be $\theta$-admissible with the insertion/deletion of some $u^n\in\pazocal{L}$. In this case, we add an $a$-cell corresponding to $u^n$, construct the auxiliary $\theta$-band $\pazocal{T}''$ as above, and attach the mirror $a$-cell on the other side to produce a diagram $\bar{\Delta}$ with similar properties to that as above.

The procedure of excising $\Gamma$ from $\Delta$ to create $\bar{\Delta}$ is called the \textit{transposition} of the disk $\Pi$ and the band $\pazocal{T}$ in $\Delta$.

\smallskip

Now, consider the situation where there may be $a$-cells between the $\theta$-band and the disk. In particular, assume $l\geq3$ is maximal for $\Pi$ and $\pazocal{T}$ and every cell between $\Pi$ and the bottom of $\pazocal{T}$ is an $a$-cell.

Since $l\geq3$, there exists $1\leq i\leq l-1$ such that the adjacent $t$-letters corresponding to $\pazocal{Q}_i$ and $\pazocal{Q}_{i+1}$ are not $t_1$ and $t_2$, i.e the `special' input sector is not between them.

If there exists an $a$-cell in the subdiagram bounded by $\pazocal{Q}_i$, $\pazocal{Q}_{i+1}$, and $\pazocal{T}'$, then Lemmas \ref{a-bands between a-cells} and \ref{M_a no annuli} imply that it sits between two spokes of $\Pi$, say $\pazocal{B}_1$ and $\pazocal{B}_2$, and that every $a$-band ending on it gives rise to a $(\theta,a)$-cell of $\pazocal{T}'$. We can then apply the transposition of $\pazocal{T}$ with the $a$-cell to contradict the minimality of $\Delta$.

So, there is no cell between $\pazocal{T}$ and the portion of $\Pi$ between $\pazocal{Q}_i$ and $\pazocal{Q}_{i+1}$, so that $W(i)$ is $\theta$-admissible.

As above, Lemma \ref{projection admissible configuration not} then implies that, perhaps after attaching $a$-cells, we can create a new disk and auxiliary band that, perhaps after attaching more $a$-cells, functions as the transposition of $\Pi$ with $\pazocal{T}$.

\smallskip

Note that the definition of transposition above differs from that in [19] and [26] only by the presence of $a$-cells. So, observing that the number of $a$-cells comes after the number of $(\theta,t)$-cells in the signature, the following is proved in the same way as in [26].

\begin{lemma} \label{G_a theta-annuli}

\textit{(Lemma 7.7 of [26])} Let $\Delta$ be a minimal diagram over $G_a(\textbf{M})$.

\begin{addmargin}[1em]{0em}

(1) Suppose a $\theta$-band $\pazocal{T}_0$ crosses $l$ $t$-spokes starting of a disk $\Pi$ and there are no disks in the subdiagram bounded by these spokes, $\pazocal{T}_0$, and $\partial\Pi$. Then $l\leq L/2$.

(2) Suppose $\pazocal{T}$ and $\pazocal{S}$ are disjoint $\theta$-bands crossing $l$ and $l'$ $t$-spokes, respectively, of a disk $\Pi$. Suppose further that every cell between the bottom of $\pazocal{T}$ (of $\pazocal{S}$) and $\Pi$ is an $a$-cell. Further, suppose these bands correspond to the same rule $\theta$ if the history is read toward the disk. Then $l+l'\leq L/2$.

(3) $\Delta$ contains no $\theta$-annuli

(4) $\Delta$ contains no $(\theta,q)$-annuli

\end{addmargin}

\end{lemma}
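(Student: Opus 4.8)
The plan is to prove the four statements of Lemma \ref{G_a theta-annuli} by reduction to the already-established facts about diagrams over $M_a(\textbf{M})$ (Lemma \ref{M_a no annuli}) together with the transposition procedures of Sections 9.3 and 11.2, exactly mirroring the argument of Lemma 7.7 of [26]. The one new ingredient is that, because $G_a(\textbf{M})$ has $a$-cells, the transposition of a $\theta$-band and a disk may introduce extra $a$-cells; the key observation making everything go through unchanged is that in the signature $s(\Delta)=(\a_1,\dots,\a_5)$ the number of $a$-cells ($\a_4$) comes strictly after the number of $(\theta,t)$-cells ($\a_2$), so a transposition that fixes the number of disks and does not increase the number of $(\theta,t)$-cells strictly decreases the signature regardless of how many $a$-cells it creates.

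First I would prove (1). Suppose $\pazocal{T}_0$ crosses $t$-spokes $\pazocal{Q}_1,\dots,\pazocal{Q}_l$ of $\Pi$ with no disk in the region bounded by them, $\pazocal{T}_0$, and $\partial\Pi$; take $l$ maximal and, passing to an innermost such configuration, assume the only cells in that region other than $\pazocal{T}_0$ are $a$-cells. If $l>L/2$ then by the pigeonhole principle there is an index $i$ so that the adjacent $t$-letters of $\pazocal{Q}_i,\pazocal{Q}_{i+1}$ are not $t(1),t(2)$, i.e.\ the `special' input sector does not lie between them. Using Lemmas \ref{a-bands between a-cells} and \ref{M_a no annuli} exactly as in the second transposition discussion of Section 11.2, any $a$-cell between the relevant portion of $\partial\Pi$ and $\pazocal{T}_0$ could be transposed with $\pazocal{T}_0$ to lower the signature, so there is no cell there and $W(i)$ is $\theta$-admissible for the rule $\theta$ of $\pazocal{T}_0$ read toward $\Pi$. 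Lemma \ref{projection admissible configuration not} then says $W$ itself is $\theta$-admissible up to insertion/deletion of some $u^n\in\pazocal{L}$; performing the disk-band transposition (attaching $a$-cells as needed) yields a diagram with the same contour label, the same number of disks, the same number of $(\theta,t)$-cells, and strictly smaller signature — contradiction. Hence $l\le L/2$.

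For (2), the argument is the same: if $\pazocal{T}$ and $\pazocal{S}$ are disjoint $\theta$-bands abutting $\Pi$ (with only $a$-cells in between) along $l$ and $l'$ consecutive spokes respectively, and they carry the same rule read toward $\Pi$, then the total $l+l'$ consecutive spokes sit between $\pazocal{T}$ and $\pazocal{S}$; if $l+l'>L/2$ pigeonhole again gives a gap avoiding the special input sector, $\theta$-admissibility of the corresponding projected configuration follows from Lemma \ref{projection admissible configuration not}, and a transposition (of $\pazocal{T}$, say) across $\Pi$ toward $\pazocal{S}$ reduces the signature, a contradiction. For (3): if $\Delta$ had a $\theta$-annulus $\pazocal{S}$, take one bounding an innermost subdiagram; it cannot enclose a disk, for otherwise a $t$-spoke of that disk would have to cross $\pazocal{S}$ and, being a $q$-band in a diagram with no disk outside, would have nowhere to end (contradicting that a $q$-band ends only on a disk or the boundary) — more precisely the enclosed subdiagram would be a diagram over $M_a(\textbf{M})$ forbidden by Lemma \ref{M_a no annuli}(3) once we check no disk lies inside, which follows from (1). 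So the enclosed region is a $\theta$-annulus-bounding diagram over $M_a(\textbf{M})$, impossible by Lemma \ref{M_a no annuli}(3). Statement (4) follows identically from Lemma \ref{M_a no annuli}(4) together with (3) and Lemma \ref{annuli lower bound}: a $(\theta,q)$-annulus either contains a disk — handled by the spoke-crossing count and (1)–(2) — or is a $(\theta,q)$-annulus over $M_a(\textbf{M})$, forbidden.

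The main obstacle I anticipate is bookkeeping around the transposition when $a$-cells are present: one must verify carefully that, after excising the subdiagram $\Gamma$ formed by $\Pi$ and (a subband of) the $\theta$-band and pasting in the new disk $\bar\Pi$, the auxiliary band $\pazocal{T}''$, and the extra $a$-cells coming from the $u^n\in\pazocal{L}$ correction of Lemma \ref{projection admissible configuration not}, the resulting diagram is genuinely minimal-reducible, i.e.\ that its signature is strictly smaller — this needs the placement of $\a_4$ after $\a_2$ in the ordering and the fact that no new $(\theta,t)$-cell is created (the new $\theta$-band crosses $\partial\bar\Pi$ in the same $q$-letters, just on the complementary arc). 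Everything else is a faithful transcription of the proof of Lemma 7.7 of [26], with Lemmas \ref{M_a no annuli}, \ref{a-bands between a-cells}, \ref{projection admissible configuration not}, and \ref{annuli lower bound} playing the roles of their counterparts there.
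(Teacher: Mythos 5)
Your overall strategy is the same as the paper's: the paper proves this lemma by observing that the transposition of Section 11.2 differs from that of [26] only by the possible creation of $a$-cells, and that since the number of $a$-cells ($\a_4$) sits after the number of $(\theta,t)$-cells ($\a_2$) in the signature, the argument of Lemma 7.7 of [26] goes through verbatim. Your use of the pigeonhole to avoid the `special' input sector, of Lemma \ref{projection admissible configuration not} to make the disk $\theta$-admissible up to an $a$-cell correction, and the reduction of (3) and (4) to (1)--(2) together with Lemma \ref{M_a no annuli} are all in the right spirit.

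However, the mechanism you state for the contradiction is wrong, and it is precisely the heart of the lemma. You assert that the transposition yields a diagram with ``the same number of disks, the same number of $(\theta,t)$-cells, and strictly smaller signature,'' and later that ``no new $(\theta,t)$-cell is created'' because the auxiliary band runs along ``the same $q$-letters, just on the complementary arc.'' If the disk count and the $(\theta,t)$-count were both unchanged, the transposed diagram --- which in general has \emph{more} $a$-cells and a rebuilt $\theta$-band --- would have signature at least as large, and minimality of $\Delta$ would not be contradicted; moreover the complementary arc $\textbf{s}_2$ carries the \emph{other} $t$- and $q$-letters, not the same ones. The hypotheses $l>L/2$ and $l+l'>L/2$ enter exactly here: the subband of $\pazocal{T}_0$ along $\textbf{s}_1$ contains $l$ $(\theta,t)$-cells, while the auxiliary band along $\textbf{s}_2$ contains $L-l$ of them, so the transposition strictly decreases $\a_2$ precisely when $l>L/2$, and that strict decrease is what dominates the newly attached $a$-cells in the lexicographic order. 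Likewise, in (2) your sketch never uses the hypothesis that $\pazocal{T}$ and $\pazocal{S}$ carry the same rule read toward $\Pi$; transposing $\pazocal{T}$ alone only trades $l$ for $L-l$ and gives nothing when $l\le L/2$. One needs that after the transposition the auxiliary band lies on the side of $\pazocal{S}$ and, because the rules agree, its $(\theta,t)$-cells cancel against those of $\pazocal{S}$ along the $l'$ spokes they share, leaving on the order of $L-l-l'$ such cells in place of $l+l'$; the assumption $l+l'>L/2$ is exactly what makes this a strict decrease. Without these counting points the proposal does not produce a contradiction, so this gap needs to be repaired even though the surrounding scaffolding matches the intended proof.
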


%\begin{proof}
%
%(1) Lemma \ref{M_a no annuli} implies that there exists a $\theta$-band $\pazocal{T}$ crossing all $l$ spokes such that the only cells between it and $\Pi$ are $a$-cells. If $l>L/2$, then the transposition of $\Pi$ and $\pazocal{T}$ in $\Delta$ then yields a diagram with the same contour label, the same number of disks, strictly less $(\theta,t)$-cells, and perhaps more $a$-cells. This contradicts the minimality of $\Delta$.
%
%(2) Again assume that $l+l'>L/2$. As it can be taken that $L>8$, one can assume that one of $l,l'$ is at least $3$. Consider the transposition of $\pazocal{T}$ and $\Pi$. This removes $l$ $(\theta,t)$-cells and adds $L-l$ new $(\theta,t)$-cells in the resulting band. However, $l'$ of these cells form mirror pairs with some of the cells of $\pazocal{S}$, so that it is possible to cancel $2l'$ cells. So, the change in the number of $(\theta,t)$-cells is $L-2l-2l'$, so that the relation $l+l'>L/2$ would contradict the minimality of $\Delta$.
%
%(3) Let $\Delta'$ be a subdiagram bounded by a $\theta$-annulus. By Lemma \ref{M_a no annuli}, there exists a disk in $\Delta'$. But then Lemma \ref{graph} contradicts (1) if we take $L>6$.
%
%(4) follows by the same argument as (3).
%
%\end{proof}

\smallskip

%%%%%%%%%%%%%%%%%%%%%%%%%%%%%%%%%%%%%%%%%%%%%%%%%%%%%%%%%%%%%%%%%

\subsection{Quasi-trapezia} \

We now introduce a generalization of the concept of trapezium similar to the one used in [19] and [26]. A \textit{quasi-trapezium} is a minimal diagram defined in the same way as a trapezium (see section 6.3) except that it is permitted to contain disks and $a$-cells (the allowance of $a$-cells obviously differs from previous constructions). In other words, a quasi-trapezium is a minimal diagram over $G_a(\textbf{M})$ (as opposed to a reduced diagram over $M(\textbf{M})$) with contour of the form $\textbf{p}_1^{-1}\textbf{q}_1\textbf{p}_2\textbf{q}_2^{-1}$, where each $\textbf{p}_i$ is the side of a $q$-band and each $\textbf{q}_i$ is the maximal subpath of the side of a $\theta$-band where the subpath starts and ends with a $q$-letter.

Note that trapezia form a speecial subclass of quasi-trapezia.

The \textit{(step) history} of a quasi-trapezium is defined in the same way as for a trapezium, as are the \textit{base}, \textit{height}, and \textit{standard factorization}.

\begin{lemma} \label{M_a reduced quasi-trapezia}

Suppose $\Gamma$ is a reduced (not necessarily minimal) diagram over $M_a(\textbf{M})$ with contour $\textbf{p}_1^{-1}\textbf{q}_1\textbf{p}_2\textbf{q}_2^{-1}$ where each $\textbf{p}_j$ is the side of a $q$-band and each $\textbf{q}_j$ is the maximal subpath of the side of a $\theta$-band that starts and ends with a $q$-letter. Then there exists a minimal diagram $\Gamma'$ over $M_a(\textbf{M})$ such that:

\begin{addmargin}[1em]{0em}

(1) $\partial\Gamma'=(\textbf{p}_1')^{-1}\textbf{q}_1'\textbf{p}_2'(\textbf{q}_2')^{-1}$, where $\text{Lab}(\textbf{p}_j')\equiv\text{Lab}(\textbf{p}_j)$ and $\text{Lab}(\textbf{q}_j')\equiv\text{Lab}(\textbf{q}_j)$ for $j=1,2$

(2) the vertices $(\textbf{p}_1')_-$ and $(\textbf{p}_2')_-$ (the vertices $(\textbf{p}_1')_+$ and $(\textbf{p}_2')_+$) are connected by a simple path $\textbf{s}_1$ (a simple path $\textbf{s}_2$) such that there exist three subdiagrams $\Gamma_1',\Gamma_2',\Gamma_3'$ of $\Gamma'$ where $\Gamma_2'$ is a quasi-trapezium over $M_a(\textbf{M})$ with standard factorization $(\textbf{p}_1')^{-1}\textbf{s}_1\textbf{p}_2'\textbf{s}_2^{-1}$, and all cells of the subdiagrams $\Gamma_1'$ and $\Gamma_3'$ with contours $\textbf{q}_1'\textbf{s}_1^{-1}$ and $\textbf{s}_2(\textbf{q}_2')^{-1}$ are $a$-cells

\end{addmargin}

\end{lemma}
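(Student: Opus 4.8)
The plan is to reduce the reduced diagram $\Gamma$ over $M_a(\textbf{M})$ to a minimal one and then straighten out the top and bottom by pushing $a$-cells to the boundary. First I would replace $\Gamma$ by a minimal diagram over $M_a(\textbf{M})$ with the same contour label; since the signature grading (Section 10.1, restricted to diagrams without disks) characterizes triviality, and $\Gamma$ already has no disks, a minimal diagram $\Gamma'$ exists with $\partial\Gamma'=(\textbf{p}_1')^{-1}\textbf{q}_1'\textbf{p}_2'(\textbf{q}_2')^{-1}$ and $\text{Lab}(\textbf{p}_j')\equiv\text{Lab}(\textbf{p}_j)$, $\text{Lab}(\textbf{q}_j')\equiv\text{Lab}(\textbf{q}_j)$. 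Because $\textbf{p}_1'$ and $\textbf{p}_2'$ are (labels of) sides of $q$-bands and $\textbf{q}_1'$, $\textbf{q}_2'$ are trimmed sides of $\theta$-bands starting and ending with $q$-letters, the maximal $q$-bands $\pazocal{Q}_1,\pazocal{Q}_2$ having $\textbf{p}_1',\textbf{p}_2'$ as sides are well-defined inside $\Gamma'$, as are the outermost maximal $\theta$-bands $\pazocal{T}_{\text{bot}},\pazocal{T}_{\text{top}}$ along $\textbf{q}_1',\textbf{q}_2'$. By Lemma \ref{M_a no annuli} these bands cross each other in single cells and cannot form annuli, so $\pazocal{T}_{\text{bot}}$ (resp.\ $\pazocal{T}_{\text{top}}$) runs from a point of $\textbf{p}_1'$ to a point of $\textbf{p}_2'$, cutting off a subdiagram $\Gamma_1'$ (resp.\ $\Gamma_3'$) between $\textbf{q}_j'$ and the trimmed top/bottom of that band.

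The core step is then to argue that each of $\Gamma_1'$ and $\Gamma_3'$ contains only $a$-cells. Consider $\Gamma_1'$, lying between $\textbf{q}_1'$ and $\textbf{s}_1\defeq\textbf{tbot}(\pazocal{T}_{\text{bot}})$ (or $\textbf{ttop}$, depending on orientation). A $\theta$-edge on $\textbf{q}_1'$ lies on $\pazocal{T}_{\text{bot}}$ by maximality of that $\theta$-band, so no $\theta$-band of $\Gamma_1'$ other than cells of $\pazocal{T}_{\text{bot}}$ itself touches $\textbf{q}_1'$; any $\theta$-band wholly inside $\Gamma_1'$ would, by Lemma \ref{M_a no annuli}, have to have both ends on $\partial\Gamma_1'$, but $\textbf{s}_1$ is a side of the $\theta$-band $\pazocal{T}_{\text{bot}}$ and hence has no $\theta$-edges, $\textbf{q}_1'$'s $\theta$-edges are used up, and the two short sides are pieces of $\textbf{p}_1',\textbf{p}_2'$ which are $q$-band sides with no $\theta$-edges — contradiction. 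Hence $\Gamma_1'$ has no $\theta$-edges, so no $(\theta,q)$- or $(\theta,a)$-cells and (by Lemma \ref{M_a no annuli} again, no $q$-annuli) no $q$-edges either; every cell is an $a$-cell. The same reasoning applies to $\Gamma_3'$. What remains, $\Gamma_2'$, has contour $(\textbf{p}_1')^{-1}\textbf{s}_1\textbf{p}_2'\textbf{s}_2^{-1}$ with $\textbf{p}_j'$ sides of $q$-bands and $\textbf{s}_1,\textbf{s}_2$ trimmed sides of $\theta$-bands starting and ending with $q$-letters, so it is a quasi-trapezium over $M_a(\textbf{M})$ by definition (Section 11.3, specialized to the disk-free case). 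Minimality of $\Gamma_2'$ is inherited, since a subdiagram of a minimal diagram is minimal.

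The point I expect to require the most care is verifying that $\textbf{s}_1$ and $\textbf{s}_2$ can be taken simple and that the three subdiagrams genuinely tile $\Gamma'$ without overlap — in particular that $\pazocal{T}_{\text{bot}}$ and $\pazocal{T}_{\text{top}}$ are disjoint and neither meets the other's flanking $\theta$-band, which is where one invokes the absence of $\theta$-annuli and $(\theta,q)$-annuli from Lemma \ref{M_a no annuli} together with the fact that $\Gamma'$ has no disks. If $\Gamma'$ happened to have height so small that $\pazocal{T}_{\text{bot}}=\pazocal{T}_{\text{top}}$ or the diagram has no $\theta$-bands at all, one treats this as a degenerate quasi-trapezium of height $0$ or $1$ with $\Gamma_1'$ or $\Gamma_3'$ empty; 0-refinement supplies the needed simple connecting paths $\textbf{s}_1,\textbf{s}_2$. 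Once the tiling is established, conclusions (1) and (2) are immediate from the construction, with $\text{Lab}(\textbf{s}_j)$ being whatever the trimmed top/bottom label of the outermost $\theta$-band is.
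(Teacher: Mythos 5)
Your overall plan (pass to a minimal diagram with the same contour label, locate the extreme $\theta$-bands, and argue the regions between them and $\textbf{q}_1',\textbf{q}_2'$ consist of $a$-cells) is the same as the paper's, but two steps you treat as automatic are exactly the content of the proof, and one of them you argue from false premises. First, in the new diagram $\Gamma'$ the paths $\textbf{p}_j'$ and $\textbf{q}_j'$ only share \emph{labels} with $\textbf{p}_j,\textbf{q}_j$; nothing guarantees a priori that $\textbf{p}_j'$ is the side of a $q$-band of $\Gamma'$ or that there are maximal $\theta$-bands running ``along'' $\textbf{q}_j'$ from $\textbf{p}_1'$ to $\textbf{p}_2'$. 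The paper proves this: every maximal $q$-band of $\Gamma'$ must join $\textbf{q}_1'$ to $\textbf{q}_2'$ (a $q$-band returning to $\textbf{q}_j'$ would, via the $\theta$-bands it generates and Lemma \ref{M_a no annuli}, force a $\theta$-edge on $\textbf{q}_j'$, impossible since its label contains no $\theta$-letters), and every maximal $\theta$-band must join $\textbf{p}_1'$ to $\textbf{p}_2'$ --- and this last step is where the hypothesis that the \emph{original} $\Gamma$ is reduced is used: a $\theta$-band with both ends on $\textbf{p}_j'$ would produce two adjacent mutually inverse $\theta$-letters in $\text{Lab}(\textbf{p}_j)$, hence a cancellable pair in the $q$-band of $\Gamma$ with side $\textbf{p}_j$. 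Your proposal never invokes reducedness of $\Gamma$, so it cannot rule out $\theta$-bands that start and end on the same $\textbf{p}_j'$, and then your ``outermost'' bands $\pazocal{T}_{\text{bot}},\pazocal{T}_{\text{top}}$ need not exist in the required form. One also still has to check that $\textbf{p}_1'$ and $\textbf{p}_2'$ coincide with sides of $q$-bands of $\Gamma'$ (the paper does this with the $q$-bands starting on the first and last edges of $\textbf{q}_1'$), which you assert without argument.

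Second, your argument that $\Gamma_1'$ and $\Gamma_3'$ contain only $a$-cells has the edge types of band sides reversed: the side of a $q$-band consists of $\theta$- and $a$-edges (no $q$-edges), while the side of a $\theta$-band consists of $q$- and $a$-edges (no $\theta$-edges). So ``$\textbf{q}_1'$'s $\theta$-edges are used up'' is vacuous ($\textbf{q}_1'$ has none), and ``pieces of $\textbf{p}_1',\textbf{p}_2'$ \ldots with no $\theta$-edges'' is false --- any such side pieces would carry precisely the $\theta$-edges that could absorb ends of $\theta$-bands inside $\Gamma_1'$, destroying your contradiction. The correct argument, as in the paper, is that once all maximal $\theta$-bands are known to join $\textbf{p}_1'$ to $\textbf{p}_2'$, one takes $\textbf{s}_1=\textbf{bot}(\pazocal{T}_1)$ and $\textbf{s}_2=\textbf{top}(\pazocal{T}_h)$ for the bottommost and topmost of them, so that the contours $\textbf{q}_1'\textbf{s}_1^{-1}$ and $\textbf{s}_2(\textbf{q}_2')^{-1}$ contain no $\theta$-edges whatsoever (and there are no leftover ``short sides''); then no $(\theta,q)$- or $(\theta,a)$-cell can occur in these regions, since its maximal $\theta$-band would have to end on their contours, leaving only $a$-cells.
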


\begin{proof}

Let $\Gamma'$ be a minimal diagram with the same contour label as $\Gamma$. Then, the contour of $\Gamma'$ can be factored as in (1).

Every $q$-edge of $\textbf{q}_j'$ gives rise to a maximal $q$-band of $\Gamma'$. Suppose such a band $\pazocal{Q}$ starts and ends on $\textbf{q}_j'$ and consider the subdiagram $\Delta$ bounded by the top of $\pazocal{Q}$ and $\textbf{q}_j'$. Since $q$-bands are comprised entirely of $(\theta,q)$-cells, the top of $\pazocal{Q}$ contains $\theta$-edges that give rise to maximal $\theta$-bands in $\Delta$. Lemma \ref{M_a no annuli} then implies that no such $\theta$-band can have both ends on the top of $\pazocal{Q}$, so that it must end on $\textbf{q}_j'$. But then $\textbf{q}_j'$ contains a $\theta$-edge, which is a contradiction as it has the same label as $\textbf{q}_j$. So, by Lemma \ref{M_a no annuli}, every maximal $q$-band of $\Gamma'$ must connect an edge of $\textbf{q}_1'$ with an edge of $\textbf{q}_2'$. 

Now suppose a maximal $\theta$-band of $\Gamma'$ has two ends on $\textbf{p}_j'$. Then, as no two $\theta$-bands can cross, there exists a $\theta$-band connecting adjacent $\theta$-letters (with perhaps $a$-letters between them) of $\textbf{p}_j'$. But then this implies that the corresponding $\theta$-edges of $\textbf{p}_j$ in $\Gamma$ are mutually inverse adjacent $\theta$-edges, so that the corresponding cells of the $q$-band with side $\textbf{p}_j$ has cancellable cells, contradicting the assumption that $\Gamma$ is reduced. So, by Lemma \ref{M_a no annuli}, every maximal $\theta$-band of $\Gamma'$ must connect an edge of $\textbf{p}_1'$ with an edge of $\textbf{p}_2'$, and so we can enumerate them from bottom to top $\pazocal{T}_1,\dots,\pazocal{T}_h$ for $h=|\textbf{p}_j|$.

Letting $\pazocal{Q}$ (respectively $\pazocal{Q}'$) be the maximal $q$-band starting on the first (respectively last) edge of $\textbf{q}_1'$, it then follows that $\text{Lab}(\textbf{top}(\pazocal{Q}))\equiv\text{Lab}(\textbf{p}_1')$ (respectively $\text{Lab}(\textbf{bot}(\pazocal{Q}'))\equiv\text{Lab}(\textbf{p}_2')$). Since these paths bound a subdiagram of the minimal diagram $\Gamma'$, they must coincide.

Now let $\textbf{s}_1=\textbf{bot}(\pazocal{T}_1)$ and $\textbf{s}_2=\textbf{top}(\pazocal{T}_h)$. Then defining the subdiagrams $\Gamma_1'$, $\Gamma_2'$, $\Gamma_3'$ as in (2), $\Gamma_1'$ and $\Gamma_3'$ have no $\theta$-edges in their contours and so must contain only $a$-cells, while $\Gamma_2'$ is a quasi-trapezium over $M_a(\textbf{M})$ by definition.

\end{proof}

\begin{lemma} \label{quasi-trapezia}

Let $\Gamma$ be a quasi-trapezium with standard factorization of its contour $\textbf{p}_1^{-1}\textbf{q}_1\textbf{p}_2\textbf{q}_2^{-1}$. Then there exists a minimal diagram $\Gamma'$ such that

\begin{addmargin}[1em]{0em}

(1) $\partial\Gamma'=(\textbf{p}_1')^{-1}\textbf{q}_1'\textbf{p}_2'(\textbf{q}_2')^{-1}$, where $\text{Lab}(\textbf{p}_j')\equiv\text{Lab}(\textbf{p}_j)$ and $\text{Lab}(\textbf{q}_j')\equiv\text{Lab}(\textbf{q}_j)$ for $j=1,2$.

(2) the number of disks and $(\theta,q)$-cells in $\Gamma'$ are the same as in $\Gamma$.

(3) the vertices $(\textbf{p}_1')_-$ and $(\textbf{p}_2')_-$ (the vertices $(\textbf{p}_1')_+$ and $(\textbf{p}_2')_+$) are connected by a simple path $\textbf{s}_1$ (a simple path $\textbf{s}_2$) such that we have the three subdiagrams $\Gamma_1$, $\Gamma_2$, $\Gamma_3$ of $\Gamma'$ where $\Gamma_2$ is a quasi-trapezium over $M_a(\textbf{M})$ with standard factorization $(\textbf{p}_1')^{-1}\textbf{s}_1\textbf{p}_2'\textbf{s}_2^{-1}$, and all cells of the subdiagrams $\Gamma_1$ and $\Gamma_3$ with boundaries $\textbf{q}_1'\textbf{s}_1^{-1}$ and $\textbf{s}_2(\textbf{q}_2')^{-1}$ are disks and $a$-cells.

(4) all maximal $\theta$-bands of $\Gamma$ and all maximal $\theta$-bands of $\Gamma_2$ have the same number of $(\theta,t)$-cells (equal for $\Gamma$ and $\Gamma_2$).

\end{addmargin}

\end{lemma}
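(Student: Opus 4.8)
The plan is to combine Lemma \ref{M_a reduced quasi-trapezia} with an induction on the signature of the quasi-trapezium $\Gamma$, pushing the disks and $a$-cells of $\Gamma$ out of the "band region'' so that what remains is genuinely a reduced diagram over $M_a(\textbf{M})$ to which Lemma \ref{M_a reduced quasi-trapezia} applies. First I would let $\Gamma'$ be a minimal diagram with $\text{Lab}(\partial\Gamma')\equiv\text{Lab}(\partial\Gamma)$; since $\Gamma$ itself is minimal, $\Gamma'$ can in fact be taken to be $\Gamma$ up to relabelling of the boundary decomposition, giving (1) immediately and the equality of the number of disks and $(\theta,q)$-cells in (2). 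The contour of $\Gamma'$ factors as in (1), and every $q$-edge of $\textbf{q}_j'$ begins a maximal $q$-band; as in the proof of Lemma \ref{M_a reduced quasi-trapezia}, Lemma \ref{M_a no annuli} forces every maximal $q$-band to join $\textbf{q}_1'$ to $\textbf{q}_2'$ and, modulo dismissing the bad configurations below, every maximal $\theta$-band to join $\textbf{p}_1'$ to $\textbf{p}_2'$, so the $\theta$-bands can be enumerated $\pazocal{T}_1,\dots,\pazocal{T}_h$ from bottom to top with $h$ the $\theta$-length of $\textbf{p}_j'$.

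The key new phenomenon, absent from the trapezium case, is that a maximal $\theta$-band of $\Gamma'$ might try to cross a disk (this is exactly what the transposition machinery of Section 11.2 is built to handle) or end on an $a$-cell. For the disk case, I would argue that if some $\pazocal{T}_i$ crosses $t$-spokes of a disk $\Pi$, then using the transposition of $\Pi$ with $\pazocal{T}_i$ described in Section 11.2 one produces a diagram with the same boundary label and strictly smaller signature (the number of $(\theta,t)$-cells drops, even though $a$-cells may be created — and $a$-cells come later in the signature), contradicting minimality of $\Gamma'$; hence no $\theta$-band crosses a disk, so each disk lies entirely in $\Gamma_1$, in $\Gamma_3$, or strictly between two consecutive $\theta$-bands. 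For the $a$-cell case, the transposition of a $\theta$-band with an $a$-cell from Section 9.3 plays the same role: if a $\theta$-band $\pazocal{T}_i$ shares more than half the contour of an adjacent $a$-cell $\pi$, transposing lowers the signature, a contradiction; and Lemma \ref{a-bands between a-cells} together with Lemma \ref{M_a no annuli} forbids an $a$-band with both ends on $a$-cells, so again the $a$-cells can be pushed to $\Gamma_1$ or $\Gamma_3$ (or between consecutive $\theta$-bands, where they get absorbed into the quasi-trapezium $\Gamma_2$). Once no $\theta$-band crosses a disk, the region swept out by $\pazocal{T}_1,\dots,\pazocal{T}_h$ together with the disks and $a$-cells trapped between consecutive $\theta$-bands is a quasi-trapezium $\Gamma_2$ over $M_a(\textbf{M})$ with standard factorization $(\textbf{p}_1')^{-1}\textbf{s}_1\textbf{p}_2'\textbf{s}_2^{-1}$ where $\textbf{s}_1=\textbf{bot}(\pazocal{T}_1)$ and $\textbf{s}_2=\textbf{top}(\pazocal{T}_h)$, and the complementary subdiagrams $\Gamma_1$ (with contour $\textbf{q}_1'\textbf{s}_1^{-1}$) and $\Gamma_3$ (with contour $\textbf{s}_2(\textbf{q}_2')^{-1}$) contain only disks and $a$-cells, since neither contour contains a $\theta$-edge; this gives (3). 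Finally, (4) follows because every maximal $\theta$-band of $\Gamma_2$ is exactly one of the $\pazocal{T}_i$ with its end-$a$-cells (if any) trimmed off — trimming removes no $(\theta,t)$-cells — and the bands $\pazocal{T}_1,\dots,\pazocal{T}_h$ of $\Gamma_2$ are precisely the maximal $\theta$-bands of $\Gamma$ after the disks/$a$-cells are repositioned, so the count of $(\theta,t)$-cells in each maximal $\theta$-band is unchanged and common to $\Gamma$ and $\Gamma_2$.

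The main obstacle I anticipate is verifying carefully that the transposition moves are available and strictly signature-decreasing in the present setting — in particular that when a $\theta$-band's complementary arc on a disk fails to be $\theta$-admissible, Lemma \ref{projection admissible configuration not} really does apply (one must check the disk's boundary label is an accepted configuration, which holds since disks correspond to disk relations), and that inserting the resulting $a$-cells does not spoil the "strictly smaller signature'' conclusion because $a$-cells are counted by $\a_4$, strictly after $\a_2$ (number of $(\theta,t)$-cells) and $\a_3$ (number of $(\theta,q)$-cells) in the lexicographic order. A secondary bookkeeping point is that, after all repositioning, $\Gamma'$ must still be minimal with the prescribed boundary decomposition; this is handled by running the whole argument inside a minimal-signature counterexample, exactly as in the proof of Lemma \ref{M_a no annuli}. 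Everything else is a routine adaptation of the trapezium arguments of Lemma \ref{M_a reduced quasi-trapezia} and of the corresponding lemma in [26].
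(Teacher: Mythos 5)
Your central step does not work: you claim that if a maximal $\theta$-band $\pazocal{T}_i$ of the minimal diagram crosses $t$-spokes of a disk $\Pi$, then transposing $\Pi$ with $\pazocal{T}_i$ strictly decreases the signature (``the number of $(\theta,t)$-cells drops''), so that minimality forces no $\theta$-band to cross a disk. That is false. Transposition replaces the part of the band crossing $l$ of the $t$-spokes by an auxiliary band passing around the other side of the disk, so the number of $(\theta,t)$-cells goes from $l$ to roughly $L-l$ (and the number of $(\theta,q)$-cells and $(\theta,a)$-cells can grow as well); it decreases only when $l>L/2$. This is exactly the content of Lemma \ref{G_a theta-annuli}(1): minimality only bounds the number of $t$-spokes of $\Pi$ crossed by a single $\theta$-band by $L/2$, and crossings below that bound are perfectly compatible with minimality. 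Consequently you have no mechanism for clearing the disks out of the region between $\pazocal{T}_1$ and $\pazocal{T}_h$, and your description of $\Gamma_2$ as ``the region swept out by $\pazocal{T}_1,\dots,\pazocal{T}_h$ together with the disks \dots trapped between consecutive $\theta$-bands'' is self-contradictory: a quasi-trapezium over $M_a(\textbf{M})$ contains no disks, and conclusion (3) requires all disks to sit in $\Gamma_1$ and $\Gamma_3$. For the same reason you cannot take $\Gamma'=\Gamma$ ``up to relabelling''; the whole point of the lemma is that $\Gamma$ itself need not have the layered structure (3), and $\Gamma'$ is a genuinely different diagram.

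The paper's argument handles precisely this difficulty: using Lemma \ref{graph} it finds a disk with at least $L-3$ $t$-spokes ending on $\textbf{q}_1\cup\textbf{q}_2$, so that (by Lemma \ref{G_a theta-annuli}(1)) at least three of its spokes cross the $\theta$-band immediately above it and three cross the band immediately below; this makes the disk transposition of Section 10.2 applicable, and the disk is then pushed, by iterated transpositions, above $\pazocal{T}_h$ or below $\pazocal{T}_1$ --- accepting that the intermediate diagram $\Gamma''$ need not be minimal and that $a$-cells may be created. The preservation of the $(\theta,t)$-cell count, your item (4), is then not automatic (each transposed band changes its number of $(\theta,t)$-cells from $l$ to about $L-l$); it is recovered by fixing a band $\pazocal{T}_i$ with the minimal number $m$ of $(\theta,t)$-cells, observing that it never participates in a transposition, counting exactly $hm$ such cells in the disk-free middle region (where every maximal $q$-band crosses every maximal $\theta$-band), and invoking the minimality of $\Gamma$ to force equality. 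Your assertion that the counts are ``unchanged'' by the repositioning skips all of this and is not correct band-by-band. The remainder of your outline (applying Lemma \ref{M_a reduced quasi-trapezia} to the disk-free middle part, and the treatment of $a$-cells via Lemma \ref{a-bands between a-cells}) is in the right spirit, but as written the proof has a genuine gap at its main step.
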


\begin{proof}

By Lemma \ref{G_a theta-annuli}(3), every maximal $\theta$-band of $\Gamma$ must connect an edge of $\textbf{p}_1$ with an edge of $\textbf{p}_2$. So, we can enumerate these bands from bottom to top as $\pazocal{T}_1,\dots,\pazocal{T}_h$ for $h=|\textbf{p}_1|=|\textbf{p}_2|$.

If $\Gamma$ contains a disk, then by Lemma \ref{graph} there is a disk $\Pi$ such that at least $L-3$ of its $t$-spokes end on $\textbf{q}_1$ or $\textbf{q}_2$, and such that there are no disks between these spokes. By Lemma \ref{G_a theta-annuli}(1), at least $L-3-L/2\geq3$ of these spokes must end on $\textbf{q}_1$ (on $\textbf{q}_2$).

If $\Pi$ lies between $\pazocal{T}_j$ and $\pazocal{T}_{j+1}$, then the number of its $t$-spokes crossing $\pazocal{T}_j$ (crossing $\pazocal{T}_{j+1}$) is at least 3. So we can move each of these two $\theta$-bands around $\Pi$ by transposition. 

%This also implies that no pair $\pazocal{T}_j$ and $\pazocal{T}_{j+1}$ correspond to mutually inverse rules.

Choose $i$ such that the number of $(\theta,t)$-cells in $\pazocal{T}_i$, $m$, is minimal. It follows that $\Gamma$ has at least $hm$ $(\theta,t)$-cells.

If the disk $\Pi$ lies above $\pazocal{T}_i$ (i.e $j\geq i$), then move it upwards by transposing it with $\pazocal{T}_{j+1}$, and iterate until it is transposed with $\pazocal{T}_h$. Otherwise, move it down in the same way until it is transposed with $\pazocal{T}_1$. 

Then consider the subdiagram formed by excising $\Pi$. If it contains disks, then again find a disk with $L-3$ of its $t$-spokes ending on the top or bottom. Iterating this process, every disk can be transposed until it is above the $h$-th $\theta$-band or below first $\theta$-band. This forms a new diagram, $\Gamma''$, with subdiagram $\Gamma_2''$ bounded by $\pazocal{T}_1''$ and $\pazocal{T}_h''$ which contains no disks. Note, however, that this process may add $a$-cells to the diagram; perhaps after some cancellations, we can then assume that $\Gamma_2''$ is reduced and so satisfies the hypotheses of Lemma \ref{M_a reduced quasi-trapezia}.

%This forms the diagram $\Gamma''$ with contour $(\textbf{p}_1'')^{-1}\textbf{q}_1''\textbf{p}_2''(\textbf{q}_2'')^{-1}$ satisfying: 
%
%\begin{addmargin}[1em]{0em}
%
%$\bullet$ $\text{Lab}(\textbf{p}_j'')\equiv\text{Lab}(\textbf{p}_j)$ and $\text{Lab}(\textbf{q}_j'')\equiv\text{Lab}(\textbf{q}_j)$ for $j=1,2$
%
%$\bullet$ the vertices $(\textbf{p}_1'')_-$ and $(\textbf{p}_2'')_-$ (the vertices $(\textbf{p}_1'')_+$ and $(\textbf{p}_2'')_+$) are connected by a simple path $\textbf{t}_1$ (a simple path $\textbf{t}_2$) such that we have three subdiagrams $\Gamma_1''$, $\Gamma_2''$, $\Gamma_3''$ fo $\Gamma''$ where $\Gamma_2''$ is a diagram over $M_a(\textbf{M})$ with contour label $(\textbf{p}_1'')^{-1}\textbf{t}_1\textbf{p}_2''\textbf{t}_2^{-1}$ and all cells of the subdiagrams $\Gamma_1''$ and $\Gamma_3''$ with contours $\textbf{q}_1''\textbf{t}_1^{-1}$ and $\textbf{t}_2(\textbf{q}_2'')^{-1}$ are disks.
%
%\end{addmargin}

Since the $\theta$-band $\pazocal{T}_i$ did not participate in any of the transpositions and the history of $\Gamma$ is reduced, the resulting $\theta$-band $\pazocal{T}_i''$ of $\Gamma_2''$ has the same number of $(\theta,t)$-cells as $\pazocal{T}_i$. Moreover, since $\Gamma_2''$ contains no disks, every maximal $q$-band of $\Gamma_2''$ must cross every maximal $\theta$-band, so that there are exactly $hm$ $(\theta,t)$-cells, which does not exceed the number of $(\theta,t)$-cells of $\Gamma$. As $\Gamma$ is minimal, these numbers are equal.

Applying Lemma \ref{M_a reduced quasi-trapezia} to $\Gamma_2''$ results in the diagram $\Gamma'$ as described in (1) with subdiagrams $\Gamma_1,\Gamma_2,\Gamma_3$ as described in (3). Statement (2) is clear from the construction and the minimality of both $\Gamma$ and $\Gamma'$. 

Finally, since $\Gamma_2$ is minimal, it contains at most as many $(\theta,t)$-cells as $\Gamma_2''$, and so the same number as $\Gamma$. As a result, each $\theta$-band contains $m$ $(\theta,t)$-cells.

\end{proof}

%\begin{lemma} \label{M_a quasi-trapezia}
%
%Let $\Delta$ be a 
%
%\end{lemma}
%
%\begin{proof}
%
%
%
%\end{proof}

\smallskip

%%%%%%%%%%%%%%%%%%%%%%%%%%%%%%%%%%%%%%%%%%%%%%%%%%%%%%%%%%%%%%%%%

\subsection{Shafts} \

\begin{lemma} \label{controlled has reduced}

Suppose $\Delta$ is a quasi-trapezium such that its history $H$ is controlled. Then the base of $\Delta$ is a reduced word.

\end{lemma}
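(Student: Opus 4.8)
The statement asserts that if a quasi-trapezium $\Delta$ has controlled history $H$, then its base is reduced. The plan is to reduce this to the already-established statement for diagrams over $M(\textbf{M})$ (or rather $M_a(\textbf{M})$), namely Lemma \ref{M controlled} (equivalently Lemma \ref{M_5 controlled}), which says that a reduced computation of $\textbf{M}$ with controlled history has reduced base. First I would recall that a quasi-trapezium is a minimal diagram over $G_a(\textbf{M})$, so by Lemma \ref{quasi-trapezia} I may pass to the associated diagram $\Gamma'$ with the same contour labels, decomposed into $\Gamma_1$, $\Gamma_2$, $\Gamma_3$ where $\Gamma_2$ is a quasi-trapezium over $M_a(\textbf{M})$ and $\Gamma_1, \Gamma_3$ consist only of disks and $a$-cells. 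Since $a$-cells and disks have no $q$-edges, the base of $\Delta$ is determined by $\Gamma_2$; so it suffices to treat the case where $\Delta$ itself is a quasi-trapezium over $M_a(\textbf{M})$, i.e.\ contains no disks, only $(\theta,q)$-cells, $(\theta,a)$-cells, and $a$-cells. (If $\Delta$ contained disks, the controlled history would force, via Lemma \ref{graph} and Lemma \ref{G_a theta-annuli}, an $\textbf{LR}$- or $\textbf{LR}_k$-type subcomputation around a disk, which is already excluded; but the cleaner route is simply to invoke Lemma \ref{quasi-trapezia} and push the disks out to $\Gamma_1$ and $\Gamma_3$.)

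Next I would decompose the (disk-free, minimal) quasi-trapezium $\Delta$ into its maximal $\theta$-bands $\pazocal{T}_1,\dots,\pazocal{T}_h$ from bottom to top — every maximal $\theta$-band connects the left side to the right side by Lemma \ref{M_a no annuli}(3) (or Lemma \ref{G_a theta-annuli}(3)). Reading off the trimmed tops and bottoms, Lemma \ref{theta-bands are one-rule computations} identifies $\text{tbot}(\pazocal{T}_j)$ and $\text{ttop}(\pazocal{T}_j)$ as admissible words with $\text{ttop}(\pazocal{T}_j)\equiv\text{tbot}(\pazocal{T}_j)\cdot\theta_j$, and $\text{ttop}(\pazocal{T}_j)\equiv\text{tbot}(\pazocal{T}_{j+1})$ after accounting for any $a$-cells between consecutive bands. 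However — and this is the subtle point — there may be $a$-cells squeezed between $\pazocal{T}_j$ and $\pazocal{T}_{j+1}$, so the bottom of $\pazocal{T}_{j+1}$ need not be \emph{literally} the top of $\pazocal{T}_j$; the two differ only by insertion/deletion of elements of $\pazocal{L}$ in the `special' input sector (as in the proof of Lemma \ref{extend}). Crucially, such an $a$-cell does not change the base, only the $a$-letters of the input sector. Hence the sequence of $q$-projections of $\text{tbot}(\pazocal{T}_1), \text{tbot}(\pazocal{T}_2),\dots,\text{ttop}(\pazocal{T}_h)$ is exactly a computation $W_0\to\dots\to W_h$ of the $S$-machine $\textbf{M}$ with history $H$ (with the $a$-content of the `special' input sector possibly modified between steps, which is irrelevant since $H$ is controlled and controlled histories lock that sector — see the definition of controlled history and Lemma \ref{M controlled}). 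Since $H$ is reduced (the history of any trapezium/quasi-trapezium is reduced), this is a reduced computation with controlled history.

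Now I would apply Lemma \ref{M controlled}: a reduced computation of $\textbf{M}$ with controlled history has reduced base. The base of the quasi-trapezium $\Delta$ equals the base of this computation, hence is reduced, completing the proof. The main obstacle I anticipate is bookkeeping the interaction of $a$-cells with the $\theta$-band decomposition — making sure that inserting $a$-cells between consecutive $\theta$-bands genuinely leaves the base word untouched and that controlledness of $H$ (which forces the `special' input sector to be locked throughout, so that in fact no such $a$-cells even arise in $\Gamma_2$) is correctly used. A careful statement is: because $H$ is controlled, every rule of $H$ locks the `special' input sector, so no $(\theta,a)$-cell of $\Delta$ involving $\pazocal{A}$-letters occurs, so there are no $a$-bands reaching an $a$-cell from a $\theta$-band inside $\Delta$; consequently the configurations $\text{tbot}(\pazocal{T}_j)$ genuinely form a reduced $\textbf{M}$-computation with controlled history in the literal sense, and Lemma \ref{M controlled} applies directly.
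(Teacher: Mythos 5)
Your proposal follows essentially the same route as the paper: pass to the quasi-trapezium $\Gamma_2$ over $M_a(\textbf{M})$ via Lemma~\ref{quasi-trapezia}, rule out $a$-cells because a controlled history locks the `special' input sector, and then invoke Lemmas~\ref{computations are trapezia} and~\ref{M controlled}. The only gap in polish is in your final step: having shown there are no $(\theta,a)$-cells with $\pazocal{A}$-edges, you should still observe (as the paper does via Lemma~\ref{a-bands between a-cells}) that an $a$-cell in $\Gamma_2$ cannot have all its $\pazocal{A}$-edges on other $a$-cells or on $\partial\Gamma_2$, so none can be present at all.
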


\begin{proof}

Suppose there exists a disk or $a$-cell in $\Delta$. Then, Lemma \ref{quasi-trapezia} implies that there exists a quasi-trapezium $\Gamma$ over $M_a(\textbf{M})$ with the same base as $\Delta$. If there exists an $a$-cell in $\Gamma$, then Lemma \ref{a-bands between a-cells} implies that there exists an $a$-edge corresponding to the `special' input sector on either a $(\theta,q)$-cell of $\Gamma$ or on $\partial\Gamma$. However, since every rule of a controlled history locks the `special' input sector, this is impossible.

So, we can assume $\Gamma$ is a trapezium, so that it suffices to consider trapezia with a controlled history. The statement then follows from Lemmas \ref{M controlled} and \ref{computations are trapezia}. 

\end{proof}

The following definitions, as they were used in [19] and [26], are used to define a useful measure on minimal diagrams.

A trapezium $\Delta$ over the canonical presentation of $M(\textbf{M})$ is called \textit{standard} if its base is the standard base (or its inverse) and its history $H$ contains a subword $H_0^{\pm1}$ for some controlled history $H_0$. Naturally, a history $H$ (i.e $H\in F(\Theta^+)$) is called \textit{standard} if there exists a standard trapezium with history $H$. By Lemmas \ref{M controlled} and \ref{computations are trapezia}, a standard trapezium is uniquely determined by its history.

Suppose $\Pi$ is a disk contained in a minimal diagram with some $t$-spoke $\pazocal{B}$. Further, suppose there is a subband $\pazocal{C}$ of $\pazocal{B}$ that also starts on $\Pi$ and has standard history $H$ such that $\text{Lab}(\partial\Pi)$ is $H$-admissible. Then the $t$-band $\pazocal{C}$ is called a \textit{shaft} at $\Pi$. 

For $0\leq\lambda<1/2$, a shaft $\pazocal{C}$ at a disk $\Pi$ is a \textit{$\lambda$-shaft} at $\Pi$ if for every factorization $H\equiv H_1H_2H_3$ satisfying $\|H_1\|+\|H_3\|<\lambda\|H\|$, $H_2$ is still a standard history. Note also that shafts are 0-shafts.

The following adapts Lemma 7.11 of [26] to the case of potential $a$-cells.

\begin{lemma} \label{shafts}

Let $\Pi$ be a disk in a minimal diagram $\Delta$ and $\pazocal{C}$ be a $\lambda$-shaft at $\Pi$ with history $H$. Then $\pazocal{C}$ has no factorization $\pazocal{C}=\pazocal{C}_1\pazocal{C}_2\pazocal{C}_3$ such that

\begin{addmargin}[1em]{0em}

$(1)$ the sum of the lengths of $\pazocal{C}_1$ and $\pazocal{C}_3$ do not exceed $\lambda\|H\|$ and

$(2)$ $\Delta$ contains a quasi-trapezium $\Gamma$ such that the top (or bottom) label of $\Gamma$ has $L+1$ $t$-letters and $\pazocal{C}_2$ starts on the bottom and ends on the top of $\Gamma$.

\end{addmargin}

\end{lemma}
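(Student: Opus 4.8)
The statement is a "no return" type lemma: a $\lambda$-shaft cannot have its middle subband pass through a quasi-trapezium whose horizontal section carries $L+1$ $t$-letters. Intuitively, a shaft is (part of) a $t$-band at a disk $\Pi$ with standard history, and standard histories force the base of any associated trapezium to be (a subword of) the standard base $\{t(1)\}B_4(1)\dots\{t(L)\}B_4(L)$, which contains exactly $L$ occurrences of $t$-letters. If the middle piece $\pazocal{C}_2$ of the shaft starts on the bottom and ends on the top of a quasi-trapezium $\Gamma$ whose top or bottom label has $L+1$ $t$-letters, then $\Gamma$ has a base containing $L+1$ $t$-letters, and — once we reduce $\Gamma$ to a genuine trapezium via Lemma \ref{quasi-trapezia} — this base would have to be the base of a computation of $\textbf{M}$. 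The plan is to derive a contradiction from the fact that no admissible word of $\textbf{M}$ in the standard (cyclic) base can contain $L+1$ consecutive $t$-letters without "wrapping around," and that wrapping around is incompatible with the disk $\Pi$ sitting at one end of the shaft.

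First I would argue by contradiction: assume such a factorization $\pazocal{C}=\pazocal{C}_1\pazocal{C}_2\pazocal{C}_3$ and such a quasi-trapezium $\Gamma$ exist. Write $H\equiv H_1H_2H_3$ for the corresponding factorization of the history, where $H_j$ is the history of $\pazocal{C}_j$; by hypothesis $\|H_1\|+\|H_3\|<\lambda\|H\|$, so by the definition of $\lambda$-shaft, $H_2$ is a standard history. Next I would apply Lemma \ref{quasi-trapezia} to $\Gamma$ to obtain a minimal diagram $\Gamma'$ with the same contour labels, the same number of disks and $(\theta,q)$-cells, and an inner quasi-trapezium $\Gamma_2$ over $M_a(\textbf{M})$ whose standard factorization has the same side and top/bottom labels; the subband $\pazocal{C}_2$ still runs from the bottom of $\Gamma_2$ to its top (the subdiagrams $\Gamma_1,\Gamma_3$ being made of disks and $a$-cells only contribute $a$-letters to $\textbf{q}_1',\textbf{q}_2'$, not $t$-letters, so the count of $t$-letters on the top/bottom is unchanged). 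Then Lemma \ref{controlled has reduced} — applied using the fact that $H_2$ is standard, hence contains $H_0^{\pm1}$ for a controlled $H_0$, and that $\Gamma_2$ is a quasi-trapezium with this history — forces the base of $\Gamma_2$ (restricted to the subcomputation with history $H_0$) to be a reduced word that is a subword of the standard base of $\textbf{M}$.

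The core of the argument is then the count: since $\pazocal{C}_2$ is a $t$-band (being a subband of the $t$-spoke $\pazocal{B}$), it runs through every $\theta$-band of $\Gamma_2$ in a single $(\theta,t)$-cell, and the top/bottom of $\Gamma_2$ is an admissible word of $\textbf{M}$ with base containing $L+1$ $t$-letters. But the standard base of $\textbf{M}$ has exactly $L$ $t$-letters $\{t(1)\},\dots,\{t(L)\}$, so any admissible word whose base contains $L+1$ of them must, reading cyclically, revisit some $t(i)$ — i.e. the base is not a subword of the standard base but genuinely "wraps around." Since $\pazocal{C}_2$ is a subband of a $t$-spoke emanating from the disk $\Pi$ with $\text{Lab}(\partial\Pi)$ $H$-admissible, and $\text{Lab}(\partial\Pi)$ is a configuration in the standard (non-wrapping) base $\{t(1)\}B_4(1)\dots\{t(L)\}B_4(L)$ with exactly $L$ $t$-letters, the $t$-band $\pazocal{C}_2$ starting at $\Pi$ cannot have a base with $L+1$ $t$-letters — this is exactly where a $\theta$-band passing through $L+1$ $t$-spokes of disks (or one disk) would contradict the bound $l\le L/2$ of Lemma \ref{G_a theta-annuli}(1), or more directly contradicts reducedness of the base from Lemma \ref{controlled has reduced}. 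I would finish by spelling out this contradiction: the reduced base of the controlled part of $\Gamma_2$ cannot simultaneously be a subword of the standard base of $\textbf{M}$ and contain $L+1$ occurrences of $t$-letters.

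The main obstacle I anticipate is the bookkeeping in passing from the quasi-trapezium $\Gamma$ to the trapezium-like object to which Lemma \ref{controlled has reduced} applies: one must check carefully that the $a$-cells and disks permitted inside a quasi-trapezium (and inside the flanking pieces $\Gamma_1,\Gamma_3$ from Lemma \ref{quasi-trapezia}) do not create additional $t$-letters on the relevant sections, and that the hypothesis "$\pazocal{C}_2$ starts on the bottom and ends on the top of $\Gamma$" survives the surgery of Lemma \ref{quasi-trapezia} (disks may be transposed to above or below, but the $t$-band $\pazocal{C}_2$, being part of a shaft with standard history, should not be disturbed). A secondary point requiring care is confirming that a standard history of length $\geq$ the relevant threshold really does force exactly $L$ $t$-letters in the base — this is where the definition of controlled history and the structure of the cyclic machine $\textbf{M}$ (Lemma \ref{M controlled}) must be invoked precisely.
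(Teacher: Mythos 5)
Your core step fails. You claim that once the base of (the controlled part of) $\Gamma_2$ is reduced, it cannot carry $L+1$ $t$-letters, because the standard base of $\textbf{M}$ has only $L$ of them and ``wrapping around'' is impossible or incompatible with the disk $\Pi$. But $\textbf{M}$ is a \emph{cyclic} machine: as set up in Section 4.11, admissible words are explicitly allowed to have bases that wrap around the standard base, e.g. $\{t(1)\}B_4(1)\dots\{t(L)\}B_4(L)\{t(1)\}$, and such a base is reduced. So neither Lemma \ref{controlled has reduced} nor Lemma \ref{M controlled} excludes a top/bottom label of $\Gamma$ with $L+1$ $t$-letters, and Lemma \ref{G_a theta-annuli}(1) does not apply: the $t$-bands crossing $\Gamma$, other than $\pazocal{C}_2$, need not be spokes of $\Pi$ at all, let alone spokes crossed by a single $\theta$-band with no disks between them. (Also, a $t$-band does not have a ``base with $L+1$ $t$-letters''; the $t$-letters in question sit on the top/bottom of $\Gamma$, not on $\pazocal{C}_2$.) In fact the hypothesis of $L+1$ $t$-letters is not contradictory in itself; it is exactly what the actual proof exploits constructively: after trimming one end $t$-band, the base of $\Gamma$ is, up to cyclic permutation, the standard base, so $\Gamma$ simulates a computation of $\textbf{M}$ in the full cyclic base.

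What is missing is the entire second half of the argument, which is a surgery plus a cell count using minimality of $\Delta$. One uses that $\lab(\partial\Pi)$ is $H$-admissible (definition of shaft) to build, via Lemma \ref{computations are trapezia}, a trapezium $\Psi$ with bottom label $W\equiv\lab(\partial\Pi)$ and history $H$; Lemma \ref{M controlled} (uniqueness of configurations of a controlled computation given base and history) then forces the relevant sections of $\Psi$ and $\Gamma$ to agree up to the contents of the `special' input sector, so the mismatches are freely equal to $a$-relations and can be closed by $a$-cells. One then replaces the subdiagram formed by $\Pi$, the portion $\pazocal{C}_1\pazocal{C}_2$ of the shaft and $\Gamma$ by a single new disk (its label is an accepted configuration, hence a disk relation) together with a mirror trapezium of height at most $\lambda\|H\|$, i.e. at most $\lambda\|H\|L$ $(\theta,t)$-cells, whereas the excised subdiagram contains at least $\|H_2\|L\geq(1-\lambda)\|H\|L$ of them; since $\lambda<1/2$ this strictly decreases the number of $(\theta,t)$-cells while keeping the number of disks, contradicting minimality of $\Delta$. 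Your proposal never uses the $H$-admissibility of $\lab(\partial\Pi)$, the quantitative bound $\lambda<1/2$, or the minimality of $\Delta$ in any count, so the gap is not a bookkeeping issue but the absence of the argument that actually proves the lemma.
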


\begin{proof}

Assuming such a factorization exists, we first construct from $\Gamma$ the quasi-trapezium $\Gamma_2$ over $M_a(\textbf{M})$ obtained from Lemma \ref{quasi-trapezia}. Since $\Gamma_2$ and $\Gamma$ have the same base and $\pazocal{C}$ is left unchanged by the process, we merely assume for simplicity that $\Gamma$ is already a quasi-trapezium over $M_a(\textbf{M})$.

By the definition of a $\lambda$-shaft, $\Gamma$ contains a subdiagram $\Gamma'$ which is a quasi-trapezium over $M_a(\textbf{M})$ with controlled history. Lemma \ref{controlled has reduced} then implies that the base of $\Gamma'$ is a reduced word (in fact, the proof of Lemma \ref{controlled has reduced} implies that $\Gamma'$ is a trapezium), so that Lemma \ref{M_a no annuli} then implies that the base of $\Gamma$ is a reduced word. 

Perhaps chopping off the sides of the diagram, we now assume that the bottom (and top) label of $\Gamma$ starts and ends with one of its $L+1$ $t$-letters. Then, removing one of these end $t$-bands, we assume that the base (or its inverse) is, up to cyclic permutation, standard.

Let $\Gamma'_1$ (respectively $\Gamma'_2$) be the subquasi-trapezium over $M_a(\textbf{M})$ bounded by the bottom (respectively top) of $\Gamma$ and the bottom (respectively top) of $\Gamma'$.

By Lemma \ref{computations are trapezia} and the definition of shaft, there exists a trapezium $\Psi$ with bottom label $W\equiv\text{Lab}(\partial\Pi)$ and history $H$ (and so top label $V\equiv W\cdot H$). Let $\Psi'$ be the subtrapezium with the same controlled history as $\Gamma'$ and $\Psi'_1$ (respectively $\Psi'_2$) be the subtrapezium whose whose top (respectively bottom) is the bottom (respectively top) of $\Psi'$ and whose history is the same as $\Gamma'_1$ (respectively $\Gamma'_2$).

By Lemma \ref{M controlled}, the top labels (respectively bottom labels) of $\Psi'_1$ and $\Gamma'_1$ (respectively $\Psi'_2$ and $\Gamma'_2$) are equivalent. Further, the side labels of $\Psi_j'$ are the same as those of $\Gamma_j'$ for $j=1,2$.

Gluing these parts of the contours together, we obtain diagrams $\Lambda_1$ and $\Lambda_2$ over $M_a(\textbf{M})$ with contours labelled by $V_1(V_1')^{-1}$ and $V_2(V_2')^{-1}$, where $V_1$ and $V_2$ (respectively $V_1'$ and $V_2'$) are the bottom and top labels of $\Psi$ (respectively of $\Gamma$). 

Since $V_1'$ and $V_1$ ($V_2'$ and $V_2$) are admissible, any maximal $a$-band with one end on an $a$-cell must have its other end on a $(\theta,q)$-cell or on the contour corresponding to one of the words' `special' input sectors. So, $V_1'$ and $V_1$ ($V_2'$ and $V_2$) differ only by their projections to the `special' input sector.

As a result, the contour labels of the diagrams $\Lambda_1$ and $\Lambda_2$ are freely equal to $a$-relations, so that we can replace them with $a$-cells.

Now, assume $\lambda=0$. Then $\Gamma$ and $\Pi$ have a common edge, so that they form a subdiagram $\Delta'$. Perhaps adding two pairs of cancellable $a$-cells to $\Delta'$, we assume that $\Delta'$ has a subdiagram $\Delta''$ with contour label freely equal to $W\cdot H$ (and such that the complement of $\Delta''$ in $\Delta'$ is at most two $a$-cells). But since $W$ is accepted, there exists a disk relation corresponding to $W\cdot H$, so that we can replace $\Delta''$ with one disk. Excising $\Delta'$ from $\Delta$ and replacing it with this new diagram reduces the number of $(\theta,t)$-cells of $\Delta$ while leaving the number of disks fixed, yielding a contradiction.

We now assume the general case of $\lambda>0$. Let $\pazocal{C}=\pazocal{C}_1\pazocal{C}_2\pazocal{C}_3$, with $H=H_1H_2H_3$ such that $H_i$ is the history of $\pazocal{C}_i$. Consider the diagram $\Delta'=\Pi\cup \pazocal{C}_1\pazocal{C}_2\cup\Gamma$. 

Let $\Psi_1''$ be the subtrapezium of $\Psi$ with the same bottom label and top label $V_1$. Let $E$ be the diagram obtained from attaching the appropriate $a$-cell to the top of $\Psi_1''$ so that its top label is $V_1'$. Further, let $\exists$ be the mirror image of $E$ and $E\exists$ be the diagram fomed by gluing $\exists$ to $E$ along the bottom of $\Psi_1''$. Note that there are at most $\lambda\|H\|L$ $(\theta,t)$-cells in $\exists$. Then glue the top of $E\exists$ to the bottom of $\Gamma$ and along $\pazocal{C}_1$.

As in the previous case, we set $\Delta'$ as the subdiagram formed by $\Pi$, $\Gamma$, and $E$, and form the diagram $\Delta''$ made of one disk, $\exists$, and perhaps some new $a$-cells. Then $\Delta''$ has at most $\lambda\|H\|L$ $(\theta,t)$-cells, while $\Delta_1'$ least $\|H_2\|L\geq(1-\lambda)\|H\|L$. Since $\lambda<1/2$, it follows that $\Delta''$ has less $(\theta,t)$-cells than $\Delta'$ and an equal number of disks. Excising $\Delta'$ from $\Delta$ and replacing it with $\Delta''$ then contradicts the minimality of $\Delta$.

\end{proof}

\medskip

%%%%%%%%%%%%%%%%%%%%%%%%%%%%%%%%%%%%%%%%%%%%%%%%%%%%%%%%%%%%%%%%%

\subsection{Designs on a Disk} \

In this section, we recall several definitions from [19], yielding a measure on minimal diagrams supplementary to weight that will prove useful in future arguments.

Let $\pazocal{D}$ be a disk in the Euclidean plane, $\textbf{T}$ be a finite set of disjoint chords, and $\textbf{Q}$ be a finite set of disjoint simple curves in $\pazocal{D}$, called \textit{arcs} (as to differentiate them from the chords). 

Assume that arcs belong to the open disk $\pazocal{D}^\circ$ and that each chord crosses any arc transversely and at most one, with the intersection not coming at either of the arc's endpoints.

With these assumptions, the pair $(\textbf{T},\textbf{Q})$ is called a \textit{design} on the disk.

The length of an arc $C\in\textbf{Q}$, denoted $|C|$, is the number of chords crossing it. \textit{Subarcs} are defined in the natural way, so that the inequality $|D|\leq|C|$ is clear for $D$ a subarc of $C$.

An arc $C_1$ is \textit{parallel} to an arc $C_2$, denoted $C_1 \ \| \ C_2$, if every chord crossing $C_1$ also crosses $C_2$. Note that this relation is transitive but not symmetric.

For $\lambda\in(0,1)$ and $n$ a positive integer (note that $\lambda$ and $n$ are here not as specified in other sections), we say that a design $(\textbf{T},\textbf{Q})$ satisfies property $P(\lambda,n)$ if for any collection of $n$ distinct arcs $C_1,\dots,C_n$, there are no subarcs $D_1,\dots,D_n$, respectively, such that $|D_i|>(1-\lambda)|C_i|$ for all $i$ and $D_1 \ \| \ D_2 \ \| \dots \| \ D_n$.

For a design $(\textbf{T},\textbf{Q})$, define the length of $\textbf{Q}$, $\ell(\textbf{Q})$, to be $$\ell(\textbf{Q})=\sum_{C\in\textbf{Q}}|C|$$

Every chord $T\in\textbf{T}$ divides $\pazocal{D}$ into two half-disks. If one of these half-disks contains no other chords, then the chord $T$ is called \textit{peripheral} and the relevant half-disk $O_T$ (not including $T$) a \textit{peripheral half-disk}.

For $C\in\textbf{Q}$, an arc $D$ is called an \textit{extension} of $C$ if $C$ is a subarc of $D$. Note that extensions need not be elements of $\textbf{Q}$; however, we only consider extensions of elements of $\textbf{Q}$ that, upon replacing the arcs with that extensions, yield a new design $(\textbf{T},\textbf{Q}')$. An arc $C\in\textbf{Q}$ is \textit{maximal} if there is no extension $D$ of $C$ such that $|D|>|C|$.

\medskip

\begin{lemma} \label{basic design}

\textit{(Lemma 8.3 of [19])} Let $(\textbf{T},\textbf{Q})$ be a design with $\#\textbf{T}\geq1$. Then every arc $C\in\textbf{Q}$ has a maximal extension $D$ ending in two different peripheral half-disks and so that the set $\textbf{Q}'$ of such extensions yields a design $(\textbf{T},\textbf{Q}')$.

\end{lemma}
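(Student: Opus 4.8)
\textbf{Proof plan for Lemma \ref{basic design}.}
The plan is to prove the statement by a double induction: an outer induction on $\#\textbf{T}$ (the number of chords), and, for fixed $\#\textbf{T}$, an inner consideration that handles a single arc $C\in\textbf{Q}$ at a time. First I would dispose of the degenerate situation where $C$ crosses no chord: then $C$ itself, or any simple extension of $C$ that stays in $\pazocal{D}^\circ$ and crosses no chord, is already maximal, and one can push its two endpoints into (the interiors of) peripheral half-disks without creating any new transverse intersection — every chord is peripheral once there is only one, and in general at least two peripheral chords exist whenever $\#\textbf{T}\geq1$ (a standard fact about a finite family of disjoint chords of a disk: the chord set, ordered by the containment of half-disks, has at least two minimal elements unless $\#\textbf{T}=1$, in which case the single chord is peripheral on both sides). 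So the endpoints can always be routed into peripheral half-disks.

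Next, for an arc $C$ that does cross at least one chord, I would look at the two ``ends'' of $C$ separately. Walking outward from one endpoint of $C$, let $T$ be the last chord crossed by $C$ before that endpoint; $T$ cuts off a half-disk $\pazocal{D}_T$ on the side containing the endpoint. If $\pazocal{D}_T$ is a peripheral half-disk we are done on that end. If not, $\pazocal{D}_T$ contains at least one further chord, and I would extend $C$ by prolonging it inside $\pazocal{D}_T$ so that it crosses one more chord of $\pazocal{D}_T$ transversely and exactly once (possible since the chords in $\pazocal{D}_T$ are disjoint and we may choose a generic prolongation avoiding their endpoints), strictly increasing $|C|$. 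Iterating this on each end, and using that $\#\textbf{T}$ is finite so $|C|$ is bounded, the process terminates in a maximal extension $D$ whose two ends lie in peripheral half-disks. The transversality-and-at-most-once hypothesis is preserved at every step by construction, so $D$ together with the other arcs still forms a design.

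Finally I would address the simultaneous requirement: replacing \emph{all} arcs of $\textbf{Q}$ by such extensions must again yield a design, i.e.\ the extended arcs must remain pairwise disjoint. Here I would perform the extensions one arc at a time inside the relevant peripheral (and intermediate) half-disks, at each stage choosing the prolongation to be a short generic curve hugging the previously-extended arcs without meeting them — since arcs live in the open disk and there are only finitely many of them, there is always room, exactly as in the construction of $\textbf{Q}'$ in [19, Lemma 8.3]. This disjointness bookkeeping, rather than the existence of a single maximal extension, is the main obstacle: one has to check that prolonging one arc into a half-disk does not block the prolongation of another arc later, which is why the extensions are routed close to the chords and to each other rather than spread across the half-disk. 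Since the proof is essentially that of [19, Lemma 8.3], I would cite that argument and only indicate the (routine) modifications; no new idea is needed beyond the observation that disjointness can be maintained greedily.
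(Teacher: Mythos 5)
The paper itself does not prove this lemma; it quotes it from [19, Lemma~8.3] and uses it as a black box. So there is no in-text argument to compare against, and what follows is an assessment of your plan on its own merits.

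The heart of your argument, the iteration for an arc that already crosses at least one chord, is sound. Extending one end of $C$ across a further chord of the current non-peripheral half-disk strictly increases $|C|$ and must terminate because $|C|\le\#\textbf{T}<\infty$; once an end lies in a peripheral half-disk it cannot cross any more chords without crossing the bounding chord a second time, which the design axioms forbid, so the result is genuinely maximal. You leave implicit, but it is automatic, that the two terminal peripheral half-disks are \emph{distinct}: the two ends of $D$ are separated by the first chord $D$ crosses, whereas each peripheral half-disk lies entirely on one side of a single chord, and peripheral half-disks cut off by disjoint chords are pairwise disjoint.

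Your ``degenerate'' case, however, is wrong, and not harmlessly so. If $|C|=0$ and $\#\textbf{T}\ge1$, then $C$ is \emph{not} already maximal: the other arcs are finitely many disjoint open simple curves in $\pazocal{D}^\circ$, so their complement in the disk is connected, and one can prolong $C$ around the free endpoints of any blocking arcs until it crosses some chord, strictly increasing $|C|$. Conversely, the remedy you propose --- pushing both endpoints ``without creating any new transverse intersection'' --- leaves both endpoints in the same connected region of $\pazocal{D}\setminus\bigcup\textbf{T}$, and hence in at most one peripheral half-disk, not two \emph{different} ones as the lemma demands. The fix is simply to delete this case as a special case: extend $C$ once to cross a first chord and then run your iteration. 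The disjointness bookkeeping for $\textbf{Q}'$ you rightly flag as the delicate point; ``hugging previously-extended arcs'' can be made rigorous by routing all prolongations in a thin collar near the chord being crossed, away from chord endpoints and through a fresh crossing point for each arc, so that earlier extensions never seal a later arc off from the chord it needs to reach, but as written this is a gesture rather than an argument. Since your stated intention is to defer to [19] for those details, that is acceptable for a proof sketch, but the degenerate-case misstep should be corrected before the sketch can be trusted.
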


\begin{lemma} \label{design}

\textit{(Lemma 8.2 of [19])} There is a constant $c=c(\lambda,n)$ such that for any design $(\textbf{T},\textbf{Q})$ satisfying property $P(\lambda,n)$, the inequality $\ell(\textbf{Q})\leq c(\#\textbf{T})$ holds.

\end{lemma}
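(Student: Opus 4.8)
The plan is to argue by induction on $\#\textbf{T}$. When $\#\textbf{T}=0$ every arc has length $0$, so $\ell(\textbf{Q})=0$ and there is nothing to prove; so suppose $\#\textbf{T}\geq 1$ and that the statement holds for all designs with fewer chords. First I would normalize the design. Arcs of length $0$ may be deleted, since they contribute nothing to $\ell(\textbf{Q})$ and can never take part in a configuration forbidden by $P(\lambda,n)$ (a subarc would need positive length exceeding $(1-\lambda)\cdot 0$). Since $\#\textbf{T}\geq 1$, Lemma \ref{basic design} then lets me replace each remaining arc by a maximal extension ending in two peripheral half-disks; this does not decrease $\ell(\textbf{Q})$, so it suffices to bound $\ell$ for the normalized design $(\textbf{T},\textbf{Q}')$ — with the caveat, returned to below, that one must check the extension can be performed without destroying $P(\lambda,n)$ (maximality of the extensions is the point that makes this controllable).

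Next I would fix a peripheral chord $T$ (one exists once $\#\textbf{T}\geq 1$) with peripheral half-disk $O_T$, which by definition contains no chord. The heart of the argument is the claim that the number $k$ of arcs of $\textbf{Q}'$ crossing $T$ is bounded by a constant $K=K(\lambda,n)$. The shape of this step: each such arc meets $T$ exactly once and cannot re-exit $O_T$, so it must terminate inside $O_T$; listing the crossing points $p_1,\dots,p_k$ in order along $T$ with corresponding arcs $C_1,\dots,C_k$, the segments of $T$ between consecutive $p_i$ carry no chord crossings. Using planarity of the configuration just outside $O_T$ together with the fact that each chord crosses each arc at most once and the arcs are pairwise disjoint, one shows that a large $k$ forces some $n$ of the $C_i$ to possess subarcs $D_1\parallel\dots\parallel D_n$ with $|D_j|>(1-\lambda)|C_j|$, contradicting $P(\lambda,n)$. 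I expect this extraction of a long parallel chain from many arcs through a single peripheral chord to be the main obstacle; everything else is bookkeeping around it.

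Granting the claim, I would delete $T$. In $(\textbf{T}\setminus\{T\},\textbf{Q}')$ the length of every arc drops by at most one, and by exactly one precisely for the at most $K$ arcs crossing $T$, so $\ell_{\textbf{T}}(\textbf{Q}')\leq \ell_{\textbf{T}\setminus\{T\}}(\textbf{Q}')+K$. The inductive hypothesis applied to the design with $\#\textbf{T}-1$ chords gives $\ell_{\textbf{T}\setminus\{T\}}(\textbf{Q}')\leq c(\#\textbf{T}-1)$, so $\ell_{\textbf{T}}(\textbf{Q}')\leq c(\#\textbf{T}-1)+K\leq c\,\#\textbf{T}$ as soon as $c\geq K$; combined with $\ell(\textbf{Q})\leq\ell(\textbf{Q}')$ this closes the induction. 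One subtlety must be handled here: removing a chord can create new parallelisms among arcs, which could in principle manufacture a long parallel chain absent before deletion, so before invoking the inductive hypothesis one should also truncate each arc that crossed $T$ on its $O_T$-side (equivalently, work inside the complement of a neighbourhood of $O_T$), which deletes exactly the crossings responsible for the spurious chains; checking that this truncation preserves $P(\lambda,n)$ is routine but needs to be written out, and is best folded into the normalization step together with the extension via Lemma \ref{basic design}. The only genuinely hard point remains the bound $k\leq K(\lambda,n)$ on the number of arcs through a peripheral chord.
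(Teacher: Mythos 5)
Your induction hinges on the claim that the number $k$ of arcs crossing an arbitrary fixed peripheral chord $T$ is bounded by a constant $K(\lambda,n)$, and that a large $k$ forces a forbidden parallel chain. This claim is false. Take $m$ pairwise non-crossing chords $T_1,\dots,T_m$ so that the face graph is a star: a central face and $m$ peripheral half-disks $O_1,\dots,O_m$, with $O_i$ cut off by $T_i$. For each $j=2,\dots,m$ place $n-1$ pairwise disjoint arcs from $O_1$ to $O_j$; each such arc crosses exactly $\{T_1,T_j\}$, so has length $2$, and this fan of $(n-1)(m-1)$ arcs through $T_1$ is planar. For any $\lambda<1/2$ the condition $|D_i|>(1-\lambda)\cdot 2$ forces $|D_i|=2$, hence $D_i=C_i$, and a chain $D_1\parallel\cdots\parallel D_n$ would force all $n$ crossing sets $\{T_1,T_{j_i}\}$ to coincide -- but each crossing set is realized by only $n-1$ arcs. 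So $P(\lambda,n)$ holds, $\ell(\textbf{Q})=2(n-1)(m-1)\le 2(n-1)m$ is consistent with the lemma, yet $(n-1)(m-1)$ arcs cross the single peripheral chord $T_1$, which is unbounded in $m$. Thus the ``heart of the argument'' as you state it cannot be carried out: a large $k$ does \emph{not} produce $n$ arcs with long parallel subarcs, because the arcs through $T$ can be short and pairwise non-parallel. At best one could hope that \emph{some} peripheral chord has few arcs through it, but that is a different (and unproven) statement, and your write-up explicitly fixes an arbitrary $T$.

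The ``routine'' subtlety you defer is also a genuine problem, not bookkeeping. In the same star example, if one removes, say, $T_2$ (a peripheral chord with only $n-1$ arcs through it), the $n-1$ former length-$2$ arcs to $O_2$ become length-$1$ arcs crossing only $T_1$. These length-$1$ arcs are mutually parallel, and together with any single surviving length-$2$ arc they yield a chain $D_1\parallel\cdots\parallel D_n$ with $|D_i|>(1-\lambda)|C_i|$, so $P(\lambda,n)$ fails for the reduced design. Your proposed truncation (cutting off the arcs on the $O_T$ side) does not remove this: the offending length-$1$ arcs still cross $T_1$. So one cannot simply invoke the inductive hypothesis after deleting a chord. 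The fix is not local trimming but a different accounting of how $\ell$ drops (for instance, deleting the arcs through $T$ entirely and charging their full length elsewhere), and making that work is where the real content of the proof lies; the paper itself does not reprove the statement but cites Lemma 8.2 of [19], whose argument is organized around a global/amortized count rather than a one-chord-at-a-time peeling.
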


Now, consider $\lambda\in[0,1/2)$ as in the parameters of Section 2.4. For every $t$-spoke $\pazocal{Q}$ of a minimal diagram $\Delta$, choose the $\lambda$-shaft of maximal length that is a subband of $\pazocal{Q}$. Note that if $\pazocal{Q}$ connects two disks, then it can produce two maximal maximal $\lambda$-shafts, one for each disk. Define $\sigma_\lambda(\Delta)$ as the sum of the lengths of all $\lambda$-shafts.

If $\Delta$ is a minimal diagram over $G_a(\textbf{M})$, identify $\Delta$ as a disk and construct the following design: Let the middle lines of maximal $\theta$-bands be the chords and the middle lines of maximal $\lambda$-shafts be the arcs. 

Note that there is a minor subtlety in this construction: If a maximal $t$-spoke connects two disks, then it may correspond to two distinct $\lambda$-shafts, and these $\lambda$-shafts may overlap; however, this issue can be remedied by `making room' in the spoke for both arcs to fit and be disjoint. 

Then, the length $|C|$ of an arc is the number of cells in the $\lambda$-shaft and $\#\textbf{T}\leq|\partial\Delta|/2$ since every maximal $\theta$-band ends twice on $\partial\Delta$.

\begin{lemma} \label{G_a design}

\textit{(Lemma 8.5 of [19])} If $\Delta$ is a minimal diagram over $G_a(\textbf{M})$, then the design constructed above satisfies $P(\lambda,2L+1)$. In particular, there is a constant $c$ depending on $\lambda$ and $L$ such that $\sigma_\lambda(\Delta)\leq c|\partial\Delta|$.

\end{lemma}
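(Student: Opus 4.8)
The plan is to verify the two properties $P(\lambda,2L+1)$ for the design built from $\Delta$ and then invoke Lemma \ref{design} to conclude. The design here has chords given by the middle lines of maximal $\theta$-bands and arcs given by the middle lines of maximal $\lambda$-shafts; the compatibility conditions of a design (chords disjoint, arcs disjoint after the ``making room'' fix, each chord crossing an arc at most once not at an endpoint) follow from Lemma \ref{M_a no annuli} / Lemma \ref{G_a theta-annuli}(3) together with the fact that a maximal $\theta$-band crosses a $q$-band in at most one $(\theta,q)$-cell. So the only substantive thing to prove is property $P(\lambda,2L+1)$, and then the bound $\sigma_\lambda(\Delta)=\ell(\textbf Q)\leq c\cdot\#\textbf T\leq c|\partial\Delta|/2$ is immediate from Lemma \ref{design} and the observation that $\#\textbf T\leq|\partial\Delta|/2$.

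To establish $P(\lambda,2L+1)$, suppose for contradiction that there are $2L+1$ distinct arcs $C_1,\dots,C_{2L+1}$ (middle lines of $\lambda$-shafts $\pazocal C_1,\dots,\pazocal C_{2L+1}$ at disks $\Pi_1,\dots,\Pi_{2L+1}$, not necessarily distinct) with subarcs $D_i$ satisfying $|D_i|>(1-\lambda)|C_i|$ and $D_1\ \|\ D_2\ \|\ \cdots\ \|\ D_{2L+1}$. Each $D_i$ corresponds to a subband $\pazocal C_i'$ of $\pazocal C_i$ whose history, by the definition of a $\lambda$-shaft and the length condition $|D_i|>(1-\lambda)|C_i|$, is still a standard history; in particular $\pazocal C_i'$ starts and ends with a $\theta$-band that, being a chord crossing all of $D_1,\dots,D_{2L+1}$, also crosses every other $\pazocal C_j'$. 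The parallelism $D_1\ \|\ \cdots\ \|\ D_{2L+1}$ says precisely that the set of chords crossing $D_1$ is contained in that crossing $D_2$, etc., so there is a common ``bottom'' $\theta$-band $\pazocal T_{\mathrm{bot}}$ and a common ``top'' $\theta$-band $\pazocal T_{\mathrm{top}}$ that cross all $2L+1$ subbands. The region between $\pazocal T_{\mathrm{bot}}$ and $\pazocal T_{\mathrm{top}}$ (appropriately trimmed on the sides by the $q$-bands containing the outer $\pazocal C_i'$) is then a quasi-trapezium $\Gamma$ whose top label has at least $2L+1$ many $t$-letters (one from each $\pazocal C_i'$, since consecutive $t$-letters repeat with period $L$ in the base, $2L+1$ such $t$-bands force the top label to contain more than $L$ distinct $t$-letters — in fact at least $L+1$), and through which one of the $\lambda$-shafts, say $\pazocal C_1'$, runs from bottom to top.

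The contradiction is then delivered by Lemma \ref{shafts}: taking $\pazocal C_1=\pazocal C_1^{(1)}\pazocal C_1^{(2)}\pazocal C_1^{(3)}$ where $\pazocal C_1^{(2)}=\pazocal C_1'$, the length hypothesis $|D_1|>(1-\lambda)|C_1|$ gives that $\pazocal C_1^{(1)}$ and $\pazocal C_1^{(3)}$ together have length less than $\lambda\|H\|$, and $\Gamma$ is exactly a quasi-trapezium whose top (or bottom) label has the required number of $t$-letters with $\pazocal C_1^{(2)}$ running from its bottom to its top — precisely the forbidden configuration of Lemma \ref{shafts}. (The bookkeeping that ``$2L+1$ parallel $\lambda$-shafts force at least $L+1$ $t$-letters in the cross-cut'' is where one must be slightly careful: distinct arcs may be shafts at the same disk and the base is only pseudo-periodic of period $L$, so one needs that among $2L+1$ distinct $t$-bands crossing a common $\theta$-band at least $L+1$ carry pairwise distinct $t$-letters; this follows because at most two $t$-spokes of a single disk can correspond to the same $t$-letter is false — rather, distinct $t$-spokes of one disk carry distinct $t$-letters, and shafts at distinct disks crossing a common $\theta$-band contribute further $t$-letters, so $2L+1$ of them exceed $L$.)

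The main obstacle I expect is this combinatorial step: translating ``$2L+1$ parallel $\lambda$-shafts'' into ``a quasi-trapezium whose boundary label genuinely contains $L+1$ $t$-letters,'' since a $\lambda$-shaft is a $t$-band and several of the $C_i$ could be shafts at the same disk or even subbands of the same maximal $t$-spoke. One resolves it by noting that the $2L+1$ shafts, being distinct arcs of the design, lie in $2L+1$ distinct maximal $q$-bands (the ``making room'' convention ensures even two shafts in one spoke are separated), each of these $q$-bands is a $t$-band carrying a single $t$-letter $t(j)$, and the cross-cutting $\theta$-band $\pazocal T_{\mathrm{bot}}$ meets all of them in order; reading the base of $\pazocal T_{\mathrm{bot}}$ we see at least $2L+1>L+1$ $t$-letters counted with multiplicity, and since the base is cyclic of period $L$ with the standard base appearing, more than $L$ of them forces the sub-base strictly between two of the occurrences to contain a full period, hence at least $L+1$ distinct $t$-letters appear in the trimmed top of $\Gamma$. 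With that in hand, Lemmas \ref{shafts}, \ref{quasi-trapezia}, and \ref{controlled has reduced} close the argument exactly as in Lemma 8.5 of [19], and Lemma \ref{design} yields $\sigma_\lambda(\Delta)\leq c|\partial\Delta|$.
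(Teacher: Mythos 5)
The paper does not give its own proof of this lemma — it is cited verbatim as Lemma 8.5 of [19] — so the comparison is against that argument. Your reconstruction follows the same route as [19]: $2L+1$ parallel subarcs force a common pair of extremal $\theta$-bands crossing all of them; the enclosed region, trimmed by the outermost $t$-bands, is a quasi-trapezium $\Gamma$ whose bottom and top labels have at least $2L+1$ $t$-edges; one of the $\lambda$-shafts, say $\pazocal C_1$, has a subband $\pazocal C_1'$ running from bottom to top of $\Gamma$ with the complement of length less than $\lambda\|H\|$; and Lemma \ref{shafts} then forbids exactly this configuration. Applying Lemma \ref{design} and $\#\textbf T\leq|\partial\Delta|/2$ closes the argument. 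The overall structure is right and matches [19].

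Two small points on the bookkeeping you flagged yourself. First, the phrase ``at least $L+1$ \emph{distinct} $t$-letters'' cannot be literally correct, since $\textbf{M}$ has only $L$ distinct $t$-letters $t(1),\dots,t(L)$. What is actually needed — and what the proof of Lemma \ref{shafts} uses — is that the top label of $\Gamma$ contains $L+1$ consecutive $t$-\emph{edges} whose $t$-letters traverse a complete cyclic period of the standard base (so that the leftmost and rightmost carry the same $t$-letter, and the $L$ intermediate bases form the standard base up to cyclic shift after removing one end $t$-band). That is certainly provided by $2L+1$ $t$-edges once the base is known to be reduced, which it is because $\Gamma$ contains a quasi-trapezium over $M_a(\textbf M)$ with controlled history and Lemmas \ref{controlled has reduced}, \ref{quasi-trapezia} then apply — but this reducedness is the load-bearing fact and should be said explicitly, rather than appealing to ``the base is cyclic of period $L$'' as though it were automatic. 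Second, the assertion that the $2L+1$ arcs lie in $2L+1$ distinct maximal $q$-bands needs a word of justification: two $\lambda$-shafts living in one $t$-spoke that joins two disks are made disjoint by the ``making room'' device, but the reason they cannot both appear among the parallel family is that a chord (a $\theta$-band) can meet that spoke in only one cell, so it cannot cross both of the split arcs; since every chord crossing $D_1$ must cross all $D_i$, no two $D_i$ can share a $q$-band. Neither issue damages the argument, but both deserve a sentence.
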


\medskip

%%%%%%%%%%%%%%%%%%%%%%%%%%%%%%%%%%%%%%%%%%%%%%%%%%%%%%%%%%%%%%%%%

\section{Upper bound on the weight of minimal diagrams}

\subsection{Weakly minimal diagrams} \

The goal in this section is to show that $\text{wt}(\Delta)\leq N_3|\partial\Delta|^3$ for all minimal diagrams $\Delta$, where $N_3$ is the parameter as assigned in Section 3.3. To prove this property, we consider a larger class of diagrams over the disk presentation of $G_a(\textbf{M})$ called weakly minimal.

First, we define a \textit{diminished} diagram over the disk presentation similar to how we define a minimal diagram, but with the first coordinate of the diagram's signature omitted. In other words, a diagram is diminished if it has the minimal number of $(\theta,t)$-cells amongst diagrams of the same contour label, the minimal number of $(\theta,q)$-cells amongst diagrams with the same contour label and same number of $(\theta,t)$-cells, etc; however, a diminished diagram may contain more disks than another diagram with the same contour label. Note that this means that a minimal diagram need not be diminished, though a diminished diagram over $M_a(\textbf{M})$ is necessarily minimal.

Let $\pazocal{C}$ be a cutting $q$-band of a diminished diagram $\Delta$ containing disks, i.e $\pazocal{C}$ ends on $\partial\Delta$ twice. Then $\pazocal{C}$ is called a \textit{stem band} if it is either a rim band of $\Delta$ or both components of $\Delta\setminus\pazocal{C}$ contain disks. The unique maximal subdiagram of $\Delta$ satisfying the property that every cutting $q$-band is a stem band is called the \textit{stem} of $\Delta$ and denoted $\Delta^*$.

If $\pazocal{C}$ is a cutting $q$-band that is not a stem band, then a component $\Gamma$ of $\Delta\setminus\pazocal{C}$ contains no disks. In this situation, the cells of $\Gamma$ are called \textit{crown} cells. Note that one can construct $\Delta^*$ from $\Delta$ simply by cutting off all of the crown cells.

Finally, a diminished diagram $\Delta$ is said to be \textit{weakly minimal} if its stem $\Delta^*$ is a minimal diagram. 

\begin{lemma} \label{weakly minimal} \textit{(Lemma 7.17 of [26])}

$(a)$ If $\Delta_1$ is a subdiagram of a weakly minimal diagram $\Delta$, then $\Delta_1$ is weakly minimal and $\Delta_1^*\subset\Delta^*$

$(b)$ In the same setting as $(a)$, $\sigma_\lambda(\Delta_1^*)\leq\sigma_\lambda(\Delta^*)$

$(c)$ There exists a constant $c$ dependent on $\lambda$ and $L$ such that $\sigma_\lambda(\Delta^*)\leq c|\partial\Delta|$ for every weakly minimal diagram $\Delta$

$(d)$ Let $\pazocal{C}$ be a cutting $q$-band of a diminished diagram $\Delta$ and $\Delta_1$, $\Delta_2$ be the components of $\Delta\setminus\pazocal{C}$. Suppose $\Delta_1\cup\pazocal{C}$ is a diminished diagram without disks (i.e a minimal diagram over $M_a(\textbf{M})$) and $\pazocal{C}\cup\Delta_2$ is a weakly minimal diagram. Then $\Delta$ is weakly minimal

$(e)$ A weakly minimal diagram $\Delta$ contains no $\theta$-annuli or $(\theta,q)$-annuli

\end{lemma}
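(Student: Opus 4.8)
\textbf{Proof plan for Lemma \ref{weakly minimal}.}
The plan is to prove the five statements essentially in the order listed, since $(b)$ and $(c)$ lean on $(a)$, statement $(d)$ is the gluing criterion used to verify weak minimality inductively, and $(e)$ follows from the structure theory of the stem together with Lemma \ref{M_a no annuli}. For part $(a)$, the key observation is that the stem operation is ``local'': cutting off crown cells of a subdiagram $\Delta_1$ only removes cells that were already crown cells (or would-be crown cells) of $\Delta$, so $\Delta_1^*$ is a subdiagram of $\Delta^*$. One must check that a diminished subdiagram of a diminished diagram is diminished (immediate, as in the minimal case) and that the stem of $\Delta_1$ is again minimal: indeed $\Delta_1^*\subset\Delta^*$ and a subdiagram of a minimal diagram is minimal, so $\Delta_1^*$ is minimal, hence $\Delta_1$ is weakly minimal. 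Part $(b)$ is then immediate: every $\lambda$-shaft of $\Delta_1^*$ is a $\lambda$-shaft (or subband of one) of the larger minimal diagram $\Delta^*$ by Lemma \ref{shafts}-type reasoning, so the sum of their lengths cannot exceed $\sigma_\lambda(\Delta^*)$; one should be slightly careful that maximality of a $\lambda$-shaft inside $\Delta_1^*$ does not prevent it from extending inside $\Delta^*$, but in that case it is still dominated by the maximal $\lambda$-shaft of $\Delta^*$ through its $\theta$-band count.

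For part $(c)$, the idea is to run the design argument of Section 12.5 on the stem $\Delta^*$ rather than on all of $\Delta$: since $\Delta^*$ is minimal, Lemma \ref{G_a design} gives $\sigma_\lambda(\Delta^*)\leq c'|\partial\Delta^*|$ for a constant $c'=c'(\lambda,L)$, and then I would bound $|\partial\Delta^*|$ by $|\partial\Delta|$ up to a constant. The latter bound holds because $\Delta^*$ is obtained from $\Delta$ by successively cutting off crown subdiagrams along cutting $q$-bands, and each such cut replaces a portion of the boundary by the top/bottom of a $q$-band whose length is controlled by the number of $\theta$-edges it meets on $\partial\Delta$ (here Lemma \ref{lengths}$(b)$ and the no-annuli results are used, exactly as in Case 1 of Lemma \ref{M_a cubic}); summing over disjoint cuts shows $|\partial\Delta^*|\le|\partial\Delta|+(\text{const})$, or more simply $|\partial\Delta^*|\leq c''|\partial\Delta|$. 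Combining gives $\sigma_\lambda(\Delta^*)\le c|\partial\Delta|$ with $c=c'c''$.

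Part $(d)$ is the gluing lemma. Suppose $\pazocal{C}$ cuts $\Delta$ into $\Delta_1$ and $\Delta_2$ with $\Delta_1\cup\pazocal{C}$ minimal over $M_a(\textbf{M})$ (no disks) and $\pazocal{C}\cup\Delta_2$ weakly minimal. All disks of $\Delta$ lie in $\Delta_2$, so $\pazocal{C}$ is \emph{not} a stem band of $\Delta$ unless it is a rim band; in either case the crown cells of $\Delta$ include all of $\Delta_1$, and the stem of $\Delta$ is contained in $\pazocal{C}\cup\Delta_2$. More precisely $\Delta^*=(\pazocal{C}\cup\Delta_2)^*$, which is minimal by hypothesis, so $\Delta$ is weakly minimal once we know $\Delta$ itself is diminished; diminishedness of $\Delta$ follows from diminishedness of the two pieces together with the fact that any diagram with smaller signature-tail and the same contour would split along $\pazocal{C}$ into pieces contradicting minimality of $\Delta_1\cup\pazocal{C}$ or diminishedness of $\pazocal{C}\cup\Delta_2$. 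The one point requiring care is that cutting and regluing along $\pazocal{C}$ does not create cancellable cells or reduce the signature, which is where the hypothesis that $\pazocal{C}$ is a genuine cutting $q$-band (so its two sides carry the same $q$-letter sequence) is used.

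Finally, part $(e)$: a $\theta$-annulus or $(\theta,q)$-annulus in $\Delta$ would, after passing to the minimal subdiagram it bounds, be a subdiagram of $\Delta$, hence weakly minimal by $(a)$; but such an annulus contains no disks (a $\theta$-band meets no hub, and the region it bounds together with it has boundary label in $F(Y\cup Q)$ with the relevant structure), so the minimal subdiagram containing it is a diagram over $M_a(\textbf{M})$, and Lemma \ref{M_a no annuli} forbids $\theta$-annuli and $(\theta,q)$-annuli there. (Alternatively, invoke Lemma \ref{G_a theta-annuli}$(3)$--$(4)$ applied to the minimal subdiagram.) I expect the main obstacle to be part $(c)$, specifically the bookkeeping that shows $|\partial\Delta^*|$ is linearly controlled by $|\partial\Delta|$ while simultaneously ensuring the design on $\Delta^*$ really does satisfy $P(\lambda,2L+1)$ — i.e.\ that chopping to the stem has not destroyed the shaft/$\theta$-band incidence structure that Lemma \ref{G_a design} depends on; everything else is a fairly direct adaptation of the minimal-diagram arguments.
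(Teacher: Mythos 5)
The main gap is in part $(e)$. You assert that the minimal subdiagram $\Delta_S$ bounded by a $\theta$-annulus or $(\theta,q)$-annulus ``contains no disks,'' justifying this by noting that the annulus itself contains no disks (true) and that $\partial\Delta_S$ has label in $F(Y\cup Q)$ (also true). But neither observation implies the \emph{interior} of the annulus is diskless: a single disk is itself a subdiagram with boundary label in $F(Y\cup Q)$ that obviously contains a disk, so ``no $\theta$-letters on the boundary'' does not preclude disks inside. Consequently the conclusion that $\Delta_S$ is a diagram over $M_a(\textbf{M})$, and hence that Lemma \ref{M_a no annuli} applies, does not follow. You need to handle the case of disks inside the annulus separately — e.g.\ by counting: if $\Delta_S$ has $l\geq1$ disks then, passing to the stem $\Delta_S^*$ (minimal by your part $(a)$), Lemma \ref{number of q-edges} forces $\partial\Delta_S$ to carry many $q$-edges, and hence $S$ to have many $(\theta,q)$-cells each crossed by a $t$-spoke of a disk; comparing this against the bound of Lemma \ref{G_a theta-annuli}(1) on how many $t$-spokes of a single disk a $\theta$-band can cross yields the contradiction. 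Some argument of this kind is needed; as written the diskless claim is simply wrong as stated.

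There is also a gap in $(a)$, which matters because $(b)$ and $(e)$ lean on it. The containment $\Delta_1^*\subset\Delta^*$ is justified only by ``the stem operation is local,'' but cutting $q$-bands of $\Delta_1$ do not correspond cleanly to cutting $q$-bands of $\Delta$: a $q$-band with both ends on $\partial\Delta_1$ may, when extended maximally in $\Delta$, end on a disk or deep inside $\Delta$, and a diskless component of $\Delta_1\setminus\pazocal{C}$ need not land in a diskless component of $\Delta\setminus\hat{\pazocal{C}}$ (the latter may pick up disks outside $\Delta_1$). A case analysis is required; the one-phrase justification does not carry the weight. Incidentally, in $(d)$ you spend effort arguing that $\Delta$ is diminished, but this is a hypothesis of the statement; once you observe $\Delta^*=(\pazocal{C}\cup\Delta_2)^*$, part $(d)$ is immediate. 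Parts $(b)$ and $(c)$ are essentially fine modulo $(a)$: the reduction of $(c)$ to Lemma \ref{G_a design} applied to the minimal stem, with $|\partial\Delta^*|\leq|\partial\Delta|$ via the $\theta$-edge count exactly as in Case 1 of Lemma \ref{M_a cubic}, is the right argument.
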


\smallskip

%%%%%%%%%%%%%%%%%%%%%%%%%%%%%%%%%%%%%%%%%%%%%%%%%%%%%%%%%%%%%%%%%

\subsection{Definition of the minimal counterexample and cloves} \

We now turn our attention to weakly minimal diagrams and look to prove an upper bound for the weight of such diagrams in terms of their contour length.

In particular, our goal is to prove that for any weakly minimal diagram $\Delta$, we have the inequality $$\text{wt}(\Delta)\leq N_2(|\partial\Delta|+\sigma_\lambda(\Delta^*))^3$$ for large enough choice of the parameter parameter $N_2$. The proof of this follows a similar path as that presented in Section 7 of [26] (though mixtures are not used here) and Section 9 of [19] (taking $F(x)=x^2$ in that setting). As such, many of the proofs of the statements that follow will either be omitted or amount to a remark on how it differs from the corresponding proof in [26].

To this end, let $\Delta$ be a weakly minimal counterexample diagram with minimal possible sum $|\partial\Delta|+\sigma_\lambda(\Delta^*)$, i.e so that 
$$\text{wt}(\Delta)>N_2(|\partial\Delta|+\sigma_\lambda(\Delta^*))^3$$ 
but for any weakly minimal diagram $\Gamma$ such that $|\partial\Gamma|+\sigma_\lambda(\Gamma^*)<|\partial\Delta|+\sigma_\lambda(\Delta^*)$, we have
$$\text{wt}(\Gamma)\leq N_2(|\partial\Gamma|+\sigma_\lambda(\Gamma^*))^3$$ 
By Lemma \ref{M_a cubic} and the assignment of parameters, $\Delta$ must contain a disk. Since $\Delta^*$ is minimal and contains every disk and spoke of $\Delta$, it follows that we can apply Lemma \ref{graph} to find a disk $\Pi$ with $L-3$ consecutive $t$-spokes $\pazocal{Q}_1,\dots,\pazocal{Q}_{L-3}$ ending on $\partial\Delta$ and bounding subdiagrams $\Gamma_1,\dots,\Gamma_{L-4}$ containing no disks.

The subdiagrams of $\Delta$ bounded by $\Pi$, $\pazocal{Q}_i$, and $\pazocal{Q}_j$ for $1\leq i<j\leq L-3$ are called \textit{cloves} and denoted $\Psi_{ij}$. The maximal clove $\Psi_{1,L-3}$ is simply denoted $\Psi$.

\begin{lemma} \label{rim theta-bands}

\textit{(Lemma 7.19 of [26])} If $\Delta$ contains a rim $\theta$-band with base of length $s$, then $s>K$.

\end{lemma}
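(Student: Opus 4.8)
The plan is to argue by contradiction: assume $\Delta$ (the fixed weakly minimal counterexample of minimal $|\partial\Delta|+\sigma_\lambda(\Delta^*)$) contains a rim $\theta$-band $\pazocal{T}$ whose base has length $s\leq K$. I would then remove $\pazocal{T}$ from $\Delta$ and show that the resulting diagram $\Delta'$ is still weakly minimal with strictly smaller parameter $|\partial\Delta'|+\sigma_\lambda((\Delta')^*)$, so that the inductive hypothesis applies to $\Delta'$; finally I would recover a bound on $\text{wt}(\Delta)$ contradicting the counterexample assumption. First I would set up the removal carefully: a rim $\theta$-band has $(\textbf{e}_0^{-1}\textbf{q}_1\textbf{e}_n)^{\pm1}$ or $(\textbf{e}_n\textbf{q}_2^{-1}\textbf{e}_0^{-1})^{\pm1}$ as a subpath of $\partial\Delta$, so cutting $\pazocal{T}$ off replaces the outer boundary arc along $\pazocal{T}$ by the trimmed top (or bottom) of $\pazocal{T}$ together with the two side edges. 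Since the base has length $\leq K$, Lemma \ref{small rim bands} gives $|\partial\Delta|-|\partial\Delta'|>1$; in particular $|\partial\Delta'|<|\partial\Delta|$.

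Next I would control the change in $\sigma_\lambda$. Deleting $\pazocal{T}$ can only shorten or destroy $\lambda$-shafts (removing one $(\theta,t)$-cell or $(\theta,q)$-cell at the end of a band passing through $\pazocal{T}$) and cannot create new ones, so $\sigma_\lambda((\Delta')^*)\leq\sigma_\lambda(\Delta^*)$ by the monotonicity in Lemma \ref{weakly minimal}(a),(b) applied to $\Delta'\subset\Delta$ (a rim band removal yields a subdiagram). Hence $|\partial\Delta'|+\sigma_\lambda((\Delta')^*)<|\partial\Delta|+\sigma_\lambda(\Delta^*)$, and $\Delta'$ is weakly minimal by Lemma \ref{weakly minimal}(a). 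The inductive hypothesis then gives $\text{wt}(\Delta')\leq N_2(|\partial\Delta'|+\sigma_\lambda((\Delta')^*))^3\leq N_2(|\partial\Delta|+\sigma_\lambda(\Delta^*)-1)^3$.

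It then remains to bound $\text{wt}(\Delta)-\text{wt}(\Delta')$, i.e. the weight of the cells removed with $\pazocal{T}$. The band $\pazocal{T}$ itself has base length $\leq K$, so by Lemma \ref{lengths}(d) its number of $(\theta,q)$- and $(\theta,a)$-cells is bounded by $l_a+3K$ where $l_a=|\textbf{bot}(\pazocal{T})|_a$; and $l_a\leq\|\partial\Delta\|$ since the bottom of a rim band runs along $\partial\Delta$ (or is exposed after removal). Thus $\text{wt}(\pazocal{T})\leq \|\partial\Delta\|+3K$. However, removing $\pazocal{T}$ may also expose $a$-cells that were pinned between $\pazocal{T}$ and $\partial\Delta$; by Lemma \ref{a-bands between a-cells} and Lemma \ref{M_a no annuli} the maximal $a$-bands from such $a$-cells must reach $\partial\Delta$ or the side/bottom of $\pazocal{T}$, so (as in Case 2 of Lemma \ref{M_a cubic}) the total combinatorial perimeter of these $a$-cells is $O(\|\partial\Delta\|)$, contributing weight $O(c_7\|\partial\Delta\|^2)$. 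Putting this together, $\text{wt}(\Delta)\leq N_2(|\partial\Delta|+\sigma_\lambda(\Delta^*)-1)^3 + O(c_7|\partial\Delta|^2)$, and since $|\partial\Delta|\geq 2$ (indeed $|\partial\Delta|$ is large, as $\Delta$ contains a disk) the cubic gains at least $3N_2(|\partial\Delta|+\sigma_\lambda(\Delta^*)-1)^2 \geq 3N_2|\partial\Delta|^2$, which dominates the $O(c_7|\partial\Delta|^2)$ error once $N_2\gg c_7$; hence $\text{wt}(\Delta)\leq N_2(|\partial\Delta|+\sigma_\lambda(\Delta^*))^3$, contradicting the choice of $\Delta$.

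The main obstacle I anticipate is the bookkeeping around the exposed $a$-cells and around whether cutting a rim $\theta$-band genuinely yields a \emph{subdiagram} in the sense needed for Lemma \ref{weakly minimal}(a),(b) — one must check that $\pazocal{T}$ can be chosen so that $\Delta'$ sits inside $\Delta$ as an honest subdiagram (possibly after a $0$-refinement), and that no new $q$-edges appear on $\partial\Delta'$ that could spuriously increase $\sigma_\lambda$. The estimate $\text{wt}(\pazocal{T})=O(\|\partial\Delta\|)$ plus the $a$-cell correction is the routine part; the delicate point is ensuring the strict decrease of $|\partial\Delta|+\sigma_\lambda(\Delta^*)$ holds simultaneously with weak minimality of $\Delta'$, which is exactly where Lemmas \ref{small rim bands} and \ref{weakly minimal} are used in tandem.
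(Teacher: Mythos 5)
Your approach is the expected one (the paper just cites [26] rather than giving its own proof): assume a rim $\theta$-band $\pazocal{T}$ has base of length $\leq K$, cut it off to get a subdiagram $\Delta'$, use Lemma \ref{small rim bands} for the strict perimeter drop $|\partial\Delta|-|\partial\Delta'|>1$, use Lemma \ref{weakly minimal}$(a),(b)$ so that $\Delta'$ is weakly minimal with $\sigma_\lambda((\Delta')^*)\leq\sigma_\lambda(\Delta^*)$, apply the inductive hypothesis, and show the weight of the removed band is small enough to stay under the cubic bound. Two slips are worth flagging, though neither breaks the argument. First, the ``exposed $a$-cells'' correction is spurious: excising a rim $\theta$-band removes exactly the $(\theta,q)$- and $(\theta,a)$-cells of $\pazocal{T}$, and any $a$-cells adjacent to $\textbf{top}(\pazocal{T})$ stay in $\Delta'$ and are already counted in $\text{wt}(\Delta')$, so they do not enter the difference $\text{wt}(\Delta)-\text{wt}(\Delta')$; the correct correction is just $\text{wt}(\pazocal{T})\leq l_a+3K$ from Lemma \ref{lengths}$(d)$. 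Second, you silently identify $\|\partial\Delta\|$ and $|\partial\Delta|$: what the rim-band condition gives is $l_a\leq\|\partial\Delta\|$, while the gain from the inductive step is measured in the modified length, and since the bottom of $\pazocal{T}$ has no $\theta$-edges the honest bound is $l_a\leq|\partial\Delta|/\delta+O(K)$, a correction of size $O(\delta^{-1}|\partial\Delta|)$. This is absorbed because $N_2\gg\delta^{-1}$ and $\Delta$ contains a disk (so $|\partial\Delta|$ is large, cf.\ Lemma \ref{number of q-edges}), but it should be said explicitly. Also note that your passing inequality $(|\partial\Delta|+\sigma_\lambda(\Delta^*)-1)^2\geq|\partial\Delta|^2$ is false when $\sigma_\lambda(\Delta^*)<1$; the comparison one actually needs is with $N_2\bigl(3x^2-3x+1\bigr)$ where $x=|\partial\Delta|+\sigma_\lambda(\Delta^*)$, which works for the same reason.
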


\smallskip

%%%%%%%%%%%%%%%%%%%%%%%%%%%%%%%%%%%%%%%%%%%%%%%%%%%%%%%%%%%%%%%%%

\subsection{Properties of the cloves of $\Delta$} \

\begin{lemma} \label{no subcombs}

No subcomb of $\Delta$ is contained in $\Psi$.

\end{lemma}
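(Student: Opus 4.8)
The plan is to argue by contradiction, using the minimality of $\Delta$ (with respect to $|\partial\Delta| + \sigma_\lambda(\Delta^*)$). Suppose some subcomb $\Gamma$ of $\Delta$ is contained in $\Psi$, and let $\pazocal{Q}$ be its handle. Since $\Gamma$ is a subcomb, its handle $\pazocal{Q}$ divides $\Delta$ into two parts, one of which is $\Gamma$. First I would record the structural consequences: $\Gamma$ contains no disks (a comb has the property that every maximal $\theta$-band ends on the bottom of the handle, so a $q$-band that is a handle cannot be a spoke of a disk inside $\Gamma$ — more precisely the disks of $\Delta$ all lie in $\Delta^*$ and $\Gamma$ being cut off by a single $q$-band handle cannot contain one, else that $q$-band would separate two disk-containing regions and $\Gamma$ would not be a comb). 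Hence $\Gamma$ is a minimal diagram over $M_a(\textbf{M})$, and Lemma \ref{M_a cubic} applies to it.

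Next I would set $\Delta' = \Delta \setminus \Gamma$, cutting along $\textbf{t} = \textbf{bot}(\pazocal{Q})$. Write $\partial\Delta = \textbf{s}_0\textbf{s}_1$ with $\textbf{s}_0 \subset \partial\Delta'$, $\textbf{s}_1 \subset \partial\Gamma$, so $\partial\Delta' = \textbf{s}_0\textbf{t}^{-1}$ and $\partial\Gamma = \textbf{t}\textbf{s}_1$. As in the proof of Lemma \ref{M_a cubic}, Case 1, if $h$ is the height of $\Gamma$ then $|\textbf{t}| = h$ by Lemma \ref{lengths}(b), and every $\theta$-band crossing $\pazocal{Q}$ has an end on $\textbf{s}_1$, giving $|\textbf{s}_1| \geq h+2$, while every $\theta$-edge of $\textbf{t}$ starts a maximal $\theta$-band of $\Delta'$ ending on $\textbf{s}_0$ (Lemma \ref{M_a no annuli}), so $|\textbf{s}_0| \geq h$. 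Lemma \ref{lengths}(c) then gives $|\partial\Delta'| \leq |\textbf{s}_0| + |\textbf{t}| \leq |\partial\Delta| + 2\delta - 2 \leq |\partial\Delta| - 1$ and $|\partial\Gamma| \leq |\textbf{s}_0| + |\textbf{s}_1| + 2\delta \leq (1+\delta)|\partial\Delta|$.

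Now the key point is that removing $\Gamma$ does not increase $\sigma_\lambda$ of the stem: since $\Gamma$ contains no disks, it contains no $\lambda$-shafts, and $\Delta'$ is still a diminished (indeed weakly minimal) diagram with $(\Delta')^* \subseteq \Delta^*$ by Lemma \ref{weakly minimal}(a),(b), so $\sigma_\lambda((\Delta')^*) \leq \sigma_\lambda(\Delta^*)$. Hence $|\partial\Delta'| + \sigma_\lambda((\Delta')^*) < |\partial\Delta| + \sigma_\lambda(\Delta^*)$, and the inductive hypothesis yields $\text{wt}(\Delta') \leq N_2(|\partial\Delta'| + \sigma_\lambda((\Delta')^*))^3 \leq N_2(|\partial\Delta| - 1 + \sigma_\lambda(\Delta^*))^3$. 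Combining with $\text{wt}(\Gamma) \leq N_1|\partial\Gamma|^3 \leq N_1(1+\delta)^3|\partial\Delta|^3$ from Lemma \ref{M_a cubic}, and using $N_2 \gg N_1$, a routine estimate of the form $N_2(X-1)^3 + N_1(1+\delta)^3 X^3 \leq N_2 X^3$ (valid for $X = |\partial\Delta| + \sigma_\lambda(\Delta^*) \geq 2$, since $N_2$ is chosen after $N_1$) contradicts $\text{wt}(\Delta) = \text{wt}(\Delta') + \text{wt}(\Gamma) > N_2(|\partial\Delta| + \sigma_\lambda(\Delta^*))^3$. I expect the main obstacle to be the bookkeeping that a subcomb inside $\Psi$ genuinely contains no disks and that cutting it off keeps the remainder weakly minimal with a controlled stem — i.e., correctly invoking Lemma \ref{weakly minimal} — rather than the final arithmetic, which mirrors Lemma \ref{M_a cubic}.
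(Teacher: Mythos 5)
Your overall strategy (excise the subcomb, compare $|\partial\Delta'|+\sigma_\lambda((\Delta')^*)$ with $|\partial\Delta|+\sigma_\lambda(\Delta^*)$, and invoke the minimality of the counterexample) is the right one and matches the paper, but your final estimate contains a genuine gap: you bound $\text{wt}(\Gamma)$ by Lemma \ref{M_a cubic}, i.e.\ by $N_1|\partial\Gamma|^3\leq N_1(1+\delta)^3|\partial\Delta|^3$, and then claim that $N_2(X-1)^3+N_1(1+\delta)^3X^3\leq N_2X^3$ holds for all $X\geq 2$ because $N_2\gg N_1$. This is false: the savings from removing $\Gamma$ is only $N_2\bigl(X^3-(X-1)^3\bigr)=N_2(3X^2-3X+1)$, which is quadratic in $X$, while the term $N_1(1+\delta)^3X^3$ is cubic. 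For $X$ larger than roughly $3N_2/N_1$ the inequality fails, no matter how the parameters $N_1\ll N_2$ are chosen; since $|\partial\Delta|$ is not bounded, no contradiction is obtained this way. A cubic bound on the excised piece can never be absorbed into the cubic bound on the whole diagram by shaving one unit off the perimeter.

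The missing idea is to arrange that the excised comb has a \emph{quadratic} weight bound. Since $\Psi$ contains no disks it is a minimal diagram over $M_a(\textbf{M})$, and if it contains a subcomb at all one can pass to an innermost one, i.e.\ a subcomb $\Gamma$ containing no maximal $q$-band other than its handle. For such a comb Lemma \ref{one q-band comb} gives $\text{wt}(\Gamma)\leq c_8|\partial\Gamma|^2\leq c_8(1+\delta)^2|\partial\Delta|^2$, and now the arithmetic does close: with $X=|\partial\Delta|+\sigma_\lambda(\Delta^*)$ one has $X^3-(X-1)^3\geq X^2$ for $X\geq 2$, so choosing $N_2$ large with respect to $c_8$ and $\delta^{-1}$ yields $N_2(X-1)^3+c_8(1+\delta)^2|\partial\Delta|^2\leq N_2X^3$, contradicting the counterexample assumption. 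This is exactly how the paper argues (mirroring Case~1 of the proof of Lemma \ref{M_a cubic}); your bookkeeping about $|\partial\Delta'|\leq|\partial\Delta|-1$, $|\partial\Gamma|\leq(1+\delta)|\partial\Delta|$, and the use of Lemma \ref{weakly minimal}(a),(b) to get $\sigma_\lambda((\Delta')^*)\leq\sigma_\lambda(\Delta^*)$ is correct and is what the paper does, so only the choice of bound for $\Gamma$ needs to be repaired.
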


\begin{proof}

As the subdiagram of a diminished diagram is clearly diminished and $\Psi$ contains no disks, it follows that $\Psi$ is a minimal diagram over $M_a(\textbf{M})$.

Assuming the statement false, it is possible to find subcomb $\Gamma$ in $\Psi$ containing no maximal $q$-bands other than its handle.

But then the same proof as Lemma \ref{M_a cubic} can be applied with just a few adjustments: 

The subdiagram $\Delta_0$ obtained from excising $\Gamma$ from $\Delta$ again satisfies $|\partial\Delta_0|\leq|\partial\Delta|-1$. By Lemma \ref{weakly minimal}$(a)$,$(b)$, $\Delta_0$ is weakly minimal with $\sigma_\lambda(\Delta_0^*)\leq\sigma_\lambda(\Delta^*)$. By the inductive hypothesis, this then implies that $$\text{wt}(\Delta_0)\leq N_2(|\partial\Delta|+\sigma_\lambda(\Delta^*)-1)^3$$
so that Lemma \ref{one q-band comb} yields $$\text{wt}(\Delta)\leq N_2(|\partial\Delta|+\sigma_\lambda(\Delta^*)-1)^3+c_8|\partial\Gamma|^2$$
Again noting that $|\partial\Gamma|\leq|\partial\Delta|+2\delta$ by Lemma \ref{lengths}, we can take $N_2$ large enough with respect to $c_8$ and $\delta^{-1}$ to then give the desired contradiction.

\end{proof}

\begin{lemma} \label{theta-bands in cloves} \textit{(Lemma 7.22 of [26])}

(1) Every maximal $\theta$-band of $\Psi$ crosses either $\pazocal{Q}_1$ or $\pazocal{Q}_{L-3}$

(2) There exists an $r$ satisfying $L/2-3\leq r\leq L/2$ such that the $\theta$-bands of $\Psi$ crossing $\pazocal{Q}_{L-3}$ do not cross $\pazocal{Q}_r$ and the $\theta$-bands of $\Psi$ crossing $\pazocal{Q}_1$ do not cross $\pazocal{Q}_{r+1}$

\end{lemma}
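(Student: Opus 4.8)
The plan is to analyze the maximal $\theta$-bands that live in the clove $\Psi = \Psi_{1,L-3}$, which is a minimal diagram over $M_a(\textbf{M})$ by Lemma \ref{no subcombs} and the fact that subdiagrams of diminished diagrams are diminished (and $\Psi$ contains no disks). For part (1), I would argue that any maximal $\theta$-band $\pazocal{T}$ of $\Psi$ must, by Lemma \ref{M_a no annuli}(3), have both ends on $\partial\Psi$. The contour $\partial\Psi$ is composed of the arc of $\partial\Pi$ between $\pazocal{Q}_1$ and $\pazocal{Q}_{L-3}$, the sides of the handles $\pazocal{Q}_1$ and $\pazocal{Q}_{L-3}$, and a portion of $\partial\Delta$. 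A $\theta$-band cannot end on the arc of $\partial\Pi$ (it would have to penetrate the disk or run along it, contradicting that $\partial\Pi$ consists of $q$- and $a$-edges in an admissible-word pattern with no $\theta$-edges available as band ends in that configuration). It also cannot have both ends on the portion of $\partial\Delta$: if it did, one could pass to an innermost such band and obtain either a rim $\theta$-band of $\Delta$ with short base (contradicting Lemma \ref{rim theta-bands}, which forces base length $> K$) or a subcomb inside $\Psi$ (contradicting Lemma \ref{no subcombs}). Hence every maximal $\theta$-band of $\Psi$ must cross at least one of the handles $\pazocal{Q}_1$, $\pazocal{Q}_{L-3}$; since a $\theta$-band crosses a $q$-band in at most one cell (Lemma \ref{M_a no annuli}), and the $t$-spokes $\pazocal{Q}_i$ are consecutive, crossing one of the two extreme handles is exactly the dichotomy in (1).

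For part (2), the key is the transposition machinery and Lemma \ref{G_a theta-annuli}. Let me call a maximal $\theta$-band of $\Psi$ a "left band" if it crosses $\pazocal{Q}_1$ and a "right band" if it crosses $\pazocal{Q}_{L-3}$; by part (1) every band is left or right (a priori possibly both). First I would show no band is simultaneously left and right: such a band would cross all of $\pazocal{Q}_1,\dots,\pazocal{Q}_{L-3}$, i.e.\ $L-3$ consecutive $t$-spokes of $\Pi$, but Lemma \ref{G_a theta-annuli}(1) bounds the number of $t$-spokes of a disk crossed by a single $\theta$-band (with no disks in the enclosed subdiagram) by $L/2$, and $L-3 > L/2$ for $L$ large. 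So the left bands and right bands partition the maximal $\theta$-bands of $\Psi$. Now, a left band crosses $\pazocal{Q}_1$ and then, being a contiguous band, crosses a consecutive initial segment $\pazocal{Q}_1,\dots,\pazocal{Q}_{i}$ of the spokes before leaving the clove region; similarly a right band crosses a consecutive terminal segment $\pazocal{Q}_{j},\dots,\pazocal{Q}_{L-3}$. Let $r_{\mathrm{left}}$ be the largest index of a spoke crossed by some left band and $r_{\mathrm{right}}$ the smallest index crossed by some right band. The partition property forces $r_{\mathrm{right}} = r_{\mathrm{left}} + 1$ (the bands crossing spoke $\pazocal{Q}_k$ are exactly the left bands if $k \le r_{\mathrm{left}}$ and exactly the right bands if $k \ge r_{\mathrm{right}}$, with no spoke left uncrossed since every spoke is crossed by at least the bands incident to it). Setting $r = r_{\mathrm{left}}$ gives: $\theta$-bands crossing $\pazocal{Q}_{L-3}$ (the right bands) do not cross $\pazocal{Q}_r$, and those crossing $\pazocal{Q}_1$ (the left bands) do not cross $\pazocal{Q}_{r+1}$.

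The remaining point is the numerical bound $L/2 - 3 \le r \le L/2$. The upper bound $r \le L/2$ is again Lemma \ref{G_a theta-annuli}(1): a left band crosses $\pazocal{Q}_1,\dots,\pazocal{Q}_r$, which is $r$ consecutive $t$-spokes, and with no disks enclosed this is at most $L/2$. For the lower bound $r \ge L/2 - 3$, I would apply the symmetric argument to the right bands: a right band crosses $\pazocal{Q}_{r+1},\dots,\pazocal{Q}_{L-3}$, which is $(L-3) - (r+1) + 1 = L - 3 - r$ consecutive spokes, so $L - 3 - r \le L/2$, giving $r \ge L/2 - 3$. Here I should double-check that the "no disks enclosed" hypothesis of Lemma \ref{G_a theta-annuli}(1) genuinely holds for the relevant subdiagrams — this is where the hypothesis that $\Gamma_1,\dots,\Gamma_{L-4}$ contain no disks (from Lemma \ref{graph}) is essential, and this is the step I expect to require the most care: one must verify that the subdiagram bounded by the extreme band, the spokes it crosses, and $\partial\Pi$ indeed has no disks, possibly after noting that a disk there would have to lie in one of the disk-free $\Gamma_i$'s or would contradict the maximality choice in Lemma \ref{graph}. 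Once the geometry of which bands cross which spokes is nailed down, the counting is immediate; the main obstacle is cleanly justifying the application of the transposition/annulus lemma \ref{G_a theta-annuli}(1) to the precise subdiagrams at hand.
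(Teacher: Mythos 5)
The paper states this lemma as a citation to Lemma 7.22 of [26] and gives no proof, so there is no in-paper argument to compare against; I will assess your reconstruction on its own terms.

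Your overall plan is right and uses the correct supporting lemmas, and part (2) is essentially correct. Two things need tightening.

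First, in part (1), the step ruling out a maximal $\theta$-band $\pazocal{T}$ of $\Psi$ with both ends on $\textbf{p}$ is not a clean dichotomy ``rim band with short base or subcomb.'' An innermost such $\pazocal{T}$ may (a) cross $q$-bands that themselves have both ends on $\textbf{p}$, in which case neither horn applies directly, and (b) have $a$-cells between it and $\textbf{p}$, in which case $\pazocal{T}$ is not literally a rim $\theta$-band of $\Delta$ and Lemma~\ref{rim theta-bands} does not apply as stated. The argument needs to be sequenced: first show $\Psi$ has no $q$-band with both ends on $\textbf{p}$ --- take an innermost such $q$-band $\pazocal{B}_0$, and in the subdiagram $\Gamma_0$ it bounds with $\textbf{p}$ (which has no other $q$-bands) observe that a $\theta$-band with both ends on $\textbf{p}$ has base of length $0$, so after transposing away any $a$-cells between it and $\partial\Delta$ one either gets a rim $\theta$-band of base $0$ (contradicting Lemma~\ref{rim theta-bands}) or reduces the signature; hence every $\theta$-band of $\Gamma_0$ ends on $\textbf{bot}(\pazocal{B}_0)$, $\Gamma_0$ is a subcomb, contradiction with Lemma~\ref{no subcombs}. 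Only then can one conclude that all $q$-bands of $\Psi$ are (subbands of) spokes of $\Pi$, so the base of any $\theta$-band of $\Psi$ has length at most $LN<K$, and the innermost-band argument gives an ``almost rim'' band with base $<K$; the contradiction then comes from cutting off the band plus the $a$-cells in front of it and invoking the minimality of the counterexample $\Delta$ (as in the proofs of Lemmas~\ref{no subcombs} and~\ref{rim theta-bands}), rather than a bare appeal to Lemma~\ref{rim theta-bands}. You also never justify why the base would be ``short'': that requires the fact, established only after the $q$-band exclusion, that $\pazocal{T}$ crosses only spokes of $\Pi$.

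Second, in part (2), $r_{\mathrm{right}}=r_{\mathrm{left}}+1$ need not hold: an intermediate spoke $\pazocal{Q}_j$ could have $h_j=0$ and be crossed by no $\theta$-band at all, so the bands crossing the spokes do not partition the spokes without gaps. What you actually need and what your bounds do give is only $r_{\mathrm{left}}<r_{\mathrm{right}}$ together with $r_{\mathrm{left}}\le L/2$ and $r_{\mathrm{right}}-1\ge L/2-3$ (both from Lemma~\ref{G_a theta-annuli}(1), applied with the ``no disks enclosed'' hypothesis supplied by $\Psi$ being disk-free); the intersection $[r_{\mathrm{left}},\,r_{\mathrm{right}}-1]\cap[L/2-3,\,L/2]$ is nonempty, and any $r$ in it works. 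So the conclusion stands, but the intermediate claim is overstated.
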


For $1\leq i<j\leq L-3$, denote $\textbf{p}_{ij}$ as the shared subpath of $\partial\Psi_{ij}$ and $\partial\Delta$. For simplicity, denote the path $\textbf{p}_{1,L-3}$ associated to the maximal clove simply as $\textbf{p}$.

Let $\bar{\Delta}$ be the subdiagram formed by $\Pi$ and $\Psi$. Further, let $\bar{\textbf{p}}$ be the path $\textbf{top}(\pazocal{Q}_1)\textbf{u}^{-1}\textbf{bot}(\pazocal{Q}_{L-3})^{-1}$, where $\textbf{u}$ is a subpath of $\partial\Pi$, such that $\bar{\textbf{p}}$ separates $\bar{\Delta}$ from the remaining subdiagram $\Psi'$ of $\Delta$.

Similarly, define $\bar{\Delta}_{ij}$, paths $\bar{\textbf{p}}_{ij}=\textbf{top}(\pazocal{Q}_i)\textbf{u}_{ij}^{-1}\textbf{bot}(\pazocal{Q}_j)^{-1}$, where $\textbf{u}_{ij}$ is a subpath of $\partial\Pi$ separating from the subdiagram $\Psi_{ij}'$.

Further, define $H_1,\dots,H_{L-3}$ as the histories of the spokes $\pazocal{Q}_1,\dots,\pazocal{Q}_{L-3}$, read starting from the disk $\Pi$, and $h_1,\dots,h_{L-3}$ as these histories' lengths. Lemma \ref{theta-bands in cloves} then implies the inequalities
$$h_1\geq h_2\geq\dots\geq h_r; \ \ h_{r+1}\leq\dots\leq h_{L-3}$$ 

where $L/2-3\leq r\leq L/2$. It then follows that $H_{i+1}$ is a prefix of $H_i$ for $i=1,\dots,r-1$ while $H_j$ is a prefix of $H_{j+1}$ for $j=r+1,\dots,L-4$.

Let $W$ be the accepted configuration corresponding to the label of $\partial\Pi$. Then, using the notation of Section 5.3, $W\equiv W(1)W(2)\dots W(L)$, where $W(2),\dots,W(L)$ are all copies of the same configuration $V$ of $\textbf{M}_5$. Further, by Lemma \ref{accepted configuration a-length}, we have $\frac{1}{2}\|V\|\leq\|W(1)\|\leq\frac{3}{2}\|V\|$.

\begin{lemma} \label{path inequalities} \textit{(Lemma 7.24 of [26])}

(1) If $i\leq r$ and $j\geq r+1$, then $|\textbf{p}_{ij}|\geq|\textbf{p}_{ij}|_\theta+|\textbf{p}_{ij}|_q\geq h_i+h_j+(j-i)N+1$

(2) $|\bar{\textbf{p}}_{ij}|\leq h_i+h_j+(L-j+i+1)|V|-1$

\end{lemma}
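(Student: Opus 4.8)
The plan is to prove the two bounds of Lemma \ref{path inequalities} separately, using the structure of the cloves established in Lemmas \ref{theta-bands in cloves} and \ref{no subcombs} together with the $a$-length estimates on the accepted configuration $W$ from Lemma \ref{accepted configuration a-length}.

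\textbf{Part (1).} First I would recall that $\textbf{p}_{ij}$ is the part of $\partial\Delta$ cut off by the spokes $\pazocal{Q}_i$ and $\pazocal{Q}_j$, and that by Lemma \ref{no subcombs} the clove $\Psi_{ij}$ contains no subcomb contained in $\Psi$; combined with Lemma \ref{theta-bands in cloves}(1), every maximal $\theta$-band of $\Psi_{ij}$ must cross $\pazocal{Q}_i$ or $\pazocal{Q}_j$ and must end on $\textbf{p}_{ij}$ (it cannot end on $\pazocal{Q}_i$ or $\pazocal{Q}_j$ twice, nor form an annulus, by Lemma \ref{M_a no annuli} / Lemma \ref{weakly minimal}$(e)$; it cannot end on $\partial\Pi$ because the spokes between $\pazocal{Q}_i$ and $\pazocal{Q}_j$ would block it, as there are at least $L/2$ spokes on each side by the choice of $r$). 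Since $i\le r$ and $j\ge r+1$, the $\theta$-bands crossing $\pazocal{Q}_i$ are disjoint from those crossing $\pazocal{Q}_j$: indeed $\pazocal{Q}_i$ has history (of the part from $\Pi$) of length $h_i$ and $\pazocal{Q}_j$ of length $h_j$, and Lemma \ref{theta-bands in cloves}(2) says the bands crossing $\pazocal{Q}_{L-3}$ do not cross $\pazocal{Q}_r$ while those crossing $\pazocal{Q}_1$ do not cross $\pazocal{Q}_{r+1}$, which propagates to give disjointness of the relevant band families for $i\le r$, $j\ge r+1$. Thus $\textbf{p}_{ij}$ contains at least $h_i+h_j$ distinct $\theta$-edges (the endpoints of these bands), giving $|\textbf{p}_{ij}|_\theta\ge h_i+h_j$. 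For the $q$-edges, each of the $j-i+1$ spokes $\pazocal{Q}_i,\dots,\pazocal{Q}_j$ contributes one $q$-edge to $\textbf{p}_{ij}$, and moreover between consecutive spokes $\pazocal{Q}_m,\pazocal{Q}_{m+1}$ on $\partial\Pi$ there are $N-1$ further $q$-letters (the full base $\{t(m)\}B_4(m)$ between the two $t$-letters has $N$ letters — this is exactly how $N$ was defined in Section 4.8), and each of these gives a spoke ending on $\partial\Delta$ inside $\textbf{p}_{ij}$ since $\Psi_{ij}$ contains no disks for these to end on. Hence $|\textbf{p}_{ij}|_q\ge (j-i)N+1$. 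Since $|\textbf{p}_{ij}|\ge|\textbf{p}_{ij}|_\theta+|\textbf{p}_{ij}|_q$ by the length function (Lemma \ref{lengths}$(a)$, as $\theta$- and $q$-edges each have length $1$ and there may be additional $a$-edges contributing nonnegatively), the inequality follows.

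\textbf{Part (2).} Here $\bar{\textbf{p}}_{ij}=\textbf{top}(\pazocal{Q}_i)\textbf{u}_{ij}^{-1}\textbf{bot}(\pazocal{Q}_j)^{-1}$. By Lemma \ref{lengths}$(b)$, $|\textbf{top}(\pazocal{Q}_i)|=h_i$ and $|\textbf{bot}(\pazocal{Q}_j)|=h_j$ (these are tops/bottoms of $q$-bands, so their length equals the number of cells, which is $h_i$ resp. $h_j$). For $\textbf{u}_{ij}$, which is the arc of $\partial\Pi$ between the feet of $\pazocal{Q}_i$ and $\pazocal{Q}_j$ on the side away from $\Psi_{ij}$: the label of $\partial\Pi$ is the word $W\equiv W(1)\cdots W(L)$, and the arc $\textbf{u}_{ij}$ runs through the consecutive blocks $\{t(j)\}B_4(j),\dots,\{t(L)\}B_4(L),\{t(1)\}B_4(1),\dots,\{t(i)\}B_4(i)$ — that is, $L-j+i+1$ of the coordinate blocks. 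Each block $\{t(m)\}B_4(m)$ for $m\ge2$ is a copy of $V$ (one letter $t(m)$ plus the base, but the $a$-length content is that of $V$), and the block for $m=1$ has $\|W(1)\|\le\frac{3}{2}\|V\|$; so the combinatorial content of $\textbf{u}_{ij}$ is at most $(L-j+i+1)|V|$ up to the one-letter $t$-parts, which I would absorb carefully — the cleanest route is to bound $|\textbf{u}_{ij}|\le (L-j+i+1)|V| - 1$ directly, using that we only traverse part of the blocks at the two ends (cutting at the $t$-edges of $\pazocal{Q}_i$ and $\pazocal{Q}_j$) so at least one $q$-letter's worth of length is saved, and that $|W(1)|\le\frac32|V|$ lets us replace the first block's contribution by that of a copy of $V$ with room to spare (since $L$ is large). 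Then $|\bar{\textbf{p}}_{ij}|\le|\textbf{top}(\pazocal{Q}_i)|+|\textbf{u}_{ij}|+|\textbf{bot}(\pazocal{Q}_j)|\le h_i+h_j+(L-j+i+1)|V|-1$ by the subadditivity in Lemma \ref{lengths}$(c)$.

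\textbf{Main obstacle.} The delicate point is the bookkeeping of the $-1$ and the handling of the first block $W(1)$ in part (2): one must be sure that traversing the arc $\textbf{u}_{ij}$ of $\partial\Pi$ genuinely costs at most $(L-j+i+1)|V|-1$ and not, say, $(L-j+i+1)|V|$ or more, given that $\|W(1)\|$ can be as large as $\frac32\|V\|$. This is resolved by observing that $\textbf{u}_{ij}$ omits the portions of the two end-blocks $\{t(i)\}B_4(i)$ and $\{t(j)\}B_4(j)$ lying on the $\Psi_{ij}$-side of the spokes $\pazocal{Q}_i,\pazocal{Q}_j$ — in particular it omits at least the $t$-edges themselves and hence saves at least one unit of length — and that for the single block indexed $1$ the extra slack $\frac12\|V\|$ is dominated, over the at least $L/2$ blocks traversed (since $L-j+i+1\ge L-(L-3)+1+1 = 5$ in the worst case, but in fact the combinatorially relevant cases have $L-j+i+1$ close to $L/2+3$), by the accumulated savings; I would state this as a short computation invoking $L>>1$ from the parameter hierarchy. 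For part (1) the only subtlety is verifying the disjointness of the two $\theta$-band families crossing $\pazocal{Q}_i$ and $\pazocal{Q}_j$, which follows cleanly from Lemma \ref{theta-bands in cloves}(2) and the monotonicity $h_1\ge\cdots\ge h_r$, $h_{r+1}\le\cdots\le h_{L-3}$ noted just before the lemma, so that no $\theta$-band can cross both a spoke with index $\le r$ and one with index $\ge r+1$.
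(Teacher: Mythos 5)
The paper does not actually prove this lemma: its entire ``proof'' is the sentence remarking that the statement is Lemma 7.24 of [26] with the inequality in (2) loosened by an extra $|V|$ to account for the fact that $W(1)$ need not be a coordinate shift of $W(m)$ for $m\ge 2$, whereas in [26] all the $W(m)$ agree. So there is no argument in the paper to compare yours against line by line; what you have done is reconstruct the underlying [26] argument in the present setting, and the reconstruction is substantially correct. Two places deserve tightening.

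In part (1), the bound $|\textbf{p}_{ij}|_q\ge (j-i)N+1$ requires that every one of the $(j-i)N+1$ maximal $q$-bands emanating from the arc of $\partial\Pi$ inside $\Psi_{ij}$ has its other end on $\textbf{p}_{ij}$. You justify this only by saying ``$\Psi_{ij}$ contains no disks for these to end on,'' which rules out one alternative but not the other: a priori such a $q$-band could start and end on $\partial\Pi$. To exclude this you need Lemma \ref{no subcombs}: a $q$-band with both ends on $\partial\Pi$, together with the arc of $\partial\Pi$ it cuts off, would bound a subdiagram of $\Psi$ whose contour carries $\theta$-edges only along one side of that $q$-band (the $\partial\Pi$ arc has none, and the spokes and $\textbf{p}_{ij}$ are not on its contour), so every maximal $\theta$-band in it would end twice on the $q$-band, making it a subcomb of $\Delta$ contained in $\Psi$ --- contradiction. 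You cite Lemma \ref{no subcombs} in your preamble, but the actual deployment should be made explicit here, since this is the only place in (1) where it is really needed (the disjointness of the $\theta$-band families from $\pazocal{Q}_i$ and $\pazocal{Q}_j$, which you also lean on Lemma \ref{no subcombs} for, in fact follows directly from Lemma \ref{theta-bands in cloves}(2) and the prefix relations among the $H_i$).

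In part (2), your bookkeeping is informal but arrives at the right place, and you have correctly put your finger on exactly the point the paper flags: the block corresponding to $t(1)$ has $a$-length up to $\frac32$ that of $V$. A cleaner way to close it: there are $L-(j-i)$ copies of the $B_4$-portion in $\textbf{u}_{ij}$ but only $L-(j-i)-1$ interior $t$-edges (the $t$-edges of $\pazocal{Q}_i$ and $\pazocal{Q}_j$ are the shared edges and do not lie on $\textbf{u}_{ij}$), and each $B_4$-portion has length $|V|-1$; so, before the $W(1)$-correction, $|\textbf{u}_{ij}|\le (L-j+i)|V|-1$. The correction is at most $\delta(|W(1)|_a-|V|_a)\le\tfrac{\delta}{2}|V|_a\le\tfrac12|V|$ by Lemma \ref{accepted configuration a-length}, which is comfortably absorbed by the full extra $|V|$ of slack the lemma grants by writing $L-j+i+1$ instead of $L-j+i$. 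There is no need for a delicate trade-off over ``at least $L/2$ blocks'' as your ``main obstacle'' paragraph suggests; the loosening by a single $|V|$ already does all the work. Combining with $|\textbf{top}(\pazocal{Q}_i)|=h_i$, $|\textbf{bot}(\pazocal{Q}_j)|=h_j$ (Lemma \ref{lengths}$(b)$) and subadditivity (Lemma \ref{lengths}$(c)$) completes (2).
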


\begin{proof}

%(1) Lemma \ref{theta-bands in cloves} implies that $\textbf{p}_{ij}$ contains $h_i+h_j$ $\theta$-edges. Lemma \ref{no subcombs} implies it contains $(j-i)N+1$ $q$-edges. The inequality then follows from Lemma 6.1.
%
%(2) This follows from Lemmas \ref{lengths} and \ref{accepted configuration a-length}.

The difference between the inequality presented here and the one presented in [26] is attributed to the fact that $W(1)$ need not be a copy of $W(i)$ for $i\geq2$, whereas this is not the case in [26].

\end{proof}

The following is the analogue of Lemma 7.26 of [26].

\begin{lemma} \label{eps intro}

If $j-i>L/2$, then $|\textbf{p}_{ij}|+\sigma_\lambda(\bar{\Delta}_{ij}^*)<(1+\eps)|\bar{\textbf{p}}_{ij}|$, where $\eps=N_2^{-\frac{1}{4}}$.

\end{lemma}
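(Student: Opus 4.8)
The plan is to compare the two quantities $|\textbf{p}_{ij}|+\sigma_\lambda(\bar\Delta_{ij}^*)$ and $|\bar{\textbf{p}}_{ij}|$ directly, using Lemma \ref{path inequalities} to control the first two summands against $|\bar{\textbf p}_{ij}|$ and then absorbing the shaft term $\sigma_\lambda(\bar\Delta_{ij}^*)$ into the small error $\eps|\bar{\textbf p}_{ij}|$. First I would invoke Lemma \ref{path inequalities}(1) and (2): since $j-i>L/2\geq r$ we have $i\leq r$ and $j\geq r+1$, so $|\textbf{p}_{ij}|\le |\textbf p_{ij}|_\theta+|\textbf p_{ij}|_q+\delta(\text{small})$ is bounded, while $h_i+h_j\le |\bar{\textbf p}_{ij}|-(L-j+i+1)|V|+1$. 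Combining these yields $|\textbf p_{ij}|\le |\bar{\textbf p}_{ij}|-(L-j+i+1)|V|+(j-i)N+O(1)$; since $j-i>L/2$, we have $L-j+i+1< L/2+1$, and because $N<<|V|$... wait — here I must be careful: $|V|$ is the $a$-length-weighted length of a configuration of $\textbf M_5$ and can be as small as a bounded constant, so I cannot simply claim $(j-i)N$ is dominated by $(L-j+i+1)|V|$. The right move is instead to note $|\textbf p_{ij}|\ge (j-i)N$ from part (1), so $|\bar{\textbf p}_{ij}|\ge |\textbf p_{ij}|\ge(j-i)N>(L/2)N$, i.e. $|\bar{\textbf p}_{ij}|$ is large (at least of order $LN$); this is what makes $\eps|\bar{\textbf p}_{ij}|$ a usable budget.

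Next I would bound $\sigma_\lambda(\bar\Delta_{ij}^*)$. By Lemma \ref{weakly minimal}(c) (or Lemma \ref{G_a design}, whichever applies to $\bar\Delta_{ij}$ — it is a subdiagram of the weakly minimal $\Delta$, hence weakly minimal by Lemma \ref{weakly minimal}(a)), there is a constant $c=c(\lambda,L)$ with $\sigma_\lambda(\bar\Delta_{ij}^*)\le c|\partial\bar\Delta_{ij}|$. The contour of $\bar\Delta_{ij}$ is $\bar{\textbf p}_{ij}$ together with the two spoke sides, so $|\partial\bar\Delta_{ij}|$ is comparable to $|\bar{\textbf p}_{ij}|+h_i+h_j\le c'|\bar{\textbf p}_{ij}|$ for a constant $c'$ depending only on $L$ (using $h_i,h_j\le|\bar{\textbf p}_{ij}|$ from Lemma \ref{path inequalities}(2)). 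Hence $\sigma_\lambda(\bar\Delta_{ij}^*)\le cc'|\bar{\textbf p}_{ij}|$. Then I would write
\begin{align*}
|\textbf p_{ij}|+\sigma_\lambda(\bar\Delta_{ij}^*)
&\le |\bar{\textbf p}_{ij}| - (L-j+i+1)|V| + (j-i)N + O(1) + cc'|\bar{\textbf p}_{ij}|,
\end{align*}
and since $\eps=N_2^{-1/4}$ and $N_2$ was chosen after $L$, $\lambda$, $N$, and all the $c$'s (it is the last parameter but one in the chain of Section 3.3), the constant $cc'$ is \emph{not} automatically $\le\eps$ — so the naive bound is too crude. The correct accounting must exploit the negative term $-(L-j+i+1)|V|$ or, more robustly, must use that the $\lambda$-shafts counted in $\sigma_\lambda(\bar\Delta_{ij}^*)$ are internal to $\bar\Delta_{ij}$ and by Lemma \ref{weakly minimal}(b) are already accounted for in $\sigma_\lambda(\Delta^*)$; the comparison should be set up so that the real content is $|\textbf p_{ij}|\le |\bar{\textbf p}_{ij}|$ (immediate, since $\bar{\textbf p}_{ij}$ is the ``long way around'' $\Pi$ and $\textbf p_{ij}$ the ``short way'', both closing up the same region) plus the observation that $\sigma_\lambda(\bar\Delta_{ij}^*)\le \eps|\bar{\textbf p}_{ij}|$ because any $\lambda$-shaft of $\bar\Delta_{ij}^*$ long enough to matter would, crossing $L+1$ $t$-letters worth of a quasi-trapezium inside a region touching $\Pi$, contradict Lemma \ref{shafts}; hence the total shaft length is bounded by a quantity of order $L\cdot(\text{number of disks in }\bar\Delta_{ij})=0$ plus boundary contributions, which is $o(|\bar{\textbf p}_{ij}|)$ once $N_2$ (and so $1/\eps$) is large.

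The main obstacle I anticipate is precisely this last point: getting $\sigma_\lambda(\bar\Delta_{ij}^*)\le\eps|\bar{\textbf p}_{ij}|$ with the right constant. The clean route, which I would pursue, is: $\bar\Delta_{ij}$ contains exactly one disk, namely $\Pi$, and all its $t$-spokes under consideration end on $\partial\Delta$ or on $\pazocal{Q}_i,\pazocal{Q}_j$; a $\lambda$-shaft contributing to $\sigma_\lambda(\bar\Delta_{ij}^*)$ either is a subband of one of these spokes (whose total length is $\le h_i+h_j+\sum\text{(lengths of spokes between }i,j)$, controlled by $|\textbf p_{ij}|_\theta\le|\bar{\textbf p}_{ij}|$ via Lemma \ref{path inequalities}) or lives in the crown-free region between consecutive spokes, where by Lemma \ref{no subcombs}-type reasoning and Lemma \ref{G_a design}'s property $P(\lambda,2L+1)$ its length is bounded by $c(\lambda,L)\cdot|\partial\bar\Delta_{ij}|$. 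Either way $\sigma_\lambda(\bar\Delta_{ij}^*)\le c(\lambda,L)|\bar{\textbf p}_{ij}|$, and then the statement follows provided $\eps=N_2^{-1/4}\ge c(\lambda,L)\cdot(\text{the }|\textbf p_{ij}|\text{ vs }|\bar{\textbf p}_{ij}|\text{ slack})$ — but since $|\textbf p_{ij}|\le|\bar{\textbf p}_{ij}|$ already holds with no slack and $N_2$ dominates $c(\lambda,L)$ by the parameter ordering, this is exactly what the choice $\eps=N_2^{-1/4}$ buys. I would therefore organize the proof as: (i) $|\textbf p_{ij}|\le|\bar{\textbf p}_{ij}|$; (ii) $\sigma_\lambda(\bar\Delta_{ij}^*)\le c(\lambda,L)|\bar{\textbf p}_{ij}|$ via Lemmas \ref{weakly minimal}(c) and \ref{G_a design}; (iii) conclude using $c(\lambda,L)\le\eps$ by the choice of $N_2$. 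The subtlety to double-check is step (i) when $j-i>L/2$ forces the short arc $\textbf p_{ij}$ to nonetheless be long — but Lemma \ref{path inequalities}(1),(2) together precisely guarantee $|\textbf p_{ij}|\le|\bar{\textbf p}_{ij}|$ in this regime, which is why the hypothesis $j-i>L/2$ appears.
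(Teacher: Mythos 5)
Your proposal has a genuine gap, in fact two. First, the step you call ``immediate,'' namely $|\textbf{p}_{ij}|\leq|\bar{\textbf{p}}_{ij}|$, is unjustified: $\textbf{p}_{ij}$ is an arc of $\partial\Delta$ and $\bar{\textbf{p}}_{ij}$ is the inner path along the spokes and $\partial\Pi$, and there is no a priori comparison between them. Lemma \ref{path inequalities} cannot supply one, since part (1) is a \emph{lower} bound on $|\textbf{p}_{ij}|$ and part (2) is an \emph{upper} bound on $|\bar{\textbf{p}}_{ij}|$; combined they point, if anywhere, in the opposite direction (the term $(j-i)N$ can exceed $(L-j+i+1)|V|$ when $|V|$ is small). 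Indeed the inequality you want to take for granted is essentially the content of the lemma. Second, your final parameter argument is backwards: since $\eps=N_2^{-1/4}$, enlarging $N_2$ makes $\eps$ \emph{smaller}, so the ordering ``$N_2$ dominates $c(\lambda,L)$'' yields $\eps\leq c(\lambda,L)^{-1}$, never $c(\lambda,L)\leq\eps$. Hence the bound $\sigma_\lambda(\bar{\Delta}_{ij}^*)\leq c(\lambda,L)|\partial\bar{\Delta}_{ij}|$ from Lemma \ref{G_a design}/\ref{weakly minimal}(c) cannot be absorbed into $\eps|\bar{\textbf{p}}_{ij}|$ this way; you flag this obstacle yourself midway but then conclude with exactly the move you had ruled out.

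The missing idea is that this lemma is not a general estimate valid for every weakly minimal diagram: it is a statement about the specific minimal counterexample $\Delta$ fixed in Section 11.2, and its proof must use that minimality. The paper argues by contradiction: set $y=|\textbf{p}_{ij}|+\sigma_\lambda(\bar{\Delta}_{ij}^*)$ and suppose $d=y-|\bar{\textbf{p}}_{ij}|\geq\eps|\bar{\textbf{p}}_{ij}|$, so $d\geq\eps y/2$. Cutting along $\bar{\textbf{p}}_{ij}$, the remaining subdiagram $\Psi_{ij}'$ satisfies $(|\partial\Psi_{ij}'|+\sigma_\lambda((\Psi_{ij}')^*))\leq x-d$ with $x=|\partial\Delta|+\sigma_\lambda(\Delta^*)$ (the spokes of $\bar{\Delta}_{ij}$ and $\Psi_{ij}'$ are disjoint), so the inductive hypothesis gives $\text{wt}(\Psi_{ij}')\leq N_2(x-d)^3$. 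The excised part is the disk-free clove $\Psi_{ij}$, whose weight is at most $N_1(L+3)^3y^3$ by Lemma \ref{M_a cubic} together with the perimeter bounds $|\partial\Psi_{ij}|<(L+3)y$ coming from Lemma \ref{path inequalities}, plus the disk $\Pi$ with $\text{wt}(\Pi)\leq c_7(L+1)^2y^2$. Summing and using $d^3\geq\eps^3y^3/8$, the choice $\eps=N_2^{-1/4}$ makes $N_2d^3\geq N_2^{1/4}y^3/8$ dominate $N_1(L+3)^3y^3+c_7(L+1)^2y^2$, so $\text{wt}(\Delta)\leq N_2x^3$, contradicting that $\Delta$ is a counterexample. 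This is why $\eps$ has the exotic form $N_2^{-1/4}$: it is calibrated so that $N_2\eps^3$ still grows with $N_2$, not so that $\eps$ exceeds any design constant. Your direct route, which never invokes the counterexample property, cannot produce the statement.
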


\begin{proof}

Set $y=|\textbf{p}_{ij}|+\sigma_\lambda(\bar{\Delta}_{ij}^*)$ and $d=y-|\bar{\textbf{p}}_{ij}|$. Suppose $d\geq\eps|\bar{\textbf{p}}_{ij}|$.

Then $d\geq y-\eps^{-1}d$, so that $d\geq(1+\eps^{-1})^{-1}y\geq\frac{\eps y}{2}$ for large enough choice of $N_2$. Note that, since $\Psi_{ij}'$ and $\bar{\Delta}_{ij}$ have no common spokes, $\sigma_\lambda(\bar{\Delta}_{ij}^*)+\sigma_\lambda((\Psi_{ij}')^*)\leq\sigma_\lambda(\Delta^*)$. Then, since $|\partial\Delta|-|\partial\Psi_{ij}'|\geq|\textbf{p}_{ij}|-|\bar{\textbf{p}}_{ij}|$, we have
$$(|\partial\Delta|+\sigma_\lambda(\Delta^*))-(|\partial\Psi_{ij}'|+\sigma_\lambda((\Psi_{ij}')^*))\geq d>0$$
So, setting $x=|\partial\Delta|+\sigma_\lambda(\Delta^*)$, it follows from the inductive hypothesis that
$$\text{wt}(\Psi_{ij}')\leq N_2(x-d)^3$$
By Lemma \ref{path inequalities}, $|\bar{\textbf{p}}_{ij}|<|\textbf{p}_{ij}|+|\partial\Pi|$, and so $|\partial\Psi_{ij}|<2|\textbf{p}_{ij}|+|\partial\Pi|$. Since $|\partial\Pi|\leq(L+1)|\bar{\textbf{p}}_{ij}|$, we get
$$|\partial\Psi_{ij}|<(L+3)|\textbf{p}_{ij}|\leq (L+3)y$$
Then, Lemma \ref{M_a cubic} implies that $\text{wt}(\Psi_{ij})\leq N_1(L+3)^3y^3$. 

Further, since $|\partial\Pi|\leq(L+1)|\bar{\textbf{p}}_{ij}|<(L+1)|\textbf{p}_{ij}|\leq(L+1)y$, we have 
$$\text{wt}(\Pi)\leq c_7(L+1)^2y^2$$
Together, these inequalities give 
$$
\text{wt}(\Delta)\leq N_2(x-d)^3+N_1(L+3)^3y^3+c_7(L+1)^2y^2 
$$
Hence, a contradiction has been reached as long as $N_2(x-d)^3+N_1(L+3)^3y^3+c_7(L+1)^2y^2\leq N_2x^3$, i.e 
$$N_1(L+3)^3y^3+c_7(L+1)^2y^2\leq 3N_2xd(x-d)+N_2d^3$$
As $x\geq d$, it suffices to show that
$$N_2d^3\geq N_1(L+3)^3y^3+c_7(L+1)^2y^2$$
But then $d^3\geq\frac{\eps^3}{8}y^3$, so that $N_2d^3\geq\frac{\sqrt[4]{N_2}}{8}y^3$. Letting $N_2$ be large enough in comparison to $N_1$, $c_7$, and $L$ then justifies the desired inequality.

\end{proof}

For $i=1,\dots,L-4$, if the adjacent pair of $t$-letters associated to $\pazocal{Q}_i$ and to $\pazocal{Q}_{i+1}$ are $t(1)$ and $t(2)$, then $\Psi_{i,i+1}$ is said to be the \textit{distinguished clove}.

\begin{lemma} \label{non-distinguished a-cells}

If $\Psi_{i,i+1}$ is not the distinguished clove, then there exists a simple path $\textbf{q}_{i,i+1}$ homotopic to $\textbf{p}_{i,i+1}$ such that:

\begin{addmargin}[1em]{0em}

(1) the subdiagram $\Lambda_{i,i+1}$ with contour $\textbf{q}_{i,i+1}\textbf{p}_{i,i+1}^{-1}$ contains only $(\theta,a)$-cells and $a$-cells;

(2) the diagram $\Psi_{i,i+1}^0$ formed by cutting $\Lambda_{i,i+1}$ off of $\Psi_{i,i+1}$ contains no $a$-cells;

(3) $\textbf{q}_{i,i+1}$ contains no $a$-edges labelled by a letter from the `special' input sector;

(4) all maximal $\theta$-bands of $\Lambda_{i,i+1}$ connect $\textbf{p}_{i,i+1}$ to $\textbf{q}_{i,i+1}$

\end{addmargin}

\end{lemma}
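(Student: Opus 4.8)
The plan is to extract from the clove $\Psi_{i,i+1}$ all of its $a$-cells by pushing them toward the outer boundary path $\textbf{p}_{i,i+1}$, using the transposition machinery of Section 9.3 together with Lemma~\ref{a-bands between a-cells} and the annuli dismissal of Lemma~\ref{M_a no annuli}. Since $\Psi$ contains no disks (it is one of the subdiagrams $\Gamma_j$ of Lemma~\ref{graph}), $\Psi_{i,i+1}$ is a minimal, hence diminished, diagram over $M_a(\textbf{M})$, so Lemmas~\ref{M_a no annuli}, \ref{a-bands between a-cells}, and \ref{no subcombs} all apply to it and to its subdiagrams. The working hypothesis that $\Psi_{i,i+1}$ is \emph{not} the distinguished clove is exactly what guarantees that the two adjacent $t$-letters bounding it are not $t(1), t(2)$, so the `special' input sector does not occur between $\pazocal{Q}_i$ and $\pazocal{Q}_{i+1}$; this is the crucial input to (3).

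First I would analyze the $a$-cells of $\Psi_{i,i+1}$. By Lemma~\ref{a-bands between a-cells} no $a$-band can have both ends on $a$-cells, and by Lemma~\ref{M_a no annuli} there are no $a$-annuli, so every maximal $a$-band starting on an $a$-cell of $\Psi_{i,i+1}$ must end either on $\partial\Psi_{i,i+1}$ or on a $(\theta,q)$-cell, i.e.\ on one of the two bounding $q$-bands $\pazocal{Q}_i,\pazocal{Q}_{i+1}$, or on the top/bottom $\theta$-bands of the clove. The key point is that since the base strictly between $\pazocal{Q}_i$ and $\pazocal{Q}_{i+1}$ omits the `special' input sector, the $a$-letters occurring on edges of these $a$-bands are \emph{not} from the `special' input alphabet, so any $a$-band touching an $a$-cell that reaches $\partial\Psi_{i,i+1}$ must reach the part lying on $\textbf{p}_{i,i+1}$ (an $a$-edge on $\textbf{top}(\pazocal{Q}_i)$ or $\textbf{bot}(\pazocal{Q}_{i+1})$ would be, after passing along a $q$-band, an $a$-letter of a sector adjacent to a $t$-letter, contradicting that $t$-sectors have empty tape alphabet). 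Then I would iterate the transposition of a $\theta$-band with an $a$-cell (Section 9.3): take a $\theta$-band $\pazocal{T}$ in $\Psi_{i,i+1}$ with an $a$-cell $\pi$ immediately above (or below) it with no cells between; since the base omits the `special' sector, one can transpose, and minimality of $\Psi_{i,i+1}$ forces the transposition never to increase the signature — in particular one shows the portion of $\partial\pi$ on the bottom of $\pazocal{T}$ is at most half of $\partial\pi$, so the $a$-cell is effectively pushed outward past $\pazocal{T}$. Repeating, all $a$-cells migrate until they sit adjacent to $\textbf{p}_{i,i+1}$, producing the subdiagram $\Lambda_{i,i+1}$ containing exactly the $a$-cells together with the $(\theta,a)$-cells of the $a$-bands escorting them to $\textbf{p}_{i,i+1}$, and leaving $\Psi_{i,i+1}^0$ free of $a$-cells.

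Next I would define $\textbf{q}_{i,i+1}$ to be the inner boundary path separating $\Lambda_{i,i+1}$ from $\Psi_{i,i+1}^0$, so that $\partial\Lambda_{i,i+1}=\textbf{q}_{i,i+1}\textbf{p}_{i,i+1}^{-1}$ and $\textbf{q}_{i,i+1}$ is homotopic to $\textbf{p}_{i,i+1}$; after $0$-refinement one may assume it is simple. Claim (1) then holds by construction. Claim (4): every $\theta$-edge of $\Lambda_{i,i+1}$ lies on some maximal $\theta$-band, which by Lemma~\ref{M_a no annuli} cannot close up and cannot end on the $q$-bands $\pazocal{Q}_i,\pazocal{Q}_{i+1}$ once those are placed outside $\Lambda_{i,i+1}$ (the construction keeps $\Lambda_{i,i+1}$ between $\textbf{p}_{i,i+1}$ and $\textbf{q}_{i,i+1}$ only, with no $q$-edges in its interior since it has no $(\theta,q)$-cells), so it must connect $\textbf{p}_{i,i+1}$ to $\textbf{q}_{i,i+1}$. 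Claim (3): an $a$-edge of $\textbf{q}_{i,i+1}$ labelled by a `special'-sector letter would start a maximal $a$-band into $\Psi_{i,i+1}^0$, which by Lemma~\ref{a-bands between a-cells} would have to end on a $(\theta,q)$-cell or on $\partial\Psi_{i,i+1}^0$; tracing it to a $q$-band and across, as in the paragraph above, contradicts the absence of the `special' sector in the base between $\pazocal{Q}_i$ and $\pazocal{Q}_{i+1}$, and it cannot end on an $a$-cell since $\Psi_{i,i+1}^0$ has none.

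The main obstacle I anticipate is making precise the iterated-transposition argument in (2): one must check that the transpositions terminate and genuinely move \emph{all} $a$-cells to the boundary path $\textbf{p}_{i,i+1}$ rather than merely rearranging them, and that at no stage does a transposition create an $a$-cell whose boundary shares more than half its edges with a $\theta$-band (which would contradict minimality and thus show the relevant configuration cannot occur, but one has to organize the induction — e.g.\ on the number of $(\theta,a)$-cells lying between some $a$-cell and $\textbf{p}_{i,i+1}$, or on the signature — so the argument is not circular). One also needs to confirm that after all transpositions the resulting diagram still has the same contour label and is still weakly minimal/minimal so that the earlier lemmas keep applying; this is the kind of bookkeeping that is routine but must be stated carefully, and it parallels the corresponding argument in Section~7 of~[26] with the $a$-cells playing a new role.
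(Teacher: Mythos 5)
Your overall strategy is not the paper's, and it has a genuine gap at its core. You propose to physically migrate every $a$-cell of $\Psi_{i,i+1}$ out to $\textbf{p}_{i,i+1}$ by iterating the transposition of a $\theta$-band with an $a$-cell, and you yourself flag the termination/minimality bookkeeping as the main obstacle --- but that obstacle is fatal rather than routine. Minimality only forbids the configuration in which an $a$-cell shares \emph{more than half} of its boundary with one $\theta$-band; it gives no mechanism whatsoever for moving an $a$-cell past a $\theta$-band that meets a smaller portion of its boundary, and such a move in general \emph{increases} the number of $(\theta,a)$-cells (the paper notes explicitly in Section 9.3 that the result of a transposition need not be minimal). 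An $a$-cell sitting deep inside the clove, separated from $\textbf{p}_{i,i+1}$ by many $\theta$-bands each touching only a few of its edges, is perfectly compatible with minimality, so there is no contradiction to exploit and no way to organize the induction you sketch. Moreover, the lemma is needed for the fixed counterexample diagram $\Delta$ itself: the path $\textbf{q}_{i,i+1}$, the subdiagrams $\Lambda_{i,i+1}$ and $\Psi^0_{i,i+1}$, and the trapezia $\Gamma_j$ extracted from them are used later \emph{inside} $\Delta$, so producing the decomposition only in a surgically modified diagram (whose minimality you cannot guarantee anyway) does not serve the subsequent arguments.

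The paper's proof leaves the $a$-cells where they are and instead \emph{cuts along $a$-band sides}. For an $a$-cell $\pi$, every maximal $a$-band starting on $\pi$ must end on $\textbf{p}_{i,i+1}$: its tape letter lies in the `special' input alphabet, so it could only otherwise end on a $(\theta,q)$-cell with state letter in $Q_0(1)$ or $R_0(1)$, and such $q$-edges are excluded in $\Psi_{i,i+1}$ by Lemma \ref{no subcombs} combined with the non-distinguished hypothesis (your justification for this step, via ``$t$-sectors have empty tape alphabet,'' misses that one must also rule out maximal $q$-bands with both ends on $\textbf{p}_{i,i+1}$, which is exactly what Lemma \ref{no subcombs} does). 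Taking the first and last edges $\textbf{e},\textbf{f}$ of $\textbf{p}_{i,i+1}$ where these bands end, with bands $\textbf{A}_e,\textbf{A}_f$, one shows no $\theta$-band crosses both $\textbf{A}_e$ and $\textbf{A}_f$: otherwise every $a$-edge of $\pi$ would start a band crossing it, and transposing the innermost such $\theta$-band with $\pi$ would strictly reduce the number of $(\theta,a)$-cells, contradicting minimality --- this is the only place transposition appears, and only inside a contradiction, never as an actual modification. The path $\textbf{q}_\pi=\textbf{bot}(\textbf{A}_e)\textbf{top}(\textbf{A}_f)^{-1}$, made of $\theta$-edges, then cuts $\pi$ and its escorting $(\theta,a)$-cells off into $\Lambda_{i,i+1}$, and the ``no $\theta$-band crosses both'' fact is what yields your claim (4). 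Finally, (3) requires a second cutting step, replacing segments of the resulting path by bottoms of $a$-bands that start on its special-sector $a$-edges; these bands cannot end on $a$-cells (now on the other side of the cut) or on $(\theta,q)$-cells, so the new segments consist of $\theta$-edges. Your conclusions (1), (3), (4) are argued plausibly, but they all rest on the migration step, which as proposed cannot be carried out.
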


\begin{proof}

If $\Psi_{i,i+1}$ contains no $a$-cells, then the statement holds for $\textbf{q}_{i,i+1}=\textbf{p}_{i,i+1}$. So, assume there exists an $a$-cell $\pi$ in $\Psi_{i,i+1}$. 

By Lemma \ref{no subcombs}, no $q$-edge of $\Psi_{i,i+1}$ is labelled by a $q$-edge of the `special' input sector, i.e $Q_0(1)$ or $R_0(1)$. So, by Lemmas \ref{a-bands between a-cells} and \ref{M_a no annuli}, every $a$-band starting on $\pi$ must end on $\textbf{p}_{i,i+1}$.

Let $\textbf{e}$ (respectively $\textbf{f}$) be the first (respectively last) edge of $\textbf{p}_{i,i+1}$ that marks the end of an $a$-band $\textbf{A}_e$ (respectively $\textbf{A}_f)$ starting on $\pi$.

Suppose a $\theta$-band $\pazocal{T}_0$ crosses both $\textbf{A}_e$ and $\textbf{A}_f$. Then Lemmas \ref{a-bands between a-cells} and \ref{M_a no annuli} imply that every $a$-edge of $\pi$ marks the start of an $a$-band that crosses $\pazocal{T}_0$. So, we can find a $\theta$-band $\pazocal{T}$ crossing every such band and such that there are no cells between $\pazocal{T}$ and $\pi$. However, we can then transpose $\pazocal{T}$ with $\pi$ and reduce the number of $(\theta,a)$-cells while leaving all other numbers fixed, contradicting the minimality of $\Psi_{i,i+1}$.

Now, letting $\textbf{p}_\pi$ be a subpath of $\textbf{p}_{i,i+1}$ starting with $\textbf{e}$ and ending with $\textbf{f}$, define the homotopic path $\textbf{q}_\pi=\textbf{bot}(\textbf{A}_e)\textbf{top}(\textbf{A}_f)^{-1}$. Note that all edges of $\textbf{q}_\pi$ are $\theta$-edges.

For the subdiagram $\Lambda_\pi$ with contour label $\textbf{p}_\pi\textbf{q}_\pi^{-1}$, no edge of the contour is a $q$-edge, so that every cell is either a $(\theta,a)$- or an $a$-cell.

Then, define the path $\textbf{q}_{i,i+1}'$ homotopic to $\textbf{p}_{i,i+1}$ by replacing $\textbf{p}_\pi$ in $\textbf{p}_{i,i+1}$ with the path $\textbf{q}_\pi$ for all $a$-cells $\pi$.

For any $a$-edge $\textbf{e}'$ of $\textbf{q}_{i,i+1}'$ labelled by a letter from the `special' input sector, there exists an $a$-band $\pazocal{B}$ starting on it. As above, this band must end on $\textbf{q}_{i,i+1}'$, say at $\textbf{f}'$. Letting $\textbf{q}_{\pazocal{B}}'$ be the subpath of $\textbf{q}_{i,i+1}'$ starting on $\textbf{e}'$ and ending on $\textbf{f}'$, consider the path homotopic to it $\textbf{q}_{\pazocal{B}}=\textbf{bot}(\pazocal{B})$. Note that $\textbf{q}_{\pazocal{B}}$ is made entirely of $\theta$-edges and that the subdiagram $\Lambda_{\pazocal{B}}$ with contour $\textbf{q}_{\pazocal{B}}'\textbf{q}_{\pazocal{B}}^{-1}$ contains only $(\theta,a)$-cells.

Define the path $\textbf{q}_{i,i+1}$ homotopic to $\textbf{q}_{i,i+1}'$ by replacing $\textbf{q}_{\pazocal{B}}$ in $\textbf{q}_{i,i+1}'$ with the path $\textbf{q}_{\pazocal{B}}$ for all such bands $\pazocal{B}$.

It is then easy to see that the statement holds for this path.

%Suppose this is not true, i.e $\Psi_{i,i+1}$ contains an $a$-cell $\pi$ with an edge $\textbf{e}$ of $\partial\pi$ not on $\partial\Delta$. 
%
%By Lemmas \ref{a-bands between a-cells} and \ref{M_a no annuli}, since no $q$-letter is from the `special' input sector, the maximal $a$-bands starting on $\pi$ must end on $\textbf{p}_{i,i+1}$. So, letting $\textbf{A}$ be the $a$-band starting on $\textbf{e}$, we can apply a sequence of transpositions between $\pi$ and the maximal $\theta$-bands intersecting $\textbf{A}$ to push $\textbf{e}$ to $\partial\Delta$. Note that this changes the diagram only by the number of $(\theta,a)$-cells, and this change is captured by the change in the lengths of the $a$-bands ending on $\pi$.
%
%However, Lemmas \ref{theta-bands in cloves}(1) and \ref{M_a no annuli} imply that $\partial\pi$ is a subpath of the resulting diagram's contour, so that the lengths of the $a$-bands ending on $\pi$ is zero. This implies that the number of $(\theta,a)$-cells of the resulting diagram are at most the number in $\Delta$.

\end{proof}

To extend the definitions arising from Lemma \ref{non-distinguished a-cells}, if $\Psi_{i,i+1}$ is the distinguished clove, then define $\textbf{q}_{i,i+1}$ as $\textbf{p}_{i,i+1}$. We then define $\textbf{q}_{ij}$ and $\Psi_{ij}^0$ for $1\leq i<j\leq L-3$ in the natural way.

\begin{lemma} \label{q_{ij} inequalities}

\textit{(Lemma 7.27 of [26])} If $i\leq r$ and $j\geq r+1$, then $|\textbf{q}_{ij}|\geq|\textbf{q}_{ij}|_\theta+|\textbf{q}_{ij}|_q\geq h_i+h_j+(j-i)N+1$

\end{lemma}

%\begin{proof}
%
%This is clear from Lemma \ref{path inequalities}(1) after noting that $|\textbf{q}_{i,i+1}|_q=|\textbf{p}_{i,i+1}|_q$ and $|\textbf{q}_{i,i+1}|_\theta=|\textbf{p}_{i,i+1}|_\theta$ for all $i$.
%
%\end{proof}

It follows from Lemmas \ref{no subcombs} and \ref{theta-bands in cloves} that for $1\leq j\leq r-1$, if $\Psi_{j,j+1}$ is not the distinguished clove, then $\Psi_{j,j+1}^0$ contains a trapezium $\Gamma_j$ of height $h_{j+1}$ with side $t$-bands. Similarly, we have the trapezia $\Gamma_j$ of height $h_j$ in $\Psi_{j,j+1}^0$ for $r+1\leq j\leq L-4$ given that $\Psi_{j,j+1}$ is not distinguished.

For each such $j$, the bottom of $\Gamma_j$, $\textbf{y}_j$, is shared with $\partial\Pi$ and has label $Vt(l)$ for some $l$. Its top is denoted $\textbf{z}_j$. For $2\leq j\leq r-1$, given $\Gamma_j$ and $\Gamma_{j-1}$ both exist (i.e neither $\Psi_{j-1,j}$ and $\Psi_{j,j+1}$ are the distinguished clove), their bottom labels are coordinate shifts of one another while the history $H_j$ is a prefix of $H_{j-1}$; so, $h_{j+1}$ different $\theta$-bands of $\Gamma_{j-1}$ form a copy $\Gamma_j'$ of the trapezium $\Gamma_j$ with top and bottom paths $\textbf{z}_j'$ and $\textbf{y}_j'=\textbf{y}_{j-1}$.

If $\Psi_{j,j+1}$ is not the distinguised clove for $1\leq j\leq r-1$, we denote by $E_j$ (respectively $E_j^0$) the comb formed by the maximal $\theta$-bands of $\Psi_{j,j+1}$ (respectively $\Psi_{j,j+1}^0$) crossing the $t$-spoke $\pazocal{Q}_j$ but not crossing $\pazocal{Q}_{j+1}$. Its handle $\pazocal{C}_j$ of height $h_j-h_{j+1}$ is contained in $\pazocal{Q}_j$; $\partial E_j$ (respectively $\partial E_j^0$) consists of the side of this handle, the paths $\textbf{z}_j$, and the path $\textbf{p}_{j,j+1}$ (respectively $\textbf{q}_{j,j+1}$).

\begin{lemma} \label{a-bands ending on theta-band}

\textit{(Lemma 7.30 of [26])} At most $2N$ $a$-bands starting on the path $\textbf{y}_j$ (or $\textbf{z}_j$) can end on the $(\theta,q)$-cells of the same $\theta$-band.

\end{lemma}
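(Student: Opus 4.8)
## Proof Strategy for Lemma \ref{a-bands ending on theta-band}

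The plan is to reduce this to a local statement about the structure of a single rule $\theta$ of the machine $\textbf{M}$. The path $\textbf{y}_j$ (or $\textbf{z}_j$) is the bottom (respectively top) of a trapezium $\Gamma_j$, and by Lemma \ref{trapezia are computations} its label is an admissible word of the form $Vt(l)$ for some coordinate $l$; in particular, it is a configuration of one of the auxiliary machines $\textbf{M}_5$. An $a$-band starting on $\textbf{y}_j$ and ending on a $(\theta,q)$-cell of a fixed $\theta$-band $\pazocal{T}$ corresponds, via the correspondence between trapezia and computations, to a tape letter $a$ of the configuration $V$ that lies in the domain of the rule $\theta$ and is consumed/produced at the $(\theta,q)$-cell marking a transition between two sectors. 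The key observation to extract is that for any single rule $\theta$ of $\textbf{M}$, each part of $\theta$ is (by Lemma \ref{simplify rules} and the conventions following it) of the form $q_i\to a_iq_i'b_{i+1}$ with $\|a_i\|+\|b_i\|\leq 1$, so a single $(\theta,q)$-cell of $\pazocal{T}$ can be the endpoint of at most two $a$-bands (one on each side of the $q$-edge). Since there are $N$ sectors in the standard base of $\textbf{M}_5$ — equivalently, the base $\{t\}B_4$ has length $N$ — a fixed $\theta$-band $\pazocal{T}$ crossing $\Gamma_j$ contains at most $N$ $(\theta,q)$-cells whose $q$-edges lie in the portion of the base relevant to $\textbf{y}_j$, and hence at most $2N$ $a$-bands starting on $\textbf{y}_j$ can end on them.

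More carefully, first I would set up the correspondence: fix the $\theta$-band $\pazocal{T}$ in question and let $\pazocal{T}$ cross $\Gamma_j$, meeting its $q$-bands in $(\theta,q)$-cells $\Pi_1,\dots,\Pi_m$ read left to right. By Lemma \ref{theta-bands are one-rule computations} the trimmed bottom of $\pazocal{T}$ is an admissible word $W'$ and $W'\cdot\theta$ is its trimmed top, and the number of $(\theta,q)$-cells $m$ is bounded by the length of the base of $\pazocal{T}$, which is at most $N$ since $\textbf{y}_j$ has label $Vt(l)$ whose base is a subword of a single block $\{t(x)\}B_4(x)$ of length $N$. Next I would argue that each $a$-band starting on $\textbf{y}_j$ and ending on some $\Pi_p$ must end at one of the two $a$-edges of $\partial\Pi_p$ adjacent to its $q$-edge — this is immediate from the form of the $(\theta,q)$-relation $q_i\theta_{i+1}=\theta_iv_iq_i'u_{i+1}$, where at most the single $a$-letters $v_i,u_{i+1}$ appear. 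So each $\Pi_p$ absorbs at most $2$ such $a$-bands, giving the bound $2m\leq 2N$. The argument for $\textbf{z}_j$ is identical, reading the band $\pazocal{T}$ from the top instead of the bottom and using the $\theta^{-1}$-admissibility clause of Lemma \ref{theta-bands are one-rule computations}.

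The main obstacle I anticipate is bookkeeping about which $\theta$-band is meant and how $a$-bands interact with it: an $a$-band could in principle cross $\pazocal{T}$ transversely without ending on it, so I must be careful to count only $a$-bands that genuinely \emph{end} on a $(\theta,q)$-cell of $\pazocal{T}$, not those that merely pass through. By Lemma \ref{M_a no annuli} (and its weakly-minimal analogue, Lemma \ref{weakly minimal}(e)), a maximal $a$-band and a maximal $\theta$-band cross in at most one cell, so an $a$-band that crosses $\pazocal{T}$ does so in a single $(\theta,a)$-cell and continues past it; such a band is not counted. The only $a$-bands counted are those terminating at a $(\theta,q)$-cell of $\pazocal{T}$, and the local relation-form argument above handles exactly these. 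One subtlety worth recording explicitly: two distinct $(\theta,q)$-cells of $\pazocal{T}$ could in principle sit in the \emph{same} sector if the base of $\pazocal{T}$ is non-reduced, but since $\pazocal{T}$ lies inside a trapezium whose base is a subword of the standard base of $\textbf{M}$ (which is a reduced word), the base of $\pazocal{T}$ is reduced and has length at most $N$, so $m\leq N$ holds without further ado.
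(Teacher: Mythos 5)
Your strategy is essentially the right one: the count is controlled by the number of $a$-edges lying on $(\theta,q)$-cells of the fixed band $\pazocal{T}$, and the $(\theta,q)$-relations contribute at most two $a$-edges each. But the bookkeeping has two gaps that as written break the arithmetic, and there is a missing case.

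First, the off-by-one. You claim the base of $\textbf{y}_j$, labelled $Vt(l)$, ``is a subword of a single block $\{t(x)\}B_4(x)$ of length $N$,'' and hence $m\le N$. That is not so: $V$ itself already carries the full standard base $\{t\}B_4$ of length $N$ (recall $N$ is \emph{defined} in Section 4.7 as the length of $\{t\}B_4$), and the trailing $t(l)$ adds one more letter, so the base of $\Gamma_j$ has length $N+1$ and a $\theta$-band of $\Gamma_j$ has $m=N+1$ $(\theta,q)$-cells. Your estimate then yields $2m=2(N+1)>2N$, which is not what you want. The repair is the observation you did not make: the two extreme cells of $\pazocal{T}$ in $\Gamma_j$ sit on the side $t$-bands and are therefore $(\theta,t)$-cells, whose defining relation $\theta_jt(i)=t(i)\theta_{j+1}$ contains no $a$-letters at all; they absorb zero $a$-bands. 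That brings the count down to $2(N-1)<2N$.

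Second, you implicitly restrict to $\theta$-bands of $\Gamma_j$ (``let $\pazocal{T}$ cross $\Gamma_j$''). But an $a$-band starting on $\textbf{y}_j$ can run straight through $\Gamma_j$, exit across $\textbf{z}_j$, and continue into the comb $E_j^0$; and an $a$-band starting on $\textbf{z}_j$ may head up into $E_j^0$ from the start. For a $\theta$-band $\pazocal{T}$ of $E_j^0$, there is no reason its base should be a subword of the base of $\textbf{y}_j$, so the ``length of the base of $\pazocal{T}$ is $\le N$'' step has no footing there. The correct framing, which also subsumes the in-$\Gamma_j$ case, is by sector confinement: an $a$-band from $\textbf{y}_j$ (or $\textbf{z}_j$) is trapped in the strip between the two maximal $q$-bands emanating from the consecutive $q$-edges of $\textbf{y}_j$ (resp.\ $\textbf{z}_j$) that flank its starting $a$-edge, since an $a$-band consists only of $(\theta,a)$-cells and so never shares a cell with a $q$-band. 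By Lemma \ref{M_a no annuli} (applicable since $\Psi_{j,j+1}^0$ is a minimal diagram over $M_a(\textbf{M})$), the band $\pazocal{T}$ crosses each of these $q$-bands at most once, so $\pazocal{T}$ has at most two $(\theta,q)$-cells bordering each strip, each presenting at most one $a$-edge into that strip. With $N$ strips (between the $N+1$ $q$-edges of $\textbf{y}_j$, two of which are $t$-letters whose flanking sectors are locked and contribute nothing), the total is at most $2N$. In short: bound by strips and by ``crosses each $q$-band once,'' not by the base length of $\pazocal{T}$, and do not forget the $(\theta,t)$-cells at the ends.
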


%\begin{proof}
%
%Assume each of the $a$-bands $\textbf{A}_1,\dots,\textbf{A}_l$ starts from an edge of $\textbf{y}_j$ and ends on some $(\theta,q)$-cell of the $\theta$-band $\pazocal{T}$.  Let $\pazocal{T}_0$ be the minimal subband of $\pazocal{T}$ where the $a$-bands $\textbf{A}_2,\dots,\textbf{A}_{s-1}$ end and $\bar{\textbf{y}}_j$ the minimal subpath of $\textbf{y}_j$ where they start. By Lemma \ref{M_a no annuli}, each $q$-band starting on $\bar{\textbf{y}}_j$ has to cross $\pazocal{T}_0$ and vice versa. So, the base of $\pazocal{T}_0$ is a subword of the standard base of $\textbf{M}_5$ (or its inverse). Noting that every rule of $\textbf{M}$ can change at most $2(N-2)$ $a$-letters in a word with such a base, the statement follows easily.
%
%An analogous argument works for $a$-bands starting from $\textbf{z}_j$.
%
%\end{proof}

Recall the parameter $L_0$, assigned in Section 3.3, is assigned before $L$. Without loss of generality, suppose $h\defeq h_{L_0+1}\geq h_{L-L_0-3}$.

\begin{lemma} \label{h leq 1}

\textit{(Lemma 7.31 of [26])} If $h\leq L_0^2|V|_a$, then the number of trapezia $\Gamma_j$ satisfying $|\textbf{z}_j|_a\geq|V|_a/2c_6N$ for $j\in[L_0+1,r-1]$ or $j\in[r+1,L-L_0-3]$ is less than $L/5$.

\end{lemma}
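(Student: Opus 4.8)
\textbf{Proof plan for Lemma \ref{h leq 1}.}

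The plan is to argue by contradiction: assume that at least $L/5$ of the trapezia $\Gamma_j$ (with $j$ in one of the two ranges $[L_0+1,r-1]$ or $[r+1,L-L_0-3]$) have top labels with $a$-length at least $|V|_a/2c_6 N$. First I would reduce to a single range, say $j\in[L_0+1,r-1]$, at the cost of a factor of $2$, so that we have at least $L/10$ such "fat" trapezia with consecutive indices $j_1<j_2<\dots$ inside $[L_0+1,r-1]$. The key structural fact to exploit is that for such indices the histories are nested: $H_j$ is a prefix of $H_{j-1}$, so the trapezium $\Gamma_j$ embeds as a copy $\Gamma_j'$ inside $\Gamma_{j-1}$ sharing the bottom path $\textbf{y}_{j-1}$, and iterating, all the fat trapezia with indices $\leq j$ can be viewed as nested subtrapezia sitting above the common bottom $\textbf{y}_{j_1}$ (which has label $Vt(l)$ for some $l$, hence $a$-length $|V|_a$).

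Next I would track what happens to the $a$-length as we pass from $\textbf{y}_j$ up to $\textbf{z}_j$ and then reinterpret $\textbf{z}_j$ as (a coordinate shift of) a bottom path at the next stage. The hypothesis $h\leq L_0^2|V|_a$ bounds the total height of the region under consideration, so each fat trapezium contributes a controlled number of $(\theta,t)$-cells and the intermediate configurations have $a$-length at most $c_6|V|_a$ by the long-history bounds (Lemma \ref{M projected long history}, via restriction to a base $\{t(x)\}B_4(x)$ with $x\geq 2$), except possibly in the special input sector --- but Lemma \ref{non-distinguished a-cells}(3) has removed those $a$-edges from $\textbf{q}_{j,j+1}$, and the $\Psi_{j,j+1}^0$ are $a$-cell-free. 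The condition $|\textbf{z}_j|_a\geq|V|_a/2c_6N$ together with Lemma \ref{a-bands ending on theta-band} (at most $2N$ $a$-bands from $\textbf{y}_j$ or $\textbf{z}_j$ end on the same $\theta$-band) forces each fat trapezium $\Gamma_j$ to contain at least $|\textbf{z}_j|_a/2N \geq |V|_a/4c_6N^2$ distinct maximal $a$-bands crossing it, hence that many $(\theta,a)$-cells per $\theta$-band of $\Gamma_j$, i.e.\ at least $h_{j+1}\cdot|V|_a/4c_6N^2$ cells. Summing the weights of all these cells over the $\geq L/10$ fat trapezia, and using that the heights $h_j$ in $[L_0+1,r-1]$ are all at least $h = h_{L_0+1}\geq \dots$, one gets a lower bound on $\mathrm{wt}(\Delta)$ of order $(L/10)\cdot h\cdot |V|_a$ (up to the $N,c_6$ constants), whereas the perimeter contribution of this region is controlled: each fat trapezium contributes boundary length roughly $h_j + |V|$ on $\textbf{p}_{j,j+1}$, and $|\partial\Delta|$ sees these disjointly, so $|\partial\Delta|+\sigma_\lambda(\Delta^*)$ is comparable to something like $L|V| + \sum h_j$ rather than $L\cdot h\cdot|V|$.

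To finish I would combine this with the minimality of the counterexample $\Delta$: excise the subdiagram consisting of these fat trapezia (or rather pass to $\Psi'$ / use the cutting along the relevant spokes as in the proof of Lemma \ref{eps intro}), apply the inductive hypothesis to the complement, add back the weight of the excised region via Lemma \ref{M_a cubic} and the quadratic bound on $\mathrm{wt}(\Pi)$, and derive $\mathrm{wt}(\Delta)\leq N_2(|\partial\Delta|+\sigma_\lambda(\Delta^*))^3$, contradicting the choice of $\Delta$. The arithmetic here is the same flavor as in Lemma \ref{eps intro}: one needs the "gain" $d$ in the quantity $|\partial\Delta|+\sigma_\lambda(\Delta^*)$ obtained by cutting to dominate the cube-root-of-$N_2$ weight we are throwing away, which works because $L_0$ (and hence the number of fat trapezia we need, and the ratio between height and base contributions) is chosen before $L$, which is chosen before $\delta^{-1},N_1,N_2,N_3$.

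The main obstacle I anticipate is the bookkeeping in the second paragraph: making precise how the nested copies $\Gamma_j'\subset\Gamma_{j-1}$ interact with the $a$-cells and with Lemma \ref{a-bands ending on theta-band}, and in particular ensuring that "fatness" of $\textbf{z}_j$ really does propagate into genuinely distinct $(\theta,a)$-cells rather than cells already counted for a neighboring trapezium --- the disjointness of the $a$-bands across the different $\Gamma_j$ is what has to be verified carefully, since a priori an $a$-band could wander across several cloves. This is handled by the no-subcomb property (Lemma \ref{no subcombs}) and by the observation that distinct maximal $a$-bands cannot intersect, so the counting is ultimately a sum over disjoint sets of cells; but stating it cleanly is where the real work lies.
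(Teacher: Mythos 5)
There is a genuine gap, and it sits exactly at the quantitative core of your plan. First, the step ``$|\textbf{z}_j|_a\geq|V|_a/2c_6N$ together with Lemma \ref{a-bands ending on theta-band} forces at least $|\textbf{z}_j|_a/2N$ maximal $a$-bands to cross $\Gamma_j$, hence at least $h_{j+1}|V|_a/4c_6N^2$ many $(\theta,a)$-cells'' is a non sequitur: Lemma \ref{a-bands ending on theta-band} only bounds how many $a$-bands starting on $\textbf{y}_j$ or $\textbf{z}_j$ can \emph{end} on one and the same $\theta$-band; it does not force any band to traverse all of $\Gamma_j$, since a band may terminate on a $(\theta,q)$-cell after crossing only a few $\theta$-bands. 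Second, and more seriously, the quantity you produce is a \emph{lower} bound on $\mathrm{wt}(\Delta)$, which cannot contradict anything: $\Delta$ is a counterexample precisely because its weight is large, and no upper bound on $\mathrm{wt}(\Delta)$ is available until the end of the whole argument. Your fallback excision in the last paragraph does not repair this, because you never exhibit the source of the ``gain'' $d$ in $|\partial\Delta|+\sigma_\lambda(\Delta^*)$: in Lemma \ref{eps intro} that gain is the hypothesis being contradicted, whereas here it has to be \emph{produced}, and producing it is exactly the content of this lemma. The hypothesis $h\leq L_0^2|V|_a$, which you only use in passing, is precisely what makes the gain appear.

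What is actually needed is the upward, boundary-directed count. Since $\Psi^0_{j,j+1}$ has no $a$-cells (Lemma \ref{non-distinguished a-cells}), each of the $|\textbf{z}_j|_a$ maximal $a$-bands through $\textbf{z}_j$ either ends on one of the $h_j-h_{j+1}$ $\theta$-bands of the comb $E^0_j$ lying above $\textbf{z}_j$ (at most $2N$ per band, by Lemma \ref{a-bands ending on theta-band}) or exits through $\textbf{q}_{j,j+1}$; so at least $|V|_a/2c_6N-2N(h_j-h_{j+1})$ of them reach the outer arc. If there were at least $L/5$ fat indices, summing and using the telescoping bound $\sum_j(h_j-h_{j+1})\leq 2h\leq 2L_0^2|V|_a$ gives a surplus of at least roughly $L|V|_a/20c_6N$ $a$-edges on $\textbf{q}_{L_0+1,\,L-L_0-3}$, hence (by Lemma \ref{lengths}, the absorption into $(\theta,a)$-syllables being at most $2(h_{L_0+1}+h_{L-L_0-3})\leq 4L_0^2|V|_a$) an extra $\delta L|V|_a/40c_6N$ in $|\textbf{p}_{L_0+1,\,L-L_0-3}|$ beyond $h_{L_0+1}+h_{L-L_0-3}+(L-2L_0-4)N$ (cf. Lemma \ref{q_{ij} inequalities} and Lemma \ref{path inequalities}). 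This contradicts the already proved Lemma \ref{eps intro} applied with $i=L_0+1$, $j=L-L_0-3$: there $|\bar{\textbf{p}}_{ij}|\leq h_i+h_j+(2L_0+5)|V|$, and it is again the hypothesis $h\leq L_0^2|V|_a$ that makes the error terms $\eps(h_i+h_j)$ and $(2L_0+5)\delta|V|_a$ negligible against the surplus, since $L\gg L_0^2c_6N$ and $\eps=N_2^{-1/4}\ll\delta$. So no new excision or inductive bookkeeping is required; the missing ingredient in your proposal is the count of $a$-bands forced onto the boundary arcs, which is where both the hypothesis and Lemma \ref{a-bands ending on theta-band} actually enter.
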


\begin{lemma} \label{h leq 2}

\textit{(Lemma 7.32 of [26])} If $h\leq L_0^2|V|_a$, then the histories $H_1$ and $H_{L-3}$ have different first letters.

\end{lemma}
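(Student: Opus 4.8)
The plan is to argue by contradiction: suppose that $H_1$ and $H_{L-3}$, read starting from $\Pi$, begin with the same rule $\theta\in\Theta^+$. By the prefix relations recorded just before Lemma \ref{path inequalities} ($H_{i+1}$ is a prefix of $H_i$ for $i<r$ and $H_i$ is a prefix of $H_{i+1}$ for $i>r$), every nonempty $H_j$ then also begins with $\theta$; equivalently, for each $j$ with $h_j\ge 1$ the $\theta$-band of $\Delta$ whose cell meets $\partial\Pi$ along the first edge of $\pazocal{Q}_j$ corresponds to the rule $\theta$.

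The key structural observation is that $a$-cells occur only inside the distinguished clove: the boundary label of an $a$-cell is a word over the `special' input alphabet $\pazocal{A}$, and letters of $\pazocal{A}$ appear only in the `special' input sector, so every $a$-band ends on that sector and an $a$-cell cannot lie in a clove $\Psi_{j,j+1}$ unless the $t$-letters bounding it are $t(1),t(2)$. Hence, across any maximal run of \emph{non-distinguished} consecutive cloves in which the trapezia $\Gamma_j$ exist (i.e.\ the spokes are nontrivial), the bottom $\theta$-band of $\Gamma_j$ — which, since $\textbf{y}_j\subseteq\partial\Pi$, contains the first cells of both $\pazocal{Q}_j$ and $\pazocal{Q}_{j+1}$ — is a segment of a single maximal $\theta$-band that remains adjacent to $\partial\Pi$ throughout the run; by Lemma \ref{theta-bands in cloves}(2) such a band crosses spokes on only one side of the valley $\pazocal{Q}_r,\pazocal{Q}_{r+1}$. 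This produces disjoint maximal $\theta$-bands, all of rule $\theta$ and all adjacent to $\partial\Pi$ except across the $a$-cells of the distinguished clove, whose $t$-spoke-crossings cover all of $\pazocal{Q}_1,\dots,\pazocal{Q}_{L-3}$ apart from a bounded number of spokes near the distinguished clove and near the valley.

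If each of these runs were long enough that two of the bands together cross more than $L/2$ of the spokes, Lemma \ref{G_a theta-annuli}(1),(2) (with the two rule-$\theta$ bands read toward the disk) would immediately give a contradiction; using $r\ge L/2-3$ this already disposes of the case with no distinguished clove and, more generally, of every configuration in which the runs are not cut short. This is exactly where the hypothesis $h\le L_0^2|V|_a$ enters: only in this regime can the middle spokes $\pazocal{Q}_{L_0+1},\dots,\pazocal{Q}_{L-L_0-3}$ be short enough (length $\le h$, possibly zero) for a run of the common $\theta$-band to terminate prematurely. I would then invoke Lemma \ref{h leq 1} — all but fewer than $L/5$ of the trapezia $\Gamma_j$ in the two stated index ranges satisfy $|\textbf{z}_j|_a<|V|_a/2c_6N$, hence are essentially full-width and keep forcing the common band across the next spoke — together with Lemma \ref{a-bands ending on theta-band}, which caps at $2N$ the $(\theta,a)$-cells a single $\theta$-band can pick up from $a$-bands off $\textbf{y}_j$ in the distinguished clove, to conclude that the rule-$\theta$ band emanating from $\pazocal{Q}_1$ must after all cross more than $L/2$ of the spokes $\pazocal{Q}_1,\dots,\pazocal{Q}_{L-3}$, contradicting Lemma \ref{G_a theta-annuli}(1).

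The whole scheme parallels the proof of Lemma 7.32 of [26]; the only genuinely new ingredient, and the main obstacle, is the bookkeeping at the distinguished clove. One must track — by transposing the $\theta$-band past the $a$-cells, as in the transposition procedure used for Lemma \ref{G_a theta-annuli} — how far the rule-$\theta$ band coming out of the left run can be continued before it is forced to terminate, verify that the $a$-cells contribute only a bounded additive error to the spoke count, and separately handle the degenerate situation where some valley spokes are trivial so that the relevant trapezia $\Gamma_j$ do not exist; once these are in place, the counting inequalities above close the argument.
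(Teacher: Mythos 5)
Your overall strategy is the right one (and is the expected adaptation of Lemma 7.32 of [26]): assume the first letters coincide, look at the $\theta$-bands attached to $\partial\Pi$ at the beginnings of the spokes, note that they all correspond to the same rule read toward the disk, and play Lemma \ref{G_a theta-annuli}(1),(2) against Lemma \ref{h leq 1}. But as written the argument does not close, and two of its load-bearing steps are wrong as stated. First, the ``key structural observation'' that $a$-cells occur only inside the distinguished clove is false: Lemma \ref{non-distinguished a-cells} exists precisely because $a$-cells can occur in non-distinguished cloves -- what that lemma shows is that their $a$-bands all end on $\textbf{p}_{i,i+1}$ (not ``on the special input sector''), so such cells live in the collar $\Lambda_{i,i+1}$ near $\partial\Delta$; the fact you actually need, namely that in a non-distinguished clove nothing separates the bottom of $\Gamma_j$ from $\partial\Pi$, is part of the setup preceding Lemma \ref{a-bands ending on theta-band} and should be quoted as such. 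Second, your use of Lemma \ref{h leq 1} is garbled: having $|\textbf{z}_j|_a<|V|_a/(2c_6N)$ does not make $\Gamma_j$ ``essentially full-width,'' and width has nothing to do with whether the lowest band crosses $\pazocal{Q}_{j+1}$. The correct mechanism is the contrapositive: the band attached to $\partial\Pi$ fails to continue past $\pazocal{Q}_{j+1}$ (in a non-distinguished clove) only when $h_{j+1}=0$, in which case $\textbf{z}_j=\textbf{y}_j$ and $|\textbf{z}_j|_a=|V|_a\ge|V|_a/(2c_6N)$, so $j$ is one of the fewer than $L/5$ exceptional indices of Lemma \ref{h leq 1}; since the heights are monotone on each side of $r$ (Lemma \ref{theta-bands in cloves}), the zero-height indices form end segments of the two middle ranges, so the lowest left band crosses at least $r-1-L/5-O(L_0)$ spokes and the lowest right band at least as many, and the total exceeds $L/2$, contradicting Lemma \ref{G_a theta-annuli}(2). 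Note also that ``short'' spokes do not cut a run short -- only zero-height ones do -- so your explanation of where the hypothesis $h\le L_0^2|V|_a$ enters is off; it enters only through the availability of Lemma \ref{h leq 1}.

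Finally, the part you yourself identify as the main obstacle -- the bookkeeping at the distinguished clove, where $a$-cells may sit between the disk and the lowest band so that the ``common'' band can split into two bands on one side, and the degenerate case where trapezia do not exist -- is exactly what is new here relative to [26], and you leave it as a to-do rather than proving it. It can be handled (e.g., the splitting produces at most one extra band, all such bands still have only $a$-cells between their bottoms and $\Pi$ and the same first rule toward the disk, so pairwise applications of Lemma \ref{G_a theta-annuli}(2) still force more than $L/2$ crossings in total), but until that and the corrected use of Lemma \ref{h leq 1} are written out, what you have is a plan with a misstated central implication, not a proof.
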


%\begin{proof}
%
%Let $\pazocal{T}$ and $\pazocal{S}$ be the maximal $\theta$-bands of $\Psi$ crossing $\pazocal{Q}_1$ and $\pazocal{Q}_{L-3}$, respectively, closest to the disk $\Pi$. Say these $\theta$-bands cross $l$ and $l'$ spokes of $\Pi$, respectively. 
%
%For $1\leq i\leq j\leq L$ such that $\pazocal{T}$ does not cross $\pazocal{Q}_i$ and $\pazocal{S}$ does not cross $\pazocal{Q}_{j+1}$, then for all $x\in S$ such that $i\leq x\leq j$, $\textbf{z}_x$ is a shared subpath with $\partial\Pi$. As a result, $|\textbf{z}_x|_a=|V|_a\geq|V|_a/2c_6N$.
%
%So, by Lemma \ref{h leq 1}, $l+l'>L-L/5-3L_0>2L/3$. Further, $l,l'\geq3$ since $L/2-3\leq r\leq L/2$. So, applying Lemma \ref{G_a theta-annuli}(2) to $\Delta^*$, the $\theta$-letters corresponding to $\pazocal{T}$ and $\pazocal{S}$ cannot be the same.
%
%\end{proof}

\begin{lemma} \label{h leq 3}

\textit{(Lemma 7.33 of [26])} If $h\leq L_0^2|V|_a$, then $|V|_a>\frac{LN}{4L_0}$.

\end{lemma}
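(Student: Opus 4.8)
I would argue by contradiction: assume $|V|_a\le\frac{LN}{4L_0}$. Since $h=h_{L_0+1}\ge h_{L-L_0-3}$ and the two monotone runs $h_1\ge\dots\ge h_r$, $h_{r+1}\le\dots\le h_{L-3}$ together with $L/2-3\le r\le L/2$ force $h_i\le h$ for every index $i$ in the middle range $[L_0+1,\,L-L_0-3]$, the standing hypothesis $h\le L_0^2|V|_a$ makes every clove $\Psi_{i,i+1}$ in this range ``short'' (height at most $h\le L_0^2|V|_a$), and (excluding the single distinguished clove) each such clove $\Psi_{j,j+1}$ carries a trapezium $\Gamma_j$ with bottom $\textbf{y}_j$ on $\partial\Pi$ of $a$-length $|V|_a$, top $\textbf{z}_j$, and height $h_{j+1}$ (resp.\ $h_j$ for $j\ge r+1$) at most $h$; here the configuration at $\textbf{y}_j$ is a genuine copy of $V$ because $j$ is not the distinguished index, so $l_j\ge2$.

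The first step is to feed this into Lemma~\ref{h leq 1}: fewer than $L/5$ of the $\ge L-2L_0-1$ middle indices $j$ satisfy $|\textbf{z}_j|_a\ge|V|_a/(2c_6N)$, so more than, say, $L/5$ of them (in fact a positive fraction on each side of $r$, since each side contains at most $L/2$ indices) satisfy $|\textbf{z}_j|_a<|V|_a/(2c_6N)$. For each such ``narrow'' $\Gamma_j$ almost all of the $|V|_a$ maximal $a$-bands issuing from $\textbf{y}_j$ fail to reach $\textbf{z}_j$, so they terminate on $(\theta,q)$-cells of the at most $h$ internal $\theta$-bands of $\Gamma_j$; Lemma~\ref{a-bands ending on theta-band} bounds by $2N$ the number terminating on any one $\theta$-band, whence $|V|_a-|\textbf{z}_j|_a\le 2Nh$ and already $h>|V|_a/(4N)$. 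The next, and decisive, step is to look at $\Gamma_j$ as an $\textbf{M}_5$-computation and use that $W$ (hence $V$, hence the accepted-by-reversibility configuration at $\textbf{z}_j$) is an accepted configuration: by Lemmas~\ref{M_5 accepted configurations}, \ref{M_5 tame}, \ref{M_5 long history} (and their projected forms in Lemma~\ref{M projected long history}), a reduced $\textbf{M}_5$-computation of length $\le h\le L_0^2|V|_a$ that divides the $a$-length by more than $2c_6N$ must have at least $0.99$ of its history inside controlled blocks of type (a) or (b); each such block has length at most $\approx 2|V|_a$ (by Lemmas~\ref{primitive computations} and~\ref{(A) and (B)}, the block length being essentially the $a$-length of the relevant sector, which is $\le|V|_a$), so the step history of $\Gamma_j$ alternates controlled and connecting steps many times and in particular begins with one fixed controlled/connecting pattern. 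Running this conclusion across the $\gtrsim L/5$ narrow trapezia on each side of $r$, and using that $H_{j+1}$ is a prefix of $H_j$ for $j<r$ while $H_j$ is a prefix of $H_{j+1}$ for $j>r$, pins down the common initial behaviour of $H_1,\dots,H_r$ and of $H_{r+1},\dots,H_{L-3}$; quantifying the cumulative length this forces into the spokes $\pazocal{Q}_1,\dots,\pazocal{Q}_{L_0}$ and $\pazocal{Q}_{L-L_0-2},\dots,\pazocal{Q}_{L-3}$ against the bound $h\le L_0^2|V|_a$ (and using $h>|V|_a/(4N)$ for each of the $\Theta(L)$ narrow trapezia, together with the fact that each has base length $N$) yields the inequality $|V|_a>\frac{LN}{4L_0}$, the precise numerology being where the parameter relations $c_6\ll L_0\ll L$ and $k\ll N$ enter.

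The main obstacle I expect is exactly that third step: turning ``a short $\textbf{M}_5$-computation with a large drop of $a$-length'' into usable structural information about its history, and then reconciling the $\Theta(L)$ near-parallel copies of such computations hanging off $\Pi$ with the single global ceiling $h\le L_0^2|V|_a$; this is the step that must be carried out with the same care as Lemma~7.33 of [26], tracking how each of the five submachines of $\textbf{M}_5$ and the primitive machine $\textbf{LR}_k$ affects $a$-lengths (Lemmas~\ref{multiply one letter}--\ref{primitive computations}, \ref{M_5 one-step}) and which step histories are admissible (Lemmas~\ref{first step history}--\ref{(A) and (B)}). Everything else—the monotonicity bookkeeping, the count of narrow trapezia, and the $a$-band estimate via Lemma~\ref{a-bands ending on theta-band}—is routine by comparison.
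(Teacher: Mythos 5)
Your plan has a genuine gap at precisely the step you yourself flag as the ``decisive'' one, and the gap is structural, not just technical. The conclusion $|V|_a>\frac{LN}{4L_0}$ must produce a factor of $L$, and in your outline $L$ can only enter through the count of the $\gtrsim L/5$ narrow trapezia $\Gamma_j$. But those trapezia do not contribute independent quantities that can be accumulated against the ceiling $h\leq L_0^2|V|_a$: for $j<r$ each $H_{j+1}$ is a prefix of $H_j$ (dually for $j>r$), so the narrow trapezia are nested copies of essentially one computation, and each yields only the same single estimate $h>|V|_a/4N$ that you already extracted from one of them via Lemma \ref{a-bands ending on theta-band}. ``Pinning down the common initial behaviour of $H_1,\dots,H_r$'' is automatic from the prefix property and carries no new length information, and Lemmas \ref{M_5 long history}/\ref{M projected long history} describe the step history of a single computation; nothing in the proposal converts ``many narrow trapezia'' into a lower bound on $|V|_a$ of order $LN/L_0$. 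So the argument as outlined cannot close, which is consistent with your own assessment of the third step.

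The proof this lemma actually rests on (Lemma 7.33 of [26], which the paper cites without reproducing) does not use Lemmas \ref{h leq 1}, \ref{h leq 2}, \ref{a-bands ending on theta-band} or any $\textbf{M}_5$-computation analysis -- those ingredients belong to the later Lemmas \ref{h >} through \ref{contradiction}. It is a short perimeter comparison on the wide clove with $i=L_0+1$, $j=L-L_0-3$ (so $j-i>L/2$, $i\leq r$, $j\geq r+1$): Lemma \ref{path inequalities}(1) gives $|\textbf{p}_{ij}|\geq h_i+h_j+(j-i)N+1$, since roughly $(j-i)N\approx LN$ $q$-spokes must exit through the outer arc, while Lemma \ref{path inequalities}(2) gives $|\bar{\textbf{p}}_{ij}|\leq h_i+h_j+(L-j+i+1)|V|-1\leq h_i+h_j+(2L_0+5)|V|$. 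Lemma \ref{eps intro} then yields $|\textbf{p}_{ij}|<(1+\eps)|\bar{\textbf{p}}_{ij}|$ with $\eps=N_2^{-1/4}$, and this is exactly where the hypothesis $h\leq L_0^2|V|_a$ enters: since $h_i,h_j\leq h$, the error term $\eps(h_i+h_j)\leq 2\eps L_0^2|V|_a$ is negligible by the choice of $N_2$ after $L_0$. Combining, $(L-2L_0-4)N\leq 2\eps L_0^2|V|_a+(1+\eps)(2L_0+5)|V|$, and since $L>>L_0$ and $|V|$ exceeds its $q$-contribution (at most $N$) only by its $a$-contribution, this forces $|V|_a>\frac{LN}{4L_0}$; the constant $4L_0$ in the statement is tailored to exactly this computation. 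Your steps (1)--(2) and the $a$-band estimate are correct as far as they go, but they are not needed here, and I would rebuild the proof along the lines above.
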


%\begin{proof}
%
%Assume that $|V|_a\leq LN/4L_0$. By Lemma \ref{path inequalities}, we have $$|\textbf{p}_{L_0+1,L-L_0-3}|\geq h_{L_0+1}+h_{L-L_0-3}+(L-3L_0)N$$ and $$|\bar{\textbf{p}}_{L_0+1,L-L_0-3}|\leq h_{L_0+1}+h_{L-L_0-3}+3L_0(N+|V|_a)$$ so that $$|\textbf{p}_{L_0+1,L-L_0-3}|-|\bar{\textbf{p}}_{L_0+1,L-L_0-3}|\geq(L-6L_0)N-3L_0|V|_a>(L-6L_0)N-3LN/4>LN/5$$ since $L>>L_0$. Since $h_{L_0+1}+h_{L-L_0-3}\leq 2h$, we have $$|\bar{\textbf{p}}_{L_0+1,L-L_0-3}|\leq2h+3L_0(N+LN/4L_0)\leq2L_0^2\frac{LN}{4L_0}+LN<L_0LN$$
%This implies that $$\frac{|\textbf{p}_{ij}|-|\bar{\textbf{p}}_{ij}|}{|\bar{\textbf{p}}_{ij}|}>\frac{1}{5L_0}>\eps$$
%since $\eps=N_2^{-\frac{1}{4}}$ and $N_2>>L$, which contradicts Lemma \ref{eps intro}.
%
%\end{proof}

\begin{lemma} \label{h >}

\textit{(Lemma 7.34 of [26])} The inequality $h>L_0^2|V|_a$ must be true.

\end{lemma}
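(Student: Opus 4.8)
The plan is to argue by contradiction: assume $h=h_{L_0+1}\le L_0^2|V|_a$ and derive a contradiction from Lemmas \ref{h leq 1}, \ref{h leq 2}, and \ref{h leq 3}, together with the structure of the distinguished clove and the accepting computations of $\textbf{M}$. The key point is that when $h$ is small, most of the spokes $\pazocal{Q}_{L_0+1},\dots,\pazocal{Q}_{L-L_0-3}$ have comparable (small) height, so the corresponding trapezia $\Gamma_j$ are genuinely ``short'' computations of $\textbf{M}$ starting from the accepted configuration $W\equiv\lab(\partial\Pi)$; I want to feed this back into the one-step/long-history machinery (Lemmas \ref{M projected long history}, \ref{M one-step}, \ref{M_5 one-step}) to force these computations to grow the $a$-length of the relevant sectors, which will clash with the bound $|V|_a>\frac{LN}{4L_0}$ from Lemma \ref{h leq 3} once $L$ is large relative to $L_0$.

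Concretely, first I would fix $r$ with $L/2-3\le r\le L/2$ as in Lemma \ref{theta-bands in cloves}, and recall that $H_{i+1}$ is a prefix of $H_i$ for $i<r$ and $H_j$ a prefix of $H_{j+1}$ for $j>r$, so the histories $H_{L_0+1},\dots,H_{r-1}$ (and symmetrically $H_{r+1},\dots,H_{L-L_0-3}$) are nested of lengths between $h_{r}\le h$ and $h$. Since $|V|_a>\frac{LN}{4L_0}$ by Lemma \ref{h leq 3} while each $H_j$ has length $\le h\le L_0^2|V|_a$, for at least one of the two ``halves'' there are $\ge L/2 - L_0 - O(1)$ indices $j$ with $\Gamma_j$ defined (not the distinguished clove, which occurs for at most one consecutive pair). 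For each such $j$, $\Gamma_j$ is a trapezium whose bottom $\textbf{y}_j$ reads $Vt(l)$ for an appropriate $l$ and whose top $\textbf{z}_j$ reads the configuration $V\cdot H_j$ of (a copy of) $\textbf{M}_5$. Because the histories are nested, the family $\{\Gamma_j\}$ is, up to coordinate shift, a family of prefixes of a single computation $\pazocal{C}:V\equiv V_0\to\dots\to V_{h}$ of $\textbf{M}$ of length $\le h\le L_0^2|V|_a$. Now apply Lemma \ref{M one-step}: since the step history of such a short computation has bounded length (Lemma \ref{long step history} gives length $\le 8$ unless one of the ``$(12)_i(2)_i(23)_i$''-type subwords appears, which I will rule out because the total length $h$ is too small compared to the $2k\,|V|_a$ lower bound such a subword would force, by the argument already used in Lemma \ref{M_5 special computations}), we are in the bounded-step-history regime; and if $|V_x|_a>3|V|_a$ ever occurred, Lemma \ref{M one-step} produces a sector that strictly grows at every subsequent step, forcing $h\gtrsim|V_x|_a\gtrsim|V|_a$, contradicting Lemma \ref{h leq 1}'s conclusion once combined with the fact that $\ge L/5$ many $\textbf{z}_j$ must then have large $a$-length. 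So $|V_x|_a\le 3|V|_a$ throughout, which pins $|\textbf{z}_j|_a$ to within a constant factor of $|V|_a$ for \emph{every} non-distinguished $\Gamma_j$ in the chosen half, directly contradicting Lemma \ref{h leq 1} (which asserts fewer than $L/5$ such $j$), since the chosen half has $\gg L/5$ indices once $L\gg L_0$.

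The main obstacle I expect is bookkeeping the interaction between the distinguished clove and the coordinate-shift structure: the distinguished clove is the unique place where the ``special'' input sector $Q_0(1)R_0(1)$ sits between consecutive spokes, and there $W(1)$ is genuinely not a coordinate shift of the other $W(i)$, so Lemmas \ref{accepted configuration a-length} and \ref{projection admissible configuration not} must be invoked to keep the discrepancy (a factor $3/2$ in $a$-length, plus possible insertion/deletion of elements of $\pazocal{L}$) under control; I would isolate this by always restricting attention to $\Gamma_j$ with $j\ne i_0, i_0+1$ where $\Psi_{i_0,i_0+1}$ is distinguished, which costs only two indices. A secondary subtlety is making the ``nested histories $\Rightarrow$ single computation'' identification precise enough to apply Lemmas \ref{M one-step} and \ref{long step history}, which are stated for computations projected to a single coordinate $\{t(i)\}B_4(i)$; this is handled by Lemma \ref{extending one-machine} exactly as in the proofs of Lemmas \ref{h leq 1}--\ref{h leq 3}. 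Once those two points are dispatched the contradiction is immediate from the counting in Lemma \ref{h leq 1} versus the size lower bound in Lemma \ref{h leq 3}, so $h>L_0^2|V|_a$ as claimed.
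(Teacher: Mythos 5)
Your overall strategy --- assume $h\le L_0^2|V|_a$, derive from Lemma~\ref{h leq 3} that $|V|_a$ is large, and then try to force so many $\Gamma_j$ to have $|\textbf{z}_j|_a\ge |V|_a/(2c_6N)$ that Lemma~\ref{h leq 1} is violated --- is plausibly the right skeleton. But two of the load-bearing steps as written do not hold.

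First, you try to rule out a subword of the form $(12)_i(2)_i(23)_i$ (or $(34)_i(4)_i(45)_i$, etc.) in the step history of the $\le h$-long computation by saying $h$ is ``too small compared to the $2k|V|_a$ lower bound such a subword would force.'' That parameter comparison goes the wrong way: the standing assumption is only $h\le L_0^2|V|_a$, and the parameter hierarchy in Section~3.3 has $k\ll L_0$, so $L_0^2|V|_a$ is much larger than $2k|V|_a$. Nothing prevents $h$ from being large enough to accommodate a full $\textbf{LR}_k$ excursion, so you cannot force the step history into the ``bounded length $\le 8$'' regime of Lemma~\ref{long step history} this way, and the subsequent appeal to Lemma~\ref{M one-step} (which, note, requires step history of length \emph{exactly} one, not merely bounded) does not apply to the whole computation.

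Second, and more fundamentally, your conclusion ``$|V_x|_a\le 3|V|_a$ throughout, which pins $|\textbf{z}_j|_a$ to within a constant factor of $|V|_a$'' conflates an upper bound with a two-sided bound. To contradict Lemma~\ref{h leq 1} you need the \emph{lower} bound $|\textbf{z}_j|_a\ge |V|_a/(2c_6N)$ for at least $L/5$ of the indices, and nothing you have established prevents the computation from rapidly erasing $a$-letters so that most $\textbf{z}_j$ are very short. (Indeed, the whole content of Lemma~\ref{h leq 1} is precisely that this kind of shrinkage must happen for most $j$ in the small-$h$ regime; showing that it nevertheless \emph{cannot} happen is where the real work is.) One natural route to a lower bound would be to count $a$-bands leaving $\textbf{y}_j$ via Lemma~\ref{a-bands ending on theta-band}, which gives $|\textbf{z}_j|_a\ge |V|_a - 2N h_{j+1}$, but that only helps when $h_{j+1}\ll |V|_a/N$, which is not guaranteed under the mere assumption $h\le L_0^2|V|_a$. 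Finally, notice that Lemma~\ref{h leq 2} is listed in the paper immediately before the present statement and presumably feeds into its proof, yet it plays no role in your argument; that omission is a useful signal that the intended contradiction runs through the first $\theta$-bands at $\Pi$ rather than solely through the $a$-length bookkeeping you set up.
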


Next, we define the positive integers $\omega$ and $\tau$ satisfying $1\leq\omega<\tau\leq L_0$ satisfying the property that $\omega,\dots,\tau$ is the maximal string of such consecutive integers such that for each $\omega\leq i\leq\tau$, $\Psi_{i,i+1}$ is not the distinguished clove.

Note that $\tau-\omega\geq L_0/2$, $1\leq\omega\leq L_0/2\leq\tau\leq L_0$, and $h_\tau\geq h$.

\begin{lemma} \label{big first h_i}

\textit{(Lemma 7.35 of [26])} For $\omega\leq i\leq\tau$, $h_i>\delta^{-1}$.

\end{lemma}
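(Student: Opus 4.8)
The inequality $h_i > \delta^{-1}$ for $\omega \le i \le \tau$ should follow from the already-established lower bound $h = h_{L_0+1} > L_0^2 |V|_a$ (Lemma \ref{h >}), the monotonicity relations $h_1 \ge h_2 \ge \dots \ge h_r$ (so in particular $h_i \ge h_\tau \ge h$ for $i \le \tau \le L_0$, using that $\tau \le L_0 < L_0+1 \le r$), and a lower bound on $|V|_a$ of the form $|V|_a \ge 1$ or better. Indeed, $V$ is a nonempty configuration of $\textbf{M}_5$ arising from an accepted configuration $W$ of $\textbf{M}$ with $W(2) \equiv \dots \equiv W(L)$ copies of $V$; since $\Pi$ is a disk and $W$ its contour label, $|V|_a$ is bounded below once we argue $V$ is not the empty configuration (otherwise $W$ would be essentially a copy of $W_{ac}$ up to the special input sector, contradicting that $\Pi$ genuinely carries the $t$-spokes $\pazocal{Q}_1, \dots, \pazocal{Q}_{L-3}$ in a nondegenerate way). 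The plan is therefore: first, show $|V|_a \ge c$ for a suitable constant $c$ (or simply $|V|_a \ge 1$ will suffice if $L_0^2 \gg \delta^{-1}$, which holds by the parameter assignment $L_0 \ll \delta^{-1}$ — wait, this is backwards, so one must be more careful); second, chain the inequalities $h_i \ge h_\tau \ge h > L_0^2 |V|_a$; third, compare $L_0^2 |V|_a$ with $\delta^{-1}$ using the parameter hierarchy.

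\textbf{The parameter subtlety.} Since $L_0 \ll \delta^{-1}$ in the assignment of Section 3.3, the bare estimate $L_0^2 |V|_a \ge L_0^2$ is \emph{not} enough to beat $\delta^{-1}$. So the real content must be a genuine lower bound on $|V|_a$ that is large relative to $\delta^{-1}$. The natural source is Lemma \ref{h leq 3}, whose conclusion $|V|_a > \frac{LN}{4L_0}$ was derived under the hypothesis $h \le L_0^2|V|_a$ — but that hypothesis is exactly what Lemma \ref{h >} contradicts, so Lemma \ref{h leq 3} is vacuous here. Instead I expect the argument to run: because $h > L_0^2|V|_a$ and the $\theta$-bands crossing $\pazocal{Q}_\tau$ (of which there are $h_\tau \ge h$) each must terminate on $\partial\Delta$ or cross another spoke, and because each $\theta$-band has base length at most $N$ through the clove region, one gets that $\partial\Delta$ — or more precisely the relevant boundary arcs $\textbf{p}_{ij}$ — absorb these bands. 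Combined with Lemma \ref{a-bands ending on theta-band} (at most $2N$ $a$-bands from $\textbf{y}_j$ end on one $\theta$-band) and the fact that $\textbf{y}_j$ has $a$-length $|V|_a$, one extracts that $h_i$ is comparable to (a multiple of) $|V|_a$ times a quantity forced to be large. Alternatively, and more likely, the argument uses Lemma \ref{shafts} / Lemma \ref{G_a design}: a $t$-spoke of length $h_i$ contains a $\lambda$-shaft whose length is controlled by $\sigma_\lambda(\Delta^*) \le c|\partial\Delta|$, and combining with $h > L_0^2 |V|_a$ and $|V|_a \ge |W(1)|_a \cdot \frac{2}{3} \ge \frac{2}{3}$ (so $|V|_a \ge 1$), plus the observation that $\tau - \omega \ge L_0/2$ forces $\textbf{p}$ to contain at least $(\tau-\omega)N \gtrsim L_0 N / 2$ $q$-edges via Lemma \ref{path inequalities}, yields $h_i \ge h \ge L_0^2 |V|_a \ge L_0^2 > \delta^{-1}$ provided the parameter relation is read in the correct order. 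Re-examining: in the list $L_0 \ll L$ but both come \emph{before} $\delta^{-1}$, so $\delta^{-1} \gg L_0$; thus one genuinely needs $|V|_a$ to be large, of order at least $\delta^{-1}/L_0^2$.

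\textbf{Main steps in order.} First I would invoke Lemma \ref{h >} to record $h > L_0^2|V|_a$ and the monotonicity from Lemma \ref{theta-bands in cloves} to get $h_i \ge h_\tau \ge h_{L_0+1} = h$ for all $i \le \tau$ (legitimate since $\tau \le L_0 < L_0 + 1 \le r$). Second, I would establish the key lower bound on $|V|_a$: because $\Delta$ is the minimal counterexample with $\text{wt}(\Delta) > N_2(|\partial\Delta| + \sigma_\lambda(\Delta^*))^3$ and $\Pi$ is a disk with $\text{wt}(\Pi) = c_7|\partial\Pi|^2 \le c_7 (L+1)^2 |V|^2$ and $|V| \ge |V|_a \delta$ crudely, the counterexample inequality together with Lemma \ref{M_a cubic} applied to the disk-free cloves forces $|\partial\Delta|$, hence $|V|_a$, to be large — quantitatively, large enough that $L_0^2|V|_a > \delta^{-1}$ by the choice $N_2 \gg \delta^{-1}$ and the fact that weight being cubic while a single disk contributes only quadratically means the diagram cannot be ``small''. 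Third, I would conclude $h_i \ge h > L_0^2 |V|_a > \delta^{-1}$. \textbf{The hard part} will be making step two precise: pinning down exactly why the minimal-counterexample hypothesis, rather than any intrinsic geometric feature of $\Pi$, is what forces $|V|_a$ to exceed $\delta^{-1}/L_0^2$ — this is where one must carefully trace how $\text{wt}(\Delta)$ being forced above the cubic threshold, while disks and $a$-cells contribute only quadratically in their own perimeters, propagates a lower bound on the number of $(\theta,t)$-cells per spoke and hence on the spoke heights $h_i$. I expect the cleanest route is the one in \cite{26}: derive $|V|_a$ large directly from $h > L_0^2|V|_a$ being compatible with $h_\tau$ bounded by boundary data only if $|V|_a$ itself is already large, i.e. a self-improving estimate using Lemmas \ref{path inequalities}, \ref{q_{ij} inequalities}, and \ref{a-bands ending on theta-band}.
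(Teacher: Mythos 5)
Your setup is fine: the monotonicity $h_i\geq h_\tau\geq h$ for $\omega\leq i\leq\tau\leq L_0\leq r$ and Lemma \ref{h >} are indeed where one starts, and you are right that $\delta^{-1}>>L_0$ rules out any crude count. But the step you yourself flag as ``the hard part'' is a genuine gap, and it is aimed at the wrong target. You plan to prove a lower bound $|V|_a\gtrsim\delta^{-1}/L_0^2$, but no such bound is available or even true in general: Lemma \ref{h >} bounds $|V|_a$ from \emph{above} (it gives $|V|_a<h/L_0^2\leq h_i/L_0^2$, which is the direction used later in Lemma \ref{no one-step}), and nothing prevents $|V|_a$ from being tiny or zero (e.g.\ when $\Pi$ is a hub), while the conclusion $h_i>\delta^{-1}$ must still hold. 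Your proposed justification --- that the counterexample inequality forces $|\partial\Delta|$, ``hence $|V|_a$'', to be large --- is a non sequitur, since the perimeter of the single disk $\Pi$ is not bounded below by any function of $|\partial\Delta|$. The auxiliary lemmas you name as the likely engine (Lemmas \ref{q_{ij} inequalities}, \ref{a-bands ending on theta-band}, \ref{shafts}, \ref{G_a design}) do not close this argument either; the one lemma that does, Lemma \ref{eps intro}, never appears in your proposal.

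The actual mechanism (this is Lemma 7.35 of [26], to which the paper defers, adjusted only by restricting to $[\omega,\tau]$ to avoid the distinguished clove) bounds $h_i$ directly, with no lower bound on $|V|_a$. Take $j=L-L_0-3$, so that $i\leq L_0\leq r<r+1\leq j$, $j-i>L/2$, and $h_j\leq h\leq h_\tau\leq h_i$ by the normalization $h_{L_0+1}\geq h_{L-L_0-3}$ and Lemma \ref{theta-bands in cloves}. Lemma \ref{eps intro} (minimality of the counterexample packaged as $|\textbf{p}_{ij}|<(1+\eps)|\bar{\textbf{p}}_{ij}|$ with $\eps=N_2^{-1/4}$) combined with Lemma \ref{path inequalities} gives $(j-i)N<\eps(h_i+h_j)+(1+\eps)(L-j+i+1)|V|$. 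Here $(j-i)N\geq(L-2L_0-3)N$ while $L-j+i+1\leq 2L_0+4$ and $|V|=N+\delta|V|_a$ with $\delta|V|_a<\delta h_i/L_0^2$ by Lemma \ref{h >} and monotonicity; absorbing $(1+\eps)(2L_0+4)N\leq 3L_0N$ into the left side and using $h_j\leq h_i$, one gets $\frac{LN}{2}<h_i\bigl(2\eps+\frac{4\delta}{L_0}\bigr)<\delta h_i$, since $\eps<<\delta$ (choose $N_2\geq\delta^{-4}$, allowed by the parameter order) and $L_0$ is large. Hence $h_i>\frac{LN}{2}\delta^{-1}>\delta^{-1}$. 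In short, the q-edge excess $(j-i)N$ versus $(L-j+i+1)|V|$ must be absorbed by the terms $\eps(h_i+h_j)$ and $\delta|V|_a\leq\delta h_i/L_0^2$, and that is what forces $h_i$ to be huge; trying instead to make $|V|_a$ large is a dead end.
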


\begin{proof}

%Fix such an $i$. As we assume that $h_{L_0+1}\geq h_{L-L_0-3}$, we have $h_i\geq h_{L-L_0-3}$. By Lemma \ref{h >}, a contradiction to the statement would imply that $\delta^{-1}\geq h_i>|V|_a$. Lemma \ref{path inequalities} would then imply that
%$$|\bar{\textbf{p}}_{i,L-L_0-3}|<h_i+h_{L-L_0-3}+3L_0(N+\delta|V_a|)\leq h_i+h_{L-L_0-3}+4L_0N$$
%and
%$$|\textbf{p}_{i,L-L_0-3}|\geq h_i+h_{L-L_0-3}+LN/2$$
%In this case, since $h_i+h_{L-L_0-3}\leq 2\delta^{-1}$ and $4L_0N<LN/4$, we have $$\frac{|\textbf{p}_{i,L-L_0-3}|}{|\bar{\textbf{p}}_{i,L-L_0-3}|}\geq\frac{h_i+h_{L-L_0-3}+LN/2}{h_i+h_{L-L_0-3}+LN/4}\geq\frac{8\delta^{-1}+2LN}{8\delta^{-1}+LN}=1+\delta\frac{LN}{\delta LN+8}>1+\delta$$
%Taking $N_2>>\delta^{-1}$ then gives $1+\delta>1+\eps$, so that the above inequality contradicts Lemma \ref{eps intro}.

Note the difference presented here can be attributed to the possibility of the distinguished clove.

\end{proof}

\begin{lemma} \label{lambda-shafts in first h_i}

\textit{(Lemma 7.36 of [26])} For $\omega\leq i\leq\tau$, the spoke $\pazocal{Q}_i$ does not contain a $\lambda$-shaft of $\Pi$ of length at least $\delta h$.

\end{lemma}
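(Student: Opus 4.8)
The plan is to argue by contradiction: suppose some spoke $\pazocal{Q}_i$ with $\omega \le i \le \tau$ contains a $\lambda$-shaft $\pazocal{C}$ of $\Pi$ of length at least $\delta h$. Recall that by Lemma \ref{big first h_i} we have $h_i > \delta^{-1}$ for all such $i$, so $\pazocal{Q}_i$ is a long $t$-band; and by the choice of $\omega,\dots,\tau$, none of the cloves $\Psi_{j,j+1}$ for $\omega \le j \le \tau - 1$ is distinguished, so each $\Psi_{j,j+1}^0$ contains a genuine trapezium $\Gamma_j$ with side $t$-bands whose bottom is a subpath of $\partial\Pi$ labelled $Vt(l)$. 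Since the histories $H_\omega, \dots, H_\tau$ are nested (one a prefix of the next, by Lemma \ref{theta-bands in cloves} and the fact that all these indices lie on the same side of $r$), the $\theta$-bands crossing $\pazocal{Q}_i$ sweep successively across the neighboring spokes, and one can assemble a quasi-trapezium $\Gamma$ inside $\Psi$ whose top (or bottom) label contains at least $\tau - \omega + 1 \ge L_0/2 + 1 > L + 1$ distinct $t$-letters — this uses $\tau - \omega \ge L_0/2$ and the choice $L_0 \ll L$ reversed... wait, I need $L_0$ large relative to $L$; actually $L_0 \ll L$ so $L_0/2 < L$. Let me reconsider: the correct count comes from the spokes $\pazocal{Q}_\omega, \dots, \pazocal{Q}_\tau$ together with sufficiently many further consecutive spokes available inside $\Psi$ (there are $L-3$ of them), giving a quasi-trapezium crossed by $\pazocal{C}$ (or a long subband of it) whose boundary picks up $L+1$ $t$-letters.

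More precisely, first I would pass from $\pazocal{C}$ to a maximal subband $\pazocal{C}_2$ obtained by chopping off at most $\lambda\|H\|$ worth of history from each end (where $H$ is the history of $\pazocal{C}$), using that $\pazocal{C}$ is a $\lambda$-shaft so that $\pazocal{C}_2$ still has standard history and $\text{Lab}(\partial\Pi)$ is admissible for it. Since $\|H\| \ge \delta h$ and $h > L_0^2 |V|_a$ by Lemma \ref{h >}, the band $\pazocal{C}_2$ is still very long, certainly long enough that as it passes from the bottom of $\Psi$ to the top it must cross the $t$-spokes $\pazocal{Q}_\omega, \pazocal{Q}_{\omega+1}, \dots$ — in fact all the spokes $\pazocal{Q}_1, \dots, \pazocal{Q}_{L-3}$ that it reaches before terminating, because the only way a $t$-band can stop is on $\partial\Delta$ or a disk, and inside $\Psi$ there are no disks other than $\Pi$. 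Then I would build a quasi-trapezium $\Gamma$ inside $\Psi$ bounded above and below by suitable $\theta$-bands, whose bottom label is a subword of $\text{Lab}(\partial\Pi)$ containing $L+1$ consecutive $t$-letters $t(l), t(l+1), \dots$ (this is possible because the $t$-spokes $\pazocal{Q}_1,\dots,\pazocal{Q}_{L-3}$ span $L-4 \ge L+1$... no, $L-3 < L+1$, so I actually need to use that the spokes cross around the cyclic standard base and the configuration $\text{Lab}(\partial\Pi) \equiv W(1)\cdots W(L)$ has $L$ many $t$-letters, plus the wrap-around copy), and such that $\pazocal{C}_2$ starts on the bottom and ends on the top of $\Gamma$.

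This puts us in exactly the situation forbidden by Lemma \ref{shafts}: a $\lambda$-shaft $\pazocal{C}$ at $\Pi$ with a factorization $\pazocal{C} = \pazocal{C}_1\pazocal{C}_2\pazocal{C}_3$ where $\pazocal{C}_1,\pazocal{C}_3$ have combined length at most $\lambda\|H\|$ and $\pazocal{C}_2$ starts on the bottom and ends on the top of a quasi-trapezium whose top or bottom label has $L+1$ $t$-letters. Lemma \ref{shafts} says no such factorization exists, yielding the contradiction. The main obstacle, and the step that needs genuine care, is the construction of the quasi-trapezium $\Gamma$ with the required $L+1$ $t$-letters on its boundary and with $\pazocal{C}_2$ threading through it correctly: one must verify that the $\theta$-bands available inside $\Psi$ (those crossing $\pazocal{Q}_\omega$ through $\pazocal{Q}_\tau$, whose nested histories are controlled by $h_\omega \ge \dots \ge h_\tau \ge h$ and the prefix relations among the $H_i$) genuinely enclose a quasi-trapezium rather than getting blocked, that chopping $\pazocal{C}$ down to $\pazocal{C}_2$ still leaves it spanning from bottom to top of $\Gamma$, and that the base of $\Gamma$ is long enough — here is where $\delta h \ge \delta L_0^2 |V|_a$ and the ordering $L_0 \gg \delta^{-1}$, $L \gg L_0$ from Section 3.3 must be invoked to guarantee the $L+1$ count. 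I expect the bookkeeping to parallel closely the proof of Lemma 7.36 in [26], the only new wrinkle being the bookkeeping around the distinguished clove (which is why the maximal string $\omega,\dots,\tau$ of non-distinguished indices was singled out) and the asymmetry $\frac12\|V\| \le \|W(1)\| \le \frac32\|V\|$ from Lemma \ref{accepted configuration a-length}, which only affects constants.
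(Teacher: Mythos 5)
Your high-level strategy — reduce to the prohibition in Lemma \ref{shafts} — may well be the right one, but the key construction is never carried out, and the attempts you make to carry it out contain errors that would have to be fixed rather than merely tidied.

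First, a geometric misconception: you write that the shaft (or $\pazocal{C}_2$) "must cross the $t$-spokes $\pazocal{Q}_\omega,\dots$ — in fact all the spokes $\pazocal{Q}_1,\dots,\pazocal{Q}_{L-3}$ that it reaches." The shaft $\pazocal{C}$ is by definition a subband of the single $t$-spoke $\pazocal{Q}_i$; distinct maximal $q$-bands in a reduced diagram never meet, so $\pazocal{C}$ does not and cannot "cross" any other $t$-spoke. What crosses $\pazocal{C}$ are $\theta$-bands, and it is those $\theta$-bands (not $\pazocal{C}$ itself) that must pick up the $t$-edges supplying the $L+1$ $t$-letters on the top and bottom of the quasi-trapezium.

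Second, and more seriously, the count of $L+1$ $t$-letters is never achieved and your own computations show why it cannot be achieved by the construction you describe. You try $\tau-\omega+1$, which is at most $L_0+1\ll L$; you try $L-3$, which is still $<L+1$; and you then wave at "the wrap-around copy" of $\partial\Pi$. But the $\theta$-bands available to bound a quasi-trapezium through $\pazocal{Q}_i$ live in the disk-free region bounded by $\pazocal{Q}_1$, $\pazocal{Q}_{L-3}$, $\partial\Pi$, and $\partial\Delta$, and by Lemma \ref{theta-bands in cloves}(2) each such $\theta$-band crosses only one of $\pazocal{Q}_r$ and $\pazocal{Q}_{r+1}$, hence crosses at most $r\le L/2$ consecutive $t$-spokes; Lemma \ref{G_a theta-annuli}(1) gives the same $L/2$ bound for any $\theta$-band bounding a disk-free region against $\partial\Pi$. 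So a quasi-trapezium of the shape you describe — bounded by two such $\theta$-bands and with $\pazocal{C}_2$ threading from bottom to top inside $\bar\Delta$ — has top and bottom labels with at most roughly $L/2+1$ $t$-letters, not $L+1$. The "wrap-around" idea (making the bounding $\theta$-bands encircle $\Pi$ to pick up all $L$ $t$-edges of $\partial\Pi$) is exactly what Lemma \ref{G_a theta-annuli}(1) forbids in the absence of intervening disks. So the step "build a quasi-trapezium $\Gamma$ ... whose boundary picks up $L+1$ $t$-letters" is not merely unfinished bookkeeping — as stated it contradicts the lemmas you already have, and a genuinely different mechanism (using the controlled structure of the standard history of $\pazocal{C}$ together with the trapezia $\Gamma_{i-1},\Gamma_i,\dots$ and their nesting, in the spirit of the surrounding Lemmas \ref{trapezia in first h_i}–\ref{no one-step}) is needed to reach a contradiction.
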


%\begin{proof}
%
%Assuming a contradiction, we have $\sigma_\lambda(\bar{\Delta}_{i,L-L_0-3}^*)\geq\delta h$.
%
%By Lemma \ref{path inequalities}, we have the inequalities
%$$|\textbf{p}_{i,L-L_0-3}|>h_i+h_{L-L_0-3}+(L-3L_0)N$$
%$$|\bar{\textbf{p}}_{i,L-L_0-3}|<h_i+h_{L-L_0-3}+3L_0(N+\delta|V|_a)$$
%Lemma \ref{h >} then gives the relation $\delta|V|_a<\delta h/L_0^2$, so that
%$$|\bar{\textbf{p}}_{i,L-L_0-3}|<h_i+h_{L-L_0-3}+3L_0N+3\delta h/L_0$$
%So,
%$$|\textbf{p}_{i,L-L_0-3}|+\sigma_\lambda(\bar{\Delta}_{i,L-L_0-3}^*)-|\bar{\textbf{p}}_{i,L-L_0-3}|\geq(L-6L_0)N+\delta h(1-3/L_0)\geq\frac{1}{2}(LN+\delta h)$$
%
%So, noting that $h_{L_0+1},h_{L-L_0-3}\leq h$, we have:
%\begin{align*}
%\frac{|\textbf{p}_{i,L-L_0-3}|+\sigma_\lambda(\bar{\Delta}_{i,L-L_0-3}^*)-|\bar{\textbf{p}}_{i,L-L_0-3}|}{|\bar{\textbf{p}}_{i,L-L_0-3}|}&\geq\frac{LN+\delta h}{2(2h+3L_0N+3\delta h/L_0)} \\
%&\geq\frac{LN+\delta h}{6L_0N+4h} \\
%&\geq\min(L/6L_0,\delta/4)=\delta/4
%\end{align*}
%Taking $N_2>>\delta^{-1}$, we then have $\eps<\delta/4$, contradicting Lemma \ref{eps intro}.
%
%\end{proof}

\begin{lemma} \label{trapezia in first h_i}

\textit{(Lemma 7.37 of [26])} For $\omega\leq i\leq\tau-1$, $|\textbf{z}_i|_a>h_{i+1}/c_6$.

\end{lemma}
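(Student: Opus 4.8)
The plan is to argue by contradiction along the lines of Lemma~7.37 of~[26], adapting the argument only where the presence of $a$-cells and the distinguished clove intervene. Suppose $|\textbf{z}_i|_a\leq h_{i+1}/c_6$ for some $\omega\leq i\leq\tau-1$. Recall that $\textbf{y}_i$ is shared with $\partial\Pi$ and is labelled $Vt(l)$, so that $|\textbf{y}_i|_a=|V|_a$, and that $\Gamma_i$ is a trapezium of height $h_{i+1}$ with $t$-band sides $\pazocal{Q}_i',\pazocal{Q}_{i+1}$. The idea is that $\Gamma_i$ realizes (via Lemma~\ref{trapezia are computations}) a reduced computation of $\textbf{M}$ of length $h_{i+1}$ whose initial configuration $\lab(\textbf{y}_i)$ has $a$-length $\approx|V|_a$ and whose terminal configuration $\lab(\textbf{z}_i)$ has $a$-length at most $|V|_a/c_6$; since $h_{i+1}\geq h>L_0^2|V|_a$ by Lemma~\ref{h >}, the computation is very long relative to the $a$-lengths of its endpoints. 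First I would restrict this computation to the base $\{t(l)\}B_4(l)$ (with $l\geq2$, which holds since $\Psi_{i,i+1}$ is not the distinguished clove), so that Lemma~\ref{M projected long history} applies to $V_0\to\dots\to V_{h_{i+1}}$ with $\|V_0\|\approx|V|$ and $\|V_{h_{i+1}}\|\leq|V|$. As $h_{i+1}$ far exceeds $c_5\max(\|V_0\|,\|V_{h_{i+1}}\|)$ by choice of parameters ($c_6>>c_5$), alternative~(b) of that lemma holds: the sum of the lengths of the maximal subcomputations with step histories $(12)_l(2)_l(23)_l$, $(32)_l(2)_l(21)_l$, $(34)_l(4)_l(45)_l$, $(54)_l(4)_l(43)_l$ is at least $0.98h_{i+1}$.

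Next I would invoke Lemma~\ref{M projected long history controlled}: any subcomputation (or its inverse) of length at least $0.4h_{i+1}$ contains a subcomputation with controlled history, and by Lemmas~\ref{M controlled} and~\ref{computations are trapezia} such a controlled subcomputation gives a standard subtrapezium inside $\Gamma_i$. The crucial structural point, exactly as in~[26], is that a standard subtrapezium of height growing with $h_{i+1}$ sitting on the spoke $\pazocal{Q}_i$ at the disk $\Pi$ forces a long $\lambda$-shaft at $\Pi$ contained in $\pazocal{Q}_i$ — indeed a controlled history that survives chopping off a $\lambda$-fraction is precisely the definition of a $\lambda$-shaft, and the admissibility of $\lab(\partial\Pi)$ for this history comes from the fact that $\textbf{y}_i\subset\partial\Pi$. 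This $\lambda$-shaft has length on the order of $h_{i+1}\geq h$, which contradicts Lemma~\ref{lambda-shafts in first h_i}, asserting that for $\omega\leq i\leq\tau$ the spoke $\pazocal{Q}_i$ contains no $\lambda$-shaft of $\Pi$ of length at least $\delta h$. To make the length bound clean I would use that $h_{i+1}\geq h$ and $\delta<<1$, and Lemma~\ref{shafts} to rule out the degenerate configurations where the putative shaft is broken up by a quasi-trapezium with $L+1$ $t$-letters in its top or bottom.

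The main obstacle I anticipate is bookkeeping around the $a$-cells: $\Gamma_i$ lives in $\Psi_{i,i+1}^0$, which by Lemma~\ref{non-distinguished a-cells} is the part of $\Psi_{i,i+1}$ with all $a$-cells cut off, so I must be careful that the trapezium $\Gamma_i$ genuinely has the computation-theoretic structure Lemmas~\ref{trapezia are computations} and~\ref{M projected long history controlled} need, and that the projection to $\{t(l)\}B_4(l)$ together with the assumption $|\textbf{z}_i|_a\leq h_{i+1}/c_6$ really yields $\|V_{h_{i+1}}\|$ small enough to trigger case~(b) rather than case~(a) of Lemma~\ref{M projected long history}. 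A secondary subtlety is that, unlike in~[26], $W(1)$ need not be a coordinate shift of $W(l)$, so I would only ever project to coordinates $l\geq2$; since $\omega\geq1$ and $\Psi_{i,i+1}$ is non-distinguished for $\omega\leq i\leq\tau-1$, the relevant $t$-letters are never the pair $(t(1),t(2))$, so $l\geq2$ is automatic and this difficulty does not actually arise here. Everything else is a transcription of the argument of Lemma~7.37 of~[26] with $F(x)=x^2$ and the extra $a$-cell layer absorbed into the constants via $c_6>>c_5$ and $\delta^{-1}<<c_6$.
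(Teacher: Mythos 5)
Your general strategy — assume $|\textbf{z}_i|_a\leq h_{i+1}/c_6$, read off the computation of $\Gamma_i$ via Lemma~\ref{trapezia are computations}, land in case~(b) of Lemma~\ref{M projected long history}, extract a controlled subcomputation via Lemma~\ref{M projected long history controlled}, and manufacture a $\lambda$-shaft of length $\geq\delta h$ to contradict Lemma~\ref{lambda-shafts in first h_i} — is the right skeleton, and your observation that the distinguished clove is avoided (so the base is $t(l)B_4(l)t(l+1)$ with $l\geq 2$) is a genuinely needed adaptation. But two points are off and one is a real gap.

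First, a factual error: you write that ``$\|V_{h_{i+1}}\|\leq|V|$.'' The contradiction hypothesis only gives $|\textbf{z}_i|_a\leq h_{i+1}/c_6$, so $\|V_{h_{i+1}}\|\approx N+h_{i+1}/c_6$, which can be far larger than $|V|$ (indeed $h_{i+1}\geq h>L_0^2|V|_a$). What you actually need — and do have — is $c_5\max(\|V_0\|,\|V_{h_{i+1}}\|)<h_{i+1}$, which follows from $c_6\gg c_5$, $L_0\gg c_5$, and $h_{i+1}>\delta^{-1}\gg c_5N$ (Lemma~\ref{big first h_i}). So case~(b) does apply, but your stated length bound should be corrected.

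Second, the step from ``controlled subcomputation exists'' to ``$\pazocal{Q}_i$ contains a $\lambda$-shaft of length $\geq\delta h$'' is the heart of the lemma and is left as a one-line assertion. Two things need to be pinned down. (i) Positioning: a $\lambda$-shaft requires the controlled subword $H''$ to survive deletion of a $\lambda$-fraction from \emph{both} ends of the shaft's history $H$; if $H''$ sits too close to an end, the middle piece $H_2$ may miss it entirely. You need to use Lemma~\ref{M projected long history controlled} on a subcomputation positioned so that $H''$ sits in the middle $40\%$ (or balance the flanking pieces as in the proof of Lemma~\ref{contradiction}), and then verify the resulting shaft length against $\delta h$. (ii) Admissibility: the definition of a shaft requires $\lab(\partial\Pi)$ itself to be $H$-admissible, and this is \emph{not} automatic from $\textbf{y}_i\subset\partial\Pi$. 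The trapezium only certifies that $W(l)$ is $H$-admissible; passing to the full base $W=\lab(\partial\Pi)$ is exactly the obstruction treated in Lemma~\ref{projection admissible configuration not}, where the special input sector of $W(1)$ can block a rule even though $W(l)$ for $l\geq2$ admits it. For the controlled subword itself this is harmless (every rule of a controlled history locks the special input sector, cf.\ the proof of Lemma~\ref{controlled has reduced}), but the flanking pieces $H_1, H_3$ of $H=H_1H''H_3$ can contain rules that do not lock that sector, and you must argue (using Lemma~\ref{projection admissible configuration not} and the fact that $W$ is accepted, or by restricting $H$ to end near $H''$) that admissibility of $W$ is preserved along the whole prefix $H$. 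Glossing over this by saying ``the difficulty does not arise because $l\geq2$'' conflates the sub-base projection with the full-base admissibility and leaves a genuine hole in the shaft construction.
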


%\begin{proof}
%
%Suppose $|\textbf{z}_i|_a\leq h_{i+1}/c_6$. Then $\|\textbf{z}_i\|\leq|\textbf{z}_i|_a+N+1\leq h_{i+1}/c_6+N+1$.
%
%As $h_{i+1}>\delta^{-1}$ by Lemma \ref{big first h_i}, taking $\delta^{-1}>>Nc_6$ gives $N+1<h_{i+1}/c_6$.
%
%So, $\|\textbf{z}_i\|\leq h_{i+1}/c_6<h_{i+1}/c_5$.
%
%Further, taking $\delta^{-1}>>NL_0^2$, Lemma \ref{h >} also gives $$\|\textbf{y}_i\|\leq N+1+|V|_a\leq N+1+h_{i+1}/L_0^2\leq 2h_{i+1}/L_0^2<h_{i+1}/c_5$$
%
%So, the computation $\pazocal{C}$ corresponding to the trapezium $\Gamma_i$ by Lemma \ref{trapezia are computations} has history of length $h_{i+1}$ satisfying $h_{i+1}>c_5\max(\|\textbf{y}_i\|,\|\textbf{z}_i\|)$. As a result, the computation corresponding to $\Gamma_i$ by Lemma \ref{trapezia are computations} satisfies the hypotheses of Lemma \ref{M projected long history controlled}, so that  showing that $\pazocal{Q}_{i+1}$ contains a $\lambda$-shaft of length at least $h_{i+1}\geq h\geq\delta h$ at $\Pi$. This contradicts Lemma \ref{lambda-shafts in first h_i}.
%
%\end{proof}

\begin{lemma} \label{first h_i vs h_{i+1}}

\textit{(Lemma 7.38 of [26])} For $\omega\leq i\leq\tau-1$, $h_{i+1}<(1-\frac{1}{20c_6N})h_i$.

\end{lemma}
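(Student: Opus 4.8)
Fix $i$ with $\omega\le i\le\tau-1$. By the choice of $\omega$ and $\tau$ the clove $\Psi_{i,i+1}$ is not the distinguished clove, so (as recorded just before Lemma~\ref{a-bands ending on theta-band}) $\Psi_{i,i+1}^0$ contains a trapezium $\Gamma_i$ of height $h_{i+1}$ with side $t$-bands, bottom $\textbf{y}_i\subset\partial\Pi$ labelled $Vt(l)$ with $l\neq1$, and top $\textbf{z}_i$; moreover the cells of $\Psi_{i,i+1}^0$ lying above $\textbf{z}_i$ form exactly the comb $E_i^0$, whose handle $\pazocal{C}_i\subset\pazocal{Q}_i$ has height $h_i-h_{i+1}$ and whose boundary is (the side of $\pazocal{C}_i$)$\cup\textbf{z}_i\cup\textbf{q}_{i,i+1}$. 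Since $E_i^0$ is a comb, Lemmas~\ref{no subcombs} and \ref{M_a no annuli} show that $\pazocal{C}_i$ is its only maximal $q$-band and that each of its maximal $\theta$-bands crosses $\pazocal{C}_i$ in exactly one $(\theta,q)$-cell, so $E_i^0$ has exactly $h_i-h_{i+1}$ maximal $\theta$-bands. Recall also $h_i\ge h_{i+1}$ since $i\le\tau-1<r-1$.

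The plan is to count the maximal $a$-bands of $\Psi_{i,i+1}^0$ issuing from the $a$-edges of $\textbf{z}_i$. By Lemma~\ref{trapezia in first h_i} there are more than $h_{i+1}/c_6$ such edges. Each of these $a$-edges lies on a maximal $a$-band which, traced into $E_i^0$, cannot return to $\textbf{z}_i$ (that would produce either a subcomb, contradicting Lemma~\ref{no subcombs}, or a pair of cancellable cells as in Case~2 of Lemma~\ref{a-bands between a-cells}); since $\Psi_{i,i+1}^0$ has no $a$-cells, this $a$-band therefore terminates on a $(\theta,q)$-cell of $\pazocal{C}_i$ or on an $a$-edge of $\textbf{q}_{i,i+1}$. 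The $a$-bands ending on $\pazocal{C}_i$ are bounded using Lemma~\ref{a-bands ending on theta-band}: since the only $(\theta,q)$-cells of $E_i^0$ lie on $\pazocal{C}_i$, each of them belonging to one of the $h_i-h_{i+1}$ maximal $\theta$-bands of $E_i^0$, at most $2N$ of our $a$-bands end on the $(\theta,q)$-cells of any one $\theta$-band, hence at most $2N(h_i-h_{i+1})$ in total. For the $a$-bands terminating on $\textbf{q}_{i,i+1}$ one argues, as in the proof of Lemma~7.38 of [26], that there are only $O(h_i-h_{i+1})$ of them: the $h_i-h_{i+1}$ $\theta$-bands of $E_i^0$ already occupy $h_i-h_{i+1}$ $q$-edges of $\textbf{q}_{i,i+1}$, and since $\textbf{q}_{i,i+1}$ carries no special-input $a$-edges (Lemma~\ref{non-distinguished a-cells}(3)) while all maximal $\theta$-bands of $\Lambda_{i,i+1}$ run between $\textbf{p}_{i,i+1}$ and $\textbf{q}_{i,i+1}$ (Lemma~\ref{non-distinguished a-cells}(4)), any surplus of $a$-bands between consecutive $q$-edges of $\textbf{q}_{i,i+1}$ can be removed by transposing a $\theta$-band past a stray $a$-cell, lowering the signature and contradicting minimality.

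Combining the two counts gives $h_{i+1}/c_6<|\textbf{z}_i|_a\le CN(h_i-h_{i+1})$ for an absolute constant $C<20$, whence $h_i-h_{i+1}>\tfrac{1}{Cc_6N}h_{i+1}$ and therefore
$$h_{i+1}<\Bigl(1-\tfrac{1}{Cc_6N+1}\Bigr)h_i<\Bigl(1-\tfrac{1}{20c_6N}\Bigr)h_i ,$$
the last inequality holding because $c_6,N\ge1$ force $(20-C)c_6N>1$. I expect the main obstacle to be the middle step — showing that all but $O(h_i-h_{i+1})$ of the $a$-bands emanating from $\textbf{z}_i$ terminate on the handle $\pazocal{C}_i$ rather than on $\textbf{q}_{i,i+1}$ — which is exactly where one must exploit the structure of $\textbf{q}_{i,i+1}$ from Lemma~\ref{non-distinguished a-cells}, the absence of subcombs in $\Psi$, and the transposition of $\theta$-bands past $a$-cells, as in the corresponding argument of [26]; the remaining steps are bookkeeping with the length function and the parameter hierarchy.
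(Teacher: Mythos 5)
Your framework is the right one for this lemma: pair the lower bound $|\textbf{z}_i|_a > h_{i+1}/c_6$ from Lemma~\ref{trapezia in first h_i} with an upper bound on $|\textbf{z}_i|_a$ of the form $CN(h_i-h_{i+1})$, obtained by tracing the maximal $a$-bands of $E_i^0$ that issue from the $a$-edges of $\textbf{z}_i$; the final algebra deducing $h_{i+1}<(1-\tfrac{1}{20c_6N})h_i$ from $h_i-h_{i+1}>h_{i+1}/(Cc_6N)$ with $C<20$ is correct. The count of those $a$-bands that end on $(\theta,q)$-cells is also right: Lemma~\ref{a-bands ending on theta-band} bounds them by $2N$ per maximal $\theta$-band, and $E_i^0$ has exactly $h_i-h_{i+1}$ of those (one per cell of the handle $\pazocal{C}_i$), giving $\leq 2N(h_i-h_{i+1})$. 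One small inaccuracy: $\pazocal{C}_i$ is \emph{not} the only maximal $q$-band of $E_i^0$ -- the $q$-edges of $\textbf{z}_i$ start roughly $N$ further $q$-bands running to $\textbf{q}_{i,i+1}$ -- but since Lemma~\ref{a-bands ending on theta-band} is stated per $\theta$-band rather than per $q$-band, this does not affect your total.

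The real gap is exactly where you flag it: the count of $a$-bands from $\textbf{z}_i$ that escape through $\textbf{q}_{i,i+1}$. The sketch you give there does not hold together. The $h_i-h_{i+1}$ maximal $\theta$-bands of $E_i^0$ occupy $\theta$-edges (not ``$q$-edges'') of $\textbf{q}_{i,i+1}$, and the proposed remedy -- ``transposing a $\theta$-band past a stray $a$-cell to lower the signature'' -- has nothing to act on, since $\Psi_{i,i+1}^0$ contains no $a$-cells at all by Lemma~\ref{non-distinguished a-cells}(2); the $a$-cells of $\Psi_{i,i+1}$ all live in $\Lambda_{i,i+1}$, on the far side of $\textbf{q}_{i,i+1}$, and your counted $a$-bands end \emph{on} $\textbf{q}_{i,i+1}$ rather than on those cells. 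So as written there is no argument bounding this escape count by $O(h_i-h_{i+1})$ (equivalently, by anything smaller than $\tfrac{9h_{i+1}}{20c_6}$, which is all the arithmetic actually requires); appealing to ``the corresponding argument of [26]'' is not a reconstruction of it. Until this middle estimate is supplied -- and it is where the lemma's genuine content lies, drawing on the minimality of $\Delta$, the absence of subcombs (Lemma~\ref{no subcombs}), and the specific structure of $\textbf{q}_{i,i+1}$ from Lemma~\ref{non-distinguished a-cells} -- the proof is incomplete.
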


\begin{lemma} \label{upper bound on z_i}

\textit{(Lemma 7.39 of [26])} For $\omega+1\leq i\leq \tau-1$, $|\textbf{z}_i|_a\leq 2Nh_i$.

\end{lemma}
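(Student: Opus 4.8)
The goal is the upper bound $|\textbf{z}_i|_a \leq 2Nh_i$ for $\omega+1\leq i\leq\tau-1$. The natural vehicle is the trapezium $\Gamma_i$ inside $\Psi_{i,i+1}^0$: its bottom $\textbf{y}_i$ lies on $\partial\Pi$ with label $Vt(l)$, and its top is $\textbf{z}_i$. Since $\Psi_{i,i+1}$ is not the distinguished clove (by the choice of $\omega,\tau$), Lemma \ref{non-distinguished a-cells} applies, so $\Psi_{i,i+1}^0$ contains no $a$-cells and the maximal $\theta$-bands of $\Gamma_i$ are genuine one-rule computations. Reading $\Gamma_i$ as a computation $V\to\dots\to V\cdot H_{i+1}$ (or a prefix/suffix thereof) via Lemma \ref{trapezia are computations}, the top label $\textbf{z}_i$ corresponds to an intermediate configuration of that computation. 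The first observation is that this computation is a one-machine computation in the $z$-th machine for the coordinate determined by $t(l)$: indeed $\Psi_{i,i+1}$ does not straddle the `special' input sector, so by Lemma \ref{M_a reduced quasi-trapezia} / the structure of $\Gamma_i$ the base is, up to coordinate shift, a copy of $\{t(x)\}B_4(x)$ for some $x\geq 2$, and $\textbf{y}_i$ is (a coordinate shift of) an accepted configuration $A(x)$ of $\textbf{M}$.

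The main step is then to invoke Lemma \ref{M projected long history} (or Lemma \ref{M_5 tame} / Lemma \ref{M accepted configurations}, depending on the length regime) to control the $a$-length of intermediate configurations. Concretely: I would first handle the case where the step history of $\Gamma_i$ has length at most $8$, using Lemma \ref{long step history} to reduce to a one-step computation, and then Lemma \ref{M one-step} together with the fact that $\textbf{y}_i$ is accepted — if $|\textbf{z}_i|_a > 3|V|_a$ there is a sector whose length keeps increasing along the remaining band, which (since $\Gamma_{i+1}$ sits above and its bottom is again a coordinate shift of $V$ of the same $a$-length) forces a contradiction with Lemma \ref{first h_i vs h_{i+1}} or with the monotonicity $h_{i+1}\leq h_i$. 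In the alternative case where the step history is longer, Lemma \ref{long step history} gives a subword of the form $(34)_i(4)_i(45)_i$ etc., and Lemma \ref{M projected long history} case $(a)$ bounds $\|V_j\|$ — hence $|\textbf{z}_i|_a$ — by a constant times $\max(\|\textbf{y}_i\|,\|\textbf{z}_i\|)$; combined with $\|\textbf{y}_i\|\leq \|V\|+1$ and Lemma \ref{accepted configuration a-length} this yields a linear bound, and the precise constant $2N$ comes from tracking that each of the $h_{i+1}$ $\theta$-bands of $\Gamma_i$ changes any sector's $a$-length by a bounded amount and there are at most $N$ sectors per coordinate, as in Lemma \ref{a-bands ending on theta-band}.

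I expect the main obstacle to be the bookkeeping that forces the constant to be exactly $2N$ rather than merely $O(h_i)$: one has to argue that the $a$-length of $\textbf{z}_i$ cannot exceed roughly twice the number of sectors times the height, which requires combining Lemma \ref{a-bands ending on theta-band} (at most $2N$ $a$-bands from $\textbf{y}_i$ or $\textbf{z}_i$ end on the $(\theta,q)$-cells of one $\theta$-band) with the absence of $a$-cells in $\Psi_{i,i+1}^0$ (so every $a$-edge of $\textbf{z}_i$ is the end of an $a$-band crossing $\Gamma_i$ from the boundary or from $\textbf{y}_i$) and with the height bound from Lemma \ref{first h_i vs h_{i+1}}. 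A secondary subtlety is confirming that we are genuinely in the `$a$-cell free' situation — this is exactly what Lemma \ref{non-distinguished a-cells} and Lemma \ref{no subcombs} guarantee, and those must be cited carefully since $i$ ranges over indices where $\Psi_{i,i+1}$ is non-distinguished by the very definition of $\omega$ and $\tau$. Apart from this, the argument is a direct transcription of Lemma 7.39 of [26], with the only modification being the appeal to Lemma \ref{accepted configuration a-length} to pass between the $a$-length of $W(1)$ and that of $W(x)$ for $x\geq 2$, since in our setting the `special' coordinate behaves differently from the rest.
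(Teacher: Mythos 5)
Your final observation --- that every $a$-edge of $\textbf{z}_i$ is the end of a maximal $a$-band in $\Gamma_i$ which, since $\Psi_{i,i+1}^0$ has no $a$-cells (Lemma~\ref{non-distinguished a-cells}) and the side $t$-bands carry no $a$-edges, must either cross to $\textbf{y}_i$ or end on a $(\theta,q)$-cell, so that Lemma~\ref{a-bands ending on theta-band} gives $|\textbf{z}_i|_a\leq|V|_a+2Nh_{i+1}$ --- is the correct core of the argument. But you never close this estimate. To get $|V|_a+2Nh_{i+1}\leq 2Nh_i$ you need $|V|_a\leq 2N(h_i-h_{i+1})$; Lemma~\ref{first h_i vs h_{i+1}} gives $h_i-h_{i+1}>h_i/(20c_6N)$, so this reduces to $|V|_a<h_i/(10c_6)$, which requires Lemma~\ref{h >} (namely $h>L_0^2|V|_a$, hence $|V|_a<h_i/L_0^2$ since $h_i\geq h$ for $i\leq L_0$) together with $L_0^2\gg c_6$. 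You cite Lemma~\ref{first h_i vs h_{i+1}} but not Lemma~\ref{h >}, and without the latter the proof does not go through.

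Most of the rest of the plan is either unnecessary or wrong. Lemma~\ref{M projected long history}(a) bounds intermediate configurations by $c_6\max(\|W_0(i)\|,\|W_t(i)\|)$, but $\textbf{z}_i$ is one of the two endpoints of the computation read off $\Gamma_i$, so invoking that lemma to bound $|\textbf{z}_i|_a$ is circular. The remark that ``$\Gamma_{i+1}$ sits above $\Gamma_i$'' is geometrically off: $\Gamma_i$ and $\Gamma_{i+1}$ lie in the different cloves $\Psi_{i,i+1}$ and $\Psi_{i+1,i+2}$ and both have bottoms on $\partial\Pi$; the copy $\Gamma'_{i+1}$ lives inside $\Gamma_i$. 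The appeal to Lemma~\ref{accepted configuration a-length} is also not needed here: for $\omega+1\leq i\leq\tau-1$ the clove $\Psi_{i,i+1}$ is non-distinguished by the very choice of $\omega,\tau$, so $\textbf{y}_i$ is labelled by a coordinate shift of the generic $V$ rather than the special block $W(1)$. Once the detours through Lemmas~\ref{long step history}, \ref{M one-step}, and \ref{M projected long history} are deleted, the proof is simply: absence of $a$-cells, the band count via Lemma~\ref{a-bands ending on theta-band}, then Lemma~\ref{first h_i vs h_{i+1}} and Lemma~\ref{h >} to absorb the $|V|_a$ term.
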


%\begin{proof}
%
%Assume $|\textbf{z}_i|_a>2Nh_i$. By Lemma \ref{a-bands ending on theta-band}, at most $Nh_i$ maximal $a$-bands of $E_i$ starting on $\textbf{z}_i$ can end on the $(\theta,q)$-cells of $E_i$. So, at least $|\textbf{z}_i|_a-Nh_i>Nh_i$ of them end on the path $\textbf{q}_{i,i+1}$, so that $\textbf{p}_{i,i+1}$ has at least $Nh_i$ $a$-edges.
%
%By Lemma \ref{theta-bands in cloves}, $\textbf{p}_{i,i+1}$ has at most $h_i$ $\theta$-edges. As a result, Lemma \ref{lengths} implies that $h_i(N-1)$ $a$-edges of $\textbf{p}_{i,i+1}$ contribute $\delta$ to $|\textbf{p}_{i,i+1}|$, and so also to $|\textbf{p}_{i,L-L_0-3}|$. So, Lemma \ref{path inequalities} gives the inequalities
%$$|\textbf{p}_{i,L-L_0-3}|\geq h_i+h_{L-L_0-3}+LN/2+\delta h_i(N-1)$$
%$$|\bar{\textbf{p}}_{i,L-L_0-3}|\leq h_i+h_{L-L_0-3}+3NL_0+3L_0\delta|V|_a$$
%Taking $|V|_a<h/L_0^2\leq h_i$ by Lemma \ref{h >} then gives
%$$|\bar{\textbf{p}}_{i,L-L_0-3}|\leq h_i+h_{L-L_0-3}+3NL_0+3\delta h_i/L_0$$
%so that
%$$|\textbf{p}_{i,L-L_0-3}|-|\bar{\textbf{p}}_{i,L-L_0-3}|\geq N(L/2-3L_0)+\delta h_i(\delta(N-1)-3\delta/L_0)\geq LN/3+\delta h_iN/2$$
%Then, taking $h_{L-L_0-3}\leq h\leq h_i$,
%\begin{align*}
%\frac{|\textbf{p}_{i,L-L_0-3}|-|\bar{\textbf{p}}_{i,L-L_0-3}|}{|\bar{\textbf{p}}_{i,L-L_0-3}|}&\geq\frac{LN/3+\delta h_iN/2}{3NL_0+5h_i} \\
%&\geq\min(L/9L_0,\delta N/10)=\delta N/10
%\end{align*}
%However, again taking $N_3>>\delta^{-1}$, $\eps<\delta N/10$ so that the above inequality contradicts Lemma \ref{eps intro}.
%
%\end{proof}

For $\omega+1\leq i\leq\tau$, assume that a maximal $a$-band $\textbf{A}$ of $E_i^0$ starts on $\textbf{z}_i$ and ends on a side of a maximal $q$-band $\pazocal{C}$ of $E_i^0$. Then $\textbf{A}$, a part of $\pazocal{C}$, and a subpath $\textbf{z}$ of $\textbf{z}_i$ bound a comb $\nabla$.

\begin{lemma} \label{copy of comb}

\textit{(Lemma 7.29 of [26])} For $\omega+1\leq i\leq\tau$, let $\nabla$ be a comb as above. Then there is a copy of the comb $\nabla$ in the trapezium $\Gamma=\Gamma_{i-1}\setminus\Gamma_i'$.

\end{lemma}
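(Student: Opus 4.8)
The statement to prove is Lemma \ref{copy of comb}: given $\omega+1 \le i \le \tau$ and a comb $\nabla$ inside $E_i^0$ whose handle is (part of) a maximal $q$-band $\pazocal{C}$ of $E_i^0$, bounded by a maximal $a$-band $\textbf{A}$ starting on $\textbf{z}_i$, a piece of $\pazocal{C}$, and a subpath $\textbf{z}$ of $\textbf{z}_i$, I want to exhibit a copy of $\nabla$ inside the trapezium $\Gamma = \Gamma_{i-1} \setminus \Gamma_i'$. The key structural input is the observation, recorded just before the statement, that since $H_i$ is a prefix of $H_{i-1}$ and the bottom labels $\textbf{y}_{i-1}$, $\textbf{y}_i$ are coordinate shifts of one another, the trapezium $\Gamma_i$ has a faithful copy $\Gamma_i'$ sitting inside $\Gamma_{i-1}$ as the union of $h_{i+1}$ of its $\theta$-bands, with top $\textbf{z}_i'$ and bottom $\textbf{y}_i' = \textbf{y}_{i-1}$; then $\Gamma = \Gamma_{i-1}\setminus\Gamma_i'$ is again a trapezium whose bottom is $\textbf{z}_i'$ (a copy of $\textbf{z}_i$) and whose history is the suffix $H_{i-1}$-after-$H_i$.

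First I would set up the correspondence between $\Psi_{i-1,i}^0$ and $\Psi_{i,i+1}^0$ explicitly. By Lemma \ref{quasi-trapezia} and Lemma \ref{no subcombs}, $\Psi_{i,i+1}^0$ contains the trapezium $\Gamma_i$ of height $h_{i+1}$; the comb $E_i^0$ is obtained from the maximal $\theta$-bands crossing $\pazocal{Q}_i$ but not $\pazocal{Q}_{i+1}$, so the bands of $E_i^0$ correspond exactly to the history word $H_i$ with the prefix $H_{i+1}$ removed — i.e. the history of the handle $\pazocal{C}_i$ of $E_i^0$ has length $h_i - h_{i+1}$. Since $H_i$ is a prefix of $H_{i-1}$, the trapezium $\Gamma = \Gamma_{i-1}\setminus\Gamma_i'$ has the same history word as $\pazocal{C}_i$ (the portion of $H_{i-1}$ of length $h_i - h_{i+1}$ sitting directly above the copy of $\Gamma_i$), and its bottom label $\textbf{z}_i'$ is a copy of $\textbf{z}_i = \textbf{bot}(E_i^0)$. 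Now $\nabla$ is a comb sitting inside $E_i^0$ attached to $\textbf{z}_i$ along $\textbf{z}$ and to $\pazocal{C}$ along a part of a maximal $q$-band, with the fourth side the $a$-band $\textbf{A}$; by Lemma \ref{theta-bands are one-rule computations} and Lemma \ref{trapezia are computations} (or, more precisely, the trapezium structure of $E_i^0$), $\nabla$ is completely determined by (a) its base, which is the subword of the base of $E_i^0$ cut out between $\textbf{z}$'s endpoints and $\pazocal{C}$, and (b) its history, which is a suffix of the history of $E_i^0$ read from $\textbf{z}_i$ upward, together with the starting configuration $\lab(\textbf{z})$. All three of these data are present verbatim in $\Gamma$: the base of $\Gamma$ contains the same subword (the bands of $E_i^0$ and of $\Gamma$ cross the same $t$-spokes in the same order), the relevant suffix of the history agrees because $H_i$ is a prefix of $H_{i-1}$, and the starting configuration $\lab(\textbf{z})$ reappears as the corresponding subword of $\textbf{z}_i'$. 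Hence the subdiagram of $\Gamma$ determined by this base, this history, and this starting subword — which exists and is unique by Lemma \ref{trapezia are computations} applied to the corresponding computation of $\textbf{M}$ — is a copy of $\nabla$.

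So the proof reduces to: (1) identifying $\Gamma$ as a trapezium whose bottom is a copy of $\textbf{z}_i$ and whose history is the appropriate segment of $H_{i-1}$, using the prefix relation $H_i \prec H_{i-1}$ and the coordinate-shift relation $\textbf{y}_i' = \textbf{y}_{i-1}$; (2) reading off the base, history, and initial-configuration data that determine $\nabla$ as a sub-trapezium/sub-comb of $E_i^0$ via Lemma \ref{theta-bands are one-rule computations}; (3) transporting those data into $\Gamma$ and invoking Lemma \ref{trapezia are computations} (equivalently Lemma \ref{computations are trapezia}) to produce the uniquely determined copy. I expect the main obstacle to be bookkeeping step (2)–(3): one must be careful that the $a$-band $\textbf{A}$ of $\nabla$ also has a copy in $\Gamma$ — this requires knowing that the domain of every rule occurring in the relevant history segment contains the tape letter labelling $\textbf{A}$, which holds because that letter already occurs in a $(\theta,a)$-cell of $\nabla \subseteq E_i^0 \subseteq \Delta$, so the rules are $a$-admissible with that letter, and the computation producing $\Gamma$ uses the very same rules. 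Since all of this is identical to the situation in [26] (the only new feature, potential $a$-cells, has been pushed outside $\Psi_{i,i+1}^0$ by Lemma \ref{non-distinguished a-cells}, so $E_i^0$ is genuinely a diagram over $M(\textbf{M})$ containing only $(\theta,q)$- and $(\theta,a)$-cells), the argument of Lemma 7.29 of [26] applies essentially verbatim.
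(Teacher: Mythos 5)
Your proposal is correct and follows exactly the argument the paper defers to (the paper gives no proof of this lemma, citing Lemma 7.29 of [26]): identify $\Gamma=\Gamma_{i-1}\setminus\Gamma_i'$ as a trapezium whose bottom $\textbf{z}_i'$ is a (coordinate-shift) copy of $\textbf{z}_i$ and whose history coincides with that of the handle $\pazocal{C}_i$ of $E_i^0$, then transport $\nabla$ using the determinism of trapezia/computations (Lemmas \ref{theta-bands are one-rule computations}, \ref{trapezia are computations}, \ref{computations are trapezia}), noting that Lemma \ref{non-distinguished a-cells} keeps $a$-cells out of $\Psi_{i,i+1}^0$. One small slip: in your first paragraph the history of $\Gamma$ is stated as the suffix of $H_{i-1}$ after $H_i$, whereas it is the suffix of $H_i$ after $H_{i+1}$ (of length $h_i-h_{i+1}$), which is what you correctly use in the second paragraph when matching it with the history of $\pazocal{C}_i$.
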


%\begin{proof}
%
%Let the $a$-edge $\textbf{e}$ and the $q$-edge $\textbf{f}$ be the first and last edge of $\textbf{z}$, respectively. Since $\textbf{z}_i'$ is a copy of $\textbf{z}_i$ in the trapezium $\Gamma_{i-1}$, it contains a subpath $\textbf{z}'$ that is a copy of $\textbf{z}$ and starts with an edge $\textbf{e}'$ and ends with an edge $\textbf{f}'$. If $\pi$ is the $\theta$-cell attached to $\textbf{f}$ in $\nabla$, then the $\theta$-cell $\pi'$ attached to $\textbf{f}'$ is a copy since it corresponds to the same letter of the history. Moving from $\textbf{f}$ to $\textbf{e}$, the whole maximal $\theta$-band of $\nabla$ containing $\pi$ has a copy in $\Gamma_{i-1}$. Moving up, we find a copy of every maximal $\theta$-band of $\nabla$ in $\Gamma_{i-1}$, forming a copy of $\nabla$ in $\Gamma_{i-1}$.
%
%\end{proof}

The following is the analogue of Lemma 7.40 of [26].

\begin{lemma} \label{no one-step}

For $\omega+1\leq i\leq \tau-2$, let $H_i=H_{i+1}H'=H_{i+2}H''H'$ and $\pazocal{C}$ be the computation with history with history $H_i$ corresponding to the trapezium $\Gamma_{i-1}$. Then the subcomputation $\pazocal{D}$ of $\pazocal{C}$ with history $H''H'$ cannot have step history of length 1 so that there exists a sector $QQ'$ such that one of either $Q$ or $Q'$ has a letter inserted next to it to increase the length of this sector for each rule of $\pazocal{D}$.

\end{lemma}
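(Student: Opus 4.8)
## Proof Proposal for Lemma \ref{no one-step}

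The plan is to argue by contradiction, following the strategy used for the analogous Lemma 7.40 of [26] but now accounting for the presence of the distinguished clove and of $a$-cells. Suppose such a subcomputation $\pazocal{D}$ exists: it has step history of length $1$, and there is a sector $QQ'$ whose length is strictly increased by every rule of $\pazocal{D}$ because a fixed one of the two neighboring state letters inserts an $a$-letter at each step. First I would note that, since $\pazocal{D}$ is a subcomputation of the computation of the standard base corresponding to $\Gamma_{i-1}$, its restriction to the two-letter base $QQ'$ (in the relevant coordinate $\{t(x)\}B_4(x)$ with $x\geq 2$, which exists because $i$ is far from the distinguished coordinate by our constraint $\omega+1\le i\le\tau-2$ together with $\tau\le L_0$ and $L_0<<L$) satisfies the hypotheses of Lemma \ref{M one-step} — more precisely of Lemma \ref{M_5 one-step} applied inside a single copy of $\textbf{M}_5$. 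This forces the $a$-length of that sector to grow linearly with the number of steps of $\pazocal{D}$, hence $|W_t|_a \geq \|H''H'\|$ up to additive constants, where $W_t$ is the terminal configuration of $\pazocal{D}$.

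Next I would estimate $\|H''H'\| = h_i - h_{i+2}$ from below and the $a$-length of the terminal configuration from above. By Lemma \ref{first h_i vs h_{i+1}}, $h_{i+2} < (1-\frac{1}{20c_6N})^2 h_i$, so $h_i - h_{i+2}$ is a definite fraction of $h_i$, and by Lemma \ref{h >} together with Lemma \ref{big first h_i} we have $h_i > \delta^{-1}$ and indeed $h_i$ is large compared to $|V|_a$ in the regime $h>L_0^2|V|_a$. On the other hand, the terminal configuration of $\pazocal{D}$ sits on the top $\textbf{z}_{i+1}$-level inside $\Gamma_{i-1}$, and Lemma \ref{upper bound on z_i} gives $|\textbf{z}_{i+1}|_a \leq 2Nh_{i+1}$, while the $a$-length of the sector $QQ'$ in the relevant configuration is at most the $a$-length of the appropriate $\textbf{z}$-path, hence at most $2Nh_{i+1} < 2Nh_i$. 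Combining, the linear growth from the one-step computation would give $h_i - h_{i+2} \leq 2Nh_{i+1} + (\text{const})$, which contradicts $h_i - h_{i+2} \geq \frac{1}{10c_6N}h_i$ (say) once $h_i$ is large — and $h_i$ is large because $h_i \geq h_\tau \geq h > L_0^2|V|_a \geq L_0^2$ by Lemma \ref{h >}, while the constants here depend only on $N$, $c_6$, and the fixed parameters, all chosen before $L_0$.

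The step that I expect to be the main obstacle is verifying that the hypotheses of Lemma \ref{M one-step} (equivalently Lemma \ref{M_5 one-step}) genuinely apply to the restriction of $\pazocal{D}$ to $QQ'$: one must check that $QQ'$ is not (a copy related to) the special input sector — so that the "accepted configuration" hypothesis and the sector-growth dichotomy of Lemma \ref{M_5 one-step} are available in the form needed — and that the coordinate $x$ can indeed be taken $\ge 2$. This is where the possibility of the distinguished clove enters: if $\Psi_{i,i+1}$ were distinguished we could not pass to a copy with $x\geq 2$. The constraint $\omega+1\le i\le\tau-2$ combined with the definition of $\omega,\tau$ (no $\Psi_{j,j+1}$ in this range is the distinguished clove) and the fact that $\tau-\omega\geq L_0/2$ is exactly what rules this out, so the bookkeeping is to confirm that the trapezia $\Gamma_{i-1}$, $\Gamma_i'$, $\Gamma_{i+1}'$ all exist and carry the standard base in a coordinate away from $t(1)$. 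Once that is in hand, the rest is the arithmetic comparison of the two estimates on $h_i - h_{i+2}$, which goes through by the choice of parameters $N<<c_6<<\cdots<<L_0<<L$.
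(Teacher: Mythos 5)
Your proposal takes a fundamentally different route from the paper, and the route does not close. You aim for a purely arithmetic contradiction among the heights $h_i$, $h_{i+1}$, $h_{i+2}$: the growing sector gives a linear lower bound on the terminal $a$-length, while Lemma \ref{upper bound on z_i} gives an upper bound, and you claim these conflict. But the two inequalities you produce are $h_i - h_{i+2} \ge \frac{1}{10c_6N}\,h_i$ (say) and $h_i - h_{i+2}\le 2Nh_{i+1}$ (or $2Nh_i$, since the terminal word of $\pazocal{D}$ is at the $\textbf{z}_i$-level, not the $\textbf{z}_{i+1}$-level). With the parameter ordering $N<<c_6$, we always have $\frac{1}{10c_6N} < 2N$, and $h_{i+1}$ is only a constant factor smaller than $h_i$ by Lemma \ref{first h_i vs h_{i+1}}, so both inequalities hold simultaneously with room to spare. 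There is no contradiction hiding in the $h_i$'s alone, and the statement of Lemma \ref{no one-step} cannot be proved by reasoning about the abstract $S$-machine computation in isolation: the hypothesis that a sector grows at every step of $\pazocal{D}$ is perfectly consistent with the computation.

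The contradiction in the paper comes from the geometry of the counterexample diagram $\Delta$, not from the computation. The growing sector $QQ'$ corresponds, inside $\Psi_{i,i+1}$, to the subpath $\textbf{x}$ of $\textbf{z}_i$ between the neighbor $q$-bands $\pazocal{Q},\pazocal{Q}'$, and the trapezium $\Gamma_i\setminus\Gamma_{i+1}'$ with history $H''$ forces $|\textbf{x}|_a\ge h_{i+1}-h_{i+2}$. The key step your proposal is missing is tracing those $a$-edges: Lemma \ref{copy of comb} and Lemma \ref{trapezia are computations} rule out their $a$-bands ending on $\pazocal{Q}$, and a counting argument with Lemma \ref{theta-bands in cloves}(2) and Lemma \ref{no subcombs} shows that at least $|\textbf{x}|_a$ of them end on $\textbf{q}_{i,i+1}$ and contribute $\delta$ each to $|\textbf{p}_{i,i+1}|$, hence to $|\textbf{p}_{i,L-L_0-3}|$. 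That makes the path $\textbf{p}_{i,L-L_0-3}$ noticeably longer than $\bar{\textbf{p}}_{i,L-L_0-3}$ (on the order of $\delta h_{i+1}/c_6N$), so cutting off the clove $\Psi_{i,L-L_0-3}$ and applying the inductive hypothesis to $\Psi'_{i,L-L_0-3}$ — together with the cubic bound of Lemma \ref{M_a cubic} on the disk-free piece $\Psi_{i,L-L_0-3}$ and the weight of $\Pi$ — yields $\text{wt}(\Delta)\le N_2(|\partial\Delta|+\sigma_\lambda(\Delta^*))^3$, contradicting the choice of $\Delta$ as a minimal counterexample. In short: the argument must turn the sector growth into extra boundary length, and then use minimality of $\Delta$; your proposal stops before that conversion, and the $h$-arithmetic it substitutes does not yield a contradiction. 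The one thing you do identify correctly is that the constraint $\omega+1\le i\le\tau-2$ is what keeps the relevant coordinates away from $t(1)$ and the distinguished clove, which the paper also uses at the outset (the base of each $\Gamma_j$ for $\omega\le j\le\tau$ is $t(l)B_4(l)t(l+1)$ with $l\ge 2$).
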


\begin{proof}

Note that for all $\omega\leq j\leq \tau$, the base of $\Gamma_j$ is $t(l)B_4(l)t(l+1)$ for $2\leq l\leq L$ (where if $l=L$, then $l+1$ is replaced with 1). So, assuming the existence of such a subcomputation $\pazocal{D}$ in the statement, then the symmetry of how the rules operate on $B_3(l)$ and its mirror copy implies that they do the same thing to a copy of the $(Q')^{-1}Q^{-1}$ sector. As a result, we can assume without loss of generality that the rules of $\pazocal{D}$ write letters to the right of $Q$.

Let $\pazocal{Q}$ be the maximal $q$-band of $E_i^0$ that, in $\Delta$, is a subband of the $q$-spoke of $\Pi$ corresponding to a coordinate shift the state letter $Q$. Similarly, we define $\pazocal{Q}'$ as the maximal $q$-band for a copy of $Q'$, so that $\pazocal{Q}$ and $\pazocal{Q}'$ are neighbor $q$-bands. Let $\textbf{x}$ be the subpath of $\textbf{z}_i$ between $\pazocal{Q}$ and $\pazocal{Q}'$.

Since $\Gamma_i$ contains a copy $\Gamma_{i+1}'$ of the trapezium $\Gamma_{i+1}$, the bottom of the trapezium $\Gamma_i\setminus\Gamma_{i+1}'$ is a copy $\textbf{z}_{i+1}'$ of $\textbf{z}_{i+1}$, while the top is $\textbf{z}_i$. This trapezium has history $H''$, so that it inserts one letter per rule. As a result, $|\textbf{x}|_a\geq\|H''\|=h_{i+1}-h_{i+2}$. 

By Lemma \ref{first h_i vs h_{i+1}}, $h_{i+1}-h_{i+2}>\frac{1}{20c_6N}h_{i+1}$. Using $h_{i+1}\geq h$, Lemma \ref{h >}, and $L_0>200c_6N$, we get
$$|\textbf{x}|_a\geq\frac{h}{20c_6N}>\frac{L_0^2|V|_a}{20c_6N}>10L_0|V|_a$$
If an $a$-band starting on $\textbf{x}$ ended on a $(\theta,q)$-cell of $\pazocal{Q}$, then Lemma \ref{copy of comb} implies that there is a copy of this in the trapezium $\Gamma_{i-1}\setminus\Gamma_i'$. By Lemma \ref{trapezia are computations}, though, this would contradict the assumption that rules of $\pazocal{D}$ only write letters in the sector.

Now, consider the comb bounded by $\pazocal{Q}$, $\pazocal{Q}'$, $\textbf{x}$, and $\textbf{q}_{i,i+1}$. Set $s$ and $s'$ as the lengths of $\pazocal{Q}$ and $\pazocal{Q}'$, respectively, so that $s'\leq s$ by Lemma \ref{theta-bands in cloves}. So, there are $|\textbf{x}|_a+s$ maximal $a$-bands starting on $\textbf{x}$ and $\pazocal{Q}$ and ending on $\pazocal{Q}'$ or on $\textbf{q}_{i,i+1}$ by Lemma \ref{no subcombs}. Since only $s'\leq s$ $a$-bands can end on $\pazocal{Q}'$ by Lemma \ref{simplify rules}, at least $|\textbf{x}|_a+s-s'$ of them end on the segement of $\textbf{q}_{i,i+1}$ between $\pazocal{Q}$ and $\pazocal{Q}'$. By Lemma \ref{theta-bands in cloves}(2), the same segment contains $s-s'$ $\theta$-edges, meaning at least $|\textbf{x}|_a$ of them contribute $\delta$ to its length by Lemma \ref{lengths}. So, by Lemma \ref{path inequalities},
\begin{align*}
|\textbf{p}_{i,L-L_0-3}|\geq|\textbf{q}_{i,L-L_0-3}|&\geq h_i+h_{L-L_0-3}+LN/2+\delta\frac{h_{i+1}}{20c_6N} \\
&\geq h_i+h_{L-L_0-3}+LN/2+10\delta L_0|V|_a
\end{align*}
Also by Lemma \ref{path inequalities} and \ref{h >},
$$|\bar{\textbf{p}}_{i,L-L_0-3}|\leq h_i+h_{L-L_0-3}+3L_0N+3\delta L_0|V|_a\leq h_i+h_{L-L_0-3}+3L_0N+3\delta h/L_0$$
So,
\begin{align*}
|\textbf{p}_{i,L-L_0-3}|-|\bar{\textbf{p}}_{i,L-L_0-3}|&\geq (L/2-3L_0)N+\delta(h_{i+1}/20c_6N-3h/L_0) \\
&>\delta h_{i+1}(1/20c_6N-3/L_0) \\
&>\delta h_{i+1}/50c_6N
\end{align*}
by again taking $L_0>>c_6N$.

Noting that $\Psi'_{i,L-L_0-3}$ is the subdiagram of $\Delta$ arising by replacing the subpath $\textbf{p}_{i,L-L_0-3}$ of the contour with $\bar{\textbf{p}}_{i,L-L_0-3}$, letting $\textbf{s}$ be the complement of the $\textbf{p}_{i,L-L_0-3}$ in $\partial\Delta$, and applying Lemma \ref{lengths} gives:
\begin{align*}
|\partial\Psi'_{i,L-L_0-3}|&\leq|\textbf{s}|+|\bar{\textbf{p}}_{i,L-L_0-3}| \\
&<|\textbf{s}|+|\textbf{p}_{i,L-L_0-3}|-\delta h_{i+1}/50c_6N \\
&<|\partial\Delta|-\delta(h_{i+1}/50c_6N-1)
\end{align*}
Since $h_{i+1}>\delta^{-1}$ by Lemma \ref{big first h_i}, we take $\delta^{-1}>100c_6N$ so that $h_{i+1}/100c_6N>1$, so that
$$|\partial\Psi'_{i,L-L_0-3}|<|\partial\Delta|-\delta h_{i+1}/100c_6N$$
Lemma \ref{weakly minimal}(1),(2) also implies that $\Psi'_{i,L-L_0-3}$ is weakly minimal with $\sigma_\lambda((\Psi'_{i,L-L_0-3})^*)\leq\sigma_\lambda(\Delta^*)$. So, the inductive hypothesis gives:
\begin{align*}
\text{wt}(\Psi'_{i,L-L_0-3})&\leq N_2(|\Psi'_{i,L-L_0-3}|+\sigma_\lambda((\Psi'_{i,L-L_0-3})^*))^3 \\
&< N_2(|\partial\Delta|+\sigma_\lambda(\Delta^*)-\delta h_{i+1}/100c_6N)^3
\end{align*}

Next, noting that $|V|_a\leq h_i/L_0^2$ by Lemma \ref{h >}, $h_i>\delta^{-1}>100L_0N$ by Lemma \ref{big first h_i}, and $h_{L-L_0-3}\leq h_i$, we get
$$|\bar{\textbf{p}}_{i,L-L_0-3}|\leq 2h_i+3h_i/100+3\delta h_i/L_0\leq 2.1h_i$$
So, since $|\textbf{p}_{i,L-L_0-3}|\leq(1+\eps)|\bar{\textbf{p}}_{i,L-L_0-3}|$ by Lemma \ref{eps intro}, taking $\eps<<1$ gives 
$$|\Psi_{i,L-L_0-3}|\leq|\textbf{p}_{i,L-L_0-3}|+|\bar{\textbf{p}}_{i,L-L_0-3}|\leq4.21h_i$$
Since $\Psi_{i,L-L_0-3}$ contains no disks, Lemma \ref{M_a cubic} then gives
$$\text{wt}(\Psi_{i,L-L_0-3})\leq75N_1h_i^3$$
What's more, since $|\partial\Pi|<L|\bar{\textbf{p}}_{i,L-L_0-3}|$, we have
$$\text{wt}(\Pi)=c_7|\partial\Pi|^2\leq5c_7L^2h_i^2$$
Combining these two inequalities, we have
$$\text{wt}(\bar{\Delta}_{i,L-L_0-3})\leq75N_1h_i^3+5c_7L^2h_i^2<(75N_1+5c_7L^2)h_i^2$$
by Lemma \ref{big first h_i}. What's more, we have
$$\text{wt}(\Delta)\leq N_2\left(|\partial\Delta|+\sigma_\lambda(\Delta^*)-\frac{\delta h_{i+1}}{100c_6N}\right)^3+(75N_1+5c_7L^2)h_i^2$$
So, we have reached a contradiction if the following expression is at most 0:
\begin{align*}
3N_2(|\partial\Delta|+\sigma_\lambda(\Delta^*))\bigg(\frac{\delta h_{i+1}}{100c_6N}&\bigg)\bigg(\frac{\delta h_{i+1}}{100c_6N}-|\partial\Delta|-\sigma_\lambda(\Delta^*)\bigg)-N_2\bigg(\frac{\delta h_{i+1}}{100c_6N}\bigg)^3 +(75N_1+5c_7L^2)h_i^2
\end{align*}
As $h_{i+1}<|\partial\Delta|$ by Lemmas \ref{lengths} and \ref{G_a theta-annuli}, we can take
$$\frac{\delta h_{i+1}}{100c_6N}-|\partial\Delta|\leq-2|\partial\Delta|/3$$
it suffices to show the inequality:
$$2N_2|\partial\Delta|^2\frac{\delta h_{i+1}}{100c_6N}\geq(75N_1+5c_7L^2)h_i^2$$
By Lemma \ref{big first h_i}, $h_{i+1}>\delta^{-1}$, so that it suffices to show that 
$$N_2|\partial\Delta|^2\geq50c_6N(75N_1+5c_7L^2)h_i^2$$
By Lemma \ref{theta-bands in cloves}, each $\theta$-band of $\Psi$ starting on $\pazocal{Q}_i$ ends on $\textbf{p}$, so that it contributes at least one to the length of $|\partial\Delta|$. Hence, $h_i\leq|\partial\Delta|$, so that it suffices to show that
$$N_2\geq50c_6N(75N_1+5c_7L^2)$$
But this amounts to a parameter choices for $N_2$, so that we have our contradiction.

\end{proof}

Finally, we reach our desired contradiction, the analogue of Lemma 7.41 of [26].

\begin{lemma} \label{contradiction}

The counterexample diagram $\Delta$ cannot exist.

\end{lemma}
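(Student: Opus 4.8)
The plan is to derive the final contradiction by combining the structural restrictions on the spokes $\pazocal{Q}_\omega,\dots,\pazocal{Q}_\tau$ already established (Lemmas \ref{big first h_i}--\ref{upper bound on z_i} and, crucially, Lemma \ref{no one-step}) with the one-step-computation dichotomy of Lemma \ref{M one-step}. The idea is this: the histories satisfy $H_{i+2}H''H'\equiv H_i$ along the decreasing part of the string $\omega,\dots,\tau$, so the trapezia $\Gamma_{i-1}$ encode computations in $\textbf{M}$ with base $t(l)B_4(l)t(l+1)$ for some $l\geq 2$. By Lemma \ref{long step history}, any such computation either contains one of the four ``controlled-type'' step-history subwords $(34)_i(4)_i(45)_i$, etc., or has step history of bounded length ($\leq 8$). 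The first alternative forces a long controlled subcomputation, hence (via Lemma \ref{shafts}, using that $\pazocal{Q}_i$ cannot contain a $\lambda$-shaft of length $\geq\delta h$ by Lemma \ref{lambda-shafts in first h_i}) a bound on $h_i$ that contradicts $h_i\geq h>L_0^2|V|_a$ together with the geometric lower bounds on the $h_i$'s; the second alternative is exactly the hypothesis ruled out by Lemma \ref{no one-step} combined with Lemma \ref{M one-step}.

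Concretely, I would first fix $i$ in the range $\omega+1\leq i\leq\tau-2$ (which is nonempty since $\tau-\omega\geq L_0/2$ and $L_0$ is large) and consider the computation $\pazocal{C}$ with history $H_i=H_{i+2}H''H'$ associated to $\Gamma_{i-1}$, together with the subcomputation $\pazocal{D}$ of history $H''H'$. The length of $\pazocal{D}$ is $\|H''H'\|=h_i-h_{i+2}$, which by Lemma \ref{first h_i vs h_{i+1}} iterated twice is at least a fixed fraction of $h_i$, hence (using $h_i\geq h>L_0^2|V|_a$ and Lemma \ref{big first h_i}) very large compared to $|V|_a\approx\|W(l)\|$. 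Then I split on the step history of $\pazocal{D}$:

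\emph{Case A: the step history of $\pazocal{D}$ has length at most $8$.} Passing to a sub-subcomputation whose step history has length $1$ and whose length is still at least $(h_i-h_{i+2})/8$, Lemma \ref{M one-step} (in the form of Lemma \ref{M one-step} applied to the projection $W(l)$, $l\geq2$) applies once the $a$-length has grown past $3|V_0|_a$ — which it must, since the computation is far longer than $|V|_a$ and the trapezium $\Gamma_{i-1}$ already has $|\textbf{z}_i|_a$ controlled by Lemmas \ref{trapezia in first h_i} and \ref{upper bound on z_i}. This produces precisely a sector $QQ'$ into which a letter is inserted next to $Q$ or $Q'$ increasing its length at every rule of the tail of $\pazocal{D}$, contradicting Lemma \ref{no one-step}.

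\emph{Case B: the step history of $\pazocal{D}$ contains one of $(12)_j(2)_j(23)_j$, $(32)_j(2)_j(21)_j$, $(34)_j(4)_j(45)_j$, $(54)_j(4)_j(43)_j$.} Then $\pazocal{D}$, and hence the trapezium $\Gamma_{i-1}$, contains a subtrapezium with controlled history $H_0$ of length comparable to $h_i$; by the definition of a $\lambda$-shaft and Lemma \ref{controlled has reduced} this yields a $\lambda$-shaft at $\Pi$ inside $\pazocal{Q}_i$ (note $\pazocal{C}_i\subseteq\pazocal{Q}_i$ has height $h_i-h_{i+1}$, still large) of length at least $\lambda$ times a fixed fraction of $h_i$, hence at least $\delta h$ for suitable parameter choices since $h_i\geq h$; this contradicts Lemma \ref{lambda-shafts in first h_i}. (If the controlled portion is too short to directly give a long shaft, one instead feeds it into Lemma \ref{shafts} applied to the quasi-trapezium $\Gamma_{i-1}$, whose top/bottom label has at least $L+1$ $t$-letters once we absorb enough consecutive cloves $\Psi_{i,i+1}$ — recall $\tau-\omega\geq L_0/2\gg L$ — to reach the same contradiction with minimality of $\Delta$.)

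The main obstacle is bookkeeping in Case B: one has to verify that the controlled subcomputation, which lives in a single base block $t(l)B_4(l)t(l+1)$, can be ``widened'' across enough of the $L-4$ disk-free cloves $\Gamma_1,\dots,\Gamma_{L-4}$ to produce a quasi-trapezium whose top label carries $L+1$ distinct $t$-letters, so that Lemma \ref{shafts}(2) is genuinely applicable; this uses that $H_{i+1}$ is a prefix of $H_i$ along the decreasing run and the coordinate-shift compatibility of the bottom labels $\textbf{y}_j$ (each of the form $Vt(l)$), exactly as set up before Lemma \ref{path inequalities}. Once that width argument is in place, both cases close by the parameter inequalities $\lambda^{-1}\ll k\ll\dots\ll L_0\ll L$ and $L_0\gg c_6N$, $\delta^{-1}\gg c_6N$ already exploited in Lemma \ref{no one-step}, and no weakly minimal counterexample $\Delta$ can exist, completing the induction and hence the proof that $\text{wt}(\Delta)\leq N_2(|\partial\Delta|+\sigma_\lambda(\Delta^*))^3$ for all weakly minimal $\Delta$ (and a fortiori $\text{wt}(\Delta)\leq N_3|\partial\Delta|^3$ for all minimal $\Delta$).
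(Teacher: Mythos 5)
Your overall dichotomy is the same one the paper uses: either a long one-step subcomputation (contradicting Lemma~\ref{no one-step} via Lemma~\ref{M one-step}) or a long controlled subhistory (yielding a $\lambda$-shaft that contradicts Lemma~\ref{lambda-shafts in first h_i}). But the execution as sketched has a genuine gap in the one-step branch, and the shaft branch misses the mechanism that makes the flanking estimate work.

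In Case~A you say Lemma~\ref{M one-step} ``applies once the $a$-length has grown past $3|V_0|_a$''; but establishing that the $a$-length actually triples \emph{within a single one-step subcomputation} is precisely the hard part, and nothing in your sketch forces it. The paper gets this by first choosing an integer $\eta$ with $(1-\tfrac{1}{20c_6N})^\eta<\tfrac{1}{6c_6N}$, then selecting $30$ indices $\omega+1\leq j_1<\dots<j_{30}\leq\tau-2$ spaced at least $\eta$ apart (possible because $L_0>30\eta$). Lemmas~\ref{first h_i vs h_{i+1}}, \ref{trapezia in first h_i}, \ref{upper bound on z_i} then give checkpoints $W(l)$ inside the single computation $\pazocal{C}$ associated to $\Gamma_{j_2}$ with $|W(l+1)|_a>3|W(l)|_a$. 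With $\geq 10$ such checkpoints, if the step history of $\pazocal{C}$ were short, some triple $W(l),W(l+1),W(l+2)$ falls inside a one-step subcomputation and then Lemma~\ref{M one-step} plus Lemma~\ref{no one-step} gives the contradiction. Without the spacing you cannot conclude that the $a$-length grows across a \emph{single} step.

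In Case~B your main line is right (build a $\lambda$-shaft of length $\geq\delta h$ and hit Lemma~\ref{lambda-shafts in first h_i}), but the parenthetical fallback --- widening to a quasi-trapezium with $L+1$ $t$-letters in order to invoke Lemma~\ref{shafts} --- is not how the argument closes, and it would not work: the cloves $\Gamma_j$ have base $t(l)B_4(l)t(l+1)$ with only two $t$-letters, and the various $\Gamma_j$ have different heights, so the union is not a trapezium. The paper instead exploits the prefix structure $H_{i+1}\subset H_i$ and the fact that $h_{j_1}>6c_6Nh_{j_2}>2h_{j_2}$ to show that $H_{j_1+1}$ has the controlled word $H''$ sitting with flanks $H'$ and $H'''_1$ each of length $\geq\|H''\|$; the resulting subband of $\pazocal{Q}_{j_1+1}$ (not $\pazocal{Q}_i$) is then a genuine $\lambda$-shaft with $\lambda<1/3$ of length $\geq h\geq\delta^{-1}\gg\delta h$, which is what Lemma~\ref{lambda-shafts in first h_i} forbids. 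That extra headroom --- one index $j_1$ farther out, giving a history strictly containing $H_{j_2+1}$ --- is essential to the $\lambda$-shaft verification and is absent from your sketch.
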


\begin{proof}

First, fix an integer $\eta$ dependant on $c_6$ and $N$ such that $(1-\frac{1}{20c_6N})^\eta<\frac{1}{6c_6N}$. Note that, although $\eta$ is not listed as one of the parameters of Section 2.4, since we choose $L_0$ after $c_6$ and $N$, we can take $L_0>>\eta$.

For $\omega\leq i\leq\tau-1$, Lemma \ref{first h_i vs h_{i+1}} gives us that $h_{i+1}<(1-\frac{1}{20c_6N})h_i$. So, if $\omega\leq i<j\leq \tau-1$ with $j-i-1\geq\eta$, then $h_j<(1-\frac{1}{20c_6N})^\eta h_{i+1}<\frac{1}{6c_6N}h_{i+1}$, i.e $h_{i+1}>6c_6N h_j$.

Lemma \ref{trapezia in first h_i} implies that $|\textbf{z}_i|_a\geq h_{i+1}/c_6>6Nh_j$; Lemma \ref{upper bound on z_i} then gives $2Nh_j\geq|\textbf{z}_j|_a$, so that $|\textbf{z}_i|_a>3|\textbf{z}_j|_a$.

Now, assuming $L_0>30\eta$, we obtain indices $\omega+1\leq j_1<j_2<\dots<j_{30}\leq\tau-2$ such that $j_i-j_{i-1}-1\geq\eta$, so that $|\textbf{z}_{j_{i-1}}|_a>3|\textbf{z}_{j_i}|_a$ and $h_{j_{i-1}}\geq6c_6Nh_{j_i}$.

Let $\pazocal{C}:W_0\to\dots\to W_t$ be the computation corresponding to the trapezium $\Gamma_{j_2}$ by Lemma \ref{trapezia are computations}. As it contains a copy of $\Gamma'_{j_2+1}$, which in turn contains a copy of $\Gamma_{j_2+2}$ and so on, there exist words $W(l)$ in $\pazocal{C}$ for $l=1,\dots,$ that are coordinate shifts of the labels of $\textbf{z}_{j_l}$. By the inequalities above, $|W(l+1)|_a>3|W(l)|_a$.

If for some $l$ the subcomputation $W(l+2)\to\dots\to W(l)$ is a one-step computation, then Lemma \ref{M one-step} implies that the subcomputation $W(l+1)\to\dots\to W(l)$ and there exists a sector for which a letter is inserted on the left or right increasing the sector's length. But since this subcomputation has length at least $\eta+1\geq2$, it follows that we can find a subcomputation contradicting Lemma \ref{no one-step}. 

So, the step history of the computation $\pazocal{C}$ must have length at least 10, so that Lemma \ref{long step history} limies that its step history contains a subword $(34)_i(4)_i(45)_i$, $(54)_i(4)_i(43)_i$, $(12)_i(2)_i(23)_i$, or $(32)_i(2)_i(21)_i$.

Suppose first that the subword is $(34)_i(4)_i(45)_i$ or $(54)_i(4)_i(43)_i$. Then we can factor $H_{j_2+1}$ as $H'H''H'''$ where $(H'')^{\pm1}$ is of the form $\chi(i-1,i)H_0\chi(i,i+1)$ for $i$ chosen such that this is a controlled history. Further, since $k\geq2$, Lemma \ref{M_3 standard base}$(b)$ tells us we can choose $i$ so that $\|H''\|\leq\|H'\|$. Finally, noting that $\pazocal{C}$ corresponds to a subtrapezium of $\Gamma_{j_2}\setminus\Gamma_\tau$, we can assume that $\|H'\|\geq\|H_\tau\|\geq h$.

Now, since $h_{j_1}>6c_6Nh_{j_2}>2h_{j_2}$, the history $H_{j_1+1}$ of $\Gamma_{j_1}$ has prefix $H'H''H'''_1$ where $\|H'''_1\|=\|H'\|\geq\|H''\|$. Set $\pazocal{C}$ as the subband of the spoke $\pazocal{Q}_{j_1+1}$ with this history. Then, for any factorization $\pazocal{C}=\pazocal{C}_1\pazocal{C}_2\pazocal{C}_3$, with $\|\pazocal{C}_1\|+\pazocal{C}_2\|\leq\|\pazocal{C}\|/3$, the history of $\pazocal{C}_2$ must contain $H''$. So, taking $\lambda<1/3$, $\pazocal{C}$ is a $\lambda$-shaft with length at least $h$, contradicting Lemma \ref{lambda-shafts in first h_i}.

Conversely, suppose that the subword is $(12)_i(2)_i(23)_i$ or $(32)_i(2)_i(21)_i$. Then factor $H_{j_2+1}$ as $H'H''H'''$ with $H''$ of the form $\zeta^{(i-1,i)}H_0\zeta^{(i,i+1)}$ for appropriate $i$. Lemma \ref{primitive computations} then implies that we can choose this factorization so that $\|H'\|\geq\|H''\|$, so that again we obtain a $\lambda$-shaft with length at least $h$.

Thus, we have reached the desired contradiction.

\end{proof}

\medskip

%%%%%%%%%%%%%%%%%%%%%%%%%%%%%%%%%%%%%%%%%%%%%%%%%%%%%%%%%%%%%%%%%

\section{Proof of Theorem \ref{main theorem}}

\begin{lemma} \label{embedding}

The group $B(2,n)$ embeds in the group $G(\textbf{M})$.

\end{lemma}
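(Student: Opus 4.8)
The plan is to show that the natural map $\varphi:B(2,n)\to G(\textbf{M})$ sending the two generators of $B(2,n)$ to the copies of the letters of $\pazocal{A}$ in the tape alphabet of the `special' input sector is injective. That $\varphi$ is a well-defined homomorphism is already contained in the proof of Lemma \ref{a-relations are relations} and Lemma \ref{G isomorphic to G_a}: every $a$-relation $w=1$ with $w\in\pazocal{L}$ holds in $G(\textbf{M})$, and since $B(2,n)=\gen{\pazocal{A}\mid w=1,\ w\in\pazocal{L}}$, von Dyck's theorem gives the homomorphism. The content of the lemma is therefore injectivity: if $w\in F(\pazocal{A})$ is a reduced word that is \emph{not} trivial in $B(2,n)$, then $w\neq1$ in $G(\textbf{M})$.

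\textbf{Key steps.} First I would identify, via Lemma \ref{G isomorphic to G_a}, the group $G(\textbf{M})$ with $G_a(\textbf{M})$, so that triviality of $w$ in $G(\textbf{M})$ is equivalent to the existence of a minimal diagram $\Delta$ over $G_a(\textbf{M})$ with $\lab(\partial\Delta)\equiv w$. Suppose for contradiction such a $\Delta$ exists for some $w$ not trivial in $B(2,n)$; choose $w$ so that $\|w\|$ is minimal among all such counterexamples. Since $w\in F(\pazocal{A})$, the boundary $\partial\Delta$ has no $q$-edges and no $\theta$-edges, so every cell of $\Delta$ is either an $a$-cell or a $(\theta,a)$-cell or a disk. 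I would then argue that $\Delta$ contains no disks: a disk would have a $t$-spoke (indeed $L$ of them), and by Lemma \ref{M_a no annuli}/Lemma \ref{number of q-edges} (in the presence of at least one disk the number of $q$-edges on $\partial\Delta$ is at least $lLN/2>0$) this would force a $q$-edge onto $\partial\Delta$, contradicting $\lab(\partial\Delta)\equiv w\in F(\pazocal{A})$. Hence $\Delta$ is a minimal diagram over $M_a(\textbf{M})$ with no $q$-edges on its boundary. Now Lemma \ref{M_a no annuli} (no $\theta$-annuli) together with the fact that every maximal $\theta$-band must end on $\partial\Delta$ forces $\Delta$ to have no $\theta$-edges at all, so $\Delta$ contains no $(\theta,a)$-cells; by Lemma \ref{a-bands between a-cells} and Lemma \ref{M_a no annuli} there can then be no $a$-cells either unless $\Delta$ consists of a single $a$-cell. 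Thus either $\Delta$ is empty, forcing $w=1$ in the free group, or $\Delta$ is a single $a$-cell, whose boundary label is by definition an element of the normal closure of $\{u^n:u\in F(\pazocal{A})\}$ visually equal to $w$; in either case $w=1$ in $B(2,n)$, contradicting the choice of $w$.

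\textbf{Main obstacle.} The delicate point is the step eliminating all $\theta$-edges (equivalently all $(\theta,a)$-cells) from $\Delta$: one must be careful that an apparently harmless maximal $\theta$-band in $\Delta$ cannot ``close up'' or terminate on an $a$-cell or a disk in a way that avoids putting a $\theta$-edge on $\partial\Delta$. The earlier lemmas are designed exactly for this — Lemma \ref{M_a no annuli} rules out $\theta$-annuli and $(\theta,a)$-annuli, the remarks after Lemma \ref{M_a no annuli} guarantee that every maximal $\theta$-band ends twice on $\partial\Delta$, and Lemma \ref{a-bands between a-cells} controls $a$-bands between $a$-cells — so in fact the argument reduces to carefully invoking these, essentially reproducing (the disk-free, $q$-edge-free case of) the proof of part (3) of Lemma \ref{M_a no annuli}. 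Once that is in place, the reduction to a single $a$-cell and hence to triviality in $B(2,n)$ is immediate, and the minimality-of-$\|w\|$ contradiction closes the argument. I would also note explicitly that this shows $\varphi$ is injective, so $B(2,n)$ embeds, and by Shirvanyan's theorem (cited after Theorem \ref{main theorem}) the same conclusion will propagate to $B(m,n)$ for all $m\geq2$.
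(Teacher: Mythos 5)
Your proposal is correct and follows essentially the same route as the paper: define $\varphi$ via von Dyck, pass to $G_a(\textbf{M})$ by Lemma \ref{G isomorphic to G_a}, take a minimal diagram $\Delta$ for a word $w$ in the kernel, and use the absence of $q$-edges (hence disks, via Lemma \ref{graph}/\ref{number of q-edges}) and $\theta$-edges (via the $G_a$ annulus lemma) on $\partial\Delta$ to force $\Delta$ to consist only of $a$-cells, whence $w=1$ in $B(2,n)$. The only cosmetic differences are that the paper cites Lemmas \ref{graph} and \ref{G_a theta-annuli} where you cite \ref{number of q-edges} and \ref{M_a no annuli} (the latter strictly applies only after disks have been eliminated), and your appeal to minimality of $\|w\|$ and the reduction to a single $a$-cell are both unnecessary — once $\Delta$ is composed entirely of $a$-cells it is a diagram over the presentation of $B(2,n)$ and the conclusion is immediate.
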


\begin{proof}

Consider the natural map $\varphi:\pazocal{A}\to G_a(\textbf{M})$ sending the elements of $\pazocal{A}$ to their copies in the tape alphabet of the `special' input sector. The theorem of von Dyck implies that this extends to a homomorphism $\varphi:B(2,n)\to G_a(\textbf{M})$.

Now suppose the word $w\in F(\pazocal{A})$ satisfies $\varphi(w)=1$. Then there exists a minimal diagram $\Delta$ over $G_a(\textbf{M})$ satisfying $\text{Lab}(\partial\Delta)\equiv w$. By Lemmas \ref{graph} and \ref{G_a theta-annuli}, every cell of $\Delta$ must be an $a$-cell. But then this is a diagram over $B(2,n)$, so that $w=1$ in $F(\pazocal{A})$.

So, $\varphi:B(2,n)\to G_a(\textbf{M})$ is an embedding. Lemma \ref{G isomorphic to G_a} then implies the statement.

\end{proof}

The next two lemmas are fundamental results relating to the Dehn functions of products of groups proved in [4]:

\begin{lemma} \label{product upper}

\textit{(Proposition 2.1 of [4])} If $G$ and $H$ are finitely presented groups, then: $$\delta_{G\times H}\preccurlyeq n^2+\delta_G+\delta_H$$

\end{lemma}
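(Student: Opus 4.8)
The plan is to prove the upper bound $\delta_{G\times H}\preccurlyeq n^2+\delta_G+\delta_H$ by the standard van Kampen diagram argument. Fix finite presentations $G=\gen{\pazocal{A}\mid\pazocal{R}}$ and $H=\gen{\pazocal{B}\mid\pazocal{S}}$ with $\pazocal{A}\cap\pazocal{B}=\emptyset$, so that $G\times H$ has the finite presentation $\pazocal{P}=\gen{\pazocal{A}\sqcup\pazocal{B}\mid\pazocal{R}\sqcup\pazocal{S}\sqcup\pazocal{C}}$, where $\pazocal{C}=\{[a,b]:a\in\pazocal{A},b\in\pazocal{B}\}$ is the set of commutation relations. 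Let $W$ be a reduced word over $\pazocal{A}\sqcup\pazocal{B}$ of length $\leq n$ representing the identity in $G\times H$; write $W\equiv W_1\cdots W_k$ as an alternating product of maximal nonempty subwords, each entirely over $\pazocal{A}$ or entirely over $\pazocal{B}$. Since $G\times H$ maps onto $G$ by killing $\pazocal{B}$ and onto $H$ by killing $\pazocal{A}$, the product of the $\pazocal{A}$-syllables is trivial in $G$ and the product of the $\pazocal{B}$-syllables is trivial in $H$.

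The first step is the \emph{shuffling} estimate: using the commutation relations of $\pazocal{C}$, one can transform $W$ into a word $W'\equiv U V$, where $U$ is over $\pazocal{A}$ and $V$ is over $\pazocal{B}$, $U$ is the concatenation of the $\pazocal{A}$-syllables of $W$ in order and $V$ the concatenation of the $\pazocal{B}$-syllables in order. Each elementary swap of an adjacent $a^{\pm1}b^{\pm1}$ pair costs one $\pazocal{C}$-cell, and the number of swaps needed to move all $\pazocal{B}$-letters to the right past all $\pazocal{A}$-letters is at most (number of $\pazocal{A}$-letters)$\times$(number of $\pazocal{B}$-letters)$\leq(n/2)^2$, hence $\leq n^2$. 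Concretely this produces a van Kampen diagram over $\pazocal{P}$ with boundary label $W\cdot(W')^{-1}$ and area $\leq n^2$ consisting only of $\pazocal{C}$-cells (a "staircase" diagram). The second step: $U=1$ in $G$ with $|U|\leq n$, so there is a van Kampen diagram over $\gen{\pazocal{A}\mid\pazocal{R}}$ with boundary $U$ and area $\leq\delta_G(n)$; likewise a diagram over $\gen{\pazocal{B}\mid\pazocal{S}}$ with boundary $V$ and area $\leq\delta_H(n)$. Both are diagrams over $\pazocal{P}$ as well. The third step is to glue: attach the $G$-diagram and the $H$-diagram to the two halves $U$ and $V$ of the boundary of the staircase diagram, obtaining a van Kampen diagram over $\pazocal{P}$ with boundary label $W$ and total area at most $n^2+\delta_G(n)+\delta_H(n)$. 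This gives $\text{Area}(W)\leq n^2+\delta_G(n)+\delta_H(n)$ for every such $W$, hence $\delta_{\pazocal{P}}(n)\leq n^2+\delta_G(n)+\delta_H(n)$, which is the claimed inequality up to the equivalence $\preccurlyeq$ (and in fact a clean inequality for this particular presentation).

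I do not expect a serious obstacle here; this is a routine argument. The one point requiring a little care is bookkeeping in the shuffling step: one must check that when a syllable $W_i$ over $\pazocal{A}$ is commuted past a syllable $W_{i+1}$ over $\pazocal{B}$, the cost is exactly $|W_i|\cdot|W_{i+1}|$ commutation cells, and that summing over all ordered pairs of syllables of opposite type gives $\bigl(\sum_{\pazocal{A}\text{-syll}}|W_i|\bigr)\bigl(\sum_{\pazocal{B}\text{-syll}}|W_j|\bigr)\leq n^2/4\leq n^2$; this is an AM--GM estimate on the split of $n$ into $\pazocal{A}$-letters and $\pazocal{B}$-letters. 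A second minor point is to note that freely reducing after the shuffle does not increase area, so we may indeed take $U$ and $V$ to be honest words of length $\leq n$ fed into the Dehn functions of $G$ and $H$. Since the statement is only asserted up to $\preccurlyeq$, even a cruder constant in front of $n^2$ would suffice, so the argument is robust.
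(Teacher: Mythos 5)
The paper does not give a proof of this lemma; it is stated with a citation to Proposition~2.1 of Brick's paper~[4] and nothing more. Your shuffling/staircase argument is the standard proof of this inequality (and, as far as I can tell, essentially Brick's own); it is correct, including the inversion count $pq\leq (p+q)^2/4\leq n^2/4$ for the number of commutator cells and the observation that freely reducing $UV$ only shortens the words fed into $\delta_G$ and $\delta_H$. There is no gap; there is simply nothing in the paper to compare against.
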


\begin{lemma} \label{product lower}

\textit{(Corollary 2.3 of [4])} If $G$ and $H$ are finitely presented groups, then $\delta_G\preccurlyeq\delta_{G\times H}$ and $\delta_H\preccurlyeq\delta_{G\times H}$.

\end{lemma}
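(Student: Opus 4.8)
\textbf{Proof plan for Lemma \ref{product lower}.}

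The statement to prove is that for finitely presented groups $G$ and $H$, both $\delta_G\preccurlyeq\delta_{G\times H}$ and $\delta_H\preccurlyeq\delta_{G\times H}$; by symmetry it suffices to treat $\delta_G$. The natural approach is to exhibit $G$ as a retract of $G\times H$ and then show that Dehn functions behave monotonically under retractions. First I would fix finite presentations $\pazocal{P}_G=\gen{\pazocal{A}\mid\pazocal{R}}$ and $\pazocal{P}_H=\gen{\pazocal{B}\mid\pazocal{S}}$ with $\pazocal{A}\cap\pazocal{B}=\emptyset$, so that $\pazocal{P}=\gen{\pazocal{A}\sqcup\pazocal{B}\mid\pazocal{R}\cup\pazocal{S}\cup[\pazocal{A},\pazocal{B}]}$ is a finite presentation of $G\times H$, where $[\pazocal{A},\pazocal{B}]$ denotes the set of commutator relators $aba^{-1}b^{-1}$ for $a\in\pazocal{A}$, $b\in\pazocal{B}$. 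The inclusion $\iota\colon G\hookrightarrow G\times H$ and the projection $\pi\colon G\times H\to G$ (killing $\pazocal{B}$) satisfy $\pi\circ\iota=\mathrm{id}_G$.

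The key step is the retraction estimate: if $w$ is a word over $\pazocal{A}$ with $w=1$ in $G$, then $w=1$ in $G\times H$, so $w$ bounds a van Kampen diagram $\Delta$ over $\pazocal{P}$ with $\mathrm{Area}(\Delta)\le\delta_{\pazocal{P}}(|w|)$. Apply the retraction homomorphism $\pi$ at the level of the free group — i.e. erase every $\pazocal{B}$-letter from the label of every edge of $\Delta$. Under this operation each $\pazocal{R}$-cell remains an $\pazocal{R}$-cell (or becomes degenerate, boundary label freely trivial), each $\pazocal{S}$-cell and each commutator cell $[a,b]$ has boundary label mapping to a freely trivial word, and the boundary label maps to $w$ itself. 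After collapsing the cells with freely trivial boundary (which contribute nothing to area over $\pazocal{P}_G$ once handled by free reductions / $0$-refinement), one obtains a van Kampen diagram over $\pazocal{P}_G$ with boundary label $w$ and at most $\mathrm{Area}(\Delta)$ cells. Hence $\mathrm{Area}_{G}(w)\le\delta_{\pazocal{P}}(|w|)$ for all such $w$, and since the generating set $\pazocal{A}$ of $G$ sits inside that of $G\times H$ with lengths unchanged, $\delta_G(n)=\delta_{\pazocal{P}_G}(n)\le\delta_{\pazocal{P}}(n)$, giving $\delta_G\preccurlyeq\delta_{G\times H}$.

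The main obstacle is the bookkeeping around cells whose boundary label becomes freely trivial after erasing $\pazocal{B}$-letters: one must argue carefully that such a diagram can be converted into a genuine van Kampen diagram over $\pazocal{P}_G$ without increasing the cell count — either by repeatedly removing a cell bounded by a freely trivial word (folding its boundary) or, more cleanly, by invoking that a word over $\pazocal{A}$ equal to $1$ in $G\times H$ equals $1$ in $G$ and that the algebraic form of van Kampen's Lemma lets one read off $\mathrm{Area}_G(w)\le\mathrm{Area}_{G\times H}(w)$ directly from the product decomposition $w=\prod f_i R_i^{\pm1}f_i^{-1}$ over $\pazocal{P}$ after applying $\pi$ to each factor (the factors coming from $\pazocal{S}$ and $[\pazocal{A},\pazocal{B}]$ map to $1$ in $F(\pazocal{A})$ and can simply be deleted). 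This algebraic route avoids the diagram surgery entirely and makes the inequality $\mathrm{Area}_G(w)\le\mathrm{Area}_{G\times H}(w)$ immediate, whence $\delta_G\preccurlyeq\delta_{G\times H}$; the case of $H$ is identical.
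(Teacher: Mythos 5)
Your proof is correct and is the standard retraction argument used in Brick's paper [4], which the present paper cites without reproducing a proof. The algebraic route you sketch at the end (applying the retraction $\pi\colon F(\pazocal{A}\sqcup\pazocal{B})\to F(\pazocal{A})$ to the product decomposition $w=\prod f_iR_i^{\pm1}f_i^{-1}$ and deleting the factors with $R_i\in\pazocal{S}\cup[\pazocal{A},\pazocal{B}]$, which map to $1$ in $F(\pazocal{A})$) is the cleanest version and sidesteps the diagram-surgery bookkeeping entirely.
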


Now, let $G_n=G(\textbf{M})\times H_3\Z$, where $H_3\Z$ is the discrete Heisenberg group, i.e the subgroup of $SL_3\Z$ given by matrices of the form $\begin{pmatrix}
1 & a & b \\
0 & 1 & c \\
0 & 0 & 1
\end{pmatrix}$. Lemma \ref{product lower} immediately implies that $G_n$ has at least cubic Dehn function, as $H_3\Z$ has cubic Dehn function [6].

So, as $G(\textbf{M})\xhookrightarrow{ \ } G_n$, to prove Theorem \ref{main theorem} it suffices to show that $G_n$ has at most cubic Dehn function. In fact, Lemma \ref{product upper} moreover implies that it suffices to show that $G(\textbf{M})$ has at most cubic Dehn function.

Let $w\in F(\pazocal{X})$ such that $w=1$ in $G(\textbf{M})$. By Lemma \ref{G isomorphic to G_a}, $w$ is also trivial over the group $G_a(\textbf{M})$, so that we can find a minimal diagram $\Delta_a$ over $G_a(\textbf{M})$ with $\text{Lab}(\partial\Delta_a)\equiv w$. By Lemma \ref{contradiction}, we have
$$\text{wt}(\Delta_a)\leq N_2(|w|+\sigma_\lambda(\Delta_a^*))^3$$
By Lemma \ref{G_a design}, $\sigma_\lambda(\Delta_a^*)\leq c|w|$. Further, by the modified definition of length, $|w|\leq\|w\|$. As a result, we can choose $N_3$ large enough so that
$$\text{wt}(\Delta_a)\leq N_3\|w\|^3$$
Now, by Lemmas \ref{disks are quadratic} and \ref{a-cells are quadratic}, from $\Delta_a$ we can construct the diagram $\Delta$ over the canonical presentation of $G(\textbf{M})$ by:

\begin{addmargin}[1em]{0em}

$\bullet$ excising any disk $\Pi$ and pasting in its place a diagram over $G(\textbf{M})$ with the same contour label and area at most $c_7|\partial\Pi|^2$

$\bullet$ excising any $a$-cell $\pi$ and pasting in its place a diagram  over $G(\textbf{M})$ with the same contour label and area at most $c_7\|\partial\pi\|^2$

\end{addmargin}

By the definition of $\text{wt}$, it follows that $\text{Area}(\Delta)\leq\text{wt}(\Delta_a)\leq N_3\|w\|^3$, and thus Theorem \ref{main theorem} is proved.

\bigskip

%%%%%%%%%%%%%%%%%%%%%%%%%%%%%%%%%%%%%%%%%%%%%%%%%%%%%%%%%%%%%%%%%

\section{References}

[1] S. I. Adian, \textit{The Burnside Problem and Identities in Groups}, Springer-Verlag, (1979).

[2] J.-C. Birget, A. Yu. Olshanskii, E. Rips, M. Sapir, \textit{Isoperimetric functions of groups and combinatorial complexity of the word problem}, Annals of Mathematics , 156 (2002), no. 2, 467–518.

[3] B. H. Bowditch, \textit{Notes on Gromov’s hyperbolicity criterion for path-metric spaces}, "Group theory from a geometrical viewpoint (Trieste, 1990)", (E Ghys, A Haefliger, A Verjovsky, editors), World Sci. Publ., River Edge, NJ (1991)

[4] S. G. Brick, \textit{Dehn functions of groups and products of groups}, Transactions of the American Mathematical Society. 335. 369-384, (1993).

[5] A. Darbinyan, \textit{Word and conjugacy problems in lacunary hyperbolic groups}, (2017).

[6] D. B. A. Epstein, J. W. Cannon, S. V. F. Levy, M. S. Paterson, W. P. Thurston, \textit{Word Processing in Groups}, Jones and Bartlett, Boston, (1992).

[7] E. Ghys, P. de la Harpe, \textit{Sur les Groupes Hyperboliques d'apr\`{e}s Mikhael Gromov}, Springer, 1990.

[8] E. S. Golod, I. R. Shafarevich, \textit{On the class field tower}, Izv. Akad. Nauk SSSR Ser. Mat., 28:2 (1964), 261–272 

[9] M. Gromov, \textit{Hyperbolic groups}, Essays in Group Theory (S.M.Gersten, ed.), M.S.R.I. Pub. 8, Springer, (1987), 75–263.

[10] M. Gromov, \textit{Asymptotic invariants of infinite groups}, in: Geometric Group Theory. Vol. 2 (G.A.Niblo and M.A.Roller, eds.), London Math. Soc. Lecture Notes Ser., 182 (1993), 1–295.

[11] S. V. Ivanov, \textit{On subgroups of free Burnside groups of large odd exponent}. Illinois J. Math. 47 (2003), no. 1-2, 299--304. 

[12] S. V. Ivanov, \textit{Embedding free Burnside groups in finitely presented groups}, Geometriae Dedicata, (2005), vol. 111, no. 1, pp. 87-105.

[13] R. C. Lyndon and P. E. Schupp, \textit{Combinatorial group theory}, Springer-Verlag, 1977.

[14] P. S. Novikov, S. I. Adian, \textit{Defining relations and the word problem for free periodic groups of odd order}, Izv. Akad. Nauk SSSR Ser. Mat., 32:4 (1968)

[15] A. Yu. Ol'shanskii, \textit{Groups of bounded period with subgroups of prime order}, Algebra and Logic 21 (1983), 369–418; translation of Algebra i Logika 21 (1982)

[16] A. Yu. Ol'shanskii, \textit{Hyperbolicity of groups with subquadratic isoperimetric inequality} Internat. J. Algebra Comput. 1 (1991), no. 3, 281–289.

[17] A. Yu. Ol'shanskii, \textit{Geometry of Defining Relations in Groups}, Springer Netherlands, (1991)

[18] A. Yu. Ol'shanskii, \textit{On subgroup distortion in finitely presented groups} Mat. Sb., 188:11 (1997), 51–98; Sb. Math., 188:11 (1997), 1617–1664 

[19] A. Yu. Ol'shanskii, \textit{Polynomially-bounded Dehn functions of groups}, Journal of Combinatorial Algebra, 2. (2018) 311-433

[20] A. Yu. Ol'shanskii, M. V. Sapir, \textit{Embeddings of relatively free groups into finitely presented groups}, (2000)

[21] A. Yu. Ol'shanskii, M. V. Sapir, \textit{Length and area functions in groups and quasiisometric Higman embeddings}, Intern. J. Algebra and Comput., 11 (2001), no. 2, 137–170.

[22] A. Yu. Ol'shanskii, M. V. Sapir, \textit{The Conjugacy Problem and Higman Embeddings}. Memoirs of the American Mathematical Society. 170. (2003). 

[23] A. Yu. Ol’shanskii, M. V. Sapir, \textit{Non-Amenable Finitely Presented Torsion-by-Cyclic Groups}, Publ. math., Inst. Hautes Étud. Sci. (2003)

[24] A. Yu. Ol’shanskii, M. V. Sapir, \textit{Groups with Small Dehn functions and Bipartite Chord Diagrams} GAFA, Geom. funct. anal. 16 (2006), 1324

[25] A. Yu. Ol'shanskii, M. V. Sapir, \textit{Groups with undecidable word problem and almost quadratic Dehn function}, Journal of Topology. 5. (2012) 785-886. 10.1112/jtopol/jts020

[26] A. Yu. Ol'shanskii, M. V. Sapir, \textit{Conjugacy problem in groups with quadratic Dehn function}, (2018). 

[27] M. V. Sapir, J. C. Birget, E. Rips, \textit{Isoperimetric and Isodiametric Functions of Groups}, Annals of Mathematics, 156(2), second series, (1998), 345-466 

[28] M. V. Sapir, \textit{Combinatorial algebra: Syntax and Semantics. With contributions by Victor S. Guba and Mikhail V. Volkov}, Springer Monographs in Mathematics, Springer, Cham, (2014).

[29] V. L. Shirvanyan, \textit{Embedding the group $B(\infty,n)$ in the group $B(2,n)$}, Izv. Akad. Nauk SSR Ser. Mat. 40 (1976), 190–208.

[30] E. van Kampen, \textit{On Some Lemmas in the Theory of Groups}, American Journal of Mathematics
Vol. 55, No. 1 (1933), pp. 268-273.

\end{document}